\numberwithin{equation}{section}
\def\be{{\beta}}
\def\eps{\epsilon}
\def\ka{\kappa}
\def\si{\sigma}
\def\phii{\widetilde{\varphi}}
\def\ka{\kappa}
\def\al{\alpha}
\newtheorem{theorem}{Theorem}[section]
\newtheorem{lemma}[theorem]{Lemma}
\newtheorem{proposition}[theorem]{Proposition}
\newtheorem{definition}[theorem]{Definition}
\newtheorem{remark}[theorem]{Remark}
\begin{document}

\title[The Euler--Maxwell two-fluid system in 3D]{Global solutions of the Euler--Maxwell two-fluid system in 3D}

\author{Yan Guo}
\address{Brown University}
\email{guoy@cfm.brown.edu}

\author{Alexandru D. Ionescu}
\address{Princeton University}
\email{aionescu@math.princeton.edu}

\author{Benoit Pausader}
\address{LAGA, Universit\'e Paris 13 (UMR 7539)}
\email{pausader@math.univ-paris13.fr}

\thanks{The first author is supported in part by NSF grant \#1209437 and a
Chinese NSF grant. The second author is supported in part by a Packard
Fellowship and NSF grant DMS-1065710. The third author is supported in part
by NSF grant DMS-1 142293. Both the first and the third authors thank the
support of Beijing International Mathematical Research Center.}

\begin{abstract}
The fundamental \textquotedblleft two-fluid\textquotedblright\ model for describing plasma dynamics is given by the Euler--Maxwell system, in which compressible ion and electron fluids interact with their own self-consistent electromagnetic field.
We prove global stability of a constant neutral background, in the sense that irrotational, smooth and localized perturbations of a constant background with small amplitude lead to global smooth solutions in three space dimensions for the Euler-Maxwell system. Our construction applies equally well to other plasma models such as the Euler-Poisson system for two-fluids and a relativistic Euler-Maxwell system for two fluids. Our solutions appear to be the first nontrivial global smooth solutions in all of these models.
\end{abstract}

\maketitle

\tableofcontents

\section{Introduction}\label{intro}

\subsection{Statement of the main result}

A plasma is a collection of fast-moving charged particles. It is believed that more than 90\% of the matter in the universe is in the form of plasma, from sparse intergalactic plasma, to the interior of stars to neon signs. In addition, understanding of the instability formation in plasma is one of the main challenges for nuclear fusion, in which charged particles are accelerated at high speed to create energy. We refer to \cite{Bit,Che,DelBer,Stu,Sw} for physics references in book form.

At high temperature and velocity, ions and electrons in a plasma tend to become two separate fluids due to their different physical properties (inertia, charge). One of the basic fluid models for describing plasma dynamics is the so-called \textquotedblleft two-fluid\textquotedblright\ model, in which two compressible ion and electron fluids interact with their own self-consistent electromagnetic field. Such an Euler-Maxwell system describes the dynamical evolution of the functions $n_{e},n_{i}:\mathbb{R}^{3}\rightarrow \mathbb{R}$, $v_{e},v_{i},E,B:\mathbb{R}^{3}\rightarrow  \mathbb{R}^{3}$, which evolve according to the quasi-linear coupled system, 
\begin{equation}
\begin{split}
& \partial _{t}n_{e}+\hbox{div}(n_{e}v_{e})=0, \\
& n_{e}m_{e}\left[ \partial _{t}v_{e}+v_{e}\cdot \nabla v_{e}\right] +\nabla p_{e}=-n_{e}e\left[ E+\frac{v_{e}}{c}\times B\right] , \\
& \partial _{t}n_{i}+\hbox{div}(n_{i}v_{i})=0, \\
& n_{i}M_{i}\left[ \partial _{t}v_{i}+v_{i}\cdot \nabla v_{i}\right] +\nabla
p_{i}=Zn_{i}e\left[ E+\frac{v_{i}}{c}\times B\right] , \\
& \partial _{t}B+c\nabla \times E=0, \\
& \partial _{t}E-c\nabla \times B=4\pi e\left[ n_{e}v_{e}-Zn_{i}v_{i}\right],
\end{split}
\label{EuMa1}
\end{equation}
together with the elliptic equations 
\begin{equation}
\hbox{div}(B)=0,\quad \hbox{div}(E)=4\pi e(Zn_{i}-n_{e})  \label{Ell}
\end{equation}
and two equations of state expressing $p_e$ and $p_i$ in terms of $n_e$ and $n_i$. These equations describe a plasma composed of electrons and one species of ions. The electrons have charge $-e$, density $n_{e}$, mass $m_{e}$, velocity $v_{e}$, and pressure $p_{e}$, and the ions have charge $Ze$, density $n_{i}$, mass $M_{i}$, velocity $v_{i}$, and pressure $p_{i}$. In addition, $c$ denotes the speed of light and $E$ and $B$ denote the electric and magnetic field. The two equations \eqref{Ell} are propagated by the dynamic flow, provided that we assume that they are satisfied at the initial time.

The full Euler-Maxwell system \eqref{EuMa1} with constraint \eqref{Ell} forms the foundation of the ``two-fluid'' model in the plasma theory, which captures the complex dynamics of a plasma due to electromagnetic interactions present in the model. Even at the linear level, there are new ion-acoustic waves, Langmuir waves, as well as light waves etc. At the nonlinear level, the Euler-Maxwell system is the origin of many well-known dispersive PDE, such as KdV \cite{GuPu}, KP \cite{LaLiSa,Pu}, Zakharov \cite{Te}, Zakharov-Kuznetsov \cite{LaLiSa,Pu} and NLS, which can be derived from \eqref{EuMa1} and \eqref{Ell} via different scaling and asymptotic expansions. We also refer to \cite{CorGre,DeDeSa,GeHKRo} for derivation of the cold-ion and quasi-neutral equations.

From a PDE viewpoint, the full Euler-Maxwell system \eqref{EuMa1} with constraint \eqref{Ell} can be classified as a system of nonlinear hyperbolic conservation laws with \textit{no dissipation and no relaxation effects}\footnote{When dissipation or relaxation is present, one expects stronger decay, even at the level of the $L^2$-norm, see e.g. \cite{Peng} and the references therein. In our case however, the evolution is time-reversible and we need a different mechanism of decay based on dispersion.}. Despite major advances in the mathematical study of hyperbolic conservation laws in one space dimension over the years, no general mathematical theory exists for the construction of global solutions in higher space dimension. One of the reasons is that, for these equations, shock waves (i.e., discontinuities) will generically  develop even from smooth initial data (see e.g. John \cite{Jo}). Even worse, a classical result of Sideris \cite{Sid} demonstrates that, for the compressible Euler equation for a neutral gas, shock waves will develop even for smooth irrotational initial data with small amplitude. This shock formation was recently further described in \cite{Ch2,ChMi} (see also \cite{Al}). 

In this paper we consider perturbations of the flat neutral equilibrium, namely $(n_{e}^{0},v_{e}^{0},n_{i}^{0},v_{i}^{0},E^{0},B^{0})=(Zn_{0},0,n_{0},0,0,0)$, for constant $n_{0}>0$ to the Euler-Maxwell system \eqref{EuMa1} and \eqref{Ell}. In order to state our main result, we normalize the Euler-Maxwell system in the following way. Assume the pressures are given by the formulas\footnote{In fact, our approach allows to treat any sufficiently smooth \textit{barotropic} pressure law, in particular the typical power law $p_{e}\sim n_{e}^{\gamma _{e}}$ for some $\gamma _{e}>0$ and similarly for $p_{i}$. We refer to Appendix \ref{AEP} for more precise statements. We use the particular quadratic laws for the pressure here only for the sake of concreteness and since it minimizes the nonlinear terms we have to consider.}: 
\begin{equation}
p_{e}=P_{e}\frac{n_{e}^{2}}{2},\quad p_{i}=P_{i}Z^{2}\frac{n_{i}^{2}}{2}.
\label{pressure}
\end{equation}
with constants $P_{e}$ and $P_{i}.$ The physical parameters are then the effective ion and electron temperatures 
\begin{equation*}
k_{B}T_{e}=n_{0}P_{e},\quad k_{B}T_{i}=n_{0}ZP_{i},
\end{equation*}
where $k_{B}$ denotes the Boltzmann constant, with corresponding electron
and ion thermal speeds\footnote{These correspond to the speed of inertial (linearized) waves if one neglects the electromagnetic field.} 
\begin{equation*}
V_{e}=\sqrt{\frac{n_{0}P_{e}}{m_{e}}}=\sqrt{\frac{k_{B}T_{e}}{m_{e}}},\quad
V_{i}=\sqrt{\frac{n_{0}P_{i}Z}{M_{i}}}=\sqrt{\frac{k_{B}T_{i}}{M_{i}}}.
\end{equation*}
We also have the Debye length 
\begin{equation*}
\frac{1}{\lambda _{D}^{2}}=4\pi e^{2}\left[ \frac{n_{0}}{k_{B}T_{e}}+\frac{Zn_{0}}{k_{B}T_{i}}\right] =4\pi e^{2}\left[ \frac{1}{P_{e}}+\frac{1}{P_{i}}\right] .
\end{equation*}

The Euler--Maxwell system can be adimensionalized to
depend only on three parameters: the ratio of the electron to ion masses
(per charge) 
\begin{equation}
\varepsilon :=Zm_{e}/M_{i},  \label{DefEpsIntro}
\end{equation}
the ratio of the temperatures 
\begin{equation}
T:=P_{e}/P_{i}=ZT_{e}/T_{i},  \label{DefTIntro}
\end{equation}
and the (normalized) ratio of the speed of light to the ion velocity 
\begin{equation}
C_{b}:=\varepsilon\frac{c^2}{V_i^2}=\frac{c^{2}}{V_{e}V_{i}}\sqrt{T\varepsilon }=\frac{c^{2}m_{e}}{%
n_{0}P_{i}}.  \label{DefCIntro}
\end{equation}
More precisely, let 
\begin{equation*}
\lambda :=\sqrt{\frac{4\pi e^{2}}{P_{i}}},\qquad \beta :=\sqrt{\frac{4\pi
n_{0}Ze^{2}}{M_{i}}},
\end{equation*}
and 
\begin{equation}\label{AddedResc}
\begin{split}
& n_{e}(x,t)=n_{0}\big[n(\lambda x,\beta t)+1\big],\qquad
n_{i}(x,t)=(n_{0}/Z)\big[\rho (\lambda x,\beta t)+1\big], \\
& v_{e}(x,t)=(\beta /\lambda )v(\lambda x,\beta t),\qquad \quad
\,\,v_{i}(x,t)=(\beta /\lambda )u(\lambda x,\beta t), \\
& E(x,t)=(4\pi en_{0}/\lambda )\tilde{E}(\lambda x,\beta t),\quad
\,\,B(x,t)=(cM_{i}\beta /(Ze))\tilde{B}(\lambda x,\beta t).
\end{split}
\end{equation}
The parameter $\beta $ is the \textit{ion plasma frequency} and $\beta /\lambda =V_{i}$ is the ion thermal velocity. In terms of $n,v,\rho ,u,\tilde{E},\tilde{B}$ the system \eqref{EuMa1}--\eqref{Ell} becomes 
\begin{equation}
\begin{split}
& \partial _{t}n+\hbox{div}((n+1)v)=0, \\
& \varepsilon \left( \partial _{t}v+v\cdot \nabla v\right) +T\nabla n+\tilde{E}+v\times \tilde{B}=0, \\
& \partial _{t}\rho +\hbox{div}((\rho +1)u)=0, \\
& \left( \partial _{t}u+u\cdot \nabla u\right) +\nabla \rho -\tilde{E}-u\times \tilde{B}=0, \\
& \partial _{t}\tilde{B}+\nabla \times \tilde{E}=0, \\
& \partial _{t}\tilde{E}-\frac{C_{b}}{\varepsilon }\nabla \times \tilde{B}=\left[ (n+1)v-(\rho +1)u\right] , \\
& \hbox{div}(\widetilde{B})=0,\quad \hbox{div}(\widetilde{E})=\rho -n,
\end{split}
\label{NewSys}
\end{equation}
where $\varepsilon $, $T$ and $C_{b}$ have been defined above. We will
assume throughout the paper that 
\begin{equation}
\varepsilon \leq 10^{-3},\qquad T\in \lbrack 1,100],\qquad C_{b}\geq 6T.  \label{condTeps}
\end{equation}

We will make two additional simplifications. Using the system \eqref{NewSys} it is easy to see that 
\begin{equation*}
\begin{split}
\partial _{t}\big[\tilde{B}-\varepsilon \nabla \times v\big]& =\nabla \times \big[v\times (\tilde{B}-\varepsilon \nabla \times v)\big], \\
\partial _{t}\big[\tilde{B}+\nabla \times u\big]& =\nabla \times \big[u\times (\tilde{B}+\nabla \times u)\big],
\end{split}
\end{equation*}
Therefore ``\textit{generalized irrotational flows}'' with the property that 
\begin{equation}
\tilde{B}=\varepsilon \nabla \times v=-\nabla \times u  \label{girrotational}
\end{equation}
are naturally preserved for all time, see Proposition \ref{Localexistence}
(iii) below for precise details.

Our main theorem is as follows:

\begin{theorem}
\label{MainThm} \label{Main1} Assume (\ref{condTeps}).  Let $N_{0}=10^{4}$
and assume that 
\begin{equation}
\begin{split}
& \Vert (n^{0},v^{0},\rho ^{0},u^{0},\tilde{E}^{0},\tilde{B}^{0})\Vert_{H^{N_{0}}}+\Vert (n^{0},v^{0},\rho^{0},u^{0},\tilde{E}^{0},\tilde{B}^{0})\Vert _{Z}=\delta _{0}\leq \overline{\delta}, \\
& \hbox{div}(\tilde{E}^{0})+n^{0}-\rho ^{0}=0,\qquad \tilde{B}^{0}=\varepsilon \nabla \times v^{0}=-\nabla \times u^{0},
\end{split}
\label{maincond2}
\end{equation}
where $\overline{\delta }=\overline{\delta }(C_{b},T,\varepsilon )>0$ is sufficiently small, and the $Z$ norm is defined in Definition \ref{MainDef}. Then there exists a unique global solution $(n,v,\rho ,u,\tilde{E},\tilde{B})\in C([0,\infty ):{H^{N_{0}}})$ of the system (\ref{NewSys}) with initial data $(n(0),v(0),\rho (0),u(0),\tilde{E}(0),\tilde{B}(0))=(n^{0},v^{0},\rho ^{0},u^{0},\tilde{E}^{0},\tilde{B}^{0})$. Moreover, for any $t\in \lbrack 0,\infty )$, 
\begin{equation}
\hbox{div}(\tilde{E})(t)+n(t)-\rho (t)=0,\qquad \tilde{B}(t)=\varepsilon
\nabla \times v(t)=-\nabla \times u(t),\text{ \ (generalized
irrotationality) }  \label{mainconcl2}
\end{equation}
and, with $\beta :=1/100$, 
\begin{equation}
\begin{split}
&\Vert (n(t),v(t),\rho (t),u(t),\tilde{E}(t), \tilde{B}(t))\Vert
_{H^{N_{0}}}\\
&+\sup_{|\alpha |\leq 4}(1+t)^{1+\beta /2}\Vert
(D_{x}^{\alpha }n(t), D_{x}^{\alpha }v(t), D_{x}^{\alpha }\rho (t), D_{x}^{\alpha}u(t), D_{x}^{\alpha }\tilde{E}(t), D_{x}^{\alpha }\tilde{B}(t)\Vert _{L^{\infty }}\lesssim \delta
_{0}.
\end{split}
\label{mainconcl2.1}
\end{equation}
\end{theorem}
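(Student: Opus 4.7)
The overall plan is a standard but intricate bootstrap argument combining high-order energy estimates with sharp dispersive decay, closed via the profile/space-time resonance method. Local well-posedness in $H^{N_0}$ is not at issue: the quasilinear symmetric-hyperbolic structure, together with Proposition \ref{Localexistence} referenced in the paper, gives a solution on $[0,T^\ast)$ and propagates the irrotationality constraint \eqref{mainconcl2}, so the real work is to prove a-priori bounds that force $T^\ast=\infty$.

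First I would diagonalize the linearization around the equilibrium. Using \eqref{girrotational} to eliminate $\tilde B$ in favor of $\varepsilon \nabla\times v = -\nabla\times u$, and splitting each velocity into its divergence and curl parts, the system \eqref{NewSys} decouples (at the linear level) into several half-wave equations $\partial_t f + i\Lambda_\sigma(D) f = \mathcal{N}_\sigma$, one for each of the physically expected modes: an ion-acoustic branch, a Langmuir (electron plasma) branch, and two electromagnetic branches coming from the transverse part of $(\tilde E,\tilde B)$. The symbols $\Lambda_\sigma(\xi)$ can be read off from the characteristic polynomial and, crucially under \eqref{condTeps}, they are smooth, non-degenerate (in particular have non-vanishing Hessian for $\xi \ne 0$), and separated so that characteristic sets of different branches only meet transversally. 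This is what ultimately powers the $t^{-3/2}$-type dispersive decay for localized data.

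Second, I would set up the bootstrap on the time interval $[0,T^\ast)$ with three norms: the high Sobolev norm $\|U\|_{H^{N_0}}$, the sharp decay norm $\sup_{|\alpha|\le 4}(1+t)^{1+\beta/2}\|D^\alpha U\|_{L^\infty}$ from \eqref{mainconcl2.1}, and the weighted $Z$-norm referenced in the statement (presumably measuring the localization of the profiles $V_\sigma := e^{it\Lambda_\sigma}U_\sigma$). The bootstrap assumption would be that each of these is $\le \delta_0^{1/2}$ (or $\le C\delta_0$) and the goal is to improve each one.

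\textbf{Energy estimate.} For the $H^{N_0}$ bound I would symmetrize \eqref{NewSys} using the good variables (modified to respect the symmetric-hyperbolic structure: $\sqrt{T}n$, $\sqrt{\varepsilon}v$, $\rho$, $u$, $\tilde E$, $\sqrt{C_b/\varepsilon}\tilde B$, roughly). Commuting $N_0$ derivatives produces quadratic error terms of the form $\partial^k U\cdot \partial U\cdot \partial^{N_0-k}U$; after integration by parts one ends up with expressions bounded by $\|DU\|_{L^\infty}\|U\|_{H^{N_0}}^2$. Under the $(1+t)^{-1-\beta/2}$ decay bootstrap, Gronwall gives $\|U(t)\|_{H^{N_0}} \lesssim \delta_0$ globally in time. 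A subtlety is the quasilinear terms involving $n \cdot \nabla v$ etc., which require that the linear principal part be symmetrized with coefficients depending on the solution; this is routine.

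\textbf{Dispersive and $Z$-norm estimates.} This is the hard part. Writing $V_\sigma(t)=e^{it\Lambda_\sigma}U_\sigma(t)$, Duhamel gives
\begin{equation*}
V_\sigma(t) = V_\sigma(0) + \sum_{\mu,\nu} \int_0^t \int_{\mathbb{R}^3} e^{is\Phi_{\sigma\mu\nu}(\xi,\eta)} m_{\sigma\mu\nu}(\xi,\eta)\,\widehat{V}_\mu(s,\eta)\widehat{V}_\nu(s,\xi-\eta)\,d\eta\,ds,
\end{equation*}
with phases $\Phi_{\sigma\mu\nu}(\xi,\eta)=\Lambda_\sigma(\xi)-\Lambda_\mu(\eta)-\Lambda_\nu(\xi-\eta)$. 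The plan is to classify, for each triple $(\sigma,\mu,\nu)$, the space-time resonant set $\{\Phi=0,\ \nabla_\eta\Phi=0\}$; under \eqref{condTeps} I expect these sets to be low-dimensional (generically points or codimension-2 submanifolds) thanks to the separation of the dispersion relations, including the fact that the electromagnetic modes travel much faster than the acoustic ones because $C_b/\varepsilon \gg 1$. The $Z$-norm should be a weighted $L^2$ (or frequency-localized Besov-type) norm on the profiles, chosen so that (i) it controls the $L^\infty$-decay via a stationary-phase/linear dispersive estimate of the form $\|e^{-it\Lambda_\sigma}V_\sigma\|_{L^\infty} \lesssim (1+t)^{-3/2}\|V_\sigma\|_Z$ (losing a little to give the $(1+t)^{-1-\beta/2}$ with $\beta=1/100$), and (ii) it can be closed under the Duhamel iteration: time resonances ($\Phi=0$) are absorbed by integration by parts in $s$, space resonances ($\nabla_\eta\Phi=0$) by integration by parts in $\eta$, and the remaining boundary/error terms are handled using the $H^{N_0}$ bootstrap for smoothness and the $Z$-bootstrap for localization.

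\textbf{Main obstacle.} The principal difficulty, which I would attack last, is the analysis of the space-time resonant sets and the design of the $Z$-norm. With four (or more) dispersion relations, there are $O(1)$ triples $(\sigma,\mu,\nu)$, each producing its own phase. Some of these phases will have genuine space-time resonances (for instance, self-interactions within the ion-acoustic branch are the closest analog to the Sideris blow-up, and one must verify that the electromagnetic coupling removes the pathological resonance that caused shocks there). Showing that the resonant geometry is compatible with a single $Z$-norm that is both strong enough to give the sharp $t^{-1}$-type $L^\infty$ decay of \eqref{mainconcl2.1} and weak enough to be controlled by the bilinear Duhamel terms—uniformly in the regime \eqref{condTeps}—is the heart of the theorem and is where I expect essentially all the technical machinery (paradifferential symbol calculus, multiplier estimates, vector-field/weighted-energy methods for the profiles) to be deployed. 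Once the bootstrap closes, a standard continuation argument extends the solution to $[0,\infty)$, and \eqref{mainconcl2} is preserved by the flow as noted in Proposition \ref{Localexistence}(iii).
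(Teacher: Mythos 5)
Your overall architecture matches the paper's: local well-posedness and propagation of the constraints via Proposition~\ref{Localexistence}, diagonalization of the linearized system into complex dispersive variables (Proposition~\ref{algebra}), a bootstrap combining $H^{N_0}$-energy control with a decay/$Z$-norm estimate, and the profile / space-time resonance method for the Duhamel integral. Two of your structural choices line up with the paper's Propositions~\ref{Norm0} and~\ref{Norm} (propagation and bootstrap of the $Z$-norm), and the energy estimate via symmetrization is also what the paper does.

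There is, however, a genuine error in your setup which would cause the scheme to fail as written. You assert that under \eqref{condTeps} the symbols $\Lambda_\sigma$ all have non-vanishing Hessian away from $\xi=0$, and that this yields $t^{-3/2}$-type dispersive decay. This is false for the ion-acoustic branch $\Lambda_i$: Lemma~\ref{tech99} shows that $\lambda_i(r)\sim \lambda_i'(0)\,r$ with $\lambda_i''(0)=0$ (wave-like near $\xi=0$), and moreover $\lambda_i''$ has an interior zero $r_\ast\in(T^{-1/2},4T^{-1/2}+4T^{-1/4})$. Both degeneracies are essential: they cap the $L^\infty$ decay of the $i$-mode at roughly $t^{-5/4}$ (cf. \eqref{mk15.65}--\eqref{mk15.67}), not $t^{-3/2}$, and are precisely what create the degenerate Case~B and Case~C resonances in Sections~\ref{normproof3}--\ref{normproof4}. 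Your scheme as stated does not notice this and would not close for those interactions.

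Two further ingredients of the paper that your plan omits, and which are indispensable to handling those degenerate resonances, are: (i) the $Z$-norm is not a single weighted $L^2$ space but is built as an infimum over two kinds of atoms, $B^1_{k,j}$ (strong) and $B^2_{k,j}$ (weak), because the Case~A resonant spheres produce outputs that \emph{fail} to lie in the strong space and can only be captured by the weaker $B^2$ component (\eqref{sec5.2}--\eqref{sec5.4}); and (ii) the multipliers $m_{i;\mu,\nu}$ vanish like $|\xi|$ at the output frequency $\xi=0$ (Lemma~\ref{tech1.3} and Remark~\ref{tech1.33}) — this null structure, inherited from the exact-derivative form of the density equations, is what rescues the Case~C analysis near $\xi=0$ where $\lambda_i$ is only Lipschitz-like. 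Without identifying the Hessian degeneracy, the two-tier $Z$-norm, and the null structure, the bootstrap you describe cannot be closed.
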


Our main result demonstrates that even though the Euler-Maxwell system (\ref{EuMa1}) and (\ref{Ell}) is much more complicated than the pure Euler system for a neutral gas, it is in fact \textit{more stable} in the sense that global smooth solutions can persist globally without any shock formations. This is a stark and surprising contrast to Sideris's result for the pure Euler equations \cite{Sid}.

\begin{remark}
We make a few remarks about the assumptions in Theorem \ref{MainThm}.

\begin{itemize}
\item Condition (\ref{condTeps}) is needed for our careful analysis of the
dispersion relations that appear in the study of the linearized system (see
Lemma \ref{tech99} in Appendix \ref{lin}). It is consistent with the relevant physical ranges of the parameters.

\item Our hypothesis imply in particular that the perturbation is \textit{electrically neutral}, i.e. 
\begin{equation*}
\int_{\mathbb{R}^3}\left[ Zn_0(1+\rho^0(x))-n_0(1+n^0(x))\right]dx=0.
\end{equation*}
This is however forced by Maxwell's relation \eqref{Ell} if we assume that
the electric perturbation is integrable.

\item The smallness assumption is needed: large deviations from an
equilibrium do create shocks \cite{GuoTah}.
\end{itemize}
\end{remark}

\subsection{Important simplified models}

The result of blow-up of Sideris for the pure compressible Euler equations \cite{Sid} can be understood from the fact that small and irrotational perturbations of a constant background for the pure compressible Euler equations obey a \textit{quasilinear wave equation without null-structure} of the form 
\begin{equation}
\left( \partial _{tt}-\Delta \right) \alpha =\mathcal{Q}(\alpha ,\nabla
\alpha ,\nabla ^{2}\alpha )  \label{WaveIntro}
\end{equation}
where $\alpha $ is related to the unknown and the right-hand side denotes a quadratic nonlinearity in up to two derivatives of $\alpha $. This type of equation has slow decay of linear waves (decay like $1/t$) and strong resonances and therefore blow-up or formation of shocks is expected.

The Euler-Maxwell system \eqref{NewSys} contains a nonlinearity $\mathcal{Q}$ essentially similar to the pure compressible Euler case. However, due to self-consistent electromagnetic interaction,  the linearized Euler-Maxwell system exhibits much more complex and subtle linear and bilinear dispersive effects than that from the wave equation. The main task in the present work is to systematically track down and exploit such dispersive effects mathematically to preserve smoothness globally in time and prevent shock formation.

In order to put our result in the right context as well as to understand the wealth of dynamics involved in small perturbations of \eqref{EuMa1}-\eqref{Ell}, we need to introduce some intermediate models. The Euler-Maxwell system (\ref{EuMa1}) and (\ref{Ell}) is such a ``master equations'' describing very rich and complex plasma dynamics, that it contains several well-known simplified models in plasma physics. For instance, in all physical situations\footnote{Indeed, the ratio $m_e/M_i$ is no bigger than the ratio of the electron mass to the proton mass which equals $1/1836$.}, $m_{e}\ll M_{i}$. It is then natural to formally set $\varepsilon =0$ in \eqref{NewSys}, which leads to simplified {\it one fluid} models for either ions ($M_{i}=1$, $m_{e}=0$) or electrons ($M_{i}=\infty $, $m_{e}=1$). Moreover, if all the velocities are much smaller than the speed of light, then $C_{b}\gg 1$. Formally setting\footnote{This is called the \textit{electrostatic approximation}.} $C_{b}=\infty $ and $B\equiv 0$ replaces the Maxwell equations by the much simpler \textit{Poisson equation}. We refer to \cite{CorGre,DeDeSa} for other examples.

In the following, we will consider the simplified models in a form which is
consistent with the reformulation \eqref{NewSys} given appropriate
approximations. This might look somewhat different from the classical form
of these models. However, after an appropriate rescaling the equations
should be the same up to cubic and higher-order terms which can be ignored
in our situation (see Appendix \ref{AEP}).

\subsubsection{Single-fluid models}

The simplest model we can derive is the \textit{Euler-Poisson model for the
electrons}

\begin{equation}
\begin{split}
\partial _{t}n+\hbox{div}((1+n)v)& =0, \\
\partial _{t}v+v\cdot \nabla v+\nabla n& =\nabla \phi , \\
\Delta \phi & =n.
\end{split}
\label{EP/e}
\end{equation}
Here the magnetic field vanishes $B\equiv 0$, and the ions are treated as motionless
with a constant density and only form a fixed charged background. Such a
simplified system is used for describing Langmuir waves in the two-fluid
theory. After suitable change of unknown, \eqref{EP/e} can be reformulated
as 
\begin{equation}  \label{KGIntro}
\left(\partial_{tt}-\Delta+1\right)\alpha=\mathcal{Q}(\alpha,\nabla\alpha,\nabla^2\alpha).
\end{equation}
The linearized Euler-Poisson system for irrotational flows is no longer the acoustic (wave) equation as in the pure Euler system \eqref{WaveIntro}, but the Klein-Gordon system with \textquotedblleft mass term\textquotedblright\ created by the plasma frequency due to to the electrostatic interaction. Taking advantage of the much better properties of Klein-Gordon equations (faster time decay of linear waves like $t^{-3/2}$, absence of quadratic resonances), global smooth irrotational flows were constructed in \cite{Guo} via the normal form method of Shatah \cite{Sh}:

\begin{theorem}[Stability of a neutral equilibrium \protect\cite{Guo}]
\label{GuoThm} Solutions of \eqref{EP/e} with initial data $(n^0,v^0)$
small, smooth, neutral and irrotational in the sense that 
\begin{equation*}
\int_{\mathbb{R}^3}n^0(x)dx=0,\qquad \nabla\times v^0\equiv0
\end{equation*}
remain globally smooth and decay to $0$ in $L^\infty$ as $t\to+\infty$.
\end{theorem}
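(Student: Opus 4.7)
The plan is to reduce the Euler--Poisson system (\ref{EP/e}) to a single semilinear Klein--Gordon equation for a scalar complex unknown and then apply Shatah's normal form method. First I would verify that irrotationality is propagated: taking the curl of the momentum equation and using the identity $v\cdot\nabla v = \nabla(|v|^2/2) - v\times(\nabla\times v)$, the vorticity $\omega = \nabla\times v$ satisfies $\partial_t\omega = \nabla\times(v\times\omega)$, so $\omega^0=0$ forces $\omega(t)\equiv 0$. Hence I write $v = \nabla\psi$ for a scalar potential $\psi$. The neutrality condition $\int n^0\,dx=0$ is preserved by the continuity equation, so $\phi = \Delta^{-1}n$ is well defined. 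Integrating the momentum equation produces a Bernoulli-type relation $\partial_t\psi + n + |\nabla\psi|^2/2 = \phi$, and eliminating $\psi$ in the continuity equation yields at the linear level $(\partial_{tt}-\Delta+1)n=0$, and at the nonlinear level an equation of the form (\ref{KGIntro}). Passing to $\alpha := n + i\Lambda^{-1}\partial_t n$ with $\Lambda = (1-\Delta)^{1/2}$ recasts the system as the first-order semilinear equation $\partial_t\alpha + i\Lambda\alpha = \mathcal{N}(\alpha,\bar\alpha)$ with $\mathcal{N}$ quadratic.

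Next I would run a bootstrap argument on the combined norm
\begin{equation*}
M(t) := \|\alpha(t)\|_{H^N} + (1+t)^{3/2}\|\alpha(t)\|_{W^{k,\infty}},\qquad N \gg k \gg 1,
\end{equation*}
supplemented by a weighted component controlling Fourier localization. The $H^N$ bound is propagated by a standard energy estimate for (\ref{EP/e}) after symmetrizing the quasilinear transport, the pointwise decay of $\nabla(n,v)$ absorbing lower-order terms so that Gronwall closes. The pointwise decay itself comes from the Klein--Gordon dispersive estimate $\|e^{-it\Lambda}f\|_{L^\infty}\lesssim (1+t)^{-3/2}\|f\|_{W^{4,1}}$ applied to Duhamel's formula. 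The decisive point is that a quadratic Duhamel integrand decays only like $(1+s)^{-3/2}$ under one factor, which is borderline with respect to the weighted norms and cannot by itself close the bootstrap. This is where the normal form enters: since $\Lambda(k) \geq 1$ strictly, the resonance equations $\Lambda(k)\pm\Lambda(k-\ell)\pm\Lambda(\ell)=0$ admit no real solutions and the reciprocal phase is a bounded Fourier multiplier. I can therefore define a bilinear operator $B$ and substitute $\alpha \mapsto \widetilde{\alpha} := \alpha + B(\alpha,\alpha)$, after which $\widetilde{\alpha}$ satisfies an equation with a purely cubic nonlinearity, which is integrable in time since $\int_0^\infty (1+s)^{-3}\,ds<\infty$.

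The main obstacle I expect is the derivative loss inherent in the original quadratic nonlinearity. Terms like $\mathrm{div}(n\nabla\psi)$ cost a derivative at top order; since the normal-form symbol $(\Lambda(k)\pm\Lambda(k\mp\ell)\pm\Lambda(\ell))^{-1}$ does not decay at large frequencies, the transformation $\alpha\mapsto\widetilde{\alpha}$ is not genuinely semilinear and can reintroduce a loss at the $H^N$ level. The resolution, as in Guo's argument, is to perform a careful algebraic symmetrization of $\mathcal{N}$ using the quasilinear structure of (\ref{EP/e}) and to integrate by parts against a modified energy functional so that the top-order quadratic contribution is effectively lower order. Checking that the modified energy is positive and equivalent to $\|\cdot\|_{H^N}^2$, and tracking the cubic commutators produced by the normal form without further derivative loss, is the technical heart of the proof; the concluding $L^\infty$ decay statement then follows from the $(1+t)^{-3/2}$ factor in the bootstrap norm.
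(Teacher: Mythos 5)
Note first that the paper does not supply a proof of this statement: Theorem~\ref{GuoThm} is quoted in the introduction directly from \cite{Guo} as background for the Euler--Maxwell result, so there is no ``paper's own proof'' against which to measure your proposal. What follows is therefore an assessment of your sketch as an outline of the cited argument.

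Your proposal has the correct skeleton of \cite{Guo}: the curl of the momentum equation propagates irrotationality; a Bernoulli relation eliminates the velocity potential and gives a scalar equation of the form~\eqref{KGIntro}; the complexification $\alpha = n + i(1-\Delta)^{-1/2}\partial_t n$ turns it into a first-order equation with quadratic right-hand side; the mass gap $\Lambda(\xi)=\sqrt{1+|\xi|^2}\geq 1$ removes quadratic space-time resonances and enables Shatah's normal form; and you correctly flag derivative loss as the genuine obstruction. One technical imprecision worth correcting: your assertion that ``the reciprocal phase is a bounded Fourier multiplier'' is not literally true. The phase $\Lambda(\xi)-\Lambda(\xi-\eta)-\Lambda(\eta)$ never vanishes, but its infimum along parallel high-frequency directions is $0$ (it behaves like $\min(|\xi-\eta|,|\eta|)^{-1}$ there), so $1/\Phi$ grows; together with the derivative already present in $\operatorname{div}((1+n)v)$ this makes the normal-form transformation genuinely quasilinear rather than semilinear. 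You walk this back a sentence later and propose the right repair (symmetrization and a modified energy functional comparable to the $H^N$ norm), which is indeed how \cite{Guo} closes. One small historical adjustment: the 1998 argument does not use a bootstrap norm with a ``weighted component controlling Fourier localization''---those frequency/space-localized atom spaces are a feature of the later approach of \cite{IoPa1,IoPa2} employed in the body of the present paper, not of \cite{Guo}, which instead combines weighted Sobolev estimates in physical space with the Klein--Gordon $t^{-3/2}$ decay. With those caveats, the proposal is a fair outline of the cited proof.
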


The neutral assumption was later removed in \cite{GeMaPa} and this result was extended to two spatial dimensions independently in \cite{IoPa1,LiWu} (see also \cite{Ja,JaLiZh}). Theorem \ref{GuoThm} was the first positive result indicating that the dispersive effect alone in the two-fluid theory may prevent shock formation\footnote{Another way to prevent shock formation is to introduce exponential damping of the perturbation via dissipation or relaxation (see e.g. \cite{Peng}). We will not discuss this at all in this paper.} and it started an investigation to understand to which extent the introduction of electromagnetic forces could stabilize the full Euler-Maxwell system.

\medskip

Recently, further progress was made in this direction in the study of another simplified model: the \textit{Euler-Poisson equation for the ions}\footnote{In many works (including \cite{GuPa} and \cite{GeHKRo,GuPu,LaLiSa}), the the Poisson relation in \eqref{EP/i} is replaced by
\begin{equation*}
-\Delta \phi =1+\rho -e^\phi,
\end{equation*}
but, for small perturbations, this agrees with \eqref{EP/i} up to nonlinear corrections which can be easily handled.}: 
\begin{equation}
\begin{split}
\partial _{t}\rho +\hbox{div}((1+\rho )u)& =0, \\
\partial _{t}u+u\cdot \nabla u+\nabla \rho & =-\nabla \phi , \\
-\Delta \phi  &=\rho -\phi.
\end{split}
\label{EP/i}
\end{equation}
Here the electron dynamics with constant temperature is decoupled from the
ion dynamics via the Boltzmann relation. The model equation then becomes 
\begin{equation}
\left( \partial _{tt}-\Delta +(-\Delta )(1-\Delta )^{-1}\right) \alpha
=|\nabla |\mathcal{Q}(\alpha ,\nabla \alpha )  \label{IonDispIntro}
\end{equation}
This system has intermediate behavior between \eqref{WaveIntro} and \eqref{KGIntro}. The linearized solutions decay slowly (like $t^{-4/3}$) and create many strong degeneracies near the zero frequency, where the dispersion relation is similar to the wave dispersion up to third order (see $\lambda _{i}$ in Lemma \ref{tech99}). Nevertheless, the first and third authors were able to obtain an analogue of Theorem \ref{GuoThm} for perturbations of a neutral equilibrium by using a variation on the normal form method, controlling bilinear multipliers with rough coefficients using arguments inspired by \cite{GuNaTs}. Here, a crucial property is the fact that the nonlinearity is an exact derivative, which helps compensate for the degeneracy at the $0$ frequency.

\subsubsection{Two-fluid models with different speeds}

Both systems \eqref{EP/e} and \eqref{EP/i} can be reduced (under the irrotational assumption) to a (complex) scalar quasilinear equation with one speed. This is no longer the case for more complicated two-fluid models which yield quasilinear system with different speeds. Bilinear interactions in quasilinear systems generically create resonant sets of 2D spheres in the phase space, which are very challenging to control analytically. This was first studied in \cite{Ge} for the case of semilinear systems of Klein-Gordon equations with different speeds (see also \cite{DeFaXu} for a study of a system with different masses) and led in \cite{GeMa} to the first construction of global smooth solutions for the Euler-Maxwell equation for electrons, 
\begin{equation}
\begin{split}
\partial _{t}n+\hbox{div}((1+n)v)& =0, \\
\partial _{t}v+v\cdot \nabla v+\nabla n& =-\left[ E+v\times B\right] , \\
\partial _{t}B+\nabla \times E& =0, \\
\partial _{t}E-C\nabla \times B& =(1+n)v
\end{split}
\label{EM/e}
\end{equation}
with constraints $\hbox{div}(B)=0$ and $\hbox{div}(E)=n$ :

\begin{theorem}[Stability in the Euler-Maxwell system for electrons \cite{GeMa,IoPa2}]
\label{GMThm} A solution of \eqref{EM/e} with
initial data $(n^0,v^0,E^0,B^0)$ small, smooth, compactly supported, neutral
and irrotational in the sense that 
\begin{equation*}
\begin{split}
\int_{\mathbb{R}^3}n^0(x)dx=0,\quad\int_{\mathbb{R}^3}B^0(x)dx=0,\quad
\nabla\times v^0+CB^0\equiv 0
\end{split}
\end{equation*}
remains global and smooth and decays to $0$ in $L^\infty$.
\end{theorem}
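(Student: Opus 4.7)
The plan is to exploit the generalized irrotationality constraint $\nabla\times v + CB = 0$, which is preserved by the flow, to reduce the dynamical content to two complex-valued scalar quantities. Decomposing $v = v_\| + v_\perp$ into curl-free and divergence-free parts and doing the same for $E$, the constraints $\hbox{div}(E) = n$, $\hbox{div}(B) = 0$, $\nabla\times v_\| = 0$, and $\nabla\times v_\perp = -CB$ completely tie together the remaining variables. At the linear level this produces two Klein--Gordon equations with the same plasma mass but different speeds: the acoustic/Langmuir branch with dispersion $\Lambda_s(\xi) = \sqrt{1+|\xi|^2}$ and the electromagnetic branch with dispersion $\Lambda_e(\xi) = \sqrt{1+C|\xi|^2}$. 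The nonlinearity is at worst quadratic in the unknowns and their first derivatives. This is the key structural reduction that puts \eqref{EM/e} in the framework of systems of semilinear Klein--Gordon equations with different speeds studied in the semilinear setting by Germain.

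I would then run a bootstrap on two norms. The first is a high-order Sobolev energy $\Vert U\Vert_{H^{N_0}}$ which, using the symmetric hyperbolic structure of \eqref{EM/e} together with the Klein--Gordon decay $t^{-3/2}$ of $\Vert U\Vert_{W^{k,\infty}}$ for moderate $k$, is shown to grow at most like $(1+t)^\delta$ in a standard energy inequality. The second is a dispersive $Z$-norm tailored to the profile of each branch and designed to propagate the sharp $t^{-3/2}$ pointwise decay. Working with profiles $f_\sigma(t) := e^{it\Lambda_\sigma} U_\sigma(t)$, $\sigma\in\{s,e\}$, Duhamel's formula expresses $f_\sigma(t)$ as a sum of bilinear oscillatory integrals with phases
\[
\Phi_{\sigma\mu\nu}(\xi,\eta) = \Lambda_\sigma(\xi) - \iota_1\Lambda_\mu(\xi-\eta) - \iota_2\Lambda_\nu(\eta), \qquad \iota_j \in \{\pm\},
\]
and all $Z$-norm estimates reduce to multiplier bounds on these integrals.

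The genuine difficulty, which is absent from the single-speed Theorem \ref{GuoThm}, is the analysis of the space-time resonant set $\{\Phi = 0,\ \nabla_\eta \Phi = 0\}$. For interacting Klein--Gordon waves of different speeds this set generically consists of nondegenerate two-dimensional spheres in frequency space. Away from these spheres, a normal form transformation in the spirit of Shatah divides out by $\Phi$ to eliminate the quadratic nonlinearity, producing cubic and higher contributions that are controlled by the bootstrap assumptions together with duality and standard bilinear multiplier estimates. Near the space-time resonances, neither integration by parts in time nor in frequency is directly available; instead one dyadically localizes in a thin tube around the resonant sphere and exploits $L^2$-smallness from the thin support combined with the weighted-space information encoded in the $Z$-norm. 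At low frequencies the neutrality hypothesis $\int n^0 = 0$ and the compact support assumption provide enough smoothness of the initial profile $\widehat{f_\sigma(0)}$ to absorb the most dangerous resonant contribution. The main obstacle, and the step requiring the most delicate work, is precisely this localized bilinear analysis on the resonant spheres — verifying nondegeneracy of $\Phi$ in the normal directions to each sphere and extracting the small parameter that makes the normal form calculation close — since it is this nondegeneracy, together with the Klein--Gordon mass, that separates the electron Euler--Maxwell case from the more subtle two-fluid case of the main theorem.
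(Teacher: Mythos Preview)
This theorem is not proved in the present paper: it is quoted as a background result from \cite{GeMa,IoPa2}, and the paper only sketches its context via the model system \eqref{KGSys} before moving on to the genuinely new two-fluid result. So there is no ``paper's own proof'' to compare against here.

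That said, your outline is a faithful summary of the strategy in the cited works and of the method the present paper extends. A few small corrections: the reduced system is \emph{quasilinear}, not semilinear --- the quadratic nonlinearity involves second derivatives of the unknowns (see \eqref{KGSys}), which is why the high-order energy estimate and the loss-of-derivative issue are essential rather than optional. Also, the resolution near the space-time resonant spheres in \cite{IoPa2} is not purely by ``$L^2$-smallness from thin support'' but relies on a carefully designed $Z$-norm built from a sum of two atomic pieces $B^1_{k,j}+B^2_{k,j}$ (the analogue of Definition~\ref{MainDef} here), where the $B^2$ component is specifically engineered to absorb the output localized on the resonant sphere; this is the refinement that removed the generic-parameter assumption of \cite{GeMa}. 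Your sketch captures the architecture correctly but undersells the role of this norm design.
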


This was first shown in \cite{GeMa} under additional generic conditions on the parameters. Later in \cite{IoPa2}, the generic condition was removed and a stronger (integrable) decay was obtained, providing a robust approach even in the quasilinear case.
The model system is\begin{equation}
\begin{split}
\left( \partial _{tt}-\Delta +1\right) \alpha & =\mathcal{Q}_{1}(\alpha
,\beta ,\nabla \alpha ,\nabla \beta ,\nabla ^{2}\alpha ,\nabla ^{2}\beta ) \\
\left( \partial _{tt}-C\Delta +1\right) \beta & =\mathcal{Q}_{2}(\alpha
,\beta ,\nabla \alpha ,\nabla \beta ,\nabla ^{2}\alpha ,\nabla ^{2}\beta ).
\end{split}
\label{KGSys}
\end{equation}
It is important to note that the speed of the electron fluids is
different from the speed of the magnetic field, so that new analytical tools
are needed to estimate the 2D resonant sphere in the phase space.
The main result of \cite{IoPa2} is the natural analogue of Theorem \ref{GuoThm} and it is the foundation of the approach we use in this work. Note that in this case, we also need to introduce a decay condition on the initial data in order to be able to perform a more refined analysis of the solutions.

\subsection{Description of the Method}

We use a combination of dispersive analysis and energy estimate, relying heavily on the Fourier transform (see  \cite{Ch,GeMaSh,GeMaSh2,GuNaTs,Kl,Kl2,Sh} for previous seminal works). To overcome the quasilinear nature of the nonlinearity and ensure global existence, we use classical high order energy estimates to make up for the loss of derivatives in the nonlinearity. Global existence follows if a certain norm of lower regularity remains bounded and decays faster than 
$1/t.$

Such a crucial decay property is established by a semilinear analysis of systems of dispersive equations. Expecting some form of scattering, we express the solution as a free evolution\footnote{I.e. a solution to the linearized equation.} from a profile which varies more slowly in time. After suitable algebraic manipulations, and appropriate use of the Fourier transform, we need to study bilinear operators $T$ of
the form 
\begin{equation}
\widehat{T[f,g]}(\xi )=\int_{\mathbb{R}}\int_{\mathbb{R}^{3}}e^{it\Phi (\xi
,\eta )}m(\xi ,\eta )\widehat{f}(\xi -\eta ,t)\widehat{g}(\eta ,t)d\eta dt.
\label{OPT}
\end{equation}
with a phase $\Phi$ which is specific to each interaction and which is of the form
\begin{equation}\label{ModelPhase}
\Phi(\xi,\eta)=\Lambda_0(\xi)\pm\Lambda_{1}(\xi-\eta)\pm\Lambda_{2}(\eta),\quad\Lambda_j\in\{\Lambda_i,\Lambda_e,\Lambda_b\},
\end{equation}
where the functions $\Lambda_j$ are defined from the linearized system and given in \eqref{operators1}. As a first approximation, one may think of $f$, $g$ as being smooth bump functions and $m$ being essentially a smooth cut-off, and the main challenge is to estimate efficiently the infinite time integral. It then becomes clear that a key role is played by the properties of the function $\Phi $ and in particular by the points where it is stationary, 
\begin{equation}
\nabla _{(t,\eta )}[t\Phi (\xi ,\eta )]=0,\quad \hbox{i.e.}\quad \Phi (\xi
,\eta )=0\hbox{ and }\nabla _{\eta }\Phi (\xi ,\eta )=0.  \label{STRes}
\end{equation}
The collection of such points form the \textit{space-time resonant set}. This was already highlighted in \cite{GeMaSh} and forms the basis of the space-time resonance method. In some situations, one has no or few fully stationary points and the task is mainly to propagate enough smoothness of $\hat{f}$, $\hat{g}$ to exploit (non)-stationary phase arguments.

However, as in the case of the Euler-Maxwell equation for electrons \eqref{EM/e}, the space-time resonance set can be a 2D sphere. In addition, the linearized system for the full two fluid model is now coupled, which makes the derivation of the dispersive system in Section \ref{Algebraic} more involved and requires a careful study of the dispersion relations $\Lambda_j$ appearing in \eqref{ModelPhase} (see Lemma \ref{tech99}). Moreover, the appearance of an ``ion-like'' dispersion relation $\Lambda _{i}$, similar to that in \eqref{EP/i},  leads to additional mathematical difficulties of slower time decay of linear solutions, rough bilinear multipliers and rough phase around the zero frequency.

To overcome these new difficulties, we employ and extend the method developed in \cite{IoPa1,IoPa2}. We seek an appropriate space $B$ satisfying two requirements: first, the bilinear operator $T$ in \eqref{OPT} needs to be bounded 
\begin{equation}
T:B\times B\rightarrow B,  \label{TBdd}
\end{equation}
second, the free flow of the linearized Euler-Mawell system \eqref{NewSys} with initial data bounded in the space $B$ should belong to a space like $L_{t}^{1}L_{x}^{\infty }$, which has sufficiently strong time decay to close the energy estimate.

This strategy, initiated in the previous works \cite{IoPa1,IoPa2} shares similarities with {\it the space-time resonance method} as developed in \cite{Ge, GeMa,GeMaSh,GeMaSh2,GeMaSh3,GeMaSh4} but with new types of function space localized in both space and frequency, which are naturally compatible with the introduction of fractional powers of the weights (like $x^{-1-\varepsilon}L^2$ for the $B^1$-norm below), and with new bilinear estimates. We find this approach more precise and flexible analytically, which is crucial to analyze the complicated phase function \eqref{ModelPhase} arising in the Euler-Maxwell system. Together with orthogonality arguments and localized decay estimates, this allows to overcome the central difficulty of controlling delicate space-time resonant points in \eqref{STRes}.

We also mention the works in \cite{DeFa,Sch} which consider global existence in dispersive equations or systems with nonlinearity with small power without assuming any weights on the initial data (see also \cite{SGuo} and the references therein). It appears, however that such approach would be very difficult to carry on in the context of a system like \eqref{EuMa1} due to the loss of derivative in the nonlinearity. In addition, even in a purely semilinear setting, the analysis of the most delicate space-time resonances seems presently out of reach when the functions have too rough Fourier transforms.

\subsubsection{Choice of the norms} In order to define such a space $B$, we measure localization both in space and in frequency. We quantify all these ``coordinates'' all the way to the uncertainty principle and decompose an arbitrary function as a sum of ``atoms'':
\begin{equation*}
f=\sum_{X\cdot N\geq 1}Q_{X}P_{N}f,\quad (Q_{X}f)(x)\simeq \mathfrak{1}_{X\leq |x|\leq 2X}(x)f(x),\quad (\widehat{P_{N}f})(\xi )\simeq \mathfrak{1}_{N\leq |\xi |\leq 2N}(\xi )\hat{f}(\xi ).
\end{equation*}
We can then define the norms for the space $B$ on each atom. The simplest norm giving the appropriate decay would be a weighted space $x^{-1-\varepsilon}L^{2}$ and this is the main motivation for our ``strong'' norm $B^{1}$. Unfortunately, some interactions seem to produce outputs which are not bounded in this norm around a 2D resonant sphere. To account for this, we also introduce another kind of atoms, the ``weak'' atoms, bounded only in $B^{2}$ which barely fail to be in $x^{-1}L^{2}$, but are essentially concentrated on the $2D$-resonant spheres. Finally, each atom is allowed to be a combination of the two above types: 
\begin{equation*}
\Vert f\Vert =\sup_{X\cdot N\geq 1}\Vert Q_{X}P_{N}f\Vert _{B_{X,N}},\qquad
\Vert g\Vert _{B_{X,N}}=\Vert g\Vert_{B_{X,N}^{1}+B_{X,N}^{2}}=\inf_{g=g_{1}+g_{2}}\{\Vert g_{1}\Vert_{B_{X,N}^{1}}+\Vert g_{2}\Vert _{B_{X,N}^{2}}\}.
\end{equation*}
We refer to Definition \ref{MainDef} for the precise definition of the $Z$ norm that we use and to Lemma \ref{tech1.5} in Appendix \ref{lin} for the proof that these norms yield the desired integrability upon application of the linear flow.

\subsubsection{Analysis of the bilinear operators}

Once the form of the norm has been assessed, the main difficulty is to understand the bilinear interactions in \eqref{OPT} and to fine-tune the
norms to ensure that they are appropriately bounded as in \eqref{TBdd}. After quantifying all the information, one needs to treat a huge sum of elementary interactions (even for a single atom as an output). However, appropriate use of energy-estimate, simple orthogonality arguments and finite speed of propagation quickly limit the cases to only a few possibilities, see Proposition \ref{reduced2} and Lemma \ref{BigBound3}. Then non-stationary phase analysis allows to focus on the \textit{space-time resonant sets} as in \eqref{STRes}. This is where the bulk of the work is done, following a previous work \cite{IoPa2} where such an analysis was performed on the simplified Euler-Maxwell equation for the electrons.

After a careful analysis of the interactions done in Appendix \ref{resonantsets}, we
isolate three different problematic space-time resonant sets $\mathcal{S}$.

\begin{itemize}
\item Case A: we have the case of a ``classical'' $2D$ sphere 
\begin{equation*}
\mathcal{S}_A=\{(\xi,\eta)=(R\omega,r\omega),\,\,\omega\in\mathbb{S}^2\},\quad R\ne 0,\,\, r\ne 0
\end{equation*}
which already appears in the analysis of \eqref{KGSys} and which is responsible for the introduction of the ``weak'' atoms. Fortunately, here the phase is nondegenerate and we can perform an efficient stationary phase analysis and use additional refined orthogonality arguments as in \cite{IoPa2}.

\item Case B: we have a first degenerate sphere 
\begin{equation*}
\mathcal{S}_B=\{(\xi,\eta)=(R^\prime\omega,0),\,\,\omega\in\mathbb{S}^2\},\quad R^\prime\ne 0
\end{equation*}
where in addition, the phase is not smooth in $\eta$.

In this case, we use the fact that the speed of propagation of the singular perturbation is slower than expected, the fact that the phase is weakly elliptic and a careful adaptation of the refined orthogonality analysis of Case A, keeping track of how the bound deteriorate as $\eta\to 0$.

\item Case C: the presence of an ``ion-like'' dispersion relation brings in
a strong degenerate set at $0$ 
\begin{equation*}
\mathcal{S}_D=\{(\xi,\eta)=(0,r^\prime\omega),\,\,\omega\in\mathbb{S}
^2\},\quad r^\prime\ne 0\hbox{ or }r^\prime=0.
\end{equation*}
Here the problem comes from the strong degeneracy of the phase. Similar problems already appeared for the Euler-Poisson equation for the ions \eqref{EP/i}, but for \eqref{EuMa1} we need more refined multiplier estimate and orthogonality arguments, combined with additional finite speed of propagation estimates and use of the null-form structure coming from the presence of a derivative in front of the nonlinearity as in \eqref{IonDispIntro} in order to overcome the loss coming from the roughness of the multiplier after application of a normal form transformation.
\end{itemize}

Our approach seems flexible and robust and we illustrate this by extending
the main results to other problems of interest with a similar structure in
Appendix \ref{AEP}, Most notably variants of \eqref{EuMa1} which enjoy natural (Galilean or Lorentz) symmetry.

\subsubsection{Organization of the paper}

In Section \ref{local}, we obtain a classical local well-posedness result in the energy space. In Section \ref{Algebraic}, we reduce the Euler-Maxwell system \eqref{NewSys} into a quasilinear dispersive system and identify the linearized system, together with the main structure of the nonlinearity. In Section \ref{MainSec2}, we introduce the function space $Z$ (see \ref{sec5.4}) and prove the main Theorem \ref{MainThm} assuming boundedness of the relevant bilinear integral operators as in \eqref{OPT}-\eqref{TBdd}. In Section \ref{normproof}, we study the case of nonresonant interactions for localized atoms. Sections \ref{normproof2}-\ref{normproof3}-\ref{normproof4} are then devoted to the study of the resonant
interactions.

In Section \ref{normproof2}, we study Case A resonant interactions. We first make use of an efficient parametrization $p^{\sigma ;\mu ,\nu }$ in \eqref{DefOfP0}, \eqref{DefP1}-\eqref{DefP3}, then control precisely the output of interactions of ``atoms'' by carefully designed $B_{k,j}^{2}$ norm defined in \eqref{sec5.4} as well as additional $L^{2}$ orthogonality argument in the spirit of \cite{IoPa2}.

In Section \ref{normproof3}, we study Case B resonant interactions. We make use of a precise analytic characterization of Case B (Lemma \ref{desc2}), decay estimates Lemma A.5, as well as an orthogonality argument to control the $ L^{2}$ norm to complete the analysis.

Section \ref{normproof4} is devoted to the study of Case C. We take advantage of the geometry of angles between $\eta ,\xi $ and $\xi -\eta $ to obtain extra regularity to overcome the singularity near zero frequency.

Finally, in Appendix \ref{lin}, we isolate relevant information on the structure of the dispersion relations $\lambda_i$, $\lambda_e$ and $\lambda_b$ and provide various stationary-phase estimates that are needed throughout the proof; in Appendix \ref{resonantsets}, we classify the quadratic resonances that may appear; in Appendix \ref{multiex}, we provide, for the convenience of the reader the precise form of the $61$ multipliers that appear in the quadratic interactions, and in Appendix \ref{AEP}, we extend the results to cover various other systems with a similar structure.

{\bf Acknowledgments:} The third author expresses his thanks to B. Texier and A. Cerfon for interesting discussions and helpful references.

\section{Energy estimates and the local existence theory}\label{local}

The local existence theory for \eqref{NewSys} is based on energy estimates. These in turn are obtained from the physical energy. The (local) energy identity reads
\begin{equation*}
\begin{split}
&\partial_t\mathfrak{e}+\hbox{div}\left[\mathfrak{J}_e+\mathfrak{J}_i+\mathfrak{J}_b\right]=0,\\
&\mathfrak{e}:=T\frac{n^2}{2}+\varepsilon(n+1)\frac{\vert v\vert^2}{2}+\frac{\rho^2}{2}+(\rho+1)\frac{\vert u\vert^2}{2}+\frac{\vert \tilde{E}\vert^2}{2}+\frac{C_b}{\varepsilon}\frac{\vert \tilde{B}\vert^2}{2},\\
&\mathfrak{J}_e:=\left\{Tn+\varepsilon\frac{\vert v\vert^2}{2}\right\}(n+1)v,\quad \mathfrak{J}_i:=\left\{\rho+\frac{\vert u\vert^2}{2}\right\}(\rho+1)u,\quad \mathfrak{J}_b:=\frac{C_b}{\varepsilon}\tilde{E}\times \tilde{B}.
\end{split}
\end{equation*}
From this, we obtain our higher order energies.
For any $(n,v,\rho,u,\tilde{E},\tilde{B})\in\widetilde{H}^N$ we define
\begin{equation}\label{Addpla0}
\mathcal{E}_N:=\sum_{|\gamma|\leq N}\int_{\mathbb{R}^3}
\big[T|D_x^\gamma n|^2+\varepsilon(1+n)|D_x^\gamma v|^2
+\vert D_x^\gamma\rho\vert^2+(\rho+1)\vert D_x^\gamma u\vert^2+|D_x^\gamma \tilde{E}|^2+\frac{C_b}{\varepsilon} |D_x^\gamma \tilde{B}|^2\big]\,dx.
\end{equation}
The following proposition is our local regularity result:

\begin{proposition}\label{Localexistence}
(i) There is $\delta_1\in(0,1]$ such that if 
\begin{equation}\label{Addpla1}
\|(n^0,v^0,\rho^0,u^0,\tilde{E}^0,\tilde{B}^0)\|_{\widetilde{H}^4}\leq\delta_1
\end{equation}
then there is a unique solution $(n,v,\rho,u,\tilde{E},\tilde{B})\in C([0,1]:\widetilde{H}^4)$ of the system
\eqref{NewSys}
with
\begin{equation*}
 (n(0),v(0),\rho(0),u(0),\tilde{E}(0),\tilde{B}(0))=(n^0,v^0,\rho^0,u^0,\tilde{E}^0,\tilde{B}^0).
\end{equation*}
Moreover,
\begin{equation*}
\sup_{t\in[0,1]}\|(n(t),v(t),\rho(t),u(t),\tilde{E}(t),\tilde{B}(t))\|_{\widetilde{H}^4}\lesssim \|(n^0,v^0,\rho^0,u^0,\tilde{E}^0,\tilde{B}^0)\|_{\widetilde{H}^4}.
\end{equation*}

(ii) If $N\geq 4$ and $(n^0,v^0,\rho^0,u^0,\tilde{E}^0,\tilde{B}^0)\in\widetilde{H}^N$ satisfies \eqref{Addpla1} then $(n,v,\rho,u,\tilde{E},\tilde{B})\in C([0,1]:\widetilde{H}^N)$, and
\begin{equation}\label{Addpla2}
\mathcal{E}_N(t')-\mathcal{E}_N(t)\lesssim \int_{t}^{t'}A(s)\mathcal{E}_N(s)\,ds.
\end{equation}
for any $t\leq t'\in[0,1]$, where
\begin{equation}\label{Addpla0.5}
\begin{split}
A(s):&=\|\nabla n(s)\|_{L^\infty}+\|v(s)\|_{L^\infty}+\|\nabla v(s)\|_{L^\infty}+\Vert \nabla\rho(s)\Vert_{L^\infty}+\Vert u(s)\Vert_{L^\infty}+\Vert\nabla u(s)\Vert_{L^\infty}\\
&+\|\nabla \tilde{E}(s)\|_{L^\infty}+\|\tilde{B}(s)\|_{L^\infty}+\|\nabla \tilde{B}(s)\|_{L^\infty}.\\
\end{split}
\end{equation}

(iii) If $(n^0,v^0,\rho^0,u^0,\tilde{E}^0,\tilde{B}^0)\in\widetilde{H}^4$ satisfies \eqref{Addpla1}, and, in addition,
\begin{equation*}
\hbox{div}(\tilde{E}^0)+n^0-\rho^0=0,\qquad \tilde{B}^0=\varepsilon \nabla\times v^0=-\nabla \times u^0,
\end{equation*}
then, for any $t\in[0,1]$,
\begin{equation}\label{Addpla5}
\hbox{div}(\tilde{E})(t)+n(t)-\rho(t)=0,\qquad \tilde{B}(t)=\varepsilon \nabla\times v(t)=-\nabla \times u(t).
\end{equation}
\end{proposition}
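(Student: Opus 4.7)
The plan is to handle the three parts in turn, viewing \eqref{NewSys} as a quasilinear symmetric hyperbolic system with the physical energy $\mathfrak{e}$ playing the role of a Friedrichs symmetrizer.

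\textbf{Part (i).} Write $U:=(n,v,\rho,u,\tilde E,\tilde B)$ and recast \eqref{NewSys} in the form $A_0(U)\partial_tU+\sum_{j=1}^3A_j(U)\partial_jU=F(U)$, where the weights $T,\varepsilon(1+n),1,(1+\rho),1,C_b/\varepsilon$ (read off from $\mathfrak{e}$) make $A_0$ symmetric and positive-definite whenever $|n|,|\rho|\ll1$, and each $A_j$ symmetric. The Lorentz forces $\pm v\times\tilde B$, the electric coupling $\pm\tilde E$, and the Amp\`ere source $(n+1)v-(\rho+1)u$ are of order zero and absorbed into $F(U)$. The smallness in \eqref{Addpla1} keeps $(1+n),(1+\rho)$ away from $0$, and $H^4(\mathbb{R}^3)\hookrightarrow C^1$ controls the coefficients. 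Local existence in $\widetilde H^4$ on $[0,1]$, together with the uniform bound, then follows from the standard Kato--Majda theorem for symmetric hyperbolic systems via a Gr\"onwall argument.

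\textbf{Part (ii).} For each $|\gamma|\le N$, apply $D_x^\gamma$ to each equation of \eqref{NewSys} and pair with the correspondingly weighted derivative of the solution. In the $(n,v)$ block, differentiating the weight $\varepsilon(1+n)$ in time yields $\varepsilon\int\partial_t n\,|D_x^\gamma v|^2=-\varepsilon\int\hbox{div}((1+n)v)\,|D_x^\gamma v|^2$, which cancels the transport contribution $-\int\varepsilon(1+n)\,v\cdot\nabla|D_x^\gamma v|^2$ from $v\cdot\nabla v$; the cross pairing between $T\nabla n$ and $\hbox{div}((1+n)v)$ cancels at top order after one integration by parts, leaving the commutator $T\int\nabla n\cdot D_x^\gamma v\,D_x^\gamma n$, bounded by $A(s)\mathcal{E}_N(s)$. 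The $(\rho,u)$ block is treated identically. In the Maxwell block, the rotational pairing is skew-adjoint, so $(C_b/\varepsilon)\int(D_x^\gamma\tilde E\cdot\nabla\times D_x^\gamma\tilde B-D_x^\gamma\tilde B\cdot\nabla\times D_x^\gamma\tilde E)=0$; the Amp\`ere source cancels at top order with the $\tilde E$ terms in the $v,u$ equations, again leaving only commutators. All remaining terms, including $D_x^\gamma(v\times\tilde B)$, are controlled by the Moser-type bound $\|[D_x^\gamma,f]g\|_{L^2}\lesssim\|\nabla f\|_{L^\infty}\|g\|_{H^{|\gamma|-1}}+\|f\|_{H^{|\gamma|}}\|g\|_{L^\infty}$, each factor entering $A(s)$ by construction. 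Summing over $|\gamma|\le N$ and integrating in time produces \eqref{Addpla2}.

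\textbf{Part (iii).} Taking the divergence of $\partial_t\tilde B+\nabla\times\tilde E=0$ gives $\partial_t(\hbox{div}\,\tilde B)=0$, and $\hbox{div}\,\tilde B^0=\varepsilon\,\hbox{div}(\nabla\times v^0)=0$, so $\hbox{div}\,\tilde B\equiv0$ on $[0,1]$. Combining $\partial_t n=-\hbox{div}((n+1)v)$, $\partial_t\rho=-\hbox{div}((\rho+1)u)$ with the Amp\`ere equation yields
\begin{equation*}
\partial_t\bigl[\hbox{div}(\tilde E)+n-\rho\bigr]=\hbox{div}\bigl[(n+1)v-(\rho+1)u\bigr]-\hbox{div}\bigl[(n+1)v\bigr]+\hbox{div}\bigl[(\rho+1)u\bigr]=0,
\end{equation*}
so Gauss's law is preserved. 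For the generalized irrotationality, set $W_1:=\tilde B-\varepsilon\nabla\times v$ and $W_2:=\tilde B+\nabla\times u$. The identities stated before \eqref{girrotational} give $\partial_tW_j=\nabla\times(w_j\times W_j)$ with $w_1=v$, $w_2=u$; using $\hbox{div}\,W_j=\hbox{div}\,\tilde B=0$ they expand to the linear transport equations $\partial_tW_j+(w_j\cdot\nabla)W_j=-(\hbox{div}\,w_j)W_j+(W_j\cdot\nabla)w_j$. A standard $L^2$ energy estimate, using $\|\nabla v\|_{L^\infty}+\|\nabla u\|_{L^\infty}<\infty$, yields uniqueness, and since $W_j(0)=0$ we conclude $W_j\equiv0$ on $[0,1]$.

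\textbf{Main obstacle.} The heart of the argument lies in part (ii): one must simultaneously track the cancellations in all three coupling blocks and check that every uncontrolled remainder factors through a quantity already present in $A(s)$. The Friedrichs symmetrization dictated by $\mathfrak{e}$ is precisely what makes this bookkeeping possible, and the ``extra'' terms from the Lorentz force and from the time derivative of the weights $(1+n)$, $(1+\rho)$ are benign because they combine with transport terms to produce clean cancellations.
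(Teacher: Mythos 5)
Your proposal is correct and follows essentially the same route as the paper: symmetrize \eqref{NewSys} using the weights read off from the physical energy $\mathfrak{e}$, invoke the Kato theory for symmetric hyperbolic systems for parts (i)--(ii), track the cancellations arising from the symmetrization plus the Moser/commutator estimate for the energy inequality, and propagate the constraints in part (iii) by computing $\partial_t$ of $\hbox{div}(\tilde E)+n-\rho$, $\tilde B-\varepsilon\nabla\times v$, $\tilde B+\nabla\times u$ and closing with an $L^2$ estimate. One small point: the term $\varepsilon\int\partial_t n\,|D^\gamma v|^2$ need not cancel against the transport contribution — the paper simply bounds it directly by $A(s)\mathcal{E}_N(s)$ since $\partial_t n=-\hbox{div}((1+n)v)$ is controlled by $A$; your described cancellation is harmless but not quite exact, and it isn't needed.
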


\begin{proof}[Proof of Proposition \ref{Localexistence}] We multiply each equation by a suitable factor and rewrite the system \eqref{NewSys} as a symmetric hyperbolic system,
\begin{equation*}
\begin{split}
&T\partial_tn+T\sum_{k=1}^3v_k\partial_kn+T(1+n)\sum_{k=1}^3\partial_kv_k=0,\\
&\varepsilon(1+n)\partial_tv_j+T(1+n)\partial_jn+\varepsilon(1+n)\sum_{k=1}^3v_k\partial_kv_j=-(1+n)\tilde{E}_j-(1+n)\sum_{k,m=1}^3\in_{jmk}v_m\tilde{B}_k,\\
&\partial_t\rho+\sum_{k=1}^3u_k\partial_k\rho+(1+\rho)\sum_{k=1}^3\partial_ku_k=0,\\
&(1+\rho)\partial_tu_j+(1+\rho)\partial_j\rho+(1+\rho)\sum_{k=1}^3u_k\partial_ku_j=(1+\rho)\tilde{E}_j+(1+\rho)\sum_{k,m=1}^3\in_{jmk}u_m\tilde{B}_k,\\
&\frac{C_b}{\varepsilon}\partial_t\tilde{B}_j+\frac{C_b}{\varepsilon}\sum_{k,m=1}^3\in_{jmk}\partial_m\tilde{E}_k=0,\\
&\partial_t\tilde{E}_j-\frac{C_b}{\varepsilon}\sum_{k,m=1}^3\in_{jmk}\partial_m\tilde{B}_k=(1+n)v_j-(1+\rho)u_j.
\end{split}
\end{equation*}
Then we apply Theorem II and Theorem III in \cite{Ka} to prove the local existence claim in part (i) and the propagation of regularity claim in part (ii). 

To verify the energy inequality \eqref{Addpla2} we let, for $P=D^\gamma_x$, $|\gamma|\leq N$,
\begin{equation*}
\mathcal{E}^\prime_P:=\int_{\mathbb{R}^3}\big[T|P n|^2+\varepsilon(1+n)|P v|^2+\vert P\rho\vert^2+(1+\rho)\vert Pu\vert^2+|P \tilde{E}|^2+\frac{C_b}{\varepsilon}|P \tilde{B}|^2\big]\,dx,
\end{equation*}
Then we calculate
\begin{equation*}
\begin{split}
&\frac{d}{dt}\mathcal{E}'_P=I_P+II_P+III_P+I_P^\prime+II_P^\prime+III_P^\prime+IV_P,\\
\end{split}
\end{equation*}
where
\begin{equation*}
\begin{split}
&I_P:=\int_{\mathbb{R}^3}2TP n\cdot P\partial_tn\,dx,\\
&II_P:=\sum_{j=1}^3\varepsilon\int_{\mathbb{R}^3}\partial_tn\cdot Pv_j\cdot Pv_j\,dx,\\
&III_P:=\sum_{j=1}^3\varepsilon\int_{\mathbb{R}^3}2(1+n)\cdot Pv_j\cdot P\partial_tv_j\,dx,\\
&I_P^\prime:=\int_{\mathbb{R}^3}2P \rho\cdot P\partial_t\rho\,dx,\\
&II_P^\prime:=\sum_{j=1}^3\int_{\mathbb{R}^3}\partial_t\rho\cdot Pu_j\cdot Pu_j\,dx,\\
&III_P^\prime:=\sum_{j=1}^3\int_{\mathbb{R}^3}2(1+\rho)\cdot Pu_j\cdot P\partial_tu_j\,dx,\\
&IV_P:=\sum_{j=1}^3\int_{\mathbb{R}^3}2P \tilde{E}_j\cdot P\partial_t\tilde{E}_j\,dx+\sum_{j=1}^3\int_{\mathbb{R}^3}2\frac{C_b}{\varepsilon}P\tilde{B}_j\cdot P\partial_t\tilde{B}_j\,dx.
\end{split}
\end{equation*}

We use the general bound 
\begin{equation}\label{sobo}
\|D^\rho_xf\cdot D^{\rho'}_xg\|_{L^2}\lesssim \|\nabla_x f\|_{L^\infty}\|g\|_{H^M}+\|\nabla_x g\|_{L^\infty}\|f\|_{H^M},
\end{equation}
provided that $|\rho|+|\rho'|\leq M+1$, $M\geq 1$, and $|\rho|,|\rho'|\geq 1$. Using also the equations we estimate
\begin{equation*}
\begin{split}
&\Big|I_P+\sum_{k=1}^3\int_{\mathbb{R}^3}2TPn\cdot (1+n)\cdot P\partial_kv_k\,dx\Big|\lesssim A(t)\|(n,v,\rho,u,\tilde{E},\tilde{B})\|_{\widetilde{H}^N}^2,\\
&\Big|II_P\Big|\lesssim A(t)\|(n,v,\rho,u,\tilde{E},\tilde{B})\|_{\widetilde{H}^N}^2,\\
&\Big|III_P+\sum_{j=1}^3\int_{\mathbb{R}^3}\big[2TP\partial_jn\cdot (1+n)\cdot Pv_j+2P\tilde{E}_j\cdot Pv_j\cdot (1+n)\big]\,dx\Big|\lesssim A(t)\|(n,v,\rho,u,\tilde{E},\tilde{B})\|_{\widetilde{H}^N}^2,
\end{split}
\end{equation*}
and similarly,
\begin{equation*}
\begin{split}
&\Big|I_P^\prime+\sum_{k=1}^3\int_{\mathbb{R}^3}2P\rho\cdot (1+\rho)\cdot P\partial_ku_k\,dx\Big|\lesssim A(t)\|(n,v,\rho,u,\tilde{E},\tilde{B})\|_{\widetilde{H}^N}^2,\\
&\Big|II_P^\prime\Big|\lesssim A(t)\|(n,v,\rho,u,\tilde{E},\tilde{B})\|_{\widetilde{H}^N}^2,\\
&\Big|III_P^\prime+\sum_{j=1}^3\int_{\mathbb{R}^3}\big[2P\partial_j\rho\cdot (1+\rho)\cdot Pu_j-2P\tilde{E}_j\cdot Pu_j\cdot (1+\rho)\big]\,dx\Big|
\lesssim A(t)\|(n,v,\rho,u,\tilde{E},\tilde{B})\|_{\widetilde{H}^N}^2.\\
\end{split}
\end{equation*}
In addition,
\begin{equation*}
\Big|IV_P-\sum_{j=1}^3\int_{\mathbb{R}^3}2P\tilde{E}_j\cdot \left[Pv_j\cdot (1+n)-Pu_j\cdot (1+\rho)\right]\,dx\Big|\lesssim A(t)\|(n,v,\rho,u,\tilde{E},\tilde{B})\|_{\widetilde{H}^N}^2.
\end{equation*}
Therefore
\begin{equation*}
\Big|\frac{d}{dt}\mathcal{E}'_P\Big|\lesssim A(t)\|(n,v,\rho,u,\tilde{E},\tilde{B})\|_{\widetilde{H}^N}^2,
\end{equation*}
and the bound \eqref{Addpla2} follows since $\mathcal{E}_N=\sum_{P=D^\gamma_x,\,|\gamma|\leq N}\mathcal{E}'_P\approx \|(n,v,\rho,u,\tilde{E},\tilde{B})\|_{\widetilde{H}^N}^2$.

Finally, to verify that the identities \eqref{Addpla5} are propagated by the flow, we let
\begin{equation*}
X:=n-\rho+\hbox{div}(\tilde{E}),\qquad Y:=\tilde{B}-\varepsilon\nabla\times v,\qquad Z:=\tilde{B}+\nabla\times u.
\end{equation*}
Using the equations in \eqref{NewSys} we calculate
\begin{equation*}
\partial_tX=\partial_tn-\partial_t\rho+\sum_{j=1}^3\partial_j\partial_t\tilde{E}_j=-\sum_{j=1}^3\partial_j[(1+n)v_j-(1+\rho)u_j]+\sum_{j=1}^3\partial_j[(1+n)v_j-(1+\rho)u_j]=0,
\end{equation*}
therefore $X\equiv 0$. Moreover
\begin{equation*}
\partial_t\big(\sum_{k=1}^3\partial_k\tilde{B}_k\big)=0,
\end{equation*}
therefore
\begin{equation*}
\sum_{k=1}^3\partial_k\tilde{B}_k\equiv 0,\qquad \sum_{k=1}^3\partial_kY_k\equiv 0,\qquad \sum_{k=1}^3\partial_kZ_k\equiv 0.
\end{equation*}
Finally we notice that
\begin{equation*}
\partial_tY=\nabla\times(v\times Y),\qquad \partial_tZ=\nabla\times(u\times Z).
\end{equation*}
Using energy estimates it follows easily that $Y\equiv 0, Z\equiv 0$, as desired.
\end{proof}

\section{Derivation of the main dispersive system}\label{Algebraic}

The main part of this paper is devoted to obtain global time integrability of the function $A$ defined in \eqref{Addpla0.5}, so as to be able to propagate energy control 
using \eqref{Addpla2}. In order to do this, one needs to turn the system \eqref{NewSys}--\eqref{girrotational} into a quasilinear system of dispersive equations. This is the purpose of this section. 
The main results are summarized in Proposition \ref{algebra}.

For $\xi\in\mathbb{R}^3$ and $\al=1,2,3$ we define
\begin{equation}\label{pla1}
\begin{split}
&|\nabla|(\xi):=|\xi|,\qquad R_\alpha(\xi):=i\xi_\al/|\xi|,\qquad Q_{\alpha\beta}(\xi):=i\in_{\alpha\gamma\beta}\xi_\gamma/\vert\xi\vert,\\
&H_1(\xi):=\sqrt{1+|\xi|^2},\qquad H_\varepsilon(\xi):=\varepsilon^{-1/2}\sqrt{1+T|\xi|^2},\qquad \Lambda_b(\xi):=\varepsilon^{-1/2}\sqrt{1+\varepsilon+C_b\vert\xi\vert^2}.
\end{split}
\end{equation}
By a slight abuse of notation we also let $|\nabla|,R_\alpha,Q,H_1,H_\varepsilon,\Lambda_b$ denote the operators on $\mathbb{R}^3$ defined by the 
corresponding Fourier multipliers. Notice that
\begin{equation*}
Q^3=Q\qquad\text{ and }\qquad Q A=|\nabla|^{-1}(\nabla\times A)\text{ for any vector-field }A.
\end{equation*}

Closer inspection of the system \eqref{NewSys}--\eqref{girrotational} shows a decoupling of the magnetic unknowns $\hbox{curl}(E)$, $B$
and the electrostatic (Euler-Poisson) unknowns $n$, $\rho$, $\hbox{div}(v)$ and $\hbox{div}(u)$. More precisely, we may define
\begin{equation*}
  2U_b:=\Lambda_b\vert\nabla\vert^{-1}Q\tilde{B}-iQ^2\tilde{E},\qquad h:=-\vert\nabla\vert^{-1}\hbox{div}(v),\qquad g:=-\vert\nabla\vert^{-1}\hbox{div}(u).
\end{equation*}
Recalling that $\tilde{B}=\varepsilon\nabla\times v=-\nabla\times u$ and $\hbox{div}(\tilde{E})=\rho-n$, the functions $U_b,h,g$ together with $n$, $\rho$ allow us to recover all the physical unknowns, i.e.
\begin{equation}\label{Recover}
 \begin{split}
\tilde{B}&=2\Lambda_b^{-1}\vert\nabla\vert Q\hbox{Re}(U_b),\\
  v&=\nabla\vert\nabla\vert^{-1}h+\frac{2}{\varepsilon}\Lambda_b^{-1}\hbox{Re}(U_b),\\
  u&=\nabla\vert\nabla\vert^{-1}g-2\Lambda_b^{-1}\hbox{Re}(U_b),\\
  \tilde{E}&=-\nabla\vert\nabla\vert^{-2}\left[\rho-n\right]-2\hbox{Im}(U_b).
 \end{split}
\end{equation}

Let
\begin{equation*}
 A_\alpha=2\Lambda_b^{-1}\hbox{Re}(U_{b,\alpha}).
\end{equation*}
In terms of $n,h,\rho,g,U_b$ the system \eqref{NewSys}-\eqref{girrotational} becomes
\begin{equation}\label{pla0}
\begin{split}
\partial_t n-\vert\nabla\vert h&=-\partial_\alpha\left[n R_\alpha h\right]-(1/\varepsilon)\partial_\alpha\left[nA_\alpha\right],\\
\partial_t\rho-\vert\nabla\vert g&=-\partial_\alpha\left[\rho R_\alpha g\right]+\partial_\alpha\left[\rho A_\alpha\right],\\
\partial_th+\vert\nabla\vert^{-1}H_\varepsilon^2n-\varepsilon^{-1}\vert\nabla\vert^{-1}\rho&=-(1/2)\vert\nabla\vert\left[R_\alpha h R_\alpha h\right]-\varepsilon^{-1}\vert\nabla\vert \left[R_\alpha h A_\alpha\right]-(\varepsilon^{-2}/2)\vert\nabla\vert\left[A_\alpha A_\alpha\right],\\
\partial_t g-\vert\nabla\vert^{-1}n+\vert\nabla\vert^{-1}H_1^2\rho&=-(1/2)\vert\nabla\vert\left[R_\alpha g R_\alpha g\right]+\vert\nabla\vert\left[R_\alpha g A_\alpha\right]-(1/2)\vert\nabla\vert\left[A_\alpha A_\alpha\right],\\
\partial_tU_{b,\alpha}+i\Lambda_b U_{b,\alpha}&=-(i/2)Q^2_{\al\be}[nR_\be h-\rho R_\be g+\varepsilon^{-1}n A_\be+\rho A_\be],
\end{split}
\end{equation}
where the left-hand sides of the equations above are linear in the variables $n,h,\rho,g,U_b$, and the right-hand sides are quadratic.

We make linear changes of variables to diagonalize this system. Let
\begin{equation}\label{pla2}
\begin{split}
&\Lambda_e:=\varepsilon^{-1/2}\sqrt{\frac{(1+\varepsilon)-(T+\varepsilon)\Delta+\sqrt{\left((1-\varepsilon)-(T-\varepsilon)\Delta\right)^2+4\varepsilon}}{2}},\\
&\Lambda_i:=\varepsilon^{-1/2}\sqrt{\frac{(1+\varepsilon)-(T+\varepsilon)\Delta-\sqrt{\left((1-\varepsilon)-(T-\varepsilon)\Delta\right)^2+4\varepsilon}}{2}},
\end{split}
\end{equation}
such that
\begin{equation}\label{pla3}
(\Lambda_e^2-H_\varepsilon^2)(H_{\varepsilon}^2-\Lambda_i^2)=\varepsilon^{-1},\qquad \Lambda_e^2-H_1^2=H_{\varepsilon}^2-\Lambda_i^2.
\end{equation}
Let
\begin{equation}\label{pla4}
 R:=\sqrt{\frac{\Lambda_{e}^2-H_\varepsilon^2}{H_\varepsilon^2-\Lambda_{i}^2}},
\end{equation}
and notice that
\begin{equation}\label{pla5}
\Lambda_e^2-H_\varepsilon^2=\varepsilon^{-1/2}R,\qquad H_{\varepsilon}^2-\Lambda_i^2=\varepsilon^{-1/2}R^{-1}.
\end{equation}

Let
\begin{equation}\label{pla7}
\begin{split}
&U_e:=\frac{1}{2\sqrt{1+R^2}}\big[-\varepsilon^{1/2}|\nabla|^{-1}\Lambda_e n+R|\nabla|^{-1}\Lambda_e\rho-i\varepsilon^{1/2}h+iRg\big],\\
&U_i:=\frac{1}{2\sqrt{1+R^2}}\big[\varepsilon^{1/2}R|\nabla|^{-1}\Lambda_i n+|\nabla|^{-1}\Lambda_i\rho+i\varepsilon^{1/2}Rh+ig\big].
\end{split}
\end{equation}
Using the system \eqref{pla0} it is easy to check that the complex variables $U_e$, $U_i$ and $U_b$ satisfy the identities
\begin{equation}\label{pla8}
\begin{split}
&(\partial_t+i\Lambda_e)U_e=\mathcal{N}_e,\\
&(\partial_t+i\Lambda_i)U_i=\mathcal{N}_i,\\
&(\partial_t+i\Lambda_b)U_{b,\alpha}=\mathcal{N}_{b,\alpha},
\end{split}
\end{equation}
where
\begin{equation}\label{pla9}
\begin{split}
&\Re(\mathcal{N}_e)=\frac{\Lambda_eR_\al}{2\sqrt{1+R^2}}\big[\varepsilon^{1/2}(nR_\al h)-R(\rho R_\al g)+\varepsilon^{-1/2}(nA_\alpha)+R(\rho A_\alpha)\big],\\
&\Im(\mathcal{N}_e)=\frac{|\nabla|}{4\sqrt{1+R^2}}\big[\varepsilon^{-3/2}(\varepsilon R_\al h+A_\al)(\varepsilon R_\al h+A_\al)-R[(R_\al g-A_\al)(R_\al g-A_\al)]\big],\\
&\Re(\mathcal{N}_i)=\frac{-\Lambda_iR_\al}{2\sqrt{1+R^2}}\big[\varepsilon^{1/2}R(nR_\al h)+(\rho R_\al g)+\varepsilon^{-1/2}R(nA_\alpha)-(\rho A_\alpha)\big],\\
&\Im(\mathcal{N}_i)=\frac{-|\nabla|}{4\sqrt{1+R^2}}\big[\varepsilon^{-3/2}R[(\varepsilon R_\al h+A_\al)(\varepsilon R_\al h+A_\al)]+(R_\al g-A_\al)(R_\al g-A_\al)\big],\\
&\Re(\mathcal{N}_{b,\alpha})=0,\\
&\Im(\mathcal{N}_{b,\alpha})=-(1/2)Q^2_{\alpha\beta}\left[nR_\beta h-\rho R_\beta g+\varepsilon^{-1}n A_\beta+\rho A_\beta\right].
\end{split}
\end{equation}

The system \eqref{pla8} is our main dispersive system, which is diagonalized at the linear level. To analyze it we have to express the nonlinearities $\mathcal{N}_e$, $\mathcal{N}_i$, and $\mathcal{N}_{b,\alpha}$ in terms of the complex variables $U_e$, $U_i$, and $U_b$. Indeed, it follows from \eqref{pla7} that
\begin{equation}\label{pla20}
\begin{split}
&n=\frac{-|\nabla|\varepsilon^{-1/2}}{\sqrt{1+R^2}\Lambda_e}(U_e+\overline{U_e})+\frac{|\nabla|\varepsilon^{-1/2}R}{\sqrt{1+R^2}\Lambda_i}(U_i+\overline{U_i}),\\
&\rho=\frac{|\nabla|R}{\sqrt{1+R^2}\Lambda_e}(U_e+\overline{U_e})+\frac{|\nabla|}{\sqrt{1+R^2}\Lambda_i}(U_i+\overline{U_i}),\\
&h=\frac{i\varepsilon^{-1/2}}{\sqrt{1+R^2}}(U_e-\overline{U_e})+\frac{-i\varepsilon^{-1/2}R}{\sqrt{1+R^2}}(U_i-\overline{U_i}),\\
&g=\frac{-iR}{\sqrt{1+R^2}}(U_e-\overline{U_e})+\frac{-i}{\sqrt{1+R^2}}(U_i-\overline{U_i})\\
&A_\alpha=\Lambda_b^{-1}(U_{b,\alpha}+\overline{U_{b,\alpha}}).
\end{split}
\end{equation}

We summarize now the main results we proved in this section. Recall first the definitions of the main multipliers
\begin{equation}\label{operators1}
\begin{split}
&\Lambda_e(\xi):=\varepsilon^{-1/2}\sqrt{\frac{(1+\varepsilon)+(T+\varepsilon)|\xi|^2+\sqrt{\left((1-\varepsilon)+(T-\varepsilon)|\xi|^2\right)^2+4\varepsilon}}{2}},\\
&\Lambda_i(\xi):=\varepsilon^{-1/2}\sqrt{\frac{(1+\varepsilon)+(T+\varepsilon)|\xi|^2-\sqrt{\left((1-\varepsilon)+(T-\varepsilon)|\xi|^2\right)^2+4\varepsilon}}{2}},\\
&\Lambda_b(\xi):=\varepsilon^{-1/2}\sqrt{1+\varepsilon+C_b\vert\xi\vert^2},\\
\end{split}
\end{equation}
and
\begin{equation}\label{operators2}
\begin{split}
&|\nabla|(\xi):=|\xi|,\qquad R_\alpha(\xi):=i\xi_\al/|\xi|,\qquad Q_{\alpha\beta}(\xi):=i\in_{\alpha\gamma\beta}\xi_\gamma/\vert\xi\vert,\qquad H_1(\xi):=\sqrt{1+|\xi|^2},\\
&H_\varepsilon(\xi):=\varepsilon^{-1/2}\sqrt{1+T|\xi|^2},\qquad R(\xi):=[\Lambda_e(\xi)^2-H_{\varepsilon}(\xi)^2]^{1/2}[H_{\varepsilon}(\xi)^2-\Lambda_i(\xi)]^{-1/2}.
\end{split}
\end{equation}

The lemma below describes symbol-type properties of some of these multipliers.

\begin{lemma}\label{tech1}
In $\mathbb{R}^3$ we have
\begin{equation}\label{mk1}
\Lambda_e^2\geq H_\varepsilon^2\geq H_1^2\geq \Lambda_i^2\geq |\nabla|^2,\qquad \Lambda_i^2\lesssim |\nabla|^2,
\end{equation}
and
\begin{equation}\label{mk2}
\begin{split}
&\Lambda_e^2-H_\varepsilon^2=\varepsilon^{-1/2}R,\qquad H_{\varepsilon}^2-\Lambda_i^2=\varepsilon^{-1/2}R^{-1},\\
&\Lambda_e(\xi)^2-H_\varepsilon(\xi)^2=H_1(\xi)^2-\Lambda_i(\xi)^2 =\frac{2}{(1-\varepsilon)+(T-\varepsilon)|\xi|^2+\sqrt{\big((1-\varepsilon)+(T-\varepsilon)|\xi|^2\big)^2+4\varepsilon}}.
\end{split}
\end{equation}
In addition, for $\alpha=(\alpha_1,\alpha_2,\alpha_3)$, we have the symbol-type estimates
\begin{equation}\label{mk3}
\begin{split}
&|D^\al_\xi \Lambda_e(\xi)|+|D^\al_\xi H_\varepsilon(\xi)|+|D^\al_\xi H_1(\xi)|\lesssim_{|\alpha|} (1+|\xi|)^{1-|\alpha|},\\
&|D^\al_\xi \Lambda_i(\xi)|+|D^\al_\xi |\nabla|(\xi)|\lesssim_{|\alpha|} |\xi|^{1-|\alpha|},\\
&|D^\al_\xi R(\xi)|\lesssim_{|\alpha|} (1+|\xi|)^{-2-|\alpha|}.
\end{split}
\end{equation}
\end{lemma}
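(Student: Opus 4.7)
The plan is to establish the three displays in the order \eqref{mk2}, \eqref{mk1}, \eqref{mk3}, since the algebraic identities feed the comparisons, which in turn feed the symbol-type estimates; the content is essentially algebraic, with no genuine analytic obstacle.

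For \eqref{mk2}, the starting observation --- verified by expanding the explicit formulas in \eqref{pla2} --- is that $\Lambda_e^2$ and $\Lambda_i^2$ are the two roots of a quadratic with elementary symmetric functions
$$\Lambda_e^2 + \Lambda_i^2 = H_\varepsilon^2 + H_1^2, \qquad \Lambda_e^2 \Lambda_i^2 = H_\varepsilon^2 H_1^2 - \varepsilon^{-1}.$$
The sum identity yields $\Lambda_e^2 - H_\varepsilon^2 = H_1^2 - \Lambda_i^2$; combined with the product identity it gives $(\Lambda_e^2 - H_\varepsilon^2)(H_\varepsilon^2 - \Lambda_i^2) = \varepsilon^{-1}$, whence the first two equalities in \eqref{mk2} follow from the definition \eqref{pla4} of $R$. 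The closed form in the third line is obtained by rationalizing: with $X := (1-\varepsilon)+(T-\varepsilon)|\xi|^2$ one has $\Lambda_e^2 - H_\varepsilon^2 = \frac{1}{2\varepsilon}(-X + \sqrt{X^2+4\varepsilon})$, and multiplying numerator and denominator by the conjugate $X + \sqrt{X^2+4\varepsilon}$ collapses this to $\frac{2}{X + \sqrt{X^2+4\varepsilon}}$.

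From here \eqref{mk1} falls out quickly: $\Lambda_e^2 \geq H_\varepsilon^2$ and $H_1^2 \geq \Lambda_i^2$ are immediate from \eqref{mk2}; $H_\varepsilon^2 \geq H_1^2$ reduces to $(T-\varepsilon)|\xi|^2 \geq \varepsilon-1$, trivial under \eqref{condTeps}; and $\Lambda_i^2 \geq |\xi|^2$ is equivalent to $\Lambda_e^2 - H_\varepsilon^2 \leq 1$, which the closed form controls as $2/(X + \sqrt{X^2+4\varepsilon}) \leq 2/((1-\varepsilon)+(1+\varepsilon)) = 1$, using $X \geq 1-\varepsilon$ and $\sqrt{X^2+4\varepsilon} \geq \sqrt{(1-\varepsilon)^2+4\varepsilon} = 1+\varepsilon$. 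The converse $\Lambda_i^2 \lesssim |\xi|^2$ follows from the product identity rewritten as $\Lambda_i^2 = |\xi|^2(T+1+T|\xi|^2)/(\varepsilon \Lambda_e^2)$, combined with the trivial lower bound $\varepsilon \Lambda_e^2 \gtrsim 1 + T|\xi|^2$ read from \eqref{pla2}.

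For the symbol bounds \eqref{mk3}, the structural point is that the inner discriminant $((1-\varepsilon)+(T-\varepsilon)|\xi|^2)^2 + 4\varepsilon$ is bounded below by the strictly positive constant $4\varepsilon$, so $\Lambda_e^2, \Lambda_i^2, H_\varepsilon^2, H_1^2$ and (via its closed form) $R$ are all smooth functions of $|\xi|^2$ on $\mathbb{R}^3$. For $\Lambda_e, H_\varepsilon, H_1$, which are bounded below by a positive constant and of linear growth, the outer square root preserves smoothness and the bounds $\lesssim (1+|\xi|)^{1-|\alpha|}$ follow from the Leibniz/chain rule. For $\Lambda_i$ the previous paragraph shows $\Lambda_i^2/|\xi|^2$ extends to a strictly positive smooth function of $|\xi|^2$ on $[0,\infty)$, so one may write $\Lambda_i(\xi) = |\xi|\,\phi(|\xi|^2)$ with $\phi$ smooth and positive, from which $|D^\alpha_\xi \Lambda_i| \lesssim |\xi|^{1-|\alpha|}$ matches the homogeneous singularity of $|\xi|$. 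Finally $R = 2\varepsilon^{1/2}/(X + \sqrt{X^2+4\varepsilon})$ is smooth in $|\xi|^2$, and each differentiation introduces a factor of size $1/(1+|\xi|^2)$, yielding the decay $(1+|\xi|)^{-2-|\alpha|}$. The only places requiring care are the use of \eqref{condTeps} in showing $\Lambda_i^2 \geq |\xi|^2$ and preserving the factor $4\varepsilon$ under the discriminant throughout --- this nondegeneracy, arising from the electron--ion coupling, is precisely what renders all the multipliers smooth on $\mathbb{R}^3$ away from the homogeneous singularity of $|\xi|$ at the origin.
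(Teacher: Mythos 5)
Your proposal is correct and takes essentially the same approach as the paper (which gives only a one-line proof, citing the closed form for $R$ and saying the rest follows from the definitions). Your Vieta-style framing via the symmetric functions $\Lambda_e^2+\Lambda_i^2=H_\varepsilon^2+H_1^2$ and $\Lambda_e^2\Lambda_i^2=H_\varepsilon^2H_1^2-\varepsilon^{-1}$ is a clean way to organize the algebra implicit in the paper's formulas, and the remaining verifications (rationalization to the closed form, the comparisons, and the symbol estimates via $\Lambda_i=|\xi|\phi(|\xi|^2)$ with $\phi$ smooth and positive) are all sound.
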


\begin{proof}[Proof of Lemma \ref{tech1}] The inequalities in \eqref{mk1} and the identities in \eqref{mk2} follow directly from definitions. The symbol-type estimates in \eqref{mk3} also follow from definitions and the additional formula
\begin{equation*}
R(\xi)=\frac{2\varepsilon^{1/2}}{(1-\varepsilon)+(T-\varepsilon)|\xi|^2+\sqrt{\big((1-\varepsilon)+(T-\varepsilon)|\xi|^2\big)^2+4\varepsilon}}.
\end{equation*}
\end{proof}

The following proposition is the main result in this section. 

\begin{proposition}\label{algebra}
With $N_0=10^4$ as in Theorem \ref{Main1}, assume that $(n,v,\rho,u,\tilde{E},\tilde{B})\in C(I:\widetilde{H}^{N_0})$ is a solution of the system \eqref{NewSys}-\eqref{girrotational}, where $I\subseteq\mathbb{R}$ is an interval. Let $\Lambda_e, \Lambda_i, \Lambda_b, |\nabla|, R_\alpha, Q, H_1, H_\varepsilon, R$ denote the operators defined by the corresponding multipliers in \eqref{operators1}-\eqref{operators2}. Let
\begin{equation}\label{pla50}
\begin{split}
&h:=-\vert\nabla\vert^{-1}\hbox{div}(v),\qquad g:=-\vert\nabla\vert^{-1}\hbox{div}(u),\qquad A_\alpha:=|\nabla|^{-1}Q_{\al\be}\tilde{B}_\be,\\
&U_e:=\frac{1}{2\sqrt{1+R^2}}\big[-\varepsilon^{1/2}|\nabla|^{-1}\Lambda_e n+R|\nabla|^{-1}\Lambda_e\rho-i\varepsilon^{1/2}h+iRg\big],\\
&U_i:=\frac{1}{2\sqrt{1+R^2}}\big[\varepsilon^{1/2}R|\nabla|^{-1}\Lambda_i n+|\nabla|^{-1}\Lambda_i\rho+i\varepsilon^{1/2}Rh+ig\big],\\
&U_b:=[\Lambda_b\vert\nabla\vert^{-1}Q\tilde{B}-iQ^2\tilde{E}]/2,
\end{split}
\end{equation}
and, for $\alpha\in\{1,2,3\}$,
\begin{equation*}
U_{e+}:=U_e,\quad U_{e-}:=\overline{U_e},\quad U_{i+}:=U_i,\quad U_{i-}:=\overline{U_i},\quad U_{b+\alpha}:=U_{b,\alpha},\quad U_{b-\alpha}:=\overline{U_{b,\alpha}}.
\end{equation*}

(i) Then $U_e,U_i,U_b\in C(I:H^{N_0})$ and, for any $t\in I$,
\begin{equation}\label{pla50.1}
\|U_e(t)\|_{H^{N_0}}+\|U_i(t)\|_{H^{N_0}}+\|U_b(t)\|_{H^{N_0}}\lesssim \|(n(t),v(t),\rho(t),u(t),\tilde{E}(t),\tilde{B}(t))\|_{\widetilde{H}^{N_0}}.
\end{equation}
Moreover, the functions $U_e:\mathbb{R}^3\times I\to\mathbb{C}$, $U_i:\mathbb{R}^3\times I\to\mathbb{C}$, $U_b:\mathbb{R}^3\times I\to\mathbb{C}^3$ satisfy the dispersive system
\begin{equation}\label{pla51}
\begin{split}
&(\partial_t+i\Lambda_e)U_e=\mathcal{N}_e,\qquad (\partial_t+i\Lambda_i)U_i=\mathcal{N}_i,\qquad (\partial_t+i\Lambda_b)U_b=\mathcal{N}_{b},
\end{split}
\end{equation}
where the quadratic nonlinearities $\mathcal{N}_e,\mathcal{N}_i,\mathcal{N}_b$ are given by
\begin{equation}\label{pla52}
\begin{split}
&\mathcal{F}(\mathcal{N}_e)(\xi,t)=c\sum_{\mu,\nu\in\mathcal{I}_0}\int_{\mathbb{R}^3}m_{e;\mu,\nu}(\xi,\eta)\widehat{U_\mu}(\xi-\eta,t)\widehat{U_\nu}(\eta,t)\,d\eta,\\
&\mathcal{F}(\mathcal{N}_i)(\xi,t)=c\sum_{\mu,\nu\in\mathcal{I}_0}\int_{\mathbb{R}^3}m_{i;\mu,\nu}(\xi,\eta)\widehat{U_\mu}(\xi-\eta,t)\widehat{U_\nu}(\eta,t)\,d\eta,\\
&\mathcal{F}(\mathcal{N}_{b})(\xi,t)=c\sum_{\mu,\nu\in\mathcal{I}_0}\int_{\mathbb{R}^3}m_{b;\mu,\nu}(\xi,\eta)\widehat{U_\mu}(\xi-\eta,t)\widehat{U_\nu}(\eta,t)\,d\eta.
\end{split}
\end{equation}
The set $\mathcal{I}_0$ is given by 
\begin{equation}\label{Iset}
\mathcal{I}_0:=\{e+,e-,i+,i-,b+1,b+2,b+3,b-1,b-2,b-3\},
\end{equation}
and the multipliers $m_{e;\mu,\nu}:\mathbb{R}^3\times\mathbb{R}^3\to\mathbb{C}$, $m_{i;\mu,\nu}:\mathbb{R}^3\times\mathbb{R}^3\to\mathbb{C}$, $m_{b;\mu,\nu}:\mathbb{R}^3\times\mathbb{R}^3\to\mathbb{C}^3$ are calculated explicitly in \eqref{mult0}--\eqref{mult3.5}.

(ii) The physical variables $(n,\rho,v,u,\tilde{E},\tilde{B})$ can be expressed in terms of the complex variables $U_e,U_i,U_b$ according to the formulas
\begin{equation}\label{pla50.2}
\begin{split}
&n=\frac{-|\nabla|\varepsilon^{-1/2}}{\sqrt{1+R^2}\Lambda_e}(U_e+\overline{U_e})+\frac{|\nabla|\varepsilon^{-1/2}R}{\sqrt{1+R^2}\Lambda_i}(U_i+\overline{U_i}),\\
&\rho=\frac{|\nabla|R}{\sqrt{1+R^2}\Lambda_e}(U_e+\overline{U_e})+\frac{|\nabla|}{\sqrt{1+R^2}\Lambda_i}(U_i+\overline{U_i}),\\
&v=\nabla\vert\nabla\vert^{-1}h+\frac{2}{\varepsilon}\Lambda_b^{-1}\hbox{Re}(U_b),\qquad h=\frac{i\varepsilon^{-1/2}}{\sqrt{1+R^2}}(U_e-\overline{U_e})+\frac{-i\varepsilon^{-1/2}R}{\sqrt{1+R^2}}(U_i-\overline{U_i}),\\
&u=\nabla\vert\nabla\vert^{-1}g-2\Lambda_b^{-1}\hbox{Re}(U_b),\qquad g=\frac{-iR}{\sqrt{1+R^2}}(U_e-\overline{U_e})+\frac{-i}{\sqrt{1+R^2}}(U_i-\overline{U_i}),\\
&\tilde{E}=-\nabla\vert\nabla\vert^{-2}\left[\rho-n\right]-2\hbox{Im}(U_b),\\
&\tilde{B}=2\Lambda_b^{-1}\vert\nabla\vert Q\hbox{Re}(U_b).
\end{split}
\end{equation}
\end{proposition}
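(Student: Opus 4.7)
The bulk of the proposition is already established by the algebraic manipulations preceding its statement: the linear diagonalization was carried out in \eqref{pla2}--\eqref{pla8}, and the recovery formulas are \eqref{pla20}. What remains is a verification of three things: (a) that the change of variables $(n,v,\rho,u,\tilde{E},\tilde{B})\mapsto(U_e,U_i,U_b)$ and its inverse are bounded on $H^{N_0}$, (b) that the nonlinearities $\mathcal{N}_e,\mathcal{N}_i,\mathcal{N}_b$ when expressed in the complex variables have the specific bilinear form \eqref{pla52}, and (c) that the recovery formulas \eqref{pla50.2} are consistent. The plan is to treat these three items in turn, the bulk of the work being the explicit computation of the multipliers $m_{e;\mu,\nu}$, $m_{i;\mu,\nu}$, $m_{b;\mu,\nu}$.

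For (a), the key input is the symbol bound from Lemma \ref{tech1}. The operators $|\nabla|^{-1}\Lambda_e$, $R$, $\Lambda_b^{-1}|\nabla|$, $(1+R^2)^{-1/2}$ and the Riesz-type multipliers $R_\alpha, Q_{\alpha\beta}$ appearing in the definitions of $U_e,U_i,U_b$ all have bounded symbols on the frequency range $|\xi|\lesssim 1$ (where one must check that the apparent $|\nabla|^{-1}$ singularity is cancelled by the factor $\Lambda_i$ or by $|\nabla|$ coming from $\hbox{div}$), and have symbols of order $O(1)$ or better at high frequency. Similarly the inverse formulas \eqref{pla50.2} involve $|\nabla|\Lambda_e^{-1}$, $|\nabla|\Lambda_i^{-1}$ and $\Lambda_b^{-1}$, which are likewise bounded symbols. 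From this I would conclude via standard Fourier multiplier estimates that the map and its inverse define continuous bijections between $\widetilde{H}^{N_0}$ and $H^{N_0}\times H^{N_0}\times H^{N_0}$, proving \eqref{pla50.1}.

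For (b), the linear evolution equations $(\partial_t+i\Lambda_e)U_e=\mathcal{N}_e$, etc., were derived in \eqref{pla8}, with real/imaginary parts of the nonlinearities given in \eqref{pla9}. The task is to substitute the inverse relations \eqref{pla20} into those formulas, expanding each quadratic product such as $(nR_\alpha h)$, $(\rho R_\alpha g)$, $(R_\alpha h A_\alpha)$, $(A_\alpha A_\alpha)$, etc., in terms of $U_{e\pm}, U_{i\pm}, U_{b\pm\alpha}$. Each such substitution produces a sum of bilinear expressions indexed by $\mu,\nu\in\mathcal{I}_0$, and the symbol $m_{\star;\mu,\nu}(\xi,\eta)$ is obtained by reading off the Fourier symbol of the product of the relevant operators ($|\nabla|/\Lambda_e$, $R/\Lambda_i$, $\Lambda_b^{-1}$, $\Lambda_e R_\alpha/\sqrt{1+R^2}$, $Q_{\alpha\beta}^2$, etc.) evaluated at either the frequency $\xi-\eta$ or $\eta$ as appropriate for the input, and then at $\xi$ for the outer operator. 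I would perform this bookkeeping systematically for $\Re(\mathcal{N}_e),\Im(\mathcal{N}_e),\Re(\mathcal{N}_i),\Im(\mathcal{N}_i),\mathcal{N}_b$, combine real and imaginary parts, and write out the resulting $61$ nonzero multipliers as claimed (the explicit list being deferred to Appendix \ref{multiex}).

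Finally, (c) is pure linear algebra: the definitions \eqref{pla50} can be solved for $n,\rho,h,g$ by combining the $U_e$ and $U_i$ equations and using the identities \eqref{pla3}--\eqref{pla5}; the auxiliary identities $\tilde{B}=\varepsilon\nabla\times v=-\nabla\times u$ from \eqref{girrotational} together with $\hbox{div}(\tilde{E})=\rho-n$ from \eqref{Ell} allow the reconstruction of the full vector fields $v,u,\tilde{E},\tilde{B}$ from $(h,g,U_b,n,\rho)$ via \eqref{Recover}. The main obstacle in the entire proof is purely organizational: the explicit computation of the $61$ multipliers in (b) is lengthy but mechanical, since at each step the symbols are smooth (or have controlled singularities at $\xi=0$) by Lemma \ref{tech1}, and no analytic input beyond Fourier-multiplier bookkeeping is required.
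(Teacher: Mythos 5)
Your plan follows the paper's own proof essentially verbatim: the $H^{N_0}$ bound is deduced from the symbol estimates of Lemma \ref{tech1}, the dispersive system and recovery identities are taken from the earlier derivations \eqref{pla8}--\eqref{pla9}, \eqref{Recover}, \eqref{pla20}, and the explicit multiplier bookkeeping is deferred to Appendix \ref{multiex}. The one place where your argument is imprecise and would not quite close as stated is the low-frequency control of $U_e$. The operator $|\nabla|^{-1}\Lambda_e$ genuinely has a $|\xi|^{-1}$ singularity at the origin, because $\Lambda_e(0)=\sqrt{\varepsilon^{-1}+1}\neq 0$; neither a factor of $\Lambda_i$ nor a $|\nabla|$ from a divergence appears in the $n$ and $\rho$ terms of $U_e$ to cancel it, so your parenthetical explanation does not apply to those terms. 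The actual mechanism, which the paper highlights, is the algebraic identity $R(0)=\varepsilon^{1/2}$. This allows one to write
\begin{equation*}
-\varepsilon^{1/2}n + R\rho \;=\; \tfrac{1}{2}(R-\varepsilon^{1/2})(n+\rho) \;+\; \tfrac{1}{2}(R+\varepsilon^{1/2})(\rho-n),
\end{equation*}
where the first symbol $(R-\varepsilon^{1/2})$ vanishes quadratically at $\xi=0$ (as $R$ is a smooth even function of $|\xi|$), so that $\Lambda_e(R-\varepsilon^{1/2})|\nabla|^{-1}$ is bounded; and the second term involves $\rho-n=\operatorname{div}(\tilde E)$ by the constraint in \eqref{NewSys}, which supplies the needed factor of $|\nabla|$ so that $\Lambda_e(R+\varepsilon^{1/2})|\nabla|^{-1}\operatorname{div}$ is again a bounded multiplier applied to $\tilde E$. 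You should state this explicitly — without it the claim that $U_e\in H^{N_0}$ does not follow from symbol bounds alone.
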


\begin{proof}[Proof of Proposition \ref{algebra}] The claim \eqref{pla50.1} is a consequence of \eqref{mk3} and the observation that $R(0)=\varepsilon^{1/2}$. The diagonalized dispersive system \eqref{pla51} and the identities \eqref{pla50.2} were derived earlier, see \eqref{pla8}-\eqref{pla9}, \eqref{Recover}, and \eqref{pla20}. It remains only to prove the formulas \eqref{pla52}, showing that the nonlinearities $\mathcal{N}_e,\mathcal{N}_i,\mathcal{N}_b$ can be expressed as bilinear forms in terms of the complex variables $U_e,U_i,U_b$. This is easy to see by inspecting the formulas \eqref{pla9} and \eqref{pla20}. The precise, somewhat long calculations are presented in section \ref{multiex}.
\end{proof}

The precise formulas of the multipliers $m_{e;\mu,\nu}$, $m_{i;\mu,\nu}$, and $m_{b;\mu,\nu}$, derived in section \ref{multiex} below, are complicated. However, we do not use these formulas in the rest of the paper. We will only use the simple observation that these multipliers can be expressed as suitable products of multipliers satisfying inequalities of the H\"{o}rmander--Michlin type. More precisely, for any integer $n\geq 1$ let
\begin{equation}\label{symb1}
\mathcal{S}^n:=\{q:\mathbb{R}^3\to\mathbb{C}:\,\|q\|_{\mathcal{S}^n}:=\sup_{\xi\in\mathbb{R}^3\setminus\{0\}}\sup_{|\alpha|\leq n}|\xi|^{|\alpha|}|D^\alpha_\xi q(\xi)|<\infty\},
\end{equation}
and
\begin{equation}\label{symb2}
\mathcal{M}:=\{m:\mathbb{R}^3\times\mathbb{R}^3\to\mathbb{C}:\,m(\xi,\eta)=q_1(\xi)\cdot q_2(\xi-\eta)\cdot q_3(\eta),\,\sup_{n\in\{1,2,3\}}\|q_n\|_{\mathcal{S}^{100}}\leq 1\}.
\end{equation}

\begin{lemma}\label{tech1.3}
The multipliers $m_{e;\mu,\nu}(\xi,\eta)$ and $m_{b,\alpha;\mu,\nu}(\xi,\eta)$, $\al\in\{1,2,3\}$, can be written as finite sums of functions of the form
\begin{equation}\label{mk4}
(1+|\xi|^2)^{1/2}\cdot m(\xi,\eta),\qquad m\in\mathcal{M}.
\end{equation}
Similarly, the multipliers $m_{i;\mu,\nu}(\xi,\eta)$ can be written as finite sums of functions of the form
\begin{equation}\label{mk5}
|\xi|\cdot m(\xi,\eta),\qquad m\in\mathcal{M}.
\end{equation}
\end{lemma}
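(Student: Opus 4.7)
The plan is to verify the lemma by inspection of the explicit formulas for the multipliers that are (or will be) computed in Section~\ref{multiex}, isolating in each term the single ``growth factor'' (either a factor of size $(1+|\xi|^2)^{1/2}$ or $|\xi|$) that comes from the outermost differential operator in \eqref{pla9}, and then checking that every remaining factor is a Michlin-type symbol in $\mathcal{S}^{100}$ evaluated at one of $\xi$, $\xi-\eta$ or $\eta$.

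First I would catalogue the elementary building blocks that can appear. From Lemma~\ref{tech1} and the definitions \eqref{operators1}--\eqref{operators2}, the functions $1/\Lambda_e(\xi)$, $1/\Lambda_b(\xi)$, $1/H_\varepsilon(\xi)$, $1/H_1(\xi)$ are Michlin symbols of order $-1$ bounded by $(1+|\xi|)^{-1}$; the functions $R_\alpha(\xi)$ and $Q_{\alpha\beta}(\xi)$ are homogeneous of degree $0$ and belong to $\mathcal{S}^{100}$; and $R(\xi)$ together with $(1+R(\xi)^2)^{-1/2}$ are smooth, bounded, and have derivatives that decay, hence lie in $\mathcal{S}^{100}$. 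The only slightly subtle entries are the quotients $|\xi|/\Lambda_e(\xi)$ and $|\xi|/\Lambda_i(\xi)$, which appear in the inversion formulas \eqref{pla20}; however $\Lambda_e\gtrsim 1+|\xi|$, so $|\xi|/\Lambda_e\in\mathcal{S}^{100}$ immediately, while a direct expansion of \eqref{pla2} at the origin gives $\Lambda_i(\xi)\approx c_0|\xi|$ as $\xi\to 0$ and $\Lambda_i(\xi)\approx|\xi|$ as $|\xi|\to\infty$, so $|\xi|/\Lambda_i(\xi)$ is smooth, bounded, and a Michlin symbol.

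Next I would go through the three blocks of \eqref{pla9} and substitute \eqref{pla20}. For $\Re(\mathcal{N}_e)$ and $\Re(\mathcal{N}_i)$ the outermost symbol is $\Lambda_{e,i}(\xi)R_\alpha(\xi)$, while for $\Im(\mathcal{N}_e)$ and $\Im(\mathcal{N}_i)$ it is $|\nabla|(\xi)/\sqrt{1+R(\xi)^2}$, and for $\mathcal{N}_{b,\alpha}$ it is $Q^2_{\alpha\beta}(\xi)$. All the inner factors produced by the substitution \eqref{pla20} are products of the elementary building blocks above (symbols of the form $q_2(\xi-\eta)$ and $q_3(\eta)$), and there are finitely many of them. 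For $\mathcal{N}_e$ the outer symbol is of the form $(1+|\xi|^2)^{1/2}\cdot q_1(\xi)$ with $q_1\in\mathcal{S}^{100}$, since $\Lambda_e(\xi)\simeq (1+|\xi|^2)^{1/2}$ in the symbol sense by \eqref{mk3}; for $\mathcal{N}_b$ the outer symbol is bounded, so we trivially write it as $(1+|\xi|^2)^{1/2}\cdot[(1+|\xi|^2)^{-1/2}Q^2_{\alpha\beta}(\xi)]$ with the bracket in $\mathcal{S}^{100}$. For $\mathcal{N}_i$ we extract $|\xi|$ from either $\Lambda_i R_\alpha=|\xi|\cdot[R_\alpha\Lambda_i/|\xi|]$ or from $|\nabla|$ directly, and the bracket is again in $\mathcal{S}^{100}$ by the discussion of $\Lambda_i/|\xi|$ above.

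The main (only) technical point is the verification that $|\xi|/\Lambda_i(\xi)$ and the auxiliary $R(\xi)/\sqrt{1+R(\xi)^2}$ are genuine Michlin symbols, which requires the explicit formula for $R$ recorded in Lemma~\ref{tech1} and the low-frequency expansion of $\Lambda_i$; everything else is a matter of organizing the substitution and absorbing numerical constants. Once these points are checked, Lemma~\ref{tech1.3} follows by reading off each of the finitely many summands in the formulas of Section~\ref{multiex} and writing it in the form $q_1(\xi)q_2(\xi-\eta)q_3(\eta)$ times the extracted growth factor, up to a multiplicative constant that can be absorbed into the $\|q_n\|_{\mathcal{S}^{100}}\leq 1$ normalization in the definition \eqref{symb2} of the class $\mathcal{M}$.
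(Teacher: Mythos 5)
Your proof is correct and follows essentially the same route as the paper's: use the inversion formulas \eqref{pla20} together with the symbol bounds of Lemma~\ref{tech1} to express $n,\rho,h,g,A_\alpha$ as Calder\'on--Zygmund operators acting on the $U_\mu$'s, then substitute into \eqref{pla9} and factor out the single growth factor ($\Lambda_e\sim(1+|\xi|^2)^{1/2}$, $|\nabla|=|\xi|$, or a trivial $(1+|\xi|^2)^{\pm1/2}$ rewrite for the bounded $Q^2_{\alpha\beta}$). The paper states this more tersely but with the same content; your extra verification that $|\xi|/\Lambda_i$ and $R/\sqrt{1+R^2}$ lie in $\mathcal{S}^{100}$ is exactly the point where Lemma~\ref{tech1} is invoked.
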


\begin{remark}\label{tech1.33} We notice that the multipliers $m_{i;\mu,\nu}$ satisfy better estimates at $\xi=0$ than the multipliers $m_{e;\mu,\nu}$ and $m_{b,\alpha;\mu,\nu}$; in particular these multipliers vanish at the origin. This is an indication of a certain {\it{null structure}} of the system, and is important in the analysis in sections \ref{normproof} and \ref{normproof2}.
\end{remark}

\begin{proof}[Proof of Lemma \ref{tech1.3}] The formulas \eqref{mk4} and \eqref{mk5} follow from the identities \eqref{pla9}-\eqref{pla20} and Lemma \ref{tech1}. Indeed, using \eqref{pla20} and Lemma \ref{tech1}, we notice first that the functions $n, \rho, h, g, A_\alpha$ can all be written as finite sums of Calderon--Zygmund operators applied to the complex variables $U_{e\pm},U_{i\pm},U_{b\pm\alpha}$, i. e. finite sums of expressions of the form
\begin{equation*}
TU_{e\pm},\qquad TU_{i\pm},\qquad TU_{b\pm\alpha},\qquad \qquad\text{ where }\quad \widehat{Tf}(\xi)=q(\xi)\widehat{f}(\xi)\quad\text{ for some }\quad q\in \mathcal{S}^{100}.
\end{equation*} 
Then we use again Lemma \ref{tech1} and the identities in \eqref{pla9} to complete the proof of the lemma.  
\end{proof}

\section{Main definitions and propositions}\label{MainSec2}

In this section we define our main function spaces, and state two key propositions that concern properties of solutions of the dispersive system \eqref{pla51}. Then we show how to use these propositions, together with the local regularity theory in section \ref{local} and linear dispersive bounds, to complete the proof of the main theorem.

We fix $\varphi:\mathbb{R}\to[0,1]$ an even smooth function supported in $[-8/5,8/5]$ and equal to $1$ in $[-5/4,5/4]$. For simplicity of notation, we also let $\varphi:\mathbb{R}^d\to[0,1]$ denote the corresponding radial function on $\mathbb{R}^d$, $d=2,3$. For $d\in\{1,2,3\}$ let
\begin{equation*}
\begin{split}
&\varphi_k(x)=\varphi_{k,(d)}(x):=\varphi(|x|/2^k)-\varphi(|x|/2^{k-1})\qquad\text{ for any }k\in\mathbb{Z},\,x\in\mathbb{R}^d,\\
&\varphi_I:=\sum_{m\in I\cap\mathbb{Z}}\varphi_m\text{ for any }I\subseteq\mathbb{R}.
\end{split}
\end{equation*}
Let
\begin{equation*}
\mathcal{J}:=\{(k,j)\in\mathbb{Z}\times\mathbb{Z}_+:\,k+j\geq 0\}.
\end{equation*}
For any $(k,j)\in\mathcal{J}$ let
\begin{equation*}
\phii^{(k)}_j(x):=
\begin{cases}
\varphi_{(-\infty,-k]}(x)\quad&\text{ if }k+j=0\text{ and }k\leq 0,\\
\varphi_{(-\infty,0]}(x)\quad&\text{ if }j=0\text{ and }k\geq 0,\\
\varphi_j(x)\quad&\text{ if }k+j\geq 1\text{ and }j\geq 1.
\end{cases}
\end{equation*}
and notice that, for any $k\in\mathbb{Z}$ fixed,
\begin{equation*}
\sum_{j\geq-\min(k,0)}\phii^{(k)}_j=1.
\end{equation*}
For any interval $I\subseteq\mathbb{R}$ let
\begin{equation*}
\phii^{(k)}_I(x):=\sum_{j\in I,\,(k,j)\in\mathcal{J}}\phii^{(k)}_j(x).
\end{equation*}

Let $P_k$, $k\in\mathbb{Z}$, denote the operator on $\mathbb{R}^3$ defined by the Fourier multiplier $\xi\to \varphi_k(\xi)$. Similarly, for any $I\subseteq \mathbb{R}$ let $P_I$ denote the operator on $\mathbb{R}^3$
 defined by the Fourier multiplier $\xi\to \varphi_I(\xi)$. 

\begin{definition}\label{MainDef}
Let
\begin{equation}\label{sec5.6}
\beta:=1/100,\qquad \al:=\beta/2,\qquad\gamma:=3/2-4\beta.
\end{equation}
We define
\begin{equation}\label{sec5}
Z:=\{f\in L^2(\mathbb{R}^3):\,\|f\|_{Z}:=\sup_{(k,j)\in\mathcal{J}}\|\phii^{(k)}_j(x)\cdot P_kf(x)\|_{B_{k,j}}<\infty\},
\end{equation}
where, with $\widetilde{k}:=\min(k,0)$ and $k_+:=\max(k,0)$,
\begin{equation}\label{sec5.2}
\|g\|_{B_{k,j}}:=\inf_{g=g_1+g_2}\big[\|g_1\|_{B^{1}_{k,j}}+\|g_2\|_{B^{2}_{k,j}}\big],
\end{equation}
\begin{equation}\label{sec5.3}
\|h\|_{B^{1}_{k,j}}:=(2^{\alpha k}+2^{10k})\big[2^{(1+\beta)j}\|h\|_{L^2}+2^{(1/2-\beta)\widetilde{k}}\|\widehat{h}\|_{L^\infty}\big],
\end{equation}
and
\begin{equation}\label{sec5.4}
\|h\|_{B^{2}_{k,j}}:=2^{10|k|}(2^{\alpha k}+2^{10k})\big[2^{(1-\beta)j}\|h\|_{L^2}+\|\widehat{h}\|_{L^\infty}+2^{\gamma j}\sup_{R\in[2^{-j},2^k],\,\xi_0\in\mathbb{R}^3}R^{-2}\|\widehat{h}\|_{L^1(B(\xi_0,R))}\big].
\end{equation}
\end{definition}

The definition above shows that if $\|f\|_{Z}\leq 1$ then, for any $(k,j)\in\mathcal{J}$ one can decompose
\begin{equation}\label{sec5.8}
\widetilde{\varphi}^{(k)}_{j}\cdot P_kf=(2^{\alpha k}+2^{10k})^{-1}(g+h),
\end{equation}
where{\footnote{The support condition \eqref{sec5.81} can easily be achieved by starting with a decomposition 
$\widetilde{\varphi}^{(k)}_{j}\cdot P_kf=(2^{\alpha k}+2^{10k})^{-1}(g'+h')$ that minimizes the $B_{k,j}$ norm up to a constant, and then 
redefining $g:=g'\cdot \widetilde{\varphi}^{(k)}_{[j-1,j+1]}$ and $h:=h'\cdot \widetilde{\varphi}^{(k)}_{[j-1,j+1]}$.}}
\begin{equation}\label{sec5.81}
g=g\cdot \widetilde{\varphi}^{(k)}_{[j-2,j+2]},\qquad h=h\cdot \widetilde{\varphi}^{(k)}_{[j-2,j+2]},
\end{equation}
and
\begin{equation}\label{sec5.815}
\begin{split}
&2^{(1+\be)j}\|g\|_{L^2}+2^{(1/2-\beta)\widetilde{k}}\|\widehat{g}\|_{L^\infty}\lesssim 1,\\
&2^{(1-\be)j}\|h\|_{L^2}+\|\widehat{h}\|_{L^\infty}+
2^{\gamma j}\sup_{R\in[2^{-j},2^k],\,\xi_0\in\mathbb{R}^3}R^{-2}\|\widehat{h}\|_{L^1(B(\xi_0,R))}
\lesssim 2^{-10|k|}.
\end{split}
\end{equation}
In some of the easier estimates we will often use the weaker bound, obtained by setting $R=2^k$, 
\begin{equation}\label{sec5.82}
\begin{split}
&2^{(1+\be)j}\|g\|_{L^2}+2^{(1/2-\beta)\widetilde{k}}\|\widehat{g}\|_{L^\infty}\lesssim 1,\\
&2^{(1-\be)j}\|h\|_{L^2}+\|\widehat{h}\|_{L^\infty}
+2^{\gamma j}\|\widehat{h}\|_{L^1}\lesssim 2^{-8|k|}.
\end{split}
\end{equation}

We are now ready to state our main propositions which concern solutions $U=(U_e,U_i,U_b)$ of the system \eqref{pla51}-\eqref{pla52} derived in Proposition \ref{algebra}. We claim first that smooth solutions that start with data in the space $Z$ remain in the space $Z$, in a continuous way. More precisely:

\begin{proposition}\label{Norm0}
Assume $N_0=10^4$, $T_0\geq 1$, and $U=(U_e,U_i,U_b)\in C([0,T_0]:H^{N_0})$ is a solution of the system of equations \eqref{pla51}-\eqref{pla52}. Assume that, for some $t_0\in[0,T_0]$,
\begin{equation}\label{data1}
e^{it_0\Lambda_\sigma}U_\sigma(t_0)\in Z,\qquad\text{ for }\sigma\in\{e,i,b\}.
\end{equation}
Then there is 
\begin{equation*}
\tau=\tau\Big(T_0,\sup_{\sigma\in\{e,i,b\}}\|e^{it_0\Lambda_\sigma}U_\sigma(t_0)\|_{Z},\sup_{\sigma\in\{e,i,b\}}\sup_{t\in[0,T_0]}\|U_\sigma(t)\|_{H^{N_0}}\Big)>0
\end{equation*}
such that
\begin{equation}\label{data2}
\sup_{t\in [0,T_0]\cap[t_0,t_0+\tau]}\sup_{\sigma\in\{e,i,b\}}\|e^{it\Lambda_\sigma}U_\sigma(t)\|_{Z}\leq 2\sup_{\sigma\in\{e,i,b\}}\|e^{it_0\Lambda_\sigma}U_\sigma(t_0)\|_{Z},
\end{equation}
and the mapping $t\to e^{it\Lambda_\sigma}U_\sigma(t)$ is continuous from $[0,T_0]\cap[t_0,t_0+\tau]$ to $Z$, for any $\sigma\in\{e,i,b\}$.
\end{proposition}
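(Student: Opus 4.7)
The plan is to work with the profiles $V_\sigma(t) := e^{it\Lambda_\sigma}U_\sigma(t)$. By \eqref{pla51} they satisfy
\begin{equation*}
V_\sigma(t) - V_\sigma(t_0) = \int_{t_0}^{t} e^{is\Lambda_\sigma}\mathcal{N}_\sigma(s)\,ds,
\end{equation*}
which in Fourier, using Proposition \ref{algebra}, is a finite sum of oscillatory bilinear integrals in $(V_\mu, V_\nu)$ with phases $\Phi(\xi,\eta) = \Lambda_\sigma(\xi)\mp\Lambda_\mu(\xi-\eta)\mp\Lambda_\nu(\eta)$ and multipliers which, by Lemma \ref{tech1.3}, factor as $\langle\xi\rangle$ or $|\xi|$ times a symbol $m\in\mathcal{M}$. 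Setting $M_0 := \sup_\sigma \|V_\sigma(t_0)\|_Z$ and $M_1 := \sup_\sigma\sup_{t\in[0,T_0]}\|U_\sigma(t)\|_{H^{N_0}}$, I would run a standard continuity argument: under the bootstrap hypothesis $\sup_\sigma \|V_\sigma(s)\|_Z \le 2M_0$ for $s\in[t_0,t_0+\tau]$, the goal is to prove the bilinear Lipschitz bound
\begin{equation*}
\Big\|\int_{t_0}^{t} e^{is\Lambda_\sigma}\mathcal{N}_\sigma(s)\,ds\Big\|_Z \le C(M_0, M_1)\,|t - t_0|,
\end{equation*}
so that choosing $\tau$ small in terms of $M_0$ and $M_1$ closes the bootstrap and gives \eqref{data2}; the same bound applied between any two times $t, t'\in[t_0,t_0+\tau]$ then yields continuity of $V_\sigma$ in $Z$.

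For the bilinear estimate, I would decompose each input profile $V_\mu(s), V_\nu(s)$ into $Z$-atoms as in \eqref{sec5.8}--\eqref{sec5.815}, indexed by dyadic scales $(k_\ell,j_\ell)$ and by $B^1$ vs.\ $B^2$ type. For each output atom $(k,j)\in\mathcal{J}$, I would bound $\widetilde{\varphi}^{(k)}_j P_k$ applied to the Duhamel integral in $B_{k,j}$ separately for each elementary interaction, then sum the geometric series in the input scales. The prefactors $(2^{\alpha k}+2^{10k})$ in $B^1_{k,j}$ and the additional $2^{10|k|}$ in $B^2_{k,j}$, combined with Bernstein inequalities and the $H^{N_0}$ a priori bound $M_1$ to absorb the contributions of extreme frequencies, ensure summability of the resulting sums.

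The main obstacle is that the free flow $e^{is\Lambda_\sigma}$ does not commute with the spatial localization $\widetilde{\varphi}^{(k)}_j(x)$ built into the $Z$ norm. The Fourier-side components of $B_{k,j}$ (the $\|\widehat{\cdot}\|_{L^\infty}$ and $L^1$-on-small-balls pieces) are straightforward: since $e^{is\Lambda_\sigma}$ is unimodular on the Fourier side, they reduce to Young-convolution estimates on the Fourier profiles of the atoms. The spatially weighted $L^2$ pieces ($2^{(1+\beta)j}\|\cdot\|_{L^2}$ in $B^1_{k,j}$ and $2^{(1-\beta)j}\|\cdot\|_{L^2}$ in $B^2_{k,j}$) are the delicate part: moving to the Fourier side by Plancherel converts the $|x|^{1+\beta}$ type spatial weight into a fractional $\xi$-derivative of the integrand $e^{is\Phi(\xi,\eta)}m(\xi,\eta)\widehat{V_\mu}(\xi-\eta,s)\widehat{V_\nu}(\eta,s)$. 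Each such derivative hits either $e^{is\Phi}$, producing a factor $s\cdot\partial_\xi\Phi = O(\tau)$ over the short interval, or one of the Fourier profiles, producing a spatial weight on the input atom which is controlled by its $B^1$ or $B^2$ norm. Since we only need smallness of order $|t - t_0|$ rather than time-integrability at infinity, the refined non-stationary phase and space-time resonance analyses developed in Sections \ref{normproof}--\ref{normproof4} are unnecessary here; crude bilinear H\"older and Bernstein estimates, together with these weight-to-derivative trades, suffice to close the argument.
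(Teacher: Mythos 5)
Your high-level strategy — expressing $V_\sigma(t)-V_\sigma(t_0)$ via Duhamel, seeking a short-time bilinear Lipschitz bound in $Z$, and closing a bootstrap — is essentially the right one and is the same as the paper's. But as formulated the bootstrap is circular. For $t>t_0$ you have no a priori reason to believe that $V_\sigma(t)\in Z$: the space $C([0,T_0]:H^{N_0})$ does \emph{not} embed into $Z$, since the piece $2^{(1+\beta)j}\|\widetilde{\varphi}^{(k)}_j\cdot P_kf\|_{L^2}$ of $B^1_{k,j}$ enforces a spatial decay rate, uniformly in $j$, that $H^{N_0}$ does not provide. Consequently the hypothesis $\sup_{s\in[t_0,t_0+\tau]}\|V_\sigma(s)\|_Z\leq 2M_0$ cannot be verified on any right-neighborhood of $t_0$ before the conclusion is already in hand, and the bilinear Lipschitz estimate you intend to prove is conditional on exactly the quantity you are trying to control. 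The same problem blocks a direct Picard or Gronwall argument.

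The paper avoids this with the regularized norms
\[
\|f\|_{Z_{J}}:=\sup_{(k,j)\in\mathcal{J}}2^{\min(0,2J-2j)}\|\widetilde{\varphi}^{(k)}_j\cdot P_kf\|_{B_{k,j}},
\]
whose damping factor for $j>J$ yields $\|f\|_{Z_J}\lesssim_J\|f\|_{H^{N_0}}$, so $t\mapsto\|V_\sigma(t)\|_{Z_J}$ is automatically finite and continuous on $[0,T_0]$. One then proves
\[
\sup_{\sigma}\|V_\sigma(t')-V_\sigma(t)\|_{Z_{J}}\leq \widetilde{C}\,|t'-t|\,\Big(1+\sup_{s\in[t,t']}\sup_{\sigma}\|V_\sigma(s)\|_{Z_{J}}\Big)^2,
\]
with $\widetilde{C}$ \emph{uniform in $J$} and depending only on $T_0$, $\sup_{\sigma,t}\|U_\sigma(t)\|_{H^{N_0}}$, and $\sup_\sigma\|V_\sigma(t_0)\|_Z$. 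This closes a genuine bootstrap in $Z_J$ for each fixed $J$, and passing $J\to\infty$ (using $\|\cdot\|_{Z_J}\uparrow\|\cdot\|_Z$) delivers both \eqref{data2} and the continuity claim. Without this device (or an equivalent truncation) your argument does not get started; this is the key missing ingredient. A smaller inaccuracy in your sketch: the factor $s\,\partial_\xi\Phi$ produced by the weight-to-derivative trade is $O(T_0)$, not $O(\tau)$, since $s\in[t_0,t_0+\tau]$ and $t_0$ may be as large as $T_0$; the $|t-t_0|$ smallness comes from the length of the integration interval, and the $O(T_0)$ factor is simply absorbed into the constant.
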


The proof of Proposition \ref{Norm0} is very similar to the proof of Proposition 2.4 in \cite{IoPa2}. For any integer $J\geq 0$ and $f\in H^{N_0}$ we define
\begin{equation*}
\|f\|_{Z_{J}}:=\sup_{(k,j)\in\mathcal{J}}2^{\min(0,2J-2j)}\|\phii^{(k)}_j(x)\cdot P_kf(x)\|_{B_{k,j}},
\end{equation*}
compare with Definition \ref{MainDef}, and notice that
\begin{equation*}
 \|f\|_{Z_{J}}\leq\|f\|_{Z},\qquad \|f\|_{Z_{J}}\lesssim_J\|f\|_{H^{N_0}}.
\end{equation*}
The main point is show that if $t\leq t'\in [0,T_0]\cap [t_0,t_0+1]$ and $J\in\mathbb{Z}_+$ then
\begin{equation*}
 \sup_{\sigma\in\{e,i,b\}}\|e^{it'\Lambda_\sigma}U_\sigma(t')-e^{it\Lambda_\sigma}U_\sigma(t)\|_{Z_{J}}\leq \widetilde{C} |t'-t|(1+\sup_{s\in[t,t']}\sup_{\sigma\in\{e,i,b\}}\|e^{is\Lambda_\sigma}U_\sigma(s)\|_{Z_{J}})^2,
\end{equation*}
with a suitable constant $\widetilde{C}$ that may depend only on 
\begin{equation*}
T_0,\qquad\sup_{\sigma\in\{e,i,b\}}\sup_{t\in[0,T_0]}\|U_\sigma(t)\|_{H^{N_0}},\qquad\sup_{\sigma\in\{e,i,b\}}\|e^{it_0\Lambda_\sigma}U_\sigma(t_0)\|_Z.
\end{equation*}
This is very similar to the proof of the corresponding estimate (3.2) in \cite{IoPa2}, and we refer the reader there for the details.

The key proposition in the paper is the following bootstrap estimate:

\begin{proposition}\label{Norm}
Assume $N_0=10^4$, $T_0\geq 0$, and $U=(U_e,U_i,U_b)\in C([0,T_0]:H^{N_0})$ is a solution of the system of equations \eqref{pla51}-\eqref{pla52}.
Assume that
\begin{equation}\label{sec2}
\sup_{t\in [0,T_0]}\sup_{\sigma\in\{e,i,b\}}\|e^{it\Lambda_\sigma}U_\sigma(t)\|_{H^{N_0}\cap Z}\leq\delta_1\leq 1.
\end{equation}
Then
\begin{equation}\label{sec3}
\sup_{t\in [0,T_0]}\sup_{\sigma\in\{e,i,b\}}\|e^{it\Lambda_\sigma}U_\sigma(t)-U_\sigma(0)\|_{Z}\lesssim \delta_1^2,
\end{equation}
where the implicit constant in \eqref{sec3} may depend only on the constants $T,\varepsilon,C$.
\end{proposition}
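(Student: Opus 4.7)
The plan is to work at the level of the profiles $V_\sigma(t):=e^{it\Lambda_\sigma}U_\sigma(t)$, $\sigma\in\{e,i,b\}$, which by hypothesis \eqref{sec2} satisfy $\|V_\sigma(t)\|_{H^{N_0}\cap Z}\leq \delta_1$. Applying $e^{it\Lambda_\sigma}$ to \eqref{pla51} gives $\partial_t V_\sigma(t)=e^{it\Lambda_\sigma}\mathcal{N}_\sigma(t)$, and writing each $U_\mu$ in \eqref{pla52} in terms of its corresponding profile (with conjugate profiles evolving with the opposite phase), Duhamel's formula yields
\begin{equation*}
\mathcal{F}\bigl(V_\sigma(t)-V_\sigma(0)\bigr)(\xi)=c\sum_{\mu,\nu\in\mathcal{I}_0}\int_0^t\!\!\int_{\mathbb{R}^3} e^{is\Phi_{\sigma;\mu,\nu}(\xi,\eta)} m_{\sigma;\mu,\nu}(\xi,\eta)\widehat{V_\mu}(\xi-\eta,s)\widehat{V_\nu}(\eta,s)\,d\eta\,ds,
\end{equation*}
with phases $\Phi_{\sigma;\mu,\nu}(\xi,\eta)=\Lambda_\sigma(\xi)\pm\Lambda_\mu(\xi-\eta)\pm\Lambda_\nu(\eta)$ of the form \eqref{ModelPhase}. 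Proving \eqref{sec3} therefore reduces to bounding this time-integrated bilinear operator, applied to the profiles $V_\mu,V_\nu$, in the norm $Z$.

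To exploit the structure of $Z$, I would fix a target atom $\phii^{(k)}_j\cdot P_k$ on the left-hand side and, for each input profile, use the atomic decomposition \eqref{sec5.8}--\eqref{sec5.815} to write each $\phii^{(k_a)}_{j_a}P_{k_a}V_{\mu_a}$ as a sum of a strong ($B^1_{k_a,j_a}$) plus a weak ($B^2_{k_a,j_a}$) atom, $a=1,2$. The task then becomes bounding each atom-atom contribution to $\phii^{(k)}_j P_k(V_\sigma(t)-V_\sigma(0))$ in $B_{k,j}$ with losses summable over $(k_1,j_1,k_2,j_2)$ and integrable in $s$. Crude $L^2$ and Sobolev estimates combined with the $\mathcal{S}^{100}$ symbol control of Lemma \ref{tech1.3}, $L^2$ orthogonality, and finite speed of propagation first eliminate all but a few critical parameter ranges; this is the content of Proposition \ref{reduced2} and Lemma \ref{BigBound3}.

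On the surviving regions I would split according to the geometry of the phase $\Phi_{\sigma;\mu,\nu}$. Away from the space-time resonant set \eqref{STRes} one integrates by parts, either in $s$ (normal form) producing bilinear boundary terms and cubic integrands, or in $\eta$ (non-stationary phase), using the symbol estimates and the spatial localization of the atoms to gain powers of $s$; this is the analysis of Section \ref{normproof}. Near the resonant set one performs a stationary-phase analysis, guided by the classification in Appendix \ref{resonantsets} into the three cases A, B, C sketched in the introduction.

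The main obstacle is Case A, where a genuine $2$-sphere of resonances produces outputs that cannot be absorbed in the strong $B^1$ norm and force the introduction of the weak $B^2$ component of $Z$, together with the quantity $R^{-2}\|\widehat{h}\|_{L^1(B(\xi_0,R))}$ appearing in \eqref{sec5.4}. My plan here is to parametrize the resonant sphere via the map $p^{\sigma;\mu,\nu}$ of \eqref{DefOfP0}, \eqref{DefP1}--\eqref{DefP3}, perform a stationary-phase expansion transverse to the sphere, and close the $L^2$ output bound by the refined orthogonality argument of \cite{IoPa2} (Section \ref{normproof2}). For Case B (Section \ref{normproof3}) I would adapt the same orthogonality scheme, carefully tracking the degeneration as $\eta\to 0$ and exploiting the slower propagation speed of the singular mode to retain weak ellipticity of the phase. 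For Case C (Section \ref{normproof4}) I would invoke the null structure isolated in Remark \ref{tech1.33}, namely the extra $|\xi|$ factor in the ion multipliers $m_{i;\mu,\nu}$, to compensate for the rough multiplier produced by the normal form at the ion degeneracy $\xi=0$, combined with angular information relating $\xi,\eta,\xi-\eta$ and the finite speed of propagation estimates from Appendix \ref{lin}. Summing the atomic contributions, the summability built into the weights $2^{\alpha k}+2^{10k}$ and $2^{10|k|}$ in \eqref{sec5.3}--\eqref{sec5.4} then delivers the bound \eqref{sec3}.
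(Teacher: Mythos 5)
Your proposal is correct and follows essentially the same route as the paper: Duhamel for the profiles, reduction to the atom-by-atom bilinear estimate of Proposition \ref{reduced1} via the dyadic time partition, elimination of the easy ranges (Lemmas \ref{BigBound1}--\ref{NewBigBound1}, Proposition \ref{reduced2}), and then the split into the three resonant Cases A, B, C of Proposition \ref{PropABC} treated respectively via the parametrization $p^{\sigma;\mu,\nu}$ with the $L^2$ orthogonality argument, the degenerate-sphere analysis of Lemma \ref{desc2}, and the null-structure/angular argument exploiting the $|\xi|$ factor in $m_{i;\mu,\nu}$. The only detail worth making explicit in a write-up is the dyadic partition $\{q_m\}$ of the time interval in \eqref{nh2} and the role of the companion local-in-time estimate of Proposition \ref{Norm0} in the bootstrap, but neither changes the substance of the argument you outline.
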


We prove Proposition \ref{Norm} in sections \ref{normproof} and \ref{normproof2}. In the rest of this section we show how to use these propositions and the local theory to complete the proof of Theorem \ref{Main1}.

\subsection{Proof of Theorem \ref{Main1}}\label{MainSec5} Theorem \ref{Main1} is a consequence of Proposition \ref{Localexistence}, Proposition \ref{algebra}, Proposition \ref{Norm0}, Proposition \ref{Norm}, and a linear dispersive estimate. Indeed, assume that we start with data $(n^0,v^0,\rho^0, u^0, \tilde{E}^0,\tilde{B}_0)$ as in \eqref{maincond2}, where $\overline{\delta}$ is taken sufficiently small. Using first Proposition \ref{Localexistence} there is $T_1\geq 1$ and a unique solution $(n,v,\rho, u, \tilde{E},\tilde{B})\in C([0,T_1]:\widetilde{H}^{N_0})$ of the system \eqref{NewSys}, such that  $(n(0),v(0),\rho(0),u(0),\tilde{E}(0),\tilde{B}(0))=(n^0,v^0,\rho^0, u^0, \tilde{E}^0,\tilde{B}_0)$,
\begin{equation}\label{ki0}
\hbox{div}(E)(t)+n(t)-\rho(t)=0,\qquad \tilde{B}(t)=\varepsilon\nabla\times v(t)=-\nabla\times u(t),\qquad t\in[0,T_1],
\end{equation}
and
\begin{equation}\label{ki1}
\sup_{t\in[0,T_1]}\|(n(t),v(t),\rho(t),u(t),\tilde{E}(t),\tilde{B}(t))\|_{\widetilde{H}^{N_0}}\leq\delta_0^{3/4}.
\end{equation}

We can now apply Proposition \ref{algebra} and construct the complex variables $U_e,U_i,U_b\in C([0,T_1]:H^{N_0})$ as in \eqref{pla50}, which satisfy the dispersive system \eqref{pla51}-\eqref{pla52}, and the uniform bound
\begin{equation}\label{Uinfo}
\sup_{t\in[0,T_1]}\big(\|U_e(t)\|_{H^{N_0}}+\|U_i(t)\|_{H^{N_0}}+\|U_b(t)\|_{H^{N_0}}\big)\lesssim\delta_0^{3/4}.
\end{equation}
Moreover, using the definition \eqref{pla50}, the assumption \eqref{maincond2}, Lemma \ref{tech1}, and Lemma \ref{CZop}, we have
\begin{equation}\label{ki6}
\|U_e(0)\|_{Z}+\|U_i(0)\|_{Z}+\|U_b(0)\|_{Z}\lesssim\delta_0.
\end{equation}

We are now ready to apply Proposition \ref{Norm0}. Let $T_2$ denote the largest number in $(0,T_1]$ with the property that
\begin{equation*}
\sup_{t\in[0,T_2)}\big[\|e^{it\Lambda_e}U_e(t)\|_{Z}+\|e^{it\Lambda_i}U_i(t)\|_{Z}+\|e^{it\Lambda_b}U_b(t)\|_{Z}\big]\leq \delta_0^{3/4}.
\end{equation*}
Such a $T_2\in(0,T_1]$ exists, in view of \eqref{ki6} and Proposition \ref{Norm0}. We apply now Proposition \ref{Norm} on the intervals $[0,T_2(1-1/n)]$, $n=2,3,\ldots$, with $\delta_1\approx \delta_0^{3/4}$. It follows that 
\begin{equation*}
\sup_{t\in[0,T_2)}\big[\|e^{it\Lambda_e}U_e(t)\|_{Z}+\|e^{it\Lambda_i}U_i(t)\|_{Z}+\|e^{it\Lambda_b}U_b(t)\|_{Z}\big]\lesssim \delta_0.
\end{equation*}
Using again Proposition \ref{Norm0} it follows that $T_2=T_1$ and 
\begin{equation}\label{ki7}
\sup_{t\in[0,T_1]}\big[\|e^{it\Lambda_e}U_e(t)\|_{Z}+\|e^{it\Lambda_i}U_i(t)\|_{Z}+\|e^{it\Lambda_b}U_b(t)\|_{Z}\big]\lesssim \delta_0.
\end{equation}

We can now return to the physical variables $(n,v,\rho,u,\tilde{E},\tilde{B})$. Using the formulas in \eqref{pla50.2}, the bounds \eqref{ki7}, and the dispersive bounds \eqref{ok10} it follows that, for any $t\in[0,T_1]$ and $|\al|\leq 4$,
\begin{equation}\label{ki8}
\begin{split}
(1+t)^{1+\beta/2}\big[&\|D^\al_x n(t)\|_{L^\infty}+\|D^\al_x \rho(t)\|_{L^\infty}+\|D^\al_x v(t)\|_{L^\infty}\\
&+\|D^\al_x u(t)\|_{L^\infty}+\|D^\al_x \tilde{E}(t)\|_{L^\infty}+\|D^\al_x \tilde{B}(t)\|_{L^\infty}\big]\lesssim \delta_0.
\end{split}
\end{equation}
Recalling the definition \eqref{Addpla0.5} and the energy estimate \eqref{Addpla2}, it follows that
\begin{equation*}
\sup_{t\in[0,T_1]}\mathcal{E}_{N_0}(t)\lesssim \delta_0.
\end{equation*}
As a consequence, if the solution $(n,v,\rho,u,\tilde{E},\tilde{B})$ satisfies the bound \eqref{ki1} on some interval $[0,T_1]$, then it has to satisfy the stronger bound
\begin{equation*}
\sup_{t\in[0,T_1]}\|(n(t),v(t),\rho(t),u(t),\tilde{E}(t),\tilde{B}(t))\|_{\widetilde{H}^{N_0}}\lesssim\delta_0.
\end{equation*}
Therefore the solution can be extended globally, and the desired bound \eqref{mainconcl2.1} follows using also \eqref{ki8}. This completes the proof of Theorem \ref{Main1}.

\section{Proof of Proposition \ref{Norm}, I: nonresonant interactions}\label{normproof}

In this section we start the proof of Proposition \ref{Norm}. We derive first several new formulas describing the solutions $U_\sigma$. 

\subsection{Renormalizations} The equations \eqref{pla51}-\eqref{pla52} give
\begin{equation}\label{norm2}
[\partial_t+i\Lambda_\sigma(\xi)]\widehat{U_{\sigma}}(\xi,t)=c\sum_{\mu,\nu\in\mathcal{I}_0}\int_{\mathbb{R}^3}m_{\sigma;\mu,\nu}(\xi,\eta)\widehat{U_\mu}(\xi-\eta,t)\widehat{U_\nu}(\eta,t)\,d\eta,
\end{equation}
for $\sigma\in\{i,e,b\}$. For any $\mu\in\mathcal{I}_0$ let $\iota_\mu\in\{+,-\}$ denote its sign and let $\sigma_\mu\in\{i,e,b\}$ denote its component, i.e.
\begin{equation}\label{trivi}
\begin{split}
&\iota_{i+}=\iota_{e+}=\iota_{b+1}=\iota_{b+2}=\iota_{b+3}:=+,\qquad \iota_{i-}=\iota_{e-}=\iota_{b-1}=\iota_{b-2}=\iota_{b-3}:=-,\\
&\sigma_{i+}=\sigma_{i-}:=i,\qquad\sigma_{e+}=\sigma_{e-}:=e,\qquad \sigma_{b+1}=\sigma_{b+2}=\sigma_{b+3}=\sigma_{b-1}=\sigma_{b-2}=\sigma_{b-3}:=b.
\end{split}
\end{equation}
Let
\begin{equation*}
\begin{split}
&V_{\sigma}(t):=e^{it\Lambda_\sigma}U_{\sigma}(t),\qquad\sigma\in\{i,e,b\},\\
&\widetilde{\Lambda}_{\mu}:=\iota_\mu\Lambda_{\sigma_\mu},\qquad V_{\mu}(t):=e^{it\widetilde{\Lambda}_\mu}U_{\mu}(t),\qquad \mu\in\mathcal{I}_0.
\end{split}
\end{equation*}
The equations \eqref{norm2} are equivalent to
\begin{equation}\label{norm3}
\begin{split}
\frac{d}{dt}[\widehat{V_{\sigma}}(\xi,t)]&=c\sum_{\mu,\nu\in\mathcal{I}_0}\int_{\mathbb{R}^3}e^{it[\Lambda_\sigma(\xi)-\widetilde{\Lambda}_{\mu}(\xi-\eta)-\widetilde{\Lambda}_{\nu}(\eta)]}m_{\sigma;\mu,\nu}(\xi,\eta)\widehat{V_\mu}(\xi-\eta,t)\widehat{V_\nu}(\eta,t)\,d\eta\\
&=c\sum_{\mu,\nu\in\mathcal{I}_0}\mathcal{F}[Q^{\sigma;\mu,\nu}_t(V_\mu(t),V_\nu(t))](\xi),
\end{split}
\end{equation}
where, by definition,
\begin{equation}\label{norm4.0}
\mathcal{F}[Q^{\sigma;\mu,\nu}_s(f,g)](\xi):=\int_{\mathbb{R}^3}e^{is[\Lambda_\sigma(\xi)-\widetilde{\Lambda}_{\mu}(\xi-\eta)-\widetilde{\Lambda}_{\nu}(\eta)]}m_{\sigma;\mu,\nu}(\xi,\eta)\widehat{f}(\xi-\eta)\widehat{g}(\eta)\,d\eta.
\end{equation}
Therefore, for any $t\in[0,T_0]$ and $\sigma\in\{i,e,b\}$,
\begin{equation}\label{norm4}
\widehat{V_{\sigma}}(\xi,t)-\widehat{V_{\sigma}}(\xi,0)=c\sum_{\mu,\nu\in\mathcal{I}_0}\int_0^t\int_{\mathbb{R}^3}
e^{is[\Lambda_\sigma(\xi)-\widetilde{\Lambda}_{\mu}(\xi-\eta)-\widetilde{\Lambda}_{\nu}(\eta)]}m_{\sigma;\mu,\nu}(\xi,\eta)
\widehat{V_\mu}(\xi-\eta,s)\widehat{V_\nu}(\eta,s)\,d\eta ds
\end{equation}

The desired bound \eqref{sec3} is equivalent to proving that
\begin{equation}\label{nh1}
\|V_{\sigma}(t)-V_{\sigma}(0)\|_{Z}\lesssim \delta_1^2,
\end{equation}
for any $t\in[0,T_0]$ and any $\sigma\in\{i,e,b\}$. Given $t\in[0,T_0]$, we fix a suitable decomposition of the function $\mathbf{1}_{[0,t]}$, i.e. we fix functions $q_0,\ldots,q_{L+1}:\mathbb{R}\to[0,1]$, $|L-\log_2(2+t)|\leq 2$, with the properties
\begin{equation}\label{nh2}
\begin{split}
&\sum_{m=0}^{L+1}q_l(s)=\mathbf{1}_{[0,t]}(s),\qquad \mathrm{supp}\,q_0\subseteq [0,2], \qquad \mathrm{supp}\,q_{L+1}\subseteq [t-2,t],\qquad\mathrm{supp}\,q_m\subseteq [2^{m-1},2^{m+1}],\\
&q_m\in C^1(\mathbb{R})\qquad\text{ and }\qquad \int_0^t|q'_m(s)|\,ds\lesssim 1\qquad\text{ for }m=1,\ldots,L.
\end{split}
\end{equation}

Recall the conclusions of Lemma \ref{tech1.3}. Using also Lemma \ref{CZop} and the formula \eqref{norm4}, for \eqref{nh1} it suffices to prove the following proposition.

\begin{proposition}\label{reduced1} Assume $t\in[0,T_0]$ is fixed and define the functions $q_m$ as in \eqref{nh2}. For any $\sigma\in\{i,e,b\}$, $\mu,\nu\in\mathcal{I}_0$ we define the bilinear operators $T_{m}^{\sigma;\mu,\nu}$ by 
\begin{equation}\label{nh5}
\mathcal{F}\big[T_{m}^{\sigma;\mu,\nu}(f,g)\big](\xi):=\int_{\mathbb{R}}\int_{\mathbb{R}^3}e^{is[\Lambda_\sigma(\xi)-\widetilde{\Lambda}_{\mu}(\xi-\eta)-\widetilde{\Lambda}_{\nu}(\eta)]}q_m(s)\cdot \widehat{f}(\xi-\eta,s)\widehat{g}(\eta,s)\,d\eta ds.
\end{equation}
For any $\mu\in\mathcal{I}_0$ we define functions $f_\mu:\mathbb{R}^3\times[0,T_0]\to\mathbb{C}$,
\begin{equation}\label{nh4.5}
f_\mu:=\delta_1^{-1}Q_\mu V_\mu,
\end{equation}
where $Q_\mu f:=\mathcal{F}^{-1}(q_\mu\cdot \widehat{f})$ for some $q_\mu\in\mathcal{S}^{100}$ with $\|q_\mu\|_{\mathcal{S}^{100}}\leq 1$. We decompose
\begin{equation}\label{ok20}
f_\mu=\sum_{k'\in\mathbb{Z}}\sum_{j'\geq \max(-k',0)}P_{[k'-2,k'+2]}(\phii_{j'}^{(k')}\cdot P_{k'}f_\mu)=\sum_{(k',j')\in\mathcal{J}}f_{k',j'}^\mu.
\end{equation}
For any $k\in\mathbb{Z}$ let
\begin{equation*}
k_i:=\min(k,0),\qquad k_e=k_b:=0.
\end{equation*}
Then
\begin{equation}\label{nh3}
\sum_{(k_1,j_1),(k_2,j_2)\in\mathcal{J}}(1+2^k)2^{k_\sigma}\big\|\widetilde{\varphi}^{(k)}_j\cdot P_kT_{m}^{\sigma;\mu,\nu}(f^\mu_{k_1,j_1},f^\nu_{k_2,j_2})\big\|_{B_{k,j}}\lesssim 2^{-\beta^4 m}
\end{equation}
for any fixed
\begin{equation}\label{nh4}
\sigma\in\{i,e,b\},\quad\mu,\nu\in\mathcal{I}_0,\quad (k,j)\in \mathcal{J},\quad m\in\{0,\ldots,L+1\}.
\end{equation}
\end{proposition}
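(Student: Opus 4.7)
\medskip

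The plan is to reduce the double sum in \eqref{nh3} to a finite set of dyadic configurations and to extract the $2^{-\beta^4 m}$ decay from either non-stationary phase arguments or trivial $L^2$-smallness of atoms far from the origin in physical space, deferring the genuinely space-time resonant contributions to Sections \ref{normproof2}--\ref{normproof4}. First I would record the a priori information on the atomic pieces $f^\mu_{k',j'}$. From \eqref{sec2}, the $H^{N_0}$ bound yields Bernstein-type estimates $\|f^\mu_{k',j'}\|_{L^2}\lesssim 2^{-N_0 k'_+}$, while the $Z$-norm decomposition \eqref{sec5.8}--\eqref{sec5.82} gives $2^{(1-\beta)j'}\|f^\mu_{k',j'}\|_{L^2}\lesssim 2^{-8|k'|}(2^{\alpha k'}+2^{10k'})^{-1}$ together with the pointwise bound on $\widehat{f^\mu_{k',j'}}$ and the localized $L^1$ bound on balls. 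By the linear dispersive estimate of Lemma \ref{tech1.5}, atoms in $Z$ enjoy an integrable-in-time $L^\infty$ bound of the form $\|e^{-is\widetilde{\Lambda}_\mu}f^\mu_{k',j'}\|_{L^\infty}\lesssim 2^{-(1+\beta/2)m}$ up to appropriate factors of $k'$, which is the fundamental source of decay in~$m$.

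Next I would carry out the standard dyadic reductions. Frequency support forces $k_1,k_2,k$ to satisfy the usual Littlewood--Paley trichotomy (high-high-low, low-high, or comparable), and a crude $H^{N_0}\cdot L^2$ Bernstein bound eliminates all contributions with $\max(k,k_1,k_2)$ larger than a small multiple of $m$, since $N_0$ is very large. Similarly, when $\max(j_1,j_2)$ is much larger than $m$, the $L^2$-smallness $2^{-(1-\beta)\max(j_1,j_2)}$ dominates the trivial time-integration factor $2^m$, yielding summable decay. This leaves a window $k_1,k_2,j_1,j_2\lesssim m$ to analyze, and reduces \eqref{nh3} to a finite (polynomial in $m$) set of configurations; a factor $2^{-\beta^4 m}$ can then absorb this log-loss. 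I anticipate that this ``reduction step'' is formalized as Proposition~\ref{reduced2} and Lemma~\ref{BigBound3}, which I would invoke as a black box.

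In the remaining window, the dominant strategy is a multilinear stationary/non-stationary phase analysis of the oscillatory integral in \eqref{nh5} with phase $\Phi(\xi,\eta)=\Lambda_\sigma(\xi)-\widetilde{\Lambda}_\mu(\xi-\eta)-\widetilde{\Lambda}_\nu(\eta)$. I would split into three regimes according to the size of $(|\Phi|,|\nabla_\eta\Phi|)$ relative to thresholds of the form $2^{-m/2+\delta m}$. In the regime where $|\Phi|$ is large, integrating by parts in $s$ using $q_m\cdot e^{is\Phi}=\partial_s(q_m e^{is\Phi}/(i\Phi))-(q_m'/(i\Phi))e^{is\Phi}$, together with the bound on $\int|q_m'|\,ds$ from \eqref{nh2}, gains a factor $2^{-m}\cdot|\Phi|^{-1}$. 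In the regime where $|\Phi|$ is small but $|\nabla_\eta\Phi|$ is large, integration by parts in~$\eta$, using the physical localization of $f^\mu_{k_1,j_1}$ and $f^\nu_{k_2,j_2}$ at scales $2^{j_1}$ and $2^{j_2}$ to absorb the derivatives landing on the amplitudes, gains factors of $2^{-(m-\max(j_1,j_2))}|\nabla_\eta\Phi|^{-1}$. Both gains are quantified by the structure of the dispersion relations in Lemma~\ref{tech99}. Finally, in the regime where both $|\Phi|$ and $|\nabla_\eta\Phi|$ are small, one is confined to a neighborhood of the space-time resonant set, and the contribution must be estimated by combining the ``weak'' $B^2_{k,j}$ norm with the localized $L^1$-ball bound from \eqref{sec5.4} and an orthogonality argument on foliations of the resonant spheres.

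The main obstacle will be precisely this last regime, which is a preview of the three problematic resonant configurations (Cases A, B, C) advertised in the introduction. In this section I would only carry out the non-resonant portion of the argument, obtaining the bound \eqref{nh3} on the part of phase space where at least one of $|\Phi|$ or $|\nabla_\eta\Phi|$ is bounded below by the threshold, and show that any residual loss coming from the roughness of $\Lambda_i$ near the origin or from the factor $(1+2^k)2^{k_\sigma}$ in front of the norm is controlled by the extra frequency localization in the $B^1_{k,j}$ and $B^2_{k,j}$ norms. The genuinely resonant contributions, where the null-structure from Lemma \ref{tech1.3} and the finer geometric analysis of Appendix \ref{resonantsets} become indispensable, are postponed to the subsequent three sections.
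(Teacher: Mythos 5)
Your high-level outline is mostly aligned with the paper: the paper does proceed via dyadic reductions, integration by parts in $s$ in the time-nonresonant region $\{|\Phi^{\sigma;\mu,\nu}|\gtrsim 1\}$, integration by parts in $\eta$ in the space-nonresonant region $\{|\Xi^{\mu,\nu}|\gtrsim \kappa\}$, and deferral of the genuinely resonant region $\{|\Phi|,|\Xi|\text{ small}\}$ to the subsequent three sections (Cases A, B, C). However, there is a genuine gap: your proposal only discusses splitting phase space by the sizes of $|\Phi|$ and $|\nabla_\eta\Phi|$, but this does not account for the spatial localization factor $\widetilde{\varphi}^{(k)}_j$ in the norm $\|\widetilde{\varphi}^{(k)}_j\cdot P_kT_m^{\sigma;\mu,\nu}(\cdot,\cdot)\|_{B_{k,j}}$. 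When $j$ is large relative to $m$ and $\max(j_1,j_2)$ — which is exactly the regime handled in the paper's Lemma \ref{BigBound3} — the crucial mechanism is non-stationary phase \emph{in $\xi$} (the Fourier inversion variable), using that $|\nabla_\xi[x\cdot\xi+s\Phi]|\geq |x|-C\,s\gtrsim 2^j$ on the support of $\widetilde{\varphi}^{(k)}_j$. This is a finite speed of propagation argument that neither integration by parts in $s$ nor in $\eta$ captures; without it, the $2^{(1+\beta)j}$ and $2^{\gamma j}$ weights in the $B_{k,j}$ norms cannot be controlled when $j$ is much larger than $m$.

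A second, smaller issue: you propose the same threshold $2^{-m/2+\delta m}$ for both $|\Phi|$ and $|\nabla_\eta\Phi|$. The paper instead uses an $m$-\emph{independent} constant threshold for $|\Phi|$ (the cutoff $\chi_T^{\sigma;\mu,\nu}(\xi,\eta)=\varphi_{[1,\infty)}(2^{D^2+\max(0,k_1,k_2)}\Phi^{\sigma;\mu,\nu})$ in \eqref{DefOfN}) and only an $m$-dependent threshold $\kappa\sim 2^{-m/2}$ for $|\Xi^{\mu,\nu}|$. Using a $2^{-m/2}$ threshold for $|\Phi|$ is not fatal since the target bound $2^{-\beta^4 m}$ is weak, but it makes several of the intermediate estimates (in particular the ones involving $\partial_s\widehat{f^\mu_{k_1,j_1}}$, where one loses a factor of $2^m$ from the time integral) considerably tighter and would require a separate check that the budget still closes. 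Finally, the paper's Lemmas \ref{BigBound4} and \ref{BigBound5} contain a non-trivial case analysis in the very low output frequency regime $k\to-\infty$, exploiting the weight $(1+2^k)2^{k_\sigma}$ and the null structure of $m_{i;\mu,\nu}$ from Lemma~\ref{tech1.3}; your proposal mentions these issues in passing but does not offer a mechanism beyond ``extra frequency localization'', which on its own is insufficient near the zero frequency where $\Lambda_i$ degenerates.
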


It follows from the definition that
\begin{equation}\label{nh9.2}
\begin{split}
&T^{\sigma;\mu,\nu}_m(f,g)=\int_\mathbb{R}q_m(s)\widetilde{T}^{\sigma;\mu,\nu}_s(f(s),g(s))\,ds,\\
&\mathcal{F}\big[\widetilde{T}^{\sigma;\mu,\nu}_s(f',g')\big](\xi):=\int_{\mathbb{R}^3}e^{is[\Lambda_\sigma(\xi)-\widetilde{\Lambda}_{\mu}(\xi-\eta)-\widetilde{\Lambda}_{\nu}(\eta)]}\cdot \widehat{f'}(\xi-\eta)\widehat{g'}(\eta)\,d\eta.
\end{split}
\end{equation}
For $\sigma\in\{i,e,b\}$ and $\mu,\nu\in\mathcal{I}_0$ we define also the smooth functions $\Phi^{\sigma;\mu,\nu}:\mathbb{R}^3\times\mathbb{R}^3\to\mathbb{R}$ and $\Xi^{\mu,\nu}:\mathbb{R}^3\times\mathbb{R}^3\to\mathbb{R}^3$,
\begin{equation}\label{PsiXi}
\begin{split}
\Phi^{\sigma;\mu,\nu}(\xi,\eta)&:=\Lambda_\sigma(\xi)-\widetilde{\Lambda}_{\mu}(\xi-\eta)-\widetilde{\Lambda}_{\nu}(\eta)=\Lambda_\sigma(\xi)-\iota_\mu\Lambda_{\sigma_\mu}(\xi-\eta)-\iota_\nu\Lambda_{\sigma_\nu}(\eta),\\
\Xi^{\mu,\nu}(\xi,\eta)&:=(\nabla_\eta\Phi^{\sigma;\mu,\nu})(\xi,\eta)=-\iota_\mu(\nabla\Lambda_{\sigma_\mu})(\eta-\xi)-\iota_\nu(\nabla\Lambda_{\sigma_\nu})(\eta).
\end{split}
\end{equation}

In view of Lemma \ref{CZop} and the main hypothesis \eqref{sec2}, we have
\begin{equation}\label{nh5.5}
\sup_{t\in[0,T_0]}\|f_\mu(t)\|_{H^{N_0}\cap Z}\lesssim 1.
\end{equation}
for functions $f_\mu$ defined as in \eqref{nh4.5}. Letting
\begin{equation}\label{nh5.6}
Ef^\mu_{k',j'}(s):=e^{-is\widetilde{\Lambda}_\mu}f^\mu_{k',j'}(s),
\end{equation}
it follows from Lemma \ref{tech1.5} that for any $\mu\in\mathcal{I}_0$ and $s\in[0,T_0]$,
\begin{equation}\label{nh9}
\begin{split}
&\sum_{j'\geq \max(-k',0)}(\|Ef^\mu_{k',j'}(s)\|_{L^2}+\|f^\mu_{k',j'}(s)\|_{L^2})\lesssim \min(2^{-(N_0-1)k'},2^{(1+\beta-\alpha)k'}),\\
&\sum_{j'\geq \max(-k',0)}\|Ef^\mu_{k',j'}(s)\|_{L^\infty}\lesssim \min(2^{-6k'},2^{(1/2-\beta-\alpha)k'})(1+s)^{-1-\beta},\\
&\sup_{\xi\in\mathbb{R}^3}\big|D^\rho_\xi\widehat{f^\mu_{k',j'}}(\xi,s)\big|\lesssim_{|\rho|} (2^{\alpha k'}+2^{10k'})^{-1}\cdot 2^{-(1/2-\beta)\widetilde{k'}}2^{|\rho|j'}.
\end{split}
\end{equation}
Sometimes, we will also need the more precise bounds
\begin{equation}\label{nh9.1}
\|Ef^\mu_{k',j'}(s)\|_{L^2}+\|f^\mu_{k',j'}(s)\|_{L^2}\lesssim (2^{\alpha k'}+2^{10k'})^{-1}2^{2\beta\widetilde{k'}}2^{-(1-\be)j'},
\end{equation}
and
\begin{equation}\label{equl}
\|Ef_{k',j'}^\mu(s)\|_{L^\infty}\lesssim \min(2^{\beta k'},2^{-6k'})(1+s)^{-(5/4-10\beta)}2^{(1/4-11\beta)j'},
\end{equation}
for any $(k',j')\in\mathcal{J}$. The last bound follows using \eqref{mk15.6}--\eqref{mk15.67}, and recalling that $\alpha\in[0,\beta]$.

To integrate by parts in time, i.e. apply the method of normal forms, we need suitable information on the derivatives $\partial_sf^\mu_{k',j'}$. It follows from \eqref{norm3} and Lemma \ref{ders} that, for any $(k',j')\in\mathcal{J}$, $\mu\in\mathcal{I}_0$, and $s\in[0,T_0]$,
\begin{equation}\label{ok50repeat}
\|(\partial_sf^\mu_{k',j'})(s)\|_{L^2}\lesssim 2^{k'_\sigma}\min[(1+s)^{-1-\beta},2^{3k'/2}]\cdot\min [1,2^{-(N_0-5)k'}].
\end{equation}
Moreover
\begin{equation}\label{derv1repeat}
\text{ if }\quad 2^{k'}\in[2^{-D},2^D]\text{ and }\sigma\in\{e,b\}\quad\text{ or }\quad 2^{k'}\in(0,2^D]\text{ and }\sigma=i,
\end{equation}
then
\begin{equation}\label{derv2repeat}
\|(\partial_s\widehat{f^\mu_{k',j'}})(s)\|_{L^\infty}\lesssim (1+s)^{-1+\beta/10}2^{-k'}.
\end{equation}

\subsection{Proof of Proposition \ref{reduced1}} We will prove the key bound \eqref{nh3} in several steps. 
The main ingredients in the proof are the estimates \eqref{nh5.5}--\eqref{nh9.1} above, together with \eqref{ok50}. 
In this subsection we start by considering some of the easier cases, and reduce matters to proving Proposition \ref{reduced2} below. 
In all the cases analyzed in this subsection we can in fact control the stronger norm $B^{\sigma,1}_{k,j}$, see Definition \ref{MainDef}, instead of the required $B^\sigma_{k,j}$ norm.

\begin{lemma}\label{BigBound1}
With $D=D(\varepsilon,T,C_b)$ sufficiently large, the estimate 
\begin{equation}\label{nh7.5}
\sum_{(k_1,j_1),(k_2,j_2)\in\mathcal{J}}(1+2^k)2^{k_\sigma}\big\|\widetilde{\varphi}^{(k)}_j\cdot P_kT_{m}^{\sigma;\mu,\nu}(f^\mu_{k_1,j_1},f^\nu_{k_2,j_2})\big\|_{B^{1}_{k,j}}\lesssim 2^{-\beta^4 m}
\end{equation}
holds if
\begin{equation}\label{nh7}
j\leq \beta m/2+N'_0k_++D^2,\qquad\text{ where }\qquad N'_0:=2N_0/3-10.
\end{equation}
\end{lemma}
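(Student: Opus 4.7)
The plan is to derive both components of the $B^{1}_{k,j}$-norm — the weighted $L^2$ term and the $L^\infty$-norm of the Fourier transform — from a single bilinear $L^2$ estimate on $P_{k}T_{m}^{\sigma;\mu,\nu}$, exploiting the hypothesis \eqref{nh7} to absorb the spatial weight factors into the dispersive time decay. No normal-form integration by parts or resonance analysis is required here: the smallness of $j$ relative to $m$ does all the work.

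The central ingredient is the bilinear $L^2$ estimate, obtained by rewriting $\widetilde{T}_{s}(f,g)=e^{is\Lambda_{\sigma}}\big(e^{-is\widetilde{\Lambda}_{\mu}}f\cdot e^{-is\widetilde{\Lambda}_{\nu}}g\big)$ (which follows from \eqref{nh9.2} by extracting the phases), using unitarity of $e^{is\Lambda_{\sigma}}$ in $L^{2}$, and applying H\"older in $L^{\infty}\cdot L^{2}$. The summed-in-$j'$ estimates of \eqref{nh9} then yield, after integrating $q_{m}(s)$ against $(1+s)^{-1-\beta}$ over $[2^{m-1},2^{m+1}]$,
\begin{equation*}
\sum_{j_{1},j_{2}}\|P_{k}T_{m}^{\sigma;\mu,\nu}(f^{\mu}_{k_{1},j_{1}},f^{\nu}_{k_{2},j_{2}})\|_{L^{2}}\lesssim 2^{-\beta m}\min\big(2^{-6k_{1}},2^{(1/2-\beta-\alpha)k_{1}}\big)\min\big(2^{-(N_{0}-1)k_{2}},2^{(1+\beta-\alpha)k_{2}}\big),
\end{equation*}
with the symmetric bound obtained by swapping the $L^{\infty}/L^{2}$ assignment. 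In regimes where one input frequency is much larger than $k$, I would place the high-frequency factor in $L^{2}$ in order to exploit the Sobolev decay $2^{-(N_{0}-1)k_{i}}$.

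This $L^{2}$ bound feeds both ingredients of $B^{1}_{k,j}$. For the $L^{2}$ term, $|\widetilde{\varphi}^{(k)}_{j}|\leq 1$ gives the inequality directly. For the Fourier term, the support property of $\widetilde{\varphi}^{(k)}_{j}$ together with Cauchy--Schwarz yields
\begin{equation*}
\|\widehat{\widetilde{\varphi}^{(k)}_{j}P_{k}T_{m}}\|_{L^{\infty}}\leq \|\widetilde{\varphi}^{(k)}_{j}P_{k}T_{m}\|_{L^{1}}\leq\|\widetilde{\varphi}^{(k)}_{j}\|_{L^{2}}\|P_{k}T_{m}\|_{L^{2}}\lesssim 2^{3j/2}\|P_{k}T_{m}\|_{L^{2}}.
\end{equation*}
Under the hypothesis $j\leq \beta m/2+N'_{0}k_{+}+D^{2}$, the critical exponents $(1+\beta)j-\beta m$ and $3j/2-\beta m$ are both bounded above by $-\beta m/4+O(N'_{0})k_{+}+O(D^{2})$. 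Including the prefactor $(1+2^{k})2^{k_{\sigma}}(2^{\alpha k}+2^{10k})\lesssim 2^{11k_{+}}$ appearing in \eqref{nh3} together with the $B^{1}_{k,j}$ weight, the worst-case total loss is $2^{(11+3N'_{0}/2)k_{+}}=2^{(N_{0}-4)k_{+}}$ (from the Fourier component), which is comfortably absorbed by the Sobolev decay $2^{-(N_{0}-1)k_{i}}$ with a safety margin of three powers.

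Summation over $(k_{1},k_{2})$ converges geometrically once the frequency-convolution constraint is imposed, separating the three regimes: balanced ($k_{1}\approx k_{2}$), unbalanced ($|k_{1}-k_{2}|\geq 2$ with $\max(k_{1},k_{2})=k+O(1)$), and high-high-to-low ($k_{1}\sim k_{2}\gg k$). The main obstacle I anticipate is combinatorial rather than analytic: selecting the correct $L^{2}/L^{\infty}$ assignment in each regime so that the Sobolev decay is always applied to the largest frequency, and choosing $D=D(\varepsilon,T,C_{b})$ large enough to absorb the $D^{2}$ term and all implicit constants. No delicate analytic step enters this \textquotedblleft small-$j$\textquotedblright\ regime — the harder work, involving normal forms, stationary phase, and the weak $B^{2}$ atoms, will only be needed in the complementary regime $j>\beta m/2+N'_{0}k_{+}+D^{2}$.
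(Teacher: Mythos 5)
Your proposal matches the paper's proof: both reduce the $B^{1}_{k,j}$-norm to $2^{3j/2}\|\cdot\|_{L^2}$ via the spatial support of $\widetilde{\varphi}^{(k)}_j$, apply Plancherel and H\"older with the summed-in-$j'$ dispersive decay from \eqref{nh9} to obtain the $2^{-\beta m}$ gain and Sobolev decay in the larger input frequency, and then absorb the $2^{3j/2}$ loss via the hypothesis \eqref{nh7}. The only cosmetic difference is that the paper consolidates both pieces of the $B^{1}$-norm into the single estimate \eqref{ok1}, which carries the additional harmless factor $2^{(1/2-\beta)\widetilde{k}}\le 1$.
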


\begin{proof}[Proof of Lemma \ref{BigBound1}] We observe that, in view of Definition \ref{MainDef},
\begin{equation}\label{ok1}
\|\phii^{(k)}_j\cdot P_kh\|_{B_{k,j}^1}\lesssim (2^{\alpha k}+2^{10 k})\cdot 2^{3j/2}2^{(1/2-\beta)\widetilde{k}}\|\phii^{(k)}_j\cdot P_kh\|_{L^2}.
\end{equation}
Therefore it suffices to prove that
\begin{equation}\label{nh8}
\sum_{(k_1,j_1),(k_2,j_2)\in\mathcal{J}}(1+2^{k})(2^{\alpha k}+2^{10 k})2^{3j/2}2^{(1/2-\beta)\widetilde{k}}\big\|P_kT_{m}^{\sigma;\mu,\nu}(f_{k_1,j_1}^\mu,f_{k_2,j_2}^\nu)\big\|_{L^2}\lesssim 2^{-\beta^4 m}.
\end{equation}

Recalling the definition \eqref{nh5.6}, it is easy to see that
\begin{equation*}
\mathcal{F}\big[P_kT_{m}^{\sigma;\mu,\nu}(f_{k_1,j_1}^\mu,f_{k_2,j_2}^\nu)\big](\xi)=\int_{\mathbb{R}}\int_{\mathbb{R}^3}\varphi_k(\xi)e^{is\Lambda_\sigma(\xi)}q_m(s)\widehat{Ef_{k_1,j_1}^\mu}(\xi-\eta,s)\widehat{Ef^\nu_{k_2,j_2}}(\eta,s)\,d\eta ds.
\end{equation*}
Therefore, using Plancherel theorem,
\begin{equation}\label{nh9.5}
\begin{split}
&\big\|P_kT_{m}^{\sigma;\mu,\nu}(f_{k_1,j_1}^\mu,f_{k_2,j_2}^\nu)\big\|_{L^2}\\
&\lesssim \min\Big(\int_{\mathbb{R}}q_m(s)\|Ef_{k_1,j_1}^\mu(s)\|_{L^2}\|Ef_{k_2,j_2}^\nu(s)\|_{L^\infty}\,ds,\int_{\mathbb{R}}q_m(s)\|Ef_{k_1,j_1}^\mu(s)\|_{L^\infty}\|Ef_{k_2,j_2}^\nu(s)\|_{L^2}\,ds\Big).
\end{split}
\end{equation}
Using now \eqref{nh9} and recalling the properties of the functions $q_m$ (see \eqref{nh2}), 
\begin{equation}\label{nh10}
\sum_{(k_1,j_1),(k_2,j_2)\in\mathcal{J}}(1+2^{k})\big\|P_kT_{m}^{\sigma;\mu,\nu}(f_{k_1,j_1}^\mu,f_{k_2,j_2}^\nu)\big\|_{L^2}\lesssim 2^{-(N_0-4)k_+}2^{-\beta m}.
\end{equation}

It follows that the left-hand side of \eqref{nh8} is dominated by
\begin{equation*}
2^{-\beta m}2^{(1/2-\beta+\alpha)k}2^{3j/2}
\end{equation*}
when $k\leq 0$, and by
\begin{equation*}
2^{-(N_0-15)k}2^{-\beta m}2^{3j/2}
\end{equation*}
when $k\geq 0$. The bound \eqref{nh8} follows if $j\leq \beta m/2+(2N_0/3-10)k_++D^2$, as desired.
\end{proof}

\begin{lemma}\label{BigBound2}
Assume that
\begin{equation}\label{nh30}
j\geq \beta m/2+N'_0k_++D^2.
\end{equation}
Then, with the same notation as before,
\begin{equation}\label{nh31}
\sum_{(k_1,j_1),(k_2,j_2)\in\mathcal{J},\,\,\max(k_1,k_2)\geq j/N'_0}(1+2^k)2^{k_\sigma}\big\|\widetilde{\varphi}^{(k)}_j\cdot P_kT_{m}^{\sigma;\mu,\nu}(f_{k_1,j_1}^\mu,f_{k_2,j_2}^\nu)\big\|_{B^{1}_{k,j}}\lesssim 2^{-\beta^4 m},
\end{equation}
\begin{equation}\label{nh30.7}
\sum_{(k_1,j_1),(k_2,j_2)\in\mathcal{J},\,\min(k_1,k_2)\leq -10j}(1+2^k)2^{k_\sigma}\big\|\widetilde{\varphi}^{(k)}_j\cdot P_kT_{m}^{\sigma;\mu,\nu}(f_{k_1,j_1}^\mu,f_{k_2,j_2}^\nu)\big\|_{B^{1}_{k,j}}\lesssim 2^{-\beta^4 m},
\end{equation}
and
\begin{equation}\label{nh30.8}
\sum_{(k_1,j_1),(k_2,j_2)\in\mathcal{J},\,\max(j_1,j_2)\geq 10j}(1+2^k)2^{k_\sigma}\big\|\widetilde{\varphi}^{(k)}_j\cdot P_kT_{m}^{\sigma;\mu,\nu}(f_{k_1,j_1}^\mu,f_{k_2,j_2}^\nu)\big\|_{B^{1}_{k,j}}\lesssim 2^{-\beta^4 m}.
\end{equation}
\end{lemma}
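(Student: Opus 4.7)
The strategy is to reduce each of the three estimates to an $L^2$ bound on $P_kT_m^{\sigma;\mu,\nu}$ via the inequality \eqref{ok1}, and then control that $L^2$ norm by Plancherel together with the Hölder pairing $L^2\cdot L^\infty$ used in \eqref{nh9.5}. The $q_m$ integration contributes $2^m$ and the $L^\infty$ decay in \eqref{nh9} contributes $(1+s)^{-1-\beta}$, so a factor $2^{-\beta m}$ is available for free in every case. The task is then to identify, in each of the three ranges, a single \emph{extreme} parameter that produces enough smallness to absorb the weight $2^{3j/2}\cdot 2^{11k_+}$ coming from the $B^1_{k,j}$ norm together with $(1+2^k)2^{k_\sigma}$.

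For \eqref{nh31}, assume WLOG $k_1=\max(k_1,k_2)\geq j/N'_0$. Placing $L^2$ on the high-frequency input and using the summed $H^{N_0}$ bound in \eqref{nh9} yields $\sum_{j_1}\|f^\mu_{k_1,j_1}\|_{L^2}\lesssim 2^{-(N_0-1)k_1}$; summing the $L^\infty$ bound on the other factor gives $\min(2^{-6k_2},2^{(1/2-\beta-\alpha)k_2})$, which is summable in $k_2$. Frequency support forces $k\leq k_1+O(1)$, so the weight contributes at most $2^{11k_1+3j/2}$ and the net factor is $2^{-(N_0-12)k_1+3j/2}$. The arithmetic identity $(N_0-12)/N'_0=3/2+3/N'_0$ (from $N'_0=2N_0/3-10$), together with $k_1\geq j/N'_0$, gives a residual $2^{-(3/N'_0)j}\leq 2^{-(3\beta/(2N'_0))m}$, which dominates $2^{-\beta^4 m}$ since $\beta^3\ll 1/N'_0$. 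The high-high-to-low subcase $k_1\sim k_2\gg k$ only gains the extra factor $2^{-6k_2}$ and is strictly easier.

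For \eqref{nh30.8}, assume WLOG $j_1=\max(j_1,j_2)\geq 10j$. The fixed-$(k_1,j_1)$ bound \eqref{nh9.1} gives
\[
\|f^\mu_{k_1,j_1}\|_{L^2}\lesssim (2^{\alpha k_1}+2^{10 k_1})^{-1}\,2^{2\beta\widetilde{k_1}}\,2^{-(1-\beta)j_1},
\]
and pairing with the summed $L^\infty$ bound on $Ef^\nu_{k_2,j_2}$ produces $2^{-\beta m}\cdot 2^{-(1-\beta)j_1}$ times a quantity summable in $k_1$ and in $(k_2,j_2)$. The $k_1$-sum converges: for $k_1\geq 0$ the prefactor is $\sim 2^{-10k_1}$, while for $k_1\leq 0$ it is $\sim 2^{(-\alpha+2\beta)k_1}$ with $-\alpha+2\beta=3/200>0$. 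The range $\max(k_1,k_2)\geq j/N'_0$ is already controlled by the previous paragraph; in the complementary range $k_+\leq j/N'_0+O(1)$ the weight contributes at most $2^{11j/N'_0+3j/2}$, and $\sum_{j_1\geq 10j}2^{-(1-\beta)j_1}\lesssim 2^{-10(1-\beta)j}$ leaves an overall power $\lesssim 2^{-8j}\cdot 2^{-\beta m}$, far stronger than required.

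Finally, \eqref{nh30.7} is a corollary of \eqref{nh30.8}: the defining constraint $k_1+j_1\geq 0$ of $\mathcal{J}$ forces $j_1\geq -k_1\geq 10j$ whenever $k_1\leq -10j$, so the range of summation in \eqref{nh30.7} is contained in that of \eqref{nh30.8}. The main obstacle in Lemma \ref{BigBound2} is bookkeeping rather than genuine analysis: locating, in each subcase, the single extreme parameter whose decay beats the $B^1_{k,j}$ weight, and selecting the correct Hölder pairing together with the appropriate version of \eqref{nh9} or \eqref{nh9.1}. The genuinely resonant regime $\max(k_1,k_2)<j/N'_0$, $\min(k_1,k_2)>-10j$, $\max(j_1,j_2)<10j$ is excluded here and will require the full strength of the weak atoms $B^2_{k,j}$ in Sections \ref{normproof2}--\ref{normproof4}.
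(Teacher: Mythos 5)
Your proof is correct and follows essentially the same strategy as the paper: reduce to an $L^2$ bound via \eqref{ok1}, then apply \eqref{nh9.5} with the appropriate $L^2\times L^\infty$ pairing and the bounds \eqref{nh9}, \eqref{nh9.1}. The one genuine streamlining you introduce is observing that \eqref{nh30.7} is a formal corollary of \eqref{nh30.8}, since $k_1\le -10j$ together with the constraint $k_1+j_1\ge 0$ from $\mathcal{J}$ forces $j_1\ge 10j$; the paper instead re-runs the same $L^2$ estimate directly on the range $\min(k_1,k_2)\le -10j$ and gets a $2^{-3j}$ factor. Both routes work; yours avoids one redundant computation at the cost of a small case split in the proof of \eqref{nh30.8} (treating $\max(k_1,k_2)\ge j/N_0'$ by \eqref{nh31} and the complement by the $j_1$-sum), where the paper does not split. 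The arithmetic identities you state, $(N_0-12)/N_0'=3/2+3/N_0'$ and $-\alpha+2\beta=3/200>0$, are correct and suffice, given $j\ge\beta m/2$ and $\beta^4\ll 1/N_0'$.
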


\begin{proof}[Proof of Lemma \ref{BigBound2}] Using  \eqref{nh9}, \eqref{ok1}, and \eqref{nh9.5}, the left-hand side of \eqref{nh31} is dominated by
\begin{equation*}
\begin{split}
\sum_{(k_1,j_1),(k_2,j_2)\in\mathcal{J},\,\,\max(k_1,k_2)\geq j/N'_0}&(1+2^{k})(2^{\alpha k}+2^{10 k})2^{3j/2}2^{(1/2-\beta)\widetilde{k}}\big\|P_kT_{m}^{\sigma;\mu,\nu}(f_{k_1,j_1}^\mu,f_{k_2,j_2}^\nu)\big\|_{L^2}\\
&\lesssim 2^{-\beta m}2^{-(N_0-6)j/N_0^\prime}2^{3j/2}2^{(1/2-\beta)\widetilde{k}},
\end{split}
\end{equation*}
which clearly suffices, in view of \eqref{nh30}. Similarly, the left-hand side of \eqref{nh30.7} is dominated by
\begin{equation*}
\begin{split}
\sum_{(k_1,j_1),(k_2,j_2)\in\mathcal{J},\,\,\min(k_1,k_2)\leq -10j}(1+2^{k})&(2^{\alpha k}+2^{10 k})\cdot 2^{3j/2}2^{(1/2-\beta)\widetilde{k}}\big\|P_kT_{m}^{\sigma;\mu,\nu}(f_{k_1,j_1}^\mu,f_{k_2,j_2}^\nu)\big\|_{L^2}\\
&\lesssim 2^{-\beta m}2^{-3j}\cdot(2^{\alpha k}+2^{10 k})2^{3j/2}2^{(1/2-\beta)\widetilde{k}},
\end{split}
\end{equation*}
which clearly suffices. Finally, using the more precise bound \eqref{nh9.1}, the left-hand side of \eqref{nh30.8} is dominated by
\begin{equation*}
\begin{split}
\sum_{(k_1,j_1),(k_2,j_2)\in\mathcal{J},\,\,\max(j_1,j_2)\geq 10j}(1+2^{k})&(2^{\alpha k}+2^{10 k})\cdot 2^{3j/2}2^{(1/2-\beta)\widetilde{k}}\big\|P_kT_{m}^{\sigma;\mu,\nu}(f_{k_1,j_1}^\mu,f_{k_2,j_2}^\nu)\big\|_{L^2}\\
&\lesssim 2^{-\beta m}2^{-3j}\cdot(2^{\alpha k}+2^{10 k})2^{3j/2}2^{(1/2-\beta)\widetilde{k}},
\end{split}
\end{equation*}
which clearly suffices.
\end{proof}

We examine the conclusions of Lemma \ref{BigBound1} and Lemma \ref{BigBound2}, and notice that Proposition \ref{reduced1} follows from Proposition \ref{reduced2} below.

\begin{proposition}\label{reduced2}
With the same notation as in Proposition \ref{reduced1}, we have
\begin{equation}\label{ok60}
(1+2^k)2^{k_\sigma}\big\|\widetilde{\varphi}^{(k)}_j\cdot P_kT_{m}^{\sigma;\mu,\nu}(f_{k_1,j_1}^\mu,f_{k_2,j_2}^\nu)\big\|_{B_{k,j}}\lesssim 2^{-\beta^4 (m+j)},
\end{equation}
for any fixed $\mu,\nu\in\mathcal{I}_0$, $(k,j),(k_1,j_1),(k_2,j_2)\in\mathcal{J}$, and $m\in[0,L+1]\cap\mathbb{Z}$, satisfying
\begin{equation}\label{ok61}
j\geq \beta m/2+N'_0k_++D^2,\qquad -10j\leq k_1,\,k_2\leq j/N'_0,\qquad \max(j_1,j_2)\leq 10j.
\end{equation}
\end{proposition}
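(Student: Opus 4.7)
The plan is to decompose the output in the $\eta$-variable according to the sizes of the phase $\Phi^{\sigma;\mu,\nu}(\xi,\eta)$ and its gradient $\Xi^{\mu,\nu}(\xi,\eta)$, and then control each piece in either $B^1_{k,j}$ or $B^2_{k,j}$; by the splitting \eqref{sec5.2} any such pair controls the full $B_{k,j}$ norm. Under the restrictions \eqref{ok61} all of $|k_1|,|k_2|,j_1,j_2$ are $O(j)$, so polynomial-in-$j$ losses are tolerable as long as we can ultimately extract a gain of $2^{-\beta^4(m+j)}$. The heuristic is that integration by parts in $\eta$ handles regions where $\Xi$ is large, integration by parts in $s$ (normal form) handles regions where $\Phi$ is large, and the residual neighborhood of the space-time resonant set $\{\Phi=0,\,\Xi=0\}$ is classified in Appendix \ref{resonantsets} into the three geometric cases $\mathcal{S}_A,\mathcal{S}_B,\mathcal{S}_D$ of the introduction.

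The first step is to dispose of non-resonant regions. On the region $|\Xi^{\mu,\nu}|\gtrsim 2^{\kappa m-\max(j_1,j_2)}$ for a small $\kappa\gg\beta^4$ (space non-resonance), iterating the identity $e^{is\Phi}=(is|\Xi|^2)^{-1}\Xi\cdot\nabla_\eta e^{is\Phi}$ transfers $\eta$-derivatives onto $\widehat{f^\mu_{k_1,j_1}}\widehat{f^\nu_{k_2,j_2}}$; by \eqref{nh9} each derivative costs a factor $\lesssim 2^{\max(j_1,j_2)}$ and pays $\lesssim 2^{-\kappa m}$, so enough iterations yield a bound in $B^1_{k,j}$ with the required decay. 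On the complement, where $|\Phi|\gtrsim 2^{-m+\kappa m}$, I would integrate by parts in $s$ using $e^{is\Phi}=\Phi^{-1}\partial_s((-i)e^{is\Phi})$: boundary terms at $s\sim 2^m$ are handled by the same bilinear $L^2\times L^\infty$ estimate, derivatives falling on $q_m$ give an equivalent gain using \eqref{nh2}, and interior terms containing $\partial_s f^\mu_{k',j'}$ are essentially cubic and controlled via \eqref{ok50repeat}--\eqref{derv2repeat}.

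What remains is supported in an $O(2^{-\kappa m})$-tube around the space-time resonant set. For Case A (the nondegenerate $2D$ sphere $\mathcal{S}_A$) the output after stationary phase in the normal direction is concentrated on a $2D$ sphere, which is exactly what the third term of \eqref{sec5.4} is designed to measure; I would place the contribution into $B^2_{k,j}$ and then use an $L^2$-orthogonality argument along latitudes of the sphere, in the spirit of \cite{IoPa2}. For Case B (the degenerate sphere at $\eta=0$, $\mathcal{S}_B$), the same strategy applies but one has to track how the bounds degrade as $\eta\to 0$; the slower effective propagation speed of the singular mode and the weak ellipticity of $\Phi$ in this regime combine to absorb the loss. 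For Case C (the ion-type degeneracy at $\xi=0$, $\mathcal{S}_D$), I would exploit the null structure from Lemma \ref{tech1.3}: the factor $2^{k_\sigma}=2^{\min(k,0)}$ (for $\sigma=i$) on the left of \eqref{ok60} matches the vanishing of $m_{i;\mu,\nu}$ at $\xi=0$, which cancels the worst $|\xi|^{-1}$ singularity generated by a normal-form transformation.

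The main obstacle will be Case C. Because $\Lambda_i$ is only Lipschitz at the origin, a normal-form transformation produces multipliers with $|\xi|^{-1}$-type singularities that lie outside any natural symbol class, so the usual bilinear symbol calculus is not available. The remedy is to use the null structure to cancel the leading singular term before integrating by parts, then to rely on finite speed of propagation -- meaningful because the output is spatially localized to $\{|x|\sim 2^j\}$ -- to constrain the angular configuration of the triple $(\xi,\eta,\xi-\eta)$, and finally a bilinear orthogonality argument to prevent the three small parameters $2^k,|\Phi|,|\Xi|$ from compounding losses as they simultaneously vanish along the resonant set. This is the step I expect to require the most careful bookkeeping, and is the reason for the refined two-component structure of the $Z$-norm.
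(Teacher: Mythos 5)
Your high-level plan — partition into space-nonresonant, time-nonresonant, and fully resonant regions, then classify the resonant part into Cases A/B/C using the analysis of Appendix \ref{resonantsets} — is indeed the paper's strategy (Lemma \ref{NewBigBound1}, Proposition \ref{reduced3}, and Sections \ref{normproof2}--\ref{normproof4}). However, there is a genuine gap: your proposal has no mechanism for the regime $j\gg m$, which is permitted under \eqref{ok61} since $j$ has no upper bound there. When $j$ is much larger than $m$, the target gain $2^{-\beta^4(m+j)}\approx 2^{-\beta^4 j}$ cannot be produced by integrating by parts in $\eta$ or in $s$; both mechanisms only give powers of $2^{-\beta^2 m}$, not of $2^{-j}$. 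The paper disposes of this regime \emph{first}, in Lemma \ref{BigBound3}: when $j\geq m+D$, the weight $\widetilde{\varphi}^{(k)}_j$ forces $|x|\sim 2^j\gg 2^m$, and since $|\nabla_\xi\Phi^{\sigma;\mu,\nu}|\lesssim 1$, one has $|\nabla_\xi(x\cdot\xi+s\Phi^{\sigma;\mu,\nu})|\geq|x|-s|\nabla_\xi\Phi^{\sigma;\mu,\nu}|\geq 2^{j-10}$, so repeated integration by parts in $\xi$ (not $\eta$) yields superpolynomial decay in $2^{-j}$. This finite-speed-of-propagation argument, which you invoke only in passing in Case C, must appear as the leading reduction (to $j\leq m+D$) before any $\Phi$--$\Xi$ decomposition is attempted; otherwise the proof does not close.

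Two further discrepancies are worth flagging. First, your space-nonresonance threshold $|\Xi|\gtrsim 2^{\kappa m-\max(j_1,j_2)}$ has the wrong sign on $\max(j_1,j_2)$: each $\eta$-derivative falling on $\widehat{f^\mu_{k_1,j_1}}$ or $\widehat{f^\nu_{k_2,j_2}}$ costs a factor $\lesssim 2^{\max(j_1,j_2)}$ by the last line of \eqref{nh9}, so the iterate is small only when $|\Xi|\gtrsim 2^{-m+\max(j_1,j_2)}2^{\beta^2 m}$ — a threshold that \emph{grows} with $\max(j_1,j_2)$, consistent with $\kappa\geq 2^{\beta^2m-m}2^{\max(j_1,j_2)}$ in Proposition \ref{reduced3}. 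Second, the paper pre-processes the sum further (Lemmas \ref{BigBound4}--\ref{BigBound5}) to remove outputs at very low frequency $k$ (handled by showing via Proposition \ref{PropABC} that the resonant piece $G_{\leq l_0}$ vanishes there when $\sigma\neq i$) and to discard inputs with $\min(k_1,k_2)\leq -9m/10$ or $\max(j_1,j_2)\geq(1-\beta/10)m+k_\sigma$; these reductions are what make the resonant analysis tractable with a uniform choice of $\kappa$, and they are not visible in your sketch.
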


\subsection{Proof of Proposition \ref{reduced2}} In this subsection we will show that proving Proposition \ref{reduced2} can be further reduced 
to proving Proposition \ref{reduced3} below. The arguments are more complicated than before, and we need to examine our bilinear operators 
more carefully; however, in all cases discussed in this subsection we can still control the stronger $B^{1}_{k,j}$ norms. 

We notice that we are looking to prove the bound \eqref{ok60} for {\it{fixed}} $k,j,k_1,j_1,k_2,j_2,m$. 
We will consider several cases, depending on the relative sizes of these parameters. 

\begin{lemma}\label{BigBound3}
The bound \eqref{ok60} holds provided that \eqref{ok61} holds and, in addition,
\begin{equation}\label{nh32.5}
j\geq \max(m+D,-k(1+\beta^2)+D).
\end{equation}
\end{lemma}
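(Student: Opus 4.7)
\textbf{Proof plan for Lemma \ref{BigBound3}.}

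The hypothesis $j\geq\max(m+D,-k(1+\beta^2)+D)$ forces the output physical scale $2^j$ to dominate both the time scale $2^m$ and the frequency--uncertainty scale $2^{-k}$, so the natural approach is to work in physical space and exploit kernel decay of $P_k e^{is\Lambda_\sigma}$ outside its "light cone," combined with finite--speed--of--propagation for the input evolutions $Ef^\mu_{k_1,j_1}$ and $Ef^\nu_{k_2,j_2}$. Since $\|\cdot\|_{B_{k,j}}\leq\|\cdot\|_{B^1_{k,j}}$, I would aim directly at the stronger norm, and in fact establish decay of the form $2^{-N_1 j}$ for arbitrarily large $N_1$, which comfortably beats the required $2^{-\beta^4(m+j)}$.

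The plan is as follows. First, I would rewrite
\begin{equation*}
P_k T_m^{\sigma;\mu,\nu}(f^\mu_{k_1,j_1},f^\nu_{k_2,j_2})(x)=\int q_m(s)\int K^s_\sigma(x-y)\,(Ef^\mu_{k_1,j_1})(y,s)(Ef^\nu_{k_2,j_2})(y,s)\,dy\,ds,
\end{equation*}
where $K^s_\sigma$ is the kernel of $P_ke^{is\Lambda_\sigma}$. A standard stationary--phase estimate, using that $|\nabla\Lambda_\sigma|\lesssim 1$ on $|\xi|\sim 2^k$, yields $|K^s_\sigma(z)|\lesssim_{N} 2^{3k}(1+2^k|z|)^{-N}$ whenever $|z|\geq C_0(2^m+2^{-k})$; the hypothesis ensures that for $|x|\sim 2^j$, $|y|\leq 2^{j-10}$, and $s\in\mathrm{supp}(q_m)$, the argument $x-y$ satisfies this smallness condition with room to spare. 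Second, I would split each factor in physical space, $Ef^\mu_{k_i,j_i}=A^i+B^i$, with $A^i$ supported in $|y|\leq 2^{j-10}$ and $B^i$ in $|y|\geq 2^{j-11}$, and expand the product into four pieces. The $A^1A^2$ piece is controlled by Cauchy--Schwarz against the rapidly decaying kernel, producing a bound of the form $2^{-N(j+k)}$ times crude $L^2$ norms from \eqref{nh9}--\eqref{nh9.1}. For the three pieces involving at least one $B^i$, I would distinguish two subcases. If $\max(j_1,j_2)\leq j-D/2$, I control $\|B^i\|_{L^2}$ via the analogous kernel estimate for $e^{-is\widetilde\Lambda_{\sigma_{\mu_i}}}P_{k_i}$: since $f^\mu_{k_i,j_i}$ is concentrated at $|z|\sim 2^{j_i}$ and $|y|\geq 2^{j-11}$, we have $|y-z|\geq 2^{j-12}$, which dominates $s+2^{-k_i}$ thanks to $k_i\geq -10j$, $j\geq m+D$, and a little room in the constant $D$; hence $\|B^i\|_{L^2}\lesssim 2^{-Nj}$. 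If instead $\max(j_1,j_2)\geq j-D/2$, say $j_1\geq j-D/2$, then the direct atom bound $\|f^\mu_{k_1,j_1}\|_{L^2}\lesssim 2^{-(1+\beta)j_1}$ from \eqref{nh9.1} already gives $\lesssim 2^{-(1+\beta)j+O(1)}$, which combined with $\|Ef^\nu_{k_2,j_2}\|_{L^\infty}\lesssim (1+s)^{-1-\beta}$ from \eqref{nh9} and the time integration over $s\sim 2^m$ yields the $L^2$ bound of the output directly. Third, for the $\|\widehat{h}\|_{L^\infty}$ part of the $B^1_{k,j}$ norm I would simply invoke $\|\widehat{h}\|_{L^\infty}\leq\|h\|_{L^1}\lesssim 2^{3j/2}\|h\|_{L^2}$ (using the spatial localization of $h=\widetilde\varphi_j^{(k)}P_kT_m$), so both parts of the $B^1_{k,j}$ norm are dominated by the $L^2$ estimate above. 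Multiplying by the prefactors in \eqref{sec5.3} and the factor $(1+2^k)2^{k_\sigma}$ is harmless because of the polynomial gain $2^{-Nj}$.

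The main obstacle I expect is Step two in the regime where $k_1$ (or $k_2$) is close to the extreme value $-10j$: then $2^{-k_1}$ is as large as $2^{10j}$, so a naive "distance exceeds $s+2^{-k_1}$" argument for the kernel of $e^{-is\widetilde\Lambda_{\sigma_\mu}}P_{k_1}$ fails catastrophically. The resolution is that such a regime is exactly when $j_i\geq -k_i\geq j$, falling into the second subcase where the atom is already small in $L^2$ and no light--cone estimate is needed; verifying this dichotomy carefully (so that one of the two subcases applies to every $B^i$ contribution) is the one place where the constraints \eqref{ok61} and the exact form of \eqref{nh32.5} must be used. Once this bookkeeping is done, everything else follows from the bounds \eqref{nh5.5}--\eqref{nh9.1} already in place.
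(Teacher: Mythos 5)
The decisive gap is in your second subcase, $\max(j_1,j_2)\geq j-D/2$. You cite ``the direct atom bound $\|f^\mu_{k_1,j_1}\|_{L^2}\lesssim 2^{-(1+\beta)j_1}$ from \eqref{nh9.1},'' but \eqref{nh9.1} gives $\|f^\mu_{k',j'}\|_{L^2}\lesssim(2^{\alpha k'}+2^{10k'})^{-1}2^{2\beta\widetilde{k'}}2^{-(1-\beta)j'}$; the exponent is $-(1-\beta)j'$, not $-(1+\beta)j'$. The bound $2^{-(1+\beta)j'}$ holds only for the ``strong'' part $g$ of the decomposition $g+h$ from \eqref{sec5.815}, not for the full atom. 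Inserting the correct bound into your estimate, with $j_1\geq j-D/2$ and $\|Ef^\nu_{k_2,j_2}\|_{L^\infty}\lesssim(1+s)^{-1-\beta}$, one finds
\begin{equation*}
2^{(1+\beta)j}\|P_kT^{\sigma;\mu,\nu}_m\|_{L^2}\lesssim 2^{(1+\beta)j}\cdot 2^m\cdot 2^{-(1-\beta)j_1}\cdot 2^{-(1+\beta)m}\lesssim 2^{2\beta j}2^{-\beta m},
\end{equation*}
and under \eqref{nh32.5} the exponent $2\beta j-\beta m\geq 2\beta j-\beta(j-D)=\beta j+\beta D$ grows with $j$, so the argument does not close. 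The $2\beta j$ deficit is exactly what forces the paper to work with the refined $g+h$ decomposition.

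Your case split also does not align with the paper's, and this matters. The paper splits on $\min(j_1,j_2)\lessgtr(1-\beta^2)j$. In its Case~1 it integrates by parts in $\xi$ via Lemma~\ref{tech5} with $\epsilon\approx\min(2^{-j_1},2^k)$, and the key input is that $K\epsilon\gtrsim\min(2^{j-j_1},2^{j+k})\gtrsim 2^{\beta^2 j}$; the second inequality uses the hypothesis $j\geq-k(1+\beta^2)+D$, which ties the \emph{output} frequency $k$ to the output spatial scale $j$. Your physical-space analogue applies the kernel estimate to the \emph{input} evolutions $e^{-is\widetilde\Lambda}P_{k_i}$, for which the only available lower bound is $k_i+j\geq j-j_i\geq D/2$ — a fixed constant, not growing with $j$ — so the assertion $\|B^i\|_{L^2}\lesssim 2^{-Nj}$ is also over-optimistic, and the kernel estimate alone yields only a fixed-constant gain. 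In the paper's Case~2, where both $j_1,j_2\geq(1-\beta^2)j$, the argument genuinely requires the $g+h$ decomposition of both inputs and the bilinear estimates exploiting the $\|\widehat h\|_{L^1(B(\xi_0,R))}$ control: the product of the two weak $L^2$ bounds gives $2^{-(2-2\beta)(1-\beta^2)j}$, which only beats $2^{(1+\beta)j}\cdot 2^j$ (from the weight and time integration) because of the additional $\|\widehat h\|_{L^1}$ gain with exponent $\gamma=3/2-4\beta$. Your proposal omits this decomposition entirely, and without it there is no way to absorb the $2^{2\beta j}$ loss identified above.
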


\begin{proof}[Proof of Lemma \ref{BigBound3}] Using definition \eqref{sec5.3} it suffices to prove that
\begin{equation}\label{ok30}
\begin{split}
&(1+2^{k})(2^{\alpha k}+2^{10k})\cdot 2^{(1+\beta)j}\big\|\widetilde{\varphi}^{(k)}_j\cdot P_kT_{m}^{\sigma;\mu,\nu}(f_{k_1,j_1}^\mu,f_{k_2,j_2}^\nu)\big\|_{L^2}\\
&+(1+2^{k})(2^{\alpha k}+2^{10k})\cdot 2^{(1/2-\beta)\widetilde{k}}\big\|\mathcal{F}[\widetilde{\varphi}^{(k)}_j\cdot P_kT_{m}^{\sigma;\mu,\nu}(f_{k_1,j_1}^\mu,f_{k_2,j_2}^\nu)]\big\|_{L^\infty}\lesssim 2^{-\beta^4 (m+j)}.
\end{split}
\end{equation}

Assume first that
\begin{equation}\label{ok31}
\min(j_1,j_2)\leq (1-\beta^2)j.
\end{equation}
By symmetry, we may assume that $j_1\leq (1-\beta^2)j$ and write
\begin{equation*}
\begin{split}
&\widetilde{\varphi}^{(k)}_j(x)\cdot P_kT_{m}^{\sigma;\mu,\nu}(f_{k_1,j_1}^\mu,f_{k_2,j_2}^\nu)(x)\\
&=c\widetilde{\varphi}^{(k)}_j(x)\int_{\mathbb{R}^3}\int_{\mathbb{R}}\int_{\mathbb{R}^3}\varphi_k(\xi)e^{ix\cdot\xi}e^{is[\Lambda_\sigma(\xi)-\widetilde{\Lambda}_{\mu}(\xi-\eta)-\widetilde{\Lambda}_{\nu}(\eta)]}q_m(s)\cdot \widehat{f_{k_1,j_1}^\mu}(\xi-\eta,s)\widehat{f_{k_2,j_2}^\nu}(\eta,s)\,d\eta ds d\xi.
\end{split}
\end{equation*}
We examine the integral in $\xi$ in the formula above. We recall the assumptions \eqref{ok61}, \eqref{nh32.5}, and \eqref{ok31}, and the last bound in \eqref{nh9}. Notice that, using only the assumption \eqref{nh32.5} and the definition \eqref{operators1} (see also Lemma \ref{tech99}),
\begin{equation*}
\Big|\nabla_\xi\big[x\cdot\xi+s[\Lambda_\sigma(\xi)-\widetilde{\Lambda}_{\mu}(\xi-\eta)-\widetilde{\Lambda}_{\nu}(\eta)]\big]\Big|\geq |x|-s\big|\nabla_\xi[\Lambda_\sigma(\xi)-\widetilde{\Lambda}_{\mu}(\xi-\eta)\big]\big|\geq 2^{j-10}.
\end{equation*}
We apply Lemma \ref{tech5} (with $K\approx 2^j$, $\eps\approx \min(2^{-j_1},2^k)$) to conclude that
\begin{equation*}
\big|\widetilde{\varphi}^{(k)}_j(x)\cdot P_kT_{m}^{\sigma;\mu,\nu}(f_{k_1,j_1}^\mu,f_{k_2,j_2}^\nu)(x)\big|\lesssim 2^{-10j}|\widetilde{\varphi}^{(k)}_j(x)|,
\end{equation*}
and the desired bounds \eqref{ok30} follow easily.

Assume now that 
\begin{equation}\label{ok33}
\min(j_1,j_2)\geq (1-\beta^2)j.
\end{equation}
By symmetry, we may assume that $k_1\leq k_2$. We prove first the bound on the second term in the left-hand side of \eqref{ok30}: using \eqref{nh9.1} we estimate
\begin{equation*}
\begin{split}
&(1+2^{k})(2^{\alpha k}+2^{10k})\cdot 2^{(1/2-\beta)\widetilde{k}}\|\mathcal{F}[\widetilde{\varphi}^{(k)}_j\cdot P_kT_{m}^{\sigma;\mu,\nu}(f_{k_1,j_1}^\mu,f_{k_2,j_2}^\nu)]\big\|_{L^\infty}\\
&\lesssim (1+2^{k})(2^{\alpha k}+2^{10k})2^{(1/2-\beta)\widetilde{k}}\cdot 2^m\sup_{s\in[2^{m-1},2^{m+1}]}\|f_{k_1,j_1}^\mu(s)\|_{L^2}\|f_{k_2,j_2}^\nu(s)\|_{L^2}\\
&\lesssim (1+2^{k})(2^{\alpha k}+2^{10k})2^{(1/2-\beta)\widetilde{k}}2^{j}\cdot(2^{\alpha k_1}+2^{10k_1})^{-1}2^{2\beta\widetilde{k_1}}2^{-(1-\be)j_1}\cdot(2^{\alpha k_2}+2^{10k_2})^{-1}2^{2\beta\widetilde{k_2}}2^{-(1-\be)j_2}\\
&\lesssim (1+2^{k})2^j\cdot 2^{-\alpha k_1}\min(2^{(1+\beta)k_1},2^{-(1-\beta-\beta^2)j})\cdot 2^{-(1-\be-\beta^2)j}.
\end{split}
\end{equation*}
This suffices to prove the desired bound in \eqref{ok30}, as it can be easily seen by considering the cases $k_1\leq -j$ and $k_1\geq -j$.

Some more care is needed to prove the bound on the first term in the left-hand side of \eqref{ok30}. We recall that
\begin{equation*}
f_{k_1,j_1}^\mu=P_{[k_1-2,k_1+2]}(\phii_{j_1}^{(k_1)}\cdot P_{k_1}f_\mu)\quad\text{ and }\quad f_{k_2,j_2}^\nu=P_{[k_2-2,k_2+2]}(\phii_{j_2}^{(k_2)}\cdot P_{k_2}f_\nu).
\end{equation*}
Since $\|\phii_{j_1}^{(k_1)}\cdot P_{k_1}f_\mu(s)\|_{B_{k_1,j_1}}+\|\phii_{j_2}^{(k_2)}\cdot P_{k_2}f_\nu(s)\|_{B_{k_2,j_2}}\lesssim 1$, see \eqref{nh5.5}, we use \eqref{sec5.8}--\eqref{sec5.82} to decompose
\begin{equation}\label{ok36}
\begin{split}
&\phii^{(k_1)}_{j_1}\cdot P_{k_1}f_\mu(s)=(2^{\alpha k_1}+2^{10k_1})^{-1}[g_{k_1,j_1}^\mu(s)+h_{k_1,j_1}^\mu(s)],\\
&g_{k_1,j_1}^\mu(s)=g_{k_1,j_1}^\mu(s)\cdot \widetilde{\varphi}^{(k_1)}_{[j_1-2,j_1+2]},\qquad h_{k_1,j_1}^\mu(s)=h_{k_1,j_1}^\mu(s)\cdot \widetilde{\varphi}^{(k_1)}_{[j_1-2,j_1+2]},\\
&2^{(1+\be)j_1}\|g_{k_1,j_1}^\mu(s)\|_{L^2}+2^{(1/2-\beta)\widetilde{k_1}}\|\widehat{g_{k_1,j_1}^\mu}(s)\|_{L^\infty}\lesssim 1,\\
&2^{(1-\be)j_1}\|h_{k_1,j_1}^\mu(s)\|_{L^2}+\|\widehat{h_{k_1,j_1}^\mu}(s)\|_{L^\infty}+2^{\gamma j_1}\|\widehat{h_{k_1,j_1}^\mu}(s)\|_{L^1}\lesssim 2^{-8|k_1|},
\end{split}
\end{equation}
and
\begin{equation}\label{ok37}
\begin{split}
&\phii^{(k_2)}_{j_2}\cdot P_{k_2}f_\nu(s)=(2^{\alpha k_2}+2^{10k_2})^{-1}[g_{k_2,j_2}^\nu(s)+h_{k_2,j_2}^\nu(s)],\\
&g_{k_2,j_2}^\nu(s)=g_{k_2,j_2}^\nu(s)\cdot \widetilde{\varphi}^{(k_2)}_{[j_2-2,j_2+2]},\qquad h_{k_2,j_2}^\nu(s)=h_{k_2,j_2}^\nu(s)\cdot \widetilde{\varphi}^{(k_2)}_{[j_2-2,j_2+2]},\\
&2^{(1+\be)j_2}\|g_{k_2,j_2}^\nu(s)\|_{L^2}+2^{(1/2-\beta)\widetilde{k_2}}\|\widehat{g_{k_2,j_2}^\nu}(s)\|_{L^\infty}\lesssim 1,\\
&2^{(1-\be)j_2}\|h_{k_2,j_2}^\nu(s)\|_{L^2}+\|\widehat{h_{k_2,j_2}^\nu}(s)\|_{L^\infty}+2^{\gamma j_2}\|\widehat{h_{k_2,j_2}^\nu}(s)\|_{L^1}\lesssim 2^{-8|k_2|}.
\end{split}
\end{equation}
Using these decompositions and recalling the definition \eqref{nh9.2}, to prove the desired bound on the first term in the left-hand side of \eqref{ok30}, it suffices to prove that for any $s\in[2^{m-1},2^{m+1}]$
\begin{equation}\label{ok38}
\begin{split}
(1+2^{k})&(2^{\alpha k}+2^{10k})2^{(1+\beta)j}\cdot(2^{\alpha k_1}+2^{10k_1})^{-1}(2^{\alpha k_2}+2^{10k_2})^{-1}2^m\\
\Big[&\big\|\widetilde{\varphi}^{(k)}_j\cdot P_k\widetilde{T}_{s}^{\sigma;\mu,\nu}(P_{[k_1-2,k_1+2]}g_{k_1,j_1}^\mu(s),P_{[k_2-2,k_2+2]}g_{k_2,j_2}^\nu(s))\big\|_{L^2}\\
+&\big\|\widetilde{\varphi}^{(k)}_j\cdot P_k\widetilde{T}_{s}^{\sigma;\mu,\nu}(P_{[k_1-2,k_1+2]}g_{k_1,j_1}^\mu(s),P_{[k_2-2,k_2+2]}h_{k_2,j_2}^\nu(s))\big\|_{L^2}\\
+&\big\|\widetilde{\varphi}^{(k)}_j\cdot P_k\widetilde{T}_{s}^{\sigma;\mu,\nu}(P_{[k_1-2,k_1+2]}h_{k_1,j_1}^\mu(s),P_{[k_2-2,k_2+2]}g_{k_2,j_2}^\nu(s))\big\|_{L^2}\\
+&\big\|\widetilde{\varphi}^{(k)}_j\cdot P_k\widetilde{T}_{s}^{\sigma;\mu,\nu}(P_{[k_1-2,k_1+2]}h_{k_1,j_1}^\mu(s),P_{[k_2-2,k_2+2]}h_{k_2,j_2}^\nu(s))\big\|_{L^2}\Big]\lesssim 2^{-\beta^4(m+j)}.
\end{split}
\end{equation}

Recall that we assumed $k_1\leq k_2$; therefore we may also assume that $k\leq k_2+4$. Using \eqref{ok36}--\eqref{ok37} and recalling \eqref{ok33}, we estimate
\begin{equation*}
\begin{split}
\big\|P_k\widetilde{T}_{s}^{\sigma;\mu,\nu}(P_{[k_1-2,k_1+2]}g_{k_1,j_1}^\mu(s),P_{[k_2-2,k_2+2]}g_{k_2,j_2}^\nu(s))\big\|_{L^2}&\lesssim \|\mathcal{F}(P_{[k_1-2,k_1+2]}g_{k_1,j_1}^\mu)(s)\|_{L^1}\|g_{k_2,j_2}^\nu(s)\|_{L^2}\\
&\lesssim 2^{3k_1/2}2^{-(1+\beta)j_1}2^{-(1+\beta)j_2}\\
&\lesssim 2^{3k_1/2}2^{-(2+2\beta)(1-\beta^2)j},
\end{split}
\end{equation*}
\begin{equation*}
\begin{split}
\big\|P_k\widetilde{T}_{s}^{\sigma;\mu,\nu}(P_{[k_1-2,k_1+2]}h_{k_1,j_1}^\mu(s),P_{[k_2-2,k_2+2]}h_{k_2,j_2}^\nu(s))\big\|_{L^2}&\lesssim \|\widehat{h_{k_1,j_1}^\mu}(s)\|_{L^1}\|\widehat{h_{k_2,j_2}^\nu}(s)\|_{L^2}\\
&\lesssim 2^{-\gamma j_1}2^{-8|k_1|}2^{-(1-\beta)j_2}2^{-8|k_2|}\\
&\lesssim 2^{-8|k_1|}2^{-(2+2\beta)(1-\beta^2)j},
\end{split}
\end{equation*}
\begin{equation*}
\begin{split}
\big\|P_k\widetilde{T}_{s}^{\sigma;\mu,\nu}(P_{[k_1-2,k_1+2]}h_{k_1,j_1}^\mu(s),P_{[k_2-2,k_2+2]}g_{k_2,j_2}^\nu(s))\big\|_{L^2}&\lesssim \|\widehat{h_{k_1,j_1}^\mu}(s)\|_{L^1}\|\widehat{g_{k_2,j_2}^\nu}(s)\|_{L^2}\\
&\lesssim 2^{-\gamma j_1}2^{-8|k_1|}2^{-(1+\beta)j_2}\\
&\lesssim 2^{-8|k_1|}2^{-(2+2\beta)(1-\beta^2)j},
\end{split}
\end{equation*}
and
\begin{equation*}
\begin{split}
\big\|P_k\widetilde{T}_{s}^{\sigma;\mu,\nu}&(P_{[k_1-2,k_1+2]}g_{k_1,j_1}^\mu(s),P_{[k_2-2,k_2+2]}h_{k_2,j_2}^\nu(s))\big\|_{L^2}\\
&\lesssim \min\big(2^{3k_1/2}\|\widehat{g_{k_1,j_1}^\mu}(s)\|_{L^2}\|\widehat{h_{k_2,j_2}^\nu}(s)\|_{L^2},\|\widehat{g_{k_1,j_1}^\mu}(s)\|_{L^2}\|\widehat{h_{k_2,j_2}^\nu}(s)\|_{L^1}\big)\\
&\lesssim 2^{-(1+\beta)j_1}2^{-8|k_2|}\min\big(2^{-(1-\beta)j_2}2^{3k_1/2},2^{-\gamma j_2}\big)\\
&\lesssim 2^{-(2+2\beta)(1-\beta^2)j}2^{3k_1/4}2^{-8|k_2|}.
\end{split}
\end{equation*}
Since $2^m\lesssim 2^{j}$ and $(2^{\alpha k}+2^{10k})(2^{\alpha k_2}+2^{10k_2})^{-1}\lesssim 1$, the left-hand side of \eqref{ok38} is dominated by
\begin{equation*}
C(1+2^{k})2^{(1+\beta)j}\cdot(2^{\alpha k_1}+2^{10k_1})^{-1}2^{j}\cdot 2^{-(2+2\beta)(1-\beta^2)j}(2^{3k_1/2}+2^{3k_1/4}2^{-8|k_2|})\lesssim 2^{-2\beta j/3}(1+2^{k}),
\end{equation*}
which suffices since $2^{k}\lesssim 2^{j/N'_0}$. This completes the proof of the lemma.
\end{proof}

\begin{lemma}\label{BigBound4}
The bound \eqref{ok60} holds provided that \eqref{ok61} holds and, in addition,
\begin{equation}\label{ok40}
\max(m+D, j)\leq -k(1+\beta^2)+D.
\end{equation}
\end{lemma}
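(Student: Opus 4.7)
The hypothesis $j\geq D^2$ from \eqref{ok61} combined with $j\leq -k(1+\beta^2)+D$ from \eqref{ok40} forces $-k\gtrsim D^2$, so $k$ is strongly negative; in particular $(1+2^k)\lesssim 1$, and it suffices to bound the stronger $B^1_{k,j}$-norm, i.e., to control both
\begin{equation*}
2^{\alpha k+(1+\beta)j}\|\widetilde{\varphi}^{(k)}_j P_k T^{\sigma;\mu,\nu}_m(f^\mu_{k_1,j_1},f^\nu_{k_2,j_2})\|_{L^2}
\end{equation*}
and
\begin{equation*}
2^{\alpha k+(1/2-\beta)k}\|\mathcal{F}\bigl(\widetilde{\varphi}^{(k)}_j P_k T^{\sigma;\mu,\nu}_m(f^\mu_{k_1,j_1},f^\nu_{k_2,j_2})\bigr)\|_{L^\infty}
\end{equation*}
by $2^{-\beta^4(m+j)}$, after absorbing the prefactor $2^{k_\sigma}$.

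The plan is to combine a Bernstein low-frequency reduction with an integration by parts in $s$. Since $\widehat{P_kT_m}$ is supported in $\{|\xi|\sim 2^k\}$ of volume $\sim 2^{3k}$, one has $\|P_kT_m\|_{L^2}\lesssim 2^{3k/2}\|\widehat{P_kT_m}\|_{L^\infty}$, while $\|\mathcal{F}\widetilde{\varphi}^{(k)}_j\|_{L^1}\lesssim 1$ uniformly yields $\|\mathcal{F}(\widetilde{\varphi}^{(k)}_j P_kT_m)\|_{L^\infty}\lesssim \|\widehat{P_kT_m}\|_{L^\infty}$. To bound the latter, I would integrate by parts in $s$ in the representation
\begin{equation*}
\widehat{P_kT_m}(\xi)=\varphi_k(\xi)\int q_m(s)\,e^{is\Lambda_\sigma(\xi)}(\widehat{Ef^\mu_{k_1,j_1}}\ast\widehat{Ef^\nu_{k_2,j_2}})(\xi,s)\,ds,
\end{equation*}
using $e^{is\Lambda_\sigma(\xi)}=\partial_s e^{is\Lambda_\sigma(\xi)}/(i\Lambda_\sigma(\xi))$, which exchanges the $2^m$ time growth for a factor $1/|\Lambda_\sigma(\xi)|$. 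For $\sigma\in\{e,b\}$, formula \eqref{operators1} gives $|\Lambda_\sigma(\xi)|\geq \Lambda_\sigma(0)=\varepsilon^{-1/2}\sqrt{1+\varepsilon}\gtrsim 1$, so this is an $O(1)$ substitution; for $\sigma=i$, $|\Lambda_i(\xi)|\sim 2^k$ produces a loss $2^{-k}$ that is exactly matched by the ion prefactor $2^{k_\sigma}=2^k$ in \eqref{ok60}. The boundary and $q_m'$ contributions are $O(\|f^\mu\|_{L^2}\|f^\nu\|_{L^2})=O(1)$; the $\partial_s$ contribution is handled through the decomposition $\partial_s Ef^\mu=-i\widetilde{\Lambda}_\mu Ef^\mu+E\partial_s f^\mu$ combined with the time-derivative estimate \eqref{ok50repeat} to supply integrability in $s$.

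Assembling the bounds yields $\|\widehat{P_kT_m}\|_{L^\infty}\lesssim 1$ (modulo the ion prefactor), so that using $j\leq -k(1+\beta^2)+D$ the final exponent of $2$ in both $B^1$-terms reduces to $|k|\cdot(-1/2+\beta/2+O(\beta^2))$, which is much smaller than the required $-\beta^4(m+j)\geq -2\beta^4|k|(1+\beta^2)$ since $m+j\lesssim|k|$. The main obstacle is the careful bookkeeping of the $\partial_s(Ef^\mu\cdot Ef^\nu)$ contribution in the integration by parts: the $-i\widetilde{\Lambda}_\mu Ef^\mu$ subpiece requires tracking the polynomial growth of $\widetilde{\Lambda}_\mu(\xi-\eta)$ on the Fourier support $|\xi-\eta|\sim 2^{k_1}$, and the $E\partial_s f^\mu$ subpiece must be balanced against the quasilinear estimate \eqref{ok50repeat} using the profile bounds \eqref{nh9} and \eqref{nh9.1}, with particular care in the ion case where the phase loss $2^{-k}$ only barely clears the prefactor gain $2^k$.
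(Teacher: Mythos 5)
Your reduction to the $L^\infty$-bound on $\mathcal{F}P_kT_m^{\sigma;\mu,\nu}$ via Bernstein is the same first step the paper takes, and your exponent bookkeeping for why $\|\widehat{P_kT_m}\|_{L^\infty}\lesssim 1$ would be enough is also sound. But the step that is supposed to produce this $L^\infty$-bound — integrating by parts in $s$ against $e^{is\Lambda_\sigma(\xi)}$ alone — does not close. After the IBP you must differentiate $Ef^\mu_{k_1,j_1}(s)=e^{-is\widetilde{\Lambda}_\mu}f^\mu_{k_1,j_1}(s)$, and the dominant piece is $-i\widetilde{\Lambda}_\mu Ef^\mu_{k_1,j_1}$. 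For $\sigma_\mu\in\{e,b\}$, $\widetilde{\Lambda}_\mu(\xi-\eta)\gtrsim\varepsilon^{-1/2}$ is of the same size as $\Lambda_\sigma(\xi)$, so the multiplier $\widetilde{\Lambda}_\mu(\xi-\eta)/\Lambda_\sigma(\xi)$ is $O(1)$ and you have gained nothing: the remaining $\int q_m(s)\dots\,ds$ is still of size $2^m$. Your proposed substitution is equivalent to integrating by parts against the \emph{partial} phase, but only the full phase $\Phi^{\sigma;\mu,\nu}(\xi,\eta)=\Lambda_\sigma(\xi)-\widetilde{\Lambda}_\mu(\xi-\eta)-\widetilde{\Lambda}_\nu(\eta)$ oscillates in $s$, and it vanishes on a large set.

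The paper handles this by first disposing of the cases $\sigma=i$, $m=L+1$, or $m\leq(1-\beta)(j_1+j_2)-(1/2-\beta)k$ directly via \eqref{nh9.1}, which isolates the genuinely hard regime \eqref{ok45} where $\sigma\in\{e,b\}$ and $m$ is large. There it splits $T_m$ into $H$ (the part where $|\Phi^{\sigma;\mu,\nu}|\gtrsim(1+2^{k_2})^{-1}$, handled by IBP in $s$ against the full phase, using \eqref{ok50repeat} for the $\partial_s f$ terms) and $G$ (the near-resonant part). The function $G$ is then further localized by $|\Xi^{\mu,\nu}|$ via the $G_l$ decomposition and estimated by IBP in $\eta$ (Lemma \ref{tech5}) for $l$ above a threshold $l_0$, and — this is the step your proposal is missing entirely — the remainder $G_{\leq l_0}$ is shown to vanish identically by Proposition \ref{PropABC}, because for $\sigma\in\{e,b\}$ and $k$ deeply negative there are simply no space-time resonant frequencies. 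Without this structural input, the low-frequency near-resonant piece cannot be controlled, so your argument as written would fail precisely in the hard subcase.
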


\begin{proof}[Proof of Lemma \ref{BigBound4}]
In view of the restrictions \eqref{ok40} and \eqref{ok61}, we may assume that $k\leq -D^2/2$. Using the definition, it is easy to see that
\begin{equation}\label{ok40.55}
\|\widetilde{\varphi}^{(k)}_j\cdot P_kh\|_{B^1_{k,j}}\lesssim (2^{\alpha k}+2^{10 k})2^{(1+\beta)j}2^{3k/2}\|\widehat{P_kh}\|_{L^\infty}.
\end{equation}
Therefore, it suffices to prove that
\begin{equation}\label{ok43}
2^{k_\sigma}2^{\alpha k}2^{(1+\beta)j}2^{3k/2}\big\|\mathcal{F}P_kT_{m}^{\sigma;\mu,\nu}(f_{k_1,j_1}^\mu,f_{k_2,j_2}^\nu)\big\|_{L^\infty}\lesssim 2^{-\beta^4 (m+j)}.
\end{equation}

Using \eqref{nh9.1} and recalling $\alpha\leq 2\beta$ we estimate
\begin{equation*}
\begin{split}
\big\|\mathcal{F}P_kT_{m}^{\sigma;\mu,\nu}(f_{k_1,j_1}^\mu,&f_{k_2,j_2}^\nu)\big\|_{L^\infty}\lesssim \int_{\mathbb{R}}q_m(s)\|f_{k_1,j_1}^\mu(s)\|_{L^2}\|f_{k_2,j_2}^\nu(s)\|_{L^2}\,ds\\
&\lesssim \|q_m\|_{L^1}\cdot(2^{\alpha k_1}+2^{10k_1})^{-1}2^{2\beta\widetilde{k_1}}2^{-(1-\beta)j_1}\cdot (2^{\alpha k_2}+2^{10k_2})^{-1}2^{2\beta\widetilde{k_2}}2^{-(1-\beta)j_2}\\
&\lesssim\|q_m\|_{L^1}\min(1,2^{-5k_1})2^{-(1-\beta)j_1}\cdot \min(1,2^{-5k_2})2^{-(1-\beta)j_2}.
\end{split}
\end{equation*}
Recalling the definitions \eqref{sec5.6} and the assumptions, the desired bound \eqref{ok43} follows if
\begin{equation*}
\sigma=i\,\,\,\text{ or }\,\,\,m=L+1\,\,\,\text{ or }\,\,\,m\leq (1-\beta)(j_1+j_2)-(1/2-\beta)k. 
\end{equation*}

It remains to prove the bound \eqref{ok43} in the case
\begin{equation}\label{ok45}
\sigma\in\{e,b\}\,\,\,\text{ and }\,\,\,m\in[1,L]\cap\mathbb{Z}\,\,\,\text{ and }\,\,\,m\geq -(1/2-\beta)k+(1-\beta)(j_1+j_2).
\end{equation}
Since $j_1+k_1\geq 0$, $j_2+k_2\geq 0$, and $k\leq -D^2/2$, the conditions \eqref{ok40} and \eqref{ok45} show that $k_1,k_2\geq k/4$ and $|k_1-k_2|\leq 10$. Using also \eqref{ok40}, for \eqref{ok43} it suffices to prove that, assuming \eqref{ok45},
\begin{equation}\label{ok46}
\big\|\mathcal{F}P_kT_{m}^{\sigma;\mu,\nu}(f_{k_1,j_1}^\mu,f_{k_2,j_2}^\nu)\big\|_{L^\infty}\lesssim 2^{-k(1/2+\alpha-\beta-2\beta^2)}.
\end{equation}

To prove \eqref{ok46} we would like to integrate by parts in $\eta$ and $s$ in the formula \eqref{ok40.5}. Recall the definitions
\begin{equation*}\label{ok40.5}
\mathcal{F}P_kT_{m}^{\sigma;\mu,\nu}(f_{k_1,j_1}^\mu,f_{k_2,j_2}^\nu)(\xi)=\varphi_k(\xi)\int_{\mathbb{R}}\int_{\mathbb{R}^3}e^{is\Phi^{\sigma;\mu,\nu}(\xi,\eta)}q_m(s)\widehat{f_{k_1,j_1}^\mu}(\xi-\eta,s)\widehat{f_{k_2,j_2}^\nu}(\eta,s)\,d\eta ds,
\end{equation*}
where
\begin{equation*}\label{ok40.6}
\Phi^{\sigma;\mu,\nu}(\xi,\eta)=\Lambda_\sigma(\xi)-\widetilde{\Lambda}_{\mu}(\xi-\eta)-\widetilde{\Lambda}_{\nu}(\eta).
\end{equation*}
We decompose
\begin{equation*}
\begin{split}
&\mathcal{F}P_kT_{m}^{\sigma;\mu,\nu}(f_{k_1,j_1}^\mu,f_{k_2,j_2}^\nu)(\xi)=G(\xi)+H(\xi),\\
&G(\xi):=\varphi_k(\xi)\int_{\mathbb{R}}\int_{\mathbb{R}^3}e^{is\Phi^{\sigma;\mu,\nu}(\xi,\eta)}\varphi(2^{20D}(1+2^{k_2})\Phi^{\sigma;\mu,\nu}(\xi,\eta))q_m(s)\widehat{f_{k_1,j_1}^\mu}(\xi-\eta,s)\widehat{f_{k_2,j_2}^\nu}(\eta,s)\,d\eta ds,\\
&H(\xi):=\varphi_k(\xi)\int_{\mathbb{R}}\int_{\mathbb{R}^3}e^{is\Phi^{\sigma;\mu,\nu}(\xi,\eta)}[1-\varphi(2^{20D}(1+2^{k_2})\Phi^{\sigma;\mu,\nu}(\xi,\eta))]q_m(s)\widehat{f_{k_1,j_1}^\mu}(\xi-\eta,s)\widehat{f_{k_2,j_2}^\nu}(\eta,s)\,d\eta ds.
\end{split}
\end{equation*}
The function $H$ can be estimated using integration by parts in $s$, \eqref{ok50repeat}, the assumptions \eqref{nh2}, and the bounds \eqref{nh9}. Indeed,
\begin{equation*}
\begin{split}
|H(\xi)|&\lesssim (1+2^{k_2})\sup_{s\in [2^{m-1},2^{m+1}]}\big[\big\|\widehat{f_{k_1,j_1}^\mu}(s)\big\|_{L^2}\big\|\widehat{f_{k_2,j_2}^\nu}(s)\big\|_{L^2}\\
&+2^m\big\|(\partial_s\widehat{f_{k_1,j_1}^\mu})(s)\big\|_{L^2}\big\|\widehat{f_{k_2,j_2}^\nu}(s)\big\|_{L^2}+2^m\big\|\widehat{f_{k_1,j_1}^\mu}(s)\big\|_{L^2}\big\|(\partial_s\widehat{f_{k_2,j_2}^\nu})(s)\big\|_{L^2}\big]\\
&\lesssim \min(1,2^{-(N_0-10)k_2}).
\end{split}
\end{equation*}
Therefore, for \eqref{ok46} it suffices to prove that
\begin{equation}\label{ok48}
\big\|G\big\|_{L^\infty}\lesssim 2^{-k(1/2+\alpha-\beta-2\beta^2)}.
\end{equation}

Recall the definitions \eqref{PsiXi},
\begin{equation}\label{ok47}
\Xi^{\mu,\nu}(\xi,\eta)=(\nabla_\eta\Phi^{\sigma;\mu,\nu})(\xi,\eta)=-\iota_1\nabla\Lambda_{\sigma_1}(\eta-\xi)-\iota_2\nabla\Lambda_{\sigma_2}(\eta),
\end{equation}
where
\begin{equation*}
\mu=(\sigma_1\iota_1),\qquad \nu=(\sigma_2\iota_2),\qquad \sigma_1,\sigma_2\in\{i,e,b\},\qquad \iota_1,\iota_2\in\{+,-\}.
\end{equation*}
For $l\in\mathbb{Z}$ let
\begin{equation}\label{ba1}
\begin{split}
G_{\leq l}(\xi):=\varphi_k(\xi)&\int_{\mathbb{R}}\int_{\mathbb{R}^3}\varphi_{(-\infty,l]}(\Xi^{\mu,\nu}(\xi,\eta))\cdot  e^{is\Phi^{\sigma;\mu,\nu}(\xi,\eta)}\\
&\varphi(2^{20D}(1+2^{k_2})\Phi^{\sigma;\mu,\nu}(\xi,\eta))q_m(s)\widehat{f_{k_1,j_1}^\mu}(\xi-\eta,s)\widehat{f_{k_2,j_2}^\nu}(\eta,s)\,d\eta ds.
\end{split}
\end{equation}
Let $G_l:=G_{\leq l}-G_{\leq l-1}$. In proving \eqref{ok48} we may assume that $j_1\leq j_2$. If $l\geq l_0=-20D-4\max(k_2,0)$ then we integrate by parts in $\eta$, using Lemma \ref{tech5} with $K\approx 2^{m+l}$ and $\eps^{-1}\approx 2^{j_2}+2^{-\min(l,0)-\min(k_2,0)}+2^{k_2}$. Using also the last bound in \eqref{nh9}, \eqref{ok40}, and \eqref{ok45} to ensure $\eps K\ge 2^{\beta^2m}$, it follows that
\begin{equation}\label{ba2}
\sum_{l\geq l_0+1}\|G_l\|_{L^\infty}\lesssim (1+2^{5k_2})^{-1}.
\end{equation}
It remains to estimate $\|G_{\leq l_0}\|_{L^\infty}$. Since $\sigma\ne i$, it follows from Proposition \ref{PropABC} that $G_{\leq l_0}\equiv 0$. This completes the proof of the lemma.
\end{proof}

\begin{lemma}\label{BigBound5}
The bound \eqref{ok60} holds provided that \eqref{ok61} holds and, in addition,
\begin{equation}\label{ok110}
j\leq m+D\quad\text{ and }\quad \max(j_1,j_2)\geq (1-\beta/10)m+k_\sigma,
\end{equation}
or
\begin{equation}\label{ok110.5}
j\leq m+D\quad\text{ and }\quad \min(k_1,k_2)\leq -9m/10.
\end{equation}
\end{lemma}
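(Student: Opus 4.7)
The plan is to handle the two disjoint sub-cases \eqref{ok110} and \eqref{ok110.5} separately, in each case exploiting the extra smallness provided either by very high spatial localization of one of the inputs or by very low frequency.

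Under \eqref{ok110.5}, assume by symmetry $k_1=\min(k_1,k_2)\leq -9m/10$. The Fourier support of $\widehat{f^\mu_{k_1,j_1}}$ has volume $\lesssim 2^{3k_1}$, so Cauchy--Schwarz gives $\|\widehat{f^\mu_{k_1,j_1}}\|_{L^1}\lesssim 2^{3k_1/2}\|f^\mu_{k_1,j_1}\|_{L^2}$, and Young's convolution inequality $L^1\ast L^2\to L^2$ applied to the structure of $\widetilde T_s$ yields
\[
\big\|P_k T_{m}^{\sigma;\mu,\nu}(f^\mu_{k_1,j_1},f^\nu_{k_2,j_2})\big\|_{L^2}\lesssim 2^m\cdot 2^{3k_1/2}\|f^\mu_{k_1,j_1}\|_{L^2}\|f^\nu_{k_2,j_2}\|_{L^2}.
\]
Inserting the $L^2$ bounds \eqref{nh9.1}, using $j_1\geq -k_1\geq 9m/10$ (since $(k_1,j_1)\in\mathcal{J}$) and $j\leq m+D$, the product with the $B^1_{k,j}$-weight $(1+2^k)2^{k_\sigma}(2^{\alpha k}+2^{10k})2^{(1+\beta)j}$ yields an exponent $\leq m[-1/4+O(\beta)]+3k_1/2+O(D)$, strictly negative. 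The Fourier-$L^\infty$ component of $B^1_{k,j}$ is handled the same way via $\|\widehat{P_k T_m}\|_{L^\infty}\leq 2^m\|\widehat{f^\mu_{k_1,j_1}}\|_{L^1}\|\widehat{f^\nu_{k_2,j_2}}\|_{L^\infty}$ together with \eqref{nh9}.

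Under \eqref{ok110}, assume by symmetry $j_2=\max(j_1,j_2)\geq (1-\beta/10)m+k_\sigma$ and apply the decomposition \eqref{ok37} to split $P_{[k_2-2,k_2+2]}\phii^{(k_2)}_{j_2}P_{k_2}f_\nu$ into a $g$-piece (sharp $L^2$ bound $2^{-(1+\beta)j_2}$) and an $h$-piece (weaker $L^2$ bound $2^{-(1-\beta)j_2}$ but small $L^1$-Fourier bound $2^{-\gamma j_2-8|k_2|}$, with $\gamma=3/2-4\beta$). For the $g$-piece, combine the dispersive bound $\|Ef^\mu_{k_1,j_1}(s)\|_{L^\infty}\lesssim(1+s)^{-1-\beta}$ from \eqref{nh9} with H\"older in physical space:
\[
\|P_k T_m(f^\mu,g^\nu)\|_{L^2}\lesssim \int q_m(s)\|Ef^\mu_{k_1,j_1}(s)\|_{L^\infty}\|g^\nu_{k_2,j_2}(s)\|_{L^2}\,ds\lesssim 2^{-\beta m}\cdot 2^{-(1+\beta)j_2},
\]
so the $B^1_{k,j}$-weight $2^{(1+\beta)j}$, together with $j\leq m+D$ and the lower bound on $j_2$, produces the exponent $m[-9\beta/10+O(\beta^2)]$, which is strictly negative; the Fourier-$L^\infty$ part is controlled even more comfortably via $\|\widehat{g^\nu}\|_{L^\infty}\lesssim 2^{-(1/2-\beta)\widetilde{k_2}}$. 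For the $h$-piece, the analogous $L^\infty\cdot L^2$ bound gives exponent $m[+11\beta/10+O(\beta^2)]$ in $B^1$, which fails; instead we control this piece in the weaker $B^2_{k,j}$-norm defined by \eqref{sec5.4}, whose $L^2$-weight $2^{(1-\beta)j}$ (rather than $2^{(1+\beta)j}$) brings the exponent back down to $m[-9\beta/10+O(\beta^2)]$. The Fourier-$L^\infty$ and ball-$L^1$ components of the $B^2$-norm are closed using $\|\widehat{h^\nu}\|_{L^1}\lesssim 2^{-\gamma j_2-8|k_2|}$ together with $\|\widehat{P_k T_m(f^\mu,h^\nu)}\|_{L^\infty}\leq 2^m\|\widehat{f^\mu_{k_1,j_1}}\|_{L^\infty}\|\widehat{h^\nu}\|_{L^1}$. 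The remaining mixed configurations (where $f^\mu_{k_1,j_1}$ is itself decomposed via \eqref{ok36}) follow the same dichotomy.

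The main technical obstacle is to verify that the polynomial weight $2^{10|k|}$ appearing in the $B^2_{k,j}$-norm does not spoil the exponential decay in $m$ when treating the $h$-piece. For this one uses that the constraints \eqref{ok61} force $k\leq j/N'_0+2$, so $k$ is essentially bounded above, while for the negative-$k$ branch the output has Fourier-support of volume $\lesssim 2^{3k}$, allowing the Bernstein-type passage $\|P_k T_m\|_{L^2}\lesssim 2^{3k/2}\|\widehat{P_k T_m}\|_{L^\infty}$ that absorbs the $2^{-10k}$ growth on the admissible range of $k$. This bookkeeping, together with the fact that $|k_1|,|k_2|\leq 10j\leq 10(m+D)$ from \eqref{ok61}, ensures that all remaining polynomial prefactors are dominated by the margin $2^{-\beta^4(m+j)}$ we seek.
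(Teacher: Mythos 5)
Your treatment of the sub-case \eqref{ok110.5} is essentially the paper's argument, and it is fine: localize the small-frequency input, convert to an $L^1$-Fourier bound, and read off the gain from $k_1\leq -9m/10$.

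The sub-case \eqref{ok110} is where the proposal goes astray. You conclude that the $L^\infty\cdot L^2$ estimate on the $h$-piece "fails in $B^1$" and therefore you need to switch to the weaker $B^2_{k,j}$ norm. In fact the $B^1_{k,j}$ estimate \emph{does} close for the $h$-piece, using a bound you already have in hand but discard: the $L^1$-Fourier bound $\|\widehat{h_{k_2,j_2}^\nu}\|_{L^1}\lesssim 2^{-\gamma j_2}$ with $\gamma = 3/2-4\beta$. Plugging this into
\[
\big\|P_k\widetilde T_s^{\sigma;\mu,\nu}(f^\mu_{k_1,j_1},\,h^\nu_{k_2,j_2})\big\|_{L^2}\lesssim \|\widehat{f^\mu_{k_1,j_1}}\|_{L^2}\,\|\widehat{h^\nu_{k_2,j_2}}\|_{L^1}\lesssim 2^{-(1-\beta)j_1}\cdot 2^{-\gamma j_2}
\]
gives a much better exponent than your $L^\infty\cdot L^2$ computation, and — after the usual splitting into $\min(j_1,j_2)$ small (where the refined dispersive estimate \eqref{equl} with the gain $(1+s)^{-(5/4-10\beta)}2^{(1/4-11\beta)j'}$ takes over) and $\min(j_1,j_2)\gtrsim m$ (where the $\gamma$-gain on the $h$-piece bites) — the $B^1_{k,j}$ bound closes with room to spare. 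This is exactly what the paper does, which is why it states explicitly at the start of the subsection that everything up to and including this lemma is controlled in the \emph{strong} norm $B^1_{k,j}$.

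Beyond missing the cleaner route, the $B^2$ fallback as you describe it has a genuine problem. The $B^2_{k,j}$-norm carries a $2^{10|k|}$ weight. Under the constraints \eqref{ok61} and $j\leq m+D$, the output frequency $k$ can satisfy $k\approx -m$ (take, for example, $j\in[m(1-\beta^2),m+D]$ and $k\approx -j$, which is permitted), in which case $2^{10|k|}\approx 2^{10m}$. The "Bernstein-type passage" $\|P_kT_m\|_{L^2}\lesssim 2^{3k/2}\|\widehat{P_kT_m}\|_{L^\infty}$ gains only $2^{3k/2}\approx 2^{-3m/2}$, nowhere near enough to absorb $2^{10m}$, and the modest margin $2^{-9\beta m/10}$ you have from the $j_2$-localization is orders of magnitude too small. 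There is no way to rescue this within the $B^2$ framework; one must stay in $B^1$, which has no $2^{10|k|}$ factor. The ball-$L^1$ component of the putative $B^2$ bound is also never actually estimated (you write an $L^\infty$ bound for the output Fourier transform and gesture at the $\gamma j_2$ gain, but never produce the required $R^{-2}\|\widehat{\,\cdot\,}\|_{L^1(B(\xi_0,R))}$ estimate for the output itself).

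In short: the correct strategy for the $h$-piece is to exploit $\|\widehat h\|_{L^1}\lesssim 2^{-\gamma j_2}$ directly in the $L^2$ estimate to remain in $B^1_{k,j}$, not to relax to $B^2_{k,j}$, which is both unnecessary and, because of the $2^{10|k|}$ weight, not viable in the required frequency range.
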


\begin{proof}[Proof of Lemma \ref{BigBound5}] Assume first that \eqref{ok110.5} holds. We estimate, assuming $k_1\leq k_2$ and using \eqref{nh9},
\begin{equation*}
\begin{split}
(1+2^k)2^{k_\sigma}&(2^{\alpha k}+2^{10k})2^{(1+\beta)j}2^{3k/2}\big\|\mathcal{F}P_kT_{m}^{\sigma;\mu,\nu}(f_{k_1,j_1}^\mu,f_{k_2,j_2}^\nu)\big\|_{L^\infty}\\
&\lesssim 2^{(1+\beta)j}2^m(1+2^{11k})2^{3k_2/2}\sup_{s\in[2^{m-1},2^{m+1}]}\|\widehat{f_{k_1,j_1}^\mu}(s)\|_{L^1}\|\widehat{f_{k_2,j_2}^\nu}(s)\|_{L^\infty}\\
&\lesssim 2^{(2+\beta)m}(1+2^{11k})2^{3k_2/2}\cdot 2^{5k_1/2}2^{-k_2/2}.
\end{split}
\end{equation*}
The desired bound \eqref{ok60} follows using also \eqref{ok40.55}.

Assume now that \eqref{ok110} holds. Using definition \eqref{sec5.3}, it suffices to prove that
\begin{equation}\label{ok112}
\begin{split}
&(1+2^{k})2^{k_\sigma}(2^{\alpha k}+2^{10k})\cdot 2^{(1+\beta)j}\big\|\widetilde{\varphi}^{(k)}_j\cdot P_kT_{m}^{\sigma;\mu,\nu}(f_{k_1,j_1}^\mu,f_{k_2,j_2}^\nu)\big\|_{L^2}\\
&+(1+2^{k})2^{k_\sigma}(2^{\alpha k}+2^{10k})\cdot 2^{(1/2-\beta)\widetilde{k}}\big\|\mathcal{F}[\widetilde{\varphi}^{(k)}_j\cdot P_kT_{m}^{\sigma;\mu,\nu}(f_{k_1,j_1}^\mu,f_{k_2,j_2}^\nu)]\big\|_{L^\infty}\lesssim 2^{-\beta^4 (m+j)}.
\end{split}
\end{equation}
By symmetry, we may assume $k_1\leq k_2$. We prove first the bounds \eqref{ok112} in the case
\begin{equation}\label{ba6}
k_1\leq-5m/6.
\end{equation}
Using \eqref{nh9}, for any $s\in[0,t]$,
\begin{equation*}
\|\widehat{f_{k_1,j_1}^\mu}(s)\|_{L^1}\lesssim 2^{3k_1}\|\widehat{f_{k_1,j_1}^\mu}(s)\|_{L^\infty}\lesssim 2^{(5/2-\alpha+\beta)k_1}.
\end{equation*}
Therefore, using \eqref{nh9} again, it follows that
\begin{equation*}
\begin{split}
\big\|\mathcal{F}[T_{m}^{\sigma;\mu,\nu}(f_{k_1,j_1}^\mu,f_{k_2,j_2}^\nu)]\big\|_{L^2}&\lesssim 2^m\sup_{s\in[2^{m-1},2^{m+1}]}\|\widehat{f_{k_1,j_1}^\mu}(s)\|_{L^1}\|\widehat{f_{k_2,j_2}^\nu}(s)\|_{L^2}\\
&\lesssim 2^m2^{(5/2-\alpha+\beta)k_1}\min(2^{-(N_0-1)k_2},2^{(1+\beta-\alpha)k_2})
\end{split}
\end{equation*}
and
\begin{equation}\label{ba6.5}
\begin{split}
\big\|\mathcal{F}[T_{m}^{\sigma;\mu,\nu}(f_{k_1,j_1}^\mu,f_{k_2,j_2}^\nu)\big\|_{L^\infty}&\lesssim 2^m\sup_{s\in[2^{m-1},2^{m+1}]}\|\widehat{f_{k_1,j_1}^\mu}(s)\|_{L^1}\|\widehat{f_{k_2,j_2}^\nu}(s)\|_{L^\infty}\\
&\lesssim 2^m2^{(5/2-\alpha+\beta)k_1}\cdot(2^{\alpha k_2}+2^{10k_2})^{-1}2^{-(1/2-\beta)\widetilde{k_2}}.
\end{split}
\end{equation}
Therefore, recalling \eqref{ba6}, if $k\leq 0$ then the left-hand side of \eqref{ok112} is dominated by
\begin{equation*}
C2^{(2+\beta)m}2^{(5/2-\alpha+\beta)k_1}\lesssim 2^{(-1/12+5\al/6+\be/6)m},
\end{equation*}
which suffices. Similarly, if $k\geq 0$ then the left-hand side of \eqref{ok112} is dominated by
\begin{equation*}
C2^{(2+\beta)m}2^{(5/2-\alpha+\beta)k_1}2^{-(N_0-15)k}+C2^{k_2}2^m2^{(5/2-\alpha+\beta)k_1}\lesssim 2^{k_2}2^{(-1/12+5\al/6+\be/6)m},
\end{equation*}
which also suffices.

To prove the bound \eqref{ok112} when $-5m/6\leq k_1\leq k_2$ we decompose, as in \eqref{ok36}--\eqref{ok37}, for any $s\in[2^{m-1},2^{m+1}]$,
\begin{equation}\label{ok113}
\begin{split}
&\phii^{(k_1)}_{j_1}\cdot P_{k_1}f_\mu(s)=(2^{\alpha k_1}+2^{10k_1})^{-1}[g_{k_1,j_1}^\mu(s)+h_{k_1,j_1}^\mu(s)],\\
&g_{k_1,j_1}^\mu(s)=g_{k_1,j_1}^\mu(s)\cdot \widetilde{\varphi}^{(k_1)}_{[j_1-2,j_1+2]},\qquad h_{k_1,j_1}^\mu(s)=h_{k_1,j_1}^\mu(s)\cdot \widetilde{\varphi}^{(k_1)}_{[j_1-2,j_1+2]},\\
&2^{(1+\be)j_1}\|g_{k_1,j_1}^\mu(s)\|_{L^2}+2^{(1/2-\beta)\widetilde{k_1}}\|\widehat{g_{k_1,j_1}^\mu}(s)\|_{L^\infty}\lesssim 1,\\
&2^{(1-\be)j_1}\|h_{k_1,j_1}^\mu(s)\|_{L^2}+\|\widehat{h_{k_1,j_1}^\mu}(s)\|_{L^\infty}+2^{\gamma j_1}\|\widehat{h_{k_1,j_1}^\mu}(s)\|_{L^1}\lesssim 2^{-8|k_1|},
\end{split}
\end{equation}
and
\begin{equation}\label{ok114}
\begin{split}
&\phii^{(k_2)}_{j_2}\cdot P_{k_2}f_\nu(s)=(2^{\alpha k_2}+2^{10k_2})^{-1}[g_{k_2,j_2}^\nu(s)+h_{k_2,j_2}^\nu(s)],\\
&g_{k_2,j_2}^\nu(s)=g_{k_2,j_2}^\nu(s)\cdot \widetilde{\varphi}^{(k_2)}_{[j_2-2,j_2+2]},\qquad h_{k_2,j_2}^\nu(s)=h_{k_2,j_2}^\nu(s)\cdot \widetilde{\varphi}^{(k_2)}_{[j_2-2,j_2+2]},\\
&2^{(1+\be)j_2}\|g_{k_2,j_2}^\nu(s)\|_{L^2}+2^{(1/2-\beta)\widetilde{k_2}}\|\widehat{g_{k_2,j_2}^\nu}(s)\|_{L^\infty}\lesssim 1,\\
&2^{(1-\be)j_2}\|h_{k_2,j_2}^\nu(s)\|_{L^2}+\|\widehat{h_{k_2,j_2}^\nu}(s)\|_{L^\infty}+2^{\gamma j_2}\|\widehat{h_{k_2,j_2}^\nu}(s)\|_{L^1}\lesssim 2^{-8|k_2|}.
\end{split}
\end{equation}

We will prove now the $L^2$ bound
\begin{equation}\label{ba8}
(1+2^{k})2^{k_\sigma}(2^{\alpha k}+2^{10k})\cdot 2^{(2+\beta)m}\big\|P_k\widetilde{T}_{s}^{\sigma;\mu,\nu}(f_{k_1,j_1}^\mu(s),f_{k_2,j_2}^\nu(s))\big\|_{L^2}\lesssim 2^{-2\beta^4 m},
\end{equation}
for any $s\in[2^{m-1},2^{m+1}]$, see \eqref{nh9.2} for the definition of the bilinear operators $\widetilde{T}_{s}^{\sigma;\mu,\nu}$. In view of the assumption \eqref{ok110} this would clearly imply the desired $L^2$ bound in \eqref{ok112}. 

Assume first that $\min(j_1,j_2)\leq (1-9\beta)m$, i.e.
\begin{equation}\label{ba9}
\min(j_1,j_2)\leq (1-15\beta)m,\qquad \max(j_1,j_2)\geq (1-\beta/10)m+k_\sigma,\qquad k_2\geq k_1\geq-5m/6.
\end{equation}
Using \eqref{nh9.1} and \eqref{equl},
\begin{equation*}
\begin{split}
\|P_k\widetilde{T}_{s}^{\sigma;\mu,\nu}&(f_{k_1,j_1}^\mu(s),f_{k_2,j_2}^\nu(s))\big\|_{L^2}\lesssim \min(\|Ef_{k_1,j_1}^\mu(s)\|_{L^\infty}\|Ef_{k_2,j_2}^\nu(s)\|_{L^2}, \|Ef_{k_1,j_1}^\mu(s)\|_{L^2}\|Ef_{k_2,j_2}^\nu(s)\|_{L^\infty})\\
&\lesssim \min(2^{\beta k_1},2^{-6k_1})\min(2^{\beta k_2},2^{-6k_2})\cdot 2^{-m(5/4-10\beta)}2^{(1/4-11\beta)\min(j_1,j_2)}2^{-(1-\beta)\max(j_1,j_2)}\\
&\lesssim 2^{-k_\sigma}(1+2^{k_2})^{-6}2^{-(2+3\beta/2)m},
\end{split}
\end{equation*}
which suffices to prove \eqref{ba8}.

Assume now that $\min(j_1,j_2)\geq (1-15\beta)m$, i.e.
\begin{equation}\label{ba10}
\min(j_1,j_2)\geq (1-15\beta)m,\qquad \max(j_1,j_2)\geq (1-\beta/10)m+k_\sigma,\qquad k_2\geq k_1\geq-5m/6.
\end{equation}
We recall that 
\begin{equation}\label{ba11}
\begin{split}
&f_{k_1,j_1}^\mu=P_{[k_1-2,k_1+2]}(\phii^{(k_1)}_{j_1}\cdot P_{k_1}f_\mu)=(2^{\alpha k_1}+2^{10k_1})^{-1}[P_{[k_1-2,k_1+2]}g_{k_1,j_1}^\mu+P_{[k_1-2,k_1+2]}h_{k_1,j_1}^\mu],\\
&f_{k_2,j_2}^\nu=P_{[k_2-2,k_2+2]}(\phii^{(k_2)}_{j_2}\cdot P_{k_2}f_\nu)=(2^{\alpha k_2}+2^{10k_2})^{-1}[P_{[k_2-2,k_2+2]}g_{k_2,j_2}^\nu+P_{[k_2-2,k_2+2]}h_{k_2,j_2}^\nu],
\end{split}
\end{equation}
and use the bounds in \eqref{ok113}--\eqref{ok114}. Then we estimate, using also \eqref{ba10},
\begin{equation*}
\begin{split}
\big\|P_k\widetilde{T}_{s}^{\sigma;\mu,\nu}(P_{[k_1-2,k_1+2]}h_{k_1,j_1}^\mu(s),&P_{[k_2-2,k_2+2]}h_{k_2,j_2}^\nu(s))\big\|_{L^2}\lesssim \|\widehat{h_{k_1,j_1}^\mu}(s)\|_{L^1}\|\widehat{h_{k_2,j_2}^\nu}(s)\|_{L^2}\\
&\lesssim 2^{-\gamma j_1}2^{-(1-\beta)j_2}2^{-8|k_1|}2^{-8|k_2|}\\
&\lesssim 2^{-(\gamma+1-25\beta)m}2^{-6|k_1|}2^{-8|k_2|}
\end{split}
\end{equation*}
\begin{equation*}
\begin{split}
\big\|P_k\widetilde{T}_{s}^{\sigma;\mu,\nu}(P_{[k_1-2,k_1+2]}h_{k_1,j_1}^\mu(s),&P_{[k_2-2,k_2+2]}g_{k_2,j_2}^\nu(s))\big\|_{L^2}\lesssim \|\widehat{h_{k_1,j_1}^\mu}(s)\|_{L^1}\|\widehat{g_{k_2,j_2}^\nu}(s)\|_{L^2}\\
&\lesssim 2^{-\gamma j_1}2^{-(1+\beta)j_2}2^{-8|k_1|}\\
&\lesssim 2^{-m(\gamma+1-25\beta)}2^{-6|k_1|},
\end{split}
\end{equation*}
\begin{equation*}
\begin{split}
\big\|P_k\widetilde{T}_{s}^{\sigma;\mu,\nu}(P_{[k_1-2,k_1+2]}g_{k_1,j_1}^\mu(s),&P_{[k_2-2,k_2+2]}h_{k_2,j_2}^\nu(s))\big\|_{L^2}\lesssim \|\widehat{g_{k_1,j_1}^\mu}(s)\|_{L^2}\|\widehat{h_{k_2,j_2}^\nu}(s)\|_{L^1}\\
&\lesssim 2^{-(1+\beta)j_1}2^{-\gamma j_2}2^{-8|k_2|}\\
&\lesssim 2^{-m(\gamma+1-25\beta)}2^{-8|k_2|}2^{-k_\sigma},
\end{split}
\end{equation*}
and, using also \eqref{mk15.6}--\eqref{mk15.67} (compare with the bounds \eqref{equl}),
\begin{equation*}
\begin{split}
&\big\|P_k\widetilde{T}_{s}^{\sigma;\mu,\nu}(P_{[k_1-2,k_1+2]}g_{k_1,j_1}^\mu(s),P_{[k_2-2,k_2+2]}g_{k_2,j_2}^\nu(s))\big\|_{L^2}\\
&\lesssim \min\big(\|e^{-is\widetilde{\Lambda}_\mu}P_{[k_1-2,k_1+2]}(g_{k_1,j_1}^\mu(s))\|_{L^\infty}\|g_{k_2,j_2}^\nu(s)\|_{L^2},\|g_{k_1,j_1}^\mu(s)\|_{L^2}\|e^{-is\widetilde{\Lambda}_\nu}P_{[k_2-2,k_2+2]}(g_{k_2,j_2}^\nu(s))\|_{L^\infty}\big)\\
&\lesssim 2^{-(1+\beta)\max(j_1,j_2)}\cdot 2^{-m(5/4-10\beta)}2^{(1/4-11\beta)\min(j_1,j_2)}(1+2^{4k_2})\\
&\lesssim 2^{-k_\sigma}(1+2^{4k_2})2^{-(2+19\beta/10)m}.
\end{split}
\end{equation*}
Therefore, using also $\al\in[0,\be/2]$ and $k_1\geq -5m/6$, the left-hand side of \eqref{ba8} is dominated by 
\begin{equation*}
C(1+2^{5k_2})2^{-\alpha k_1}2^{-9\beta m/10}\lesssim (1+2^{5k_2})2^{-29m\beta/60}. 
\end{equation*}
This completes the proof of \eqref{ba8}.

To complete the proof of \eqref{ok112} it remains to prove the $L^\infty$ bound.
This would follow from the estimate
\begin{equation}\label{ba14}
(1+2^{k})2^{k_\sigma}(2^{\alpha k}+2^{10k})\cdot 2^{(1/2-\beta)\widetilde{k}}2^m\big\|\mathcal{F}P_k\widetilde{T}_s^{\sigma;\mu,\nu}(f_{k_1,j_1}^\mu(s),f_{k_2,j_2}^\nu(s))\big\|_{L^\infty}\lesssim 2^{-2\beta^4m}
\end{equation}
for all $s\in [2^{m-2},2^{m+2}]$.
If $k_1\leq -2m/5$ then, as in \eqref{ba6.5},
\begin{equation*}
\begin{split}
\big\|\mathcal{F}P_k\widetilde{T}_s^{\sigma;\mu,\nu}(f_{k_1,j_1}^\mu(s),f_{k_2,j_2}^\nu(s))\big\|_{L^\infty}&\lesssim \|\widehat{f_{k_1,j_1}^\mu}(s)\|_{L^1}\|\widehat{f_{k_2,j_2}^\nu}(s)\|_{L^\infty}\\
&\lesssim 2^{(5/2-\alpha+\beta)k_1}(2^{\alpha k_2}+2^{10k_2})^{-1}2^{-(1/2-\beta)\widetilde{k}_2},
\end{split}
\end{equation*}
and therefore the left-hand side of \eqref{ba14} is dominated by
\begin{equation*}
C(1+2^{k})2^{k_\sigma}\cdot (2^{\alpha k}+2^{10k})(2^{\alpha k_2}+2^{10k_2})^{-1}\cdot 2^{(1/2-\beta)(\widetilde{k}-\widetilde{k}_2)}\cdot 2^m2^{(5/2-\alpha+\beta)k_1},
\end{equation*}
which is sufficient.

We now assume that $-2m/5\le k_1\le k_2$ and we decompose $f_{k_1,j_1}^\mu,f_{k_2,j_2}^\nu$ as in \eqref{ok113}, \eqref{ok114}, \eqref{ba11}. If $j_1\leq j_2$, we estimate
\begin{equation*}
\begin{split}
\big\|\mathcal{F}P_k\widetilde{T}_s^{\sigma;\mu,\nu}(P_{[k_1-2,k_1+2]}(g_{k_1,j_1}^\mu(s)+h_{k_1,j_1}^\mu(s)),&P_{[k_2-2,k_2+2]}g_{k_2,j_2}^\nu(s))\big\|_{L^\infty}\\
&\lesssim \big(\|\widehat{g_{k_1,j_1}^\mu}(s)\|_{L^2}+\|\widehat{h_{k_1,j_1}^\mu}(s)\|_{L^2}\big)\|\widehat{g_{k_2,j_2}^\nu}(s)\|_{L^2}\\
&\lesssim 2^{(1+\beta)\widetilde{k}_1}2^{-(1+\beta)j_2},
\end{split}
\end{equation*}
and
\begin{equation*}
\begin{split}
\big\|\mathcal{F}P_k\widetilde{T}_s^{\sigma;\mu,\nu}(P_{[k_1-2,k_1+2]}(g_{k_1,j_1}^\mu(s)+h_{k_1,j_1}^\mu(s)),&P_{[k_2-2,k_2+2]}h_{k_2,j_2}^\nu(s))\big\|_{L^\infty}\\
&\lesssim \big(\|\widehat{g_{k_1,j_1}^\mu}(s)\|_{L^\infty}+\|\widehat{h_{k_1,j_1}^\mu}(s)\|_{L^\infty}\big)\|\widehat{h_{k_2,j_2}^\nu}(s)\|_{L^1}\\
&\lesssim 2^{-(1/2-\beta)\widetilde{k_1}}\cdot 2^{-8|k_2|}2^{-\gamma j_2}.
\end{split}
\end{equation*}
Since $-\widetilde{k_1}\leq 2m/5$, $\alpha\le \beta$ and $2^{j_2}\gtrsim 2^{m(1-\beta/10)}2^{k_\sigma}$ it follows that if $j_1\leq j_2$ then
\begin{equation}\label{ba15}
\big\|\mathcal{F}P_k\widetilde{T}_s^{\sigma;\mu,\nu}(f_{k_1,j_1}^\mu(s),f_{k_2,j_2}^\nu(s))\big\|_{L^\infty}\lesssim 2^{-(1+\beta)k_\sigma}2^{-(1+\beta)(1-\beta/10)m}\cdot (2^{\alpha k_1}+2^{10k_1})^{-1}(2^{\alpha k_2}+2^{10k_2})^{-1}.
\end{equation}

Similarly, if $j_1\geq j_2$ we estimate
\begin{equation*}
\begin{split}
\big\|\mathcal{F}P_k\widetilde{T}_s^{\sigma;\mu,\nu}(P_{[k_1-2,k_1+2]}g_{k_1,j_1}^\mu(s),&P_{[k_2-2,k_2+2]}(g_{k_2,j_2}^\nu(s)+h_{k_2,j_2}^\nu(s)))\big\|_{L^\infty}\\
&\lesssim \|\widehat{g_{k_1,j_1}^\mu}(s)\|_{L^2}\big(\|\widehat{g_{k_2,j_2}^\nu}(s)\|_{L^2}+\|\widehat{h_{k_2,j_2}^\nu}(s)\|_{L^2}\big)\\
&\lesssim 2^{-(1+\beta)j_1}2^{(1+\beta)\widetilde{k}_2},
\end{split}
\end{equation*}
and
\begin{equation*}
\begin{split}
\big\|\mathcal{F}P_k\widetilde{T}_s^{\sigma;\mu,\nu}(P_{[k_1-2,k_1+2]}h_{k_1,j_1}^\mu(s),&P_{[k_2-2,k_2+2]}(g_{k_2,j_2}^\nu(s)+h_{k_2,j_2}^\nu(s)))\big\|_{L^\infty}\\
&\lesssim \|\widehat{h_{k_1,j_1}^\mu}(s)\|_{L^1}\big(\|\widehat{g_{k_2,j_2}^\nu}(s)\|_{L^\infty}+\|\widehat{h_{k_2,j_2}^\nu}(s)\|_{L^\infty}\big)\\
&\lesssim 2^{-\gamma j_1}2^{-6|k_1|}.
\end{split}
\end{equation*}
Since $2^{j_1}\gtrsim 2^{m(1-\beta/10)}2^{k_\sigma}$ it follows that if $j_1\geq j_2$ then
\begin{equation}\label{ba16}
\big\|\mathcal{F}P_k\widetilde{T}_s^{\sigma;\mu,\nu}(f_{k_1,j_1}^\mu(s),f_{k_2,j_2}^\nu(s))\big\|_{L^\infty}\lesssim 2^{-(1+\beta)k_\sigma}2^{-(1+\beta)(1-\beta/10)m}\cdot (2^{\alpha k_1}+2^{10k_1})^{-1}(2^{\alpha k_2}+2^{10k_2})^{-1}.
\end{equation}

Using \eqref{ba15} and \eqref{ba16}, the left-hand side of \eqref{ba14} is dominated by
\begin{equation*}
C(1+2^{k})2^{-\alpha k_1}2^{-4\beta m/5},
\end{equation*}
which suffices. This completes the proof of the lemma.
\end{proof}

Now that we have identified $m$ as the largest parameter, 
we may remove the non-resonant part of the nonlinearity. For any $\kappa\in(0,1]$ we define
\begin{equation}\label{DefOfN}
\begin{split}
T_{m}^{\sigma;\mu,\nu}(f,g)&=R_{m,\kappa}^{\sigma;\mu,\nu}(f,g)+N_{m}^{1;\sigma;\mu,\nu}(f,g)+N_{m,\kappa}^{2;\sigma;\mu,\nu}(f,g),\\
\mathcal{F}\big[N_{m}^{1;\sigma;\mu,\nu}(f,g)\big](\xi)&:=\int_{\mathbb{R}}\int_{\mathbb{R}^3}e^{is\Phi^{\sigma;\mu,\nu}(\xi,\eta)}\chi_{T}^{\sigma;\mu,\nu}(\xi,\eta)q_m(s)\cdot \widehat{f}(\xi-\eta,s)\widehat{g}(\eta,s)\,d\eta ds,\\
\chi_{T}^{\sigma;\mu,\nu}(\xi,\eta)&:=\varphi_{[1,\infty)}(2^{D^2+\max(0,k_1,k_2)}\Phi^{\sigma;\mu,\nu}(\xi,\eta)),\\
\mathcal{F}\big[N_{m,\kappa}^{2;\sigma;\mu,\nu}(f,g)\big](\xi)&:=\int_{\mathbb{R}}\int_{\mathbb{R}^3}e^{is\Phi^{\sigma;\mu,\nu}(\xi,\eta)}\chi_{S}^{\sigma;\mu,\nu}(\xi,\eta)q_m(s)\cdot \widehat{f}(\xi-\eta,s)\widehat{g}(\eta,s)\,d\eta ds,\\
\chi_{S}^{\sigma;\mu,\nu}(\xi,\eta)&:=\varphi(2^{D^2+\max(0,k_1,k_2)}\Phi^{\sigma;\mu,\nu}(\xi,\eta))\varphi_{[1,\infty)}(\vert\Xi^{\mu,\nu}(\xi,\eta)\vert/\kappa),\\
\mathcal{F}\big[R_{m,\kappa}^{\sigma;\mu,\nu}(f,g)\big](\xi)&:=\int_{\mathbb{R}}\int_{\mathbb{R}^3}e^{is\Phi^{\sigma;\mu,\nu}(\xi,\eta)}\chi_{R}^{\sigma;\mu,\nu}(\xi,\eta)q_m(s)\cdot \widehat{f}(\xi-\eta,s)\widehat{g}(\eta,s)\,d\eta ds,\\
\chi_R^{\sigma;\mu,\nu}(\xi,\eta)&:=\varphi(2^{D^2+\max(0,k_1,k_2)}\Phi^{\sigma;\mu,\nu}(\xi,\eta))\varphi(\vert\Xi^{\mu,\nu}(\xi,\eta)\vert/\kappa).
\end{split}
\end{equation}
Our last lemma in this section shows that only the resonant part of the interaction $R_{m,\kappa}^{\sigma;\mu,\nu}$ may produce more problematic outputs not in $B^{1}_{j,k}$.

\begin{lemma}\label{NewBigBound1}
Assume that $\sigma\in\{i,e,b\}$, $\mu, \nu\in\mathcal{I}_0$, $(k, j), (k_1, j_1), (k_2, j_2) \in\mathcal{J}$ , $m \in [0, L + 1] \cap\mathbb{Z}$, and
\begin{equation}\label{Bds1N1}
\begin{split}
&-9m/10\le k_1,k_2\le j/N_0^\prime,\quad\max(j_1,j_2)\le (1-\beta/10)m+k_\sigma,\\
&\beta m/2+N_0^\prime k_++D^2\le j\le m+D,\quad m\ge -k(1+\beta^2).
\end{split}
\end{equation}
Then, assuming $m\in[0,L]\cap\mathbb{Z}$,
\begin{equation}\label{NRI}
\begin{split}
(1+2^k)\Vert \widetilde{\varphi}^{(k)}_j\cdot P_kN_{m}^{1;\sigma;\mu,\nu}(f^\mu_{k_1,j_1},f^\nu_{k_2,j_2})\Vert_{B^1_{k,j}}&\lesssim 2^{-2\beta^4m},\\
(1+2^k)\Vert \widetilde{\varphi}^{(k)}_j\cdot P_kN_{m,\kappa}^{2;\sigma;\mu,\nu}(f^\mu_{k_1,j_1},f^\nu_{k_2,j_2})\Vert_{B^1_{k,j}}&\lesssim 2^{-2\beta^4m},
\end{split}
\end{equation}
for any $\kappa\in(0,1]$ satisfying
\begin{equation}\label{kappa}
2^m\kappa\geq 2^{\beta^2m}2^{\max(j_1,j_2)},\qquad 2^m\kappa\geq 2^{\beta^2 m}\kappa^{-1}2^{-\min(k_1,k_2,0)}2^{-D}.
\end{equation}
Moreover, for $m=L+1$,
\begin{equation}\label{NRI2}
(1+2^k)\Vert \widetilde{\varphi}^{(k)}_j\cdot P_kT_{L+1}^{\sigma;\mu,\nu}(f^\mu_{k_1,j_1},f^\nu_{k_2,j_2})\Vert_{B^1_{k,j}}\lesssim 2^{-2\beta^4L}.
\end{equation}
\end{lemma}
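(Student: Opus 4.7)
The three estimates are all proved by non-stationary phase: for $N^1_m$ we integrate by parts in time (a normal form transformation), for $N^{2}_{m,\kappa}$ we integrate by parts in the $\eta$ variable, and for $T^{\sigma;\mu,\nu}_{L+1}$ we exploit the short support $|\mathrm{supp}\,q_{L+1}|\le 2$. In all cases, once the oscillatory integration by parts produces a suitable gain, we reuse the Plancherel/\emph{atom decomposition} machinery from the proof of Lemma \ref{BigBound5}: decompose $f^{\mu}_{k_1,j_1}=(2^{\alpha k_1}+2^{10k_1})^{-1}(g^\mu_{k_1,j_1}+h^\mu_{k_1,j_1})$ and similarly for $f^{\nu}_{k_2,j_2}$ as in \eqref{ok113}--\eqref{ok114}, then combine the estimates \eqref{nh9}--\eqref{equl} with $\|\widehat{g}\|_{L^\infty}$ and $\|\widehat{h}\|_{L^1}$ bounds.

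\textbf{The boundary piece $m=L+1$.} Since $2^L\lesssim t\lesssim 2^L$ and $\int q_{L+1}\,ds\lesssim 1$, the factor $2^m$ that appeared in \eqref{ba14} is replaced by an $O(1)$ factor. Repeating the analysis in the proof of Lemma \ref{BigBound5} (two cases $k_1\le-2m/5$ and $-2m/5\le k_1\le k_2$, with the $g/h$ decomposition in the second) therefore gains us an extra factor $2^{-m}\lesssim 2^{-L}$ relative to \eqref{ba14}, which is more than enough to absorb the losses and to yield \eqref{NRI2}.

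\textbf{Integration by parts in $s$ for $N^1_m$.} Writing $e^{is\Phi^{\sigma;\mu,\nu}}=(i\Phi^{\sigma;\mu,\nu})^{-1}\partial_s e^{is\Phi^{\sigma;\mu,\nu}}$ and integrating by parts in $s$ produces three kinds of terms: (i) boundary contributions at the endpoints of $\mathrm{supp}\,q_m$, carrying the factor $|\Phi^{\sigma;\mu,\nu}|^{-1}\lesssim 2^{D^2+\max(0,k_1,k_2)}$ thanks to the cutoff $\chi^{\sigma;\mu,\nu}_T$; (ii) a $q_m'$-term with $\int|q_m'|\,ds\lesssim 1$; (iii) bulk terms involving $\partial_s\widehat{f^\mu_{k_1,j_1}}$ or $\partial_s\widehat{f^\nu_{k_2,j_2}}$. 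For (iii) we use \eqref{ok50repeat} combined with \eqref{derv2repeat}: the crucial gain is the $(1+s)^{-1+\beta/10}$ decay from \eqref{derv2repeat} in the regime \eqref{derv1repeat}, which costs $2^{-k'}$ but this loss is controlled by $-k_1,-k_2\le 9m/10$. Each term is then estimated by the same Plancherel/$L^1$-$L^\infty$ scheme as in Lemma \ref{BigBound5}, and the net trade-off (phase loss $2^{\max(0,k_1,k_2)}\le 2^{j/N'_0}$ from $1/\Phi$ versus the $s$-derivative gain $2^{-m(1+\beta/10)}\cdot 2^m=2^{-m\beta/10}$) beats $2^{-2\beta^4 m}$ under \eqref{Bds1N1}.

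\textbf{Integration by parts in $\eta$ for $N^{2}_{m,\kappa}$.} On $\mathrm{supp}\,\chi^{\sigma;\mu,\nu}_S$ we have $|\Xi^{\mu,\nu}(\xi,\eta)|\gtrsim\kappa$, so we can apply Lemma \ref{tech5} with $K\approx 2^m\kappa$. The regularity scale $\varepsilon^{-1}$ of the full integrand is the maximum of (a) the profile scale $2^{\max(j_1,j_2)}$ from the last line of \eqref{nh9}, (b) the Hessian scale of $\Phi^{\sigma;\mu,\nu}$ on the shell $|\Xi^{\mu,\nu}|\sim\kappa$, which by Lemma \ref{tech99} is at most $\kappa^{-1}2^{-\min(k_1,k_2,0)}$, and (c) the frequency localization scales $2^{-k_1},2^{-k_2}$ (which are harmless since they are dominated by (b)). The two conditions in \eqref{kappa} are exactly what is needed to ensure $K\varepsilon\ge 2^{\beta^2 m}$ against (a) and (b) respectively; iterating the integration by parts many times then yields arbitrarily rapid decay, and plugging back into the Plancherel estimate of Lemma \ref{BigBound5} gives $2^{-2\beta^4 m}$.

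\textbf{Main obstacle.} The delicate point is verifying condition (b): the phase $\Phi^{\sigma;\mu,\nu}$ can be singular at $\eta=0$ when one of the dispersion relations involved is the ion-like $\Lambda_i$ (see Lemma \ref{tech99} and Remark \ref{tech1.33}), so one must show, using the sharp symbol-type estimates of Lemma \ref{tech1}, that the second $\eta$-derivative of $m_{\sigma;\mu,\nu}\,\chi^{\sigma;\mu,\nu}_S$ really does scale like $\kappa^{-2}\cdot 2^{-2\min(k_1,k_2,0)}$ and no worse. This forces the second, quadratic-in-$\kappa$ requirement in \eqref{kappa} and is the main book-keeping step of the proof.
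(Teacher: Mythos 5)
Your high-level strategy matches the paper's (integration by parts in $s$ for $N^1_m$, integration by parts in $\eta$ for $N^2_{m,\kappa}$, shortness of $q_{L+1}$ for $T^{\sigma;\mu,\nu}_{L+1}$, and the second constraint in \eqref{kappa} coming from the worst-case Hessian scale $\kappa^{-1}2^{-\min(k_1,k_2,0)}$ of $\Phi^{\sigma;\mu,\nu}$). However, the proposed proof for the first line of \eqref{NRI} has two concrete gaps.

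First, you propose to control the $\partial_s\widehat{f}$ terms using \eqref{derv2repeat}, i.e.\ $\|\partial_s\widehat{f^{\mu}_{k',j'}}\|_{L^\infty}\lesssim (1+s)^{-1+\beta/10}2^{-k'}$. But \eqref{derv2repeat} only holds under \eqref{derv1repeat}, which requires $k'\ge -D$ when $\sigma_\mu\in\{e,b\}$; in this lemma $k_1,k_2$ may run down to $-9m/10\ll -D$, so the estimate is simply not available in the full range. And even where it is available, the $2^{-k'}$ loss of size up to $2^{9m/10}$ is not absorbed: combining the allowed $j\leq m+D$ with the prefactor $2^{(1+\beta)j}$ in the $B^1_{k,j}$ norm and the $2^m$ from the $s$-integration leaves a net growth $\gtrsim 2^{(9/10-\beta)m}$, which is far from $2^{-2\beta^4 m}$. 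The paper instead uses the $L^2$ bound \eqref{ok50repeat}, $\|(\partial_sf^{\mu}_{k',j'})(s)\|_{L^2}\lesssim 2^{k'_\sigma}\min[(1+s)^{-1-\beta},2^{3k'/2}]\min[1,2^{-(N_0-5)k'}]$, which carries no $2^{-k'}$ loss and supplies the extra $(1+s)^{-1-\beta}$ factor that makes the bookkeeping close.

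Second, you assert that the $q_m'$-term $N_{11}$ is "estimated by the same Plancherel/$L^1$-$L^\infty$ scheme as in Lemma \ref{BigBound5}." That scheme is insufficient for the $L^\infty$-in-Fourier component of the $B^1_{k,j}$ norm in the regime where all parameters are small, i.e.\ $|k|+|k_1|+|k_2|+j_1+j_2\le\beta^2 m$: there the naive Cauchy--Schwarz bound $\|N_{11}\|_{L^\infty}\lesssim \|\widehat{f^{\mu}_{k_1,j_1}}\|_{L^2}\|\widehat{f^{\nu}_{k_2,j_2}}\|_{L^2}\sim 2^{-(1-\beta)(j_1+j_2)}$ degenerates to $O(1)$ and gives nothing. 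A genuinely new ingredient is needed here, namely an angular decomposition $N_{11}=N_{11;1}+N_{11;2}$ with cutoff $\varphi(\delta^{-1}|\Xi^{\mu,\nu}(\xi,\eta)|)$, $\delta=2^{-m/3}$: $N_{11;2}$ is killed by non-stationary phase in $\eta$, while for $N_{11;1}$ one observes that $|\Xi^{\mu,\nu}|\lesssim 2^{-m/3}$ forces $\xi-\eta$ and $\eta$ (hence $\xi$ and $\eta$) to be nearly parallel or anti-parallel, restricting the angular volume of integration and yielding $\|N_{11;1}\|_{L^\infty}\lesssim 2^{-m/6}$. Without this geometric step, the $L^\infty$ component of the $B^1_{k,j}$ norm cannot be closed. (Relatedly, there are no actual boundary contributions from the integration by parts in $s$: $q_m$ vanishes at the endpoints of its support for $m\in[1,L]$, so only the $q_m'$ and $\partial_s\widehat{f}$ terms appear.)

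The treatment of $N^2_{m,\kappa}$ and of $T^{\sigma;\mu,\nu}_{L+1}$ is essentially as in the paper; you should also record the $L^1$ bound \eqref{ClaimChiT} on $\mathcal{F}^{-1}\{\chi^{\sigma;\mu,\nu}_T/\Phi^{\sigma;\mu,\nu}\}$, which is what licenses the use of Lemma \ref{tech2} with the rough cutoff in place.
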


\begin{proof}[Proof of Lemma \ref{NewBigBound1}] To prove the second inequality in \eqref{NRI} we use Lemma \ref{tech5} and the assumptions \eqref{kappa} to show that
\begin{equation}\label{szn25.8}
\vert \mathcal{F}\big[N_{m,\kappa}^{2;\sigma;\mu,\nu}(f^\mu_{k_1,j_1},f^\nu_{k_2,j_2})\big](\xi)\vert\lesssim 2^{-10m}.
\end{equation}
The second inequality in \eqref{NRI} follows easily using \eqref{Bds1N1}.

To prove the first inequality in \eqref{NRI} when $m\leq L$, we first integrate by parts in $s$ and obtain that
\begin{equation}\label{IBTNRI2}
\begin{split}
\mathcal{F}\big[N_{m}^{1;\sigma;\mu,\nu}(f,g)\big](\xi)&=-\int_{\mathbb{R}}\int_{\mathbb{R}^3}e^{is\Phi^{\sigma;\mu,\nu}(\xi,\eta)}\frac{\chi_{T}^{\sigma;\mu,\nu}(\xi,\eta)}{i\Phi^{\sigma;\mu,\nu}(\xi,\eta)}\cdot\partial_s\left[q_m(s) \widehat{f}(\xi-\eta,s)\widehat{g}(\eta,s)\right]\,d\eta ds.\\
\end{split}
\end{equation}
Therefore
\begin{equation*}
\begin{split}
\mathcal{F}\big[N_{m}^{1;\sigma;\mu,\nu}(f^\mu_{k_1,j_1},f^\nu_{k_2,j_2})\big]&=i\left[N_{11}+N_{12}+N_{13}\right],\\
N_{11}(\xi)&:=\int_{\mathbb{R}}\int_{\mathbb{R}^3}e^{is\Phi^{\sigma;\mu,\nu}(\xi,\eta)}\frac{\chi_{T}^{\sigma;\mu,\nu}(\xi,\eta)}{\Phi^{\sigma;\mu,\nu}(\xi,\eta)}q^\prime_m(s)\cdot \widehat{f^\mu_{k_1,j_1}}(\xi-\eta,s)\widehat{f^\nu_{k_2,j_2}}(\eta,s)\,d\eta ds,\\
N_{12}(\xi)&:=\int_{\mathbb{R}}\int_{\mathbb{R}^3}e^{is\Phi^{\sigma;\mu,\nu}(\xi,\eta)}\frac{\chi_{T}^{\sigma;\mu,\nu}(\xi,\eta)}{\Phi^{\sigma;\mu,\nu}(\xi,\eta)}q_m(s)\cdot (\partial_s\widehat{f^\mu_{k_1,j_1}})(\xi-\eta,s)\widehat{f^\nu_{k_2,j_2}}(\eta,s)\,d\eta ds,\\
N_{13}(\xi)&:=\int_{\mathbb{R}}\int_{\mathbb{R}^3}e^{is\Phi^{\sigma;\mu,\nu}(\xi,\eta)}\frac{\chi_{T}^{\sigma;\mu,\nu}(\xi,\eta)}{\Phi^{\sigma;\mu,\nu}(\xi,\eta)}q_m(s)\cdot \widehat{f^\mu_{k_1,j_1}}(\xi-\eta,s)(\partial_s\widehat{f^\nu_{k_2,j_2}})(\eta,s)\,d\eta ds.
\end{split}
\end{equation*}

We show first that
\begin{equation}\label{SuffNRI1}
(1+2^k)(2^{\alpha k}+2^{10k})2^{(1+\beta)m}\Vert P_kN_{m}^{1;\sigma;\mu,\nu}(f^\mu_{k_1,j_1},f^\nu_{k_2,j_2})\Vert_{L^2}\lesssim 2^{-2\beta^4m}.
\end{equation}
We may assume $k_1\leq k_2$. Using symbol type estimates, it is easy to see that
\begin{equation}\label{ClaimChiT}
\Big\|\mathcal{F}^{-1}\left[\frac{\chi_{T}^{\sigma;\mu,\nu}(\xi,\eta)}{\Phi^{\sigma;\mu,\nu}(\xi,\eta)}\varphi_{[k-4,k+4]}(\xi)\varphi_{[k_1-4,k_1+4]}(\xi-\eta)\varphi_{[k_2-4,k_2+4]}(\eta)\right]\Big\|_{L^1(\mathbb{R}^3\times\mathbb{R}^3)}\lesssim 2^{20\max(0,k_2)}.
\end{equation}
Using the decomposition \eqref{IBTNRI2}, Lemma \ref{tech2}, and \eqref{ClaimChiT}, we see that
\begin{equation}\label{IBTNRI1}
\begin{split}
&\Vert P_kN_{m}^{1;\sigma;\mu,\nu}(f^\mu_{k_1,j_1},f^\nu_{k_2,j_2})\Vert_{L^2}\\
&\lesssim 2^{20\max(0,k_2)}\sup_{s\in [2^{m-2},2^{m+2}]}\Big[\min\left\{\Vert Ef^\mu_{k_1,j_1}(s)\Vert_{L^\infty}\Vert f^\nu_{k_2,j_2}(s)\Vert_{L^2},\Vert f^\mu_{k_1,j_1}(s)\Vert_{L^2}\Vert Ef^\nu_{k_2,j_2}(s)\Vert_{L^\infty}\right\}\\
&+2^m\Vert Ef^\mu_{k_1,j_1}(s)\Vert_{L^\infty}\Vert (\partial_sf^\nu_{k_2,j_2})(s)\Vert_{L^2}+2^m\Vert (\partial_sf^\mu_{k_1,j_1})(s)\Vert_{L^2}\Vert Ef^\nu_{k_2,j_2}(s)\Vert_{L^\infty}\Big].
\end{split}
\end{equation}
It follows from \eqref{nh9} and \eqref{ok50repeat} that
\begin{equation*}
2^m\Vert Ef^\mu_{k_1,j_1}(s)\Vert_{L^\infty}\Vert (\partial_sf^\nu_{k_2,j_2})(s)\Vert_{L^2}+2^m\Vert (\partial_sf^\mu_{k_1,j_1})(s)\Vert_{L^2}\Vert Ef^\nu_{k_2,j_2}(s)\Vert_{L^\infty}\lesssim 2^{-6\max(k_2,0)}2^{-(1+2\beta)m}.
\end{equation*}
Moreover, using \eqref{nh9}--\eqref{nh9.1}, 
\begin{equation*}
\begin{split}
\min\left\{\Vert Ef^\mu_{k_1,j_1}(s)\Vert_{L^\infty}\Vert f^\nu_{k_2,j_2}(s)\Vert_{L^2},\Vert f^\mu_{k_1,j_1}(s)\Vert_{L^2}\Vert Ef^\nu_{k_2,j_2}(s)\Vert_{L^\infty}\right\}\\
\lesssim 2^{-6\max(k_2,0)}2^{-(1+\beta)m}2^{-(1-\beta)\max(j_1,j_2)}.
\end{split}
\end{equation*}
Finally, if $\max(j_1,j_2)\leq 2\beta m$ then, using \eqref{nh9.1} and \eqref{equl},
\begin{equation*}
\min\left\{\Vert Ef^\mu_{k_1,j_1}(s)\Vert_{L^\infty}\Vert f^\nu_{k_2,j_2}(s)\Vert_{L^2},\Vert f^\mu_{k_1,j_1}(s)\Vert_{L^2}\Vert Ef^\nu_{k_2,j_2}(s)\Vert_{L^\infty}\right\}\lesssim 2^{-6\max(k_2,0)}2^{-(5/4-15\beta)m}.
\end{equation*}
It follows from the last three bounds and \eqref{IBTNRI1} that
\begin{equation*}
\Vert P_kN_{m}^{1;\sigma;\mu,\nu}(f^\mu_{k_1,j_1},f^\nu_{k_2,j_2})\Vert_{L^2}\lesssim 2^{15\max(k_2,0)}2^{-(1+2\beta)m},
\end{equation*}
and the desired bound \eqref{SuffNRI1} follows since $2^k\lesssim 2^{k_2}\lesssim 2^{m/N'_0}$.

We show now that
\begin{equation}\label{LINormN1}
(1+2^k)2^{(1/2-\beta)\tilde{k}}(2^{\alpha k}+2^{10k})\Vert \mathcal{F}P_kN_{m}^{1;\sigma;\mu,\nu}(f^\mu_{k_1,j_1},f^\nu_{k_2,j_2})\Vert_{L^\infty}\lesssim 2^{-2\beta^4m}.
\end{equation}
We may assume $k_1\leq k_2$, and use the Cauchy-Schwartz inequality, \eqref{nh9}, and \eqref{ok50} to see that
\begin{equation*}
\begin{split}
&\| N_{12}\|_{L^\infty}+\|N_{13}\|_{L^\infty}\\
&\lesssim 2^{\max(0,k_2)}2^m\sup_{s\in[2^{m-2},2^{m+2}]}\big[\Vert(\partial_s\widehat{f}^\mu_{k_1,j_1})(s)\Vert_{L^2}\Vert \widehat{f}^\nu_{k_2,j_2}(s)\Vert_{L^2}+\Vert\widehat{f}^\mu_{k_1,j_1}(s)\Vert_{L^2}\Vert (\partial_s\widehat{f}^\nu_{k_2,j_2})(s)\Vert_{L^2}\big]\\
&\lesssim 2^{-\beta m}(1+2^{(N_0-10)k_2})^{-1}.
\end{split}
\end{equation*}
This implies that $N_{12}$ and $N_{13}$ give acceptable contributions to \eqref{LINormN1}. 
Proceeding as above, using \eqref{nh9.1} we also get
\begin{equation*}
\| N_{11}\|_{L^\infty}\lesssim (1+2^{k_2})2^{\beta\widetilde{k}_1}2^{-(1-\beta)j_1}\min(2^{-(N_0-5)k_2},2^{-(1-\beta)j_2}).
\end{equation*}
Therefore, this gives an acceptable contribution to \eqref{LINormN1} unless
\begin{equation}\label{SmallParam}
\vert k\vert+\vert k_1\vert+\vert k_2\vert+j_1+j_2\le \beta^2 m.
\end{equation}

Assuming that \eqref{SmallParam} holds, we need to strenghten the $L^\infty$ bound on $N_{11}$ slightly. We decompose
\begin{equation*}
\begin{split}
N_{11}&=N_{11;1}+N_{11;2},\\
N_{11;1}(\xi)&:=\int_{\mathbb{R}}\int_{\mathbb{R}^3}e^{is\Phi^{\sigma;\mu,\nu}(\xi,\eta)}\frac{\chi_{T}^{\sigma;\mu,\nu}(\xi,\eta)}{\Phi^{\sigma;\mu,\nu}(\xi,\eta)}\varphi(\delta^{-1}\vert\Xi^{\mu,\nu}(\xi,\eta)\vert)q^\prime_m(s)\cdot \widehat{f^\mu_{k_1,j_1}}(\xi-\eta,s)\widehat{f^\nu_{k_2,j_2}}(\eta,s)\,d\eta ds,\\
N_{11;2}(\xi)&:=\int_{\mathbb{R}}\int_{\mathbb{R}^3}e^{is\Phi^{\sigma;\mu,\nu}(\xi,\eta)}\frac{\chi_{T}^{\sigma;\mu,\nu}(\xi,\eta)}{\Phi^{\sigma;\mu,\nu}(\xi,\eta)}[1-\varphi(\delta^{-1}\vert\Xi^{\mu,\nu}(\xi,\eta)\vert)]q^\prime_m(s)\cdot \widehat{f^\mu_{k_1,j_1}}(\xi-\eta,s)\widehat{f^\nu_{k_2,j_2}}(\eta,s)\,d\eta ds,\\
\end{split}
\end{equation*}
with $\delta:=2^{-m/3}$. Applying Lemma \ref{tech5} with $K=2^{2m/3}$, $\epsilon=2^{-m/3}$, it is easy to see that 
\begin{equation*}
\vert N_{11;2}(\xi)\vert\lesssim 2^{-10m},
\end{equation*}
provided that \eqref{SmallParam} holds, which is clearly sufficient. On the other hand, using the definition \eqref{PsiXi} and the bounds \eqref{mk3.1}, we observe that
\begin{equation*}
\begin{split}
\vert\Xi^{\mu,\nu}(\xi,\eta)\vert&\gtrsim \vert\nabla\widetilde{\Lambda}_\nu(\eta)\vert\cdot\min(\big|(\xi-\eta)/\vert\xi-\eta\vert-\eta/\vert\eta\vert\big|,\big|(\xi-\eta)/\vert\xi-\eta\vert+\eta/\vert\eta\vert\big|)\\
&\gtrsim 2^{-\beta m}\min(\big| (\xi-\eta)/\vert\xi-\eta\vert-\eta/\vert\eta\vert\big|,\big| (\xi-\eta)/\vert\xi-\eta\vert+\eta/\vert\eta\vert\big|).
\end{split}
\end{equation*}
Consequently, if $\vert \xi\vert\in [2^{k-2},2^{k+2}]$, $\vert\xi-\eta\vert\in[2^{k_1-2},2^{k_1+2}]$ and $\vert\eta\vert\in [2^{k_2-2},2^{k_2+2}]$, and $\vert\Xi^{\mu,\nu}(\xi,\eta)\vert\lesssim 2^{-m/3}$ then
\begin{equation*}
\min\big(\vert \eta/\vert\eta\vert-\xi/\vert\xi\vert\vert,\vert \eta/\vert\eta\vert+\xi/\vert\xi\vert\vert\big)\lesssim 2^{-m/4}.
\end{equation*}
Then, a simple estimate using the $L^\infty$ bounds in \eqref{nh9} gives $\vert N_{11;1}(\xi)\vert\lesssim 2^{-m/6}$, which is sufficient to finish the proof of \eqref{LINormN1}. The first bound in \eqref{NRI} follows from \eqref{SuffNRI1} and \eqref{LINormN1}.

The bound \eqref{NRI2} follows by a similar (in fact easier) argument; since $\|q_{L+1}\|_{L^1}\lesssim 1$ one does not need to integrate by parts in $s$ and one can simply estimate the appropriate $L^2$ and $L^\infty$ norms in the same way we estimated the contributions of the function $N_{11}$ in the argument above.
\end{proof}

We examine now the conclusions of Lemma \ref{BigBound3}, Lemma \ref{BigBound4}, Lemma \ref{BigBound5}, and Lemma \ref{NewBigBound1}. We notice that to complete the proof of Proposition \ref{reduced2}, it suffices to prove Proposition \ref{reduced3} below. 

\begin{proposition}\label{reduced3}
Assume that $\sigma\in\{i,e,b\}$, $\mu, \nu\in\mathcal{I}_0$, $(k, j), (k_1, j_1), (k_2, j_2) \in\mathcal{J}$ , $m \in [1, L ] \cap\mathbb{Z}$, and
\begin{equation}\label{conditions}
\begin{split}
&-9m/10\le k_1,k_2\le j/N_0^\prime,\quad\max(j_1,j_2)\le (1-\beta/10)m+k_\sigma,\\
&\beta m/2+N_0^\prime k_++D^2\le j\le m+D,\quad m\ge -k(1+\beta^2).
\end{split}
\end{equation}
Then there is $\kappa\in(0,1]$, $\kappa\geq\max\big(2^{(\beta^2m-m)/2}2^{-\min(k_1,k_2,0)/2}2^{-D/2},2^{\beta^2m-m}2^{\max(j_1,j_2)}\big)$, such that
\begin{equation}\label{ok90}
(1+2^k)2^{k_\sigma}\big\|\widetilde{\varphi}^{(k)}_j\cdot P_kR_{m,\kappa}^{\sigma;\mu,\nu}(f_{k_1,j_1}^\mu,f_{k_2,j_2}^\nu)\big\|_{B_{k,j}}\lesssim 2^{-2\beta^4m}.
\end{equation}
\end{proposition}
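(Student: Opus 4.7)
The plan is to exploit the fact that the operator $R_{m,\kappa}^{\sigma;\mu,\nu}$ is supported on a small neighborhood of the space-time resonant set: by the cutoffs in \eqref{DefOfN} the integrand vanishes unless both $|\Phi^{\sigma;\mu,\nu}(\xi,\eta)| \lesssim 2^{-\max(0,k_1,k_2)-D^2}$ and $|\Xi^{\mu,\nu}(\xi,\eta)| \lesssim \kappa$. First I would fix $\kappa$ at essentially the smallest value compatible with \eqref{kappa}, namely $\kappa \approx \max\bigl(2^{(\beta^2 m - m)/2}2^{-\min(k_1,k_2,0)/2}2^{-D/2},\ 2^{\beta^2 m - m}2^{\max(j_1,j_2)}\bigr)$, so that the set of $(\xi,\eta)$ in the support has $\eta$-measure $\lesssim \kappa^3 (1+2^{k_2})^{-1}$. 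In view of Lemma \ref{tech1.3}, the multipliers factor as $(1+|\xi|)$ or $|\xi|$ times elements of $\mathcal{M}$, so the $(1+2^k)2^{k_\sigma}$ prefactor is absorbed; when $\sigma = i$ the additional factor of $|\xi|$ will supply the null structure needed for Case C below.

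Next I would use Proposition \ref{PropABC} from Appendix \ref{resonantsets} to classify the support of $\chi_R^{\sigma;\mu,\nu}$. For most choices of $(\sigma,\mu,\nu)$ and frequency configurations $(k,k_1,k_2)$ the resonant set is empty, and in these cases the trivial $L^2\times L^\infty$ bound (as in \eqref{nh9.5}) together with the smallness of the support suffices to control even the stronger $B^1_{k,j}$ norm. The nontrivial configurations reduce to the three cases flagged in the introduction: Case A, a full $2$D sphere $\{(R\omega, r\omega)\}$ with $R, r\neq 0$; Case B, the degenerate sphere $\{(R'\omega,0)\}$ arising when the $\Lambda_b$ dispersion relation interacts with a singular mode; Case C, the strongly degenerate set $\{(0,r'\omega)\}$ that appears exactly when $\Lambda_i$ is involved in the output frequency.

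For Case A, the output is spectrally concentrated on a sphere of radius $\approx R$ of thickness $\approx \kappa + 2^{-m}$, so I would abandon the strong $B^1$ norm and estimate the $B^2_{k,j}$ norm directly, using the $R^{-2}\|\widehat h\|_{L^1(B(\xi_0,R))}$ piece of \eqref{sec5.4} to absorb the spherical concentration. Stationary phase in $\eta$ on the unit sphere (following the refined orthogonality argument of \cite{IoPa2}) produces a pointwise bound on $\widehat{R_{m,\kappa}^{\sigma;\mu,\nu}(\cdot,\cdot)}(\xi)$ of the form $2^{-m}\kappa^2(1+2^{k_2})^{-1}$ times nonstationary remainders, from which an $L^1(B(\xi_0,R))$-bound with the right $R^2$ gain follows after summing orthogonal tubular pieces indexed by the directions tangent to the sphere. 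For Case B, the rough phase near $\eta=0$ is partially compensated by the weak ellipticity of $\Phi$ (the speed mismatch between $\Lambda_b$ and $\Lambda_\sigma$), and I would adapt the same orthogonality scheme but track how the bounds degenerate as $|\eta|\to 0$, using finite speed of propagation and Lemma \ref{desc2} (as referenced in the introduction) to control the loss. For Case C I would rely on the null structure: the extra $|\xi|$ factor from Lemma \ref{tech1.3} gains a power of $2^k$, which combined with an angular decomposition based on the geometry of $\xi, \eta, \xi-\eta$ overcomes the singularity of the multiplier after a normal form transformation, as in the analysis of \eqref{IonDispIntro}.

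The hard part will be Case A, which both dictates the introduction of the weak atoms $B^2_{k,j}$ and forces the precise choice of the exponent $\gamma = 3/2 - 4\beta$ in \eqref{sec5.6}: one must verify that stationary phase on the $2$D resonant sphere, combined with the orthogonality decomposition, produces exactly the scaling $R^{-2}$ built into \eqref{sec5.4}, and that all sums over $k_1,k_2,j_1,j_2$ close with the $2^{-2\beta^4 m}$ margin. A secondary difficulty is ensuring that the three cases genuinely exhaust the resonant support for all $61$ multipliers listed in Appendix \ref{multiex}, which is ultimately a case analysis of the dispersion relations $\Lambda_e,\Lambda_i,\Lambda_b$ carried out in Lemma \ref{tech99}.
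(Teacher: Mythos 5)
Your plan follows the same high-level strategy as the paper: classification via Proposition~\ref{PropABC} into Cases A, B, C, with the key tools you name (the $B^2$ norm to absorb concentration on the $2$D resonant sphere in Case A, Lemma~\ref{desc2} and finite speed of propagation in Case B, the null structure from the $|\xi|$ factor plus an angular decomposition in Case C), and the same understanding that the exponent $\gamma=3/2-4\beta$ must be tuned so that Case A closes.

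The one place where your sketch under-specifies in a way that would actually matter is Case~A. The paper's argument there (Section~\ref{normproof2}) splits on whether $\max(j_1,j_2)$ is below or above $(m-\beta^2 m)/2$. In the first sub-case (Lemma~\ref{BigBound7A}) the decisive gain does not come from stationary phase in $\eta$ alone: one first integrates by parts in time $s$ against the shifted phase $\Psi^{\sigma;\mu,\nu}(|\xi|)=\Phi^{\sigma;\mu,\nu}(\xi,p^{\sigma;\mu,\nu}(\xi))$, which requires the parametrization of Lemma~\ref{LemP}, and this produces the decay factor $2^{-l}$ on the annular slabs where $2^m|\Psi^{\sigma;\mu,\nu}(|\xi|)|\approx 2^l$; only then is the $B^2$ norm engaged, and only on the range $l\in[l_0,6\beta m]$, $j\in[m-3\beta m,m+D]$. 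In the second sub-case (Lemma~\ref{BigBound8A}) the orthogonality is not merely over ``tubular pieces tangent to the sphere'' but simultaneously over $\kappa$-cubes in $\xi$ \emph{and} over time intervals of length $\kappa 2^m$; the time decomposition, together with the quantitative injectivity of $s\mapsto r^{\sigma;\mu,\nu}(s)$ from \eqref{r1to1}, is what supplies the crucial extra gain of $\kappa^{1/2}$ that closes the $L^2$ estimate~\eqref{cle24}, and here the $B^1$ norm (not $B^2$) is controlled. So your Case~A plan needs these $s$-side ingredients to go through; your reading of Cases B and~C is consistent with what the paper does.
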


We prove this proposition in the next 3 sections. We consider several types of resonant interactions, which involve input and output frequencies located on spheres or at the origin, as well as the different phase functions $\Phi^{\sigma;\mu,\nu}$. We classify these interactions into 3 basic types, see Proposition \ref{PropABC}, and analyze the contributions separately in the next 3 sections. The optimal value of $\kappa$ for which we prove \eqref{ok90} depends, of course, on all the other parameters.

\section{Proof of Proposition \ref{Norm}, II: Case A resonant interactions}\label{normproof2}

In this section we consider type A interactions, see Proposition \ref{PropABC}, and prove the following proposition:

\begin{proposition}\label{reduced4}
Assume that $(k, j), (k_1, j_1), (k_2, j_2) \in\mathcal{J}$ , $m \in [1, L ] \cap\mathbb{Z}$,
\begin{equation}\label{tln1}
\begin{split}
\Phi^{\sigma;\mu,\nu}\in\mathcal{T}'_A:=\{&\Phi^{i;e+,i-}, \Phi^{i;b+,i-}, \Phi^{i;b-,e+}, \Phi^{i;b+,e-}, \Phi^{e;e+,i+}, \Phi^{e;b+,i+}, \Phi^{e;b+,i-}, \Phi^{e;b+,e-},\\
&\Phi^{b;e+,i+}, \Phi^{b;b+,i+}, \Phi^{b;e+,e+}, \Phi^{b;b+,e+},\Phi^{b;b+,e-}\},
\end{split}
\end{equation}
and
\begin{equation}\label{tln2}
-D/2\le k,k_1,k_2\le D/2,\quad\max(j_1,j_2)\le (1-\beta/10)m,\quad\beta m/2+N_0^\prime k_++D^2\le j\le m+D.
\end{equation}
Then there is $\kappa\in(0,1]$, $\kappa\geq\max\big(2^{(\beta^2m-m)/2},2^{\beta^2m-m}2^{\max(j_1,j_2)}\big)$, such that
\begin{equation}\label{tln3}
\big\|\widetilde{\varphi}^{(k)}_j\cdot P_kR_{m,\kappa}^{\sigma;\mu,\nu}(f_{k_1,j_1}^\mu,f_{k_2,j_2}^\nu)\big\|_{B_{k,j}}\lesssim 2^{-2\beta^4m}.
\end{equation}
\end{proposition}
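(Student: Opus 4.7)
The plan is to place the output of $R_{m,\kappa}^{\sigma;\mu,\nu}$ into the weak component $B^2_{k,j}$ of the norm defined in \eqref{sec5.4}, exploiting that the Case A space-time resonant set for each of the thirteen phases listed in $\mathcal{T}'_A$ is a smooth two-dimensional sphere along which the phase is Morse in normal directions. I would choose
\[
\kappa := 2^{(\beta^2-1)m/2} + 2^{(\beta^2-1)m}\,2^{\max(j_1,j_2)},
\]
which just satisfies the lower bound demanded in the statement. Under \eqref{tln2} all three input and output frequencies are of size $O(1)$, so on the support of $\chi_R^{\sigma;\mu,\nu}$ we may further assume that $\bigl||\xi|-R_\sigma\bigr|$, $\bigl||\xi-\eta|-r^*_{\mu}\bigr|$, $\bigl||\eta|-r^*_{\nu}\bigr|$ are all $\lesssim\kappa$ and that the three vectors $\xi,\xi-\eta,\eta$ are collinear up to angle $\kappa$. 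The output is therefore essentially supported in a $\kappa$-thick annulus around the fixed sphere $\{|\xi|=R_\sigma\}$, which is exactly the type of Fourier concentration that $B^2$-atoms are designed to control.

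Next, using \eqref{sec5.8}--\eqref{sec5.82}, I would decompose each input as $f_{k_r,j_r}^{\mu_r}=(2^{\alpha k_r}+2^{10k_r})^{-1}(g_r+h_r)$ with $g_r$ a ``strong'' piece controlled in $B^1_{k_r,j_r}$ and $h_r$ a ``weak'' piece controlled in $B^2_{k_r,j_r}$, and split the operator into the four bilinear pieces $(g_1,g_2), (g_1,h_2), (h_1,g_2), (h_1,h_2)$. For each piece I would verify the three ingredients of the $B^2_{k,j}$ norm separately: the weighted $L^2$ bound $2^{(1-\beta)j}\|\cdot\|_{L^2}$, the Fourier $L^\infty$ bound, and the angular $L^1$ ball bound $2^{\gamma j}R^{-2}\|\widehat{\cdot}\|_{L^1(B(\xi_0,R))}$ with $\gamma=3/2-4\beta$. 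The $L^2$ and $L^\infty$ bounds reduce to stationary phase in $\eta$ around the resonant sphere, gaining a factor of $\kappa^2$ from the angular localization, together with a time gain of order $\min(2^m,\kappa^{-2})$ produced by the cutoffs $|\Phi|\lesssim 2^{-D^2}$ and $|\Xi|\lesssim\kappa$.

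The principal technical step is the angular $L^1$ ball estimate, which is the defining feature of $B^2$. My plan here follows the orthogonality framework of \cite{IoPa2}: partition $\mathbb{S}^2$ into spherical caps $\Gamma_\alpha$ of diameter $\approx\kappa$ (of cardinality $\approx\kappa^{-2}$) and decompose each input as $\widehat{f_{k_r,j_r}^{\mu_r}}=\sum_\alpha (\widehat{f_{k_r,j_r}^{\mu_r}})_\alpha$ by localizing the angular variable. The Case A resonance pins the three directions $\xi/|\xi|$, $\eta/|\eta|$, $(\xi-\eta)/|\xi-\eta|$ into a common cap, so the bilinear form essentially preserves angular localization and an output ball $B(\xi_0,R)$ with $R\in[2^{-j},2^k]$ is fed only by a controlled number of cap-products. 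A Cauchy--Schwarz in $\alpha$, together with the strong/weak $B^1$ and $B^2$ bounds on $g_r$ and $h_r$, then yields the desired $R^2\cdot 2^{-\gamma j}$ control; the value $\gamma=3/2-4\beta$ is tuned precisely to this 2D orthogonality budget combined with the $2^{-m/2}$ time gain.

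The hard part will be the $(h_1,h_2)$ combination, where both inputs carry only the weak estimates $\|\widehat{h_r}\|_{L^1(B(\xi_0,R_r))}\lesssim R_r^2\cdot 2^{-\gamma j_r}$, so a naive product bound loses several powers of $\kappa$. The remedy is a double-cap decomposition together with the nondegeneracy of $\Phi^{\sigma;\mu,\nu}$ along the sphere: on the resonant set the convolution map $(\xi-\eta,\eta)\mapsto \xi$ restricted to the sphere is (locally) a diffeomorphism, so a fixed output ball $B(\xi_0,R)$ singles out an essentially unique pair of caps, restoring almost-orthogonality. The final bookkeeping of the four scales $\kappa, 2^{-j_r}, R, 2^{-m}$ against the target $2^{-2\beta^4 m}$ uses the windows $j\leq m+D$ and $\max(j_r)\leq(1-\beta/10)m$ to absorb all losses into the $\beta^4 m$ slack.
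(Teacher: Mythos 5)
Your proposal has a genuine gap that would cause the argument to fail, and it also diverges in structure from the paper's proof in ways that matter.

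The central error is the claim that on the support of $\chi_R^{\sigma;\mu,\nu}$ one has $\bigl||\xi|-R_\sigma\bigr|\lesssim\kappa$. The cutoffs impose $|\Xi^{\mu,\nu}(\xi,\eta)|\lesssim\kappa$ and $|\Phi^{\sigma;\mu,\nu}(\xi,\eta)|\lesssim 2^{-D^2}$; combined with Lemma~\ref{LemP} these force $|\eta-p^{\sigma;\mu,\nu}(\xi)|\lesssim\kappa$ and hence $|\Psi^{\sigma;\mu,\nu}(|\xi|)|\lesssim 2^{-D^2}+\kappa^2$, which by the nondegeneracy of $\partial_s\Psi^{\sigma;\mu,\nu}$ gives $\bigl||\xi|-\theta^{\sigma;\mu,\nu}\bigr|\lesssim 2^{-D^2}$, a fixed $O(1)$ thickness independent of $m$, not a $\kappa$-thick annulus. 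If you feed this corrected support estimate into your $L^1$-ball bound, you pick up a loss of order $\kappa^{-1}2^{-D^2}\approx 2^{m/2}$ and the bookkeeping does not close. The paper repairs exactly this by first decomposing the output by level sets $|\Psi^{\sigma;\mu,\nu}(|\xi|)|\in(2^{l-1-m},2^{l-m}]$ and then integrating by parts in $s$ on each piece, gaining a factor $2^{m-l}$ in $L^\infty$ against a level-set thickness $2^{l-m}$; the two gains cancel the $l$-dependence and give the $2^{-3m/2}$ scaling needed for the $L^1$-ball bound with $\gamma=3/2-4\beta$. Your plan has no analogue of this level-set/integration-by-parts step.

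The second gap is structural. The paper splits the proof into the regime $\max(j_1,j_2)\leq(m-\beta^2 m)/2$ (Lemma~\ref{BigBound7A}, $\kappa=2^{(\beta^2 m-m)/2}$, the only place the $B^2_{k,j}$ norm is actually used) and the regime $\max(j_1,j_2)\geq(m-\beta^2 m)/2$ (Lemma~\ref{BigBound8A}, $\kappa=2^{\beta^2m}2^{\max(j_1,j_2)-m}$, bounded in the stronger $B^1_{k,j}$ norm). In the second regime the key mechanism is not angular caps but a simultaneous orthogonality decomposition in the spatial variable into $\kappa$-cubes $\chi'(\kappa^{-1}\xi-v)$ \emph{and} in the time variable into intervals of length $\kappa 2^m$, with $\langle G_{v,n_1},G_{v,n_2}\rangle$ shown to be negligible when $|n_1-n_2|\geq 2^{100D}$ using the lower bound $|(\Psi^{\sigma;\mu,\nu})'|\geq 2^{-20D}$. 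This time-orthogonality is what buys the crucial factor $\kappa$ in $\|G\|_{L^2}^2$ and is entirely absent from your plan. Moreover, your angular caps constrain only the angular direction; the paper uses full $3$D boxes, and the nondegeneracy statements \eqref{r1to1} in Lemma~\ref{LemP}, namely that $\partial_s r^{\sigma;\mu,\nu}$ is bounded away from both $0$ and $1$, are precisely what make the images of the boxes under $\xi\mapsto p^{\sigma;\mu,\nu}(\xi)$ and $\xi\mapsto\xi-p^{\sigma;\mu,\nu}(\xi)$ pairwise almost disjoint, giving the $L^2$-orthogonality $\sum_v\|f_i^v\|_{L^2}^2\lesssim\|f_i\|_{L^2}^2$. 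Your ``double-cap'' argument gestures at this but does not control the radial direction, and therefore does not reproduce the orthogonality that is actually needed.
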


The phases in the set $\mathcal{T}'_A$ are the same as the phases in the set $\mathcal{T}_A$, after interchanging the last two indices. Without loss of generality we may assume that $\Phi^{\sigma;\mu,\nu}\in\mathcal{T}'_A$ instead of $\Phi^{\sigma;\mu,\nu}\in\in\mathcal{T}_A$.

 The rest of the section is concerned with the proof of Proposition \ref{reduced4}. The interactions corresponding to Case A are among the most difficult to control. In particular, they produce outputs which fail to belong to the ``strong'' $B^1_{k,j}$ spaces. A key element we need is a precise description of the sizes of the various elements close to the resonant set. This is made possible by the fact that the Hessian of the phases is nondegenerate. We refer to the introduction of \cite{IoPa2} for more details.

We define first the interaction functions for the space-resonant phases in $\mathcal{T}'_A$ given in \eqref{tln1}, the functions $p^{\sigma;\mu,\nu}$ and $q^{\mu,\nu}$ defined below. They help us to characterize the vanishing set for $\Xi^{\mu,\nu}$ through the equality \eqref{XiVanish}. Only the functions $p^{\sigma;\mu,\nu}$ will play an essential role, but the functions $q^{\mu,\nu}$ appear as simpler natural intermediate functions. Our goal is to define these functions such that
\begin{equation}\label{XiVanish}
\Xi^{\mu,\nu}(q^{\mu,\nu}(\eta),\eta)=0=\Xi^{\mu,\nu}(\xi,p^{\sigma;\mu,\nu}(\xi)),
\end{equation}
where the first equality holds for all $\eta$ and the second equality holds for all $\xi$ where $p^{\sigma;\mu,\nu}(\xi)$ is well defined. 

For this we first define
\begin{equation}\label{DefOfP0}
p^{b;e+,e+}(\xi):=\xi/2,\quad q^{e+,e+}(\eta):=2\eta,\qquad t^{e,e}(r):=r.
\end{equation}
The other functions require a little more care. We first define $q^{\mu,\nu}$ and then invert the process. We define the real-valued functions $t^{ei},t^{bi},t^{be}:[0,\infty)\to[0,\infty)$ by the relation
\begin{equation*}
\lambda_e^\prime(t^{ei}(r))=\lambda_b^\prime(t^{bi}(r))=\lambda_i^\prime(r),\quad \lambda_b^\prime(t^{be}(r))=\lambda_e^\prime(r).
\end{equation*}
Since $\lambda_e^\prime$ and $\lambda_b^\prime$ are injective (see Lemma \ref{tech99}) and using also \eqref{yut9}, these functions are well defined.
We can directly see that $t^{bi}(r)\le t^{ei}(r)$, $t^{be}(r)\le r$ and since
\begin{equation*}
\lambda'_i(r)\in[\lambda_i^\prime(r_\ast),\lambda_i^\prime(0)]\subseteq\Big[\lambda_i^\prime(r_\ast),\frac{\sqrt{1+T}}{\sqrt{1+\varepsilon}}\Big],\qquad\text{ for any }r\in[0,\infty),
\end{equation*}
we get from Lemma \ref{tech99} that
\begin{equation}\label{Range}
\sqrt{\varepsilon}\lambda_i^\prime(r_\ast)/(2T)\le t^{ei}(r)\le\sqrt{3\varepsilon/T},\quad \sqrt{\varepsilon}\lambda_i^\prime(r_\ast)/C_b\le t^{bi}(r)\le \sqrt{\varepsilon/C_b},\quad 0\le t^{be}(r)\le \frac{\sqrt{T(1+\varepsilon)}}{\sqrt{C_b^2-TC_b}}.
\end{equation}
More precisely we have
\begin{equation*}
t^{bi}(r)=\frac{\sqrt{\varepsilon(1+\varepsilon)}}{\sqrt{C_b}}\frac{\lambda_i^\prime(r)}{\sqrt{C_b-\varepsilon(\lambda_i^\prime(r))^2}},\quad t^{be}(r)=\frac{\sqrt{\varepsilon(1+\varepsilon)}}{\sqrt{C_b}}\frac{\lambda_e^\prime(r)}{\sqrt{C_b-\varepsilon(\lambda_e^\prime(r))^2}},
\end{equation*}
while $t^{ei}$ has a similar behavior. Note in particular that $T(t^{ei}(r))^2\le 3\varepsilon$, $C_b(t^{bi}(r))^2\le \varepsilon$, $C_b(t^{be}(r))^2\le T(1+\varepsilon)/(C_b-T)$. Let
\begin{equation*}
(\partial t^{\sigma_1\sigma_2})(r):=\frac{dt^{\sigma_1\sigma_2}(r)}{dr},\qquad (\sigma_1,\sigma_2)\in\{(e,e),(e,i),(b,i),(b,e)\}.
\end{equation*}
Using Lemma \ref{tech99},
\begin{equation}\label{UpperBdT}
\begin{split}
&\big| (\partial t^{ei})(r)\big|=\Big|\frac{\lambda_i^{\prime\prime}(r)}{\lambda_e^{\prime\prime}(t^{ei}(r))}\Big|\le \vert\lambda_i^{\prime\prime}(r)\vert\frac{\varepsilon^{1/2}(1+Tt^{ei}(r)^2)^{3/2}}{T(1-\sqrt{\varepsilon})}\leq \frac{8\sqrt{2}\sqrt{\varepsilon}(1+3\varepsilon)^{3/2}T}{(1-\sqrt{\varepsilon})T}\le\frac{1}{2},\\
&\big| (\partial t^{bi})(r)\big|=\Big|\frac{\lambda''_i(r)}{\lambda''_b(t^{bi}(r))}\Big|\le |\lambda''_i(r)|\frac{\varepsilon^{1/2}(1+\varepsilon+C_bt^{bi}(r)^2)^{3/2}}{C_b(1+\varepsilon)}\le\frac{8\sqrt{2}\sqrt{\varepsilon}(1+\varepsilon)^{1/2}TC_b^{1/2}}{(C_b-T)^{3/2}}\leq\frac{1}{2},\\
&C_{C_b,\varepsilon}^{-1}\le (\partial t^{be})(r)=\frac{\lambda''_e(r)}{\lambda''_b(t^{be}(r))}\le \frac{(1+\sqrt{\varepsilon})T}{(1+Tr^2)^{3/2}}\frac{(1+\varepsilon+C_bt^{be}(r)^2)^{3/2}}{C_b(1+\varepsilon)}\leq \frac{(1+4\sqrt{\varepsilon})TC_b^{1/2}}{(C_b-T)^{3/2}}\leq\frac{1}{2}.
\end{split}
\end{equation}

We now define $q^{\mu,\nu}$ when $(\sigma_1,\sigma_2)\in\{(e,i),(b,i),(b,e)\}$ by the formula
\begin{equation*}
q^{\mu,\nu}(\eta):=
\eta+(\iota_1\cdot\iota_2)t^{\sigma_1\sigma_2}(\vert\eta\vert)\frac{\eta}{\vert\eta\vert}=\tilde{t}^{\mu,\nu}(\vert\eta\vert)\frac{\eta}{\vert\eta\vert},
\end{equation*}
such that $\Xi^{\mu,\nu}(q^{\mu,\nu}(\eta),\eta)=0$. Then we define the function $r^{\mu,\nu}(s)$ as the inverse function of $\widetilde{t}^{\mu,\nu}(r):=r+(\iota_1\cdot\iota_2)t^{\sigma_1\sigma_2}(r)$. Therefore
\begin{equation*}
r^{\mu,\nu}:[\iota_1\iota_2t^{\sigma_1\sigma_2}(0),\infty)\to[0,\infty)
\end{equation*} 
is a well-defined increasing function, and
\begin{equation}\label{dr}
(\partial_s r^{\mu,\nu})(s)=\frac{1}{1+\iota_1\iota_2(\partial t^{\sigma_1\sigma_2})(r^{\mu,\nu}(s))},\qquad s\in[\iota_1\iota_2t^{\sigma_1\sigma_2}(0),\infty).
\end{equation}

We can now finally define the functions $p^{\sigma;\mu,\nu}$ and $\chi_A^{\sigma;\mu,\nu}:[0,\infty)\to[0,1]$:

(a) if $\Phi^{\sigma;\mu,\nu}\in\mathcal{T}'_A\setminus \{\Phi^{e;b+,i-}\}$ then we define
\begin{equation}\label{DefP1}
I^{\sigma;\mu,\nu}:=[t^{\sigma_1\sigma_2}(0),\infty),\quad p^{\sigma;\mu,\nu}(\xi):=r^{\mu,\nu}(\vert\xi\vert)\xi/\vert\xi\vert\,\text{ for }\,|\xi|\in I^{\sigma;\mu,\nu},\quad\chi_A^{\sigma;\mu,\nu}:=\mathbf{1}_{(t^{\sigma_1\sigma_2}(0)+2^{-2D},\infty)};
\end{equation}

(b) if $\Phi^{\sigma;\mu,\nu}=\Phi^{e;b+,i-}$ then we define
\begin{equation}\label{DefP2}
I^{\sigma;\mu,\nu}:=[0,t^{bi}(0)],\quad p^{\sigma;\mu,\nu}(\xi):=-r^{\mu,\nu}(-\vert\xi\vert)\xi/\vert\xi\vert\,\text{ for }\,|\xi|\in I^{\sigma;\mu,\nu},\quad\chi_A^{\sigma;\mu,\nu}:=\mathbf{1}_{(0,t^{bi}(0)-2^{-2D})}.
\end{equation}
In both cases we also define
\begin{equation}\label{DefP3}
r^{\sigma;\mu,\nu}(|\xi|):=p^{\sigma;\mu,\nu}(\xi)\cdot\xi/|\xi|.
\end{equation}

The functions $p^{\sigma;\mu,\nu}$ are not defined (and not needed) outside the range specified above. These functions are the key to an efficient analysis of Case A through the use of the following lemma.

\begin{lemma}\label{LemP}
Assume that $\Phi^{\sigma;\mu,\nu}\in\mathcal{T}'_A$, see \eqref{tln1}, and $-D/2\le k,k_1,k_2\leq D/2$.

(i) Assume that $\delta\in[0,2^{-100D}]$ and assume that $(\xi,\eta)\in\mathbb{R}^3\times\mathbb{R}^3$ is a point such that
\begin{equation}\label{Added1}
\begin{split}
&|\xi|\in[2^{k-4},2^{k+4}],\qquad|\eta|\in[2^{k_2-4},2^{k_2+4}],\qquad|\xi-\eta|\in[2^{k_1-4},2^{k_1+4}],\\
&|\Xi^{\mu,\nu}(\xi,\eta)|\leq\delta,\qquad|\Phi^{\sigma;\mu,\nu}(\xi,\eta)|\leq 2^{-100D}.
\end{split}
\end{equation}
Then
\begin{equation}\label{kx11}
\chi_A^{\sigma;\mu,\nu}(|\xi|)=1,\quad\big|\eta-p^{\sigma;\mu,\nu}(\xi)\big|\leq 2^{40D}\delta\quad\text{ and }\quad\Xi^{\mu,\nu}(\xi,p^{\sigma;\mu,\nu}(\xi))=0,
\end{equation}
and
\begin{equation}\label{r1to1}
\begin{split}
&\min\big(|(\partial_sr^{\sigma;\mu,\nu})(|\xi|)|,|1-(\partial_sr^{\sigma;\mu,\nu})(|\xi|)|\big)\geq 2^{-4D},\\
&|(D^\rho_sr^{\sigma;\mu,\nu})(|\xi|)|\leq 2^{20D},\qquad \rho=0,1,\ldots 4.
\end{split}
\end{equation}
Moreover, if $\sigma_2=i$ then
\begin{equation}\label{CaseAAwayFromRast}
\big| |\eta|-r_\ast\big|\gtrsim_{C_b,\varepsilon} 1.
\end{equation}

(ii) Let $\Psi^{\si;\mu,\nu}:I^{\sigma;\mu,\nu}\to\mathbb{R}$ be defined by
\begin{equation}\label{kxz11}
\begin{split}
\Psi^{\si;\mu,\nu}(s)&:=\Phi^{\sigma;\mu,\nu}(se,r^{\sigma;\mu,\nu}(s)e)=\lambda_\sigma(s)-\iota_1\lambda_{\sigma_1}(|r^{\sigma;\mu,\nu}(s)-s|)
-\iota_2\lambda_{\sigma_2}(|r^{\sigma;\mu,\nu}(s)|),
\end{split}
\end{equation}
for some $e\in\mathbb{S}^2$ (the definition, of course, does not depend on the choice of $e$). Then there is some constant $\widetilde{c}=\widetilde{c}(\sigma,\mu,\nu)\in\{-1,1\}$ with the following property:
\begin{equation}\label{kx20}
\begin{split}
&\text{ the set }\widetilde{I}_k^{\sigma;\mu,\nu}:=\{s\in[2^{k-4},2^{k+4}]\cap I^{\sigma;\mu,\nu}:\,|\Psi^{\si;\mu,\nu}(s)|\leq 2^{-110D}\}\text{ is an interval};\\ &\widetilde{c}\cdot(\partial_s\Psi^{\si;\mu,\nu})(s)\geq 2^{-20D}\text{ for any }s\in\widetilde{I}_k^{\sigma;\mu,\nu}.
\end{split}
\end{equation}
\end{lemma}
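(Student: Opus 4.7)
\textbf{Part (i), closeness to $p^{\sigma;\mu,\nu}$.} The approach rests on the rotational symmetry of each $\Lambda_j$. Since $\nabla\Lambda_j(\zeta)=\lambda_j'(|\zeta|)\,\zeta/|\zeta|$ is radial, the equation $\Xi^{\mu,\nu}(\xi,\eta)=0$ forces $\eta$ and $\xi-\eta$ to be collinear with $\xi$; writing $\eta=\theta\,\xi/|\xi|$, the vector equation reduces to a scalar equation in $\theta$ whose solutions, thanks to the monotonicity of the $\lambda'_j$ recorded in Lemma \ref{tech99} and the definitions of $t^{\sigma_1\sigma_2}$, are precisely $\theta=r^{\sigma;\mu,\nu}(|\xi|)$ (on the appropriate branch). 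The side hypothesis $|\Phi^{\sigma;\mu,\nu}|\le 2^{-100D}$ excludes the wrong-sign branches, on which $\Phi$ is elliptic and bounded away from $0$ under the dyadic constraints $-D/2\le k,k_1,k_2\le D/2$; this pins down the branch of \eqref{DefP1}--\eqref{DefP2} and yields $\chi_A^{\sigma;\mu,\nu}(|\xi|)=1$. The quantitative bound $|\eta-p^{\sigma;\mu,\nu}(\xi)|\le 2^{40D}\delta$ will then follow from an implicit-function-theorem inversion of the Jacobian $\partial_\eta\Xi^{\mu,\nu}$ at its zero: this Jacobian has radial eigenvalue of order $\lambda''_{\sigma_\mu}+\lambda''_{\sigma_\nu}$ and tangential eigenvalues of order $\lambda'_{\sigma_\mu}/|\xi-\eta|+\lambda'_{\sigma_\nu}/|\eta|$, both of which are uniformly of size $2^{\pm O(D)}$ once \eqref{CaseAAwayFromRast} rules out the degenerate regime $|\eta|\approx r_\ast$.

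\textbf{Part (i), derivative bounds and separation from $r_\ast$.} The estimates \eqref{r1to1} come directly from the identity \eqref{dr}, $(\partial_s r^{\mu,\nu})(s)=[1+\iota_\mu\iota_\nu(\partial t^{\sigma_1\sigma_2})(r^{\mu,\nu}(s))]^{-1}$, combined with the upper bound $|\partial t^{\sigma_1\sigma_2}|\le 1/2$ from \eqref{UpperBdT}, which already gives $|\partial_s r^{\mu,\nu}|\in[2/3,2]$. The lower bound on $|1-\partial_s r^{\mu,\nu}|$ will require a matching positive lower bound on $|\partial t^{\sigma_1\sigma_2}|$; this is explicit in \eqref{UpperBdT} for $t^{be}$, and for $t^{ei}$ and $t^{bi}$ it follows from the uniform ellipticity $\lambda''_j\gtrsim_D 1$ in the range $-D/2\le k_j\le D/2$ away from $r_\ast$. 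Higher-order derivatives come from further differentiating \eqref{dr} and using the symbol-type bounds on $\lambda_j$, while the exceptional case $p^{b;e+,e+}(\xi)=\xi/2$ verifies both bounds by inspection. For \eqref{CaseAAwayFromRast}, a contradiction argument suffices: if $|\eta|$ were within a small constant of $r_\ast$, then $\lambda'_i(|\eta|)\approx 0$ while $\iota_\mu\lambda'_{\sigma_\mu}(|\xi-\eta|)$ is of unit order in the range $-D/2\le k_1\le D/2$, contradicting $|\Xi^{\mu,\nu}|\le\delta\ll 1$.

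\textbf{Part (ii), monotonicity of $\Psi^{\sigma;\mu,\nu}$.} Since $p^{\sigma;\mu,\nu}$ is defined so that $\nabla_\eta\Phi^{\sigma;\mu,\nu}(\xi,p^{\sigma;\mu,\nu}(\xi))=0$, the chain rule applied to \eqref{kxz11} makes the contributions involving $\partial_s r^{\sigma;\mu,\nu}$ cancel, leaving the closed-form expression
\begin{equation*}
\partial_s\Psi^{\sigma;\mu,\nu}(s)=\lambda'_\sigma(s)-\iota_\mu\,\lambda'_{\sigma_\mu}(|s-r^{\sigma;\mu,\nu}(s)|)\,\mathrm{sign}(s-r^{\sigma;\mu,\nu}(s)).
\end{equation*}
Proving \eqref{kx20} then reduces to verifying, for each of the thirteen phases in $\mathcal{T}'_A$, that the right-hand side has a definite sign $\widetilde c(\sigma,\mu,\nu)\in\{-1,1\}$ and magnitude at least $2^{-20D}$ on the subset where $|\Psi^{\sigma;\mu,\nu}|\le 2^{-110D}$; monotonicity then forces this subset to be an interval. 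The main obstacle is this case-by-case inspection, which uses the ranges for $t^{\sigma_1\sigma_2}$ recorded in \eqref{Range} together with the quantitative separation of $\lambda'_i,\lambda'_e,\lambda'_b$ from Lemma \ref{tech99}. Conceptually, this non-degeneracy is exactly the statement that Case A resonances are genuinely isolated along the stationary curve: $\Phi^{\sigma;\mu,\nu}$ vanishes to first order (not higher) along $\eta=p^{\sigma;\mu,\nu}(\xi)$, which is what allows the subsequent stationary-phase analysis.
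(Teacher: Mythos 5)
Your general strategy tracks the paper's proof closely, and the formula you derive for $\partial_s\Psi^{\sigma;\mu,\nu}$ in part (ii) is equivalent to the one the paper uses (simply eliminating $\iota_2\lambda'_{\sigma_2}$ instead of $\iota_1\lambda'_{\sigma_1}$ via the stationarity relation). However, there is a genuine gap in your proof of \eqref{CaseAAwayFromRast}, and because \eqref{r1to1} is derived from \eqref{CaseAAwayFromRast} (as you yourself observe, the lower bound on $|\partial t^{ei}|$ and $|\partial t^{bi}|$ requires $\lambda''_i$ to be nondegenerate, which only holds away from $r_\ast$), the error propagates.

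The contradiction argument you give for \eqref{CaseAAwayFromRast} is based on a false premise. You write that if $|\eta|$ were close to $r_\ast$, then ``$\lambda'_i(|\eta|)\approx 0$.'' This is not true: by Lemma \ref{tech99}(i), $\lambda'_i(r)\approx_{C_b,\varepsilon}1$ for \emph{every} $r\geq 0$. What vanishes at $r_\ast$ is $\lambda''_i$, not $\lambda'_i$, and the condition $|\Xi^{\mu,\nu}|\le\delta$ by itself places no constraint whatsoever on how close $|\eta|$ can be to $r_\ast$: since $\lambda'_i$ is order one there, the space-resonance equation $\iota_1\lambda'_{\sigma_1}(|\xi-\eta|)\,\mathrm{sign}(r-s)=-\iota_2\lambda'_i(|\eta|)$ has a solution with $|\eta|$ arbitrarily close to $r_\ast$. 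The separation from $r_\ast$ only becomes visible once one also imposes $|\Phi^{\sigma;\mu,\nu}|\le 2^{-100D}$, and even then the conclusion depends delicately on which phase is under consideration: the paper handles Case 4 ($\Phi^{i;e+,i-}$, $\Phi^{i;b+,i-}$) by showing $|\eta|\gtrsim\varepsilon^{-1/2}\gg r_\ast$, handles $\Phi^{e;b+,i+}$ by showing $|\eta|\leq 3r_\ast/4$, handles $\Phi^{e;b+,i-}$ by showing $|\eta|\le r_0\le r_\ast/2$, and disposes of $\Phi^{e;e+,i+}$, $\Phi^{b;e+,i+}$, $\Phi^{b;b+,i+}$ by showing the resonant set is \emph{empty}, so the statement is vacuous. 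None of these can be obtained from a unified contradiction argument of the type you propose; you need the case-by-case inspection of the auxiliary function $f^{\sigma;\mu,\nu}(r)=\Phi^{\sigma;\mu,\nu}(q^{\mu,\nu}(re),re)$ and its monotonicity properties.

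Two smaller remarks. First, your implicit-function argument for $|\eta-p^{\sigma;\mu,\nu}(\xi)|\lesssim\delta$ is a legitimate alternative to the paper's explicit estimate \eqref{XiDetermined}, but you should make clear why the tangential eigenvalues of $\partial_\eta\Xi^{\mu,\nu}$ stay away from zero: with the signs $\iota_1,\iota_2$ present, cancellation is in principle possible, and one must use the stationarity relation (and the specific sign patterns for phases in $\mathcal{T}'_A$) to rule it out. Second, you claim an a priori bound $|\partial t^{\sigma_1\sigma_2}|\le 1/2$ ``from \eqref{UpperBdT},'' but this does not apply to $t^{ee}$ (relevant for $p^{b;e+,e+}$), for which $\partial t^{ee}\equiv1$; you do acknowledge this case separately, but it is worth being precise that \eqref{UpperBdT} covers only $t^{ei}$, $t^{bi}$, $t^{be}$.
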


\begin{proof}[Proof of Lemma \ref{LemP}] Since $\Phi^{\sigma;\mu,\nu}\in\mathcal{T}'_A$, $q^{\mu,\nu}$ is well defined. We start from the elementary formula
\begin{equation*}
\begin{split}
\vert\Xi^{\mu,\nu}(\xi,\eta)\vert&=\vert\Xi^{\mu,\nu}(\xi,\eta)-\Xi^{\mu,\nu}(q^{\mu,\nu}(\eta),\eta)\vert\\
&\approx_{C_b,\varepsilon}\vert\lambda_{\sigma_1}^\prime(\vert\xi-\eta\vert)-\lambda_{\sigma_1}^\prime(\vert q^{\mu,\nu}(\eta)-\eta\vert)\vert\vert\\
&+\max(\lambda_{\sigma_1}^\prime(\vert\xi-\eta\vert),\lambda_{\sigma_1}^\prime(\vert q^{\mu,\nu}(\eta)-\eta\vert))\Big|\frac{\xi-\eta}{\vert\xi-\eta\vert}-\frac{q^{\mu,\nu}(\eta)-\eta}{\vert q^{\mu,\nu}(\eta)-\eta\vert}\Big|.
\end{split}
\end{equation*}
Since $\lambda_{\sigma_1}^\prime(r)\ge 2^{-2D}$ and $\lambda_{\sigma_1}^{\prime\prime}(r)\ge 2^{-2D}(1+r)^{-3}$ if $r\geq 2^{-D/2-10}$, the condition $\vert\Xi^{\mu,\nu}(\xi,\eta)\vert\leq\delta$ shows that
\begin{equation*}
\big|\vert\xi-\eta\vert-\vert q^{\mu,\nu}(\eta)-\eta\vert\big|+\Big|\frac{\xi-\eta}{\vert\xi-\eta\vert}-\frac{q^{\mu,\nu}(\eta)-\eta}{\vert q^{\mu,\nu}(\eta)-\eta\vert}\Big|\le 2^{10D}\delta.
\end{equation*}
This shows that
\begin{equation}\label{XiDetermined}
\vert \xi- q^{\mu,\nu}(\eta)\vert\le 2^{20D}\delta\quad\text{ and }\quad |f^{\sigma;\mu,\nu}(|\eta|)|\leq 2^{30D}\delta.
\end{equation}
where $f^{\sigma;\mu,\nu}:[0,\infty)\to\mathbb{R}$ is defined by
\begin{equation}\label{fdef}
f^{\sigma;\mu,\nu}(r):=\Phi^{\sigma;\mu,\nu}(q^{\mu,\nu}(re),re)=\lambda_\sigma(|\widetilde{t}^{\mu,\nu}(r)|)-\iota_1\lambda_{\sigma_1}(t^{\sigma_1\sigma_2}(r))-\iota_2\lambda_{\sigma_2}(r).
\end{equation}

We turn now to the proof of the lemma. We observe first that \eqref{r1to1} follows from the formula \eqref{dr} and the bounds \eqref{CaseAAwayFromRast}, \eqref{kx11}, and \eqref{UpperBdT}. We note also that the conclusion that $\widetilde{I}_k^{\sigma;\mu,\nu}$ is a closed interval in the first line of \eqref{kx20} is a consequence of the existence of a constant $\widetilde{c}$ satisfying the inequality $\widetilde{c}(\partial_s\Psi^{\si;\mu,\nu})(s)\geq 2^{-20D}$ for any $s\in\widetilde{I}_k^{\sigma;\mu,\nu}$ in the second line of \eqref{kx20}. 

We prove the claims in the lemma by analyzing several cases.

{\bf{Case 1.}} $\Phi^{\sigma;\mu,\nu}\in\{\Phi^{b;b+,e+}, \Phi^{b;e+,e+}\}$. In this case we have
\begin{equation}\label{thy1}
\begin{split}
&t^{\sigma_1\sigma_2}(0)=0,\quad\chi^{\sigma;\mu,\nu}_A=\mathbf{1}_{(2^{-2D},\infty)},\quad\widetilde{t}^{\mu,\nu}(r)=r+t^{\sigma_1\sigma_2}(r),\quad r^{\mu,\nu}(s)\in[0,s],\\
&f^{\sigma;\mu,\nu}(r)=\lambda_b(r+t^{\sigma_1\sigma_2}(r))-\lambda_{\sigma_1}(t^{\sigma_1\sigma_2}(r))-\lambda_e(r),\\
&\Psi^{\si;\mu,\nu}(s)=\lambda_b(s)-\lambda_{\sigma_1}(s-r^{\mu,\nu}(s))-\lambda_{e}(r^{\mu,\nu}(s)),\\
&(\partial_s\Psi^{\si;\mu,\nu})(s)=\lambda'_b(s)-\lambda'_e(r^{\mu,\nu}(s)).
\end{split}
\end{equation}
The claims \eqref{kx11} and \eqref{kx20} with $\widetilde{c}=1$ follow easily (using for example \eqref{nbc2}), and the claim \eqref{CaseAAwayFromRast} is trivial.
\medskip

{\bf{Case 2.}} $\Phi^{\sigma;\mu,\nu}\in\{\Phi^{e;e+,i+}, \Phi^{e;b+,i+}, \Phi^{b;e+,i+}, \Phi^{b;b+,i+}\}$. In this case we have
\begin{equation}\label{thy2}
\begin{split}
&t^{\sigma_1\sigma_2}(0)\approx_{C_b,\varepsilon}1,\quad\chi^{\sigma;\mu,\nu}_A=\mathbf{1}_{(t^{\sigma_1\sigma_2}(0)+2^{-2D},\infty)},\quad\widetilde{t}^{\mu,\nu}(r)=r+t^{\sigma_1\sigma_2}(r),\quad r^{\mu,\nu}(s)\in[0,s],\\
&f^{\sigma;\mu,\nu}(r)=\lambda_\sigma(r+t^{\sigma_1\sigma_2}(r))-\lambda_{\sigma_1}(t^{\sigma_1\sigma_2}(r))-\lambda_i(r),\\
&\Psi^{\si;\mu,\nu}(s)=\lambda_\sigma(s)-\lambda_{\sigma_1}(s-r^{\mu,\nu}(s))-\lambda_{i}(r^{\mu,\nu}(s)),\\
&(\partial_s\Psi^{\si;\mu,\nu})(s)=\lambda'_\sigma(s)-\lambda'_i(r^{\mu,\nu}(s)).
\end{split}
\end{equation}
Notice that
\begin{equation*}
(\partial_rf^{\sigma;\mu,\nu})(r)=[1+(\partial t^{\sigma_1\sigma_2})(r)][\lambda'_\sigma(r+t^{\sigma_1\sigma_2}(r))-\lambda'_{\sigma_1}(t^{\sigma_1\sigma_2}(r))].
\end{equation*}
Therefore, using also \eqref{UpperBdT} and Lemma \eqref{tech99}, $(\partial_rf^{\sigma;\mu,\nu})(r)\geq_{C_b,\varepsilon}r(1+r^2)^{-3/2}$ and $f^{\sigma;\mu,\nu}(0)\geq 0$ if $\Phi^{\sigma;\mu,\nu}\in\{\Phi^{e;e+,i+}, \Phi^{b;e+,i+}, \Phi^{b;b+,i+}\}$. Therefore the inequality $|f^{\sigma;\mu,\nu}(|\eta|)|\leq 2^{-20D}$ in \eqref{XiDetermined} cannot be verified in these cases for any $(\xi,\eta)$ as in \eqref{Added1}, and the conclusions of the lemma are trivial.

On the other hand, if $\Phi^{\sigma;\mu,\nu}=\Phi^{e;b+,i+}$ then the claims in \eqref{kx11} follow easily, using \eqref{XiDetermined} and the hypothesis of the lemma. To prove the remaining claims we show first that
\begin{equation}\label{thy2.1}
|\eta|\leq 3T^{-1/2}/4\leq 3r_\ast/4.
\end{equation}
Indeed, starting from the inequalities $|f^{e;b+,i+}(|\eta|)|\leq 2^{-20D}$ and $t^{bi}(r)\leq \sqrt{\varepsilon/C_b}$ (see \eqref{Range}), and using also \eqref{SimpleBdLie}, it follows that
\begin{equation*}
2^{-20D}\geq \lambda_e(|\eta|)-\varepsilon^{-1/2}\sqrt{1+2\varepsilon}-\lambda_i(|\eta|)\geq \varepsilon^{-1/2}\big(\sqrt{1+T|\eta|^2}-\sqrt{1+2\varepsilon}\big)-\sqrt{(T+1)(\varepsilon+1)}|\eta|.
\end{equation*}
The desired bound \eqref{thy2.1} follows. This clearly implies the bound \eqref{CaseAAwayFromRast}.

Finally, to prove \eqref{kx20}, we calculate
\begin{equation}\label{thy2.2}
\begin{split}
&\Psi^{e;b+,i+}(t^{bi}(0))=\lambda_e(t^{bi}(0))-\lambda_{b}(t^{bi}(0))\leq -C^{-1}_{C_b,\varepsilon},\\
&(\partial_s\Psi^{e;b+,i+})(t^{bi}(0))=\lambda'_e(t^{bi}(0))-\lambda'_i(0)=\lambda'_e(t^{bi}(0))-\lambda'_b(t^{bi}(0))\leq -C^{-1}_{C_b,\varepsilon},\\
&(\partial^2_s\Psi^{e;b+,i+})(s)=\lambda''_e(s)-(\partial_s r^{b+,i+})(s)\lambda''_i(r^{b+,i+}(s)).
\end{split}
\end{equation}
Therefore $(\partial^2_s\Psi^{e;b+,i+})(s)\geq C^{-1}_{C_b,\varepsilon}$ for all $s\in[t^{bi}(0),\infty)$ for which $r^{b+,i+}(s)\leq r_\ast$. On the other hand, as in the proof of \eqref{thy2.1}, if $s\in[2^{k-4},2^{k+4}]$ has the property that $|\Psi^{e;b+,i+}(s)|\leq 2^{-20D}$ then $r^{b+,i+}(s)\leq 4r_\ast/5$. The desired conclusion \eqref{kx20} follows with $\widetilde{c}=1$ by combining the inequalities in \eqref{thy2.2}.

\medskip

{\bf{Case 3.}} $\Phi^{\sigma;\mu,\nu}\in\{\Phi^{i;b-,e+}, \Phi^{i;b+,e-},\Phi^{e;b+,e-}, \Phi^{b;b+,e-}\}$. In this case we have
\begin{equation}\label{thy3}
\begin{split}
&t^{\sigma_1\sigma_2}(0)=0,\quad\chi^{\sigma;\mu,\nu}_A=\mathbf{1}_{(2^{-2D},\infty)},\quad\widetilde{t}^{\mu,\nu}(r)=r-t^{\sigma_1\sigma_2}(r),\quad r^{\mu,\nu}(s)\in[s,\infty),\\
&f^{\sigma;\mu,\nu}(r)=\lambda_\sigma(r-t^{\sigma_1\sigma_2}(r))-\iota_1\lambda_{b}(t^{\sigma_1\sigma_2}(r))-\iota_2\lambda_e(r),\\
&\Psi^{\si;\mu,\nu}(s)=\lambda_\sigma(s)-\iota_1\lambda_{b}(r^{\mu,\nu}(s)-s)-\iota_2\lambda_{e}(r^{\mu,\nu}(s)),\\
&(\partial_s\Psi^{\si;\mu,\nu})(s)=\lambda'_\sigma(s)-\iota_2\lambda'_e(r^{\mu,\nu}(s)).
\end{split}
\end{equation}
The claims in \eqref{kx11} follow easily, using the hypothesis and \eqref{XiDetermined}. The claim \eqref{CaseAAwayFromRast} is trivial. The conclusion \eqref{kx20} also follows from the formulas above if $\iota_2=-$, with $\widetilde{c}=1$.

It remains to prove \eqref{kx20} when $\Phi^{\sigma;\mu,\nu}=\Phi^{i;b-,e+}$, in which case we set $\widetilde{c}=-1$. For $s\geq r_\ast$ we estimate
\begin{equation*}
(\partial_s\Psi^{i;b-,e+})(s)=\lambda'_i(s)-\lambda'_e(r^{\mu,\nu}(s))\leq 1-\lambda'_e(r_\ast)\leq -1,
\end{equation*}
which gives the desired conclusion \eqref{kx20} when $s\geq r_\ast$. On the other hand, we calculate
\begin{equation*}
\begin{split}
&\Psi^{i;b-,e+}(0)=\lambda_i(0)+\lambda_{b}(0)-\lambda_{e}(0)=0,\\
&(\partial_s\Psi^{i;b-,e+})(0)=\lambda'_i(0)-\lambda'_e(0)\geq C^{-1}_{C_b,\varepsilon},\\
&(\partial^2_s\Psi^{i;b-,e+})(s)=\lambda''_i(s)-(\partial_s r^{b-,e+})(s)\lambda''_e(r^{b-,e+}(s)).
\end{split}
\end{equation*}
Therefore $(\partial^2_s\Psi^{i;b-,e+})(s)\leq -C^{-1}_{C_b,\varepsilon}$ for $s\in[0,r_\ast]$, and the desired conclusion \eqref{kx20} with $\widetilde{c}=-1$ follows in this range as well.

\medskip

{\bf{Case 4.}} $\Phi^{\sigma;\mu,\nu}\in\{\Phi^{i;e+,i-}, \Phi^{i;b+,i-}\}$. In this case we have
\begin{equation}\label{thy4}
\begin{split}
&t^{\sigma_1\sigma_2}(0)\approx_{C_b,\varepsilon}1,\quad\chi^{\sigma;\mu,\nu}_A=\mathbf{1}_{(t^{\sigma_1\sigma_2}(0)+2^{-2D},\infty)},\quad\widetilde{t}^{\mu,\nu}(r)=r-t^{\sigma_1\sigma_2}(r),\quad r^{\mu,\nu}(s)\in[s,\infty),\\
&f^{\sigma;\mu,\nu}(r)=\lambda_i(|r-t^{\sigma_1\sigma_2}(r)|)-\lambda_{\sigma_1}(t^{\sigma_1\sigma_2}(r))+\lambda_i(r),\\
&\Psi^{\si;\mu,\nu}(s)=\lambda_i(s)-\lambda_{\sigma_1}(r^{\mu,\nu}(s)-s)+\lambda_{i}(r^{\mu,\nu}(s)),\\
&(\partial_s\Psi^{\si;\mu,\nu})(s)=\lambda'_i(s)+\lambda'_i(r^{\mu,\nu}(s)).
\end{split}
\end{equation}
Recalling that $t^{\sigma_1i}(r)\leq \sqrt{3\varepsilon/T}$ and $\lambda_i(r)\leq \sqrt{1+r^2}$ for any $r\in[0,\infty)$, see \eqref{Range} and \eqref{mk0.1}, we estimate
\begin{equation*}
\lambda_i(|r-t^{\sigma_1\sigma_2}(r)|)-\lambda_{\sigma_1}(t^{\sigma_1\sigma_2}(r))+\lambda_i(r)\leq -\varepsilon^{-1/2}+2\sqrt{1+r^2}
\end{equation*}
for any $r\in[0,\infty)$. The inequality $|f^{\sigma;\mu,\nu}(|\eta|)|\leq 2^{-20D}$, see \eqref{XiDetermined}, then shows that $|\eta|\geq (3\varepsilon)^{-1/2}$. Therefore $|q^{\mu,\nu}(\eta)|=|\eta|-t^{\sigma_1i}(|\eta|)\geq |\eta|/2$, and the conclusions in \eqref{kx11} follow using also \eqref{XiDetermined}. The claim \eqref{CaseAAwayFromRast} follows from $|\eta|\geq (3\varepsilon)^{-1/2}$. Finally, the conclusion \eqref{kx20} with $\widetilde{c}=1$ follows from last formula in \eqref{thy4}.

\medskip

{\bf{Case 5.}} $\Phi^{\sigma;\mu,\nu}=\Phi^{e;b+,i-}$. In this case we have
\begin{equation}\label{thy7}
\begin{split}
&t^{\sigma_1\sigma_2}(0)\approx_{C_b,\varepsilon}1,\quad\chi^{\sigma;\mu,\nu}_A=\mathbf{1}_{(0,t^{\sigma_1\sigma_2}(0)-2^{-2D})},\quad\widetilde{t}^{\mu,\nu}(r)=r-t^{bi}(r),\quad r^{\mu,\nu}(s)\in[s,\infty),\\
&f^{\sigma;\mu,\nu}(r)=\lambda_e(|r-t^{bi}(r)|)-\lambda_{b}(t^{bi}(r))+\lambda_{i}(r),\\
&\Psi^{\si;\mu,\nu}(s)=\lambda_e(s)-\lambda_b(r^{\mu,\nu}(-s)+s)+\lambda_i(r^{\mu,\nu}(-s)),\\
&(\partial_s\Psi^{\si;\mu,\nu})(s)=\lambda'_e(s)-\lambda'_b(r^{\mu,\nu}(-s)+s).
\end{split}
\end{equation}
Clearly, $-f^{\sigma;\mu,\nu}(0)\gtrsim_{C_b,\varepsilon}1$. Extending $\lambda_e$ as an even function on $\mathbb{R}$ we calculate, for $r\geq 0$,
\begin{equation*}
\begin{split}
(\partial_rf^{\sigma;\mu,\nu})(r)&=(1-(\partial t^{bi})(r))\lambda'_e(r-t^{bi}(r))-(\partial t^{bi})(r)\lambda'_{b}(t^{bi}(r))+\lambda'_i(r)\\
&=[1-(\partial t^{bi})(r)][\lambda'_{b}(t^{bi}(r))+\lambda'_e(r-t^{bi}(r))].
\end{split}
\end{equation*}
Let $r_0\in[0,\infty)$ denote the unique number with the property that $r_0=t^{bi}(r_0)$. In view of \eqref{Range}, $r_0\leq \sqrt{\varepsilon/C_b}\leq r_\ast/2$. Moreover, $r-t^{bi}(r)\geq 0$ if $r\geq r_0$ and $r-t^{bi}(r)\leq 0$ if $r\leq r_0$. Therefore $(\partial_rf^{\sigma;\mu,\nu})(r)\gtrsim_{C_b,\varepsilon}1$ if $r\geq r_0$ and $(\partial_rf^{\sigma;\mu,\nu})(r)\gtrsim_{C_b,\varepsilon}r$ if $r\in[0,r_0]$. Moreover,
\begin{equation*}
\begin{split}
f^{\sigma;\mu,\nu}(r_0)&=\lambda_e(0)-\lambda_{b}(r_0)+\lambda_{i}(r_0)=\int_{0}^{r_0}[\lambda'_i(\rho)-\lambda'_b(\rho)]\,d\rho\\
&\geq r_0\lambda'_i(r_0)-r_0\lambda'_b(r_0)+\int_{0}^{r_0}[\lambda'_b(r_0)-\lambda'_b(\rho)]\,d\rho\gtrsim_{C_b,\varepsilon}1.
\end{split}
\end{equation*}
Therefore the strictly increasing function $f^{\sigma;\mu,\nu}$ has a unique zero in the interval $(2^{-D/2},r_0-2^{-D/2})$. It follows from \eqref{XiDetermined} that if $\eta=re$ then $r\in(2^{-D},r_0-2^{-D})$ and $|\xi-(r-t^{bi}(r))e|\leq 2^{20D}\delta$. The conclusions in \eqref{kx11} follow. The conclusion \eqref{CaseAAwayFromRast} follows using also $r_0\leq r_\ast/2$. The inequality \eqref{kx20} follows using, for example, \eqref{nbc2}. 
\end{proof}

{\bf{Remark: }} The analysis in Case 2 in the proof of Lemma \ref{LemP} shows that the phases $\Phi^{e;e+,i+}$, $\Phi^{b;e+,i+}$, and $\Phi^{b;b+,i+}$ are, in fact, nonresonant, in the sense that there are no points $(\xi,\eta)\in\mathbb{R}^3\times\mathbb{R}^3$ satisfying \eqref{Added1}. Therefore in the proof of Proposition we may assume that \begin{equation}\label{thy6}
\begin{split}
\Phi^{\sigma;\mu,\nu}\in\mathcal{T}''_A:=\{&\Phi^{i;e+,i-}, \Phi^{i;b+,i-}, \Phi^{i;b-,e+}, \Phi^{i;b+,e-}, \Phi^{e;b+,i+}, \Phi^{e;b+,i-},\\
& \Phi^{e;b+,e-}, \Phi^{b;e+,e+}, \Phi^{b;b+,e+},\Phi^{b;b+,e-}\}.
\end{split}
\end{equation}

\subsection{Proof of Proposition \ref{reduced4}}

Once the functions $p^{\sigma;\mu,\nu}$ have been created, the rest of the analysis follows similar lines to the analysis of \cite[Section 4]{IoPa2}. The main ingredients we  need come from the refined $B_{k,j}$ norms and additional $L^2$ orthogonality arguments. We prove Proposition \ref{reduced4} in two steps, see Lemma \ref{BigBound7A} and Lemma \ref{BigBound8A} below, depending on the maximum in the definition of $\kappa$.

\begin{lemma}\label{BigBound7A}
The bound \eqref{tln3} holds provided that \eqref{tln2} and \eqref{thy6} hold and, in addition,
\begin{equation}\label{cle0}
\max(j_1,j_2)\leq (m-\beta^2m)/2,
\end{equation}
with
\begin{equation}\label{cle1}
\kappa:=2^{(\beta^2m-m)/2}.
\end{equation}
\end{lemma}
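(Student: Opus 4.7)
The strategy is to exploit the fact that in the space-time resonant regime the Fourier transform of the output $R_{m,\kappa}^{\sigma;\mu,\nu}(f^\mu_{k_1,j_1},f^\nu_{k_2,j_2})$ is concentrated in a thin spherical shell in $\xi$-space. Such an output fails to lie in the strong space $B^1_{k,j}$, but it is exactly captured by the weaker space $B^2_{k,j}$ whose definition \eqref{sec5.4} contains the refined $L^1$-on-balls term $R^{-2}\|\widehat{h}\|_{L^1(B(\xi_0,R))}$. Since all frequencies satisfy $|k|,|k_1|,|k_2|\le D/2$, the prefactor $2^{10|k|}(2^{\alpha k}+2^{10k})$ in \eqref{sec5.4} is a harmless constant depending on $D$; it suffices to control the three ingredients: an $L^2$ norm, a Fourier $L^\infty$ norm, and the $L^1$-on-balls term.

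Step 1 (Geometric localization). I would first apply Lemma \ref{LemP}(i) with $\delta=2\kappa$ to conclude that on the support of $\chi_R^{\sigma;\mu,\nu}$ one has $\chi_A^{\sigma;\mu,\nu}(|\xi|)=1$ and $|\eta-p^{\sigma;\mu,\nu}(\xi)|\lesssim 2^{40D}\kappa$. Because $\Xi^{\mu,\nu}$ vanishes at $\eta=p^{\sigma;\mu,\nu}(\xi)$ and the $\eta$-Hessian of $\Phi^{\sigma;\mu,\nu}$ is bounded, a second-order Taylor expansion gives $\Phi^{\sigma;\mu,\nu}(\xi,\eta)=\Psi^{\sigma;\mu,\nu}(|\xi|)+O(\kappa^2)$. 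The constraint $|\Phi^{\sigma;\mu,\nu}|\le 2^{-D^2}$ together with the transversality \eqref{kx20} then pins $|\xi|$ to an interval of length $\lesssim 2^{20D}\kappa^2$. Thus the $\xi$-support lies in a spherical shell of thickness $\kappa^2$ and total volume $\lesssim \kappa^2$.

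Step 2 (Pointwise, $L^2$, and ball bounds). For each $\xi$ in this shell, the $\eta$-integration covers a ball of volume $\lesssim\kappa^3$, and $\|\widehat{f^\mu_{k_1,j_1}}\|_{L^\infty},\|\widehat{f^\nu_{k_2,j_2}}\|_{L^\infty}\lesssim_D 1$ by \eqref{nh9}. Together with $\|q_m\|_{L^1}\lesssim 2^m$ this yields $\|\widehat{R_{m,\kappa}^{\sigma;\mu,\nu}}\|_{L^\infty}\lesssim 2^m\kappa^3$. Combining with the shell-volume bound of Step 1 gives $\|R_{m,\kappa}^{\sigma;\mu,\nu}\|_{L^2}\lesssim 2^m\kappa^4$, and for any ball $B(\xi_0,R)$ with $R\in[2^{-j},2^k]$ the intersection with the shell has volume $\lesssim\min(R^3,R^2\kappa^2)$, so $R^{-2}\|\widehat{R_{m,\kappa}^{\sigma;\mu,\nu}}\|_{L^1(B(\xi_0,R))}\lesssim 2^m\kappa^5$.

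Step 3 (Verification with $\kappa=2^{(\beta^2-1)m/2}$). Each of the three $B^2_{k,j}$ quantities is then dominated by a power gain in $m$: the weight $2^{(1-\beta)j}$ absorbs $2^m\kappa^4\lesssim 2^{-m+2\beta^2 m}$ into $2^{-\beta m+O(\beta^2)m}$; the Fourier $L^\infty$ term gives $2^{-m/2+O(\beta^2)m}$; and the sphere-localized term gives $2^{\gamma j}\cdot 2^m\kappa^5\lesssim 2^{-4\beta m+O(\beta^2)m}$ because the weight $2^{\gamma j}$ with $\gamma=3/2-4\beta$ is compensated exactly by $\kappa^5\cdot 2^m\sim 2^{-3m/2+O(\beta^2)m}$. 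All three are much smaller than $2^{-2\beta^4 m}$. The physical cutoff $\widetilde{\varphi}^{(k)}_j$ and the projection $P_k$ alter the bounds only by convolution with kernels of $L^1$-norm $O_D(1)$. Taking $g_1=0$, $g_2=\widetilde{\varphi}^{(k)}_j P_kR_{m,\kappa}^{\sigma;\mu,\nu}$ in the decomposition \eqref{sec5.2} yields \eqref{tln3}.

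The main obstacle is Step 1: the gain of $\kappa^2$ (rather than $\kappa$) in the thickness of the $\xi$-shell is what makes the $B^2_{k,j}$-sphere term close. This gain is not automatic from $\chi_R^{\sigma;\mu,\nu}$ alone but requires carefully combining the fact that $\nabla_\eta\Phi^{\sigma;\mu,\nu}$ vanishes at $p^{\sigma;\mu,\nu}(\xi)$ (so that the Taylor expansion starts at quadratic order in $\eta-p^{\sigma;\mu,\nu}(\xi)$) with the transversality estimate \eqref{kx20} for $\Psi^{\sigma;\mu,\nu}$ provided by Lemma \ref{LemP}(ii); without this $\kappa^2$ improvement the $L^1$-on-balls contribution would overwhelm the gain $2^{-2\beta^4 m}$.
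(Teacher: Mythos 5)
Your Step 1 contains a fatal gap. You claim that the cutoff $\varphi(2^{D^2+\max(0,k_1,k_2)}\Phi^{\sigma;\mu,\nu})$ combined with the second-order Taylor expansion $\Phi^{\sigma;\mu,\nu}(\xi,\eta)=\Psi^{\sigma;\mu,\nu}(|\xi|)+O(\kappa^2)$ and the transversality \eqref{kx20} ``pins $|\xi|$ to an interval of length $\lesssim 2^{20D}\kappa^2$.'' This does not follow. The cutoff only restricts $|\Phi^{\sigma;\mu,\nu}|\lesssim 2^{-D^2}$, so the Taylor expansion yields $|\Psi^{\sigma;\mu,\nu}(|\xi|)|\lesssim 2^{-D^2}+O(\kappa^2)\lesssim 2^{-D^2}$, and transversality then pins $|\xi|$ only to a shell of thickness $\sim 2^{-D^2+20D}$, which is a fixed constant depending on $D$, nowhere near $\kappa^2\sim 2^{-m}$. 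With the correct shell thickness, your crude $L^2$ bound becomes $\|G\|_{L^2}\lesssim 2^m\kappa^3\cdot 2^{-D^2/2}\sim 2^{-m/2+O(\beta^2m)}$, and the weight $2^{(1+\beta)j}$ or $2^{(1-\beta)j}$ with $j$ up to $m+D$ then produces a factor $\gtrsim 2^{m/2}$ which destroys the bound. The crude volume/pointwise argument is thus insufficient on its own.

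The missing idea is to exploit the time oscillation. The paper integrates by parts in $s$ against the phase, which gives a gain of $(2^m|\Psi^{\sigma;\mu,\nu}(|\xi|)|)^{-1}$, and then decomposes $G$ by level sets of $\Psi^{\sigma;\mu,\nu}$: on the set $D_l=\{2^m|\Psi^{\sigma;\mu,\nu}(|\xi|)|\sim 2^l\}$ one has both a pointwise gain $2^{-l}$ and (via \eqref{kx20}) a shell of thickness $\sim 2^{l-m}$; the product is what closes the $L^2$ and $B^2$ estimates. Your approach has no mechanism to see that the output concentrates at the small scale $2^{-m}$ in $|\xi|$ — it can only see the much coarser scale $2^{-D^2}$ imposed by the cutoff — and consequently would fail for the largest values of $j$ allowed by \eqref{tln2}. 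You would also need to keep the pieces with $2^m|\Psi|$ large in $B^1_{k,j}$ rather than declaring $g_1=0$, since for those pieces the $B^1$ bound is the efficient one.
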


\begin{proof}[Proof of Lemma \ref{BigBound7A}] For simplicity of notation, let
\begin{equation}\label{CaseAG}
\begin{split}
G(\xi):=&\mathcal{F}[P_kR_{m,\kappa}^{\sigma;\mu,\nu}(f_{k_1,j_1}^\mu,f_{k_2,j_2}^\nu)](\xi),\\
=&\varphi_k(\xi)\int_{\mathbb{R}}\int_{\mathbb{R}^3}e^{is\Phi^{\sigma;\mu,\nu}(\xi,\eta)}\chi_{R}^{\sigma;\mu,\nu}(\xi,\eta)q_m(s)\cdot \widehat{f_{k_1,j_1}^\mu}(\xi-\eta,s)\widehat{f_{k_2,j_2}^\nu}(\eta,s)\,d\eta ds,
\end{split}
\end{equation}
where $\chi^{\sigma;\mu,\nu}_A$ was defined in Lemma \ref{LemP}, and, as before,
\begin{equation*}
\chi_R^{\sigma;\mu,\nu}(\xi,\eta)=\varphi(2^{D^2+\max(0,k_1,k_2)}\Phi^{\sigma;\mu,\nu}(\xi,\eta))\varphi(\vert\Xi^{\mu,\nu}(\xi,\eta)\vert/\kappa).
\end{equation*}
Using the $L^\infty$ bounds in \eqref{nh9} and \eqref{kx11} (with $\delta=4\kappa$), we see easily that
\begin{equation}\label{cle3}
\|G\|_{L^\infty}\lesssim \ka^3\cdot 2^m\lesssim 2^{-m/2}2^{3\beta^2 m/2}.
\end{equation}
This suffices to prove \eqref{tln3} if, for example, $j\leq m(1/2-4\beta)$. To cover the entire range $j\leq m+D$ we integrate by parts in $s$. 

In the argument below we may assume that $G\neq 0$; in particular this guarantees that the main assumption \eqref{Added1} is satisfied. With $\Psi^{\si;\mu,\nu}(|\xi|)=\Phi^{\si;\mu,\nu}(\xi,p^{\sigma;\mu,\nu}(\xi))$, defined as in \eqref{kxz11}, 
assume that
\begin{equation}\label{cle6}
2^m\vert\Psi^{\sigma;\mu,\nu}(\vert\xi\vert)\vert\in [2^l,2^{l+1}],\,\,l\in[\beta m,\infty)\cap\mathbb{Z}.
\end{equation}
Then, using Lemma \ref{LemP}, we see that if $|\eta-p^{\sigma;\mu,\nu}(\xi)|\leq 2^{50D}\kappa$ then
\begin{equation*}
\vert\Phi^{\sigma;\mu,\nu}(\xi,\eta)-\Psi^{\sigma;\mu,\nu}(\vert\xi\vert)\vert\le\vert\eta-p^{\sigma;\mu,\nu}(\xi)\vert\cdot\sup_{\vert \zeta-p^{\sigma;\mu,\nu}(\xi)\vert\le 2^{50D}\kappa}\vert\Xi^{\mu,\nu}(\xi,\zeta)\vert\leq 2^{60D} \kappa\vert\eta-p^{\sigma;\mu,\nu}(\xi)\vert,
\end{equation*}
since $\Xi^{\mu,\nu}(\xi,p^{\mu,\nu}(\xi))=0$. Therefore
\begin{equation*}
 2^m\vert\Phi^{\si;\mu,\nu}(\xi,\eta)\vert\in[2^{l-3},2^{l+4}]\qquad\text{ if }\qquad \chi_R^{\sigma;\mu,\nu}(\xi,\eta)\neq 0.
\end{equation*}
After integration by parts in $s$ it follows that
\begin{equation*}
 \begin{split}
 |G(\xi)|\lesssim 2^{m-l}|\varphi_k(\xi)|\int_{\mathbb{R}}\int_{\mathbb{R}^3}
&|\chi_R^{\sigma;\mu,\nu}(\xi,\eta)|\,|q'_m(s)|\,|\widehat{f_{k_1,j_1}^\mu}(\xi-\eta,s)|\,|\widehat{f_{k_2,j_2}^\nu}(\eta,s)|\\
&+|\chi_R^{\sigma;\mu,\nu}(\xi,\eta)|\,|q_m(s)|\,|(\partial_s\widehat{f_{k_1,j_1}^\mu})(\xi-\eta,s)|\,|\widehat{f_{k_2,j_2}^\nu}(\eta,s)|\\
&+|\chi_R^{\sigma;\mu,\nu}(\xi,\eta)|\,|q_m(s)|\,|\widehat{f_{k_1,j_1}^\mu}(\xi-\eta,s)|\,|(\partial_s\widehat{f_{k_2,j_2}^\nu})(\eta,s)|\,d\eta ds.
 \end{split}
\end{equation*}
We use now \eqref{nh2}, the last bound in \eqref{nh9}, \eqref{derv2repeat}, and Lemma \ref{LemP}. It follows that
\begin{equation}\label{cle7}
 |G(\xi)|\lesssim 2^{m-l}|\varphi_k(\xi)|\chi_A^{\sigma;\mu,\nu}(|\xi|)\cdot \ka^3\lesssim |\varphi_k(\xi)|\chi_A^{\sigma;\mu,\nu}(|\xi|)\cdot 
2^{-l}2^{-m/2}2^{\beta m/5}
\end{equation}
provided that \eqref{cle6} holds.

We can now prove the desired bound \eqref{tln3}. To make use of \eqref{cle6}--\eqref{cle7} we need a good description of the 
level sets of the functions $\Psi^{\si;\mu,\nu}$. Let
\begin{equation*}
 \begin{split}
&l_0:=\lfloor\beta m+2\rfloor,\quad D_{l_0}:=\{\xi\in\mathbb{R}^3:2^m|\Psi^{\si;\mu,\nu}(|\xi|)|\leq 2^{l_0}\,\text{ and }|\varphi_k(\xi)|\chi_A^{\sigma;\mu,\nu}(|\xi|)\neq 0\},\\
&D_l:=\{\xi\in\mathbb{R}^3:2^m|\Psi^{\si;\mu,\nu}(|\xi|)|\in(2^{l-1},2^l]\,\text{ and }|\varphi_k(\xi)|\chi_A^{\sigma;\mu,\nu}(|\xi|)\neq 0\},\quad l\in[l_0+1,m-100D]\cap\mathbb{Z},\\
&G=\sum_{l=l_0}^{m-100D}G_l,\qquad G_l(\xi):=G(\xi)\cdot\mathbf{1}_{D_l}(\xi).
 \end{split}
\end{equation*}
For \eqref{tln3} it remains to prove that for any $l\in[l_0,m-100D]\cap\mathbb{Z}$
\begin{equation}\label{cle8}
\big\|\widetilde{\varphi}^{(k)}_j\cdot \mathcal{F}^{-1}(G_l)\|_{B_{k,j}}\lesssim 2^{-3\beta^4m}.
\end{equation}

Using \eqref{kx20} in Lemma \ref{LemP}, it follows that there is $\theta^{\sigma;\mu,\nu}=\theta^{\sigma;\mu,\nu}(\mu,\nu,\sigma,k,k_1,k_2,l)\in [2^{-D},\infty)$ 
with the property that
\begin{equation}\label{cle9}
 D_l\subseteq\{\xi\in\mathbb{R}^3:\big||\xi|-\theta^{\sigma;\mu,\nu}\big|\lesssim 2^{l-m}\}.
\end{equation}
Therefore, using also \eqref{cle7} if $l\geq l_0+1$ and \eqref{cle3} if $l=l_0$,
\begin{equation*}
\begin{split}
 \big\|\widetilde{\varphi}^{(k)}_j\cdot \mathcal{F}^{-1}(G_l)\|_{B^1_{k,j}}
&\lesssim 2^{(1+\beta)j}\|G_l\|_{L^2}+\|G_l\|_{L^\infty}\\
&\lesssim 2^{\beta m}2^{-l}2^{-m/2}2^{\beta m/5}\cdot \big(2^{(1+\beta)j}2^{(l-m)/2}+1)\\
&\lesssim 2^{j-m}2^{-l/2}2^{11\beta m/5}+2^{-l}2^{-m/2}2^{6\beta m/5}.
\end{split}
\end{equation*}
This clearly suffices to prove \eqref{cle8} if $l\ge 6\beta m$ or $j\leq m-3\beta m$.

It remains to prove \eqref{cle8} in the remaining case
\begin{equation}\label{cle10}
l\in[l_0, 6\beta m]\cap\mathbb{Z}\qquad\text{ and }\qquad j\in[m-3\beta m,m+D]\cap\mathbb{Z}. 
\end{equation}
For this we need to use the norms $B^2_{k,j}$ defined in \eqref{sec5.4}. Assume first that $l\geq l_0+1$. As before we estimate easily
\begin{equation*}
\begin{split}
 2^{(1-\beta)j}\|G_l\|_{L^2}+\|G_l\|_{L^\infty}&\lesssim 2^{-l}2^{-m/2}2^{\beta m/5}\cdot \big(2^{(1-\beta)m}2^{(l-m)/2}+1)\\
&\lesssim 2^{-l/2}2^{-4\beta m/5}+2^{-l}2^{-m/2}2^{\beta m/5}.
\end{split}
\end{equation*}
Therefore, for \eqref{cle8} it suffices to prove that
\begin{equation}\label{cle13}
2^{\gamma j}\sup_{R\in[2^{-j},2^{k}],\,\xi_0\in\mathbb{R}^3}R^{-2}
\big\|\mathcal{F}\big[\widetilde{\varphi}^{(k)}_j\cdot \mathcal{F}^{-1}(G_l)\big]\big\|_{L^1(B(\xi_0,R))}\lesssim 2^{-3\beta^4m}.
\end{equation}
Since $\big|\mathcal{F}(\widetilde{\varphi}^{(k)}_j)(\xi)\big|\lesssim 2^{3j}(1+2^j|\xi|)^{-6}$, it follows from \eqref{cle7} that
\begin{equation*}
\begin{split}
\big|\mathcal{F}\big[\widetilde{\varphi}^{(k)}_j\cdot \mathcal{F}^{-1}(G_l)\big](\xi)\big|&\lesssim 
\int_{\mathbb{R}^3}|G_l(\xi-\eta)|\cdot 2^{3j}(1+2^j|\eta|)^{-6}\,d\eta\\
&\lesssim 2^{-l}2^{-m/2}2^{\beta m/5}\int_{\mathbb{R}^3}\mathbf{1}_{D_l}(\xi-\eta)\cdot 2^{3j}(1+2^j|\eta|)^{-6}\,d\eta.
\end{split}
\end{equation*}
Therefore, using now \eqref{cle9}, for any $R\in[2^{-j},2^k]$ and $\xi_0\in\mathbb{R}^3$,
\begin{equation*}
R^{-2}\big\|\mathcal{F}\big[\widetilde{\varphi}^{(k)}_j\cdot \mathcal{F}^{-1}(G_l)\big]\big\|_{L^1(B(\xi_0,R))}\lesssim 2^{-l}2^{-m/2}2^{\beta m/5}\cdot 2^{l-m}\lesssim 2^{-3m/2}2^{\beta m/5},
\end{equation*}
and the bound \eqref{cle13} follows.

Similarly, using \eqref{cle3} and \eqref{cle9},
\begin{equation*}
2^{(1-\beta)j}\Vert G_{l_0}\Vert_{L^2}+\Vert G_{l_0}\Vert_{L^\infty} \lesssim 2^{(1-\beta)(j-m)}2^{-\beta m+l_0/2+3\beta^2m}+2^{-m/4}\lesssim 2^{-3\beta^4m}
\end{equation*}
and
\begin{equation*}
\begin{split}
\big|\mathcal{F}\big[\widetilde{\varphi}^{(k)}_j\cdot \mathcal{F}^{-1}(G_{l_0})\big](\xi)\big|&\lesssim 
\int_{\mathbb{R}^3}|G_{l_0}(\xi-\eta)|\cdot 2^{3j}(1+2^j|\eta|)^{-6}\,d\eta\\
&\lesssim 2^{-m/2}2^{3\beta^2 m}\int_{\mathbb{R}^3}\mathbf{1}_{D_{l_0}}(\xi-\eta)\cdot 2^{3j}(1+2^j|\eta|)^{-6}\,d\eta
\end{split}
\end{equation*}
from where we conclude that, for any $R\in[2^{-j},2^k]$ and $\xi_0\in\mathbb{R}^3$,
\begin{equation*}
R^{-2}\big\|\mathcal{F}\big[\widetilde{\varphi}^{(k)}_j\cdot \mathcal{F}^{-1}(G_{l_0})\big]\big\|_{L^1(B(\xi_0,R))}\lesssim 2^{-m/2}2^{3\beta^2 m}\cdot 2^{l_0-m}\lesssim 2^{-3m/2}2^{2\beta m}.
\end{equation*}
The desired bound \eqref{cle8} follows when $l=l_0$, which completes the proof of the lemma.
\end{proof}

\begin{lemma}\label{BigBound8A}
The bound \eqref{tln3} holds provided that \eqref{tln2} and \eqref{thy6} hold and, in addition,
\begin{equation}\label{cle20}
\max(j_1,j_2)\geq (m-\beta^2m)/2,
\end{equation}
with
\begin{equation}\label{cle20.1}
\kappa:=2^{\beta^2m}2^{\max(j_1,j_2)-m}.
\end{equation}
\end{lemma}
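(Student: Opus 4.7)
The strategy parallels Lemma \ref{BigBound7A}: dyadic decomposition in $|\Psi^{\sigma;\mu,\nu}(|\xi|)|$ followed by integration by parts in $s$. The new feature is that the larger choice $\kappa=2^{\beta^2 m+\max(j_1,j_2)-m}$ forced by \eqref{kappa} precludes control in the strong norm $B^1_{k,j}$, so we target the weaker component $B^2_{k,j}$ and supplement the pointwise estimates with an $L^2$-orthogonality argument in the spirit of \cite[Section 4]{IoPa2}.

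Without loss of generality $j_2\ge j_1$. Decompose each input as in \eqref{ok113}--\eqref{ok114} into strong atoms $g^\mu,g^\nu$ and weak atoms $h^\mu,h^\nu$, so that $G:=\mathcal{F}[P_kR^{\sigma;\mu,\nu}_{m,\kappa}(f^\mu_{k_1,j_1},f^\nu_{k_2,j_2})]$ splits into four contributions $G_{\ast\ast'}$ with $\ast,\ast'\in\{g,h\}$. By Lemma \ref{LemP} the cutoff $\chi_R^{\sigma;\mu,\nu}$ confines $\eta$ to the ball $B(p^{\sigma;\mu,\nu}(\xi),C\kappa)$, and Taylor-expanding $\Phi^{\sigma;\mu,\nu}$ at the critical point (nondegenerate in $\eta$ for the phases in $\mathcal{T}''_A$) yields $|\Phi^{\sigma;\mu,\nu}(\xi,\eta)-\Psi^{\sigma;\mu,\nu}(|\xi|)|\lesssim\kappa^2$. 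Combined with \eqref{kx20} this places $G$ inside a spherical shell $\{||\xi|-\theta^{\sigma;\mu,\nu}|\lesssim \kappa^2\}$ of volume $\lesssim \kappa^2$. As in Lemma \ref{BigBound7A}, I decompose $G=\sum_{l=l_0}^{l_1}G_l$ with $G_l$ supported where $2^m|\Psi^{\sigma;\mu,\nu}(|\xi|)|\in(2^{l-1},2^l]$ and $l_0:=\lfloor\beta m+2\rfloor$.

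On each piece with $l\ge l_0+1$, I integrate by parts once in $s$, gaining $|\Phi^{\sigma;\mu,\nu}|^{-1}\lesssim 2^{m-l}$, and estimate the resulting integrand using \eqref{nh9}, \eqref{nh9.1} and \eqref{derv2repeat}. The three contributions $G_{gh},G_{hg},G_{hh}$ involving at least one weak atom are closed by invoking the sharp bound $\|\widehat{h^\nu}\|_{L^1(B(\xi_0,\kappa))}\lesssim \kappa^2\,2^{-\gamma j_2-10|k_2|}$ from \eqref{sec5.815} (valid since the lower bound \eqref{cle20} guarantees $\kappa\ge 2^{-j_2}$, while $\kappa\le 2^{k_2}$ for $m$ large thanks to $-D/2\le k_2$); combined with the shell thickness $\lesssim \kappa^2$ and $\gamma=3/2-4\beta$, this delivers the gain $2^{-2\beta^4 m}$ in all three components (pointwise Fourier, weighted $L^2$, weighted $L^1$-on-$R$-balls) of the $B^2_{k,j}$-norm. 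The decisive difficulty is $G_{gg}$: the direct pointwise bound $\|G_{gg,l}\|_{\infty}\lesssim \kappa^3\cdot 2^{m-l}$ is insufficient after multiplication by $2^{(1-\beta)j}$. To close the $L^2$-estimate I apply Schur's test to the integration-by-parts kernel
\[
\mathcal{K}(\xi,\eta):=\chi_R^{\sigma;\mu,\nu}(\xi,\eta)\big[\Phi^{\sigma;\mu,\nu}(\xi,\eta)\big]^{-1}\widehat{g^\mu}(\xi-\eta),
\]
exploiting that for each fixed $\xi$ (resp.\ $\eta$) its support is contained in a ball of radius $\lesssim\kappa$ about $p^{\sigma;\mu,\nu}(\xi)$ (resp.\ $q^{\mu,\nu}(\eta)$), so that each Schur constant is bounded by $\kappa\cdot 2^{l-m}\cdot 2^{m-l}=\kappa$. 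Together with $\|g^\nu\|_{L^2}\lesssim 2^{-(1+\beta)j_2}$ and the control on $\partial_s\widehat{f^\nu_{k_2,j_2}}$ from \eqref{derv2repeat}, this yields $\|G_{gg}\|_{L^2}\lesssim \kappa\cdot 2^{-(1+\beta)j_2}\cdot 2^{\beta m/10}$; a short computation using $\kappa=2^{\beta^2 m+j_2-m}$, the upper bound $j\le m+D$, and the lower bound $j_2\ge(m-\beta^2 m)/2$ from \eqref{cle20} then gives $2^{(1-\beta)j}\|G_{gg}\|_{L^2}\lesssim 2^{-7\beta m/5+O(\beta^2)m+O(D)}\lesssim 2^{-3\beta^4 m}$.

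The main obstacle is the Schur/orthogonality analysis of $G_{gg}$, which requires simultaneously exploiting the Hessian nondegeneracy of $\Phi^{\sigma;\mu,\nu}$ on the critical set from Lemma \ref{LemP}, the dyadic lower bound $|\Phi^{\sigma;\mu,\nu}|\gtrsim 2^{l-m}$, and the sharpness of the scaling $\kappa=2^{\beta^2 m+j_2-m}$. This is the technical heart of Case A and the chief motivation for introducing the weak $B^2_{k,j}$-component in Definition \ref{MainDef}.
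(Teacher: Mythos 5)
Your proposal has several genuine gaps, and the overall strategy — dyadic decomposition in $\Psi^{\sigma;\mu,\nu}$ followed by integration by parts in $s$, as in Lemma~\ref{BigBound7A} — does not transfer to the regime of Lemma~\ref{BigBound8A}.

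First, you have the roles of $B^1_{k,j}$ and $B^2_{k,j}$ reversed. The paper \emph{does} close this lemma in the strong norm $B^1_{k,j}$ (the very first line of its proof reduces \eqref{tln3} to the $B^1$-type estimate \eqref{cle21}). It is Lemma~\ref{BigBound7A} — where $\max(j_1,j_2)$ is \emph{small} and the output is genuinely concentrated near a sphere — that needs the weak $B^2$ component. Here the constraint $\max(j_1,j_2)\ge (m-\beta^2 m)/2$ makes the atomic $L^2$ norms small enough that $B^1$ can be reached.

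Second, your claim that $G$ is confined to a spherical shell of thickness $\lesssim\kappa^2$ is false. The cutoff $\chi_R^{\sigma;\mu,\nu}$ only imposes $|\Phi^{\sigma;\mu,\nu}|\lesssim 2^{-D^2-\max(0,k_1,k_2)}$, which is a fixed constant independent of $m$; combined with $|\Phi-\Psi|\lesssim\kappa^2$ (which \emph{is} correct), one gets $|\Psi(|\xi|)|\lesssim 2^{-D^2}$, hence shell thickness $\lesssim 2^{-D^2/2}$ — not $\kappa^2$. The thickness $\kappa^2$ would only hold on a single dyadic piece $G_l$ with $l$ near $l_0$.

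Third, and most seriously, the $\Psi$-dyadic decomposition plus IBP in $s$ breaks down precisely in this regime. Integration by parts gains $|\Phi|^{-1}\lesssim 2^{m-l}$ \emph{only} when $|\Phi|\sim|\Psi|$, i.e.\ when $2^{l-m}\gg\kappa^2$; otherwise the $O(\kappa^2)$ error in $\Phi=\Psi+O(\kappa^2)$ dominates and the phase is no longer bounded away from zero on the piece $G_l$. With $\kappa=2^{\beta^2 m}2^{\max(j_1,j_2)-m}$ and $\max(j_1,j_2)$ allowed to be as large as $(1-\beta/10)m$, one has $\kappa^2 2^m$ as large as $2^{m(1+2\beta^2-\beta/5)}$, which far exceeds $2^{l_0}=2^{\beta m}$. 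So the IBP fails for essentially the entire range $l\in[l_0,\log_2(\kappa^2 2^m)]$, and the crude (no-IBP) pointwise bound $|G_{\le l}|\lesssim 2^m\kappa^3$, combined with the true shell thickness, yields an $L^2$ estimate that blows up as $m\to\infty$ after multiplying by $2^{(1\pm\beta)j}$. Your Schur constant calculation ("$\kappa\cdot 2^{l-m}\cdot 2^{m-l}=\kappa$") is also incorrect: the $\eta$-support for fixed $\xi$ is a full ball of radius $\kappa$ (volume $\kappa^3$), and the $|\Phi|\sim 2^{l-m}$ constraint lives on $\xi$, not $\eta$, so no extra thinness is gained in the $\eta$-integral.

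The paper's actual mechanism is different and essential: there is no $s$-integration by parts and no $\Psi$-decomposition. Instead the $L^2$ norm of the full $G$ is estimated via a \emph{space-time orthogonality} argument: the time interval $[2^{m-1},2^{m+1}]$ is cut into slices of length $\kappa 2^m$, and after integration by parts in $\xi$ one shows that contributions from far-apart time slices are nearly orthogonal (they live near distinct translates $w_n$ in physical space). This converts the trivial Cauchy--Schwarz factor $2^m$ in $s$ into $\kappa 2^m$, which is the decisive gain of $\kappa$ in $\|G\|_{L^2}^2$ that your argument has no substitute for. Once this reduction to a single time slice is in place (estimate \eqref{cle39}), the remaining case analysis on $j_1,j_2$ — three subcases using $L^\infty$/$L^2$ splittings of the localized pieces $f_i^v$, with the refined atom bounds \eqref{cle49}–\eqref{cle51} — closes the $L^2$ estimate.

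Finally, there is a structural conflict you glossed over: you decompose the inputs into $g/h$ atoms at each fixed $s$ and then integrate by parts in $s$. The decomposition \eqref{ok113}–\eqref{ok114} is time-dependent and not $\partial_s$-compatible; after IBP the integrand contains $\partial_s\widehat{f^\mu_{k_1,j_1}}$ which has no $g/h$ splitting. One must either IBP on the undecomposed $f$ and use \eqref{ok50repeat}, \eqref{derv2repeat} (as in Lemma~\ref{BigBound7A}), or do no IBP and decompose at each fixed time (as in Lemma~\ref{BigBound8A}) — not both simultaneously.
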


\begin{proof}[Proof of Lemma \ref{BigBound8A}] Using definition \eqref{sec5.3}, it suffices to prove that
\begin{equation}\label{cle21}
2^{(1+\beta)j}\big\|\widetilde{\varphi}^{(k)}_j\cdot P_kR_{m,\kappa_1}^{\sigma;\mu,\nu}(f_{k_1,j_1}^\mu,f_{k_2,j_2}^\nu)\big\|_{L^2}+\big\|\mathcal{F}[\widetilde{\varphi}^{(k)}_j\cdot P_kR_{m,\kappa_1}^{\sigma;\mu,\nu}(f_{k_1,j_1}^\mu,f_{k_2,j_2}^\nu)]\big\|_{L^\infty}\lesssim 2^{-2\beta^4m}.
\end{equation}
Let $G=\mathcal{F}P_kR_{m,\kappa_1}^{\sigma;\mu,\nu}(f_{k_1,j_1}^\mu,f_{k_2,j_2}^\nu)$ be given as in \eqref{CaseAG}.
In proving \eqref{cle21} we may assume that $G\neq 0$; in particular this guarantees that the main assumptions \eqref{Added1} of Lemma \ref{LemP} are satisfied. 
We prove first the $L^\infty$ bound in \eqref{cle21}. Assume that $j_1\leq j_2$ (the case $j_1\geq j_2$ is similar). Then, see \eqref{nh9} and \eqref{sec5.8}--\eqref{sec5.815},
\begin{equation*}
 \begin{split}
&\|\widehat{f_{k_1,j_1}^\mu}(s)\|_{L^\infty}\lesssim 1,\\
&\sup_{\xi_0\in\mathbb{R}^3}\|\widehat{f_{k_2,j_2}^\nu}(s)\|_{L^1(B(\xi_0,R))}\lesssim 2^{-(1+\beta)j_2}R^{3/2},\qquad\text{ for any }R\leq 1.
 \end{split}
\end{equation*}
Using \eqref{kx11} in Lemma \ref{LemP} it follows that
\begin{equation*}
\|G\|_{L^\infty}\lesssim 2^m\cdot 2^{-(1+\beta)j_2}\kappa^{3/2}\lesssim 2^{-m/2}2^{2\beta^2 m}2^{(1/2-\beta)j_2}\lesssim 2^{-2\beta^4m},
\end{equation*}
as desired.

To get the $L^2$ bound in \eqref{cle21} it suffices to show that
\begin{equation}\label{cle24}
2^{(2+2\beta)m}\|G\|^2_{L^2}\lesssim 2^{-4\beta^4m}.
\end{equation}
To prove this we need first an orthogonality argument. Let $\chi:\mathbb{R}\to[0,1]$ denote a smooth function supported in the interval $[-2,2]$ with the 
property that
\begin{equation*}
\sum_{n\in\mathbb{Z}}\chi(x-n)=1\qquad\text{ for any }x\in\mathbb{R}.
\end{equation*}
We define the smooth function $\chi':\mathbb{R}^3\to[0,1]$, $\chi'(x,y,z):=\chi(x)\chi(y)\chi(z)$. Recall the functions $\Psi^{\sigma;\mu,\nu}$
defined in \eqref{kxz11}. We define, for any $v\in\mathbb{Z}^3$ and $n\in\mathbb{Z}$,
\begin{equation}\label{cle25}
\begin{split}
G_{v,n}(\xi):=&\chi'(\kappa^{-1}\xi-v)\varphi_k(\xi)\\
&\int_{\mathbb{R}}\int_{\mathbb{R}^3}e^{is\Phi^{\sigma;\mu,\nu}(\xi,\eta)}
\chi_R^{\sigma;\mu,\nu}(\xi,\eta)\chi(2^{-m}\kappa^{-1}s-n)q_m(s)\widehat{f_{k_1,j_1}^\mu}(\xi-\eta,s)\widehat{f_{k_2,j_2}^\nu}(\eta,s)\,d\eta ds,
\end{split}
\end{equation}
and notice that $G=\sum_{v\in\mathbb{Z}^3}\sum_{n\in\mathbb{Z}}G_{v,n}$. In view of Lemma \ref{LemP} (i) we notice also that the functions $\widetilde{G}_{v,s}$ are trivial unless
\begin{equation}\label{cle25.5}
v\in Z_\kappa^{\sigma;\mu,\nu}:=\{w\in\mathbb{Z}^3:\kappa|w|\in[2^{k-4},2^{k+4}]\cap [t^{\sigma_1\sigma_2}(0)+2^{-4D},\infty),\,|\Psi^{\sigma;\mu,\nu}(\kappa |w|)|\leq 2^{-200D}\}.
\end{equation} 

We show now that
\begin{equation}\label{cle26}
 \|G\|^2_{L^2}\lesssim \sum_{v\in Z_\kappa^{\sigma;\mu,\nu}}\sum_{n\in\mathbb{Z}}\|G_{v,n}\|_{L^2}^2+2^{-10m}.
\end{equation}
This additional orthogonality in time allows us a crucial gain of $\kappa^{1/2}$ in the time integration with respect to the trivial bound. To prove this bound we estimate
\begin{equation*}
\|G\|^2_{L^2}\lesssim \sum_{v\in Z_\kappa^{\sigma;\mu,\nu}}\Big\|\sum_{n\in\mathbb{Z}}G_{v,n}\Big\|_{L^2}^2
\lesssim \sum_{v\in Z_\kappa^{\sigma;\mu,\nu}}\sum_{n_1,n_2\in\mathbb{Z}}|\langle G_{v,n_1},G_{v,n_2}\rangle|.
\end{equation*}
Therefore, for \eqref{cle26} it suffices to prove that
\begin{equation}\label{cle27}
|\langle G_{v,n_1},G_{v,n_2}\rangle|\lesssim 2^{-20m}\qquad \text{ if }v\in Z_\kappa^{\sigma;\mu,\nu}\text{ and }|n_1-n_2|\geq 2^{100D}.
\end{equation}
Let $w_n:=n\kappa 2^m\cdot(\Psi^{\sigma;\mu,\nu})'(\kappa|v|)\cdot v/|v|$ and integrate by parts in $\xi$ using Lemma \ref{tech2} with
\begin{equation*}
K\approx |x+w_n|,\,\quad\, \epsilon^{-1}\approx 2^{-\max(j_1,j_2)}.
\end{equation*}
It follows that, for any $n\in\mathbb{Z}$,
\begin{equation*}
|\mathcal{F}^{-1}(G_{v,n})(x)|\lesssim |x+w_n|^{-200}\quad\text{ if }\quad |x+w_n|\geq 2^{50D}\kappa 2^m.
\end{equation*}
Moreover, using Lemma \ref{LemP} and \eqref{cle25.5} we conclude that $|(\Psi^{\sigma;\mu,\nu})'({\kappa}|v|)|\geq 2^{-20D}$. Therefore if $|n_1-n_2|\geq 2^{100D}$ then $|w_{n_1}-w_{n_2}|\geq 2^{70D}\kappa 2^m$ and the bound \eqref{cle27} follows. This completes the proof of \eqref{cle26}.

In view of \eqref{cle26}, for \eqref{cle24} it remains to prove that
\begin{equation}\label{cle27.1}
2^{(2+2\beta)m}\sum_{v\in Z_\kappa^{\sigma;\mu,\nu},\,n\in[2^{-10}\kappa^{-1},2^{10}\kappa^{-1}]}\|G_{v,n}\|^2_{L^2}\lesssim 2^{-4\beta^4m}.
\end{equation}
Let
\begin{equation}\label{cle27.2}
\widetilde{G}_{v,s}(\xi):=\chi'(\kappa^{-1}\xi-v)\varphi_k(\xi)\int_{\mathbb{R}^3}e^{is\Phi^{\sigma;\mu,\nu}(\xi,\eta)}
\chi_R^{\sigma;\mu,\nu}(\xi,\eta)\widehat{f_{k_1,j_1}^\mu}(\xi-\eta,s)\widehat{f_{k_2,j_2}^\nu}(\eta,s)\,d\eta,
\end{equation}
such that
\begin{equation*}
G_{v,n}(\xi)=\int_{\mathbb{R}}\widetilde{G}_{v,s}(\xi)\chi(2^{-m}\kappa^{-1}s-n)q_m(s)\,ds.
\end{equation*}
Therefore, for any $(v,n)$,
\begin{equation*}
\|G_{v,n}\|_{L^2}^2\lesssim 2^m\kappa\int_{\mathbb{R}}\|\widetilde{G}_{v,s}\|^2_{L^2}\chi(2^{-m}\kappa^{-1}s-n)q_m(s)\,ds.
\end{equation*}
Therefore, for \eqref{cle27.1} it suffices to prove that for any $s\in[2^{m-1},2^{m+1}]$
\begin{equation}\label{cle27.3}
2^{(4+2\beta)m}\kappa\sum_{v\in Z_\kappa^{\sigma;\mu,\nu}}\|\widetilde{G}_{v,s}\|^2_{L^2}\lesssim 2^{-4\beta^4m}.
\end{equation}

Assuming $v\in Z_\kappa^{\sigma;\mu,\nu}$ fixed, the variables in the definition of the function $\widetilde{G}_{v,s}$ are naturally restricted as follows:
\begin{equation*}
|\xi-\kappa v|\lesssim \kappa,\qquad |\eta-p^{\sigma;\mu,\nu}(\kappa v)|\lesssim \kappa,
\end{equation*}
where $p^{\sigma;\mu,\nu}$ is defined as in Lemma \ref{LemP}. More precisely, we define the functions $f_1^{v}$ and $f_2^{v}$ by the formulas
\begin{equation}\label{cle36}
\begin{split}
&\widehat{f_1^{v}}(\theta,s):=\varphi(2^{-50D}\kappa^{-1}(\theta-\kappa v+p^{\sigma;\mu,\nu}(\kappa v)))\cdot \widehat{f_{k_1,j_1}^\mu}(\theta,s),\\
&\widehat{f_2^{v}}(\theta,s):=\varphi(2^{-50D}\kappa^{-1}(\theta-p^{\sigma;\mu,\nu}(\kappa v))\cdot \widehat{f_{k_2,j_2}^\nu}(\theta,s).
\end{split}
\end{equation}
Since $|p^{\sigma;\mu,\nu}(\kappa v_1)-p^{\sigma;\mu,\nu}(\kappa v_2)|\geq 2^{-80D}\kappa$ and $\big|[\kappa v_1-p^{\sigma;\mu,\nu}(\kappa v_1)]-[\kappa v_2-p^{\sigma;\mu,\nu}(\kappa v_2)]\big|\geq 2^{-80D}\kappa$ whenever $|v_1-v_2|\gtrsim 1$ (these inequalities are consequences of the lower bounds in the first line of \eqref{r1to1} in Lemma \ref{LemP}), it follows by orthogonality that, for any $s\in\mathbb{R}$,
\begin{equation}\label{cle36.5}
\begin{split}
&\sum_{v\in Z_\kappa^{\sigma;\mu,\nu}}\|f_1^{v}(s)\|_{L^2}^2\lesssim\|f_{k_1,j_1}^\mu(s)\|_{L^2}^2\lesssim 2^{-2j_1+2\beta j_1},\\
&\sum_{v\in Z_\kappa^{\sigma;\mu,\nu}}\|f_2^{v}(s)\|_{L^2}^2\lesssim\|f_{k_2,j_2}^\nu(s)\|_{L^2}^2\lesssim 2^{-2j_2+2\beta j_2}.
\end{split}
\end{equation}

For any $v\in\mathbb{R}^3$ and $g_1,g_2\in L^2(\mathbb{R}^3)$ let
\begin{equation}\label{cle38}
A_{v}(g_1,g_2)(\xi):=\chi'(\kappa^{-1}\xi-v)\varphi_k(\xi)\int_{\mathbb{R}^3}
\chi_R^{\sigma;\mu,\nu}(\xi,\eta)\mathcal{F}(P_{[k_1-4,k_1+4]}g_1)(\xi-\eta)\mathcal{F}(P_{[k_2-4,k_2+4]}g_2)(\eta)\,d\eta.
\end{equation}
We observe that 
\begin{equation*}
\begin{split}
&\widetilde{G}_{v,s}(\xi)=e^{is\Lambda_\sigma(\xi)}A_{v}[Ef_1^{v}(s),Ef_2^{v}(s)](\xi),\\
&Ef_1^{v}(s)=e^{-is\widetilde{\Lambda}_\mu}f_1^{v}(s),\qquad Ef_2^{v}(s)=e^{-is\widetilde{\Lambda}_\nu}f_2^{v}(s).
\end{split}
\end{equation*}
Therefore for \eqref{cle27.3} it suffices to prove that, for any $s\in[2^{m-1},2^{m+1}]$,
\begin{equation}\label{cle39}
2^{(4+2\beta)m}\kappa\sum_{v\in Z_\kappa^{\sigma;\mu,\nu}}\|A_{v}(Ef_1^{v}(s),Ef_2^{v}(s))\|^2_{L^2}\lesssim 2^{-4\beta^4m}.
\end{equation}

We notice now that if $p,q\in[2,\infty]$, $1/p+1/q=1/2$, then
\begin{equation}\label{cle40}
\|A_{v}(g_1,g_2)\|_{L^2}\lesssim\|g_1\|_{L^p}\|g_2\|_{L^q}.
\end{equation}
Indeed, as in the proof of Lemma \ref{tech2}, we write
\begin{equation*}
\mathcal{F}^{-1}(A_{v}(g_1,g_2))(x)=c\int_{\mathbb{R}^3\times\mathbb{R}^3}g_1(y)g_2(z)K_v(x;y,z)\,dydz,
\end{equation*}
where
\begin{equation*}
\begin{split}
K_v(x;y,z):=\int_{\mathbb{R}^3\times\mathbb{R}^3}&e^{i(x-y)\cdot \xi}e^{i(y-z)\cdot \eta}\chi'(\kappa^{-1}\xi-v)\varphi(\kappa^{-1}\Xi^{\mu,\nu}(\xi,\eta))\\
&\times \varphi_k(\xi)\varphi(2^{D^2+\max(0,k_1,k_2)}\Phi^{\sigma;\mu,\nu}(\xi,\eta))\varphi_{[k_1-4,k_1+4]}(\xi-\eta)\varphi_{[k_2-4,k_2+4]}(\eta)\,d\xi d\eta.
\end{split}
\end{equation*}
We recall that $k,k_1,k_2\in[-D/2,D/2]$ and integrate by parts in $\xi$ and $\eta$. Using also Lemma \ref{LemP}, it follows that
\begin{equation*}
|K_v(x;y,z)|\lesssim\kappa^3(1+\kappa^{-1}|x-y|)^{-4}\cdot \kappa^3(1+\kappa^{-1}|y-z|)^{-4},
\end{equation*}
and the desired estimate \eqref{cle40} follows.

We can now prove the main estimate \eqref{cle39}. Assume first that
\begin{equation}\label{cle45}
\max(j_1,j_2)\leq (3/5-\beta) m.
\end{equation}
By symmetry, we may assume again that $j_1\leq j_2$ and estimate 
\begin{equation*}
\|Ef_1^{v}(s)\|_{L^\infty}\lesssim \|\widehat{f_1^{v}}(s)\|_{L^1}\lesssim \kappa^3.
\end{equation*}
Therefore, using \eqref{cle40} and \eqref{cle36.5}, the left-hand side of \eqref{cle39} is dominated by
\begin{equation*}
C2^{4m+2\beta m}\kappa\sum_{v\in Z_\kappa^{\sigma;\mu,\nu}}\kappa^6\|Ef_2^{v}(s)\|^2_{L^2}\lesssim 2^{4m+2\beta m}\kappa^7\cdot 2^{-2j_2+2\beta j_2}\lesssim 2^{-3(1-\beta)m}2^{(5+2\beta)j_2},
\end{equation*}
and the desired bound \eqref{cle39} follows provided that \eqref{cle45} holds.

Assume now that
\begin{equation}\label{cle41}
\max(j_1,j_2)\ge (3/5-\beta)m,\quad \max(j_1,j_2)-\min(j_1,j_2)\geq 8\beta m.
\end{equation}
By symmetry, we may assume that $j_1\leq j_2$ and estimate, using \eqref{CaseAAwayFromRast}, \eqref{nh9}, and either \eqref{ccc7}, \eqref{ccc25}, \eqref{ccc33} or \eqref{ccc43.5},
\begin{equation*}
\|Ef_1^{v}(s)\|_{L^\infty}\lesssim 2^{-3m/2}2^{(1/2+\beta)j_1}.
\end{equation*}
Therefore, using \eqref{cle40} and \eqref{cle36.5}, the left-hand side of \eqref{cle39} is dominated by
\begin{equation*}
\begin{split}
C2^{4m+2\beta m}\kappa\sum_{v\in Z_\kappa^{\sigma;\mu,\nu}}2^{-3m}2^{(1+2\beta)j_1}\|Ef_2^{v}(s)\|^2_{L^2}&\lesssim 2^{m+2\beta m}\kappa\cdot 2^{(1+2\beta)j_1}2^{-2j_2+2\beta j_2}\\
&\lesssim 2^{j_1-j_2}2^{3\beta m}2^{2\beta j_1}2^{2\beta j_2},
\end{split}
\end{equation*}
and the desired bound \eqref{cle39} follows provided that \eqref{cle41} holds.

Finally, assume that
\begin{equation}\label{cle47}
\max(j_1,j_2)-\min(j_1,j_2)\leq 8\beta m\qquad\text{ and }\qquad\max(j_1,j_2)\geq (3/5-\beta) m.
\end{equation}
In this case we need the more refined decomposition in \eqref{sec5.8}--\eqref{sec5.815}. More precisely, using the definitions, for $s\in[2^{m-1},2^{m+1}]$ fixed we decompose
\begin{equation*}
f^\mu_{k_1,j_1}(s)=P_{[k_1-2,k_1+2]}(g_1+h_1),\qquad f^\nu_{k_2,j_2}(s)=P_{[k_2-2,k_2+2]}(g_2+h_2),
\end{equation*}
where
\begin{equation}\label{cle48}
g_1=g_1\cdot \widetilde{\varphi}^{(k_1)}_{[j_1-2,j_1+2]},\quad g_2=g_2\cdot \widetilde{\varphi}^{(k_2)}_{[j_2-2,j_2+2]},
\end{equation}
and
\begin{equation}\label{cle49}
\begin{split}
&2^{(1+\beta)j_1}\|g_1\|_{L^2}+2^{(1-\beta)j_1}\|h_1\|_{L^2}+2^{\gamma j_1}\sup_{R\in[2^{-j_1},2^{k_1}],\theta_0\in\mathbb{R}^3}R^{-2}\|\widehat{h_1}\|_{L^1(B(\theta_0,R))}\lesssim 1,\\
&2^{(1+\beta)j_2}\|g_2\|_{L^2}+2^{(1-\beta)j_2}\|h_2\|_{L^2}+2^{\gamma j_2}\sup_{R\in[2^{-j_2},2^{k_2}],\theta_0\in\mathbb{R}^3}R^{-2}\|\widehat{h_2}\|_{L^1(B(\theta_0,R))}\lesssim 1.
\end{split}
\end{equation}
Then, we define the functions $g_1^{v}, h_1^{v}, g_2^{v}, h_2^{v}$ by the formulas (compare with \eqref{cle36}),
\begin{equation}\label{cle50}
\begin{split}
&\widehat{g_1^{v}}(\theta):=\varphi(2^{-50D}\kappa^{-1}(\theta-\kappa v+p^{\sigma;\mu,\nu}(\kappa v)))\cdot \mathcal{F}(P_{[k_1-2,k_1+2]}g_1)(\theta),\\
&\widehat{h_1^{v}}(\theta):=\varphi(2^{-50D}\kappa^{-1}(\theta-\kappa v+p^{\sigma;\mu,\nu}(\kappa v)))\cdot \mathcal{F}(P_{[k_1-2,k_1+2]}h_1)(\theta),\\
&\widehat{g_2^{v}}(\theta):=\varphi(2^{-50D}\kappa^{-1}(\theta-p^{\sigma;\mu,\nu}(\kappa v)))\cdot \mathcal{F}(P_{[k_2-2,k_2+2]}g_2)(\theta),\\
&\widehat{h_2^{v}}(\theta):=\varphi(2^{-50D}\kappa^{-1}(\theta-p^{\sigma;\mu,\nu}(\kappa v)))\cdot \mathcal{F}(P_{[k_2-2,k_2+2]}h_2)(\theta).
\end{split}
\end{equation}
As in \eqref{cle36.5}, using $L^2$ orthogonality and \eqref{cle49}, we have
\begin{equation}\label{cle50.5}
\begin{split}
&\sum_{v\in Z_\kappa^{\sigma;\mu,\nu}}\|g_1^{v}\|_{L^2}^2\lesssim 2^{-2j_1-2\beta j_1},\quad \sum_{v\in Z_\kappa^{\sigma;\mu,\nu}}\|h_1^{v}\|_{L^2}^2\lesssim 2^{-2j_1+2\beta j_1},\\
&\sum_{v\in Z_\kappa^{\sigma;\mu,\nu}}\|g_2^{v}\|_{L^2}^2\lesssim 2^{-2j_2-2\beta j_2}, \quad \sum_{v\in Z_\kappa^{\sigma;\mu,\nu}}\|h_2^{v}\|_{L^2}^2\lesssim 2^{-2j_2+2\beta j_2}.
\end{split}
\end{equation}

Let $E^\mu_sf=e^{-is\widetilde{\Lambda}_\mu}f$. Using \eqref{CaseAAwayFromRast}, and either \eqref{ccc7}, \eqref{ccc25}, \eqref{ccc33} or \eqref{ccc43.5} together with \eqref{cle48}--\eqref{cle49}, we derive the $L^\infty$ bounds
\begin{equation}\label{cle51}
\begin{split}
&\|E_s^\mu g_1^{v}\|_{L^\infty}\lesssim 2^{-3m/2}\|g_1^v\|_{L^1}\lesssim 2^{-3m/2}2^{(1/2-\beta)j_1},\\
&\|E_s^\mu h_1^{v}\|_{L^\infty}\lesssim \|\widehat{h^{v}_1}\|_{L^1}\lesssim \kappa^22^{-\gamma j_1},\\
&\|E_s^\nu g_2^{v}\|_{L^\infty}\lesssim 2^{-3m/2}\|g_2^v\|_{L^1}\lesssim 2^{-3m/2}2^{(1/2-\beta)j_2},\\
&\|E_s^\nu h_2^{v}\|_{L^\infty}\lesssim \|\widehat{h^{v}_2}\|_{L^1}\lesssim \kappa^22^{-\gamma j_2},
\end{split}
\end{equation}
for any $v\in Z_\kappa^{\sigma;\mu,\nu}$. Using \eqref{cle40} and \eqref{cle50.5}--\eqref{cle51}, we estimate, assuming $j_1\leq j_2$,
\begin{equation*}
\begin{split}
2^{4m+2\beta m}\kappa&\sum_{v\in Z_\kappa^{\sigma;\mu,\nu}}\big[\|A_{v}(E_s^\mu g_1^v,E_s^\nu g_2^v)\|^2_{L^2}+\|A_{v}(E_s^\mu h^v_1,E_s^\nu g_2^v)\|^2_{L^2}\big]\\
&\lesssim 2^{4m+2\beta m}\kappa\sum_{v\in Z_\kappa^{\sigma;\mu,\nu}}\|g_2^{v}\|^2_{L^2}(\|E_s^\nu g_1^v\|_{L^\infty}^2+\|E_s^\nu h_1^{v}\|_{L^\infty}^2)\\
&\lesssim 2^{4m+2\beta m}\kappa\cdot 2^{-2j_2-2\beta j_2}\cdot [2^{-3m}2^{(1-2\beta)j_1}+\kappa^42^{-2\gamma j_1}]\\
&\lesssim 2^{3m}2^{(2\beta +\beta^2)m}2^{-(1+2\beta) j_2}\cdot 2^{-3m}2^{(1-2\beta)j_2}\\
&\lesssim 2^{-\beta^3m}.
\end{split}
\end{equation*}
Similarly, we estimate
\begin{equation*}
\begin{split}
2^{4m+2\beta m}\kappa&\sum_{v\in Z_\kappa^{\sigma;\mu,\nu}}\big[\|A_{v}(E_s^\mu g_1^v,E_s^\nu h_2^v)\|^2_{L^2}+\|A_{v}(E_s^\mu h^v_1,E_s^\nu h_2^v)\|^2_{L^2}\big]\\
&\lesssim 2^{4m+2\beta m}\kappa\sum_{v\in Z_\kappa^{\sigma;\mu,\nu}}\|E_s^\nu h_2^{v}\|^2_{L^\infty}(\|E_s^\nu g_1^v\|_{L^2}^2+\|E_s^\nu h_1^{v}\|_{L^2}^2)\\
&\lesssim 2^{4m+2\beta m}\kappa\cdot\kappa^42^{-2\gamma j_2} \cdot 2^{-2j_1+2\beta j_1}\\
&\lesssim 2^{-m/10}.
\end{split}
\end{equation*}
The desired estimate \eqref{cle39} follows from the last two bounds and the restriction \eqref{cle47}. This completes the proof of the lemma.
\end{proof}

\section{Proof of Proposition \ref{Norm}, III: Case B resonant interactions}\label{normproof3}

In this section we consider type B interactions, see Proposition \ref{PropABC}, and prove the following proposition:

\begin{proposition}\label{reduced5}
Assume that $(k, j), (k_1, j_1), (k_2, j_2) \in\mathcal{J}$ , $m \in [1, L ] \cap\mathbb{Z}$,
\begin{equation}\label{szn1}
\Phi^{\sigma;\mu,\nu}\in\in\mathcal{T}_B=\{\Phi^{e;i+,e+},\Phi^{e;i-,e+},\Phi^{b;i+,b+},\Phi^{b;i-,b+}\},
\end{equation}
and
\begin{equation}\label{szn2}
\begin{split}
&-9m/10\le k_1,k_2\le j/N_0^\prime,\quad\max(j_1,j_2)\le (1-\beta/10)m,\quad\beta m/2+N_0^\prime k_++D^2\le j\le m+D,\\
&k_1\leq -D/3,\qquad k\geq -D/4,\qquad |k-k_2|\leq 10.
\end{split}
\end{equation}
Then there is $\kappa\in(0,1]$, $\kappa\geq\max\big(2^{(\beta^2m-m)/2}2^{-k_1/2},2^{\beta^2m-m}2^{\max(j_1,j_2)}\big)$, such that
\begin{equation}\label{szn3}
(1+2^k)\big\|\widetilde{\varphi}^{(k)}_j\cdot P_kR_{m,\kappa}^{\sigma;\mu,\nu}(f_{k_1,j_1}^\mu,f_{k_2,j_2}^\nu)\big\|_{B^1_{k,j}}\lesssim 2^{-2\beta^4m}.
\end{equation}
\end{proposition}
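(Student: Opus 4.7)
\medskip

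\noindent\textbf{Proof plan for Proposition \ref{reduced5}.} The strategy is to mirror the two-step decomposition of Lemmas~\ref{BigBound7A} and~\ref{BigBound8A} from the Case~A analysis, while tracking carefully how each estimate deteriorates as the ion input frequency $2^{k_1}$ shrinks. A simplifying feature is that we only need to establish the stronger $B^1_{k,j}$ bound (no ``weak atom'' $B^2_{k,j}$ decomposition of the output is required in Case~B), so the main obstruction in Case~A, i.e.\ the concentration of the output on a small smooth sphere, does not appear here. The geometric picture is that the resonant set $\{|\Phi^{\sigma;\mu,\nu}|\leq 2^{-D^2-k_+},\,|\Xi^{\mu,\nu}|\leq\kappa\}$ is concentrated in a thin tube of width $\kappa$ around a deformed sphere $\{\eta=p^{\sigma;\mu,\nu}_B(\xi)\}$ with $|\xi-p^{\sigma;\mu,\nu}_B(\xi)|\approx 2^{k_1}$. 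The promised Lemma~\ref{desc2} should provide this parameterization together with quantitative derivative bounds on $p^{\sigma;\mu,\nu}_B$ and weak ellipticity of the restricted phase
\[
\Psi^{\sigma;\mu,\nu}_B(s):=\Phi^{\sigma;\mu,\nu}(s\omega,p^{\sigma;\mu,\nu}_B(s\omega)),\quad \omega\in\mathbb{S}^2,
\]
in the sense that $|\partial_s\Psi^{\sigma;\mu,\nu}_B|$ is bounded below by a controlled (possibly $k_1$-dependent) positive quantity on the relevant interval. This rests on the transversality $\lambda_i'(r)<\lambda_e'(|\eta|),\lambda_b'(|\eta|)$ for $|\eta|\approx 1$ and $r=2^{k_1}$ small, guaranteed by Lemma~\ref{tech99} and the hypothesis $C_b\geq 6T$.

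With this geometric input, one proceeds by splitting on the size of $\max(j_1,j_2)$. In the first regime $\max(j_1,j_2)\leq (m-\beta^2m)/2$, set $\kappa:=2^{(\beta^2m-m)/2}\,2^{-k_1/2}$. The extra factor $2^{-k_1/2}$ compensates for the fact that $|\partial^2_{\eta\eta}\Phi^{\sigma;\mu,\nu}|$ contains a term of size $2^{-k_1}$ coming from $\lambda_i''(|\xi-\eta|)$ at small argument, so that the true width of the near-resonance in $\eta$ is $\kappa$ rather than the naive $\kappa\cdot 2^{k_1/2}$. On the set where $2^m|\Psi^{\sigma;\mu,\nu}_B(|\xi|)|\in[2^l,2^{l+1}]$ one integrates by parts in $s$ exactly as in \eqref{cle7} to gain a factor $2^{-l}$, and the measure of this level set in $\xi$ is controlled via weak ellipticity. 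The loss $2^{-k_1/2}$ from $\kappa$ is absorbed because $2^{k_1}$ only enters polynomially and $k_1\geq -9m/10$ by \eqref{szn2}. In the second regime $\max(j_1,j_2)\geq (m-\beta^2m)/2$, set $\kappa:=2^{\beta^2m}2^{\max(j_1,j_2)-m}$ and run the orthogonality argument of Lemma~\ref{BigBound8A}: partition $\xi$ into boxes of size $\kappa$ and $s$ into intervals of length $\kappa 2^m$, localize each input to a neighborhood of radius $\kappa$ around $\xi-p^{\sigma;\mu,\nu}_B(\xi)$ respectively $p^{\sigma;\mu,\nu}_B(\xi)$, apply the $L^p\times L^q\to L^2$ bilinear estimate \eqref{cle40} for the localized operator (now with a constant that degrades by a controlled power of $2^{-k_1}$), and sum via $L^2$ orthogonality as in \eqref{cle36.5}--\eqref{cle50.5}. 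The refined dispersive bounds (in particular decay estimates such as those alluded to as Lemma~A.5) applied to the ion factor at small frequency keep the $k_1$-loss under control.

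The principal obstacle is the quantitative version of Lemma~\ref{desc2}. One must construct $p^{\sigma;\mu,\nu}_B(\xi)$ with uniform symbol-type derivative bounds in both $\xi$ and in the parameter $k_1$, and simultaneously prove a lower bound $|\partial_s\Psi^{\sigma;\mu,\nu}_B(s)|\gtrsim 2^{ck_1}$ for some small $c\geq 0$ on each level set $\{|\Psi^{\sigma;\mu,\nu}_B|\leq 2^{-110D}\}$. This requires a Taylor expansion of $\Lambda_i$ around the origin using $\Lambda_i(0)=0$ and $\lambda_i'(0)<\min(\lambda_e'(|\eta|),\lambda_b'(|\eta|))$, together with an implicit function argument analogous to the construction of $r^{\sigma;\mu,\nu}$ in Section~\ref{normproof2}, but with all bounds re-derived keeping $k_1$ as a small parameter. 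Once these quantitative estimates are established, the remainder of the proof is an explicit power-counting exercise in the parameters $k_1, j_1, j_2, m, j$, verifying that both subcases beat the required $2^{-2\beta^4m}$ with room to spare given the restrictions \eqref{szn2}.
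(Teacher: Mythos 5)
Your overall strategy is on the right track: split on the size of $\max(j_1,j_2)$, use the parameterization of the resonant set from Lemma~\ref{desc2}, integrate by parts in $s$ in the small-$\max(j_1,j_2)$ regime, and run an orthogonality argument in the large-$\max(j_1,j_2)$ regime, all the while tracking the $k_1$-dependent degeneracy. This is indeed what the paper does. However, there is a concrete gap in where you place the threshold between the two regimes.

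You split at $\max(j_1,j_2)=(m-\beta^2m)/2$, taking $\kappa=2^{(\beta^2m-m)/2}2^{-k_1/2}$ below and $\kappa=2^{\beta^2m}2^{\max(j_1,j_2)-m}$ above. But in Proposition~\ref{reduced3} (and, operationally, in condition \eqref{kappa} of Lemma~\ref{NewBigBound1}), $\kappa$ must satisfy $\kappa\geq 2^{\beta^2m-m}2^{\max(j_1,j_2)}$: this is what allows you to discard the nonresonant piece $N^{2;\sigma;\mu,\nu}_{m,\kappa}$ by stationary phase. With your first-regime choice $\kappa=2^{(\beta^2m-m)/2}2^{-k_1/2}$, that inequality holds exactly when $\max(j_1,j_2)\leq (m-\beta^2m)/2-k_1/2$. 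Since $k_1\leq -D/3<0$, the interval $(m-\beta^2m)/2-k_1/2<\max(j_1,j_2)\leq (m-\beta^2m)/2$ is nonempty, and on it your chosen $\kappa$ violates the required lower bound, so you can no longer remove the nonresonant contribution before running the integration-by-parts argument. The paper's Lemma~\ref{lemmaB2} and Lemma~\ref{lemmaB3} split precisely at $\max(j_1,j_2)=(m-\beta^2m)/2-k_1/2$, and this $-k_1/2$ shift is essential, not cosmetic.

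A second, related underestimate: in the high-$\max(j_1,j_2)$ regime the orthogonality argument is not a straightforward reprise of Lemma~\ref{BigBound8A}. The resonant set is now an anisotropic slab (radial width $\approx\kappa+2^{k_1}$ in $\xi$, thickness $\kappa$ in the radial direction of $\eta$, but only $\approx 2^{k_1}\kappa$ in the angular directions, per Lemma~\ref{desc2}). Because of this anisotropy the paper's Lemma~\ref{lemmaB3} has to distinguish three subcases depending on whether $\kappa\gtrless 2^{k_1}$ and on the relative size of $\kappa$ and $2^{-m(1/3+\beta/2)}$, using the full $L^1$-ball bound in the $B^2$ definition (the supremum over $R\in[2^{-j},2^k]$, $\xi_0$) rather than the coarser $\|\widehat{h}\|_{L^1}$ bound used in \eqref{cle51}. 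Also, when $\kappa\lesssim 2^{k_1}$, the phase $\Phi^{\sigma;\mu,\nu}$ itself is bounded below by $\approx 2^{2k_1}$ on the resonant set (weak ellipticity, \eqref{szn10.5}), so integration by parts in $s$ is still available and in fact preferred; your proposal describes weak ellipticity as belonging only to the low-$\max(j_1,j_2)$ regime. These structural features are what force the three-case split in the paper, and your ``run the orthogonality argument of Lemma~\ref{BigBound8A}'' step would need substantial additional work to close in the regime $\kappa\lesssim 2^{k_1}$.
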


The rest of the section is concerned with the proof of Proposition \ref{reduced5}. We have assumed, without loss of generality, that $k_1\leq k_2$. As in Case A, the proof of the proposition relies on a careful analysis of resonant interactions. For this analysis we need to understand well the geometry of almost resonant sets. 

For $\sigma\in\{e,b\}$ let $R_\sigma$ denote the unique solutions in $(0,\infty)$ of the equations
\begin{equation}\label{szn200}
\lambda'_\sigma(R_\sigma)=\lambda'_i(0)=\sqrt{(1+T)/(1+\varepsilon)}.
\end{equation}
The numbers $R_\sigma$ are well-defined, in view of Lemma \ref{tech99}, and $R_\sigma\approx_{\varepsilon,C_b}1$. 

For $(\mu,\nu)\in\{(i+,e+),(i-,e+),(i+,b+),(i-,b+)\}$, $\mu=(i\iota_1)$, $\nu=(\sigma_2+)$, $\sigma_2\in\{e,b\}$, we define the functions $r^{\mu,\nu}:(R_{\sigma_2}-2^{-D/5},R_{\sigma_2}+2^{-D/5})\to(R_{\sigma_2}-2^{-D/10},R_{\sigma_2}+2^{-D/10})$ as the unique solutions of the equations
\begin{equation}\label{kx12}
\lambda'_{\sigma_2}(r^{\mu,\nu}(s))-\lambda'_i(s-r^{\mu,\nu}(s))=0.
\end{equation}
Notice that these functions are well defined for $s\in(R_{\sigma_2}-2^{-D/5},R_{\sigma_2}+2^{-D/5})$, since the functions $r\to\lambda'_{\sigma_2}(r)-\lambda'_i(s-r)$ are strictly increasing and vanish in the appropriate ranges, as a consequence of Lemma \ref{tech99} (i) and the observation that $\lambda''_i(0)=0$. Moreover,
\begin{equation}\label{kx12.5}
|(\partial_sr^{\mu,\nu})(s)|\approx_{C_b,\varepsilon}|s-r^{\mu,\nu}(s)|\qquad\text{ for any }s\in (R_{\sigma_2}-2^{-D/5},R_{\sigma_2}+2^{-D/5}).
\end{equation}

\begin{lemma}\label{desc2}
Assume that $\mu=(i\iota_1)$, $\iota_1\in\{+,-\}$, $\nu=(\sigma_2+)$, $\sigma_2\in\{e,b\}$, $k,k_1,k_2\in\mathbb{Z}$, $k_1\leq -D/3$, $k\geq -D/4$, $|k-k_2|\leq 10$, and $\delta\in[0,2^{-10D}]$. Assume that there is a point $(\xi,\eta)\in\mathbb{R}^3\times\mathbb{R}^3$ satisfying 
\begin{equation}\label{kx10}
|\xi|\in[2^{k-4},2^{k+4}],\quad|\eta|\in[2^{k_2-4},2^{k_2+4}],\quad|\xi-\eta|\in[2^{k_1-4},2^{k_1+4}],\quad|\Xi^{\mu,\nu}(\xi,\eta)|\leq\delta.
\end{equation}

(i) Then
\begin{equation}\label{szn201}
k,k_2\in[-D/100,D/100],\quad\text{ and }\quad\big||\xi|-R_{\sigma_2}\big|+\big||\eta|-R_{\sigma_2}\big|\lesssim_{C_b,\varepsilon} 2^{k_1}+\delta.
\end{equation}
More precisely, if $\xi=se$ for some $s>0$ and some unit vector $e\in\mathbb{S}^2$ then
\begin{equation}\label{szn202}
\begin{split}
&|s-R_{\sigma_2}|\lesssim_{C_b,\varepsilon} 2^{k_1}+\delta,\\
&\eta=re+\eta',\qquad |r-r^{\mu,\nu}(s)|\lesssim_{C_b,\varepsilon}\delta,\qquad |r-s|\approx_{C_b,\varepsilon}2^{k_1},\qquad\iota_1|s-r|=s-r,\\
&\eta'\cdot e=0,\qquad|\eta'|\lesssim_{C_b,\varepsilon}2^{k_1}\delta.
\end{split}
\end{equation}

(ii) If, in addition, $\delta\leq 2^{k_1-D/10}$ then
\begin{equation}\label{szn203}
\begin{split}
&\text{ if }\quad\iota_1=+\quad\text{ then }\quad s-R_{\sigma_2}\approx_{C_b,\varepsilon}2^{k_1}\text{ and }R_{\sigma_2}-r^{\mu,\nu}(s)\approx_{C_b,\varepsilon}2^{2k_1},\\
&\text{ if }\quad\iota_1=-\quad\text{ then }\quad R_{\sigma_2}-s\approx_{C_b,\varepsilon}2^{k_1}\text{ and }R_{\sigma_2}-r^{\mu,\nu}(s)\approx_{C_b,\varepsilon}2^{2k_1}.
\end{split}
\end{equation}
\end{lemma}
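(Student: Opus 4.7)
\textbf{Proof Plan for Lemma \ref{desc2}.}

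The starting point is the explicit formula
\[
\Xi^{\mu,\nu}(\xi,\eta)=-\iota_1\lambda_i^\prime(|\eta-\xi|)\frac{\eta-\xi}{|\eta-\xi|}-\lambda_{\sigma_2}^\prime(|\eta|)\frac{\eta}{|\eta|},
\]
which expresses $\Xi^{\mu,\nu}$ as the sum of two radial vectors. Using that $\lambda_i$ is a smooth even function of its scalar argument with $\lambda_i^\prime(0)=\sqrt{(1+T)/(1+\varepsilon)}$ and $\lambda_i^{\prime\prime}(0)=0$ (so $\lambda_i^\prime(r)=\lambda_i^\prime(0)+O(r^2)$, a consequence of the definition \eqref{operators1} and direct Taylor expansion), the smallness $|\eta-\xi|\sim 2^{k_1}\leq 2^{-D/3}$ forces $|\lambda_i^\prime(|\eta-\xi|)-\lambda_i^\prime(0)|\lesssim 2^{2k_1}$. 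First I would take norms: the condition $|\Xi^{\mu,\nu}|\leq\delta$ combined with the triangle inequality yields $|\lambda_{\sigma_2}^\prime(|\eta|)-\lambda_i^\prime(|\eta-\xi|)|\lesssim\delta$, hence $|\lambda_{\sigma_2}^\prime(|\eta|)-\lambda_i^\prime(0)|\lesssim\delta+2^{2k_1}$. Since $\lambda_{\sigma_2}^{\prime\prime}(R_{\sigma_2})\approx_{C_b,\varepsilon}1$ (from Lemma \ref{tech99}, $\lambda_{\sigma_2}^\prime$ is strictly increasing and bijective near $R_{\sigma_2}$), inverting gives $\big||\eta|-R_{\sigma_2}\big|\lesssim\delta+2^{2k_1}$. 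Combined with $\big||\xi|-|\eta|\big|\leq|\xi-\eta|\sim 2^{k_1}$ this gives the first part of \eqref{szn201}-\eqref{szn202} and forces $k,k_2\in[-D/100,D/100]$.

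Next I would extract the geometric information by setting $\xi=se$ with $e=\xi/|\xi|$ and writing $\eta=re+\eta^\prime$, $\eta^\prime\perp e$. Projecting $\Xi^{\mu,\nu}$ onto $e^\perp$: the contribution from $\lambda_{\sigma_2}^\prime(|\eta|)\eta/|\eta|$ is $O(|\eta^\prime|)$, while the contribution from the first term is $-\iota_1\lambda_i^\prime(|\eta-\xi|)\eta^\prime/|\eta-\xi|$, which has magnitude $\approx 2^{-k_1}|\eta^\prime|$ since $\lambda_i^\prime\approx 1$. Balancing against $\delta$ yields $|\eta^\prime|\lesssim 2^{k_1}\delta$, which is the last line of \eqref{szn202}. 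Projecting onto $e$, the sign of $r-s$ is determined by the requirement that the two (now essentially collinear) vectors cancel: one gets $-\iota_1\lambda_i^\prime(|s-r|)\mathrm{sign}(r-s)=\lambda_{\sigma_2}^\prime(r)+O(\delta)$, and since both $\lambda_i^\prime$ and $\lambda_{\sigma_2}^\prime$ are positive one reads off $\iota_1\cdot\mathrm{sign}(s-r)=+1$, i.e.\ $\iota_1|s-r|=s-r$. The equation $\lambda_{\sigma_2}^\prime(r)=\lambda_i^\prime(|s-r|)+O(\delta)$ and the strict monotonicity of the map $r\mapsto\lambda_{\sigma_2}^\prime(r)-\lambda_i^\prime(|s-r|)$ (defining $r^{\mu,\nu}(s)$ in \eqref{kx12}) then yield $|r-r^{\mu,\nu}(s)|\lesssim\delta$, which completes part (i).

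For part (ii), the assumption $\delta\leq 2^{k_1-D/10}$ makes the $\delta$-error in $|r-r^{\mu,\nu}(s)|$ much smaller than the two natural scales $2^{k_1}$ and $2^{2k_1}$ that appear. Combining $|s-r|\approx 2^{k_1}$ (which follows from $|\eta-\xi|\sim 2^{k_1}$ once $|\eta^\prime|\ll 2^{k_1}$) with the Taylor expansions
\[
\lambda_{\sigma_2}^\prime(r^{\mu,\nu}(s))-\lambda_{\sigma_2}^\prime(R_{\sigma_2})\approx\lambda_{\sigma_2}^{\prime\prime}(R_{\sigma_2})(r^{\mu,\nu}(s)-R_{\sigma_2}),\qquad \lambda_i^\prime(\rho)-\lambda_i^\prime(0)\approx\tfrac{1}{2}\lambda_i^{\prime\prime\prime}(0)\rho^2,
\]
and the sign information $\lambda_i^\prime(\rho)<\lambda_i^\prime(0)$ for $\rho>0$ small (from Lemma \ref{tech99}), I would conclude $R_{\sigma_2}-r^{\mu,\nu}(s)\approx_{C_b,\varepsilon}(s-r^{\mu,\nu}(s))^2\approx 2^{2k_1}$ with the correct sign ($r^{\mu,\nu}(s)<R_{\sigma_2}$). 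Then $s-R_{\sigma_2}=(s-r^{\mu,\nu}(s))+(r^{\mu,\nu}(s)-R_{\sigma_2})$ has leading term of order $2^{k_1}$ with sign $\iota_1$, since $2^{2k_1}\ll 2^{k_1}$ given $k_1\leq -D/3$.

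The main obstacle is the perpendicular-component estimate $|\eta^\prime|\lesssim 2^{k_1}\delta$, which relies on the coefficient $\lambda_i^\prime(|\eta-\xi|)/|\eta-\xi|\approx 2^{-k_1}$ dominating $\lambda_{\sigma_2}^\prime(|\eta|)/|\eta|\approx 1$; one must carefully track the coefficients and use that the $O(|\eta^\prime|^2/|\eta|)$ correction to projection onto $e$ is absorbable. Once this (and the cubic order of vanishing of $\lambda_i(r)-\lambda_i^\prime(0)r$ at $r=0$) is in hand, the remaining steps are routine applications of the implicit function theorem and Taylor expansion.
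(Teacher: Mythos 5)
Your proposal follows essentially the same argument as the paper's proof. Both start from the explicit formula for $\Xi^{\mu,\nu}$ as a sum of two radial vectors, use the smallness of $|\eta-\xi|$ and the flatness of $\lambda_i'$ near $0$ (from $\lambda_i''(0)=0$) to force $|\lambda'_{\sigma_2}(|\eta|)-\lambda'_i(0)|\lesssim 2^{2k_1}+\delta$ and hence $\big||\eta|-R_{\sigma_2}\big|\lesssim 2^{2k_1}+\delta$; both then project $\Xi^{\mu,\nu}$ onto $e^\perp$ and exploit the dominant coefficient $\lambda'_i(|\eta-\xi|)/|\eta-\xi|\approx 2^{-k_1}$ to get $|\eta'|\lesssim 2^{k_1}\delta$; both read off the sign identity $\iota_1|s-r|=s-r$ from the sign condition on the $e$-component; and both use that $\partial_r[\lambda'_{\sigma_2}(r)-\lambda'_i(s-r)]\approx_{C_b,\varepsilon}1$ (i.e.\ $\lambda''_{\sigma_2}\approx 1$ dominating $\lambda''_i(s-r)=O(2^{k_1})$) to obtain $|r-r^{\mu,\nu}(s)|\lesssim\delta$. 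Part (ii) is likewise identical: $|s-r^{\mu,\nu}(s)|\approx 2^{k_1}$, the quadratic Taylor expansion $\lambda'_i(0)-\lambda'_i(\rho)\approx\rho^2$ gives $R_{\sigma_2}-r^{\mu,\nu}(s)\approx 2^{2k_1}>0$, and then $s-R_{\sigma_2}=\iota_1|s-r^{\mu,\nu}(s)|+O(2^{2k_1}+\delta)$ has the sign of $\iota_1$ and magnitude $\approx 2^{k_1}$ because $2^{2k_1}+\delta\ll 2^{k_1}$. The one point worth spelling out more explicitly, which you gesture at but do not state quantitatively, is that ``strict monotonicity'' must be upgraded to a uniform lower bound $\partial_r[\lambda'_{\sigma_2}(r)-\lambda'_i(s-r)]\gtrsim 1$ for the conclusion $|r-r^{\mu,\nu}(s)|\lesssim\delta$; this is supplied by $\lambda''_{\sigma_2}(R_{\sigma_2})\approx 1$ together with $|\lambda''_i(s-r)|\lesssim 2^{k_1}$ from Lemma \ref{tech99}.
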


\begin{proof}[Proof of Lemma \ref{desc2}] (i) We start from the formula
\begin{equation}\label{szn205}
\Xi^{\mu,\nu}(\xi,\eta)=-\iota_1\lambda'_i(|\eta-\xi|)\frac{\eta-\xi}{|\eta-\xi|}-\lambda'_{\sigma_2}(|\eta|)\frac{\eta}{|\eta|}.
\end{equation}
Since $\big|\lambda'_i(|\eta-\xi|)-\lambda'_i(0)\big|\lesssim_{\varepsilon,C_b}2^{2k_1}$, the condition $|\Xi^{\mu,\nu}(\xi,\eta)|\leq\delta$ and the estimates in Lemma \ref{tech99} (i) show that $\big||\eta|-R_{\sigma_2}\big|\lesssim_{\varepsilon,C_b}2^{2k_1}+\delta$. The desired bounds in \eqref{szn201} follow. 

We prove now the claims in \eqref{szn202}. Letting $\xi=se$ for some $s>0,e\in\mathbb{S}^2$ and $\eta=re+\eta'$, $r\in\mathbb{R}$, $\eta'\cdot e=0$, the condition $|\Xi^{\mu,\nu}(\xi,\eta)|\leq\delta$ and the formula \eqref{szn205} show that
\begin{equation}\label{szn206}
\begin{split}
&\Big|-\iota_1\lambda'_i(\sqrt{(r-s)^2+|\eta'|^2})\frac{r-s}{\sqrt{(r-s)^2+|\eta'|^2}}-\lambda'_{\sigma_2}(\sqrt{r^2+|\eta'|^2})\frac{r}{\sqrt{r^2+|\eta'|^2}}\Big|\leq\delta,\\
&\Big|-\iota_1\lambda'_i(\sqrt{(r-s)^2+|\eta'|^2})\frac{\eta'}{\sqrt{(r-s)^2+|\eta'|^2}}-\lambda'_{\sigma_2}(\sqrt{r^2+|\eta'|^2})\frac{\eta'}{\sqrt{r^2+|\eta'|^2}}\Big|\leq\delta.
\end{split}
\end{equation}
Recall that $\lambda'_i(0)>0$. Recalling also the assumptions \eqref{kx10} and the bounds \eqref{szn201}, the second equation in \eqref{szn206} shows that $|\eta'|\lesssim_{C_b,\varepsilon}2^{k_1}\delta$ as desired. In addition, $|s-r|\approx_{C_b,\varepsilon} 2^{k_1}$, therefore
\begin{equation*}
\big|s-R_{\sigma_2}\big|+\big|r-R_{\sigma_2}\big|\lesssim_{C_b,\varepsilon} 2^{k_1}+\delta.
\end{equation*}
The first equation in \eqref{szn206} now gives
\begin{equation}\label{szn207}
\Big|\iota_1\lambda'_i(|r-s|)\frac{r-s}{|r-s|}+\lambda'_{\sigma_2}(r)\Big|\leq 2\delta.
\end{equation}
Since $\lambda'_{\sigma_2}(r)\approx_{C_b,\varepsilon}1$ it follows that $\iota_1(r-s)=-|r-s|$ and, therefore,
\begin{equation*}
\Big|-\lambda'_i(s-r)+\lambda'_{\sigma_2}(r)\Big|\leq 2\delta.
\end{equation*}
Finally, we notice that the derivative of the map $r\to -\lambda'_i(s-r)+\lambda'_{\sigma_2}(r)$ is $\approx_{C_b,\varepsilon}1$ is the appropriate ranges of $r,s$, therefore $|r-r^{\mu,\nu}(s)|\lesssim_{C_b,\varepsilon}\delta$. This completes the proof of \eqref{szn206}.

(ii) If $\delta\leq 2^{k_1-D/10}$ then, using \eqref{szn202}, $|s-r^{\mu,\nu}(s)|\approx_{C_b,\varepsilon}2^{k_1}$. Therefore, using Lemma \ref{tech99} (i),  $\lambda'_i(0)-\lambda'_i(s-r^{\mu,\nu}(s))\approx_{C_b,\varepsilon}2^{2k_1}$. Using the definition \eqref{kx12} it follows that $R_{\sigma_2}-r^{\mu,\nu}(s)\approx_{C_b,\varepsilon}2^{2k_1}$. Therefore $|r-R_{\sigma_2}|\lesssim_{C_b,\varepsilon}2^{2k_1}+\delta$. The remaining bounds in \eqref{szn203} now follow from the identity $\iota_1|s-r|=s-r$ (see \eqref{szn202}) and the assumption $\delta+2^{2k_1}\leq 2^{k_1-D/10}$.
\end{proof}  

\subsection{Proof of Proposition \ref{reduced5}} We further divide the proof into several lemmas.

\begin{lemma}\label{lemmaB1}
The bound \eqref{szn3} holds if \eqref{szn2} holds and, in addition,
\begin{equation}\label{szn4}
\Phi^{\sigma;\mu,\nu}\in\{\Phi^{e;e+,i+},\Phi^{e;e+,i-},\Phi^{b;b+,i+},\Phi^{b;b+,i-}\}\text{ or }k\notin[-D/100,D/100]\text{ or }k_2\notin[-D/100,D/100],
\end{equation}
with
\begin{equation}\label{szn5}
\kappa:=2^{-10D}.
\end{equation}
\end{lemma}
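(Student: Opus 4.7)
The plan is to show that, under the hypotheses of Lemma \ref{lemmaB1} and with the choice $\kappa = 2^{-10D}$, the symbol $\chi_R^{\sigma;\mu,\nu}$ vanishes identically on the frequency support determined by $(k,k_1,k_2)$, so that $R_{m,\kappa}^{\sigma;\mu,\nu}(f^\mu_{k_1,j_1},f^\nu_{k_2,j_2})\equiv 0$ and \eqref{szn3} is trivial. Recalling that
\[
\chi_R^{\sigma;\mu,\nu}(\xi,\eta)=\varphi\big(2^{D^2+\max(0,k_1,k_2)}\Phi^{\sigma;\mu,\nu}(\xi,\eta)\big)\,\varphi(|\Xi^{\mu,\nu}(\xi,\eta)|/\kappa),
\]
it is enough to produce a lower bound $|\Xi^{\mu,\nu}(\xi,\eta)|\gtrsim_{C_b,T,\varepsilon}1$ on the support of $f_{k_1,j_1}^\mu\otimes f_{k_2,j_2}^\nu$, which for $D=D(C_b,T,\varepsilon)$ large enough greatly exceeds $\kappa$.

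First I would dispose of the ``wrong sign'' phases $\Phi^{\sigma;\mu,\nu}\in\{\Phi^{e;e+,i+},\Phi^{e;e+,i-},\Phi^{b;b+,i+},\Phi^{b;b+,i-}\}$ by direct computation. Here $\mu=(\sigma_\mu+)$ with $\sigma_\mu\in\{e,b\}$, so the slot associated with the small frequency $|\xi-\eta|\in[2^{k_1-4},2^{k_1+4}]$, $k_1\le -D/3$, is carried by $\lambda_{\sigma_\mu}$. Since $\lambda_e$ and $\lambda_b$ are smooth radial functions of $|\xi|^2$ (with symbol estimates as in Lemma \ref{tech1}), one has $\lambda_{\sigma_\mu}'(0)=0$, hence $|\nabla\lambda_{\sigma_\mu}(\xi-\eta)|\lesssim_{C_b,T,\varepsilon}2^{k_1}$. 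On the other hand, by Lemma \ref{tech99} the derivative $\lambda_i'(r)$ is bounded below by a positive constant $c(C_b,T,\varepsilon)$ uniformly in $r\in[0,\infty)$, so
\[
|\Xi^{\mu,\nu}(\xi,\eta)|=\big|{-}\lambda_{\sigma_\mu}'(|\eta-\xi|)(\eta-\xi)/|\eta-\xi|-\iota_2\lambda_i'(|\eta|)\eta/|\eta|\big|\gtrsim_{C_b,T,\varepsilon}1,
\]
which gives the desired lower bound.

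Second, for the genuine Case B phases $\Phi^{\sigma;\mu,\nu}\in\mathcal{T}_B$ when $k\notin[-D/100,D/100]$ or $k_2\notin[-D/100,D/100]$, I would argue by contradiction using Lemma \ref{desc2}(i). If there were a point $(\xi,\eta)$ in the frequency support with $|\Xi^{\mu,\nu}(\xi,\eta)|\le\kappa=2^{-10D}$, then the hypotheses \eqref{kx10} of Lemma \ref{desc2}(i) would be met with $\delta=2^{-10D}\ll 2^{-100D}$, and the conclusion \eqref{szn201} would force $k,k_2\in[-D/100,D/100]$, contrary to assumption. Hence no such point exists, and again $\chi_R^{\sigma;\mu,\nu}\equiv 0$ on the support.

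The two cases exhaust the hypotheses of the lemma, so in all of them the bilinear form is identically zero and \eqref{szn3} holds trivially. No serious obstacle is anticipated; the only point to verify carefully is the uniform positivity of $\lambda_i'$ on $[0,\infty)$, which follows from Lemma \ref{tech99} together with the asymptotics $\lambda_i'(r)\to 1$ as $r\to\infty$ extracted from \eqref{operators1}. The whole content of the lemma is that in the non-resonant sub-cases of Case B the $\kappa$-window can be chosen large (independent of $m$) without capturing any stationary point of $\Xi^{\mu,\nu}$ in the relevant frequency annulus.
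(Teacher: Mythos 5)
Your proof is correct and follows essentially the same approach as the paper: both arguments show that $P_kR_{m,\kappa}^{\sigma;\mu,\nu}(f^\mu_{k_1,j_1},f^\nu_{k_2,j_2})\equiv 0$ by establishing a lower bound $|\Xi^{\mu,\nu}|\gtrsim_{C_b,\varepsilon}1$ (via Lemma \ref{tech99}(i) for the ``swapped'' phases, and via Lemma \ref{desc2} for the off-range $k,k_2$). One minor slip: you write ``$\delta=2^{-10D}\ll 2^{-100D}$,'' which is backwards and in any case references the wrong threshold; the relevant hypothesis in Lemma \ref{desc2}(i) is $\delta\in[0,2^{-10D}]$, so what you need is $|\Xi|\leq 8\kappa/5\lesssim 2^{-10D}$, which holds with a constant that the lemma's estimates easily absorb.
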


\begin{proof}[Proof of Lemma \ref{lemmaB1}] In any of these cases we have $P_kR_{m,\kappa}^{\sigma;\mu,\nu}(f_{k_1,j_1}^\mu,f_{k_2,j_2}^\nu)=0$, using either Lemma \ref{tech99} (i) (which shows that $\lambda'_i(r)\approx_{C_b,\varepsilon}1$, $r\in[0,\infty)$, and $\lambda'_e(r)\approx\lambda'_b(r)\approx_{C_b,\varepsilon}r$, $r\in[0,1]$) or Lemma \ref{desc2}.
\end{proof}

\begin{lemma}\label{lemmaB2}
The bound \eqref{szn3} holds if \eqref{szn2} holds and, in addition,
\begin{equation}\label{szn6}
\begin{split}
&\Phi^{\sigma;\mu,\nu}\in\{\Phi^{e;i+,e+},\Phi^{e;i-,e+},\Phi^{b;i+,b+},\Phi^{b;i-,b+}\},\\
&k,k_2\in[-D/100,D/100],\qquad \max(j_1,j_2)\le (m-\beta^2m)/2-k_1/2,
\end{split}
\end{equation}
with
\begin{equation}\label{szn7}
\kappa:=2^{(\beta^2m-m)/2-k_1/2}.
\end{equation}
\end{lemma}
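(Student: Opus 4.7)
The proof follows the same template as Lemma \ref{BigBound7A}, replacing the Case A geometry of Lemma \ref{LemP} by the Case B geometry of Lemma \ref{desc2}. I first introduce the parametrization of the resonant manifold, $p^{\sigma;\mu,\nu}(\xi) := r^{\mu,\nu}(|\xi|)\,\xi/|\xi|$, and the reduced phase
\[
\Psi^{\sigma;\mu,\nu}(s) := \Phi^{\sigma;\mu,\nu}(se, p^{\sigma;\mu,\nu}(se)) = \lambda_{\sigma_2}(s) - \iota_1\lambda_i(s-r^{\mu,\nu}(s)) - \lambda_{\sigma_2}(r^{\mu,\nu}(s)),
\]
where $\lambda_i$ is extended as an even function on $\mathbb{R}$. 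Differentiating and cancelling the $(r^{\mu,\nu})'(s)$ contributions via the defining relation \eqref{kx12} yields $\partial_s\Psi^{\sigma;\mu,\nu}(s) = \lambda'_{\sigma_2}(s) - \lambda'_{\sigma_2}(r^{\mu,\nu}(s))$, which by \eqref{szn202}--\eqref{szn203} together with the convexity estimates of Lemma \ref{tech99}(i) satisfies $|\partial_s\Psi^{\sigma;\mu,\nu}(s)| \approx_{C_b,\varepsilon} 2^{k_1}$ in the relevant range of $s$. This \emph{weak ellipticity} (in place of the $O(1)$ bound \eqref{kx20}) is the essential new difficulty of Case B.

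Set $G(\xi) := \mathcal{F}[P_kR_{m,\kappa}^{\sigma;\mu,\nu}(f^\mu_{k_1,j_1},f^\nu_{k_2,j_2})](\xi)$. Lemma \ref{desc2}(i) provides three geometric facts: (a) for each fixed $\xi$, the $\eta$-region $\{|\Xi^{\mu,\nu}(\xi,\cdot)|\leq\kappa\}$ has volume $\lesssim 2^{2k_1}\kappa^3$ (one radial direction of size $\kappa$, two transverse of size $2^{k_1}\kappa$); (b) the $\xi$-support of $G$ lies in the shell $\bigl||\xi|-R_{\sigma_2}\bigr|\lesssim 2^{k_1}+\kappa$; (c) on $\mathrm{supp}(\chi_R^{\sigma;\mu,\nu})$ one has $|\eta - p^{\sigma;\mu,\nu}(\xi)|\lesssim \kappa$, with the component transverse to $\xi$ bounded by $2^{k_1}\kappa$. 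Combining (a) with the pointwise estimates \eqref{nh9} (in particular $\|\widehat{f^\mu_{k_1,j_1}}\|_{L^\infty} \lesssim 2^{-(1/2-\beta+\alpha)k_1}$ for $k_1<0$) and the choice $\kappa = 2^{(\beta^2 m - m)/2 - k_1/2}$ yields the direct estimate
\[
\|\widehat G\|_{L^\infty} \lesssim 2^m \cdot 2^{2k_1}\kappa^3 \cdot 2^{-(1/2-\beta+\alpha)k_1} \lesssim 2^{(\beta/2)k_1 + 3\beta^2 m/2 - m/2},
\]
which, since $k_1 \leq -D/3$, already controls the $\|\widehat{\,\cdot\,}\|_{L^\infty}$ component of $\|\cdot\|_{B^1_{k,j}}$. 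Using (b) and Plancherel, $\|G\|_{L^2} \lesssim (2^{k_1}+\kappa)^{1/2}\|\widehat G\|_{L^\infty}$, which together with the factor $2^{(1+\beta)j}$ handles the $L^2$ component of $\|\cdot\|_{B^1_{k,j}}$ in the small-$j$ regime $j \lesssim m/2 - k_1/2 - C\beta m$.

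For the remaining range of large $j$, follow the dyadic decomposition of Lemma \ref{BigBound7A}: split $G = \sum_l G_l$ with $G_l$ supported where $2^m|\Psi^{\sigma;\mu,\nu}(|\xi|)|\in[2^{l-1},2^l]$, and integrate by parts in $s$ using the derivative bounds \eqref{ok50repeat} and \eqref{derv2repeat}. A Taylor expansion at $\eta = p^{\sigma;\mu,\nu}(\xi)$ (where $\Xi^{\mu,\nu}$ vanishes) combined with (c) gives the crucial comparison $|\Phi^{\sigma;\mu,\nu}-\Psi^{\sigma;\mu,\nu}(|\xi|)|\lesssim \kappa^2 = 2^{\beta^2 m - m - k_1}$ on $\mathrm{supp}(\chi_R^{\sigma;\mu,\nu})$, so the IBP identity is meaningful for $l \geq \beta^2 m - k_1 + D$. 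This produces $\|\widehat G_l\|_{L^\infty}\lesssim 2^{m-l}\cdot 2^{2k_1}\kappa^3 \cdot 2^{-(1/2-\beta+\alpha)k_1}$, and the weak ellipticity $|\partial_s\Psi^{\sigma;\mu,\nu}|\gtrsim 2^{k_1}$ confines the $\xi$-support of $G_l$ to a shell of radial width $\lesssim 2^{l-m-k_1}$; the low-$l$ regime $l \in [0, \beta^2 m - k_1 + D]$ (where IBP is unavailable) contributes one residual term of the same size as the direct bound above.

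The main obstacle is the delicate cancellation required for these $L^2$ estimates to close in the strong norm $B^1_{k,j}$ for the full range $j \leq m+D$: the enlarged $\eta$-volume factor $2^{2k_1}$, the enlarged $\xi$-shell width $2^{-k_1}$ from weak ellipticity, the improved $L^\infty$-bound on $\widehat f^\mu$ coming from $k_1 \leq -D/3$, and the dependence of $\kappa$ on $k_1$ must combine so that the net powers of $k_1$ vanish from the estimate, after which summation over $l \in [\beta^2 m - k_1 + D, m]$ leaves a bound of order $2^{-m/2+C\beta m} \ll 2^{-2\beta^4 m}$. The specific scaling $\kappa = 2^{(\beta^2 m - m)/2 - k_1/2}$ is the unique choice compatible with this cancellation, and is the exact analog of $\kappa = 2^{(\beta^2 m - m)/2}$ in Lemma \ref{BigBound7A} once the weak-ellipticity factor of $2^{-k_1/2}$ is absorbed.
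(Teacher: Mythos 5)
Your high-level picture is right — Case B shares the parametrization/resonant-set machinery of Case A, and the new feature is that the phase is not uniformly elliptic but degenerates at rate $2^{k_1}$ — but the proof as sketched has several genuine gaps.

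First, you never establish the reduction that makes the $j$-localization tractable. The paper's first step (analogous to \eqref{szn11.4}, \eqref{szn53}) is that on the support of $\chi_R^{\sigma;\mu,\nu}$ one has $|\nabla_\xi\Phi^{\sigma;\mu,\nu}(\xi,\eta)|\lesssim 2^{k_1}+\kappa$; integrating by parts in $\xi$ then handles all $j$ with $2^j\geq 2^{\beta^2 m}2^m(\kappa+2^{k_1})$ outright, so that the remaining task is to bound $2^{(1+\beta)m}(\kappa+2^{k_1})^{1+\beta}\|G\|_{L^2}$ rather than $2^{(1+\beta)j}\|\widetilde\varphi^{(k)}_j\cdot\mathcal{F}^{-1}G\|_{L^2}$ for $j$ up to $m+D$. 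Without this observation your direct estimate only covers a $j$-range of size roughly $m/2-k_1/2$, which is not the full $j\leq m+D$, and the gap is not closed by what follows.

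Second, your dyadic decomposition in level sets of $\Psi$ is based on the wrong ellipticity scale. The relevant fact (from the analogue of \eqref{szn10.5}) is that the \emph{phase itself} satisfies $\Phi^{\sigma;\mu,\nu}\approx 2^{2k_1}$ on the resonant set, not merely $|\partial_s\Psi|\approx 2^{k_1}$. Consequently the level sets $\{2^m|\Psi|\approx 2^l\}$ are concentrated at the single level $l\approx m+2k_1$, making the dyadic splitting degenerate: the integration-by-parts-in-$s$ gain is a fixed factor $\approx 2^{-m-2k_1}$. When $k_1$ is close to the lower end $-9m/10$, one has $m+2k_1<0$, so $2^{-m-2k_1}>1$ and IBP in $s$ loses; in that regime the direct estimate (no IBP) is what closes, provided the reduction above has already been invoked. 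When $k_1$ is closer to $-D/3$ the IBP gains, and the paper does exactly one integration by parts (no dyadic decomposition). Your claimed sum over $l\in[\beta^2 m-k_1+D,m]$ is not the range that actually occurs, and the claimed bound $2^{-m/2+C\beta m}$ after summing does not hold near the endpoint $k_1\approx -9m/10$ (the exponent becomes positive).

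Third, even with both of these fixed, there is a borderline regime $-m/3-4\beta m\leq k_1\leq -m/3+\beta m$ where neither the direct estimate nor the single IBP in $s$ closes with enough margin. The paper's argument here introduces a further decomposition in the radial $\eta$-variable (cutoffs in $(r-r^{\mu,\nu}(r'))/2^l\kappa$) combined with a non-stationary-phase estimate in $r$ — a decomposition of a different nature than level sets of $\Psi$. Your proposal does not address this, and the final paragraph asserting that "the net powers of $k_1$ vanish" is not a substitute for carrying out the three cases.

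Your $L^\infty$ estimate is essentially correct (the paper's sharper volume factor $2^{2k_1}\kappa^2\min(2^{k_1},\kappa)$ only improves on your $2^{2k_1}\kappa^3$, and the weaker form already closes), and the computation of $\partial_s\Psi$ and the parametrization $r^{\mu,\nu}$, $p^{\sigma;\mu,\nu}$ are right. The missing pieces are precisely the $\xi$-gradient reduction, the correct ellipticity of $\Phi$ rather than of $\partial_s\Psi$, and the borderline case.
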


\begin{proof}[Proof of Lemma \ref{lemmaB2}] Let
\begin{equation}\label{szn11}
\begin{split}
G(\xi):&=\varphi_k(\xi)\cdot\mathcal{F}\big[R_{m,\kappa}^{\sigma;\mu,\nu}(f_{k_1,j_1}^\mu,f_{k_2,j_2}^\nu)\big](\xi)\\
&=\varphi_k(\xi)\int_{\mathbb{R}}\int_{\mathbb{R}^3}e^{is\Phi^{\sigma;\mu,\nu}(\xi,\eta)}\chi_{R}^{\sigma;\mu,\nu}(\xi,\eta)q_m(s)\cdot \widehat{f_{k_1,j_1}^\mu}(\xi-\eta,s)\widehat{f_{k_2,j_2}^\nu}(\eta,s)\,d\eta ds.
\end{split}
\end{equation}
where, as before,
\begin{equation*}
\chi_R^{\sigma;\mu,\nu}(\xi,\eta)=\varphi(2^{D^2+\max(0,k_1,k_2)}\Phi^{\sigma;\mu,\nu}(\xi,\eta))\varphi(\vert\Xi^{\mu,\nu}(\xi,\eta)\vert/\kappa).
\end{equation*}
We have to prove that
\begin{equation}\label{szn11.2}
2^{(1+\beta)j}\|\widetilde{\varphi}^{(k)}_j\cdot\mathcal{F}^{-1}(G)\|_{L^2}+\|G\|_{L^\infty}\lesssim 2^{-2\beta^4m}.
\end{equation}

Using Lemma \ref{desc2} and the $L^\infty$ bounds in \eqref{nh9}, for any $\xi\in\mathbb{R}^3$ we have
\begin{equation}\label{szn9}
\begin{split}
|G(\xi)|&\lesssim 2^m\cdot 2^{2k_1}\kappa^2\min(2^{k_1},\kappa)\cdot 2^{-k_1/2}\cdot\mathbf{1}_{[-2^D(2^{k_1}+\kappa),2^D(2^{k_1}+\kappa)]}(|\xi|-R_{\sigma_2})\\
&\lesssim 2^{2\beta^2m}2^{k_1/2}\min(2^{k_1},\kappa)\cdot\mathbf{1}_{[-2^D(2^{k_1}+\kappa),2^D(2^{k_1}+\kappa)]}(|\xi|-R_{\sigma_2}). 
\end{split}
\end{equation}
The $L^\infty$ bound in \eqref{szn11.2} follows. 

To prove the $L^2$ bound in \eqref{szn11.2} we notice first that we may assume that $2^j\lesssim 2^{\beta^2m}2^m(\kappa+2^{k_1})$, which is stronger than the assumption $j\leq m+D$ in \eqref{szn2}. Indeed, assuming that $\xi=se$, $\eta=re+\eta'$ satisfy \eqref{kx10} with $\delta=2\kappa$, we estimate
\begin{equation}\label{szn11.4}
\big|(\nabla_\xi\Phi^{\sigma;\mu,\nu})(\xi,\eta)\big|=\Big|-(\nabla_\eta\Phi^{\sigma;\mu,\nu})(\xi,\eta)+[\nabla\Lambda_\sigma(\xi)-\nabla\Lambda_\sigma(\eta)]\Big|\lesssim 2^{k_1}+\kappa.
\end{equation}
Therefore, we make the change of variables $\eta=\xi-\theta$ in \eqref{szn11} and integrate by parts in $\xi$ using \eqref{nh9} and Lemma \ref{tech5} with $K\approx 2^{\beta^2m}2^m(\kappa+2^{k_1})$, $\epsilon^{-1}\approx 2^m(\kappa+2^{k_1})$. It follows that
\begin{equation*}
2^{(1+\beta)j}\|\widetilde{\varphi}^{(k)}_j\cdot\mathcal{F}^{-1}(G)\|_{L^2}\lesssim 2^{-m}\qquad\text{ if }\qquad 2^j\geq 2^{\beta^2m}2^m(\kappa+2^{k_1}).
\end{equation*}
Therefore, for \eqref{szn11.2} it suffices to prove that
\begin{equation}\label{szn11.6}
2^{(1+\beta)m}(\kappa+2^{k_1})^{1+\beta}\|G\|_{L^2}\lesssim 2^{-2\beta^2m}.
\end{equation}

{\bf{Case 1.}} It follows from \eqref{szn9} that the left-hand side of \eqref{szn11.6} is dominated by
\begin{equation*}
C2^{(1+\beta)m}2^{2\beta^2m}2^{k_1/2}2^{k_1}\kappa(2^{k_1}+\kappa)^{1/2}\lesssim 2^{(1/2+2\beta)m}(2^{3k_1/2}+2^{k_1}\kappa^{1/2}).
\end{equation*}
The desired bound \eqref{szn11.6} follows if $k_1\leq -m/3-4\beta m$.

{\bf{Case 2.}} Assume now that
\begin{equation}\label{szn10}
-m/3+\beta m\leq k_1\leq -D/3.
\end{equation}
In this case we need to improve on the bound \eqref{szn9}. 
We use Lemma \ref{desc2} with $\delta=2\kappa$, and notice that, as a consequence of \eqref{szn10}, $\delta\leq 2^{k_1-D/10}$. Assuming $\xi=se$ and $\eta=re+\eta'$ satisfy \eqref{kx10} we estimate, using also Lemma \ref{tech99}, Lemma \ref{desc2}, and \eqref{szn207},
\begin{equation}\label{szn10.5}
\begin{split}
\Phi^{\si;\mu,\nu}(\xi,\eta)&=\lambda_{\sigma_2}(s)-\iota_1\lambda_i(\sqrt{(r-s)^2+|\eta'|^2})-\lambda_{\sigma_2}(\sqrt{r^2+|\eta'|^2})\\
&=\lambda_{\sigma_2}(s)-\iota_1\lambda_i(|r-s|)-\lambda_{\sigma_2}(r)+O_{C_b,\varepsilon}(\kappa^2)\\
&=\lambda_{\sigma_2}(s)-\lambda_{\sigma_2}(r)-\lambda_i(s-r)+O_{C_b,\varepsilon}(\kappa^2)\\
&=\lambda_{\sigma_2}(s)-\lambda_{\sigma_2}(R_{\sigma_2})-\lambda_i(s-R_{\sigma_2})+O_{C_b,\varepsilon}(\kappa^2+2^{3k_1})\\
&=\lambda_{\sigma_2}(s)-\lambda_{\sigma_2}(R_{\sigma_2})-\lambda'_{\sigma_2}(R_{\sigma_2})\cdot (s-R_{\sigma_2})+O_{C_b,\varepsilon}(\kappa^2+2^{3k_1})\\
&\approx_{C_b,\varepsilon} 2^{2k_1}.
\end{split}
\end{equation}
We can integrate by parts in $s$ in the formula \eqref{szn11} to conclude that
\begin{equation}\label{szn10.6}
 \begin{split}
 |G(\xi)|\lesssim 2^{-2k_1}|\varphi_k(\xi)|\int_{\mathbb{R}}\int_{\mathbb{R}^3}
&|\varphi(|\Xi^{\mu,\nu}(\xi,\eta)|/\ka)|\,|q'_m(s)|\,|\widehat{f_{k_1,j_1}^\mu}(\xi-\eta,s)|\,|\widehat{f_{k_2,j_2}^\nu}(\eta,s)|\\
&+|\varphi(|\Xi^{\mu,\nu}(\xi,\eta)|/\ka)|\,|q_m(s)|\,|(\partial_s\widehat{f_{k_1,j_1}^\mu})(\xi-\eta,s)|\,|\widehat{f_{k_2,j_2}^\nu}(\eta,s)|\\
&+|\varphi(|\Xi^{\mu,\nu}(\xi,\eta)|/\ka)|\,|q_m(s)|\,|\widehat{f_{k_1,j_1}^\mu}(\xi-\eta,s)|\,|(\partial_s\widehat{f_{k_2,j_2}^\nu})(\eta,s)|\,d\eta ds.
 \end{split}
\end{equation}
We use now \eqref{nh2}, the last bound in \eqref{nh9}, and the bound \eqref{derv2repeat}. In view of Lemma \ref{desc2}, the volume of integration is $\approx (2^{k_1}\kappa)^2\kappa$ and it follows that
\begin{equation}\label{szn10.7}
\begin{split}
 |G(\xi)|&\lesssim 2^{-2k_1}\mathbf{1}_{[0,2^D]}(2^{-k_1}|s-R_{\sigma_2}|)\cdot 2^{2k_1}\ka^3\cdot 2^{\beta m/10}2^{-k_1}\\
 &\lesssim \mathbf{1}_{[0,2^D]}(2^{-k_1}|s-R_{\sigma_2}|)\cdot 2^{-3m/2}2^{\beta m/5}2^{-5k_1/2}.
 \end{split}
\end{equation}
Therefore the left-hand side of \eqref{szn11.6} is dominated by
\begin{equation*}
2^{(1+\beta)m}2^{k_1}\cdot 2^{-3m/2}2^{\beta m/5}2^{-2k_1}\lesssim 2^{-k_1}2^{-m/2}2^{2\beta m},
\end{equation*}
and the desired bound \eqref{szn11.6} follows using also \eqref{szn10}.

{\bf{Case 3.}} It remains to prove the bound \eqref{szn11.6} in the case
\begin{equation}\label{szn25.1}
-m/3-4\beta m\leq k_1\leq -m/3+\beta m\quad\text{ and }\quad 2^{-m/3-\beta m}\leq\kappa\leq 2^{-m/3+3\beta m}.
\end{equation}
We define
\begin{equation*}
G'(\xi):=\varphi_k(\xi)\int_{\mathbb{R}}\int_{\mathbb{R}^3}e^{is\Phi^{\sigma;\mu,\nu}(\xi,\eta)}\varphi(2^{D^2+\max(0,k_1,k_2)}\Phi^{\sigma;\mu,\nu}(\xi,\eta))q_m(s)\cdot \widehat{f_{k_1,j_1}^\mu}(\xi-\eta,s)\widehat{f_{k_2,j_2}^\nu}(\eta,s)\,d\eta ds
\end{equation*}
and notice that, using integration by parts in $\eta$ as in the proof of Lemma \ref{NewBigBound1},
\begin{equation*}
\|G-G'\|_{L^2}\lesssim 2^{-10m}.
\end{equation*}
Moreover, using Lemma \ref{tech2} and the $L^\infty$ bounds \eqref{szn91.5} below,
\begin{equation*}
\begin{split}
\|G'\|_{L^2}&\lesssim 2^m\sup_{s\in[2^{m-1},2^{m+1}]}\min\left\{\Vert Ef^\mu_{k_1,j_1}(s)\Vert_{L^\infty}\Vert f^\nu_{k_2,j_2}(s)\Vert_{L^2},\Vert f^\mu_{k_1,j_1}(s)\Vert_{L^2}\Vert Ef^\nu_{k_2,j_2}(s)\Vert_{L^\infty}\right\}\\
&\lesssim 2^m\cdot 2^{-3m/2}2^{-\max(j_1,j_2)(1/2-2\beta)}.
\end{split}
\end{equation*}
The desired bound \eqref{szn11.6} follows if $\max(j_1,j_2)\geq m/2$, using also \eqref{szn25.1}.

Finally, assume that
\begin{equation}\label{szn25.2}
-m/3-4\beta m\leq k_1\leq -m/3+\beta m,\qquad 2^{-m/3-\beta m}\leq\kappa\leq 2^{-m/3+3\beta m},\qquad \max(j_1,j_2)\leq m/2.
\end{equation}
In this case we need to improve slightly on the pointwise bound \eqref{szn9}. Assuming $\xi=r'e$, $r'\in(0,\infty)$, $e\in\mathbb{S}^2$ and letting $\eta=re+\eta'$, $\eta'\cdot e=0$, we define, for any $l\in\mathbb{Z}$,
\begin{equation*}
\begin{split}
G'_{\leq l}(\xi):=&\varphi_k(r'e)\int_{\mathbb{R}}\int_{\mathbb{R}^2}\int_{\mathbb{R}}e^{is\Phi^{\sigma;\mu,\nu}(r'e,re+\eta')}\varphi(2^{D^2+\max(0,k_1,k_2)}\Phi^{\sigma;\mu,\nu}(r'e,re+\eta'))q_m(s)\\
&\times \varphi((r-r^{\mu,\nu}(r'))/2^l\kappa)\varphi(\eta'/(2^{D+k_1}\kappa))\cdot\widehat{f_{k_1,j_1}^\mu}(r'e-re-\eta',s)\widehat{f_{k_2,j_2}^\nu}(re+\eta',s)\,dr d\eta' ds.
\end{split}
\end{equation*}
Clearly, $\|G-G'_{\leq D}\|_{L^2}\lesssim 2^{-10m}$, see \eqref{szn202}. Estimating as in \eqref{szn9},
\begin{equation*}
\begin{split}
|G'_{\leq l}(\xi)|&\lesssim 2^m\cdot 2^{2k_1}\kappa^22^l\kappa\cdot 2^{-k_1/2}\cdot\mathbf{1}_{[-2^D(2^{k_1}+\kappa),2^D(2^{k_1}+\kappa)]}(|\xi|-R_{\sigma_2})\\
&\lesssim 2^l2^{2\beta^2m-m/2}\cdot\mathbf{1}_{[-2^D(2^{k_1}+\kappa),2^D(2^{k_1}+\kappa)]}(|\xi|-R_{\sigma_2}). 
\end{split}
\end{equation*}
Therefore, setting $l_0:=-\lfloor 8\beta m\rfloor$ we estimate
\begin{equation}\label{szn25.3}
2^{(1+\beta)m}(\kappa+2^{k_1})^{1+\beta}\|G'_{\leq l_0}\|_{L^2}\lesssim 2^{m+\beta m}2^{-m/3+3\beta m}\cdot 2^{l_0}2^{2\beta^2m-m/2}2^{-m/6+3\beta m/2}\lesssim 2^{-\beta^2m}.
\end{equation}
Finally we notice that we can integrate by parts in $r$ and use Lemma \ref{tech5} with $K\approx 2^m2^l\kappa$ and $\epsilon^{-1}\approx 2^{\beta m}[2^{\max(j_1,j_2)}+2^{-l}(2^{k_1}+\kappa)^{-1}]$ to show that 
\begin{equation}\label{szn25.4}
|G_{\leq l+1}(\xi)-G_{\leq l}(\xi)|\lesssim 2^{-10m}
\end{equation}
if $l\in[l_0,D]$. Indeed, it follows from \eqref{szn25.2} that $K\epsilon\gtrsim 2^{\beta m}$. Moreover, for $(r,\eta')$ in the relevant support,
\begin{equation*}
\begin{split}
\Big|\frac{d}{dr}\Phi^{\sigma;\mu,\nu}(r'e,re+\eta')\Big|&=\Big|-\iota_1\lambda'_i(\sqrt{(r-r')^2+|\eta'|^2})\frac{r-r'}{\sqrt{(r-r')^2+|\eta'|^2}}-\lambda'_{\sigma_2}(\sqrt{r^2+|\eta'|^2})\frac{r}{\sqrt{r^2+|\eta'|^2}}\Big|\\
&=\Big|\frac{\iota_1(r'-r)}{|r-r'|}\lambda'_i(r'-r)-\lambda'_{\sigma_2}(r)\Big|+O_{C_b,\varepsilon}(\kappa^2+2^{2k_1})\\
&\gtrsim_{C_b,\varepsilon} 2^l\kappa.
\end{split}
\end{equation*}
The bound \eqref{szn25.4} follows from Lemma \ref{tech5}. The desired estimate \eqref{szn11.6} follows using also \eqref{szn25.3}.
\end{proof}

\begin{lemma}\label{lemmaB3}
The bound \eqref{szn3} holds if \eqref{szn2} holds and, in addition,
\begin{equation}\label{szn50}
\begin{split}
&\Phi^{\sigma;\mu,\nu}\in\{\Phi^{e;i+,e+},\Phi^{e;i-,e+},\Phi^{b;i+,b+},\Phi^{b;i-,b+}\},\\
&k,k_2\in[-D/100,D/100],\qquad \max(j_1,j_2)\geq (m-\beta^2m)/2-k_1/2,
\end{split}
\end{equation}
with
\begin{equation}\label{szn51}
\kappa:=2^{\max(j_1,j_2)+\beta^2m-m}.
\end{equation}
\end{lemma}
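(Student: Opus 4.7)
The strategy parallels Lemma \ref{BigBound8A}, adapted to the low-frequency Case B geometry from Lemma \ref{desc2}. Writing $G:=\mathcal{F}[P_kR_{m,\kappa}^{\sigma;\mu,\nu}(f_{k_1,j_1}^\mu,f_{k_2,j_2}^\nu)]$, and using definition \eqref{sec5.3} together with $|k|\leq D/100$, it suffices to prove
\begin{equation*}
2^{(1+\beta)j}\|\widetilde{\varphi}^{(k)}_j\cdot\mathcal{F}^{-1}(G)\|_{L^2}+\|G\|_{L^\infty}\lesssim 2^{-2\beta^4m}.
\end{equation*}
Throughout, Lemma \ref{desc2} (with $\delta=2\kappa$) confines the $(\xi,\eta)$ support of the integrand to an anisotropic set: $\big||\xi|-R_{\sigma_2}\big|\lesssim 2^{k_1}+\kappa$, and for each such $\xi=se$ the $\eta$-variable lies in the thin tube $\eta=re+\eta'$ with $|r-r^{\mu,\nu}(s)|\lesssim\kappa$, $|\eta'|\lesssim 2^{k_1}\kappa$, of volume $\lesssim 2^{2k_1}\kappa^3$.

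For the $L^\infty$ bound, assume $j_1\leq j_2$ (the other case is symmetric), decompose $f_{k_2,j_2}^\nu=P_{[k_2-2,k_2+2]}(g_2+h_2)$ as in \eqref{sec5.8}--\eqref{sec5.815}, and use $\|\widehat{f_{k_1,j_1}^\mu}\|_{L^\infty}\lesssim 1$. The $h_2$-contribution is controlled by $2^m\|\widehat{h_2}\|_{L^1}\lesssim 2^m2^{-\gamma j_2}$, which is acceptable since $\gamma=3/2-4\beta$ and $j_2\geq m/2$. The $g_2$-contribution is bounded by $2^m\cdot (2^{2k_1}\kappa^3)^{1/2}\|\widehat{g_2}\|_{L^2}\cdot\|\widehat{f_{k_1,j_1}^\mu}\|_{L^\infty}\lesssim 2^m2^{k_1}\kappa^{3/2}2^{-(1+\beta)j_2}$, which is acceptable after substituting $\kappa=2^{j_2+\beta^2m-m}$ and using $k_1\leq -D/3$.

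The $L^2$ bound is the heart of the argument. I would mimic the orthogonality decomposition \eqref{cle25}--\eqref{cle27.3}: write $G=\sum_{v\in\mathbb{Z}^3,n\in\mathbb{Z}}G_{v,n}$ with spatial cut-off $\chi'(\kappa^{-1}\xi-v)$ on $\kappa$-boxes and temporal cut-off $\chi(2^{-m}\kappa^{-1}s-n)$. Non-vanishing contributions are confined to $v\in Z_\kappa^{\sigma;\mu,\nu}:=\{v:\kappa|v|\approx R_{\sigma_2},\,|\Psi^{\sigma;\mu,\nu}(\kappa|v|)|\leq 2^{-200D}\}$, where $\Psi^{\sigma;\mu,\nu}$ is the reduced phase analogous to \eqref{kxz11}, with $\partial_s\Psi^{\sigma;\mu,\nu}(\kappa|v|)=\lambda'_{\sigma_2}(\kappa|v|)-\iota_1\lambda'_i(\kappa|v|-r^{\mu,\nu}(\kappa|v|))\approx_{C_b,\varepsilon} 2^{k_1}$ by \eqref{szn10.5} and Lemma \ref{tech99}. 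Integration by parts in $\xi$ (Lemma \ref{tech2}) for fixed $v$ and distinct $n_1,n_2$ shifts the output by $(n_1-n_2)\kappa 2^m\cdot\partial_s\Psi^{\sigma;\mu,\nu}(\kappa|v|)\cdot v/|v|$, yielding cross-term decay as in \eqref{cle27} and hence $\|G\|_{L^2}^2\lesssim\sum_{v,n}\|G_{v,n}\|_{L^2}^2+2^{-10m}$.

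Next, within each box one localizes both input profiles to boxes of size $\kappa$ around $\kappa v-p^{\mu,\nu}(\kappa v)$ and $p^{\mu,\nu}(\kappa v)$, where $p^{\mu,\nu}(\xi):=r^{\mu,\nu}(|\xi|)\xi/|\xi|$ is the (Case B) analogue of the parametrization from Section \ref{normproof2}. Almost-orthogonality in $v$ at the level of $L^2$, as in \eqref{cle36.5}, relies on the quantitative separation of neighbouring images of $p^{\mu,\nu}$; here this follows from \eqref{kx12.5} combined with $|s-r^{\mu,\nu}(s)|\approx 2^{k_1}$ (cf. \eqref{szn202}), so that the full map $v\mapsto\big(\kappa v-p^{\mu,\nu}(\kappa v),\,p^{\mu,\nu}(\kappa v)\big)$ is bi-Lipschitz at scale $\kappa$. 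A kernel bound of the form \eqref{cle40} on the localized bilinear operator $A_v$ is then obtained by stationary phase in both $\xi$ and $\eta$, the quantitative control on $\Xi^{\mu,\nu}$ away from its zero set again coming from Lemma \ref{desc2}. Splitting each input into $g$- and $h$-parts via \eqref{sec5.8}--\eqref{sec5.82} and using the dispersive $L^\infty$ bounds from Lemma \ref{tech1.5}/Appendix \ref{lin} (the $g$-parts decay at the sharp rate $2^{-3m/2}$, the $h$-parts are controlled by $\kappa^2\cdot 2^{-\gamma j}$), one estimates each of the four cross-type contributions $gg$, $gh$, $hg$, $hh$ to $\sum_v\|A_v\|_{L^2}^2$ exactly as in the endgame of Lemma \ref{BigBound8A}.

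The principal obstacle is the anisotropy of the resonant set: the tangential thickness $2^{k_1}\kappa$ is much smaller than the radial thickness $\kappa$. Naively running the Case A argument overestimates the volume by a factor $2^{-2k_1}$. This is overcome by exploiting two features specific to Case B: first, the lower bound $\kappa\geq 2^{(\beta^2m-m)/2-k_1/2}$ forced by the hypothesis encodes precisely the deterioration expected in $k_1$; second, the low-frequency ion dispersion has slow group velocity $\lambda'_i$ that is nearly constant, which is what allows the factor $2^{k_1}$ lost in volume to be recovered from the stationary-phase gain in the $\eta'$ (tangential) variable. With these two observations the final summations go through and yield the claimed bound $\lesssim 2^{-2\beta^4 m}$.
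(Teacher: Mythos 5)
The key idea you identify — mimicking the Case A orthogonality of Lemma \ref{BigBound8A} with the parametrization $p^{\mu,\nu}$ from Lemma \ref{desc2} — is the right starting point, but the proposal as written has a gap that the paper goes to some trouble to close, and you gloss over it.

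You write the temporal cutoff as $\chi(2^{-m}\kappa^{-1}s-n)$, the same as in Case A, and then claim cross-term decay "as in \eqref{cle27}." But you also (correctly) compute $\partial_s\Psi^{\sigma;\mu,\nu}(\kappa|v|)\approx 2^{k_1}$, whereas in Case A the corresponding derivative is $\gtrsim 2^{-20D}$. This matters: the vectors $w_n=n\kappa 2^m\cdot\partial_s\Psi\cdot v/|v|$ are now separated by only $|n_1-n_2|\,\kappa 2^{m}2^{k_1}$, while the spatial uncertainty of each $\mathcal{F}^{-1}(G_{v,n})$ is $\approx\kappa 2^m$. Since $k_1\leq -D/3$ (and can be as small as $\approx-m/3$ in the relevant range), neighbouring blocks overlap and the orthogonality \eqref{cle26} fails; one would only get $\|G\|^2\lesssim 2^{-k_1}\sum_{v,n}\|G_{v,n}\|^2+2^{-10m}$, and this extra $2^{-k_1}$ loss is not accounted for in your endgame. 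The paper fixes this by \emph{stretching the time intervals by the factor} $2^{-k_1}$: the cutoff in \eqref{szn75} is $\chi(2^{k_1-m}\kappa^{-1}s-n)$, so that $w_n=n\kappa 2^{m-k_1}[\lambda'_{\sigma_2}(\kappa|v|)-\lambda'_{\sigma_2}(r^{\mu,\nu}(\kappa|v|))]\cdot v/|v|$ has magnitude $\approx |n|\kappa 2^m$ and the diagonalization goes through. Relatedly, your claim that $v\mapsto(\kappa v-p^{\mu,\nu}(\kappa v),\,p^{\mu,\nu}(\kappa v))$ is "bi-Lipschitz at scale $\kappa$" is incorrect: by \eqref{kx12.5} the radial derivative of $p^{\mu,\nu}$ is $\approx 2^{k_1}$, so the map is $2^{k_1}$-contracting radially in the second component; this is precisely why the paper's anisotropic localization \eqref{szn80} gives $\sum_v\|\widehat{g^v}\|^2_{L^2}\lesssim 2^{-k_1}\|\widehat{g}\|^2_{L^2}$ in \eqref{szn81}, a factor you cannot avoid. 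Finally, the paper splits the $L^2$ estimate into three regimes depending on $\kappa$ versus $2^{k_1}$ and $2^{-m(1/3+\beta/2)}$, running the orthogonality argument only in Case 3 ($2^{-m(1/3+\beta/2)}\leq\kappa\leq 2^{k_1-D}$); Case 1 uses a direct dispersive estimate as in Lemma \ref{BigBound5}, and Case 2 a pointwise bound as in Lemma \ref{lemmaB2}. Your one-size-fits-all orthogonality would need to be checked separately against these endpoint regimes, where the geometry of $D(\xi,\delta)$ and the comparison between $\kappa$ and $2^{k_1}$ change qualitatively.
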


\begin{proof}[Proof of Lemma \ref{lemmaB3}] We define the function $G$ as in \eqref{szn11}; it suffices to prove that
\begin{equation}\label{szn52}
2^{(1+\beta)j}\|\widetilde{\varphi}^{(k)}_j\cdot\mathcal{F}^{-1}(G)\|_{L^2}+\|G\|_{L^\infty}\lesssim 2^{-2\beta^4m}.
\end{equation}
The $L^\infty$ bound in \eqref{szn52} is easy: if $j_1\leq j_2$ then we use the bounds
\begin{equation*}
\begin{split}
&\|\widehat{f_{k_1,j_1}^\mu}(s)\|_{L^\infty}\lesssim 2^{-k_1/2},\\
&|\varphi_k(\xi)|\int_{\mathbb{R}^3}\big|\widehat{f_{k_2,j_2}^\nu}(\eta,s)\big|\,\big|\varphi(\vert\Xi^{\mu,\nu}(\xi,\eta)\vert/\kappa)\big|\mathbf{1}_{[2^{k_1-4},2^{k_1+4}]}(|\xi-\eta|)\,d\eta \lesssim 2^{-(1+\beta)j_2}(\kappa^3 2^{2k_1})^{1/2},
\end{split}
\end{equation*}
which follow from Lemma \ref{desc2}, the bounds \eqref{nh5.5}, and Definition \ref{MainDef}. Therefore, in this case,
\begin{equation*}
\|G\|_{L^\infty}\lesssim 2^m\cdot 2^{-k_1/2}2^{-(1+\beta)j_2}(\kappa^3 2^{2k_1})^{1/2}\lesssim 2^{-\beta m/4}2^{(j_2-m)/8},
\end{equation*}
which suffices. Similarly, if $j_1\geq j_2$ then we use the bounds
\begin{equation*}
\begin{split}
&\|\widehat{f_{k_2,j_2}^\nu}(s)\|_{L^\infty}\lesssim 1,\\
&|\varphi_k(\xi)|\int_{\mathbb{R}^3}\big|\widehat{f_{k_1,j_1}^\mu}(\xi-\eta,s)\big|\,\big|\varphi(\vert\Xi^{\mu,\nu}(\xi,\eta)\vert/\kappa)\big|\mathbf{1}_{[2^{k_2-4},2^{k_2+4}]}(|\eta|)\,d\eta \lesssim 2^{-(1+\beta)j_1}(\kappa^3 2^{2k_1})^{1/2},
\end{split}
\end{equation*}
and the desired $L^\infty$ bound on $G$ follows as before.

The $L^2$ bound in \eqref{szn52} is more complicated. We notice first that the same argument as in the proof of Lemma \ref{lemmaB2}, using the estimate \eqref{szn11.4}, shows that
\begin{equation}\label{szn53}
2^{(1+\beta)j}\|\widetilde{\varphi}^{(k)}_j\cdot\mathcal{F}^{-1}(G)\|_{L^2}\lesssim 2^{-m}\qquad\text{ if }\qquad 2^j\geq 2^{\beta^2m}2^m(\kappa+2^{k_1}).
\end{equation}
To continue we consider three cases.
\medskip

{\bf{Case 1.}} Assume first that
\begin{equation}\label{szn54}
2^{k_1-D}\leq\kappa.
\end{equation}
In view of \eqref{szn53}, in this case it remains to prove that
\begin{equation}\label{szn55}
2^{(1+\beta)m}\kappa^{1+\beta}\|G\|_{L^2}\lesssim 2^{-2\beta^2m}.
\end{equation}
We argue as in the proof of Lemma \ref{BigBound5}. We define first
\begin{equation*}
G'(\xi):=\varphi_k(\xi)\int_{\mathbb{R}}\int_{\mathbb{R}^3}e^{is\Phi^{\sigma;\mu,\nu}(\xi,\eta)}\varphi(2^{D^2+\max(0,k_1,k_2)}\Phi^{\sigma;\mu,\nu}(\xi,\eta))q_m(s)\cdot \widehat{f_{k_1,j_1}^\mu}(\xi-\eta,s)\widehat{f_{k_2,j_2}^\nu}(\eta,s)\,d\eta ds
\end{equation*}
and notice that, using integration by parts in $\eta$ and Lemma \ref{tech5} with $K\approx 2^m\kappa$, $\epsilon^{-1}\approx 2^{\max(j_1,j_2)}$, 
\begin{equation}\label{szn56}
\|G-G'\|_{L^2}\lesssim 2^{-10m}.
\end{equation}

Since $\|\phii_{j_1}^{(k_1)}\cdot P_{k_1}f_\mu(s)\|_{B_{k_1,j_1}}+\|\phii_{j_2}^{(k_2)}\cdot P_{k_2}f_\nu(s)\|_{B_{k_2,j_2}}\lesssim 1$, see \eqref{nh5.5}, we use \eqref{sec5.8}--\eqref{sec5.82} to decompose
\begin{equation}\label{szn57}
\begin{split}
&\phii^{(k_1)}_{j_1}\cdot P_{k_1}f_\mu(s)=2^{-\alpha k_1}[g_{k_1,j_1}^\mu(s)+h_{k_1,j_1}^\mu(s)],\\
&2^{(1+\be)j_1}\|g_{k_1,j_1}^\mu(s)\|_{L^2}+2^{(1/2-\beta)\widetilde{k_1}}\|\widehat{g_{k_1,j_1}^\mu}(s)\|_{L^\infty}\lesssim 1,\\
&2^{(1-\be)j_1}\|h_{k_1,j_1}^\mu(s)\|_{L^2}+\|\widehat{h_{k_1,j_1}^\mu}(s)\|_{L^\infty}+2^{\gamma j_1}\|\widehat{h_{k_1,j_1}^\mu}(s)\|_{L^1}\lesssim 2^{-8|k_1|},
\end{split}
\end{equation}
and
\begin{equation}\label{szn58}
\begin{split}
&\phii^{(k_2)}_{j_2}\cdot P_{k_2}f_\nu(s)=[g_{k_2,j_2}^\nu(s)+h_{k_2,j_2}^\nu(s)],\\
&2^{(1+\be)j_2}\|g_{k_2,j_2}^\nu(s)\|_{L^2}+\|\widehat{g_{k_2,j_2}^\nu}(s)\|_{L^\infty}\lesssim 1,\\
&2^{(1-\be)j_2}\|h_{k_2,j_2}^\nu(s)\|_{L^2}+\|\widehat{h_{k_2,j_2}^\nu}(s)\|_{L^\infty}+2^{\gamma j_2}\|\widehat{h_{k_2,j_2}^\nu}(s)\|_{L^1}\lesssim 1.
\end{split}
\end{equation}
For $f,g\in L^2(\mathbb{R}^3)$, $\xi\in\mathbb{R}^3$, and $s\in[2^{m-1},2^{m+1}]$ let
\begin{equation}\label{szn59.5}
\widetilde{G}'_s(f,g)(\xi):=\varphi_k(\xi)\int_{\mathbb{R}^3}e^{is\Phi^{\sigma;\mu,\nu}(\xi,\eta)}\varphi(2^{D^2+\max(0,k_1,k_2)}\Phi^{\sigma;\mu,\nu}(\xi,\eta))\cdot \widehat{f}(\xi-\eta)\widehat{g}(\eta)\,d\eta.
\end{equation}
Using also \eqref{szn56}, for \eqref{szn55} it suffices to prove that, for any $s\in[2^{m-1},2^{m+1}]$,
\begin{equation}\label{szn60}
2^{2\beta^2m}2^{-\alpha k_1}2^{(2+\beta)m}\kappa^{1+\beta}\|\widetilde{G}'_s(f,g)\|_{L^2}\lesssim 1,
\end{equation}
where $f\in \{P_{k_1-2,k_1+2}g_{k_1,j_1}^\mu(s),P_{k_1-2,k_1+2}h_{k_1,j_1}^\mu(s)\}$, $g\in\{P_{k_2-2,k_2+2}g_{k_2,j_2}^\nu(s),P_{k_2-2,k_2+2}h_{k_2,j_2}^\nu(s)\}$.

Using Lemma \ref{tech2} and \eqref{mk3} it follows that
\begin{equation}\label{szn61}
\|\widetilde{G}'_s(f,g)\|_{L^2}\lesssim \min\big(\|E^\mu_sf\|_{L^2}\|E^\nu_sg\|_{L^\infty},\|E^\mu_sf\|_{L^\infty}\|E^\nu_sg\|_{L^2}\big),
\end{equation}
where $E^\mu_sf:=e^{-is\widetilde{\Lambda}_\mu} f$ and $E^\nu_sg:=e^{-is\widetilde{\Lambda}_\nu} g$. In view of Lemma \ref{tech1.5}, see also \eqref{equl},
\begin{equation}\label{szn62}
\|E^\mu_sf\|_{L^\infty}\lesssim 2^{\beta k_1}2^{-m(5/4-10\beta)}2^{j_1(1/4-11\beta)},\qquad \|E^\nu_sg\|_{L^\infty}\lesssim 2^{-m(5/4-10\beta)}2^{j_2(1/4-11\beta)},
\end{equation}
for $f\in \{P_{k_1-2,k_1+2}g_{k_1,j_1}^\mu(s),P_{k_1-2,k_1+2}h_{k_1,j_1}^\mu(s)\}$, $g\in\{P_{k_2-2,k_2+2}g_{k_2,j_2}^\nu(s),P_{k_2-2,k_2+2}h_{k_2,j_2}^\nu(s)\}$.

If $|j_1-j_2|\geq 10\beta m$ then we use \eqref{szn61}--\eqref{szn62}, together with the estimate $2^{\max(j_1,j_2)}\approx \kappa 2^m2^{-\beta^2m}$ and the $L^2$ bounds $\|f\|_{L^2}\lesssim 2^{2\beta k_1}2^{-(1-\beta)j_1}$, $\|g\|_{L^2}\lesssim 2^{-(1-\beta)j_2}$, to estimate
\begin{equation*}
\begin{split}
\|\widetilde{G}'_s(f,g)\|_{L^2}&\lesssim 2^{\beta k_1}2^{-m(5/4-10\beta)}2^{\min(j_1,j_2)(1/4-11\beta)}\cdot 2^{-(1-\beta)\max(j_1,j_2)}\\
&\lesssim 2^{\beta k_1}2^{-m(5/4-10\beta)}2^{-10\beta m(1/4-11\beta)}(\kappa 2^m2^{-\beta^2m})^{-3/4-10\beta}\\
&\lesssim 2^{\beta k_1}\kappa^{-1}2^{-2m}2^{-5\beta m/4},
\end{split}
\end{equation*}
for $f\in \{P_{k_1-2,k_1+2}g_{k_1,j_1}^\mu(s),P_{k_1-2,k_1+2}h_{k_1,j_1}^\mu(s)\}$, $g\in\{P_{k_2-2,k_2+2}g_{k_2,j_2}^\nu(s),P_{k_2-2,k_2+2}h_{k_2,j_2}^\nu(s)\}$. The desired bound \eqref{szn60} follows in this case.

On the other hand, if $|j_1-j_2|\leq 10\beta m$ then we estimate, using \eqref{szn57}--\eqref{szn58} and \eqref{szn61}--\eqref{szn62},
\begin{equation*}
\begin{split}
\|\widetilde{G}'_s(P_{k_1-2,k_1+2}g_{k_1,j_1}^\mu(s),P_{k_2-2,k_2+2}g_{k_2,j_2}^\nu(s))\|_{L^2}&\lesssim 2^{-m(5/4-10\beta)}2^{\min(j_1,j_2)(1/4-11\beta)}\cdot 2^{-(1+\beta)\max(j_1,j_2)}\\
&\lesssim 2^{-m(5/4-10\beta)}(\kappa 2^m2^{-\beta^2m})^{-3/4-12\beta}\\
&\lesssim \kappa^{-1}2^{-(2+2\beta)m}2^{\beta^2m}.
\end{split}
\end{equation*}
Moreover, for $g\in\{P_{k_2-2,k_2+2}g_{k_2,j_2}^\nu(s),P_{k_2-2,k_2+2}h_{k_2,j_2}^\nu(s)\}$ we estimate, using also the assumption \eqref{szn54},
\begin{equation*}
\begin{split}
\|\widetilde{G}'_s(P_{k_1-2,k_1+2}h_{k_1,j_1}^\mu(s),g)\|_{L^2}&\lesssim \|\widehat{h_{k_1,j_1}^\mu}(s)\|_{L^1}\|g\|_{L^2}\lesssim 2^{-\gamma j_1}2^{8k_1}2^{-(1-\beta)j_2}\\
&\lesssim 2^{8k_1}2^{16\beta m}(\kappa 2^m)^{-(1-\beta+\gamma)}\lesssim 2^{k_1}2^{-m(1-17\beta+\gamma)}.
\end{split}
\end{equation*}
Finally,
\begin{equation*}
\begin{split}
\|\widetilde{G}'_s(P_{k_1-2,k_1+2}&g_{k_1,j_1}^\mu(s),P_{k_2-2,k_2+2}h_{k_2,j_2}^\nu(s))\|_{L^2}\\
&\lesssim \min\big[\|\widehat{g_{k_1,j_1}^\mu}(s)\|_{L^2}\|\widehat{h_{k_2,j_2}^\nu}(s)\|_{L^1},2^{3k_1/2}\|\widehat{g_{k_1,j_1}^\mu}(s)\|_{L^2}\|\widehat{h_{k_2,j_2}^\nu}(s)\|_{L^2}\big]\\
&\lesssim \min\big[2^{-(1+\beta)j_1}2^{-\gamma j_2},2^{3k_1/2}2^{-(1+\beta)j_1}2^{-(1-\beta)j_2}\big]\\
&\lesssim \min\big[2^{16\beta m}(\kappa 2^m)^{-(1+\beta+\gamma)},\kappa^{3/2}2^{16\beta m}(\kappa 2^m)^{-2}\big]\\
&\lesssim 2^{16\beta m}\min\big[2^{-(1+\gamma)m}\kappa^{-(1+\gamma)},2^{-2m}\kappa^{-1/2}\big].
\end{split}
\end{equation*}
The desired bound \eqref{szn60} follows from these last three estimates, which completes the proof in Case 1.

\medskip

{\bf{Case 2.}} Assume now that
\begin{equation}\label{szn70}
\kappa\leq 2^{k_1-D}\quad\text{ and }\quad\kappa\leq 2^{-m(1/3+\beta/2)}.
\end{equation}
In view of \eqref{szn53}, in this case it remains to prove that
\begin{equation}\label{szn71}
2^{(1+\beta)m}2^{(1+\beta)k_1}\|G\|_{L^2}\lesssim 2^{-2\beta^2m}.
\end{equation}

As in Lemma \ref{lemmaB2}, see \eqref{szn10.5}--\eqref{szn10.7}, we estimate pointwise
\begin{equation*}
\begin{split}
|G(\xi)|&\lesssim 2^{-2k_1}\cdot 2^{2k_1}\kappa^3\cdot 2^{\beta m/10}2^{-k_1}\cdot\mathbf{1}_{[-2^{k_1+D},2^{k_1+D}]}(|\xi|-R_{\sigma_2})\\
&\lesssim \kappa^32^{\beta m/10}2^{-k_1}\cdot\mathbf{1}_{[-2^{k_1+D},2^{k_1+D}]}(|\xi|-R_{\sigma_2}).
\end{split}
\end{equation*}
Therefore
\begin{equation*}
2^{(1+\beta)m}2^{(1+\beta)k_1}\|G\|_{L^2}\lesssim 2^{(1+5\beta/4)m}\kappa^3,
\end{equation*}
and the desired estimate \eqref{szn71} follows since $\kappa\leq 2^{-m(1/3+\beta/2)}$.

\medskip

{\bf{Case 3.}} Finally assume that 
\begin{equation}\label{szn72}
2^{-m(1/3+\beta/2)}\leq \kappa\leq 2^{k_1-D}.
\end{equation}
In view of \eqref{szn53}, in this case it remains to prove that
\begin{equation}\label{szn74}
2^{(2+2\beta)m}2^{(2+2\beta)k_1}\|G\|^2_{L^2}\lesssim 2^{-4\beta^2m}.
\end{equation}

{\bf{Step 1.}} We need first a suitable decomposition and an orthogonality argument. Let $\chi:\mathbb{R}\to[0,1]$ denote a smooth function supported in the interval $[-2,2]$ with the 
property that
\begin{equation*}
\sum_{n\in\mathbb{Z}}\chi(x-n)=1\qquad\text{ for any }x\in\mathbb{R}.
\end{equation*}
We define the smooth function $\chi':\mathbb{R}^3\to[0,1]$, $\chi'(x,y,z):=\chi(x)\chi(y)\chi(z)$. We define, for any $v\in\mathbb{Z}^3$ and $n\in\mathbb{Z}$,
\begin{equation}\label{szn75}
\begin{split}
G_{v,n}(\xi):=&\chi'(\kappa^{-1}\xi-v)\varphi_k(\xi)\\
&\int_{\mathbb{R}}\int_{\mathbb{R}^3}e^{is\Phi^{\sigma;\mu,\nu}(\xi,\eta)}
\chi_R^{\sigma;\mu,\nu}(\xi,\eta)\chi(2^{k_1-m}\kappa^{-1}s-n)q_m(s)\cdot \widehat{f_{k_1,j_1}^\mu}(\xi-\eta,s)\widehat{f_{k_2,j_2}^\nu}(\eta,s)\,d\eta ds.
\end{split}
\end{equation}
and notice that $G=\sum_{v\in\mathbb{Z}^3}\sum_{n\in\mathbb{Z}}G_{v,n}$. 

We show now that
\begin{equation}\label{szn76}
 \|G\|^2_{L^2}\lesssim \sum_{v\in\mathbb{Z}^3}\sum_{n\in\mathbb{Z}}\|G_{v,n}\|_{L^2}^2+2^{-10m}.
\end{equation}
Indeed, we clearly have
\begin{equation*}
\|G\|^2_{L^2}\lesssim \sum_{v\in\mathbb{Z}^3}\Big\|\sum_{n\in\mathbb{Z}}G_{v,n}\Big\|_{L^2}^2
\lesssim \sum_{v\in\mathbb{Z}^3}\sum_{n_1,n_2\in\mathbb{Z}}|\langle G_{v,n_1},G_{v,n_2}\rangle|.
\end{equation*}
Therefore, for \eqref{szn76} it suffices to prove that
\begin{equation}\label{szn77}
|\langle G_{v,n_1},G_{v,n_2}\rangle|\lesssim 2^{-20m}\qquad \text{ if }v\in\mathbb{Z}^3\text{ and }|n_1-n_2|\geq 2^{100D}.
\end{equation}
To prove this we need to estimate $|\mathcal{F}^{-1}(G_{v,n})(x)|$. We would like to integrate by parts in the formula \eqref{szn75}. Using Lemma \ref{desc2} and Lemma \ref{tech99} (i), for $\xi=se$, $\eta=re+\eta'$ satisfying \eqref{kx10} with $\delta=2\kappa$ and $|\xi-\kappa v|\lesssim \kappa$, we estimate
\begin{equation*}
\begin{split}
(\nabla_\xi\Phi^{\sigma;\mu,\nu})(\xi,\eta)&=-(\nabla_\eta\Phi^{\sigma;\mu,\nu})(\xi,\eta)+[\nabla\Lambda_\sigma(\xi)-\nabla\Lambda_\sigma(\eta)]\\
&=\lambda'_{\sigma_2}(s)e-\lambda'_{\sigma_2}(r^{\mu,\nu}(s))e+O_{C_b,\varepsilon}(\kappa)\\
&=[\lambda'_{\sigma_2}(\kappa|v|)-\lambda'_{\sigma_2}(r^{\mu,\nu}(\kappa|v|))]\cdot v/|v|+O_{C_b,\varepsilon}(\kappa).
\end{split}
\end{equation*}
In particular, $|\nabla_\xi\Phi^{\sigma;\mu,\nu}(\xi,\eta)|\approx 2^{k_1}$. After repeated integration by parts in $\xi$, it follows that
\begin{equation*}
\begin{split}
&|\mathcal{F}^{-1}(G_{v,n})(x)|\lesssim |x+w_n|^{-200}\qquad\text{ if }|x+w_n|\geq 2^{50D}2^m\kappa,\\
&w_n:=n\kappa2^{m-k_1}[\lambda'_{\sigma_2}(\kappa|v|)-\lambda'_{\sigma_2}(r^{\mu,\nu}(\kappa|v|))]\cdot v/|v|,
\end{split}
\end{equation*}
for any $n\in\mathbb{Z}$. Therefore if $|n_1-n_2|\geq 2^{100D}$ then $|w_{n_1}-w_{n_2}|\geq 2^{70D}\kappa 2^m$ and the desired bound \eqref{szn77} follows. This completes the proof of \eqref{szn76}.

In view of \eqref{szn76} and Lemma \ref{desc2}, for \eqref{szn74} it remains to prove that
\begin{equation}\label{szn90}
2^{(2+2\beta)m}2^{(2+2\beta)k_1}\sum_{(v,n)\in \mathbb{Z}^3\times\mathbb{Z}}\|G_{v,n}\|^2_{L^2}\lesssim 2^{-4\beta^2m},
\end{equation}
Let 
\begin{equation*}
\begin{split}
G_{n}(\xi)&:=\sum_{v\in\mathbb{Z}^3}G_{v,n}(\xi)\\
&=\varphi_k(\xi)\int_{\mathbb{R}}\int_{\mathbb{R}^3}e^{is\Phi^{\sigma;\mu,\nu}(\xi,\eta)}
\chi_R^{\sigma;\mu,\nu}(\xi,\eta)\chi(2^{k_1-m}\kappa^{-1}s-n)q_m(s)\cdot \widehat{f_{k_1,j_1}^\mu}(\xi-\eta,s)\widehat{f_{k_2,j_2}^\nu}(\eta,s)\,d\eta ds.
\end{split}
\end{equation*}
and{\footnote{In some arguments that involve the use of Lemma \ref{tech2} it is necessary to pass to operators that contain "smooth" symbols, such as the symbol $ (\xi,\eta)\to\varphi(2^{D^2+\max(0,k_1,k_2)}\Phi^{\sigma;\mu,\nu}(\xi,\eta))$ in the operators $\widetilde{G}'_s$ below. Lemma \ref{tech2} is not directly compatible with "rough" symbols such as $(\xi,\eta)\to\chi_R^{\sigma;\mu,\nu}(\xi,\eta)$ since the $L^1$ norm of the inverse Fourier transform of such symbols is very large.}}
\begin{equation*}
\begin{split}
G'_{n}(\xi):=\varphi_k(\xi)\int_{\mathbb{R}}\int_{\mathbb{R}^3}&e^{is\Phi^{\sigma;\mu,\nu}(\xi,\eta)}\varphi(2^{D^2+\max(0,k_1,k_2)}\Phi^{\sigma;\mu,\nu}(\xi,\eta))\\
&\times\chi(2^{k_1-m}\kappa^{-1}s-n)q_m(s)\cdot \widehat{f_{k_1,j_1}^\mu}(\xi-\eta,s)\widehat{f_{k_2,j_2}^\nu}(\eta,s)\,d\eta ds.
\end{split}
\end{equation*}
Notice that 
\begin{equation}\label{szn91}
\sum_{v\in\mathbb{Z}^3}\|G_{v,n}\|^2_{L^2}\lesssim \|G_n\|_{L^2}^2,\qquad \|G_n-G'_n\|_{L^2}\lesssim 2^{-10m},
\end{equation}
for any $n\in\mathbb{Z}$. Since $G'_n\equiv 0$ unless $n\in[2^{k_1-4}\kappa^{-1},2^{k_1+4}\kappa^{-1}]$, for \eqref{szn90} it suffices to prove that
\begin{equation}\label{szn92}
\sup_{n\in[2^{k_1-4}\kappa^{-1},2^{k_1+4}\kappa^{-1}]}2^{(1+\beta)m}2^{(1+\beta)k_1}2^{k_1/2}\kappa^{-1/2}\|G'_n\|_{L^2}\lesssim 2^{-2\beta^2m}.
\end{equation}

For $f,g\in L^2(\mathbb{R}^3)$, $\xi\in\mathbb{R}^3$, and $s\in[2^{m-1},2^{m+1}]$ let, as in \eqref{szn59.5},
\begin{equation*}
\widetilde{G}'_s(f,g)(\xi)=\varphi_k(\xi)\int_{\mathbb{R}^3}e^{is\Phi^{\sigma;\mu,\nu}(\xi,\eta)}\varphi(2^{D^2+\max(0,k_1,k_2)}\Phi^{\sigma;\mu,\nu}(\xi,\eta))\cdot \widehat{f}(\xi-\eta)\widehat{g}(\eta)\,d\eta.
\end{equation*}
The left-hand side of \eqref{szn92} is dominated by
\begin{equation*}
2^{(1+\beta)m}2^{(1+\beta)k_1}2^{k_1/2}\kappa^{-1/2}\cdot 2^{m}\kappa 2^{-k_1}\sup_{s\in[2^{m-1},2^{m+1}]}\|\widetilde{G}'_s(f_{k_1,j_1}^\mu,f_{k_2,j_2}^\nu)\|_{L^2}.
\end{equation*}
Therefore, it remains to prove that
\begin{equation}\label{szn91.8}
2^{(2+\beta)m}2^{(1/2+\beta)k_1}\kappa^{1/2}\sup_{s\in[2^{m-1},2^{m+1}]}\|\widetilde{G}'_s(f_{k_1,j_1}^\mu(s),f_{k_2,j_2}^\nu(s))\|_{L^2}\lesssim 2^{-2\beta^2m}.
\end{equation}

{\bf{Step 2.}} We decompose $\phii_{j_1}^{(k_1)}\cdot P_{k_1}f_\mu(s)=2^{-\alpha k_1}[g_{k_1,j_1}^\mu(s)+h_{k_1,j_1}^\mu(s)]$ and $\phii_{j_2}^{(k_2)}\cdot P_{k_2}f_\nu(s)=[g_{k_2,j_2}^\nu(s)+h_{k_2,j_2}^\nu(s)]$ as in \eqref{szn57}--\eqref{szn58}. In this proof we will also need the stronger bounds \eqref{sec5.815} on the functions $h_{k_1,j_1}^\mu(s)$ and $h_{k_2,j_2}^\nu(s)$,
\begin{equation}\label{szn90.5}
\begin{split}
&2^{\gamma j_1}\sup_{R\in[2^{-j_1},2^{k_1}],\,\xi_0\in\mathbb{R}^3}R^{-2}\|\widehat{h_{k_1,j_1}^\mu}(s)\|_{L^1(B(\xi_0,R))}\lesssim 2^{10k_1},\\
&2^{\gamma j_2}\sup_{R\in[2^{-j_2},2^{k_2}],\,\xi_0\in\mathbb{R}^3}R^{-2}\|\widehat{h_{k_2,j_2}^\nu}(s)\|_{L^1(B(\xi_0,R))}\lesssim 1,
\end{split}
\end{equation}
and the support properties \eqref{sec5.81}. Recall the $L^2$ bounds
\begin{equation}\label{szn91.6}
\begin{split}
&\|g_{k_1,j_1}^\mu(s)\|_{L^2}\lesssim 2^{-(1+\beta)j_1},\qquad\|h_{k_1,j_1}^\mu(s)\|_{L^2}\lesssim 2^{8k_1}2^{-(1-\beta)j_1},\\
&\|g_{k_2,j_2}^\nu(s)\|_{L^2}\lesssim 2^{-(1+\beta)j_2},\qquad\|h_{k_2,j_2}^\nu(s)\|_{L^2}\lesssim 2^{-(1-\beta)j_2}.
\end{split}
\end{equation}
With $E^\mu_sf=e^{-is\widetilde{\Lambda}_\mu} f$ and $E^\nu_sg=e^{-is\widetilde{\Lambda}_\nu} g$ as in the proof in Case 1, we use the kernel bounds \eqref{ccc33}, \eqref{ccc7}, and \eqref{ccc25} (as in the proof of Lemma \ref{tech1.5}) to conclude that
\begin{equation}\label{szn91.5}
\begin{split}
&\|E^\mu_sP_{[k_1-2,k_1+2]}(g_{k_1,j_1}^\mu(s))\|_{L^\infty}\lesssim 2^{k_1/2}2^{-3m/2}2^{j_1(1/2-\beta)},\\
&\|E^\mu_sP_{[k_1-2,k_1+2]}(h_{k_1,j_1}^\mu(s))\|_{L^\infty}\lesssim 2^{8k_1}\min[2^{-3m/2}2^{j_1(1/2+\beta)},2^{-\gamma j_1}]\\
&\|E^\nu_sP_{[k_2-2,k_2+2]}(g_{k_2,j_2}^\nu(s))\|_{L^\infty}\lesssim 2^{-3m/2}2^{j_2(1/2-\beta)},\\
&\|E^\nu_sP_{[k_2-2,k_2+2]}(h_{k_2,j_2}^\nu(s))\|_{L^\infty}\lesssim \min[2^{-3m/2}2^{j_2(1/2+\beta)},2^{-\gamma j_2}],
\end{split}
\end{equation}
for any $s\in[2^{m-1},2^{m+1}]$. We combine these bounds and Lemma \ref{tech2}. It follows from \eqref{szn91.5} that $\|Ef_{k_1,j_1}^\mu(s)\|_{L^\infty}\lesssim 2^{-3m/2}2^{j_1(1/2+\beta)}$ and $\|Ef_{k_2,j_2}^\nu(s)\|_{L^\infty}\lesssim 2^{-3m/2}2^{j_2(1/2+\beta)}$. Recalling that $2^{\max(j_1,j_2)}\approx\kappa2^m2^{-\beta^2m}$ (see \eqref{szn51}), we have
\begin{equation*}
\begin{split}
\|\widetilde{G}'_s(f_{k_1,j_1}^\mu(s),f_{k_2,j_2}^\nu(s))\|_{L^2}&\lesssim 2^{-3m/2}2^{\min(j_1,j_2)(1/2+\beta)}\cdot 2^{-(1-\beta)\max(j_1,j_2)}\\
&\lesssim 2^{-|j_1-j_2|(1/2+\beta)}2^{-3m/2}2^{(-1/2+2\beta)\max(j_1,j_2)}\\
&\lesssim 2^{-|j_1-j_2|(1/2+\beta)}2^{-2m}\kappa^{-1/2}\cdot 2^{\beta^2m}2^{2\beta m},
\end{split}
\end{equation*}
for any $s\in[2^{m-1},2^{m+1}]$. The desired bound \eqref{szn91.8} follows if $2^{-|j_1-j_2|}2^{k_1}\leq 2^{-6\beta m}$. 

It remains to prove \eqref{szn91.8} in the case
\begin{equation}\label{szn94}
2^{|j_1-j_2|}2^{-k_1}\leq 2^{6\beta m}.
\end{equation}
We start by using the bounds \eqref{szn91.6}--\eqref{szn91.5} more carefully. We estimate
\begin{equation*}
\|\widetilde{G}'_s(f,g)\|_{L^2}\lesssim 2^{-3m/2}2^{\min(j_1,j_2)(1/2-\beta)}\cdot 2^{-(1+\beta)\max(j_1,j_2)}\lesssim 2^{-(2+2\beta)m}\kappa^{-1/2}\kappa^{-2\beta}2^{\beta^2m},
\end{equation*}
if $(f,g)=\big(P_{[k_1-2,k_1+2]}(g_{k_1,j_1}^\mu(s)),P_{[k_2-2,k_2+2]}(g_{k_2,j_2}^\nu(s))\big)$. This is consistent with the desired bound \eqref{szn91.8}, if we recall that $\kappa^{-1}\lesssim 2^{m/3+\beta m/2}$ (see \eqref{szn72}). Therefore it remains the prove that
\begin{equation}\label{szn95}
2^{(2+\beta)m}2^{k_1/2}\kappa^{1/2}\sup_{s\in[2^{m-1},2^{m+1}]}\|\widetilde{G}'_s(f,g)\|_{L^2}\lesssim 2^{-2\beta^2m},
\end{equation}
if \eqref{szn94} holds, and
\begin{equation}\label{szn95.5}
\begin{split}
(f,g)\in\Big\{&(P_{[k_1-2,k_1+2]}(g_{k_1,j_1}^\mu(s)),P_{[k_2-2,k_2+2]}(h_{k_2,j_2}^\nu(s))\big),\\
&\big(P_{[k_1-2,k_1+2]}(h_{k_1,j_1}^\mu(s)),P_{[k_2-2,k_2+2]}(g_{k_2,j_2}^\nu(s))\big),\\
&\big(P_{[k_1-2,k_1+2]}(h_{k_1,j_1}^\mu(s)),P_{[k_2-2,k_2+2]}(h_{k_2,j_2}^\nu(s))\big)\Big\}.
\end{split}
\end{equation}

One could try arguing as before: recalling that $\gamma=3/2-4\beta$ and using \eqref{szn94}, for $(f,g)$ as in \eqref{szn95.5} we estimate
\begin{equation*}
\begin{split}
\|\widetilde{G}'_s(f,g)\|_{L^2}&\lesssim 2^{-\gamma\min(j_1,j_2)}\cdot 2^{-(1-\beta)\max(j_1,j_2)}\\
&\lesssim 2^{\gamma|j_1-j_2|}2^{-(\gamma+1-\beta)\max(j_1,j_2)}\\
&\lesssim 2^{-5m/2+15\beta m}\kappa^{-5/2+5\beta}2^{3\beta^2m},
\end{split}
\end{equation*}
Therefore the left-hand side of \eqref{szn95} is dominated by
\begin{equation*}
C2^{k_1/2}2^{(2+\beta)m}\kappa^{1/2}\cdot 2^{3\beta^2m}2^{-5m/2+15\beta m}\kappa^{-5/2+5\beta}\lesssim 2^{3\beta^2m}2^{-m(1/2-16\beta)}\kappa^{-(2-5\beta)}.
\end{equation*}
The desired bound \eqref{szn95} follows if $\kappa^{-1}$ is sufficiently small, say $\kappa^{-1}\leq 2^{m/6}$, but not in the full range $\kappa^{-1}\leq 2^{m(1/3+\beta/2)}$ (see \eqref{szn72}). To cover the full range we need an additional argument that uses the stronger bounds \eqref{szn90.5}.

{\bf{Step 3.}} We prove now \eqref{szn95}. We reinsert first the cutoff function $\chi_R^{\sigma;\mu,\nu}$, i.e., we define
\begin{equation}\label{szn93}
\begin{split}
&\widetilde{G}''_s(f,g)(\xi):=\varphi_k(\xi)\int_{\mathbb{R}^3}e^{is\Phi^{\sigma;\mu,\nu}(\xi,\eta)}\chi_R^{\sigma;\mu,\nu}(\xi,\eta)\cdot \widehat{f}(\xi-\eta)\widehat{g}(\eta)\,d\eta.\\
&\chi_R^{\sigma;\mu,\nu}(\xi,\eta)=\varphi(2^{D^2+\max(0,k_1,k_2)}\Phi^{\sigma;\mu,\nu}(\xi,\eta))\varphi(\vert\Xi^{\mu,\nu}(\xi,\eta)\vert/\kappa),
\end{split}
\end{equation}
where $(f,g)$ are as in \eqref{szn95.5}. As before, integrating by parts in $\eta$, we notice that $\|\widetilde{G}''_s(f,g)-\widetilde{G}'_s(f,g)\|_{L^2}\lesssim 2^{-10m}$. Then we decompose
\begin{equation}\label{szn93.5}
\begin{split}
&\widetilde{G}''_s(f,g)=\sum_{v\in\mathbb{Z}^3}\widetilde{G}''_{v,s}(f,g),\\
&\widetilde{G}''_{v,s}(f,g):=\chi'(\kappa^{-1}\xi-v)\varphi_k(\xi)\int_{\mathbb{R}^3}e^{is\Phi^{\sigma;\mu,\nu}(\xi,\eta)}\chi_R^{\sigma;\mu,\nu}(\xi,\eta)\cdot \widehat{f}(\xi-\eta)\widehat{g}(\eta)\,d\eta,
\end{split}
\end{equation}
where $\chi'$ is as before. For \eqref{szn95} it remains to prove that
\begin{equation}\label{cle35}
2^{(4+2\beta)m}2^{k_1}\kappa\sum_{v\in\mathbb{Z}^3}\|\widetilde{G}''_{v,s}(f,g)\|^2_{L^2}\lesssim 2^{-4\beta^2m},
\end{equation}
for any $s\in[2^{m-1},2^{m+1}]$ and $(f,g)$ as in \eqref{szn95.5}.

In view of Lemma \ref{desc2} the variables in the definition of the function $\widetilde{G}''_{v,s}(f,g)$ are naturally restricted as follows:
\begin{equation*}
\begin{split}
&v\in\mathbb{Z}^3,\qquad \big|\kappa|v|-R_{\sigma_2}\big|\lesssim_{C_b,\varepsilon}2^{k_1}\\
&\xi=a\widehat{v}+\xi',\qquad \xi'\cdot \widehat{v}=0,\qquad |\xi'|\lesssim_{C_b,\varepsilon}\kappa,\qquad \big|a-\kappa|v|\big|\lesssim_{C_b,\varepsilon} \kappa,\\
&\xi-\eta=b\widehat{v}+\theta',\qquad \theta'\cdot \widehat{v}=0,\qquad |\theta'|\lesssim_{C_b,\varepsilon}2^{k_1}\kappa,\qquad \big|b-(\kappa |v|-r^{\mu,\nu}(\kappa|v|))\big|\lesssim_{C_b,\varepsilon} \kappa,
\end{split}
\end{equation*}
where $\widehat{v}=v/|v|$. More precisely, for any $v$ fixed we define the functions $f^v$ and $g^v$ by the formulas
\begin{equation}\label{szn80}
\begin{split}
&\widehat{f^v}(\theta):=\varphi[|\theta'|/(\kappa 2^{k_1+D})]\varphi\big[[\rho-\kappa |v|+r^{\mu,\nu}(\kappa|v|)]/(\kappa 2^D)\big]\cdot \widehat{f}(\theta),\\
&\widehat{g^v}(\theta):=\varphi(|\theta'|/(\kappa 2^D))\varphi\big[[\rho-r^{\mu,\nu}(\kappa|v|)]/(\kappa 2^D)\big]\cdot \widehat{g}(\theta),
\end{split}
\end{equation}
where $\theta=\rho\widehat{v}+\theta'$, $\rho\in\mathbb{R}$, $\theta'\cdot\widehat{v}=0$. In view of Lemma \ref{desc2} and \eqref{kx12.5}, the functions $\widehat{f^v}$ (respectively $\widehat{g^v}$) have essentially pairwise disjoint supports, i.e.,
\begin{equation}\label{szn81}
\sum_{v\in\mathbb{Z}^3}\|\widehat{f^v}\|_{L^2}^2\lesssim \|\widehat{f}\|_{L^2}^2,\qquad \sum_{v\in\mathbb{Z}^3}\|\widehat{g^v}\|_{L^2}^2\lesssim 2^{-k_1}\|\widehat{g}\|_{L^2}^2.
\end{equation}
Moreover, they suffice to determine the functions $\widetilde{G}''_{v,s}(f,g)$, i.e.,
\begin{equation*}
\widetilde{G}''_{v,s}(f,g)(\xi)=\chi'(\kappa^{-1}\xi-v)\varphi_k(\xi)\int_{\mathbb{R}^3}e^{is\Phi^{\sigma;\mu,\nu}(\xi,\eta)}\chi_R^{\sigma;\mu,\nu}(\xi,\eta)\cdot \widehat{f^v}(\xi-\eta)\widehat{g^v}(\eta)\,d\eta.
\end{equation*}

We use \eqref{szn90.5}, \eqref{szn91.6}, and \eqref{szn81}. For $(f,g)=\big(P_{[k_1-2,k_1+2]}(g_{k_1,j_1}^\mu(s)),P_{[k_2-2,k_2+2]}(h_{k_2,j_2}^\nu(s))\big)$ or  $(f,g)=\big(P_{[k_1-2,k_1+2]}(h_{k_1,j_1}^\mu(s)),P_{[k_2-2,k_2+2]}(h_{k_2,j_2}^\nu(s))\big)$, we estimate
\begin{equation*}
\sum_{v\in\mathbb{Z}^3}\|\widetilde{G}''_{v,s}(f,g)\|^2_{L^2}\lesssim \sum_{v\in\mathbb{Z}^3}\|\widehat{f^v}\|_{L^2}^2\|\widehat{g^v}\|_{L^1}^2\lesssim \|\widehat{f}\|_{L^2}^2\sup_{v\in\mathbb{Z}^3}\|\widehat{g^v}\|_{L^1}^2\lesssim 2^{-2j_1+2\beta j_1}2^{-2\gamma j_2}\kappa^4.
\end{equation*}
For $(f,g)=\big(P_{[k_1-2,k_1+2]}(h_{k_1,j_1}^\mu(s)),P_{[k_2-2,k_2+2]}(g_{k_2,j_2}^\nu(s))\big)$ we estimate
\begin{equation*}
\sum_{v\in\mathbb{Z}^3}\|\widetilde{G}''_{v,s}(f,g)\|^2_{L^2}\lesssim \sum_{v\in\mathbb{Z}^3}\|\widehat{f^v}\|_{L^1}^2\|\widehat{g^v}\|_{L^2}^2\lesssim 2^{-k_1}\|\widehat{g}\|_{L^2}^2\sup_{v\in\mathbb{Z}^3}\|\widehat{f^v}\|_{L^1}^2\lesssim 2^{-2j_2+2\beta j_2}2^{-2\gamma j_1}\kappa^4.
\end{equation*}
Therefore, using also the assumption \eqref{szn94}, the left-hand side of \eqref{cle35} is dominated by
\begin{equation*}
C2^{(4+2\beta)m}\kappa\cdot \kappa^42^{-2\gamma\min(j_1,j_2)}2^{-(2-2\beta)\max(j_1,j_2)}\lesssim 2^{(4+2\beta)m}\kappa^52^{2\gamma|j_1-j_2|}2^{-(2\gamma+2-2\beta)\max(j_1,j_2)}\lesssim 2^{-\beta m},
\end{equation*}
and the desired bound \eqref{cle35} follows. This completes the proof of the lemma.
\end{proof}

\section{Proof of Proposition \ref{Norm}, IV: Case C resonant interactions}\label{normproof4}

\begin{proposition}\label{CaseCProp}
Assume that $(k, j), (k_1, j_1), (k_2, j_2) \in\mathcal{J}$ , $m \in [1, L ] \cap\mathbb{Z}$,
\begin{equation}\label{tcv1}
\Phi^{\sigma;\mu,\nu}\in\in\mathcal{T}_C=\{\Phi^{i;i+,i+},\Phi^{i;i+,i-},\Phi^{i;i-,i-},\Phi^{i;e+,e-},\Phi^{i;e+,b-},\Phi^{i;e-,b+},\Phi^{i;b+,b-}\},
\end{equation}
and
\begin{equation}\label{Bds1ND2}
\begin{split}
&-9m/10\le k_1,k_2\le j/N_0^\prime,\qquad\max(j_1,j_2)\le (1-\beta/10)m+k,\qquad m\geq -k(1+\beta^2),\\
&\beta m/2+N_0^\prime k_++D^2\le j\le m+D,\qquad k\leq -D/4.
\end{split}
\end{equation}
Then there is $\kappa\in(0,1]$, $\kappa\geq\max\big(2^{(\beta^2m-m)/2}2^{-\min(k_1,k_2,0)/2}2^{-D/2},2^{\beta^2m-m}2^{\max(j_1,j_2)}\big)$, such that
\begin{equation}\label{CaseCOK}
2^k\big\|\widetilde{\varphi}^{(k)}_j\cdot P_kR_{m,\kappa}^{\sigma;\mu,\nu}(f_{k_1,j_1}^\mu,f_{k_2,j_2}^\nu)\big\|_{B^1_{k,j}}\lesssim 2^{-2\beta^4m}.
\end{equation}
\end{proposition}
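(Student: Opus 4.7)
The proof of Proposition \ref{CaseCProp} will follow the same template as Propositions \ref{reduced4} and \ref{reduced5}: split into subcases according to the relative sizes of $\max(j_1,j_2)$ and $m$, choose $\kappa$ accordingly in each, and reduce \eqref{CaseCOK} to $L^2$ and $L^\infty$ bounds on the localized resonant operator $R_{m,\kappa}^{\sigma;\mu,\nu}$. The essential new feature — and the reason this case must be treated separately — is that the output is ion-like with small frequency ($k\leq -D/4$) and the phase $\Phi^{\sigma;\mu,\nu}$ degenerates strongly at the origin because $\lambda_i(0)=0$ and $\lambda_i'(r)$ is nearly constant for small $r$. The key compensation is the \emph{null factor} from Remark \ref{tech1.33} and Lemma \ref{tech1.3}: every multiplier $m_{i;\mu,\nu}(\xi,\eta)$ carries an explicit $|\xi|\approx 2^k$, so the $2^k$ on the left-hand side of \eqref{CaseCOK} is absorbed for free, and this is precisely what allows control in the strong norm $B^1_{k,j}$ rather than $B^2_{k,j}$.

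I would split $\mathcal{T}_C$ into three families. The ``mixed'' phases $\Phi^{i;e+,e-},\Phi^{i;e+,b-},\Phi^{i;e-,b+},\Phi^{i;b+,b-}$ force $k_1,k_2\in[-D/100,D/100]$ with the resonance occurring on a codimension-one sphere along which $\nabla_\eta^2\Phi$ is non-degenerate (by Lemma \ref{tech99} and the fact that $\lambda_e',\lambda_b'\gtrsim r$ for small $r$); one constructs functions $p^{\sigma;\mu,\nu}$ exactly as in Lemma \ref{LemP} and then repeats Lemmas \ref{BigBound7A}--\ref{BigBound8A} verbatim, with every estimate improved by the gain $2^k$. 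The ``pseudo-Case A'' phase $\Phi^{i;i+,i-}$ admits $p^{\sigma;\mu,\nu}(\xi)$ close to $-\eta\mapsto\eta$ on a sphere of radius $\approx |\xi|/2$; this case is again a direct adaptation of Section \ref{normproof2}, with the null factor compensating the loss from $k_1,k_2$ being allowed as small as $-9m/10$. The hard subcase is the ``wave-like'' pair $\Phi^{i;i+,i+}$ and $\Phi^{i;i-,i-}$: since $\lambda_i\geq 0$ vanishes only at $0$, the condition $|\Phi|\ll 1$ combined with $\sigma_1=\sigma_2=i$ forces $k_1,k_2\leq k+O(1)$, and since $\lambda_i(r)=\sqrt{(1+T)/(1+\varepsilon)}\,r+O(r^3)$ the phase behaves like a pure wave phase up to cubic corrections.

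For the $L^\infty$ piece of the $B^1_{k,j}$-norm I would use a volume bound analogous to \eqref{cle3}, using Lemma \ref{desc2}-type descriptions of the resonant locus combined with the refined decomposition \eqref{sec5.8}--\eqref{sec5.815} and the $L^\infty$ bounds \eqref{nh9}. For the $L^2$ piece, integration by parts in $s$ is available once we know $|\Phi|\gtrsim 2^{2k}$ on the relevant cluster: in the mixed/pseudo-A cases this follows from an analogue of \eqref{kx20}, while in the wave-like case it follows from the cubic correction $\lambda_i(r)-\sqrt{(1+T)/(1+\varepsilon)}\,r\gtrsim r^3$. The resulting bound is paired with the time-derivative control \eqref{ok50repeat}--\eqref{derv2repeat} and, in the regime where $\max(j_1,j_2)$ is comparable to $m/2$, with the orthogonality argument from Lemma \ref{BigBound8A} applied to cubes of size $\kappa$ in frequency and time-slabs of length $\kappa 2^{m-k}$.

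The main obstacle is the wave-like phase $\Phi^{i;i+,i+}$ (and its conjugate): the near-resonant set is a full codimension-one ``light cone'' $|\eta|+|\xi-\eta|\approx |\xi|$ inside the box $|\eta|,|\xi-\eta|\lesssim 2^k$, which is far more degenerate than Cases A or B. The saving must come simultaneously from (i)~the null factor $2^k$ from the multiplier, (ii)~the angle geometry — if $\eta$ and $\xi-\eta$ are almost aligned with $\xi$ the cubic remainder in the Taylor expansion of $\lambda_i$ produces $|\Phi|\gtrsim 2^{3k}$, while if they are transverse then $|\Xi^{\mu,\nu}|\gtrsim 2^k$ which kills the space-resonance cutoff — and (iii)~the orthogonality in spherical caps of angular width $\approx\kappa\cdot 2^{-k}$. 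Choosing $\kappa\approx\max\bigl(2^{(\beta^2 m-m)/2}2^{-k/2},\,2^{\beta^2 m-m}2^{\max(j_1,j_2)}\bigr)$ and bookkeeping the interplay of these three gains — together with an additional integration by parts in $\eta$ in the direction along $\xi$ to exploit the divergence structure coming from the null factor — is the technical heart of the section, analogous to but more involved than Section \ref{normproof3}.
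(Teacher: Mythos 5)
Your high-level heuristics (the null factor $|\xi|\approx 2^k$ in $m_{i;\mu,\nu}$, the weak ellipticity of the ion phase near the origin, angular geometry, normal forms) are the right ingredients, but the proposed decomposition into subcases contains two real errors that would derail the execution.

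First, the classification of the hard phases is wrong. You single out $\Phi^{i;i+,i+}$ and $\Phi^{i;i-,i-}$ as the ``wave-like'' hard pair and place $\Phi^{i;i+,i-}$ in an intermediate ``pseudo-Case A'' category. But $\Phi^{i;i-,i-}=\Lambda_i(\xi)+\Lambda_i(\xi-\eta)+\Lambda_i(\eta)$ is strongly elliptic: since $\lambda_i(r)\approx r$, one has $|\Phi^{i;i-,i-}|\gtrsim 2^{\widetilde{k}}+2^{\widetilde{k_1}}+2^{\widetilde{k_2}}$, so it is dispatched immediately by the normal-form/kernel criterion of Lemma \ref{CaseDIBPS}(ii) (the lower bound \eqref{bvc0}). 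On the other hand, $\Phi^{i;i+,i-}=\Lambda_i(\xi)-\Lambda_i(\xi-\eta)+\Lambda_i(\eta)$ is genuinely degenerate: its space-resonant set, like that of $\Phi^{i;i+,i+}$, lives near the collinear configuration with $|\xi|$ small, and its size is controlled only by the cubic deficit \eqref{BdPhiiii}. The two hard phases are $\Phi^{i;i+,i+}$ and $\Phi^{i;i+,i-}$, and both require the angular-cap orthogonality argument; the paper treats them together in Lemmas \ref{CaseCLinftyLem}--\ref{CaseCLem}, while $\Phi^{i;i-,i-}$ is grouped with the mixed phases in the easy Lemma \ref{CaseDProp}.

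Second, the treatment of the mixed phases $\Phi^{i;e+,e-},\Phi^{i;e+,b-},\Phi^{i;e-,b+},\Phi^{i;b+,b-}$ cannot proceed by ``constructing $p^{\sigma;\mu,\nu}$ exactly as in Lemma \ref{LemP} and repeating Lemmas \ref{BigBound7A}--\ref{BigBound8A}.'' The premise that the resonance ``forces $k_1,k_2\in[-D/100,D/100]$'' and occurs on a nondegenerate codimension-one sphere is false here. Since $k\leq -D/4$, one only gets $|k_1-k_2|\leq O(1)$ with $\min(k_1,k_2)>k+O(1)$, and $k_1,k_2$ are otherwise free to range over $[-9m/10,j/N_0']$. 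More seriously, at $\xi=0$ one has $\Xi^{\mu,\nu}(0,\eta)\equiv 0$ and $\Phi^{i;\mu,\nu}(0,\eta)\equiv 0$ for all $\eta$: the space-time resonant set is not a sphere but a full three-parameter family degenerating at the origin, so the nondegenerate-Hessian parametrization of Lemma \ref{LemP} is unavailable. The paper closes these cases by direct $L^2\times L^\infty$ estimates combined with the lower bound \eqref{CaseDXi} on $|\Xi|$ (to constrain $\kappa$) and, when $k$ is not too small, with a one-dimensional nonstationary-phase bound exploiting $|\nabla^2_\eta\Phi|\lesssim 2^k$; no Case-A orthogonality is needed. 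Finally, the choice $\kappa\approx 2^{-m/2}2^{-k/2}$ you propose for the hard case, and the cap aperture $\kappa 2^{-k}$, do not match the paper, which takes $\kappa=2^{k-D}$ in Lemma \ref{CaseCLem} and angular caps of aperture $\delta\sim 2^{k_1}$; your choice would not be compatible with the finite-speed-of-propagation estimate \eqref{CaseCFSP} used to dispatch Case 1 ($j\geq m+k_2+D$) there.
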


The rest of the section is concerned with the proof of Proposition \ref{CaseCProp}. Many of the easier cases can be handled using the following lemma:

\begin{lemma}\label{CaseDIBPS}
(i) With the hypothesis in Proposition \ref{CaseCProp}, assume in addition that either
\begin{equation}\label{CaseDMultiplier1}
\Big\| \mathcal{F}^{-1}
\left\{\frac{1}{\Phi^{i;\mu,\nu}(\xi,\eta)}
\varphi_{[k-4,k+4]}(\xi)\varphi_{[k_1-4,k_1+4]}(\xi-\eta)\varphi_{[k_2-4,k_2+4]}(\eta)\right\}
\Big\|_{L^1(\mathbb{R}^3\times\mathbb{R}^3)}\lesssim 2^{-k}2^{6\max(k_1,k_2,0)},
\end{equation}
or that
\begin{equation}\label{CaseDMultiplier2}
\begin{split}
&10\beta k\leq\max(k_1,k_2)\leq -D/100,\qquad\max(j_1,j_2)\le 3\beta m,\\
&\Big\| \mathcal{F}^{-1}
\left\{\frac{1}{\Phi^{i;\mu,\nu}(\xi,\eta)}\varphi_{[k-4,k+4]}(\xi)\varphi_{[k_1-4,k_1+4]}(\xi-\eta)\varphi_{[k_2-4,k_2+4]}(\eta)\right\}
\Big\|_{L^1(\mathbb{R}^3\times\mathbb{R}^3)}\lesssim 2^{-k}2^{m/3},\\
&\Big\| \frac{1}{\Phi^{i;\mu,\nu}(\xi,\eta)}\varphi_{[k-4,k+4]}(\xi)\varphi_{[k_1-4,k_1+4]}(\xi-\eta)\varphi_{[k_2-4,k_2+4]}(\eta)
\Big\|_{L^\infty(\mathbb{R}^3\times\mathbb{R}^3)}\lesssim 2^{-k-k_1-k_2}.
\end{split}
\end{equation}
Then, for any $\kappa\in(0,1]$, $\kappa\geq\max\big(2^{(\beta^2m-m)/2}2^{-\min(k_1,k_2,0)/2}2^{-D/2},2^{\beta^2m-m}2^{\max(j_1,j_2)}\big)$,
\begin{equation}\label{CaseDOKii}
2^{k}\Vert \widetilde{\varphi}^{(k)}_j\cdot P_kR_{m,\kappa}^{\sigma;\mu,\nu}(f^\mu_{k_1,j_1},f^\nu_{k_2,j_2})\Vert_{B^1_{k,j}}\lesssim 2^{-2\beta^4 m}.
\end{equation}

(ii) The inequality \eqref{CaseDMultiplier1} holds if
\begin{equation}\label{bvc0}
|\Phi^{i;\mu,\nu}(\xi,\eta)|\gtrsim 2^{\widetilde{k}}+2^{\widetilde{k_1}}+2^{\widetilde{k_2}}
\end{equation}
for all $(\xi,\eta)\in\mathbb{R}^3\times\mathbb{R}^3$ satisfying $|\xi|\in[2^{k-6},2^{k+6}]$, $|\xi-\eta|\in[2^{k_1-6},2^{k_1+6}]$, $|\eta|\in [2^{k_2-6},2^{k_2+6}]$.
\end{lemma}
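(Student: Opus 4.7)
The proof rests on integration by parts in $s$ combined with the decomposition from \eqref{DefOfN}: $T_m^{\sigma;\mu,\nu} = R_{m,\kappa}^{\sigma;\mu,\nu} + N_m^{1;\sigma;\mu,\nu} + N_{m,\kappa}^{2;\sigma;\mu,\nu}$. The two latter pieces are handled by Lemma \ref{NewBigBound1} once we verify that $\kappa$ satisfies the thresholds \eqref{kappa} (which follows from the hypotheses on $\kappa$ in Proposition \ref{CaseCProp}), so it suffices to bound $P_k T_m^{\sigma;\mu,\nu}(f^\mu_{k_1,j_1},f^\nu_{k_2,j_2})$ in the $B^1_{k,j}$ norm. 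For this the plan is to integrate by parts in $s$, which gives
\begin{equation*}
\mathcal{F}[T_m^{\sigma;\mu,\nu}(f,g)](\xi) = i\int_{\mathbb{R}}\int_{\mathbb{R}^3} e^{is\Phi^{i;\mu,\nu}(\xi,\eta)} \frac{\varphi_k(\xi)}{\Phi^{i;\mu,\nu}(\xi,\eta)}\, \partial_s\bigl[q_m(s)\widehat{f}(\xi-\eta,s)\widehat{g}(\eta,s)\bigr]\,d\eta\, ds,
\end{equation*}
producing three terms with derivatives falling on $q_m$, $f$, or $g$ respectively.

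For part (i) under hypothesis \eqref{CaseDMultiplier1}, the $L^1$ bound on $\mathcal{F}^{-1}(1/\Phi^{i;\mu,\nu})$ combined with Lemma \ref{tech2} yields both $L^2$ and $L^\infty$ bilinear estimates whose multiplier constant is $\lesssim 2^{-k}2^{6\max(k_1,k_2,0)}$. Applying this to each of the three terms above, using the profile bounds \eqref{nh9}--\eqref{nh9.1} on $f^\mu_{k_1,j_1}$ and $f^\nu_{k_2,j_2}$ together with the derivative bounds \eqref{ok50repeat}, \eqref{derv2repeat}, one obtains the $B^1_{k,j}$ estimate \eqref{CaseDOKii}. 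The gain $2^{-k}$ from the multiplier cancels exactly the $2^k$ prefactor in \eqref{CaseDOKii}, while the loss $2^{6\max(k_1,k_2,0)}$ is bounded by $2^{6j/N_0^\prime}$ and absorbed using $H^{N_0}$ regularity on the high-frequency factor (together with the standing assumption $k\leq -D/4$, which makes $2^{\alpha k}$ small).

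For part (i) under hypothesis \eqref{CaseDMultiplier2}, the same IBP identity is used, but the weaker $L^1$ multiplier bound $2^{-k}2^{m/3}$ must be supplemented by the pointwise $L^\infty$ bound $|1/\Phi^{i;\mu,\nu}|\lesssim 2^{-k-k_1-k_2}$. My plan is to split the analysis into two regimes and alternate between Lemma \ref{tech2} (using the $L^1$ multiplier bound together with an $L^2\cdot L^\infty$ H\"older split) and a direct Cauchy--Schwarz estimate exploiting the pointwise bound on $1/\Phi$. The restrictions $\max(j_1,j_2)\le 3\beta m$ and $10\beta k\le \max(k_1,k_2)\le -D/100$ are precisely what allow the sharp $L^2$ decay $2^{-(1-\beta)\max(j_1,j_2)}$ of $f^\mu_{k_1,j_1},f^\nu_{k_2,j_2}$ together with the $2^{-m}$ factor from $\partial_s$ to compensate the $2^{m/3}$ loss. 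This careful balancing between two imperfect multiplier estimates is the principal technical obstacle in the argument.

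For part (ii), deducing \eqref{CaseDMultiplier1} from \eqref{bvc0} is a standard symbol-calculus computation. Lemma \ref{tech1} provides symbol-type estimates for the individual multipliers $\Lambda_i,\Lambda_e,\Lambda_b$ and hence for $\Phi^{i;\mu,\nu}$; combined with the pointwise lower bound \eqref{bvc0} and Leibniz's rule, one obtains symbol-type bounds for $1/\Phi^{i;\mu,\nu}$ on the frequency shells $|\xi|\sim 2^k$, $|\xi-\eta|\sim 2^{k_1}$, $|\eta|\sim 2^{k_2}$. A standard non-stationary-phase argument (in the spirit of the proof of Lemma \ref{tech2}) then converts these symbol estimates into the required $L^1$ Fourier-inverse bound, the factor $2^{6\max(k_1,k_2,0)}$ accounting for the derivative growth of $\Lambda_i,\Lambda_e,\Lambda_b$ at high frequency.
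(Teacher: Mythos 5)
Your proposal has the correct overall shape and matches the paper's strategy in outline: integrate by parts in $s$ to produce the three terms $T_1, T_2, T_3$ (with $\partial_s$ on $q_m$, $f^\mu$, $f^\nu$ respectively), bound the $L^2$ component of the $B^1_{k,j}$ norm via Lemma \ref{tech2} using the $L^1$ kernel bound and the profile estimates \eqref{nh9}--\eqref{nh9.1}, \eqref{ok50repeat}, and in the \eqref{CaseDMultiplier2} regime supplement this with the pointwise bound on $1/\Phi^{i;\mu,\nu}$ and the sharper dispersive decay; for part (ii), a symbol-calculus argument with the Leibniz-type formula \eqref{Der1/Phi} and integration by parts in the kernel.

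However, there is a genuine gap in your treatment of the $L^\infty$ component of the $B^1_{k,j}$ norm. You assert that after integrating by parts in $s$, the profile bounds ``\eqref{nh9}--\eqref{nh9.1} ... together with the derivative bounds \eqref{ok50repeat}, \eqref{derv2repeat}, one obtains the $B^1_{k,j}$ estimate.'' This fails in the regime where $|k|+|k_1|+|k_2|+j_1+j_2\le\beta^2 m$. In that regime the only decay available from the direct Cauchy--Schwarz bound on $T_1$ is $2^{-(1-\beta)(j_1+j_2)}$, which is $O(1)$ when $j_1,j_2$ are both small; multiplying by the prefactor $2^{(3/2+\alpha-\beta)k}$ still only gives a gain $\gtrsim 2^{-2\beta^2 m}$, which is not enough to beat the target $2^{-2\beta^4 m}$. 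The paper closes this gap with an extra argument: one splits $T_1 = T_{1;1}+T_{1;2}$ according to whether $|\Xi^{\mu,\nu}(\xi,\eta)|\lesssim\delta$ or not, with $\delta = 2^{-m/3}$. The non-resonant piece $T_{1;2}$ is killed by integrating by parts in $\eta$ (Lemma \ref{tech5} with $K\approx 2^{2m/3}$, $\epsilon\approx 2^{-m/3}$), while the near-resonant piece $T_{1;1}$ is controlled by observing that smallness of $|\Xi^{\mu,\nu}|$ forces $(\xi-\eta)/|\xi-\eta|$ and $\eta/|\eta|$ to be nearly parallel (up to sign), giving an extra $2^{-m/6}$ volume gain. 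Without this case your argument for the $L^\infty$ half of \eqref{CaseDOKii} does not close. A secondary, more cosmetic imprecision: Lemma \ref{tech2} gives $L^2$ bilinear bounds from the $L^1$ kernel, not $L^\infty$ bounds; the $L^\infty$ estimate on $\mathcal{F}T_i$ proceeds by Cauchy--Schwarz in $\eta$ against the pointwise bound on $1/\Phi^{i;\mu,\nu}$, which under \eqref{CaseDMultiplier1} is obtained from the $L^1$ kernel bound via Fourier inversion, and under \eqref{CaseDMultiplier2} is the third hypothesis.
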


\begin{proof}[Proof of Lemma \ref{CaseDIBPS}] 
(i) The proof is similar to the bound on $N^{1;\sigma;\mu,\nu}_m$ in Lemma \ref{NewBigBound1}. In view of \eqref{NRI} it suffices to prove that
\begin{equation}\label{CaseDOKii2}
2^{k}\Vert \widetilde{\varphi}^{(k)}_j\cdot P_kT_{m}^{i;\mu,\nu}(f^\mu_{k_1,j_1},f^\nu_{k_2,j_2})\Vert_{B^1_{k,j}}\lesssim 2^{-2\beta^4 m}.
\end{equation}
After integration by parts in $s$ in \eqref{nh5}, we obtain that
\begin{equation}\label{IBTNRI2D}
\begin{split}
2^k\mathcal{F}[T_{m}^{i;\mu,\nu}(f^\mu_{k_1,j_1},f^\nu_{k_2,j_2})]&=i\left[T_{1}+T_{2}+T_{3}\right],\\
T_{1}(\xi)&:=\int_{\mathbb{R}}\int_{\mathbb{R}^3}e^{is\Phi^{i;\mu,\nu}(\xi,\eta)}\frac{2^k}{\Phi^{i;\mu,\nu}(\xi,\eta)}q^\prime_m(s)\cdot \widehat{f^\mu_{k_1,j_1}}(\xi-\eta,s)\widehat{f^\nu_{k_2,j_2}}(\eta,s)\,d\eta ds,\\
T_{2}(\xi)&:=\int_{\mathbb{R}}\int_{\mathbb{R}^3}e^{is\Phi^{i;\mu,\nu}(\xi,\eta)}\frac{2^k}{\Phi^{i;\mu,\nu}(\xi,\eta)}q_m(s)\cdot (\partial_s\widehat{f^\mu_{k_1,j_1}})(\xi-\eta,s)\widehat{f^\nu_{k_2,j_2}}(\eta,s)\,d\eta ds,\\
T_{3}(\xi)&:=\int_{\mathbb{R}}\int_{\mathbb{R}^3}e^{is\Phi^{i;\mu,\nu}(\xi,\eta)}\frac{2^k}{\Phi^{i;\mu,\nu}(\xi,\eta)}q_m(s)\cdot \widehat{f^\mu_{k_1,j_1}}(\xi-\eta,s)(\partial_s\widehat{f^\nu_{k_2,j_2}})(\eta,s)\,d\eta ds.
\end{split}
\end{equation}

Recall Definition \ref{MainDef}. We first show that
\begin{equation}\label{LINormN1D}
2^{(1/2-\beta+\alpha)k}\cdot 2^k\Vert \varphi_k\cdot \mathcal{F}T_{m}^{i;\mu,\nu}(f^\mu_{k_1,j_1},f^\nu_{k_2,j_2})\Vert_{L^\infty}\lesssim 2^{-2\beta^4m}.
\end{equation}
Indeed, using Cauchy-Schwartz inequality, \eqref{nh9}, \eqref{ok50repeat}, and either \eqref{CaseDMultiplier1} or \eqref{CaseDMultiplier2}, we see that, letting $\Lambda:=2^{-20\beta k}+2^{6\max(k_1,k_2,0)}$,
\begin{equation*}
\begin{split}
\|\varphi_k\cdot T_{2}\|_{L^\infty}&\lesssim 2^m \Lambda\sup_{s\in[2^{m-2},2^{m+2}]}\Vert(\partial_s\widehat{f^\mu_{k_1,j_1}}(s))\Vert_{L^2}\Vert \widehat{f^\nu_{k_2,j_2}}(s)\Vert_{L^2}\lesssim \Lambda 2^{-\beta m} 2^{-10\max(k_1,k_2,0)},\\
\|\varphi_k\cdot T_{3}\|_{L^\infty}&\lesssim 2^m\Lambda\sup_{s\in[2^{m-2},2^{m+2}]}\Vert \widehat{f}^\mu_{k_1,j_1}(s)\Vert_{L^2}\Vert (\partial_s\widehat{f}^\nu_{k_2,j_2}(s))\Vert_{L^2}\lesssim \Lambda 2^{-\beta m}2^{-10\max(k_1,k_2,0)},
\end{split}
\end{equation*}
and this gives acceptable contributions. Proceeding as above, using \eqref{nh9.1} we get
\begin{equation*}
\|\varphi_k\cdot T_{1}\|_{L^\infty}\lesssim \Lambda 2^{-(1-\beta)(j_1+j_2)}(1+2^{k_1})^{-10}(1+2^{k_2})^{-10}.
\end{equation*}
Therefore, this gives an acceptable contribution to \eqref{LINormN1D} unless
\begin{equation}\label{SmallParamD}
\vert k\vert+\vert k_1\vert+\vert k_2\vert+j_1+j_2\le \beta^2 m.
\end{equation}
Now, assuming that \eqref{SmallParamD} holds, we can strengthen the $L^\infty$ bound. Indeed, we decompose
\begin{equation*}
\begin{split}
T_{1}(\xi)&=T_{1;1}(\xi)+T_{1;2}(\xi)\\
T_{1;1}(\xi)&:=\int_{\mathbb{R}}\int_{\mathbb{R}^3}e^{is\Phi^{i;\mu,\nu}(\xi,\eta)}\frac{2^k}{\Phi^{i;\mu,\nu}(\xi,\eta)}\varphi(\delta^{-1}\vert\Xi^{\mu,\nu}(\xi,\eta)\vert)q^\prime_m(s)\cdot \widehat{f^\mu_{k_1,j_1}}(\xi-\eta,s)\widehat{f^\nu_{k_2,j_2}}(\eta,s)\,d\eta ds,\\
T_{1;2}(\xi)&:=\int_{\mathbb{R}}\int_{\mathbb{R}^3}e^{is\Phi^{i;\mu,\nu}(\xi,\eta)}\frac{2^k}{\Phi^{i;\mu,\nu}(\xi,\eta)}(1-\varphi(\delta^{-1}\vert\Xi^{\mu,\nu}(\xi,\eta))\vert)q^\prime_m(s)\cdot \widehat{f^\mu_{k_1,j_1}}(\xi-\eta,s)\widehat{f^\nu_{k_2,j_2}}(\eta,s)\,d\eta ds,\\
\end{split}
\end{equation*}
with $\delta=2^{-m/3}$. Applying Lemma \ref{tech5} with $K\approx 2^{2m/3}$, $\epsilon\approx2^{-m/3}$, it is easy to see that 
\begin{equation*}
\|\varphi_k\cdot T_{1;2}\|_{L^\infty}\lesssim 2^{-10m}
\end{equation*}
if \eqref{SmallParamD} holds, which is clearly sufficient. On the other hand, we observe that
\begin{equation*}
\begin{split}
\vert\Xi^{\mu,\nu}(\xi,\eta)\vert&\gtrsim \vert\nabla\widetilde{\Lambda}_\nu(\eta)\vert\cdot\min(\big| (\xi-\eta)/\vert\xi-\eta\vert-\eta/\vert\eta\vert\big|,\big|\vert (\xi-\eta)/\vert\xi-\eta\vert+\eta/\vert\eta\vert\big|)\\
&\gtrsim 2^{-\beta m}\min(\big| (\xi-\eta)/\vert\xi-\eta\vert-\eta/\vert\eta\vert\big|,\big| (\xi-\eta)/\vert\xi-\eta\vert+\eta/\vert\eta\vert\big|)
\end{split}
\end{equation*}
Consequently if $\vert \xi\vert\in [2^{k-2},2^{k+2}]$, $\vert\xi-\eta\vert\in[2^{k_1-2}2^{k_1+2}]$, $\vert\eta\vert\in [2^{k_2-2},2^{k_2+2}]$, and $\vert\Xi^{\mu,\nu}(\xi,\eta))\vert\lesssim 2^{-m/3}$ then 
\begin{equation*}
\min(\big| (\xi-\eta)/\vert\xi-\eta\vert-\eta/\vert\eta\vert\big|,\big| (\xi-\eta)/\vert\xi-\eta\vert+\eta/\vert\eta\vert\big|)\lesssim 2^{-m/4}.
\end{equation*}
A simple estimate using the $L^\infty$ bounds in \eqref{nh9} then gives $\|\varphi_k\cdot T_{1;1}\|_{L^\infty}\lesssim 2^{-m/6}$, which suffices to finish the proof of \eqref{LINormN1D}.

To finish the proof of \eqref{CaseDOKii2}, it suffices to prove that
\begin{equation}\label{SuffNRI1D}
2^{(1+\alpha)  k}2^{(1+\beta)m}\Vert P_kT_{m}^{i;\mu,\nu}(f^\mu_{k_1,j_1},f^\nu_{k_2,j_2})\Vert_{L^2}\lesssim 2^{-2\beta^4m}.
\end{equation}
Assume first that \eqref{CaseDMultiplier1} holds. Then we use the $L^2$ bounds
\begin{equation}\label{bvc1}
\begin{split}
&\| f^\mu_{k_1,j_1}(s)\|_{L^2}\lesssim (2^{\alpha k_1}+2^{10k_1})^{-1}2^{-(1-\beta)j_1}2^{2\beta k_1},\\
&\| f^\nu_{k_2,j_2}(s)\|_{L^2}\lesssim (2^{\alpha k_2}+2^{10k_2})^{-1}2^{-(1-\beta)j_2}2^{2\beta k_2},\\
&\|(\partial_s f^\mu_{k_1,j_1})(s)\|_{L^2}+\|(\partial_s f^\nu_{k_2,j_2})(s)\|_{L^2}\lesssim 2^{-m(1+\beta)},
\end{split}
\end{equation}
and the $L^\infty$ bounds
\begin{equation}\label{bvc2}
\begin{split}
&\| Ef^\mu_{k_1,j_1}(s)\|_{L^\infty}\lesssim \min(2^{-6k_1},2^{(1/2-\alpha-\beta)k_1})2^{-m(1+\beta)},\\
&\| Ef^\mu_{k_1,j_1}(s)\|_{L^\infty}\lesssim 2^{-m(5/4-10\beta)}2^{j_1(1/4-11\beta)},\\
&\| Ef^\nu_{k_2,j_2}(s)\|_{L^\infty}\lesssim \min(2^{-6k_2},2^{(1/2-\alpha-\beta)k_2})2^{-m(1+\beta)},\\
&\| Ef^\nu_{k_2,j_2}(s)\|_{L^\infty}\lesssim 2^{-m(5/4-10\beta)}2^{j_2(1/4-11\beta)},
\end{split}
\end{equation}
for any $s\in[2^{m-1},2^{m+1}]$, see \eqref{nh9}--\eqref{ok50repeat}. Using the assumption \eqref{CaseDMultiplier1} and Lemma \ref{tech2}, it follows that
\begin{equation*}
\begin{split}
&2^{\alpha k}2^{(1+\beta)m}\Vert P_kT_1\Vert_{L^2}\lesssim 2^{(1+\beta)m}\cdot 2^{-m(5/4-10\beta)}2^{\min(j_1,j_2)(1/4-11\beta)}2^{-(1-\beta)\max(j_1,j_2)}\lesssim 2^{-\beta m},\\
&2^{\alpha k}2^{(1+\beta)m}\Vert P_kT_2\Vert_{L^2}\lesssim 2^{(1+\beta)m}\cdot 2^m2^{-m(1+\beta)}2^{-m(1+\beta)}\lesssim 2^{-\beta m},\\
&2^{\alpha k}2^{(1+\beta)m}\Vert P_kT_3\Vert_{L^2}\lesssim 2^{(1+\beta)m}\cdot 2^m2^{-m(1+\beta)}2^{-m(1+\beta)}\lesssim 2^{-\beta m},
\end{split}
\end{equation*}
and the desired bound \eqref{SuffNRI1D} follows.

Assume now that \eqref{CaseDMultiplier2} holds. We estimate as before, using however the stronger $L^\infty$ bounds
\begin{equation}\label{bvc3}
\begin{split}
&\| Ef^\mu_{k_1,j_1}(s)\|_{L^\infty}\lesssim 2^{-m(3/2-10\beta)}2^{j_1(1/2-11\beta)},\\
&\| Ef^\nu_{k_2,j_2}(s)\|_{L^\infty}\lesssim 2^{-m(3/2-10\beta)}2^{j_2(1/2-11\beta)},
\end{split}
\end{equation}
see \eqref{mk15.6} and \eqref{mk15.65}. Then we estimate, using \eqref{bvc1}, \eqref{bvc3}, and Lemma \ref{tech2},
\begin{equation*}
\begin{split}
2^{\alpha k}2^{(1+\beta)m}\Vert P_kT_1\Vert_{L^2}&\lesssim 2^{(1+\beta)m}\cdot 2^{m/3}\cdot 2^{-m(3/2-10\beta)}2^{\min(j_1,j_2)(1/2-11\beta)}2^{-(1-\beta)\max(j_1,j_2)}\\
&\lesssim 2^{m(4/3-3/2+12\beta)}\lesssim 2^{-\beta^2 m}.
\end{split}
\end{equation*}
Similarly, we estimate, for $l\in\{2,3\}$
\begin{equation*}
\begin{split}
2^{\alpha k}2^{(1+\beta)m}\Vert P_kT_l\Vert_{L^2}&\lesssim 2^{(1+\beta)m}\cdot 2^m2^{m/3}2^{-m(1+\beta)}2^{-m(3/2-10\beta)}2^{\max(j_1,j_2)(1/2-11\beta)}\\
&\lesssim 2^{m(4/3-3/2+12\beta)}\lesssim 2^{-\beta^2m}.
\end{split}
\end{equation*}
The desired bound \eqref{SuffNRI1D} follows from the last two estimates.

(ii) Without loss of generality we may assume that $k_1\geq k_2$. Let
\begin{equation*}
K(x,y):=\int_{\mathbb{R}^3}\int_{\mathbb{R}^3}e^{ix\cdot\xi}e^{iy\cdot\eta}\frac{1}{\Phi^{i;\mu,\nu}(\xi,\eta)}
\varphi_{[k-4,k+4]}(\xi)\varphi_{[k_1-4,k_1+4]}(\xi-\eta)\varphi_{[k_2-4,k_2+4]}(\eta)\,d\xi d\eta.
\end{equation*}
We can integrate by parts to prove suitable estimates on the kernel $K$. We use the general formula
\begin{equation}\label{Der1/Phi}
\partial^a_\xi\partial^b_\eta\frac{1}{\Phi}=\sum_{n\leq |a|+|b|,\,|a_1|+\ldots+|a_n|=|a|,\,|b_1|+\ldots+|b_n|=|b|}c_{a_1,\ldots a_n,b_1,\ldots b_n}\frac{1}{\Phi}\frac{\partial^{a_1}_\xi\partial^{b_1}_\eta\Phi}{\Phi}\ldots\frac{\partial^{a_n}_\xi\partial^{b_n}_\eta\Phi}{\Phi},
\end{equation}
for any multi-indices $a$ and $b$, which follows easily by induction. 

It follows from \eqref{bvc0} that
\begin{equation*}
|K(x,y)|(1+2^{\widetilde{k}}|x|)^4(1+2^{\widetilde{k}_2}|y|)^4\lesssim 2^{3k}2^{3k_2}\left[2^{\widetilde{k}}+2^{\widetilde{k_1}}+2^{\widetilde{k_2}}\right]^{-1}\lesssim 2^{-\widetilde{k}_1}2^{3(k+k_2)}
\end{equation*}
for any $x,y\in\mathbb{R}^3$. Therefore $\|K\|_{L^1(\mathbb{R}^3\times\mathbb{R}^3)}\lesssim 2^{-\widetilde{k}_1}2^{6\max(0,k_1)}$ as desired.
\end{proof}

We will also need the following result whose proof is identical to the one of the first case of $(i)$ above since we can always use Lemma \ref{CZop} to pass from $f^\mu_{k_1,j_1}$ to $Q_1f^\mu_{k_1,j_1}$ and from $f^\nu_{k_2,j_2}$ to $Q_2f^\nu_{k_2,j_2}$.

\begin{lemma}\label{AddedLemmaCasD1.0001}

Let $q_0,q_1,q_2\in\mathcal{S}^{100}$ and define the operators $Q_0$, $Q_1$ and $Q_2$ by
\begin{equation*}
\mathcal{F}\left[Q_jf\right](\xi)=q_j(\xi)\widehat{f}(\xi).
\end{equation*}
With the hypothesis in Proposition \ref{CaseCProp}, assume in addition that
\begin{equation}\label{CaseDMultiplier1Q}
\Big\| \mathcal{F}^{-1}
\left\{\frac{q_0(\xi)q_1(\xi-\eta)q_2(\eta)}{\Phi^{i;\mu,\nu}(\xi,\eta)}
\varphi_{[k-4,k+4]}(\xi)\varphi_{[k_1-4,k_1+4]}(\xi-\eta)\varphi_{[k_2-4,k_2+4]}(\eta)\right\}
\Big\|_{L^1(\mathbb{R}^3\times\mathbb{R}^3)}\lesssim 2^{-k}2^{6\max(k_1,k_2,0)},
\end{equation}
then, for any $\kappa\in(0,1]$, $\kappa\geq\max\big(2^{(\beta^2m-m)/2}2^{-\min(k_1,k_2,0)/2}2^{-D/2},2^{\beta^2m-m}2^{\max(j_1,j_2)}\big)$,
\begin{equation}\label{CaseDQOKii}
2^{k}\Vert \widetilde{\varphi}^{(k)}_j\cdot P_kR_{m,\kappa}^{\sigma;\mu,\nu}(Q_1f^\mu_{k_1,j_1},Q_2f^\nu_{k_2,j_2})\Vert_{B^1_{k,j}}\lesssim 2^{-2\beta^4 m}.
\end{equation}

\end{lemma}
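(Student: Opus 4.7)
The plan is to imitate verbatim the proof of Lemma \ref{CaseDIBPS}(i) in the regime \eqref{CaseDMultiplier1}, tracking the additional symbol $q_0(\xi)q_1(\xi-\eta)q_2(\eta)$ through the computation. As a first step I would apply the non-resonant reduction of Lemma \ref{NewBigBound1} (which is insensitive to the insertion of bounded symbols) to reduce the task to bounding $2^{k}\|\widetilde\varphi^{(k)}_j\cdot P_kT_m^{i;\mu,\nu}(Q_1f^\mu_{k_1,j_1},Q_2f^\nu_{k_2,j_2})\|_{B^1_{k,j}}$, and absorb $q_0(\xi)$ into the output cutoff $\varphi_{[k-4,k+4]}$.

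Next, I would integrate by parts in $s$ exactly as in \eqref{IBTNRI2D}, producing three terms $T_1, T_2, T_3$ in which $1/\Phi^{i;\mu,\nu}$ is now multiplied by $q_0(\xi)q_1(\xi-\eta)q_2(\eta)$. For each of these terms I would apply Lemma \ref{tech2}, whose only input is the $L^1$ norm of the inverse Fourier transform of the multiplier---precisely the content of the hypothesis \eqref{CaseDMultiplier1Q}. The input-profile bounds \eqref{bvc1}--\eqref{bvc2} transfer unchanged to $Q_1 f^\mu_{k_1,j_1}$ and $Q_2 f^\nu_{k_2,j_2}$: the $L^2$ estimates are preserved by $Q_1, Q_2$ since $|q_j|\le 1$, the $L^\infty$ bounds on the linear evolutions are preserved because $Q_1, Q_2$ commute with the free flow and are bounded on the relevant atomic spaces by Lemma \ref{CZop}, and the derivative estimates \eqref{ok50repeat} transfer because $Q_1, Q_2$ commute with $\partial_s$. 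This yields the analogue of the $L^2$ bound \eqref{SuffNRI1D} for the modified operator with the same constants.

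For the companion $L^\infty$ estimate corresponding to \eqref{LINormN1D}, I would follow the same three-step case analysis: the main contribution from $T_2, T_3$ is estimated by Cauchy--Schwarz using the $L^1$ kernel bound, and the leftover term $T_1$ is treated either via an $L^2\times L^2$ bound when the parameters are not all small, or, in the remaining regime $|k|+|k_1|+|k_2|+j_1+j_2\le \beta^2 m$, by the further splitting $T_1=T_{1;1}+T_{1;2}$ at scale $|\Xi^{\mu,\nu}|\sim 2^{-m/3}$. The non-stationary piece $T_{1;2}$ is controlled by Lemma \ref{tech5} applied to a symbol that still satisfies $|q_0q_1q_2|\le 1$ together with the required symbol-type estimates, while $T_{1;1}$ is controlled by the angular-concentration argument of the original proof, which only uses the pointwise $L^\infty$ profile bounds in \eqref{nh9}.

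The main (and essentially only) technical point is verifying that Lemma \ref{CZop} preserves every input-profile norm that appears. Since Lemma \ref{CZop} is precisely the statement that $\mathcal{S}^{100}$-multipliers act boundedly on $L^2$, $L^\infty$, and on the atoms underlying the norms $B^1_{k,j}$ and $B^2_{k,j}$ used in \eqref{nh9}--\eqref{equl}, this is immediate and no new analytic ideas are required. Accordingly, the conclusion \eqref{CaseDQOKii} follows by rewriting the argument of Lemma \ref{CaseDIBPS}(i) verbatim with $f^\mu_{k_1,j_1}, f^\nu_{k_2,j_2}$ replaced by $Q_1f^\mu_{k_1,j_1}, Q_2f^\nu_{k_2,j_2}$ and the kernel $1/\Phi^{i;\mu,\nu}$ replaced by $q_0q_1q_2/\Phi^{i;\mu,\nu}$.
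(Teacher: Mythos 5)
Your proposal is correct and follows essentially the same route as the paper: the paper dismisses this lemma in one line, noting the proof is \textquotedblleft identical to the one of the first case of $(i)$\textquotedblright\ (i.e.\ the \eqref{CaseDMultiplier1} regime of Lemma \ref{CaseDIBPS}) because Lemma \ref{CZop} lets one pass from $f^\mu_{k_1,j_1}$ to $Q_1f^\mu_{k_1,j_1}$ and from $f^\nu_{k_2,j_2}$ to $Q_2f^\nu_{k_2,j_2}$, and the only place the multiplier enters is through the $L^1$ kernel bound \eqref{CaseDMultiplier1Q}. You have spelled out the same observations in more detail (transfer of the profile bounds under $Q_1,Q_2$, insensitivity of Lemma \ref{tech2} and Lemma \ref{tech5} to the extra $\mathcal{S}^{100}$ factors), so the argument matches.
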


\subsection{Proof of Proposition \ref{CaseCProp}} 
We divide the proof in several cases. We consider first the easier phases.

\begin{lemma}\label{CaseDProp}
The bound \eqref{CaseCOK} holds if \eqref{Bds1ND2} holds and, in addition,
\begin{equation}\label{bvc80}
\Phi^{\sigma;\mu,\nu}\in\in\{\Phi^{i;i-,i-},\Phi^{i;e+,e-},\Phi^{i;e+,b-},\Phi^{i;e-,b+},\Phi^{i;b+,b-}\},
\end{equation}
with $\kappa=\max\big(2^{(\beta^2m-m)/2}2^{-\min(k_1,k_2,0)/2}2^{-D/2},2^{\beta^2m-m}2^{\max(j_1,j_2)}\big)$.
\end{lemma}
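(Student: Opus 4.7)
The plan is to reduce Lemma~\ref{CaseDProp} to Lemma~\ref{CaseDIBPS}(i) by verifying, for each of the five phases in \eqref{bvc80}, the pointwise lower bound \eqref{bvc0}; by Lemma~\ref{CaseDIBPS}(ii), this in turn delivers the kernel estimate \eqref{CaseDMultiplier1}, which is all that is needed. The essential features of the regime are that $k\le -D/4$, so $|\xi|\approx 2^k$ is very small, and that frequency conservation forces $|k_1-k_2|=O(1)$ whenever $\min(k_1,k_2)>k+O(1)$.

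For $\Phi^{i;i-,i-}=\lambda_i(\xi)+\lambda_i(\xi-\eta)+\lambda_i(\eta)$, every summand is nonnegative; the expansion of $\lambda_i$ near zero from \eqref{operators1} together with the linear growth $\lambda_i(r)\approx r$ for large $r$ gives $\lambda_i(r)\gtrsim_{C_b,T,\varepsilon} r(1+r)^{-1}$ on the relevant range, hence
\begin{equation*}
|\Phi^{i;i-,i-}(\xi,\eta)|\ge \lambda_i(\max(|\xi|,|\xi-\eta|,|\eta|))\gtrsim 2^{\widetilde k}+2^{\widetilde{k_1}}+2^{\widetilde{k_2}},
\end{equation*}
and Lemma~\ref{CaseDIBPS} closes this case.

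For the four remaining phases the crucial ingredient is the Taylor expansion at the origin: from \eqref{operators1} one has $\lambda_e(0)=\lambda_b(0)=\sqrt{(1+\varepsilon)/\varepsilon}$, together with $\lambda_e(r)-\lambda_e(0)\approx Tr^2/(2\sqrt{\varepsilon})$ and $\lambda_b(r)-\lambda_b(0)\approx C_b r^2/(2\sqrt{\varepsilon})$ for small $r$, while $\lambda_e(r)\approx \sqrt{T/\varepsilon}\,r$ and $\lambda_b(r)\approx \sqrt{C_b/\varepsilon}\,r$ for large $r$. Substituting into the phases yields representations such as
\begin{equation*}
\Phi^{i;e+,b-}(\xi,\eta)=\lambda_i(\xi)+\frac{C_b|\eta|^2-T|\xi-\eta|^2}{2\sqrt{\varepsilon}}+\text{higher order},
\end{equation*}
with the obvious analogues for $\Phi^{i;e+,e-}$, $\Phi^{i;e-,b+}$, $\Phi^{i;b+,b-}$. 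Since $C_b\ge 6T$ by \eqref{condTeps}, the quadratic correction is $\gtrsim |\eta|^2/\sqrt{\varepsilon}$ whenever $|\eta|\gtrsim |\xi|$, while $\lambda_i(\xi)\approx \lambda_i'(0)\,2^k$ dominates when $|\eta|\lesssim |\xi|$. Splitting into the three natural subregimes $|\eta|\lesssim |\xi|$, $|\xi|<|\eta|\le 1$, and $|\eta|\gtrsim 1$ then delivers \eqref{bvc0} in each.

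The main obstacle is the intermediate window $|\xi|<|\eta|\lesssim \varepsilon^{1/4}|\xi|^{1/2}$ inside the second subregime, where neither $\lambda_i(\xi)\approx 2^k$ nor the quadratic correction $|\eta|^2/\sqrt{\varepsilon}$ is obviously larger than $2^{\widetilde{k_2}}=|\eta|$. There I would fall back on the alternative hypothesis \eqref{CaseDMultiplier2} of Lemma~\ref{CaseDIBPS}(i); its weaker requirement $|\Phi|\gtrsim 2^{k+k_1+k_2}$ follows from $|\eta|\gtrsim 2^k$ and $k_1\approx k_2$, since then $|\eta|^2\gtrsim 2^{k+k_2}\gtrsim 2^{k+k_1+k_2}$. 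The restriction $\max(j_1,j_2)\le 3\beta m$ inherited from \eqref{CaseDMultiplier2} is acceptable because the complementary range $\max(j_1,j_2)\ge 3\beta m$ within this window is controlled directly by the $L^2$-based time-integration argument used in Lemma~\ref{NewBigBound1}, exploiting the refined bound \eqref{nh9.1}.
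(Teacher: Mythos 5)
Your plan for $\Phi^{i;i-,i-}$ is fine and matches the paper: positivity of $\lambda_i$ plus $\lambda_i(r)\geq r$ gives \eqref{bvc0}, and Lemma \ref{CaseDIBPS} closes that case. But for the other four phases your scheme has a genuine gap: neither \eqref{bvc0} nor the alternative \eqref{CaseDMultiplier2} can be verified in the regime where $|\xi|$ is much smaller than $|\eta|\approx|\xi-\eta|$, and the paper handles that regime by an entirely different route.

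Concretely, your Taylor expansion is wrong for the two phases with $\sigma_1=\sigma_2$. For $\Phi^{i;e+,e-}=\lambda_i(|\xi|)-\lambda_e(|\xi-\eta|)+\lambda_e(|\eta|)$ the quadratic parts of $\lambda_e$ cancel: $\lambda_e(|\eta|)-\lambda_e(|\xi-\eta|)$ is controlled by $\lambda_e'(|\eta|)\cdot\big||\eta|-|\xi-\eta|\big|\lesssim \sqrt{T/\varepsilon}\,|\eta|\,|\xi|$, not by $|\eta|^2/\sqrt{\varepsilon}$, and it can carry either sign; the same is true for $\Phi^{i;b+,b-}$. Even for $\Phi^{i;e\pm,b\mp}$, where the correction really is $\gtrsim (C_b-T)|\eta|^2/\sqrt{\varepsilon}$, the lower bound you actually get in the regime $|\xi|\ll|\eta|\approx|\xi-\eta|\ll 1$ is $|\Phi|\gtrsim |\xi|+|\eta|^2$, which is far smaller than $2^{\widetilde{k}}+2^{\widetilde{k_1}}+2^{\widetilde{k_2}}\approx|\eta|$. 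So \eqref{bvc0} simply fails there. Your fallback to \eqref{CaseDMultiplier2} also does not close: the window you identify as problematic ($|\eta|\lesssim\varepsilon^{1/4}|\xi|^{1/2}$) is not the real one, and you never verify the $L^1$ kernel estimate required there; moreover the hypothesis $\max(k_1,k_2)\geq 10\beta k$ of \eqref{CaseDMultiplier2} can fail in the admissible range. Finally, invoking Lemma \ref{NewBigBound1} for $\max(j_1,j_2)\geq 3\beta m$ is not appropriate: that lemma bounds the non-resonant pieces $N^{1}$ and $N^{2}_{m,\kappa}$, not the resonant operator $R^{\sigma;\mu,\nu}_{m,\kappa}$, which is what Lemma \ref{CaseDProp} is about.

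The paper uses \eqref{bvc0} only in the easy case $2\max(k_1,k_2)\leq k - D/10$. In the complementary hard case it does not try to make $1/\Phi$ bounded at all. Instead it observes that $|k_1-k_2|\leq 8$, $k<\min(k_1,k_2)-20$, and that $|\Xi^{\mu,\nu}|$ satisfies the lower bound \eqref{CaseDXi}; since $R^{\sigma;\mu,\nu}_{m,\kappa}$ carries the cutoff $\varphi(|\Xi^{\mu,\nu}|/\kappa)$ with $\kappa$ small, the operator vanishes identically once \eqref{CaseDLargek} holds. When \eqref{CaseDLargek} fails, $\max(k_1,k_2)$ (or $k$) is forced small, and a direct Plancherel-plus-dispersive estimate \eqref{bvc95} suffices; the leftover case $\sigma_1=\sigma_2$, $k\geq -3m/4$ is handled with a stationary-phase (finite speed of propagation) argument using the smoothness of $\Lambda_{\sigma_1}$. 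In short, what you are missing is the use of the $\Xi$-cutoff built into $R^{\sigma;\mu,\nu}_{m,\kappa}$ together with the lower bound on $\Xi^{\mu,\nu}$ from \eqref{CaseDXi}; the normal-form route via Lemma \ref{CaseDIBPS} cannot cover the full range of $(k,k_1,k_2)$.
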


\begin{proof}[Proof of Lemma \ref{CaseDProp}] The condition \eqref{bvc0} is clearly satisfied if $\Phi^{\sigma;\mu,\nu}=\Phi^{i;i-,i-}$. This condition is also satisfied if 
\begin{equation*}
\Phi^{\sigma;\mu,\nu}\in\in\{\Phi^{i;e+,e-},\Phi^{i;e+,b-},\Phi^{i;e-,b+},\Phi^{i;b+,b-}\}\qquad\text{ and }\qquad 2\max(k_1,k_2)\le k-D/10.
\end{equation*}
Indeed, in this case
\begin{equation*}
\vert\Phi^{i;\mu,\nu}(\xi,\eta)\vert\ge \Lambda_i(\xi)-|\Lambda_{\sigma_1}(\xi-\eta)-\Lambda_{\sigma_2}(\eta)|\gtrsim \vert\xi\vert-C_{C_b,\varepsilon}(\vert\xi-\eta\vert^2+\vert\eta\vert^2)\gtrsim 2^k.
\end{equation*}
The desired bound \eqref{CaseCOK} then follows from Lemma \ref{CaseDIBPS} in these cases.

It remains to prove the proposition in the case
\begin{equation}\label{bvc20}
\Phi^{\sigma;\mu,\nu}\in\in\{\Phi^{i;e+,e-},\Phi^{i;e+,b-},\Phi^{i;e-,b+},\Phi^{i;b+,b-}\}\qquad\text{ and }\qquad k\le 2\max(k_1,k_2)+D/10.
\end{equation}
In this case, since $k\le -D/4$ from \eqref{Bds1ND2}, we remark that $k<\min(k_1,k_2)-20$, $\vert k_1-k_2\vert\le 8$ (otherwise, we would have $k\ge\max(k_1,k_2)-4$, which is incompatible with \eqref{bvc20} and $k\le -D/4$) and that
\begin{equation}\label{CaseDXi}
\begin{split}
\vert\Xi^{\mu,\nu}(\xi,\eta)\vert\gtrsim_{C_b,\varepsilon}
\begin{cases}
\vert\xi\vert(1+\vert\xi-\eta\vert+\vert\eta\vert)^{-3}&\quad\hbox{if }\sigma_1=\sigma_2,\\
(\vert\eta\vert+\vert\xi-\eta\vert)(1+\vert\xi-\eta\vert+\vert\eta\vert)^{-3}&\quad\hbox{if }\sigma_1\ne\sigma_2.
\end{cases}
\end{split}
\end{equation}
These inequalities follow from Lemma \ref{tech99}. Consequently, we see that if
\begin{equation}\label{CaseDLargek}
\begin{split}
k&\ge 2\beta m-m/2-\min(k_1,k_2,0)/2\quad\hbox{if}\quad\sigma_1=\sigma_2,\\
\max(k_1,k_2)&\ge 2\beta m-m/2-\min(k_1,k_2,0)/2\quad\hbox{if}\quad\sigma_1\ne\sigma_2,
\end{split}
\end{equation}
then $P_kR^{i;\mu,\nu}_{m,\kappa}=0$, since $\kappa=\max(2^{-(1-\beta^2)m/2}2^{-\min(k_1,k_2,0)/2}2^{-D/2},2^{\max(j_1,j_2)-(1-\beta^2)m})$. The desired bound \eqref{CaseCOK} becomes trivial in this case.

Independently, using Lemma \ref{tech2} and \eqref{ccc7}, \eqref{ccc25}, we directly see that
\begin{equation*}
\begin{split}
\Vert P_kT_{m}^{i;\mu,\nu}&(f^\mu_{k_1,j_1},f^\nu_{k_2,j_2})\Vert_{L^2}\\
&\lesssim 2^m\sup_{s\in[2^{m-4},2^{m+4}]}\min(\Vert Ef^\mu_{k_1,j_1}(s)\Vert_{L^\infty}\Vert f^\nu_{k_2,j_2}(s)\Vert_{L^2},\Vert f^\mu_{k_1,j_1}(s)\Vert_{L^2}\Vert Ef^\nu_{k_2,j_2}(s)\Vert_{L^\infty})\\
&\lesssim 2^m2^{-3m/2},\\
\end{split}
\end{equation*}
from which we deduce that
\begin{equation*}
\begin{split}
2^{(1+\alpha)k}2^{(1+\beta)j}\Vert P_kT_{m}^{i;\mu,\nu}(f^\mu_{k_1,j_1},f^\nu_{k_2,j_2})\Vert_{L^2}
\lesssim 2^{(1+\alpha)k}2^{(1/2+\beta)m}.
\end{split}
\end{equation*}
In addition
\begin{equation*}
\begin{split}
2^{(3/2-\beta+\alpha)k}\Vert \mathcal{F}P_kT_{m}^{i;\mu,\nu}(f^\mu_{k_1,j_1},f^\nu_{k_2,j_2})\Vert_{L^\infty}&\lesssim 2^{(3/2-\beta+\alpha)k}2^m\sup_{s\in [2^{m-4},2^{m+4}]}\Vert f^\mu_{k_1,j_1}(s)\Vert_{L^2}\Vert f^\nu_{k_2,j_2}(s)\Vert_{L^2}\\
&\lesssim 2^{(3/2-\beta+\alpha)k}2^m.
\end{split}
\end{equation*}
Therefore
\begin{equation}\label{bvc95}
\begin{split}
2^k\Vert \widetilde{\varphi}^{(k)}_j\cdot P_kT_{m}^{i;\mu,\nu}(f^\mu_{k_1,j_1},f^\nu_{k_2,j_2})\Vert_{B^1_{k,j}}&\lesssim 2^{(1+\alpha)k}2^{(1/2+\beta)m}
+2^{(3/2-\beta+\alpha)k}2^m.
\end{split}
\end{equation}

Using also \eqref{NRI}, this gives the desired bound \eqref{CaseCOK} if $\sigma_1\ne\sigma_2$ and \eqref{CaseDLargek} does not hold, since in this case the right-hand side of \eqref{bvc95} is dominated by $C2^{-m/10}$.

If $\sigma_1=\sigma_2$, and $k\le -3m/4$, then \eqref{CaseCOK} follows from \eqref{bvc95} and \eqref{NRI}. If $k\ge-3m/4$, since $\Lambda_{\sigma_1}$ is smooth when $\sigma_1\in\{e,b\}$, we observe that
\begin{equation*}
\vert \partial_{\eta}^{\rho}\Phi^{i;\sigma_1+,\sigma_1-}(\xi,\eta)\vert\lesssim_{\rho} 2^k, \quad\forall\,\, \vert\rho\vert\ge 2,\,\,\sigma_1\in\{e,b\},
\end{equation*}
as long as $|\xi|\leq 2^{k+4}$. Besides, from \eqref{Bds1ND2}, we have that $\max(j_1,j_2)\le (1-\beta/10)m+k$. Therefore we can use Lemma \ref{tech5} with $K=2^{(1-\beta/20)m+k}$, $\epsilon=2^{-\max(j_1,j_2)}$ to conclude that
\begin{equation*}
\vert P_kT_{m}^{i;\mu,\nu}(f^\mu_{k_1,j_1},f^\nu_{k_2,j_2})(\xi)\vert\lesssim 2^{-10m}
\end{equation*}
from which the desired inequality \eqref{CaseCOK} follows easily.
\end{proof}

We consider now the remaining two phases. A key observation is the weak ellipticity bound
\begin{equation}\label{BdPhiiii}
\lambda_i(a)+\lambda_i(b)-\lambda_i(a+b)\gtrsim_{C_b,\varepsilon}a\min(1,b)^2\qquad\text{ if }0\leq a\leq b\text{ and }a\in[0,2^{-D/20}].
\end{equation}
Indeed, using Lemma \ref{tech99}, if $b\leq r_\ast/2$ then
\begin{equation*}
\lambda_i(a)+\lambda_i(b)-\lambda_i(a+b)=\int_0^a[\lambda'(r)-\lambda'(b+r)]\,dr=\int_0^a\int_0^b -\lambda''_i(r+s)\,drds\approx_{C_b,\varepsilon}ab^2.
\end{equation*}
On the other hand, if $b\geq r_\ast/2$ then
\begin{equation*}
\lambda_i(a)+\lambda_i(b)-\lambda_i(a+b)=\int_0^a[\lambda'(r)-\lambda'(b+r)]\,dr\geq \int_0^a[q_i(r)+rq'_i(r)-q_i(b+r)]\,dr\approx_{C_b,\varepsilon}a,
\end{equation*}
and the desired lower bound \eqref{BdPhiiii} follows.

We prove first the required $L^\infty$ bounds. 

\begin{lemma}\label{CaseCLinftyLem}
Assume that \eqref{Bds1ND2} holds and
\begin{equation*}
\Phi^{\sigma;\mu,\nu}\in\in\{\Phi^{i;i+,i+},\Phi^{i;i+,i-}\}.
\end{equation*}
Then, for any $\kappa\in(0,1]$, $\kappa\geq \max\big(2^{(\beta^2m-m)/2}2^{-\min(k_1,k_2,0)/2}2^{-D/2},2^{\beta^2m-m}2^{\max(j_1,j_2)}\big)$, we have
\begin{equation}\label{CaseCLinftyNormOK}
2^{(3/2+\alpha-\beta)k}\Vert \mathcal{F}P_kR_{m,\kappa}^{i;\mu,\nu}(f^\mu_{k_1,j_1},f^\nu_{k_2,j_2})\Vert_{L^\infty}\lesssim 2^{-2\beta^4m}.
\end{equation}
\end{lemma}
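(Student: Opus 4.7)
The strategy combines two ingredients: an application of Lemma \ref{CaseDIBPS}(ii) in parameter regimes where the weak phase-ellipticity \eqref{BdPhiiii} delivers hypothesis \eqref{bvc0}, together with a direct volumetric $L^\infty$ estimate elsewhere. The starting point is to establish pointwise lower bounds on each phase using \eqref{BdPhiiii}. For $\Phi^{i;i+,i+}(\xi,\eta) = \lambda_i(|\xi|) - \lambda_i(|\xi-\eta|) - \lambda_i(|\eta|)$, setting $a = \min(|\xi-\eta|,|\eta|)$ and $b = \max(|\xi-\eta|,|\eta|)$ and using $|\xi| \le a+b$ with monotonicity of $\lambda_i$ gives
\[
|\Phi^{i;i+,i+}(\xi,\eta)| \;\ge\; \lambda_i(a) + \lambda_i(b) - \lambda_i(a+b) \;\gtrsim_{C_b,\varepsilon}\; 2^{\min(k_1,k_2)}\min(1, 2^{\max(k_1,k_2)})^2,
\]
and swapping the roles of the three vectors yields $|\Phi^{i;i+,i-}(\xi,\eta)| \gtrsim_{C_b,\varepsilon} 2^{\min(k,k_2)}\min(1,2^{\max(k,k_2)})^2$. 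In parameter ranges where these bounds dominate $2^{\widetilde{k}}+2^{\widetilde{k_1}}+2^{\widetilde{k_2}}$, the hypothesis \eqref{bvc0} holds and Lemma \ref{CaseDIBPS}(ii) gives the full $B^1_{k,j}$-bound \eqref{CaseDOKii}, which in particular implies \eqref{CaseCLinftyNormOK}.

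For the complementary parameter range I will use the direct estimate
\[
\|\mathcal{F}P_k R_{m,\kappa}^{i;\mu,\nu}(f^\mu_{k_1,j_1},f^\nu_{k_2,j_2})\|_{L^\infty} \;\lesssim\; 2^m\cdot\sup_\xi V(\xi)\cdot \sup_s\|\widehat{f^\mu_{k_1,j_1}}(s)\|_{L^\infty}\|\widehat{f^\nu_{k_2,j_2}}(s)\|_{L^\infty},
\]
where $V(\xi)$ is the $\eta$-volume of the support of $\chi_R^{i;\mu,\nu}(\xi,\cdot)$. The critical ingredient is an \emph{anisotropic} volume bound. For $\Phi^{i;i+,i+}$, the condition $|\Xi^{i+,i+}| = |\nabla\lambda_i(\eta-\xi)+\nabla\lambda_i(\eta)|\le\kappa$ forces the two gradients to be nearly antiparallel with equal norms; since $\lambda'_i>0$ is monotone in the relevant range, this pins $\eta$ near the unique critical point $\eta_0 = \xi/2$, forcing $k_1\approx k_2\approx k$. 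A direct computation of the Hessian of $\lambda_i(|\eta-\xi|)+\lambda_i(|\eta|)$ at $\eta_0$ gives eigenvalues $2\lambda''_i(|\xi|/2)\approx -T\cdot 2^k$ along $\xi$ and $2\lambda'_i(|\xi|/2)/(|\xi|/2)\approx 2^{2-k}$ in the two perpendicular directions. Consequently $\{|\Xi^{i+,i+}|\le\kappa\}$ is an anisotropic ellipsoid with radial semi-axis $\kappa\cdot 2^{-k}$ and tangential semi-axes $\kappa\cdot 2^k$ each, giving
\[
V(\xi) \;\lesssim\; \kappa\cdot 2^{-k}\cdot (\kappa\cdot 2^k)^2 \;=\; \kappa^3\cdot 2^k,
\]
which is the crucial gain of $2^k$ over the naive $\kappa^3$.

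Substituting $V(\xi)\lesssim\kappa^3\cdot 2^k$, the $L^\infty$ Fourier-side bounds $\|\widehat{f^\mu_{k_1,j_1}}\|_{L^\infty}\lesssim 2^{-(1/2-\beta+\alpha)\widetilde{k_1}}$ from \eqref{nh9}, and the minimal admissible value of $\kappa$ (either $\kappa\approx 2^{(\beta^2-1)m/2-k/2-D/2}$ or $\kappa\approx 2^{\max(j_1,j_2)-(1-\beta^2)m}$), a straightforward computation---using $k_1\approx k_2\approx k\le-D/4$ and $m\ge-k(1+\beta^2)$---yields, after multiplication by the weight $2^{(3/2+\alpha-\beta)k}$, a bound of the form $2^{-m(1-3\beta^2)/2+k\beta/2}\lesssim 2^{-2\beta^4 m}$, as required. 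The analogous analysis for $\Phi^{i;i+,i-}$ proceeds differently: its strict-resonance set $\Xi^{i+,i-}=0$ reduces to $\xi=0$, so for $\xi\neq 0$ the set $|\Xi^{i+,i-}|\le\kappa$ is non-empty only when $\kappa$ exceeds a $\xi$-dependent threshold; the resulting volume estimate, combined with the strong phase constraint $|\Phi^{i;i+,i-}|\le 2^{-D^2-\max(0,k_1,k_2)}$ (reinforced by the explicit lower bound on $|\Phi^{i;i+,i-}|$ above), severely restricts the support and closes the case analogously.

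The main obstacle will be establishing the anisotropic bound $V(\xi)\lesssim\kappa^3\cdot 2^k$ with the correct $2^k$-gain: the naive $\kappa^3$ estimate would leave a factor $2^{-k}$ unaccounted for and fail precisely at the borderline $m\approx -k$, so getting the right geometric gain from the precise Hessian eigenvalue computation---weakness $\sim 2^k$ along the radial direction balanced against strength $\sim 2^{-k}$ in the tangential directions---is essential. A secondary subtlety is the $\Phi^{i;i+,i-}$ analysis, where the degenerate critical-set geometry requires separate care; there, one may additionally need to integrate by parts in $s$ using the ellipticity $|\Phi^{i;i+,i-}|\gtrsim 2^{\min(k,k_2)}\min(1,2^{\max(k,k_2)})^2$ when the volumetric argument alone proves insufficient.
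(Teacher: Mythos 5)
Your proposal takes a genuinely different route from the paper. The paper's proof integrates by parts in $s$ (writing $T_m = i[T_1+T_2+T_3]$ with each term carrying a factor $1/\Phi^{i;\mu,\nu}$), then uses the pointwise weak-ellipticity \eqref{BdPhiiii} to bound $|1/\Phi^{i;\mu,\nu}|\lesssim 2^{-k-\widetilde{k_1}-\widetilde{k_2}}$ and Cauchy--Schwartz together with the decay of $\partial_s f$ from \eqref{ok50repeat}; a single further decomposition in $|\Xi^{\mu,\nu}|$ handles the one case $|k|+|k_1|+|k_2|+j_1+j_2\le\beta^2 m$. This works directly on $T_m$ (after the reduction \eqref{NRI}) and avoids any analysis of the geometry of $\{|\Xi^{\mu,\nu}|\le\kappa\}$. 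Your volumetric approach is conceptually interesting and the Hessian computation at $\eta_0=\xi/2$ is correct, but it has several gaps that prevent it from closing as written.

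First, the case split on \eqref{bvc0} is not effective for $\Phi^{i;i+,i+}$: you show $|\Phi^{i;i+,i+}|\gtrsim 2^{\min(k_1,k_2)}\min(1,2^{\max(k_1,k_2)})^2$, and in the interesting regime $k_1\approx k_2\approx k\le -D/4$ this is $\approx 2^{3k}\ll 2^{k}\approx 2^{\widetilde{k}}+2^{\widetilde{k_1}}+2^{\widetilde{k_2}}$, so \eqref{bvc0} never holds there. Moreover even when \eqref{bvc0} does hold, Lemma \ref{CaseDIBPS}(i) controls $\|\widetilde{\varphi}^{(k)}_j\cdot P_kR_{m,\kappa}\|_{B^1_{k,j}}$, whose $L^\infty$-component involves $\mathcal{F}[\widetilde{\varphi}^{(k)}_j\cdot P_kR_{m,\kappa}]$, not $\mathcal{F}P_kR_{m,\kappa}$; one must instead extract the $L^\infty$-estimate inside the proof of that lemma (the analogue of \eqref{LINormN1D}) rather than cite its conclusion.

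Second, the bound $V(\xi)\lesssim\kappa^3 2^k$ is a local Morse-type estimate valid only for $\kappa$ small relative to $2^{2k}$, i.e.\ when the ellipsoid $\{|\Xi|\le\kappa\}$ fits inside the dyadic shell and the quadratic approximation holds. Under \eqref{Bds1ND2} with $m\ge -k(1+\beta^2)$ and $k\le -D/4$, the admissible range of $\kappa$ includes values up to $1\gg 2^{2k}$, so you must interpolate with the shell-volume bound $V\lesssim 2^{3\min(k_1,k_2)}$ and re-verify the numerics. The statement requires the bound for \emph{all} admissible $\kappa$, not the minimal one, and the estimate $2^m\kappa^3(\cdots)$ increases with $\kappa$. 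You also need to argue that for fixed $\xi$, dyadic shells $k_1,k_2$ that miss $\xi/2$ give empty (or small) intersection with $\{|\Xi|\le\kappa\}$ — this elimination is assumed but not justified.

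Third, and most substantively, the argument for $\Phi^{i;i+,i-}$ is not an analogue of the $\Phi^{i;i+,i+}$ argument. There is no isolated $\eta$-critical point for $\xi\ne 0$; as $\xi\to 0$ the level sets $\{|\Xi^{i+,i-}|\le\kappa\}$ degenerate (they become either near-empty or occupy a full annulus), and a direct Hessian/volume analysis gives a materially different geometry. You acknowledge this and fall back on ``integrate by parts in $s$ using the ellipticity,'' which at that point is precisely the paper's method — so the volumetric strategy would only be carrying half the lemma and would still require the paper's argument for the other half. Given the amount of extra case-analysis required to make the volumetric route rigorous, the integration-by-parts route is both simpler and more uniform here, because \eqref{BdPhiiii} gives the needed weak ellipticity uniformly in both phases.
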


\begin{proof}[Proof of Lemma \ref{CaseCLinftyLem}] Integration by parts in $s$ gives
\begin{equation*}
\begin{split}
\mathcal{F}T_m^{i;\mu,\nu}(f^\mu_{k_1,j_1},f^\nu_{k_2,j_2})(\xi)&=i\left[T_1(\xi)+T_2(\xi)+T_3(\xi)\right]\\
T_1(\xi)&:=\int_{\mathbb{R}}\int_{\mathbb{R}^3}\frac{e^{is\Phi^{i;\mu,\nu}(\xi,\eta)}}{\Phi^{i;\mu,\nu}(\xi,\eta)}q^\prime_m(s)\widehat{f^\mu_{k_1,j_1}}(\xi-\eta,s)\widehat{f^\nu_{k_2,j_2}}(\eta,s)d\eta ds\\
T_2(\xi)&:=\int_{\mathbb{R}}\int_{\mathbb{R}^3}\frac{e^{is\Phi^{i;\mu,\nu}(\xi,\eta)}}{\Phi^{i;\mu,\nu}(\xi,\eta)}q_m(s)(\partial_s\widehat{f^\mu_{k_1,j_1}})(\xi-\eta,s)\widehat{f^\nu_{k_2,j_2}}(\eta,s)d\eta ds\\
T_3(\xi)&:=\int_{\mathbb{R}}\int_{\mathbb{R}^3}\frac{e^{is\Phi^{i;\mu,\nu}(\xi,\eta)}}{\Phi^{i;\mu,\nu}(\xi,\eta)}q_m(s)\widehat{f^\mu_{k_1,j_1}}(\xi-\eta,s)(\partial_s\widehat{f^\nu_{k_2,j_2}})(\eta,s)d\eta ds.
\end{split}
\end{equation*}
First, using \eqref{BdPhiiii}, \eqref{nh9}, and \eqref{ok50repeat}, we see that
\begin{equation*}
\begin{split}
2^{(3/2+\alpha-\beta)k}\Vert \varphi_k\cdot T_2\Vert_{L^\infty}&\lesssim 2^{(3/2+\alpha-\beta)k}2^{-k-\widetilde{k_1}-\widetilde{k_2}}2^m\sup_{s\in[2^{m-4},2^{m+4}]}\Vert(\partial_s\widehat{f^\mu_{k_1,j_1}})(s)\Vert_{L^2}\Vert\widehat{f^\nu_{k_2,j_2}}(s)\Vert_{L^2}\\
&\lesssim 2^{(3/2+\alpha-\beta)k}2^{-k-\widetilde{k_1}-\widetilde{k_2}}2^m\cdot 2^{\widetilde{k_1}}2^{-(1+\beta)m}2^{(1+\beta-\alpha)\widetilde{k_2}}\\
&\lesssim 2^{-\beta^3m}.
\end{split}
\end{equation*}
Similarly,
\begin{equation*}
2^{(3/2+\alpha-\beta)k}\Vert \varphi_k\cdot T_3\Vert_{L^\infty}\lesssim 2^{-\beta^3m}.
\end{equation*}
Assuming that
\begin{equation*}
\vert k\vert+\vert k_1\vert+\vert k_2\vert+\vert j_1\vert+\vert j_2\vert\ge\beta^2m,
\end{equation*}
and using H\"older's inequality, we find that
\begin{equation*}
\begin{split}
2^{(3/2+\alpha-\beta)k}&\Vert \varphi_k\cdot T_1\Vert_{L^\infty}
\lesssim 2^{(3/2+\alpha-\beta)k}2^{-k-\widetilde{k_1}-\widetilde{k_2}}\sup_{s\in[2^{m-4},2^{m+4}]}\Vert \widehat{f^\mu_{k_1,j_1}}(s)\Vert_{L^2}\Vert\widehat{f^\nu_{k_2,j_2}}(s)\Vert_{L^2}\\
&\lesssim 2^{(3/2+\alpha-\beta)k}2^{-k-\widetilde{k_1}-\widetilde{k_2}}2^{2\beta(\widetilde{k_1}+\widetilde{k_2})}(2^{\alpha k_1}+2^{10 k_1})^{-1}(2^{\alpha k_2}+2^{10 k_2})^{-1}2^{-(1-\beta)(j_1+j_2)}\\
&\lesssim 2^{-\beta^3m}.
\end{split}
\end{equation*}
On the other hand, if
\begin{equation*}
\vert k\vert+\vert k_1\vert+\vert k_2\vert+\vert j_1\vert+\vert j_2\vert\le\beta^2m,
\end{equation*}
then we can decompose $T_1=T_{1;1}+T_{1;2}$ as in the proof of Lemma \ref{CaseDIBPS} and estimate also $2^{(3/2+\alpha-\beta)k}\Vert \varphi_k\cdot T_1\Vert_{L^\infty}\lesssim 2^{-\beta^3m}$. The desired bound \eqref{CaseCLinftyNormOK} follows using also \eqref{NRI}.
\end{proof}

We prove now the weighted $L^2$ bounds, in two steps. 

\begin{lemma}\label{CaseDProp2}
Assume that \eqref{Bds1ND2} holds and
\begin{equation*}
\Phi^{\sigma;\mu,\nu}\in\in\{\Phi^{i;i+,i+},\Phi^{i;i+,i-}\}.
\end{equation*}
Then the $L^2$ bound
\begin{equation}\label{bvc90.5}
2^{(1+\alpha)k}2^{(1+\beta)j}\Vert P_kR_{m,\kappa}^{i;\mu,\nu}(f^\mu_{k_1,j_1},f^\nu_{k_2,j_2})\Vert_{L^2}\lesssim 2^{-\beta^4m}
\end{equation}
holds for any $\kappa\in(0,1]$, $\kappa\geq \max\big(2^{(\beta^2m-m)/2}2^{-\min(k_1,k_2,0)/2}2^{-D/2},2^{\beta^2m-m}2^{\max(j_1,j_2)}\big)$, provided that either
\begin{equation}\label{bvc91}
\max(k_1,k_2)\geq -D/10,
\end{equation}
or
\begin{equation}\label{bvc92}
\max(k_1,k_2)\leq -D/10\qquad\text{ and }\qquad k+\min(k_1,k_2)/2\leq(\beta^2m-m)/2+D,
\end{equation}
or
\begin{equation}\label{bvc93}
\max(k_1,k_2)\leq -D/10\qquad\text{ and }\qquad k+\max(k_1,k_2)+(1-\beta)j\leq 2D,
\end{equation}
or
\begin{equation}\label{bvc96}
\Phi^{\sigma;\mu,\nu}=\Phi^{i;i+,i-}\qquad\text{ and }\qquad k_2\geq k_1+9,
\end{equation}
or
\begin{equation}\label{bvc97}
\Phi^{\sigma;\mu,\nu}=\Phi^{i;i-,i+}\qquad\text{ and }\qquad k_1\geq k_2+9.
\end{equation}
\end{lemma}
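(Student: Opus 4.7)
The plan is to split Lemma \ref{CaseDProp2} into two regimes: the non-resonant subcases \eqref{bvc91}, \eqref{bvc96}, \eqref{bvc97}, where the phase $\Phi^{i;\mu,\nu}$ admits a quantitative lower bound compatible with the normal-form reduction of Lemma \ref{CaseDIBPS}, and the resonant subcases \eqref{bvc92}, \eqref{bvc93}, where the phase is genuinely degenerate but the very low output frequency $2^k$ and the constraints on $k,j,\kappa$ provide just enough room to close the estimate directly via Plancherel plus Lemma \ref{CaseCLinftyLem}.

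For the non-resonant cases I would verify hypothesis \eqref{bvc0} of Lemma \ref{CaseDIBPS}(ii) and invoke Lemma \ref{CaseDIBPS}(i). In \eqref{bvc91} the constraint $k\leq -D/4\ll -D/10\leq\max(k_1,k_2)$ together with the triangle inequality in Fourier space forces $|k_1-k_2|\leq O(1)$ and $k_1,k_2\geq -D/10-O(1)$; for $\Phi^{i;i+,i+}$ monotonicity of $\lambda_i$ gives $|\Phi^{i;i+,i+}|\geq\lambda_i(|\eta|)\gtrsim 1$, and for $\Phi^{i;i+,i-}$ the concavity of $\lambda_i$ near zero together with the subadditivity estimate \eqref{BdPhiiii} yields $\Phi^{i;i+,i-}\geq\lambda_i(|\xi|)+\lambda_i(|\eta|)-\lambda_i(|\xi|+|\eta|)\gtrsim 2^k\min(1,|\eta|)^2$, which matches $2^{\widetilde k}+2^{\widetilde k_1}+2^{\widetilde k_2}$ in that range. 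In \eqref{bvc96}--\eqref{bvc97} the hypothesis $|k_1-k_2|\geq 9$ together with $|\xi|\leq 2^{k_1}+2^{k_2}$ forces $k\approx\max(k_1,k_2)$, and the disparity $|\lambda_i(|\eta|)-\lambda_i(|\xi-\eta|)|\gtrsim\lambda_i(\max(|\xi-\eta|,|\eta|))\gtrsim 2^{\max(k_1,k_2)}$ again delivers \eqref{bvc0}. In all these subcases, once \eqref{bvc0} is in hand, Lemma \ref{CaseDIBPS}(ii) yields the multiplier bound \eqref{CaseDMultiplier1} and Lemma \ref{CaseDIBPS}(i) concludes.

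For the resonant cases I would exploit that $\max(k_1,k_2)\leq -D/10$ and $k\leq\min(k_1,k_2)+O(1)$ (forced by the triangle inequality as in Case C). For $\Phi^{i;i+,i+}$ the bound $|\Phi^{i;i+,i+}|\gtrsim\lambda_i(|\eta|)\gtrsim 2^{\min(\widetilde k_1,\widetilde k_2)}\gtrsim 2^{\widetilde k}+2^{\widetilde k_1}+2^{\widetilde k_2}$ survives, so Lemma \ref{CaseDIBPS}(i) still applies in both \eqref{bvc92} and \eqref{bvc93}. The genuinely hard subcase is $\Phi^{i;i+,i-}$ under \eqref{bvc92}, where \eqref{BdPhiiii} only gives the weak bound $|\Phi^{i;i+,i-}|\gtrsim 2^k\cdot 2^{2\min(\widetilde k_1,\widetilde k_2)}$, which is insufficient for \eqref{bvc0}. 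Here I would abandon normal-form reduction and instead combine Plancherel on the Fourier support with the sharp $L^\infty$ bound of Lemma \ref{CaseCLinftyLem}: $\|P_kR_{m,\kappa}^{i;\mu,\nu}(f^\mu_{k_1,j_1},f^\nu_{k_2,j_2})\|_{L^2}\lesssim 2^{3k/2}\|\widehat{P_kR_{m,\kappa}^{i;\mu,\nu}(f^\mu_{k_1,j_1},f^\nu_{k_2,j_2})}\|_{L^\infty}\lesssim 2^{(\beta-\alpha)k}2^{-2\beta^4m}$. The remaining factor $2^{(1+\beta)(k+j)}$ is absorbed using the relation $2^k\lesssim\kappa\cdot 2^{\min(k_1,k_2)/2}$ dictated by \eqref{bvc92} and the lower bound on $\kappa$, after refining the volume estimate using that $\chi_R^{i;\mu,\nu}$ is supported in a thin tube of $\eta$-volume $\lesssim\kappa^2\cdot 2^{\min(k_1,k_2)}$ around the radial one-dimensional resonant manifold $\xi\parallel\eta$. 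The case \eqref{bvc93} is analogous but cheaper: the direct inequality $k+\max(k_1,k_2)+(1-\beta)j\leq 2D$ alone makes $2^{(1+\beta)(k+j)}$ small enough to be absorbed by the same Plancherel/$L^\infty$ bound, without any further geometric refinement.

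The principal obstacle is thus the last subcase, $\Phi^{i;i+,i-}$ in \eqref{bvc92}: the phase is weakly elliptic, no integration by parts in $s$ is effective, and one must orchestrate simultaneously the null-form factor $2^k$ from the multiplier $m_{i;\mu,\nu}$ (Remark \ref{tech1.33}), the volume bound on the Fourier support, the radial structure of the resonant set, and the tight scaling $2^k\sim\kappa\cdot 2^{\min(k_1,k_2)/2}$ built into the choice of $\kappa$. Getting these four ingredients to cooperate is what makes this the delicate piece of Case C.
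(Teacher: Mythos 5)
There are substantive gaps. First, your verification of hypothesis \eqref{bvc0} for $\Phi^{i;i+,i-}$ under \eqref{bvc91} is wrong. You claim that \eqref{BdPhiiii} gives $\Phi^{i;i+,i-}\gtrsim 2^k\min(1,|\eta|)^2$ and that this ``matches $2^{\widetilde k}+2^{\widetilde{k_1}}+2^{\widetilde{k_2}}$.'' It does not: under \eqref{bvc91} and \eqref{Bds1ND2} we have $k\leq -D/4$ while $\max(k_1,k_2)\geq -D/10$, so $2^{\widetilde{k_1}}+2^{\widetilde{k_2}}\gtrsim 2^{-D/10}$, whereas your lower bound is $\lesssim 2^k\leq 2^{-D/4}\ll 2^{-D/10}$. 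Hypothesis \eqref{bvc0} is therefore false and the black-box application of Lemma \ref{CaseDIBPS}(ii) fails. The paper handles this case by a different observation: the $\eta$-derivatives exhibit a near-cancellation, $|D^\rho_\eta\Phi^{i;i+,i-}|=|(D^\rho\Lambda_i)(\eta)-(D^\rho\Lambda_i)(\eta-\xi)|\lesssim 2^k$ since $|\xi|\approx 2^k$ is tiny, so the kernel bound \eqref{CaseDMultiplier1} is verified directly via \eqref{Der1/Phi} and integration by parts even though $|\Phi|$ is only $\gtrsim 2^k$. Your argument has no substitute for that observation.

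Second, in \eqref{bvc93} you claim the constraint $k+\max(k_1,k_2)+(1-\beta)j\leq 2D$ together with plain Plancherel and Lemma \ref{CaseCLinftyLem} closes ``without any further geometric refinement.'' It does not. Your chain gives $2^{(1+\alpha)k}2^{(1+\beta)j}\|P_kR_{m,\kappa}^{i;\mu,\nu}\|_{L^2}\lesssim 2^{(1+\beta)(k+j)}2^{-2\beta^4m}$, but $k+j\geq 0$ and $k+j$ can be of size $\approx m$ (take $-k\approx m/2$, $j\approx m$, both compatible with \eqref{bvc93} after using $\max(k_1,k_2)\geq k-4$), so $2^{(1+\beta)(k+j)}$ is exponentially large in $m$ and overwhelms $2^{-2\beta^4m}$. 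The constraint \eqref{bvc93} bounds $k$ (and hence controls a logarithmic factor like $2^{-3\beta(k+k_1)}$), but it does not bound $k+j$ by a constant. The paper's proof uses the dispersive estimates \eqref{nh9.1} and \eqref{ccc33} in a Plancherel $L^2\times L^\infty$ bound, which supplies the essential decay factor $2^{-m/2}$ that the trivial Fourier-support argument lacks, and that factor is what ultimately beats $2^{-3\beta(k+k_1)}$.

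Third, you assert that in the resonant cases the triangle inequality forces $k\leq\min(k_1,k_2)+O(1)$. It forces only $k\leq\max(k_1,k_2)+O(1)$. The regime $\min(k_1,k_2)\leq k-10$ (one input at very low frequency, $k\approx\max(k_1,k_2)$) is allowed and is treated as a separate subcase of \eqref{bvc92} in the paper's proof (there it forces $\max(j_1,j_2)\geq m/8$ and is handled by a Plancherel/dispersive estimate). Your proposal omits this entirely; it also undermines your \eqref{bvc0}-verification for $\Phi^{i;i+,i+}$ in \eqref{bvc92}, which tacitly needed $|k_1-k_2|\leq O(1)$.
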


\begin{proof}[Proof of Lemma \ref{CaseDProp2}] We use Lemma \ref{CaseDIBPS} first: the conclusion follows if either \eqref{bvc96} or \eqref{bvc97} are satisfied, since the lower bound \eqref{bvc0} holds in these cases. 

The lower bound \eqref{bvc0} also holds if $\max(k_1,k_2)\geq -D/10$ and $\Phi^{\sigma;\mu,\nu}=\Phi^{i;i+,i+}$. On the other hand, if $\max(k_1,k_2)\geq -D/10$ and $\Phi^{\sigma;\mu,\nu}\in\in\{\Phi^{i;i+,i-}\}$ then
\begin{equation*}
\begin{split}
|(D^\rho_\xi\Phi^{\sigma;\mu,\nu})(\xi,\eta)|&\lesssim 2^{k(1-|\rho|)},\\
|(D^\rho_\eta\Phi^{\sigma;\mu,\nu})(\xi,\eta)|=|(D^\rho\Lambda_i)(\eta-\xi)-(D^\rho\Lambda_i)(\eta)|&\lesssim 2^k,\\
|(D^{\rho_1}_\xi D^{\rho_2}_\eta\Phi^{\sigma;\mu,\nu})(\xi,\eta)|&\lesssim 1,\\
|\Phi^{\sigma;\mu,\nu}(\xi,\eta)|&\gtrsim 2^k,
\end{split}
\end{equation*} 
as long as $|\xi|\in[2^{k-6},2^{k+6}]$, $|\xi-\eta|\in[2^{k_1-6},2^{k_1+6}]$, $|\eta|\in[2^{k_2-6},2^{k_2+6}]$, and $|\rho|\in[0,4]$, $|\rho_1|,|\rho_2|\in[1,4]$. The last bound follows from \eqref{BdPhiiii}. We can use the formula \eqref{Der1/Phi} and integrate by parts as in the proof of Lemma \ref{CaseDIBPS} (ii) to conclude that \eqref{CaseDMultiplier1} holds. The desired bound \eqref{bvc90.5} follows from  Lemma \ref{CaseDIBPS} (i).

Assume now that \eqref{bvc93} holds. We then estimate, using Plancherel, \eqref{nh9.1} and \eqref{ccc33},
\begin{equation*}
\begin{split}
\Vert T_m^{i;\mu,\nu}&(f^\mu_{k_1,j_1},f^\nu_{k_2,j_2})\Vert_{L^2}\\
&\lesssim 2^m\sup_{s\in[2^{m-4},2^{m+4}]}\min(\Vert Ef^\mu_{k_1,j_1}(s)\Vert_{L^\infty}\Vert f^\nu_{k_2,j_2}(s)\Vert_{L^2},\Vert f^\mu_{k_1,j_1}(s)\Vert_{L^2}\Vert Ef^\nu_{k_2,j_2}(s)\Vert_{L^\infty})\\
&\lesssim 2^m\cdot 2^{-3/2m}2^{(2\beta-\alpha)(k_1+k_2)}\min\big(2^{k_1/2}2^{(1/2+\beta)j_1}2^{-(1-\beta)j_2},2^{k_2/2}2^{(1/2+\beta)j_2}2^{-(1-\beta)j_1}\big)\\
&\lesssim 2^{-m/2}2^{(1/2+2\beta-\alpha)(k_1+k_2)}(2^{-2\beta k_1}+2^{-2\beta k_2}).
\end{split}
\end{equation*}
Therefore, assuming for example that $k_1\geq k_2$,
\begin{equation*}
\begin{split}
2^{(1+\alpha)k}2^{(1+\beta)j}\Vert T_m^{i;\mu,\nu}(f^\mu_{k_1,j_1},f^\nu_{k_2,j_2})\Vert_{L^2}
&\lesssim 2^{(1+\alpha)k}2^{-(1+3\beta)(k+k_1)}2^{-m/2}2^{(1+2\beta-2\alpha)k_1}\\
&\lesssim 2^{-3\beta (k+k_1)}2^{-m/2}\\
&\lesssim 2^{-\beta^2m}
\end{split}
\end{equation*}
by \eqref{bvc93}. The desired bound \eqref{bvc90.5} follows using also \eqref{NRI}.

Finally, assume that \eqref{bvc92} holds. Assume first that $k\le\min(k,k_1,k_2)+10$ and $\max(j_1,j_2)\ge 3\beta m$ or $\max(k_1,k_2)\le -3\beta m/2$. In this case, we may simply use Plancherel, \eqref{nh9.1} and \eqref{ccc33} to estimate
\begin{equation*}
\begin{split}
\Vert T_m^{i;\mu,\nu}&(f^\mu_{k_1,j_1},f^\nu_{k_2,j_2})\Vert_{L^2}\\
&\lesssim 2^m\sup_{s\in[2^{m-4},2^{m+4}]}\min(\Vert Ef^\mu_{k_1,j_1}(s)\Vert_{L^\infty}\Vert f^\nu_{k_2,j_2}(s)\Vert_{L^2},\Vert f^\mu_{k_1,j_1}(s)\Vert_{L^2}\Vert Ef^\nu_{k_2,j_2}(s)\Vert_{L^\infty})\\
&\lesssim 2^m\cdot2^{-3/2m}2^{(1/2+3\beta)\max(k_1,k_2)}2^{(1/2+\beta)(\min(j_1,j_2)-\max(j_1,j_2))}2^{-(1/2-2\beta)\max(j_1,j_2)},
\end{split}
\end{equation*}
and therefore, assuming for example that $k_1\geq k_2$,
\begin{equation*}
\begin{split}
2^{(1+\alpha)k}2^{(1+\beta)j}\Vert T_m^{i;\mu,\nu}(f^\mu_{k_1,j_1},f^\nu_{k_2,j_2})\Vert_{L^2}\lesssim 2^{(1+\alpha)[k+k_1/2]}2^{(1/2+\beta)m}2^{-(1/2-2\beta)\max(j_1,j_2)}2^{(11\beta/4)k_1}\lesssim 2^{-\beta^3m}.
\end{split}
\end{equation*}
The desired bound \eqref{bvc90.5} follows using also \eqref{NRI}. 

Assume now that $\min(k_1,k_2)\le k-10$. In this case $k\ge\max(k_1,k_2)-4$ and necessarily $\max(j_1,j_2)\ge m/8$ by \eqref{bvc92}. We may assume that $k_2\le k_1$. If $j_1\le j_2-6\beta m$, then, using Plancherel, \eqref{nh9.1} and \eqref{ccc33}, we get that
\begin{equation*}
\begin{split}
\Vert T_m^{i;\mu,\nu}(f^\mu_{k_1,j_1},f^\nu_{k_2,j_2})\Vert_{L^2}
&\lesssim 2^m\sup_{s\in[2^{m-4},2^{m+4}]}\Vert Ef^\mu_{k_1,j_1}(s)\Vert_{L^\infty}\Vert f^\nu_{k_2,j_2}(s)\Vert_{L^2}\\
&\lesssim 2^m\cdot 2^{-3/2m}2^{k_1/2}2^{(1/2+\beta)j_1}2^{-(1-\beta)j_2}2^{3/2\beta(k_1+k_2)}\\
\end{split}
\end{equation*}
and therefore,
\begin{equation*}
\begin{split}
2^{(1+\alpha)k}2^{(1+\beta)j}\Vert T_m^{i;\mu,\nu}(f^\mu_{k_1,j_1},f^\nu_{k_2,j_2})\Vert_{L^2}\lesssim 2^{(1+\alpha)[k+k_2/2]}2^{(1/2+\beta)m}2^{-(1/2+\beta/4)(j_1-j_2)}2^{3/4\beta j_1}\lesssim 2^{-\beta^3m}.
\end{split}
\end{equation*}
The desired bound \eqref{bvc90.5} follows using also \eqref{NRI}.
If $j_1\ge j_2-6\beta m$ and $\max(j_1,j_2)\ge m/8$, then
\begin{equation*}
(1/2+\beta)j_2-(1-\beta)j_1\le -(1/2-2\beta)j_1+2\beta m+4\beta^2m\le -4\beta m
\end{equation*}
and using Plancherel, \eqref{nh9.1} and \eqref{ccc33}, we get that
\begin{equation*}
\begin{split}
\Vert T_m^{i;\mu,\nu}(f^\mu_{k_1,j_1},f^\nu_{k_2,j_2})\Vert_{L^2}
&\lesssim 2^m\sup_{s\in[2^{m-4},2^{m+4}]}\Vert f^\mu_{k_1,j_1}(s)\Vert_{L^2}\Vert Ef^\nu_{k_2,j_2}(s)\Vert_{L^\infty}\\
&\lesssim 2^m\cdot 2^{-3/2m}2^{k_2/2}2^{(1/2+\beta)j_2}2^{-(1-\beta)j_1}2^{3/2\beta(k_1+k_2)}\\
\end{split}
\end{equation*}
and therefore,
\begin{equation*}
\begin{split}
2^{(1+\alpha)k}2^{(1+\beta)j}\Vert T_m^{i;\mu,\nu}(f^\mu_{k_1,j_1},f^\nu_{k_2,j_2})\Vert_{L^2}\lesssim 2^{(1+\alpha)[k+k_2/2]}2^{(1/2+\beta)m}2^{[(1/2+\beta)j_2-(1-\beta)j_1)}2^{3/2\beta (k_1+k_2/2)}\lesssim 2^{-\beta^3m}.
\end{split}
\end{equation*}
The desired bound \eqref{bvc90.5} follows using also \eqref{NRI}.

Finally, assume that
\begin{equation*}
k\le (2\beta m-m)/2+D,\qquad \max(j_1,j_2)\le  3\beta m,\qquad -D/10\geq k_1\geq k_2\geq -3\beta m/2-10.
\end{equation*}
If $\Phi^{\sigma;\mu,\nu}=\Phi^{i;i+,i+}$ then the lower bound \eqref{bvc0} holds, and the desired conclusion follows from Lemma \ref{CaseDIBPS}. On the other hand, if $\Phi^{\sigma;\mu,\nu}=\Phi^{i;i+,i-}$, then we claim that \eqref{CaseDMultiplier2} holds. Indeed, using also \eqref{BdPhiiii},
\begin{equation*}
\begin{split}
\vert\Phi^{i;i+,i-}(\xi,\eta)\vert&\gtrsim 2^{k+2k_1},\\
\vert D^\rho_\xi\Phi^{i;i+,i-}(\xi,\eta)\vert&\lesssim 2^{(1-\vert\rho\vert)k},\,\,\quad\vert\rho\vert\ge1,\\
\vert D^{\rho}_\eta\Phi^{i;i+,i-}(\xi,\eta)\vert&\lesssim 2^k2^{-\vert\rho\vert k_1},\quad\vert\rho\vert\ge1\\
\vert D_\xi^{\rho_1}D^{\rho_2}_\eta\Phi^{i;i+,i-}(\xi,\eta)\vert&\lesssim 2^{(1-\vert\rho_1\vert-\vert\rho_2\vert) k_1},\quad\vert\rho_1\vert\cdot\vert\rho_2\vert\ge1.
\end{split}
\end{equation*}
We use \eqref{Der1/Phi} and proceed as in the proof of Lemma \ref{CaseDIBPS} (ii). The desired bound \eqref{CaseDMultiplier2} follows and we can apply Lemma \ref{CaseDIBPS} to complete the proof.
\end{proof}

In view of Lemma \ref{CaseDProp} and Lemma \ref{CaseDProp2}, for Proposition \ref{CaseCProp} it remains to prove the following lemma.

\begin{lemma}\label{CaseCLem}
Assume that $\Phi^{\sigma;\mu,\nu}\in\{\Phi^{i;i+,i+},\Phi^{i;i+,i-}\}$ and that
\begin{equation}\label{Bds1C1}
\begin{split}
&-9m/10\le k_1,k_2\le j/N_0^\prime,\quad\max(j_1,j_2)\le (1-\beta/10)m+k,\quad k_2\le k_1+9\le -D/10+9,\\
&\beta m/2+D^2\le j\le m+D,\quad(1-\beta)j\ge-k-k_1+D,\\
& (\beta^2m-m)+2D\le 2k+k_2\le-D/2,
\end{split}
\end{equation}
and set $\kappa:=2^{k-D}$. Then
\begin{equation}\label{CaseCROK1}
2^{(1+\alpha)k}2^{(1+\beta)j}\Vert \widetilde{\varphi}^{(k)}_j\cdot P_k R^{i;\mu,\nu}_{m,\kappa}(f^\mu_{k_1,j_1},f^\nu_{k_2,j_2})\Vert_{L^2}\lesssim 2^{-2\beta^4m}.
\end{equation}
\end{lemma}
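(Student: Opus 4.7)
The plan is to treat this lemma as the last remaining configuration in Case C, in which neither the normal-form argument of Lemma \ref{CaseDIBPS} nor the direct $L^\infty\times L^2$ estimate of Lemma \ref{CaseDProp2} closes the bound, and in which the proof must exploit simultaneously the ion null structure and the angular concentration enforced by the space-resonance cutoff. The three ingredients to combine are: (i) the null factor $|\xi|$ carried by every multiplier $m_{i;\mu,\nu}$ (Lemma \ref{tech1.3} and Remark \ref{tech1.33}), which produces the prefactor $2^k$ in \eqref{CaseCROK1}; (ii) the weak-ellipticity bound from \eqref{BdPhiiii}, which on the support of $\chi_R^{i;\mu,\nu}$ gives $|\Phi^{i;\mu,\nu}(\xi,\eta)|\gtrsim 2^{2k_1+k_2}$ for both phases $\Phi^{i;i+,i+}$ and $\Phi^{i;i+,i-}$ in the regime $k_2\le k_1\le-D/10+9$; and (iii) the fact that on the support of $\chi_R^{i;\mu,\nu}$ the vector $\eta$ is constrained to a thin angular neighbourhood of the line $\mathbb{R}\xi$, of angular width $O(\kappa+2^{2k_1})$, since $\lambda_i'(0)\neq 0$ and $\Xi^{\mu,\nu}$ is to leading order $\lambda_i'(0)(\pm\widehat{\xi-\eta}\pm\widehat\eta)$.

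I would first integrate by parts in $s$, as in \eqref{IBTNRI2D}, to produce the three contributions $T_1,T_2,T_3$ with multiplier $\chi_R^{i;\mu,\nu}(\xi,\eta)/\Phi^{i;\mu,\nu}(\xi,\eta)$. For $T_2$ and $T_3$ the dangerous time derivative is absorbed by \eqref{ok50repeat}, which gains $2^{-(1+\beta)m}$, and the remaining bilinear operator is handled by Lemma \ref{AddedLemmaCasD1.0001} after obtaining a kernel estimate of the form
\begin{equation*}
\Big\|\mathcal{F}^{-1}\Big[\frac{\chi_R^{i;\mu,\nu}(\xi,\eta)}{\Phi^{i;\mu,\nu}(\xi,\eta)}\varphi_{[k-4,k+4]}(\xi)\varphi_{[k_1-4,k_1+4]}(\xi-\eta)\varphi_{[k_2-4,k_2+4]}(\eta)\Big]\Big\|_{L^1(\mathbb{R}^3\times\mathbb{R}^3)}\lesssim 2^{-(2k_1+k_2)}\kappa^2 2^{-\min(k,k_2)},
\end{equation*}
proved exactly as in Lemma \ref{CaseDIBPS}(ii) via the formula \eqref{Der1/Phi} but now using the derivative bounds of Lemma \ref{tech99} \emph{together with} the two extra powers of $\kappa$ coming from the tangential cutoff. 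The constraint $2k+k_2\ge\beta^2m-m+2D$ is what makes this acceptable after multiplication by $2^{(1+\alpha)k+(1+\beta)j}$. For the main term $T_1$, which only contains $q'_m$, I would decompose each input via \eqref{sec5.8}--\eqref{sec5.81} into a strong atom $g$ and a weak atom $h$ and estimate the four pairings: the $g\times g$ contribution is controlled by a time-slicing orthogonality argument in the style of Step 1 of Lemma \ref{lemmaB3}, with slice length $\sim 2^m\kappa\cdot 2^{-k_1}$ chosen so that consecutive slices are orthogonal after integration by parts in $\xi$ using $\nabla_\xi\Phi^{i;\mu,\nu}$; the pairings involving at least one $h$ use the $L^1$-on-small-balls bound built into the $B^2$-norm \eqref{sec5.4}, whose ball radius $R\in[2^{-j},2^k]$ is precisely compatible with the angular size $\kappa$ (thanks to $(1-\beta)j\ge-k-k_1+D$).

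The main obstacle will be producing the kernel bound displayed above with the gain $\kappa^2$, since the symbol $\chi_R^{i;\mu,\nu}/\Phi^{i;\mu,\nu}$ fails to be a tensor product and is rough near the set $\{\Phi=0\}$; carrying out the angular integration requires parametrising $\eta$ in a tubular coordinate $\eta=\rho(\xi/|\xi|)+\eta^\perp$ with $\eta^\perp\perp\xi$, computing the Jacobian and showing that the transverse integral of $1/\Phi$ over the $\kappa$-neighborhood of the line $\mathbb{R}\xi$ contributes only $\kappa^2$, not $\kappa^2\cdot|\log\kappa|$ or worse. Once this is in hand, the bookkeeping of Step 3 reduces the entire estimate to the elementary algebraic inequality
\begin{equation*}
(1+\alpha)k+(1+\beta)j-(2k_1+k_2)-(1+\beta)m+2k\le -\beta^3m,
\end{equation*}
which follows from the thresholds $2k+k_2\ge\beta^2m-m+2D$, $k_2\le k_1+9$, $(1-\beta)j\ge-k-k_1+D$, and $j\le m+D$ listed in \eqref{Bds1C1}; these thresholds were arranged exactly so that this inequality holds with margin $\beta^3m$.
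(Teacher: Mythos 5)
The lemma you are proving is the one spot in Case C where the paper introduces angular decompositions, and your proposal replaces that machinery with a claimed kernel bound that cannot hold. In the paper's proof, the estimate rests on three ingredients that are absent from your argument: the angular cutoffs $Q^j_\omega$ of \eqref{AngLocD1}--\eqref{AngLocD2}; the key structural fact \eqref{CaseCAngleLoc} that on the support of the rough cutoff $\chi_C^{\mu,\nu}$ all three frequencies $\xi$, $\xi-\eta$, $\eta$ are forced into a \emph{common} angular cone of aperture $\delta\sim 2^{k_1}$; and the $L^2$-orthogonality \eqref{QOrtho} over these cones. The kernel bound the paper actually proves, \eqref{CaseCMultEst}, is taken on the \emph{sector-localized} multiplier $\chi_\omega(\xi)\chi_C^{\mu,\nu}(\xi,\eta)/\Phi^{i;\mu,\nu}(\xi,\eta)$ and gives $\lesssim 2^{-(k+k_1+k_2)}$ with no extra $\kappa$-power. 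The gain that closes the estimate comes entirely from the almost-orthogonality $\sum_\omega\Vert Q_\omega f\Vert_{L^2}^2\lesssim\Vert f\Vert_{L^2}^2$ together with the dispersive decay $\Vert e^{-is\Lambda_i}P_{k}h\Vert_{L^\infty}\lesssim 2^{k/2}s^{-3/2}$ from \eqref{ccc33}, not from the kernel norm being small.

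The step you flag as ``the main obstacle'' is, in fact, a genuine gap. You claim
\begin{equation*}
\Big\|\mathcal{F}^{-1}\Big[\frac{\chi_R^{i;\mu,\nu}(\xi,\eta)}{\Phi^{i;\mu,\nu}(\xi,\eta)}\varphi_{[k-4,k+4]}(\xi)\varphi_{[k_1-4,k_1+4]}(\xi-\eta)\varphi_{[k_2-4,k_2+4]}(\eta)\Big]\Big\|_{L^1(\mathbb{R}^3\times\mathbb{R}^3)}\lesssim 2^{-(2k_1+k_2)}\kappa^2 2^{-\min(k,k_2)},
\end{equation*}
and propose to prove it by ``showing that the transverse integral of $1/\Phi$ over the $\kappa$-neighbourhood of the line $\mathbb{R}\xi$ contributes only $\kappa^2$.'' That computation bounds $\int|m(\xi,\eta)|\,d\eta$, i.e.\ the $L^1$ norm of the \emph{symbol} in the $\eta$-variable, which controls $\Vert K\Vert_{L^\infty}$, not $\Vert K\Vert_{L^1(\mathbb{R}^3\times\mathbb{R}^3)}$. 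The $L^1$ norm of the inverse Fourier transform of a bump of transverse width $\kappa$ is $O(1)$, not $O(\kappa^2)$: each transverse derivative of $\varphi(|\Xi^{\mu,\nu}|/\kappa)$ costs $\kappa^{-1}$ while the support width is only $\kappa$, so after integration by parts one recovers the usual Calder\'on--Zygmund-type $O(1)$ bound per direction and no net gain. The paper is explicit about this obstruction: the footnote attached to the definition of $G'_n$ in the proof of Lemma \ref{lemmaB3} states that ``Lemma \ref{tech2} is not directly compatible with `rough' symbols such as $(\xi,\eta)\to\chi_R^{\sigma;\mu,\nu}(\xi,\eta)$ since the $L^1$ norm of the inverse Fourier transform of such symbols is very large.'' The way the paper escapes this is exactly the angular decomposition you have omitted: once restricted to a single cone $\chi_\omega$, the multiplier \emph{is} comparable to a tensor product in adapted coordinates, and the anisotropic integration by parts behind \eqref{CaseCMultEst} (using the bounds \eqref{CaseCME1}) yields an honest $L^1$ kernel estimate. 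Without the angular split, I do not see how to salvage the estimate, and the final algebraic inequality you wrote down is calibrated to a constant that isn't there. (Separately, your plan for the non-differentiated term $T_1$ via the time-slicing orthogonality of Lemma \ref{lemmaB3}, rather than the paper's angular orthogonality, would also run into the same kernel problem; and you make no use of the finite-speed-of-propagation dichotomy $j\gtrless m+k_2+D$ which drives the paper's Case~1 vs.\ Case~2 split.)
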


Note that, in view of \eqref{Bds1C1}, $\kappa=2^{k-D}$ satisfies the necessary assumption
\begin{equation*}
\kappa\geq \max\big(2^{(\beta^2m-m)/2}2^{-\min(k_1,k_2,0)/2}2^{-D/2},2^{\beta^2m-m}2^{\max(j_1,j_2)}\big).
\end{equation*}

We remark that Lemma \ref{CaseCLem} covers the case of $k_2\le k_1$ for $\Phi^{i;i+,i+}$. By symmetry, this also covers the case $k_1\le k_2$.

\begin{proof}[Proof of Lemma \ref{CaseCLem}] In this lemma we need to use finer decompositions and additional orthogonality arguments. We will use repeatedly the following observation from the sine law in the triangle formed by $\xi$, $\xi-\eta$, $\eta$:
\begin{equation}\label{SineLaw}
\frac{\sin(\angle(\xi,\eta))}{\vert\xi-\eta\vert}=\frac{\sin(\angle(\xi,\xi-\eta))}{\vert\eta\vert}=\frac{\sin(\angle(\xi-\eta,\eta))}{\vert\xi\vert}.
\end{equation}

We need to introduce some angular localizations. For $\delta<1/10$ to be fixed in each case below, we choose $\{\omega_i\}_i$ a maximal family of $\delta$-separated points on the sphere which is symmetrical in the sense that $\omega\in\{\omega_i\}\Rightarrow -\omega\in\{\omega_i\}$,
and we define
\begin{equation}\label{AngLocD1}
\begin{split}
\chi_{e_1}^0(\xi)&=\varphi(2^{-k}\xi_1-1)\varphi(2^{-k}\delta^{-1}\xi_2)\varphi(2^{-k}\delta^{-1}\xi_3),\quad \chi_{\omega_i}^0(\xi)=\chi_{e_1}^0(R_{\omega_i}\xi)
\end{split}
\end{equation}
where $R_{\omega_i}$ is a rotation satisfying $R_{\omega_i}(\omega_i)=e_1$.
We similarly define
\begin{equation}\label{AngLocD2}
\begin{split}
\chi_{e_1}^1(\eta)&=\varphi(2^{-k_1-10}\eta_1-1)\varphi(2^{-k_1-10}\delta^{-1}\eta_2)\varphi(2^{-k_1-10}\delta^{-1}\eta_3),\quad \chi^1_{\omega_i}(\eta)=\chi^1_{e_1}(R_{\omega_i}\eta)\\
\chi_{e_1}^2(\eta)&=\varphi(2^{-k_2-10}\eta_1-1)\varphi(2^{-k_2-10}\delta^{-1}\eta_2)\varphi(2^{-k_2-10}\delta^{-1}\eta_3),\quad \chi^2_{\omega_i}(\eta)=\chi^2_{e_1}(R_{\omega_i}\eta).
\end{split}
\end{equation}
We then define the corresponding operators $Q^j_{\omega}$ by the formulas
\begin{equation*}
\widehat{Q^j_\omega f}(\xi)=\chi^j_{\omega}(\xi)\widehat{f}(\xi).
\end{equation*}
Since $\{\omega_i\}$ is maximal, there holds that, for any $f\in L^2(\mathbb{R}^3)$,
\begin{equation}\label{QOrtho}
\Vert f\Vert_{L^2}^2\lesssim \sum_{\omega_i}\Vert Q^j_{\omega_i}f\Vert_{L^2}^2\lesssim \Vert f\Vert_{L^2}^2,\quad 0\le j\le 2,
\end{equation}
uniformly in $k$, $k_1$, $k_2$, $\delta$. In addition, $Q^j_\omega$ satisfies
\begin{equation}\label{CaseCQCZ}
\Vert Q^j_\omega f\Vert_{L^p}\lesssim \Vert f\Vert_{L^p},\quad 0\le j\le 2,\,\,1\le p\le\infty.
\end{equation}

{\bf Case 1}: assume that
\begin{equation*}
j\ge m+k_2+D.
\end{equation*}
In this case, we only need a coarse localization which separates the poles and we use the finite speed of propagation to treat parallel interactions.

We set $\delta=2^{-D/10}$ in the definition \eqref{AngLocD1}-\eqref{AngLocD2} and decompose
\begin{equation}\label{bvc120}
R^{i;\mu,\nu}_{m,\kappa}(f^\mu_{k_1,j_1},f^\nu_{k_2,j_2})=\sum_{\omega_{0},\omega_{1},\omega_2}Q^0_{\omega_0}R^{i;\mu,\nu}_{m,\kappa}(Q^1_{\omega_1}f^\mu_{k_1,j_1},Q^2_{\omega_2}f^\nu_{k_2,j_2}).
\end{equation}
Since this sum is finite, we need only show that for any choice of $\omega_0,\omega_1,\omega_2$, there holds that
\begin{equation}\label{EndCase1CaseD}
2^{(1+\alpha)k}2^{(1+\beta)j}\Vert \varphi^{(k)}_j\cdot P_kQ^0_{\omega_0}R^{i;\mu,\nu}_{m,\kappa}(Q^1_{\omega_1}f^\mu_{k_1,j_1},Q^2_{\omega_2}f^\nu_{k_2,j_2})\Vert_{L^2}\lesssim 2^{-\beta^3m}
\end{equation}
Indeed, we need only consider two subcases.

$(i)$ If $\angle(\omega_0,\omega_1)>2^{-D/20}$, then, in case $\Phi^{i;\mu,\nu}=\Phi^{i;i+,i+}$, we find that
\begin{equation}\label{EstimXiCaseC1.1}
\vert\Xi^{i+,i+}(\xi,\eta)\vert\gtrsim \angle(\omega_1,\omega_2)
\end{equation}
and therefore \eqref{EndCase1CaseD} follows trivially. In case $\Phi^{i;\mu,\nu}=\Phi^{i;i+,i-}$, we claim that
\begin{equation}\label{Mult1}
\begin{split}
&\Big\|
\mathcal{F}\Big\{\frac{\chi^0_{\omega_0}(\xi)\chi^1_{\omega_1}(\xi-\eta)\chi^2_{\omega_2}(\eta)}{\Phi^{i;i+,i-}(\xi,\eta)}\varphi_{[k-4,k+4]}(\xi)\varphi_{[k_1-4,k_1+4]}(\xi-\eta)\varphi_{[k_2-4,k_2+4]}(\eta)\Big\}\Big\|_{L^1(\mathbb{R}^3\times\mathbb{R}^3)}\lesssim 2^{-k}.
\end{split}
\end{equation}
Assuming \eqref{Mult1}, the estimate \eqref{EndCase1CaseD} follows from Lemma \ref{AddedLemmaCasD1.0001}. To prove \eqref{Mult1}, we first observe that, using \eqref{BdPhiiii} and Lemma \ref{tech99},
\begin{equation*}
\begin{split}
\Phi^{i;i+,i-}(\xi,\eta)&=\lambda_i(\vert\xi\vert)+\lambda_i(\vert\eta\vert)-\lambda_i(\vert\xi\vert+\vert\eta\vert)+\left[\lambda_i(\vert\xi\vert+\vert\eta\vert)-\lambda_i(\vert\xi-\eta\vert)\right]\\
&\ge \lambda_i(\vert\xi\vert+\vert\eta\vert)-\lambda_i(\vert\xi-\eta\vert)\\
&\gtrsim \vert\xi\vert+\vert\eta\vert-\vert\xi-\eta\vert\\
&\gtrsim \vert\xi\vert\left[1-\cos(\angle\xi,\xi-\eta)\right]+\vert\eta\vert\left[1-\cos(\angle\eta-\xi,\eta)\right]\\
&\gtrsim\vert\xi\vert
\end{split}
\end{equation*}
and the inequalities
\begin{equation*}
\begin{split}
\vert \Phi^{i;i+,i-}(\xi,\eta)\vert&\ge\lambda_i(\vert\xi\vert)\gtrsim 2^k,\\
\vert D^{\rho}_{\xi}\Phi^{i;i+,i-}(\xi,\eta)\vert&\lesssim 2^{(1-\vert\rho\vert)k},\quad\vert\rho\vert\ge 1,\\
\vert D^\rho_\eta\Phi^{i;i+,i-}(\xi,\eta)\vert&\lesssim 2^k2^{-\vert\rho\vert k_2},\quad\vert\rho\vert\ge 1,\\
\vert D_\xi^{\rho_1}D^{\rho_2}_\eta\Phi^{i;i+,i-}(\xi,\eta)\vert&\lesssim 2^{(1-\vert\rho_1\vert-|\rho_2|) k_1},\quad\vert\rho_1\vert\cdot\vert\rho_2\vert\ge 1,
\end{split}
\end{equation*}
hold in the support of the multiplier in \eqref{Mult1}. Using the formula \eqref{Der1/Phi} and integrating by parts we derive the inequality \eqref{Mult1}.

$(ii)$ if $\angle(\omega_0,\omega_1)\le2^{-D/20}$, we claim that, on the support of
\begin{equation*}
\begin{split}
&\chi^{i;\mu,\nu}_C(\xi,\eta):=\\
&\varphi(2^{D^2}\Phi^{i;\mu,\nu}(\xi,\eta))\varphi(2^{D-k}\vert\Xi^{\mu,\nu}(\xi,\eta)\vert)\varphi_k(\xi)\varphi_{[k_1-2,k_1+2]}(\xi-\eta)\varphi_{[k_2-2,k_2+2]}(\eta)\chi^0_{\omega_0}(\xi)\chi^1_{\omega_1}(\xi-\eta)\chi^2_{\omega_2}(\eta)
\end{split}
\end{equation*}
we have that
\begin{equation}\label{CaseCFSP}
\vert\nabla_\xi\Phi^{i;\mu,\nu}(\xi,\eta)\vert\lesssim 2^{k_2}.
\end{equation}
Indeed, on the support of $\chi^{i;\mu,\nu}_C$, we have that $\angle(\xi,\xi-\eta)\lesssim\sin(\angle(\xi,\xi-\eta))$ and therefore, using also \eqref{SineLaw} and the smallness of $\vert\Xi^{\mu,\nu}(\xi,\eta)\vert$, if $\chi^{i;\mu,\nu}_C(\xi,\eta)\ne0$, then
\begin{equation*}
\begin{split}
\vert\nabla_\xi\Phi^{i;\mu,\nu}(\xi,\eta)\vert
&\lesssim \vert\lambda_i^\prime(\vert\xi\vert)-\lambda_i^\prime(\vert\xi-\eta\vert)\vert+\angle(\xi,\xi-\eta)\\
&\lesssim 2^{k_1+k_2}+2^{k_2-k}\sin(\angle(\xi-\eta,\eta))\\
&\lesssim 2^{k_2}+2^{k_2-k}\vert\Xi^{\mu,\nu}(\xi,\eta)\vert\lesssim 2^{k_2}.
\end{split}
\end{equation*}

From \eqref{CaseCFSP} and the assumption $j\ge m+k_2+D$, it follows that if $\vert x\vert\ge 2^{j-4}$, $s\le 2^{m+4}$, then
\begin{equation*}
\vert\nabla_\xi\left[x\cdot\xi+s\Phi^{i;\mu,\nu}(\xi,\eta)\right]\vert\ge\vert x\vert-2^{m+k_2+D/2}\ge 2^{j-8}.
\end{equation*}
We may then use Lemma \ref{tech5} with $f=2^{-j}\left[x\cdot\xi+s\Phi^{i;\mu,\nu}(\xi,\eta)\right]$, $K=2^j$ and $\epsilon=\min(2^{-\min(j_1,j_2))},2^{2k})$
to conclude that
\begin{equation*}
\vert \widetilde{\varphi}^{(k)}_j(x)P_kQ^0_{\omega_0}R^{i;\mu,\nu}_{m,\kappa}(Q^1_{\omega_1}f^\mu_{k_1,j_1},Q^2_{\omega_2}f^\nu_{k_2,j_2})(x)\vert\lesssim 2^{-100m}
\end{equation*}
from which \eqref{EndCase1CaseD} follows easily.

\medskip

{\bf Case 2:} assume that
\begin{equation}\label{FSPCaseC}
j\le m+k_2+D.
\end{equation}
In this case, we need a finer decomposition in order to obtain an efficient bilinear estimate in \eqref{CaseCMultEst}.
We define
\begin{equation}\label{DefChiCCase2}
\chi^{\mu,\nu}_C(\xi,\eta):=\varphi(2^{D^2}\Phi^{i;\mu,\nu}(\xi,\eta))\varphi(2^{D-k}\vert\Xi^{\mu,\nu}(\xi,\eta)\vert)\varphi_k(\xi)\varphi_{[k_1-2,k_1+2]}(\xi-\eta)\varphi_{[k_2-2,k_2+2]}(\eta)
\end{equation}
and fix $\delta=2^{\max(k,k_2)}$ in the definition \eqref{AngLocD1}-\eqref{AngLocD2}. By \eqref{Bds1C1}, we see that $\delta\sim 2^{k_1}$. An important property is that
\begin{equation}\label{CaseCAngleLoc}
\begin{split}
\chi^0_{\omega}(\xi)\chi^{i+,i+}_C(\xi,\eta)=&\chi^0_{\omega}(\xi)\chi^{i+,i+}_C(\xi,\eta)\chi^1_{\omega}(\xi-\eta)\chi^2_{\omega}(\eta),\\
\chi^0_{\omega}(\xi)\chi^{i+,i-}_C(\xi,\eta)=&\chi^0_{\omega}(\xi)\chi^{i+,i-}_C(\xi,\eta)\chi^1_{-\omega}(\xi-\eta)\chi^2_{\omega}(\eta)+\chi^0_{\omega}(\xi)\chi^{i+,i-}_C(\xi,\eta)\chi^1_{\omega}(\xi-\eta)\chi^2_{-\omega}(\eta).
\end{split}
\end{equation}
Indeed, for $\chi_C^{i+,i+}$, this follows from \eqref{EstimXiCaseC1.1}. For $\chi_C^{i+,i-}$, we similarly see that
\begin{equation*}
\vert\Xi^{i+,i-}(\xi,\eta)\vert\gtrsim\angle(\eta-\xi,\eta)
\end{equation*}
and, using \eqref{SineLaw},
\begin{equation*}
\begin{split}
\sin(\angle(\xi,\eta))&=\frac{\vert\xi-\eta\vert}{\vert\xi\vert}\sin(\angle(\xi-\eta,\eta))\lesssim 2^{k_1},\quad\sin(\angle(\xi,\xi-\eta))=\frac{\vert\eta\vert}{\vert\xi\vert}\sin(\angle(\xi-\eta,\eta))\lesssim 2^{k_1}.\\
\end{split}
\end{equation*}
Now, we perform an integration by parts in $s$ and obtain that
\begin{equation*}
\begin{split}
Q_\omega R^{i;i+,i-}_{m,\kappa}(f^\mu_{k_1,j_1},f^\nu_{k_2,j_2})&=i\int_{\mathbb{R}}\left[q_m^\prime(s)\Pi_{1,\omega}(s)+q_m(s)\Pi_{2,\omega}(s)+q_m(s)\Pi_{3,\omega}(s)\right] ds;\\
\widehat{\Pi_{1,\omega}}(\xi,s)&:=\int_{\mathbb{R}^3}e^{is\Phi^{i;\mu,\nu}(\xi,\eta)}\frac{\chi_\omega(\xi)\chi_{C}^{\mu,\nu}(\xi,\eta)}{\Phi^{i;\mu,\nu}(\xi,\eta)}\widehat{Q^1_\omega f^\mu_{k_1,j_1}}(\xi-\eta,s)\widehat{Q^2_\omega f^\nu_{k_2,j_2}}(\eta,s)d\eta ds,\\
\widehat{\Pi_{2,\omega}}(\xi,s)&:=\int_{\mathbb{R}^3}e^{is\Phi^{i;\mu,\nu}(\xi,\eta)}\frac{\chi_\omega(\xi)\chi_{C}^{\mu,\nu}(\xi,\eta)}{\Phi^{i;\mu,\nu}(\xi,\eta)}\widehat{Q^1_\omega (\partial_sf^\mu_{k_1,j_1})}(\xi-\eta,s)\widehat{Q^2_{\omega}f^\nu_{k_2,j_2}}(\eta,s)d\eta ds,\\
\widehat{\Pi_{3,\omega}}(\xi,s)&:=\int_{\mathbb{R}^3}e^{is\Phi^{i;\mu,\nu}(\xi,\eta)}\frac{\chi_\omega(\xi)\chi_{C}^{\mu,\nu}(\xi,\eta)}{\Phi^{i;\mu,\nu}(\xi,\eta)}\widehat{Q^1_\omega f^\mu_{k_1,j_1}}(\xi-\eta,s)\widehat{Q^2_\omega (\partial_sf^\nu_{k_2,j_2})}(\eta,s)d\eta ds.\\
\end{split}
\end{equation*}
Using \eqref{tech2}, \eqref{ccc33} and \eqref{CaseCMultEst}, we easily see that
\begin{equation*}
\begin{split}
\Vert \Pi_{1,\omega}(s)\Vert_{L^2}&\lesssim 2^{-k-k_1-k_2}\min(\Vert EQ^1_\omega f^\mu_{k_1,j_1}(s)\Vert_{L^\infty}\Vert Q^2_\omega f^\nu_{k_2,j_2}(s)\Vert_{L^2},\Vert Q^1_\omega f^\mu_{k_1,j_1}(s)\Vert_{L^2}\Vert EQ^2_\omega f^\nu_{k_2,j_2}(s)\Vert_{L^\infty})\\
&\lesssim 2^{-k-k_1-k_2}2^{-3/2m}2^{3\min(j_1,j_2)/2}2^{k_1/2}\Vert Q^1_\omega f^\mu_{k_1,j_1}(s)\Vert_{L^2}\Vert Q^2_\omega f^\nu_{k_2,j_2}(s)\Vert_{L^2}
\end{split}
\end{equation*}
and therefore, using orthogonality property \eqref{QOrtho},
\begin{equation*}
\begin{split}
&\Vert \int_{\mathbb{R}}q_m^\prime(s)\cdot\sum_\omega\Pi_{1,\omega}(s)ds\Vert_{L^2}\\
&\lesssim \int_{\mathbb{R}}q_m^\prime(s)\Vert \sum_\omega\Pi_{1,\omega}(s)\Vert_{L^2}ds\\
&\lesssim 2^{-k-k_2}2^{-k_1/2}2^{-3/2m}2^{3\min(j_1,j_2)/2}\int_{\mathbb{R}}q_m^\prime(s)\cdot\left(\sum_\omega\Vert Q^1_\omega f^\mu_{k_1,j_1}(s)\Vert_{L^2}\Vert Q^2_\omega f^\nu_{k_2,j_2}(s)\Vert_{L^2}\right)ds\\
&\lesssim 2^{-k-k_2}2^{-k_1/2}2^{-3/2m}2^{3\min(j_1,j_2)/2}\sup_{s\in[2^{m-4},2^{m+4}]}\Vert f^\mu_{k_1,j_1}(s)\Vert_{L^2}\Vert f^\nu_{k_2,j_2}(s)\Vert_{L^2}
\end{split}
\end{equation*}
and finally, with \eqref{nh9.1}, \eqref{Bds1C1} and \eqref{FSPCaseC},
\begin{equation}\label{CaseCPiOkPi1}
\begin{split}
&2^{(1+\alpha)k}2^{(1+\beta)j}\Vert \int_{\mathbb{R}}q_m^\prime(s)\cdot\sum_\omega\Pi_{1,\omega}(s)ds\Vert_{L^2}\\
&\lesssim 2^{\alpha (k+k_1+k_2)}2^{k+k_2}\cdot 2^{-k-k_2}2^{-k_1/2}2^{-(1/2-\beta)m}2^{3\min(j_1,j_2)/2}2^{-(1-\beta)(j_1+j_2)}2^{2\beta(k_1+k_2)}\\
&\lesssim 2^{\alpha (k+k_1+k_2)}2^{-k_1/2}2^{-(1/2-\beta)m}2^{-(1/2-2\beta)\max(j_1,j_2)}2^{2\beta(k_1+k_2)}\\
&\lesssim 2^{-\beta^4m}.
\end{split}
\end{equation}

\medskip

We now turn to $\Pi_{2,\omega}$ and $\Pi_{3,\omega}$. Using \eqref{mk15.65} and \eqref{ok50}, we see that for any $s\in[2^{m-4},2^{m+4}]$ and $j_1$, $j_2$ satisfying \eqref{Bds1ND2},
\begin{equation}\label{BdPi2}
\begin{split}
\Vert Ef^\mu_{k_1,j_1}(s)\Vert_{L^\infty}+\Vert (\partial_sf^\mu_{k_1,j_1})\Vert_{L^2}&\lesssim 2^{(1-5\beta)k_1}2^{-(1+\beta)m},\\
\Vert Ef^\nu_{k_2,j_2}(s)\Vert_{L^\infty}+\Vert (\partial_sf^\nu_{k_2,j_2})\Vert_{L^2}&\lesssim 2^{(1-5\beta)k_2}2^{-(1+\beta)m}.
\end{split}
\end{equation}
Therefore, using \eqref{CaseCQCZ}, \eqref{CaseCAngleLoc}, \eqref{CaseCMultEst} and Lemma \ref{tech2}, we see that
\begin{equation*}
\begin{split}
\Vert \sum_{\omega} \Pi_{2,\omega}(s)\Vert_{L^2}\lesssim \left(\sum_{\omega}\Vert \Pi_{2,\omega}(s)\Vert_{L^2}^2\right)^\frac{1}{2}
&\lesssim 2^{-k-k_1-k_2}\left(\sum_{\omega}\Vert E f^\mu_{k_1,j_1}(s)\Vert_{L^\infty}^2\Vert Q^2_{\omega}(\partial_sf^\nu_{k_2,j_2})(s)\Vert_{L^2}^2\right)^\frac{1}{2}\\
&\lesssim 2^{-k-k_1-k_2}\Vert E f^\mu_{k_1,j_1}(s)\Vert_{L^\infty}\Vert (\partial_sf^\nu_{k_2,j_2})(s)\Vert_{L^2}\\
\end{split}
\end{equation*}
and therefore, using \eqref{BdPi2},
\begin{equation*}
\int_{\mathbb{R}}q_m(s)\Vert \sum_{\omega} \Pi_{2,\omega}(s)\Vert_{L^2}ds
\lesssim 2^{-k}2^{-5\beta(k_1+k_2)}2^{-(1+2\beta)m},
\end{equation*}
so that, using \eqref{FSPCaseC},
\begin{equation*}
2^{(1+\alpha)k}2^{(1+\beta)j}\Vert \int_{\mathbb{R}}q_m(s)\sum_{\omega} \Pi_{2,\omega}(s)ds\Vert_{L^2}\lesssim 2^{\alpha k}2^{(1-10\beta)k_2}2^{-\beta m}
\end{equation*}
which is sufficient. A similar bound holds for $\Vert \int_{\mathbb{R}}q_m(s)\sum_{\omega} \Pi_{3,\omega}(s)ds\Vert_{L^2}$. This gives \eqref{CaseCROK1} and finishes the proof.
\end{proof}

\begin{lemma}
With $\chi_C^{\mu,\nu}$ defined as in \eqref{DefChiCCase2}, there holds that
\begin{equation}\label{CaseCMultEst}
\begin{split}
\sup_{\omega}\Vert\mathcal{F}\left\{
\frac{1}{\Phi^{i;\mu,\nu}(\xi,\eta)}\chi_{\omega}(\xi)
\chi_{C}^{\mu,\nu}(\xi,\eta)\right\}\Vert_{L^1(\mathbb{R}^3\times\mathbb{R}^3)}
&\lesssim 2^{-(k+k_1+k_2)}.
\end{split}
\end{equation}

\end{lemma}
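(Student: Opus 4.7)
I would prove the estimate by combining three ingredients: a uniform lower bound $|\Phi^{i;\mu,\nu}|\gtrsim 2^{k+k_1+k_2}$ on the support of $\chi_\omega\chi_C^{\mu,\nu}$; symbol-type derivative estimates for $\Phi^{i;\mu,\nu}$ in coordinates adapted to $\omega$; and the standard bound that $\|\mathcal{F}(m)\|_{L^1}$ is controlled by weighted $L^\infty$ norms of $m$ and its derivatives, with weights matching the coordinate widths of the support. The structure of the argument parallels that of Lemma~\ref{CaseDIBPS}(ii) and the proof of \eqref{Mult1}, but requires finer bookkeeping because the support is near a degenerate resonant set.

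By rotational symmetry we may fix $\omega=e_1$. Using \eqref{CaseCAngleLoc} I would reduce to the configurations where $(\xi-\eta)/|\xi-\eta|$ and $\eta/|\eta|$ lie in small caps around $\pm e_1$, and pass to cylindrical coordinates $\xi=s\,e_1 + p$, $\eta=r\,e_1 + q$ with $p,q\perp e_1$. The support of $\chi_\omega\chi_C^{\mu,\nu}$ is then contained in a box with sides
\[
L_s\approx 2^k,\qquad L_p\approx 2^{k+k_1},\qquad L_r\approx 2^{k_2},\qquad L_q\approx 2^{k_1+k_2}.
\]

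For the lower bound on $\Phi$: for $\Phi^{i;i+,i+}$ the estimate \eqref{BdPhiiii} applied with $a=\min(|\xi-\eta|,|\eta|)$ and $b=\max(|\xi-\eta|,|\eta|)$ immediately gives $|\Phi^{i;i+,i+}|\gtrsim ab^2\gtrsim 2^{k+k_1+k_2}$, since $|\xi|\approx b$ in the aligned configuration. For $\Phi^{i;i+,i-}$, the identity \eqref{CaseCAngleLoc} places $\xi-\eta$ and $\eta$ in a nearly antiparallel configuration, and the Taylor expansion $\lambda_i(r)=\lambda_i'(0)r-cr^3+O(r^5)$ (justified by $\lambda_i''(0)=0$ from Lemma~\ref{tech99}) yields, after the cancellation of the leading linear terms, the bound $|\Phi^{i;i+,i-}|\gtrsim c\,|\xi-\eta|\,|\eta|\,|\xi|\approx 2^{k+k_1+k_2}$ when $|\xi-\eta|>|\eta|$, and the stronger bound $|\Phi^{i;i+,i-}|\gtrsim 2|\xi|\gtrsim 2^k$ when $|\eta|>|\xi-\eta|$; the angular deviations from the antiparallel configuration contribute only lower-order corrections by \eqref{SineLaw}.

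The final step is to compute derivatives of $\Phi$ in adapted coordinates (via Lemma~\ref{tech99}), bound $\partial^\alpha(1/\Phi)$ using \eqref{Der1/Phi}, and apply the Fourier-$L^1$ bound with widths $L_s,L_p,L_r,L_q$. The main obstacle is the $s$-direction, where $|\partial_s\Phi|\approx 2$ is $O(1)$, so that the naive estimate $L_s|\partial_s(1/\Phi)|\lesssim L_s|\partial_s\Phi|/|\Phi|^2$ exceeds the required $1/|\Phi|$ by a factor $2^{-(k_1+k_2)}\gg 1$. To overcome this I would dyadically decompose the support according to $|\Phi|\approx 2^l$ for $l\in[k+k_1+k_2,-D^2]$ (if this range is empty the support is empty and the claim is trivial): the nondegeneracy $|\partial_s\Phi|\gtrsim 1$ forces the effective $s$-width on the $l$-th piece to shrink from $L_s=2^k$ to $L_s^{(l)}\approx 2^l$, which makes the derivative estimates in all four coordinate groups compatible and yields $\|\mathcal{F}(m_l)\|_{L^1}\lesssim 2^{-l}$. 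Summing the resulting geometric series, dominated by its smallest term at $l=k+k_1+k_2$, produces the claimed bound $\|\mathcal{F}(m)\|_{L^1}\lesssim 2^{-(k+k_1+k_2)}$.
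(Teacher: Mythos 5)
Your proposal contains a genuine miscalculation at the crucial step. You claim that the radial derivative satisfies $|\partial_s\Phi|\approx 2=O(1)$ and that this forces a dyadic decomposition in $|\Phi|$, but for $\Phi^{i;i+,i+}$ this is false on the support of $\chi_\omega\chi_C^{i+,i+}$. In the aligned configuration (all of $\xi,\ \xi-\eta,\ \eta$ pointing in nearly the same direction $\omega$), one has
\[
\partial_{\xi_1}\Phi^{i;i+,i+}=\lambda_i'(|\xi|)\tfrac{\xi_1}{|\xi|}-\lambda_i'(|\xi-\eta|)\tfrac{(\xi-\eta)_1}{|\xi-\eta|}\approx \lambda_i'(|\xi|)-\lambda_i'(|\xi-\eta|),
\]
and since $\big||\xi|-|\xi-\eta|\big|\approx|\eta|\approx 2^{k_2}$ while $\lambda_i''(r)\approx r$ near the origin (Lemma~\ref{tech99}), this difference is $\lesssim 2^{k_1+k_2}$, not $O(1)$. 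You have overlooked precisely the cancellation that the paper packages into the bound \eqref{ControlOfPhixi}, namely $|\partial_{\xi_1}^\rho\partial_{\xi_i}^\sigma\Phi^{i;i+,i+}|\lesssim 2^{k+k_1+k_2}2^{-\rho k}2^{-|\sigma|(k+k_1)}$. With that bound in hand, the direct Fourier-$L^1$ argument closes without any dyadic decomposition in $|\Phi|$: the key product $L_s\cdot|\partial_s(1/\Phi)|\lesssim 2^k\cdot 2^{k_1+k_2}/|\Phi|^2\lesssim 1/|\Phi|$ already holds.

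For $\Phi^{i;i+,i-}$ your worry has partial substance: in the anti-parallel configuration singled out by $\chi^{i+,i-;1}_{C,\omega}$ the radial derivative is genuinely $O(1)$. But there the geometry forces $|\eta|\geq|\xi-\eta|$ (otherwise $\xi$ would point the wrong way and $\chi^0_\omega(\xi)$ would vanish), whence $\Phi^{i;i+,i-}=\lambda_i(|\xi|)+[\lambda_i(|\eta|)-\lambda_i(|\xi-\eta|)]\geq\lambda_i(|\xi|)\gtrsim 2^k$, a bound substantially stronger than $2^{k+k_1+k_2}$; the extra margin exactly compensates for the $O(1)$ derivative, so no dyadic decomposition is needed either. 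The complementary anti-parallel configuration $\chi^{i+,i-;2}_{C,\omega}$ is reduced to the $\Phi^{i;i+,i+}$ estimates by the change of variables $(\xi,\eta)\to(\xi-\eta,-\eta)$ or $(\xi-\eta,\xi)$, which your proposal does not mention. Beyond the miscomputation, your sketch also leaves out the actual derivative bookkeeping (the bounds \eqref{ControlOfPhixi} and \eqref{ControlOfPhieta} and the accompanying estimates for $\Xi$), which is where most of the work in the paper's proof lies; before your dyadic decomposition could be trusted one would need to re-verify all the symbol bounds on each shell $\{|\Phi|\approx 2^l\}$, including the effect of the additional cutoff $\varphi(2^{-l}\Phi)$ on derivatives in the transverse directions.
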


\begin{proof}
We may assume that $\omega=e_1$. We begin with the case $\Phi^{i;\mu,\nu}=\Phi^{i;i+,i+}$ and we write
\begin{equation*}
K(x,y):=\int_{\mathbb{R}^3\times\mathbb{R}^3}e^{-i(x\cdot\xi+y\cdot\eta)}
\frac{1}{\Phi^{i;\mu,\nu}(\xi,\eta)}\chi_{\omega}(\xi)
\chi_{C}^{\mu,\nu}(\xi,\eta)d\xi d\eta.
\end{equation*}
The inequality \eqref{CaseCMultEst} follows from \eqref{BdPhiiii} and the following bounds for $(\xi,\eta)$ in the support of $\chi^0_{e_1}\chi_C^{\mu,\nu}$:
\begin{equation}\label{CaseCME1}
\begin{split}
i)\quad\vert \Phi^{i;\mu,\nu}(\xi,\eta)\vert\cdot\vert\partial_{\xi_1}^\rho\partial_{\xi_i}^{\sigma}\partial_{\eta_1}^{\tau}\partial_{\eta_j}^{\upsilon}\frac{1}{\Phi^{i;\mu,\nu}(\xi,\eta)}\vert&\lesssim 2^{-\vert\rho\vert k}2^{-\vert\sigma\vert(k+k_1)}2^{-\vert\tau\vert k_2}2^{-\vert\upsilon\vert(k_1+k_2)},\quad 2\le i,j\le 3,\\
ii)\quad\vert\partial_{\xi_1}^\rho\partial_{\xi_i}^{\sigma}\partial_{\eta_1}^{\tau}\partial_{\eta_j}^{\upsilon}\varphi(2^{-k}\vert\Xi^{\mu,\nu}(\xi,\eta)\vert)\vert&\lesssim 2^{-\vert\rho\vert k}2^{-\vert\sigma\vert(k+k_1)}2^{-\vert\tau\vert k_2}2^{-\vert\upsilon\vert(k_1+k_2)},\quad 2\le i,j\le 3,\\
\end{split}
\end{equation}
Indeed, assuming \eqref{CaseCME1}, we deduce that, whenever $\vert x_1\vert\sim A$, $\vert(x_2,x_3)\vert\sim B$, $\vert y_1\vert\sim C$ and $\vert (y_2,y_3)\vert\sim D$, there holds that
\begin{equation*}
\begin{split}
A^pB^{2q}C^rD^{2s}\vert K(x,y)\vert
&\lesssim\left\vert \int_{\mathbb{R}^3\times\mathbb{R}^3}e^{-i(x\cdot\xi+y\cdot\eta)}
\partial_{\xi_1}^p\Delta_{x_2,x_3}^q\partial_{\eta_1}^r\Delta_{y_2,y_3}^s
\left\{\frac{1}{\Phi^{i;\mu,\nu}(\xi,\eta)}\chi_{\omega}(\xi)
\chi_{C}^{\mu,\nu}(\xi,\eta)\right
\}d\xi d\eta\right\vert\\
&\lesssim 2^{-(k+k_1+k_2)}2^{k}2^{-pk}\cdot 2^{2(k+k_1)}2^{-2q(k+k_1)}\cdot 2^{k_2}2^{-rk_2}\cdot 2^{2(k_1+k_2)}2^{-2s(k_1+k_2)}
\end{split}
\end{equation*}
for $p,q,r,s\in\{0,2\}$ and therefore,
\begin{equation*}
\begin{split}
\Vert K\Vert_{L^1(\mathbb{R}^3\times\mathbb{R}^3)}&\lesssim \sum_{A,B,C,D}\int_{\vert x_1\vert\sim A,\,\vert (x_2,x_3)\vert\sim B,\,\vert y_1\vert\sim C,\,\vert(y_2,y_3)\vert\sim D}\vert K(x,y)\vert dx dy\\
&\lesssim 2^{-(k+k_1+k_2)}\sum_{A,B,C,D}\left[A2^k\right]^{1-p}\left[B2^{k+k_1}\right]^{2(1-q)}\left[C2^{k_2}\right]^{1-r}\left[D2^{k_1+k_2}\right]^{2(1-s)}
\end{split}
\end{equation*}
where the summation is over $A$, $B$, $C$, $D$ dyadic numbers. Since the sum converges, this gives \eqref{CaseCMultEst}.

\medskip

It remains to prove \eqref{CaseCME1}. We start with $i)$. Using \eqref{Der1/Phi}, it suffices to obtain that
\begin{equation}\label{CaseCBdPhi}
\vert\partial_{\xi_1}^\rho\partial_{\xi_i}^{\sigma}\partial_{\eta_1}^{\tau}\partial_{\eta_j}^{\upsilon}\Phi^{i;\mu,\nu}(\xi,\eta)\vert\lesssim 2^{k+k_1+k_2}2^{-\vert\rho\vert k}2^{-\vert\sigma\vert(k+k_1)}2^{-\vert\tau\vert k_2}2^{-\vert\upsilon\vert(k_1+k_2)},\quad 2\le i,j\le 3,\\
\end{equation}
Assume first that $\min(\rho+\vert\sigma\vert,\tau+\vert\upsilon\vert)\ge1$, then
\begin{equation*}
\vert\partial_{\xi_1}^\rho\partial_{\xi_i}^{\sigma}\partial_{\eta_1}^{\tau}\partial_{\eta_j}^{\upsilon}\Phi^{i;\mu,\nu}(\xi,\eta)\vert=\vert\partial_{\xi_1}^\rho\partial_{\xi_i}^{\sigma}\partial_{\eta_1}^{\tau}\partial_{\eta_j}^{\upsilon}\left[\Lambda_i(\xi)-\Lambda_i(\xi-\eta)-\iota_2\Lambda_i(\eta)\right]\vert=\vert\partial_{\xi_1}^{\rho+\tau}\partial_{\xi_i}^{\sigma+\upsilon}\Lambda_i(\xi-\eta)\vert.
\end{equation*}
If $\vert\sigma\vert+\vert\upsilon\vert\ge 2$ we obtain the claim directly since $\partial^{\theta}_\xi\Lambda_i(\vert\xi-\eta\vert)\lesssim 2^{(1-\vert\theta\vert)k_1}$. If $\rho+\tau=1=\vert\sigma\vert+\vert\upsilon\vert$, then
\begin{equation*}
\vert\partial_{\xi_1}\partial_{\xi_i}\Lambda_i(\xi-\eta)\vert\le \vert\lambda_i^{(2)}(\vert\xi-\eta\vert)\frac{\xi_1-\eta_1}{\vert\xi-\eta\vert}\frac{\xi_i-\eta_i}{\vert\xi-\eta\vert}\vert+\lambda_i^\prime(\vert\xi-\eta\vert)\frac{\xi_1-\eta_1}{\vert\xi-\eta\vert}\frac{\xi_i-\eta_i}{\vert\xi-\eta\vert^2}\vert\lesssim 1
\end{equation*}
and the bound follows. If $\rho+\tau\ge 2$, we compute that
\begin{equation*}
\partial_{\xi_1}^2[\Lambda_i(\xi-\eta)]=\lambda_i^{\prime\prime}(\vert\xi-\eta\vert)\frac{(\xi_1-\eta_1)^2}{\vert\xi-\eta\vert^2}+\frac{\lambda^\prime_i(\vert\xi-\eta\vert)}{\vert\xi-\eta\vert}\frac{\vert\xi_\perp-\eta_\perp\vert^2}{\vert\xi-\eta\vert^2},\quad\xi=\xi_1e_2+\xi_\perp,\,\,\eta=\eta_1e_1+\eta_\perp.
\end{equation*}
This can be bounded by $2^{k_1}$ and since any further derivative amounts to multiply by another factor of $2^{-k_1}$, we obtain the claim in this case as well.

We now claim that, on the support of $\chi^0_{e_1}\chi_C^{i+,i+}$,
\begin{equation}\label{ControlOfPhixi}
\vert\partial^\rho_{\xi_1}\partial^{\sigma}_{\xi_i}\Phi^{i;i+,i+}(\xi,\eta)\vert\lesssim 2^{k+k_1+k_2}2^{-\rho k}2^{-\vert\sigma\vert(k+k_1)}.
\end{equation}
Note that, on the support of $\chi_C^{i+,i+}$, there holds that $\vert\eta\vert\lesssim\min(\vert\xi\vert,\vert\xi-\eta\vert)$.
Applying the simple formula
\begin{equation*}
\begin{split}
\partial_{x_i}^\rho f(g(x))=\sum_{\substack{\vert\rho_1\vert,\dots,\vert\rho_k\vert\ge 1\\\rho_1+\dots+\rho_k=\rho}}c_{\rho_1,\dots,\rho_k}f^{(k)}(g)\cdot\partial_{x_i}^{\rho_1}g\dots\partial_{x_i}^{\rho_k}g,
\end{split}
\end{equation*}
for appropriate constants $c_{\rho_1,\dots,\rho_k}$, which follows by induction, with functions $f(r)=\lambda_i(r)$ and $g(\xi)=\vert\xi\vert$, we see that \eqref{ControlOfPhixi} will follow from the following bounds:
\begin{equation*}
\begin{split}
\vert \lambda_i^{(\rho)}(\vert\xi\vert)-\lambda_i^{(\rho)}(\vert\xi-\eta\vert)\vert\lesssim \vert\eta\vert 2^{2k_1}\cdot\vert\xi\vert^{-\rho}&\lesssim 2^{k_2}2^{(2-\rho)k_1},\quad\quad 1\le \rho\le 6\\
\vert \partial_{\xi_1}^{\rho}\partial_{\xi_i}^{\sigma}\vert\xi\vert-\partial_{\xi_1}^{\rho}\partial_{\xi_i}^{\sigma}\vert\xi-\eta\vert\vert\lesssim \vert\eta\vert 2^{(2-\vert\sigma\vert)k_1} \cdot\vert\xi\vert^{-\rho-\vert\sigma\vert}&\lesssim 2^{k_2}2^{(2-\rho-2\vert\sigma\vert)k_1},\quad 0\le \rho\le 2, 0\le \vert\sigma\vert\le 4,\,\rho+\vert\sigma\vert\ge 1\\
\vert \lambda_i^{(\vert\sigma\vert)}(\vert\xi\vert)\vert+\vert\lambda_i^{(\vert\sigma\vert)}(\vert\xi-\eta\vert)\vert+\vert \partial^\sigma_{\xi}\vert\xi\vert\vert+\vert\partial_{\xi}^{\sigma}\vert\xi-\eta\vert\vert\lesssim \vert\xi\vert^{1-\vert\sigma\vert}&\lesssim 2^{(1-\vert\sigma\vert)k_1},\quad\quad 1\le \vert\sigma\vert\le 6.\\
\end{split}
\end{equation*}
The first line follows directly from the Taylor expansion of $\lambda_i$. The second and third lines are straightforward unless $\vert\sigma\vert\le 1$. When $\sigma=0$, $1\le \rho\le 2$ or $\sigma=1$, $\rho=0$, the bound
\begin{equation*}
\vert\partial_{\xi_1}^\rho\left[\vert\xi\vert-\vert\xi-\eta\vert\right]\vert\lesssim \vert\eta\vert 2^{2k_1}\cdot\vert\xi\vert^{-\rho},\quad \vert\partial_{\xi_j}\left[\vert\xi\vert-\vert\xi-\eta\vert\right]\vert\lesssim \vert\eta\vert 2^{k_1}\cdot\vert\xi\vert^{-1}
\end{equation*}
follows from the formulas\footnote{Here for a choice of $1\le i\le 3$, we write $\xi=\xi_i+\xi_\perp$, $e_i\cdot\xi_\perp=0$.}
\begin{equation*}
\begin{split}
\partial^\rho_i\vert\xi\vert=
\begin{cases}
\vert\xi\vert^{-3}\xi_\perp^2&,\rho=2\\
\vert\xi\vert^{-5}\xi_\perp^2\xi_i&,\rho=3.
\end{cases}
\end{split}
\end{equation*}
The other cases follow from similar computations.

\medskip

We now claim that
\begin{equation}\label{ControlOfPhieta}
\vert\partial^\rho_{\eta_1}\partial^{\sigma}_{\eta_i}\Phi^{i;i+,i+}(\xi,\eta)\vert\lesssim 2^{k+k_1+k_2}2^{-\rho k_2}2^{-\vert\sigma\vert(k_1+k_2)}.
\end{equation}
Again, if $\vert\sigma\vert\ge 2$, this follows from the bound
\begin{equation*}
\vert\partial^\theta_\eta\Lambda_i(\xi-\eta)\vert+\vert\partial^\theta_\eta\Lambda_i(\eta)\vert\lesssim \vert\eta\vert^{1-\vert\theta\vert}.
\end{equation*}
If $\sigma=0$, we may simply compute that
\begin{equation*}
\vert\partial_{\eta_1}\Phi^{i;i+,i+}(\xi,\eta)\vert=\vert e_1\cdot\Xi^{i+,i+}(\xi,\eta)\vert\lesssim \vert\lambda_i^\prime(\vert \xi-\eta\vert)-\lambda_i^\prime(\vert\eta\vert)\vert+[\angle(\xi-\eta,\eta)]^2\lesssim 2^{k_1+k}
\end{equation*}
while the other bounds follow solely from the estimate
\begin{equation*}
\begin{split}
\vert\lambda_i^{(\rho)}(\vert\xi-\eta\vert)\vert+\vert\lambda_i^{(\rho)}(\vert\eta\vert)\vert+\vert\partial_{\eta_1}\vert\eta\vert\vert+\vert\partial_{\eta_1}\vert\xi-\eta\vert\vert&\lesssim 2^{2k_1}\vert\eta\vert^{(1-\rho)k_2},\quad\rho\ge2.
\end{split}
\end{equation*}
If $\sigma=1$, then
\begin{equation*}
\vert\partial_{\xi_j}\Phi^{i;i+,i+}(\xi,\eta)\vert\le \vert\Xi^{i+,i+}(\xi,\eta)\vert\lesssim 2^k
\end{equation*}
and if $\rho\ge1$, the claim follows from the bounds above and
\begin{equation*}
\vert\partial_{\xi_1}^\rho\partial_{\xi_j}\vert\eta\vert\vert+\vert\partial_{\xi_1}^\rho\partial_{\xi_j}\vert\xi-\eta\vert\vert\lesssim 2^{k_1}\vert\eta\vert^{-\rho k_2},\quad\rho\ge1.
\end{equation*}

\medskip

We now prove \eqref{CaseCME1} $ii)$. Since we can easily see that
\begin{equation*}
\begin{split}
&\partial_{\xi_1}^\rho\partial_{\xi_i}^{\sigma}\partial_{\eta_1}^{\tau}\partial_{\eta_j}^{\upsilon}\varphi(2^{-k}\vert\Xi^{\mu,\nu}(\xi,\eta)\vert)\\
&=\sum_{\vert \theta_1\vert,\dots\vert\theta_d\vert\ge1,\,\, \theta_1+\dots+\theta_d=(\rho,\sigma,\tau,\upsilon)}c_{\theta_1,\dots,\theta_d}2^{-dk}\varphi^{(d)}(2^{-k}\vert\Xi^{\mu,\nu}(\xi,\eta)\vert)\partial^{\theta_1}\Xi^{\mu,\nu}(\xi,\eta)\dots\partial^{\theta_d}\Xi^{\mu,\nu}(\xi,\eta)
\end{split}
\end{equation*}
for some appropriate coefficients $c_{\theta_1,\dots,\theta_d}$. Therefore, we see that it suffices to show that
\begin{equation*}
\vert\partial_{\xi_1}^\rho\partial_{\xi_i}^{\sigma}\partial_{\eta_1}^{\tau}\partial_{\eta_j}^{\upsilon}\Xi^{\mu,\nu}(\xi,\eta)\vert\lesssim 2^k2^{-\vert\rho\vert k}2^{-\vert\sigma\vert(k+k_1)}2^{-\vert\tau\vert k_2}2^{-\vert\upsilon\vert(k_1+k_2)}
\end{equation*}
but this follows from \eqref{CaseCBdPhi}.

\medskip

In case $\Phi^{i;\mu,\nu}=\Phi^{i;i+,i-}$, we decompose
\begin{equation*}
\begin{split}
\chi^0_{\omega}(\xi)\chi^{i+,i-}_C(\xi,\eta)&=\chi^0_{\omega}(\xi)\chi^{i+,i-}_C(\xi,\eta)\chi^1_{-\omega}(\xi-\eta)\chi^2_{\omega}(\eta)+\chi^0_{\omega}(\xi)\chi^{i+,i-}_C(\xi,\eta)\chi^1_{\omega}(\xi-\eta)\chi^2_{-\omega}(\eta)\\
&=\chi^{i+,i-;1}_{C,\omega}(\xi,\eta)+\chi^{i+,i-;2}_{C,\omega}(\xi,\eta).
\end{split}
\end{equation*}
For $\chi^{i+,i-;2}_{C,\omega}(\xi,\eta)$, changing variable $(\xi,\eta)\to (\widetilde{\xi},\widetilde{\eta})$, where $\widetilde{\xi}=\xi-\eta$ and $\widetilde{\eta}=-\eta$ if $k_2\le k$ and $\widetilde{\eta}=\xi$ if $k\le k_2$, the previous analysis for $\Phi^{i;i+,i+}$ applies. For $\chi^{i+,i-;1}_{C,\omega}(\xi,\eta)$, using that, on its support, $\vert\xi-\eta\vert\simeq \vert\eta\vert\gtrsim \vert\xi\vert$ and
\begin{equation*}
\begin{split}
\vert \Phi^{i;i+,i-}(\xi,\eta)\vert&\gtrsim 2^{k},\\
 \vert\partial_{\xi_1}^\rho\partial_{\xi_i}^{\sigma}\partial_{\eta_1}^{\tau}\partial_{\eta_j}^{\upsilon}\Xi^{i+,i-}(\xi,\eta)\vert&\lesssim 2^k2^{-\vert\rho\vert k}2^{-\vert\sigma\vert(k+k_1)}2^{-\vert\tau\vert k_2}2^{-\vert\upsilon\vert(k_1+k_2)}
\end{split}
\end{equation*}
we easily obtain \eqref{CaseCME1}. This ends the proof.
\end{proof}

\appendix

\section{General estimates and the functions $\Lambda_i,\Lambda_e,\Lambda_b$}\label{lin}

In this section we summarize the linear and the bilinear estimates we use in the paper. We also provide precise descriptions of the eigenvalues $\Lambda_i,\Lambda_e,\Lambda_b$ defined in \eqref{operators1}.

We note first that Calderon--Zygmund operators are compatible with the spaces constructed in Definition \ref{MainDef}. More precisely:

\begin{lemma}\label{CZop}
Assume $q\in\mathcal{S}^{10}$, see \eqref{symb1}, and $T_qf:=\mathcal{F}^{-1}(q\cdot\widehat{f})$. Then
\begin{equation*}
\|T_qf\|_{Z}\lesssim \|q\|_{\mathcal{S}^{10}}\|f\|_{Z},\qquad\text{ for any }f\in Z.
\end{equation*}
\end{lemma}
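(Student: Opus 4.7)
The plan is to decouple the multiplier action into a Fourier-side part (which is immediate since $q$ is a bounded symbol) and a physical-side convolution part that interacts with the atomic spatial decomposition in the $Z$-norm. Since $T_q$ commutes with every $P_k$, I write $P_kT_qf = K_k\ast P_kf$ where $K_k:=\mathcal{F}^{-1}(q\cdot\varphi_{[k-2,k+2]})$. The first step is the Schwartz-type kernel bound
\begin{equation*}
|K_k(x)|\lesssim \|q\|_{\mathcal{S}^{10}}\cdot 2^{3k}(1+2^k|x|)^{-8},
\end{equation*}
which follows by rescaling (the symbol $\tilde q(\xi):=q(2^k\xi)\varphi_{[-2,2]}(\xi)$ has $C^{10}$ norm $\lesssim\|q\|_{\mathcal{S}^{10}}$) and repeated integration by parts; in particular $\|K_k\|_{L^1}\lesssim\|q\|_{\mathcal{S}^{10}}$.

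Next I would atomize $P_kf=\sum_{j'\geq\max(-k,0)}g^k_{j'}$ with $g^k_{j'}:=\widetilde{\varphi}^{(k)}_{j'}\cdot P_kf$, so that $\|g^k_{j'}\|_{B_{k,j'}}\leq\|f\|_Z$. The lemma reduces to proving, for every $(k,j)\in\mathcal{J}$,
\begin{equation*}
\sum_{j'\geq\max(-k,0)}\bigl\|\widetilde{\varphi}^{(k)}_j\cdot(K_k\ast g^k_{j'})\bigr\|_{B_{k,j}}\lesssim \|q\|_{\mathcal{S}^{10}}\|f\|_Z.
\end{equation*}
In the diagonal regime $|j-j'|\leq 5$, split $g^k_{j'}=g^1+g^2$ along the infimum defining the $B_{k,j'}$ norm and verify that $\widetilde{\varphi}^{(k)}_j(K_k\ast g^i)$ lies in $B^i_{k,j}$ with the same bound: the $L^2$ estimates come from Young's inequality together with $\|K_k\|_{L^1}\lesssim\|q\|_{\mathcal{S}^{10}}$; the three Fourier norms (the $L^\infty$ norm, the local norm $R^{-2}\|\widehat{\cdot}\|_{L^1(B(\xi_0,R))}$, and the plain $L^2$-on-Fourier side) are all handled by writing $\widehat{T_qg^i}(\xi)=q(\xi)\widehat{g^i}(\xi)$ and using $\|q\|_{L^\infty}\leq \|q\|_{\mathcal{S}^{10}}$, noting that the spatial cutoff $\widetilde{\varphi}^{(k)}_j$ acts on the Fourier side as convolution with a Schwartz bump at scale $2^{-j}\leq R$, which preserves local $L^1$-balls of radius $R$ up to a harmless constant.

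In the off-diagonal regime $|j-j'|\geq 5$, the supports of $\widetilde{\varphi}^{(k)}_j$ and $g^k_{j'}$ are separated by $\gtrsim 2^{\max(j,j')}$, so the Schwartz tail of $K_k$ contributes a factor $(1+2^{k+\max(j,j')})^{-8}$. Combined with $k+\max(j,j')\geq 0$ on $\mathcal{J}$, this is more than enough to absorb the polynomial weights $(2^{\alpha k}+2^{10k})2^{(1+\beta)j}$ and the like in the $B_{k,j}$ norms, and still leave a geometric series in $|j-j'|$ that sums absolutely. The main obstacle is cleanly verifying the local $L^1$-Fourier piece of $B^2_{k,j}$ in this off-diagonal regime, because the supremum in $R\in[2^{-j},2^k]$ shifts with $j$; this is handled by noting that once off-diagonal decay has been extracted, one can afford to bound the local-$L^1$-Fourier norm by its value at the maximal radius $R=2^k$ and compare to the corresponding norm of $g^k_{j'}$, with the decay factor covering any residual mismatch.
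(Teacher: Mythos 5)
Your proof is correct; the paper itself omits this proof, stating only that it is identical to that of Lemma~5.1 in \cite{IoPa2}. Your route — commuting $T_q$ past $P_k$, establishing the rescaled Mikhlin kernel bound $|K_k(x)|\lesssim\|q\|_{\mathcal{S}^{10}}2^{3k}(1+2^k|x|)^{-8}$, atomizing $P_kf$ in $j'$, and splitting into a diagonal regime (handled on the Fourier side via $\widehat{T_qg}=q\widehat{g}$ and Young's inequality, with the spatial cutoff acting as a frequency-scale-$2^{-j}\le R$ mollifier) and an off-diagonal regime where the Schwartz tails combined with $k+\max(j,j')\ge 0$ dominate all polynomial weights and produce geometric decay in $|j-j'|$ — is exactly the standard Calder\'on--Zygmund stability argument for these atomic $Z$-norms, and the quantitative margins check out (eight powers of decay are ample against the at-most-$(5/2+\beta)$-power $j$-weights plus the two volume factors, and the $2^{-10|k|}$ $L^2$-gain on a $B^2$-atom cancels the $2^{10|k|}$ prefactor in $B^2_{k,j}$).
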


We omit the proof of this lemma, since it is identical to the proof of Lemma 5.1 in \cite{IoPa2}. The following general oscillatory integral estimate is used repeatedly in the proofs. 

\begin{lemma}\label{tech5} Assume that $0<\eps\leq 1/\eps\leq K$, $N\geq 1$ is an integer, and $f,g\in C^N(\mathbb{R}^n)$. Then
\begin{equation}\label{ln1}
\Big|\int_{\mathbb{R}^n}e^{iKf}g\,dx\Big|\lesssim_N (K\eps)^{-N}\big[\sum_{|\rho|\leq N}\eps^{|\rho|}\|D^\rho_xg\|_{L^1}\big],
\end{equation}
provided that $f$ is real-valued, 
\begin{equation}\label{ln2}
|\nabla_x f|\geq \mathbf{1}_{{\mathrm{supp}}\,g},\quad\text{ and }\quad\|D_x^\rho f \cdot\mathbf{1}_{{\mathrm{supp}}\,g}\|_{L^\infty}\lesssim_N\eps^{1-|\rho|},\,2\leq |\rho|\leq N.
\end{equation}
\end{lemma}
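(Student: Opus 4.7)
My plan is the classical iterated integration-by-parts / non-stationary phase argument. Set $V:=\nabla f/|\nabla f|^2$, which is smooth on a neighborhood of $\mathrm{supp}\,g$ thanks to $|\nabla f|\ge 1$ there, and define the first-order operator
\begin{equation*}
\mathcal{L}h:=\frac{1}{iK}\,\nabla\cdot(V\,h).
\end{equation*}
Since $V\cdot\nabla e^{iKf}=iK\,e^{iKf}$ on $\mathrm{supp}\,g$, integration by parts $N$ times (with no boundary terms, after approximating $g$ by a compactly supported function if necessary) yields
\begin{equation*}
\int_{\mathbb{R}^n}e^{iKf}\,g\,dx=\int_{\mathbb{R}^n}e^{iKf}\,\mathcal{L}^N g\,dx,
\end{equation*}
so the claim reduces to proving $\|\mathcal{L}^N g\|_{L^1}\lesssim_N (K\eps)^{-N}\sum_{|\rho|\le N}\eps^{|\rho|}\|D^\rho g\|_{L^1}$.

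The core step is a pointwise expansion, established by induction on $N$:
\begin{equation*}
\mathcal{L}^N g=\frac{1}{(iK)^N}\sum_{|\alpha|\le N}\Phi_{N,\alpha}(f)\,\partial^\alpha g,
\end{equation*}
where each $\Phi_{N,\alpha}(f)$ is a finite sum of terms, each a product of partial derivatives $D^{\rho_1}f,\dots,D^{\rho_j}f$ (all $|\rho_i|\ge 1$) divided by a power of $|\nabla f|^2$, subject to the balance condition $\sum_i(|\rho_i|-1)=N-|\alpha|$. Combining the hypothesis $\|D^\rho f\cdot\mathbf{1}_{\mathrm{supp}\,g}\|_{L^\infty}\lesssim_N\eps^{1-|\rho|}$ for $2\le|\rho|\le N$ with the uniform bound $|\nabla f|^{-1}\le 1$ on $\mathrm{supp}\,g$, every such term is pointwise bounded by $\eps^{-(N-|\alpha|)}$ on $\mathrm{supp}\,g$, so
\begin{equation*}
\|\Phi_{N,\alpha}(f)\cdot\mathbf{1}_{\mathrm{supp}\,g}\|_{L^\infty}\lesssim_N \eps^{-(N-|\alpha|)},
\end{equation*}
which summed against $\|\partial^\alpha g\|_{L^1}$ produces exactly \eqref{ln1}.

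The main (and in fact only) work is the inductive bookkeeping on the coefficients $\Phi_{N,\alpha}$. When $\mathcal{L}$ acts on a term $\Phi\,\partial^\alpha g$, each Leibniz split either places the new derivative on $g$ (raising $|\alpha|$ by one, with no $\eps$-cost) or on some factor $D^{\rho_i}f$ (raising its order to $|\rho_i|+1$, which exactly matches the step $\eps^{1-|\rho_i|}\to\eps^{-|\rho_i|}$ in the bound); the additional factors $|\nabla f|^{-1}\le 1$ produced by differentiating $V$ are harmless since $\eps\le 1$. This is a routine Fa\`a di Bruno-style multi-index accounting but is the only step that requires care.
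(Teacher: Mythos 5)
Your approach (iterated integration by parts with the global vector field $V=\nabla f/|\nabla f|^2$) is a legitimate alternative to the paper's, which instead localizes to balls of radius $\approx\eps$ where some single directional derivative $|\partial_1 f|\gtrsim 1$, and then integrates by parts $N$ times in that one variable; the paper's local-coordinate choice has the convenient feature that differentiating $1/\partial_1 f$ produces only factors $\partial_1^k f$ with $k\ge 2$, all controlled by \eqref{ln2}, so the bookkeeping is essentially trivial.

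There is, however, a genuine gap in your coefficient estimate. You claim that each term in $\Phi_{N,\alpha}$ has the form $\big(\prod_i D^{\rho_i}f\big)/|\nabla f|^{2m}$ with $|\rho_i|\ge 1$ and the single balance condition $\sum_i(|\rho_i|-1)=N-|\alpha|$, and you then bound it using $\|D^\rho f\|_{L^\infty(\mathrm{supp}\,g)}\lesssim\eps^{1-|\rho|}$ for $2\le|\rho|\le N$ together with $|\nabla f|^{-1}\le 1$. But the hypothesis \eqref{ln2} gives \emph{no upper bound} on $|\nabla f|$: only a lower bound $|\nabla f|\ge 1$, and bounds on $D^\rho f$ starting from $|\rho|=2$. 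Differentiating $V=\nabla f/|\nabla f|^2$ via the chain rule on $|\nabla f|^2$ produces first-order factors $\partial_j f$ in the numerator, and your stated balance condition places no constraint on how many such factors appear relative to $m$, so the terms as you describe them are not in general bounded (e.g.\ $\partial_1 f\,\partial_2 f/|\nabla f|^2$ satisfies your balance with $N-|\alpha|=0$ but is not $\lesssim 1$ from your ingredients, and in the paper's applications $|\nabla f|$ is indeed large, being $\approx |x|$). What rescues the argument — and what your writeup omits — is a second invariant: if one assigns degree $+1$ to each first-order factor $\partial_j f$ and degree $-2$ to each power of $|\nabla f|^{-2}$, then $V$ has degree $-1$, differentiation never increases degree, and hence every term in $\Phi_{N,\alpha}$ has total degree $\le -N<0$; combined with $|\partial_j f|\le|\nabla f|$ and $|\nabla f|\ge 1$ this kills all the first-order numerator factors, after which your $\eps$-counting via the balance condition $\sum_{|\rho_i|\ge 2}(|\rho_i|-1)=N-|\alpha|$ gives the claimed bound. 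You should state and propagate this degree constraint in your induction; without it the pointwise bound on $\Phi_{N,\alpha}$ does not follow.
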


\begin{proof}[Proof of Lemma \ref{tech5}] We localize first to balls of size $\approx \eps$. Using the assumptions in \eqref{ln2} we may assume that inside each small ball, one of the directional derivatives of $f$ is bounded away from $0$, say $|\partial_1f|\gtrsim_N 1$. Then we integrate by parts $N$ times in $x_1$, and the desired bound \eqref{ln1} follows.  
\end{proof}

We will also use repeatedly the following simple bilinear estimate:

\begin{lemma}\label{tech2}
Assume $p,q\in[2,\infty]$ satisfy $1/p+1/q=1/2$, and $m\in L^\infty(\mathbb{R}^3\times\mathbb{R}^3)$. Then, for any $f,g\in L^2(\mathbb{R}^3)$,
\begin{equation}\label{mk6}
\Big\|\int_{\mathbb{R}^3}m(\xi,\eta)\cdot\widehat{f}(\xi-\eta)\widehat{g}(\eta)\,d\eta\Big\|_{L^2_\xi}\lesssim\|\mathcal{F}^{-1}m\|_{L^1(\mathbb{R}^3\times\mathbb{R}^3)}\|f\|_{L^p}\|g\|_{L^q}.
\end{equation}
\end{lemma}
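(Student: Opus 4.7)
The plan is to reduce the bilinear frequency-side integral to a physical-side expression that is essentially a weighted convolution of translates of $f$ and $g$, and then to apply Minkowski and H\"older. Let $T_m(f,g)$ denote the function whose Fourier transform is $\int m(\xi,\eta)\widehat f(\xi-\eta)\widehat g(\eta)\,d\eta$, and let $K:=\mathcal{F}^{-1}m$, so that, up to an absolute constant,
\[
m(\xi,\eta)=\int_{\mathbb{R}^3\times\mathbb{R}^3}K(y,z)\,e^{-i(y\cdot\xi+z\cdot\eta)}\,dy\,dz.
\]

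Substituting this representation for $m$ into the definition of $T_m(f,g)$, performing the change of variables $\zeta=\xi-\eta$, and recognizing the resulting $\zeta$- and $\eta$-integrals as the inverse Fourier transforms of $\widehat f$ and $\widehat g$ evaluated at $x-y$ and $x-y-z$ respectively, one arrives at the kernel representation
\[
T_m(f,g)(x)=c\int_{\mathbb{R}^3\times\mathbb{R}^3}K(y,z)\,f(x-y)\,g(x-y-z)\,dy\,dz.
\]
This is the only nontrivial identity in the argument, and it is a direct computation once one is careful with Fourier conventions. I expect this Fubini/change-of-variables step to be the main (and only) technical point; it only requires enough integrability of $m$, $\widehat f$, $\widehat g$ to justify the interchanges, and the case $K\in L^1$, $f,g\in L^p\cap L^q$ can be handled first, with the general case following by density.

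Once the kernel formula is in hand, Minkowski's integral inequality applied in the $x$-variable yields
\[
\|T_m(f,g)\|_{L^2_x}\lesssim\int_{\mathbb{R}^3\times\mathbb{R}^3}|K(y,z)|\,\bigl\|f(\cdot-y)\,g(\cdot-y-z)\bigr\|_{L^2_x}\,dy\,dz,
\]
and H\"older's inequality, using $1/p+1/q=1/2$ and the translation-invariance of $L^p$ and $L^q$, gives $\|f(\cdot-y)g(\cdot-y-z)\|_{L^2_x}\leq\|f\|_{L^p}\|g\|_{L^q}$. Pulling these bounds out of the integral produces the factor $\|K\|_{L^1(\mathbb{R}^3\times\mathbb{R}^3)}=\|\mathcal{F}^{-1}m\|_{L^1(\mathbb{R}^3\times\mathbb{R}^3)}$, which is exactly the desired estimate \eqref{mk6}.
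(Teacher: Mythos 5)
Your proof is correct and takes essentially the same route as the paper: both express $m$ through its inverse Fourier transform $K$, pass to a physical-space convolution representation, and close with H\"older using $1/p+1/q=1/2$ together with $\|K\|_{L^1}$. The only cosmetic difference is that the paper phrases the last step as a duality estimate against a test function $h$, whereas you apply Minkowski's integral inequality to the kernel formula directly; these are equivalent.
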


\begin{proof}[Proof of Lemma \ref{tech2}] Let
\begin{equation*}
K(x,y):=(\mathcal{F}^{-1}m)(x,y)=\int_{\mathbb{R}^3\times\mathbb{R}^3}m(\xi,\eta)e^{ix\cdot\xi}e^{iy\cdot\eta}\,d\xi d\eta.
\end{equation*}
Then, for any $h\in C^\infty_0(\mathbb{R}^3)$ we estimate
\begin{equation*}
\begin{split}
\Big|\int_{\mathbb{R}^3\times\mathbb{R}^3}m(\xi,\eta)\widehat{f}(\xi-\eta)\cdot\widehat{g}(\eta)\widehat{h}(\xi)\,d\xi d\eta\Big|&=C\Big|\int_{\mathbb{R}^3\times\mathbb{R}^3\times\mathbb{R}^3}f(x)g(y)h(z)\cdot K(-x-z,x-y)\,dxdydz\Big|\\
&\lesssim \int_{\mathbb{R}^3\times\mathbb{R}^3\times\mathbb{R}^3}|f(x)g(x-v)h(-x-u)|\cdot |K(u,v)|\,dxdudv\\
&\lesssim \|K\|_{L^1(\mathbb{R}^3\times\mathbb{R}^3)}\cdot \|f\|_{L^p}\|g\|_{L^q}\|h\|_{L^2},
\end{split}
\end{equation*}
and the desired bound \eqref{mk6} follows.
\end{proof}

Recall the functions $\Lambda_i, \Lambda_e, \Lambda_b$ defined in \eqref{operators1}. Let $\lambda_i,\lambda_e,\lambda_b:[0,\infty)\to[0,\infty)$,
\begin{equation}\label{mk0.1}
\begin{split}
&\lambda_i(r):=\varepsilon^{-1/2}\sqrt{\frac{(1+\varepsilon)+(T+\varepsilon)r^2-\sqrt{\left((1-\varepsilon)+(T-\varepsilon)r^2\right)^2+4\varepsilon}}{2}},\\
&\lambda_e(r):=\varepsilon^{-1/2}\sqrt{\frac{(1+\varepsilon)+(T+\varepsilon)r^2+\sqrt{\left((1-\varepsilon)+(T-\varepsilon)r^2\right)^2+4\varepsilon}}{2}},\\
&\lambda_b(r):=\varepsilon^{-1/2}\sqrt{1+\varepsilon+C_br^2},
\end{split}
\end{equation}
such that $\Lambda_\sigma(\xi)=\lambda_\sigma(|\xi|)$, $\sigma\in\{i,e,b\}$. We also define
\begin{equation*}
c_\sigma=\lim_{r\to+\infty}\lambda_\sigma^\prime(r),\quad c_i=1,\,\,c_e=\sqrt{T/\varepsilon},\,\,c_b=\sqrt{C_b/\varepsilon}.
\end{equation*}

\begin{lemma}\label{tech99}
(i) The functions $\lambda_i,\lambda _e,\lambda_b$ are smooth on $[0,\infty)$ and satisfy
\begin{equation}\label{mk3.1}
\begin{split}
&\lambda_i(0)=0,\qquad\lambda''_i(0)=0,\qquad\lambda'''_i(0)\approx_{C_b,\varepsilon} -1,\qquad\lambda'_i(r)\approx_{C_b,\varepsilon} 1\,\,\text{ for any }\,\,r\in[0,\infty),\\
&\lambda_e(0)=\sqrt{\varepsilon^{-1}+1},\qquad\lambda'_e(0)=0,\qquad\lambda''_e(r)\approx_{C_b,\varepsilon} (1+r^2)^{-3/2}\,\,\text{ for any }\,\,r\in[0,\infty),\\
&\lambda_b(0)=\sqrt{\varepsilon^{-1}+1},\qquad\lambda'_b(0)=0,\qquad\lambda''_b(r)\approx_{C_b,\varepsilon} (1+r^2)^{-3/2}\,\,\text{ for any }\,\,r\in[0,\infty).
\end{split}
\end{equation}
In addition, there is a constant $r_\ast\in(T^{-1/2},4T^{-1/2}+4T^{-1/4})$ such that,
\begin{equation}\label{mk3.2}
\lambda_i^{\prime\prime}(r_\ast)=0,\qquad|\lambda''_i(r)|\approx_{C_b,\varepsilon}\min(r,r^{-3})\,\,\text{ if }\,\,|r-r_\ast|\geq 2^{-D},\qquad |\lambda'''_i(r)|\approx_{C_b,\varepsilon} 1\,\,\text{ if }\,\,|r-r_\ast|\leq 2^{-D/2}.
\end{equation}
Moreover,
\begin{equation}\label{SimpleBdLie}
\begin{split}
&r\le\lambda_i(r)\le \sqrt{(T+1)(\varepsilon+1)}r,\quad r\ge0,\\
&\max(\lambda_\sigma(0),c_\sigma r)\le\lambda_{\sigma}(r)\le \lambda_\sigma(0)+c_{\sigma}r,\quad\sigma\in\{e,b\},\,\, r\ge0.
\end{split}
\end{equation}

(ii) Letting $h_\varepsilon(r):=\varepsilon^{-1/2}\sqrt{1+Tr^2}$, we have
\begin{equation}\label{Le1}
\vert D^\rho_r(\lambda_e-h_\varepsilon)(r)\vert\le \sqrt{\varepsilon}\vert D_r^\rho h_\varepsilon(r)\vert,\quad 0\le\rho\le 2.
\end{equation}

(iii) We have $\lambda_i(r)=rq_i(r)$ for some $1\le q_i(r)\le \sqrt{(1+T)/(1+\varepsilon)}$, $q_i(r)\to1$ as $r\to+\infty$ such that
\begin{equation}\label{qprime}
q_i^\prime(r)\leq-\frac{1}{2}\frac{T^2r}{\left[1+T+Tr^2\right]^2}.
\end{equation}
Moreover 
\begin{equation}\label{alo5}
\vert \lambda_i^{\prime\prime}(r)\vert\le8\sqrt{2}T\qquad\text{ and }\qquad\lambda_i^{\prime\prime}(r)\le 10r^{-3}\text{ for }r\ge 4T^{-1/2}+4T^{-1/4}.
\end{equation}
\end{lemma}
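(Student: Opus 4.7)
The plan is to reduce everything to explicit computation based on the formulas \eqref{mk0.1}. Introducing the abbreviations $A(r):=(1-\varepsilon)+(T-\varepsilon)r^2$ and $B(r):=\sqrt{A(r)^2+4\varepsilon}$, the definitions become $2\varepsilon\lambda_i^2=(1+\varepsilon)+(T+\varepsilon)r^2-B(r)$ and $2\varepsilon\lambda_e^2=(1+\varepsilon)+(T+\varepsilon)r^2+B(r)$. Since $B(r)\ge 2\sqrt{\varepsilon}>0$, both $\lambda_e^2$ and $\lambda_b^2$ are smooth (indeed real-analytic) even functions of $r$, so $\lambda_e,\lambda_b$ are smooth on $[0,\infty)$. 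A direct expansion yields the two key algebraic identities
\begin{equation*}
\lambda_i^2+\lambda_e^2=\varepsilon^{-1}\bigl[(1+\varepsilon)+(T+\varepsilon)r^2\bigr],\qquad \lambda_i^2\lambda_e^2=\varepsilon^{-1}r^2\bigl[1+T+Tr^2\bigr],
\end{equation*}
the second coming from the identity $((1+\varepsilon)+(T+\varepsilon)r^2)^2-A(r)^2-4\varepsilon=4\varepsilon r^2(1+T+Tr^2)$. Dividing by $\lambda_e^2$ gives $\lambda_i^2=r^2q_i(r)^2$ with $q_i(r)^2:=\varepsilon^{-1}(1+T+Tr^2)/\lambda_e(r)^2$; since $\lambda_e^2$ is smooth and bounded away from $0$, $q_i$ is smooth and positive on $[0,\infty)$, which yields the decomposition of part (iii) and the smoothness of $\lambda_i$.

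\textbf{Part (iii) and the easy parts of (i).} Substituting $r=0$ gives $\lambda_i(0)=0$, $\lambda_e(0)^2=\lambda_b(0)^2=1+\varepsilon^{-1}$, and $q_i(0)^2=(1+T)/(1+\varepsilon)$. Since $q_i^2$, $\lambda_e^2$, $\lambda_b^2$ are even, the first derivatives vanish at $0$ and the second derivatives at $r=0$ can be read off directly from the Taylor expansions of $A(r),B(r)$; straightforward computation gives $\lambda''_e(0),\lambda''_b(0)\approx_{C_b,\varepsilon}1$. The lower bound $q_i(r)\geq 1$ in \eqref{SimpleBdLie} is equivalent to $B(r)\leq (1+\varepsilon)+(T-\varepsilon)r^2$, which on squaring both (positive) sides reduces to $4\varepsilon\leq 4\varepsilon(1+r^2)(1+Tr^2)$, obvious. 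The upper bound $q_i(r)\leq \sqrt{(1+T)/(1+\varepsilon)}$ is similarly equivalent to an elementary inequality for $A,B$. To obtain the quantitative derivative bound \eqref{qprime} I will differentiate $q_i^2\lambda_e^2=\varepsilon^{-1}(1+T+Tr^2)$ to get $2q_iq_i'\lambda_e^2+2q_i^2\lambda_e\lambda'_e=2\varepsilon^{-1}Tr$; combining this with the explicit form $2\lambda_e\lambda'_e=\varepsilon^{-1}((T+\varepsilon)r+(T-\varepsilon)rA(r)/B(r))$ and the lower bound $\lambda_e^2\ge \varepsilon^{-1}(1+Tr^2)$ yields the claim after a short computation. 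The global bound $|\lambda_i''(r)|\leq 8\sqrt 2 T$ will follow by writing $\lambda_i=r\sqrt{P(r^2)}$ with $P(s)=\varepsilon^{-1}(1+T+Ts)/\lambda_e(\sqrt s)^2$ and estimating $P'$ and $P''$ explicitly; the bound $\lambda_i''(r)\leq 10 r^{-3}$ for $r\geq 4T^{-1/2}+4T^{-1/4}$ will follow from the asymptotic expansion $\lambda_i(r)^2=r^2+(1+T)/T+O(r^{-2})$ given by the product identity, which gives $\lambda_i(r)=r+(1+T)/(2Tr)+O(r^{-3})$ and hence $\lambda_i''(r)\sim (1+T)/(Tr^3)$.

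\textbf{Part (ii).} The identity \eqref{mk2} of Lemma \ref{tech1}, which is contained in the algebraic setup above, gives
\begin{equation*}
\lambda_e(r)^2-h_\varepsilon(r)^2=\frac{2}{A(r)+B(r)}.
\end{equation*}
Under the assumption \eqref{condTeps}, $A(r)+B(r)\ge 2((1-\varepsilon)+(T-\varepsilon)r^2)\ge (2-3\varepsilon)(1+Tr^2)\ge (2-3\varepsilon)\varepsilon h_\varepsilon(r)^2$, so $\lambda_e^2\leq (1+2\varepsilon)h_\varepsilon^2$, which on taking square roots yields $|\lambda_e-h_\varepsilon|\leq \varepsilon h_\varepsilon\leq \sqrt{\varepsilon}h_\varepsilon$. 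For the derivative bounds, I will differentiate the identity above and use the chain rule together with analogous lower bounds on $A+B$ and its derivatives; the key point is that each $r$-derivative of $2/(A+B)$ gains a factor of $r/(1+r^2)$ of the corresponding derivative of $h_\varepsilon^2$, which combined with the pointwise bound $|\lambda_e^2-h_\varepsilon^2|\lesssim \varepsilon h_\varepsilon^2$ produces \eqref{Le1}.

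\textbf{The main obstacle: existence, uniqueness and location of $r_*$.} The hardest part is \eqref{mk3.2}. From $\lambda_i=r\sqrt{P(r^2)}$, one finds $\lambda_i''(0)=0$ and $\lambda_i'''(0)=3P'(0)/\sqrt{P(0)}$. Computing $P'(0)=\varepsilon^{-1}T/(1+\varepsilon^{-1})-\varepsilon^{-1}(1+T)(\lambda_e^2)'(0)/(1+\varepsilon^{-1})^2$ with $(\lambda_e^2)'(0)=\varepsilon^{-1}(T+\varepsilon^2)/(1+\varepsilon)$ gives, after simplification, $P'(0)=-(T-\varepsilon)^2/(1+\varepsilon)^3$, so $\lambda_i'''(0)\approx_{C_b,\varepsilon}-1$. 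Hence $\lambda_i''(r)<0$ for small $r>0$, while $\lambda_i''(r)\sim (1+T)/(Tr^3)>0$ for large $r$, so a zero $r_*$ exists by the intermediate value theorem. For uniqueness and the numerical range $r_*\in(T^{-1/2},4T^{-1/2}+4T^{-1/4})$, I will directly evaluate $\lambda_i''$ at the two endpoints using the explicit formula for $P$ (reducing to estimates on $A$ and $B$) and use the parameter assumption $T\in[1,100],\ \varepsilon\leq 10^{-3}$ to verify the two relevant signs. Uniqueness will follow by showing that $r^3\lambda_i''(r)$ is strictly increasing on the interval where $\lambda_i''\leq 0$, which can be reduced to a monotonicity statement on a single scalar function of $s=r^2$ defined through $P$, $P'$, $P''$; the bound $|\lambda_i'''(r)|\approx_{C_b,\varepsilon}1$ for $|r-r_*|\leq 2^{-D/2}$ then follows from the quantitative implicit function theorem, and the two-sided bound $|\lambda_i''(r)|\approx_{C_b,\varepsilon}\min(r,r^{-3})$ for $|r-r_*|\geq 2^{-D}$ is obtained by splicing the small-$r$ Taylor expansion with the large-$r$ asymptotic expansion and using the monotonicity above on the intermediate region.
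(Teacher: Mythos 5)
Your algebraic framework --- the sum and product identities for $\lambda_i^2,\lambda_e^2$, the decomposition $\lambda_i=rq_i$, and the identity $\lambda_e^2-h_\varepsilon^2=2/(A+B)$ --- is the same one the paper uses, and your computations of $\lambda_i'(0)$, $P'(0)$ and $\lambda_i'''(0)$ are correct. The genuine gap is in your treatment of \eqref{mk3.2}: you propose to obtain uniqueness of $r_\ast$ by showing that $r^3\lambda_i''(r)$ is strictly increasing on $\{\lambda_i''\le 0\}$, but this claim is false. Since $\lambda_i$ extends to a smooth odd function, $\lambda_i''(r)=\lambda_i'''(0)\,r+O(r^3)$ with $\lambda_i'''(0)<0$, hence $r^3\lambda_i''(r)=\lambda_i'''(0)\,r^4+O(r^6)$ and $\frac{d}{dr}\bigl[r^3\lambda_i''(r)\bigr]=4\lambda_i'''(0)\,r^3+O(r^5)<0$ for small $r>0$; so $r^3\lambda_i''$ is strictly \emph{decreasing} on a right neighborhood of $0$, which lies inside $\{\lambda_i''\le 0\}$. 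Your plan for locating $r_\ast$ (endpoint evaluation plus the monotonicity) and for the two-sided bound $|\lambda_i''(r)|\approx\min(r,r^{-3})$ away from $r_\ast$ both rest on this false monotonicity, so \eqref{mk3.2} is not established.

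The paper argues differently. Differentiating the formula for $\lambda_i^2$ three times gives an identity of the form $6\lambda_i'\lambda_i''+2\lambda_i\lambda_i^{(3)}=\mathcal{A}(r)$ with $\mathcal{A}(r)=-24(T-\varepsilon)^2 r\,\bigl[(1+\varepsilon)^2-(T-\varepsilon)^2r^4\bigr]\,B(r)^{-5}$, whose only positive zero is $R_A=\sqrt{(1+\varepsilon)/(T-\varepsilon)}$. On $(0,R_A)$, where $\mathcal{A}<0$, one shows $\lambda_i''<0$ by a sign bootstrap: as long as $\lambda_i''$ stays above the (negative) threshold at which the identity would permit $\lambda_i^{(3)}\ge 0$, the identity in fact forces $\lambda_i^{(3)}<0$, so $\lambda_i''$ keeps decreasing. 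On $(R_A,\infty)$, where $\mathcal{A}>0$, one shows there is at most one zero: if $\lambda_i''(r_\ast)=0$ the identity gives $\lambda_i^{(3)}(r_\ast)=\mathcal{A}(r_\ast)/(2\lambda_i(r_\ast))>0$, while a hypothetical next zero $r_{\ast\ast}$ would require $\lambda_i^{(3)}(r_{\ast\ast})\le 0$ (since $\lambda_i''\ge 0$ on $(r_\ast,r_{\ast\ast})$), contradicting the identity at $r_{\ast\ast}$. To repair your argument you need to replace the monotonicity step by a sign analysis of this kind that exploits the structure of the third-derivative identity, rather than a global monotonicity of $r^3\lambda_i''$.
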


\begin{proof}[Proof of Lemma \ref{tech99}] (i) Recall the assumptions \eqref{condTeps}, which are used implicitly many times in this lemma. The claims in \eqref{mk3.1} and \eqref{SimpleBdLie} are straightforward consequences of the definitions. To prove \eqref{mk3.2}, we use first the formula
\begin{equation*}
\sqrt{\varepsilon}\lambda_e(r)\lambda_i(r)=r\left[1+T+Tr^2\right]^{1/2}
\end{equation*}
to see that one can extend $\lambda_i(r)$ into a smooth odd function of $r$. Starting from the relation
\begin{equation}\label{LiP}
\lambda_i^2(r)=\frac{1}{2\varepsilon}\left[1+\varepsilon+(T+\varepsilon)r^2-\sqrt{u^2+4\varepsilon}\right],\quad u=1-\varepsilon+(T-\varepsilon)r^2,
\end{equation}
and deriving up to three times, we find that
\begin{equation}\label{LiPP}
\begin{split}
2\lambda_i(r)\lambda_i^\prime(r)&=\frac{T+\varepsilon}{\varepsilon}r-\frac{T-\varepsilon}{\varepsilon}ru(u^2+4\varepsilon)^{-1/2},\\
2(\lambda_i^\prime(r))^2+2\lambda_i(r)\lambda_i^{\prime\prime}(r)&=\frac{T+\varepsilon}{\varepsilon}-\frac{T-\varepsilon}{\varepsilon}u(u^2+4\varepsilon)^{-1/2}-8(T-\varepsilon)^2r^2(u^2+4\varepsilon)^{-3/2},\\
6\lambda_i^\prime(r)\lambda_i^{\prime\prime}(r)+2\lambda_i(r)\lambda_i^{(3)}(r)&=-\frac{24(T-\varepsilon)^2r}{\left[u^2+4\varepsilon\right]^\frac{5}{2}}\left[(1+\varepsilon)^2-(T-\varepsilon)^2r^4\right]:=A(r).
\end{split}
\end{equation}
In particular, $\lambda_i^\prime>0$.
Since $\lambda_i$ is odd, its even derivatives vanish at $0$. Dividing by $r$ and letting $r\to0$ in the first and third lines gives
\begin{equation*}
(\lambda_i^\prime(0))^2=(1+T)/(1+\varepsilon),\quad8\lambda_i^\prime(0)\lambda_i^{(3)}(0)=-24(T-\varepsilon)^2(1+\varepsilon)^{-3}.
\end{equation*}
Since $\lambda_i^\prime(0)>0$, we see that $\lambda_i^{\prime\prime}<0$ on some interval $(0,\delta)$. Let $R_A=\sqrt{(1+\varepsilon)/(T-\varepsilon)}$ be the positive root of $A(r)$. We claim that $\lambda_i^{\prime\prime}<0$ on $(0,R_A)$. Indeed, we see from \eqref{LiPP} that, on this interval, so long as $\lambda_i^{\prime\prime}(r)\ge A(r)/(12\lambda_i^\prime(r))$, $\lambda_i^{(3)}<0$ and $\lambda_i^{\prime\prime}$ is decreasing. Hence $\lambda_i^{\prime\prime}(R_A)\le 0$. If $\lambda_i^{\prime\prime}(R_A)=0$, using \eqref{LiPP}, we see that $\lambda_i^{(3)}(R_A)=0$ and $R_A$ is a single root for $\lambda_i^{(3)}$ and a double root for $\lambda_i^{\prime\prime}$. Dividing by $r-R_A$ and letting $r\to R_A$, we therefore find that
\begin{equation*}
2\lambda_i(R_A)\lambda_i^{(4)}(R_A)=\lim_{r\to R_A}\frac{A(r)}{r-R_A}>0.
\end{equation*}
But then $\lambda_i^{\prime\prime}(r)>0$ for some $r< R_A$, a contradiction.

It is clear that the argument above can be made quantitative, and prove that
\begin{equation*}
\text{ for any }\delta>0\text{ there is }\delta'=\delta'(\delta,T,\varepsilon)>0\text{ such that }\lambda_i^{\prime\prime}(r)\leq-\delta'\text{ for any }r\in[\delta,R_A].
\end{equation*}
This suffices to prove the desired claim \eqref{mk3.2} for $r\in[0,R_A]$.

We now claim that $\lambda_i^{\prime\prime}$ vanishes exactly once on $(R_A,+\infty)$. Indeed, using again \eqref{LiPP}, we see that if $\lambda_i^{\prime\prime}(r_\ast)=0$, then
\begin{equation*}
\lambda_i^{(3)}(r_\ast)=A(r_\ast)/(2\lambda_i(r_\ast))>0.
\end{equation*}
Let $r_{\ast\ast}$ be the next zero of $\lambda_i^{\prime\prime}$. Since $\lambda_i^{\prime\prime}\ge0$ on $(r_\ast,r_{\ast\ast})$, we have that $\lambda_i^{(3)}(r_{\ast\ast})\le 0$. Plugging $r=r_{\ast\ast}$ in the third line of \eqref{LiPP} gives a contradiction. Finally, we remark that there exists such $r_\ast$ since we will show below that $\lambda_i^{\prime\prime}>0$ for $r$ large enough.

Indeed, using the second equation in \eqref{LiPP},
\begin{equation}\label{alo4}
\lambda_i(r)\lambda_i^{\prime\prime}(r)=1-(\lambda_i^\prime(r))^2+\frac{T-\varepsilon}{2\varepsilon}[1-u(u^2+4\varepsilon)^{-1/2}]-4(T-\varepsilon)^2r^2(u^2+4\varepsilon)^{-3/2}.
\end{equation}
Therefore, using \eqref{LiP} and \eqref{LiPP} and letting $v:=(1-u(u^2+4\varepsilon)^{-1/2})/(2\varepsilon)$,
\begin{equation*}
\begin{split}
(\lambda_i(r))^3\lambda_i^{\prime\prime}(r)&=(\lambda_i(r))^2-(\lambda_i(r))^2(\lambda_i^\prime(r))^2+(\lambda_i(r))^2(T-\varepsilon)v-4(\lambda_i(r))^2(T-\varepsilon)^2r^2(u^2+4\varepsilon)^{-3/2}\\
&=r^2+1-v(u^2+4\varepsilon)^{1/2}-r^2(1+(T-\varepsilon)v)^2\\
&+(\lambda_i(r))^2(T-\varepsilon)v-4(\lambda_i(r))^2(T-\varepsilon)^2r^2(u^2+4\varepsilon)^{-3/2}.
\end{split}
\end{equation*}
Notice that $v\leq u^{-2}\leq (T-\varepsilon)^{-2}r^{-4}$, $(u^2+4\varepsilon)^{1/2}\leq (T-\varepsilon)r^2+1+\varepsilon$, and $(\lambda_i(r))^2\in[r^2,r^2+1]$. Therefore, if $(T-\varepsilon)r^2\geq 10(1+\sqrt{T})$ then 
\begin{equation}\label{alo3}
(\lambda_i(r))^3\lambda_i^{\prime\prime}(r)\in[1/10,1],
\end{equation}
and the desired conclusion \eqref{mk3.2} follows.

(ii) We calculate
\begin{equation*}
h_\varepsilon^\prime(r)=\varepsilon^{-1/2}Tr(1+Tr^2)^{-1/2},\qquad h_\varepsilon^{\prime\prime}(r)=\varepsilon^{-1/2}T(1+Tr^2)^{-3/2}.
\end{equation*}
Starting from the formula
\begin{equation*}
\lambda_e^2(r)=\frac{1}{\varepsilon}\left[1+Tr^2+\frac{\sqrt{u^2+4\varepsilon}-u}{2}\right],
\end{equation*}
where, as before, $u=1-\varepsilon+(T-\varepsilon)r^2$. Therefore $\lambda_e(r)\geq h_\varepsilon(r)$ and
\begin{equation}\label{alo1}
\lambda_e(r)-h_\varepsilon(r)=\frac{\lambda^2_e(r)-h^2_\varepsilon(r)}{\lambda_e(r)+h_\varepsilon(r)}\leq \frac{\sqrt{u^2+4\varepsilon}-u}{4\varepsilon h_\varepsilon(r)}\leq \frac{1}{2uh_\varepsilon(r)}.
\end{equation}
The desired bound \eqref{Le1} follows for $\rho=0$. 

Using the formulas above, we also calculate
\begin{equation*}
2\lambda_e(r)\lambda_e^\prime(r)=\frac{2Tr}{\varepsilon}-\frac{(T-\varepsilon)r}{\varepsilon}\frac{\sqrt{u^2+4\varepsilon}-u}{\sqrt{u^2+4\varepsilon}}.
\end{equation*}
Therefore $\lambda_e^\prime(r)\leq h_\varepsilon^\prime(r)$ and, using also \eqref{alo1},
\begin{equation}\label{alo2}
h_\varepsilon^\prime(r)-\lambda_e^\prime(r)=\frac{2h_\varepsilon(r) h_\varepsilon^\prime(r)-2\lambda_e(r)\lambda_e^\prime(r)+2h_\varepsilon^\prime(r)(\lambda_e(r)-h_\varepsilon(r))}{2\lambda_e(r)}\leq \frac{2Tr}{u^2\lambda_e}.
\end{equation}
The desired bound \eqref{Le1} follows for $\rho=1$.

Finally, we calculate
\begin{equation*}
2\lambda_e(r)\lambda_e^{\prime\prime}(r)+2(\lambda_e^\prime(r))^2=\frac{2T}{\varepsilon}+E\qquad\text{ where }\qquad E:=-\frac{(T-\varepsilon)}{\varepsilon}\frac{\sqrt{u^2+4\varepsilon}-u}{\sqrt{u^2+4\varepsilon}}+\frac{8(T-\varepsilon)^2r^2}{(u^2+4\varepsilon)^{3/2}}.
\end{equation*}
Therefore, using also \eqref{alo1} and \eqref{alo2},
\begin{equation*}
|\lambda_e^{\prime\prime}(r)-h_\varepsilon^{\prime\prime}(r)|\leq \frac{|E|+2|\lambda_e(r)-h_\varepsilon(r)|h_\varepsilon^{\prime\prime}(r)+2|(h_\varepsilon^\prime(r))^2-(\lambda_e^\prime(r))^2|}{2\lambda_e}\leq\frac{20(T+1)}{u^2\lambda_e}.
\end{equation*}
The desired bound \eqref{Le1} follows for $\rho=2$.

(iii) Starting from the formula
\begin{equation*}
\sqrt{\varepsilon}\lambda_e(r)\lambda_i(r)=r\sqrt{1+T+Tr^2},
\end{equation*}
we calculate
\begin{equation}\label{alo7.5}
q_i(r)=\frac{\sqrt{1+T+Tr^2}}{\sqrt{\varepsilon}\lambda_e}=\Big[1-\frac{T}{1+T+Tr^2}+\frac{\sqrt{u^2+4\varepsilon}-u}{2(1+T+Tr^2)}\Big]^{-1/2},
\end{equation}
and, with $v=(1-u(u^2+4\varepsilon)^{-1/2})/(2\varepsilon)$ as before,
\begin{equation*}
q_i^{\prime}(r)=-\Big[1-\frac{T}{1+T+Tr^2}+\frac{\sqrt{u^2+4\varepsilon}-u}{2(1+T+Tr^2)}\Big]^{-3/2}\Big[\frac{T^2r-Tr\varepsilon v\sqrt{u^2+4\varepsilon}-(1+T+Tr^2)(T-\varepsilon)r\varepsilon v}{(1+T+Tr^2)^2}\Big].
\end{equation*}
This suffices to prove \eqref{qprime}.

The second bound in \eqref{alo5} follows from \eqref{alo3}. To prove the first bound in \eqref{alo5}, we notice that it follows from part (i) that there are two values $r_{\min}\in (0,r_\ast)$ and $r_{\max}\in(r_\ast,\infty)$ such that
\begin{equation*}
\lambda_i^{\prime\prime}(r)\in [\lambda_i^{\prime\prime}(r_{\min}),\lambda_i^{\prime\prime}(r_{\max})]\qquad\text{ for any }r\in[0,\infty).
\end{equation*}
Using the identity in the second line of \eqref{LiPP} it follows that $\lambda_i(r)\lambda_i^{\prime\prime}(r)\leq 1$ for any $r\geq \sqrt{(1+\varepsilon)/(T-\varepsilon)}$. Since $r_{\max}\geq r_\ast\geq \sqrt{(1+\varepsilon)/(T-\varepsilon)}$, it follows that
\begin{equation}\label{alo8}
\lambda_i^{\prime\prime}(r_{\max})\leq 1/\lambda_i(r_{\max})\leq 1/r_{\max}\leq\sqrt{T}.
\end{equation}

To estimate $|\lambda_i^{\prime\prime}(r_{\min})|$ we use \eqref{alo4} and the observation $|\lambda_i^\prime(r)|\leq q_i(r)\leq \sqrt{(1+T)/(1+\varepsilon)}$ to write
\begin{equation*}
\begin{split}
\lambda_i(r)\lambda_i^{\prime\prime}(r)&\geq 1-\frac{1+T}{1+\varepsilon}+\frac{T-\varepsilon}{2\varepsilon}[1-u(u^2+4\varepsilon)^{-1/2}]-4(T-\varepsilon)^2r^2(u^2+4\varepsilon)^{-3/2}\\
&\geq \frac{T-\varepsilon}{1+\varepsilon}\Big[-1+\frac{1+\varepsilon}{2\varepsilon}(1-u(u^2+4\varepsilon)^{-1/2})\Big]-4(T-\varepsilon)^{3/2}r\\
&\geq -8(T-\varepsilon)^{3/2}r.
\end{split}
\end{equation*}
Moreover, since $\lambda_i^{(3)}(r_{\min})=0$ and $\lambda_i^{\prime\prime}(r_{\min})\leq 0$, it follows from the identity in the last line of \eqref{LiPP} that $r_{\min}\leq R_A=\sqrt{(1+\varepsilon)/(T-\varepsilon)}$. Therefore, using also the fact that $q_i$ is decreasing on $[0,\infty)$, see \eqref{qprime}, and the identity \eqref{alo7.5}, it follows that
\begin{equation*}
-\lambda_i^{\prime\prime}(r_{\min})\leq \frac{8(T-\varepsilon)^{3/2}r_{\min}}{\lambda_i(r_{\min})}\leq \frac{8(T-\varepsilon)^{3/2}}{q_i(R_A)}\leq 8\sqrt{2}(T-\varepsilon).
\end{equation*}
The desired estimate in \eqref{alo5} follows using also \eqref{alo8}.
\end{proof}

\begin{lemma}\label{tech1.5} Assume $\|f\|_{Z}\leq 1$, $t\in\mathbb{R}$, $(k,j)\in\mathcal{J}$, and let $\widetilde{k}=\min(k,0)$ and
\begin{equation*}
f_{k,j}:=P_{[k-2,k+2]}[\widetilde{\varphi}^{(k)}_j\cdot P_kf].
\end{equation*}

(i) Then
\begin{equation}\label{mk15.5}
\|f_{k,j}\|_{L^2}\lesssim (2^{\alpha k}+2^{10k})^{-1}\cdot 2^{2\beta\widetilde{k}}2^{-(1-\be)j}
\end{equation}
and
\begin{equation}\label{mk15.55}
\sup_{\xi\in\mathbb{R}^3}\big|D^\rho_\xi\widehat{f_{k,j}}(\xi)\big|\lesssim_{|\rho|} (2^{\alpha k}+2^{10k})^{-1}\cdot 2^{-(1/2-\beta)\widetilde{k}}2^{|\rho|j}.
\end{equation}
Moreover, if $k\leq 0$ and $\sigma\in\{e,b\}$ then
\begin{equation}\label{mk15.6}
\big\|e^{it\Lambda_\sigma}f_{k,j}\big\|_{L^\infty}\lesssim 2^{(3/2-\alpha)k}2^{-(1+\beta)j}(1+2^{-j}|t|2^k)^{-3/2+10\beta}.
\end{equation}
If $k\geq 0$ and $\sigma\in\{e,b\}$ then
\begin{equation}\label{mk15.61}
\big\|e^{it\Lambda_\sigma}f_{k,j}\big\|_{L^\infty}\lesssim 2^{-6k}2^{-(1+\beta)j}(1+2^{-j}|t|)^{-3/2+10\beta}.
\end{equation}

With $r_\ast$ defined as in \eqref{mk3.2}, let $k_\ast:=\log_2r_\ast$. If $k\leq k_\ast-3$ and $\sigma=i$ then
\begin{equation}\label{mk15.65}
\big\|e^{it\Lambda_\sigma}f_{k,j}\big\|_{L^\infty}\lesssim 2^{(3/2-\alpha)k}2^{-(1+\beta)j}(1+2^{-j}|t|2^{2k/3})^{-3/2+10\beta}.
\end{equation}
If $k\in[k_\ast-3,k_\ast+3]$ and $\sigma=i$ then
\begin{equation}\label{mk15.66}
\big\|e^{it\Lambda_\sigma}f_{k,j}\big\|_{L^\infty}\lesssim 2^{-(1+\beta)j}(1+2^{-j}|t|)^{-5/4+10\beta}.
\end{equation}
If $k\geq k_\ast+3$ and $\sigma=i$ then
\begin{equation}\label{mk15.67}
\big\|e^{it\Lambda_\sigma}f_{k,j}\big\|_{L^\infty}\lesssim 2^{-6k}2^{-(1+\beta)j}(1+2^{-j}|t|)^{-3/2+10\beta}.
\end{equation}

(ii) As a consequence
\begin{equation}\label{ok9}
\sum_{j\geq \max(-k,0)}\|f_{k,j}\|_{L^2}\lesssim \min(2^{(1+\beta-\alpha)k},2^{-10k})
\end{equation}
and\footnote{In many places we will be able to use the simpler bound \eqref{ok10}, instead of the more precise bounds \eqref{mk15.6}--\eqref{mk15.67}.}, for any $\sigma\in\{i,e,b\}$,
\begin{equation}\label{ok10}
\sum_{j\geq \max(-k,0)}\big\|e^{it\Lambda_\sigma}f_{k,j}\big\|_{L^\infty}\lesssim \min(2^{(1/2-\beta-\alpha)k},2^{-6k})(1+\vert t\vert)^{-1-\beta}.
\end{equation}
\end{lemma}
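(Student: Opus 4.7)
The proof naturally splits into three stages: (a) the basic $L^2$ and Fourier $L^\infty$ bounds \eqref{mk15.5}--\eqref{mk15.55}; (b) the dispersive $L^\infty$ bounds \eqref{mk15.6}--\eqref{mk15.67}; (c) the summation corollaries in part (ii). For stage (a), the plan is to invoke the decomposition \eqref{sec5.8}--\eqref{sec5.815} afforded by $\|f\|_Z\le 1$, writing $f_{k,j}=g+h$ with $g$ satisfying the $B^1_{k,j}$ bounds and $h$ the $B^2_{k,j}$ bounds, both with spatial localization at scale $2^j$. The $L^2$ bound \eqref{mk15.5} follows by taking the $L^2$ bound of $g$ from $B^1$ (giving $(2^{\alpha k}+2^{10k})^{-1}2^{-(1+\beta)j}$), the $L^2$ bound of $h$ from $B^2$ (giving $(2^{\alpha k}+2^{10k})^{-1}2^{-10|k|}2^{-(1-\beta)j}$), and exchanging a factor $2^{-2\beta j}$ for $2^{2\beta\widetilde{k}}$ using the essential constraint $j+\widetilde{k}\ge 0$ of $\mathcal{J}$. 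For \eqref{mk15.55}, I use that $\widehat{f_{k,j}}$ is smooth since $f_{k,j}$ has spatial support at scale $2^j$: for $\rho=0$ combine the $L^\infty$ Fourier bounds from $B^1$ and $B^2$; for $|\rho|\ge 1$ write $D^\rho_\xi\widehat{f_{k,j}}=\widehat{(-ix)^\rho f_{k,j}}$ and bound via $\|x^\rho f_{k,j}\|_{L^1}\lesssim 2^{|\rho|j}2^{3j/2}\|f_{k,j}\|_{L^2}$, again exploiting $j+\widetilde{k}\ge 0$.

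Stage (b) is the heart of the lemma. The strategy is to write
\begin{equation*}
(e^{it\Lambda_\sigma}f_{k,j})(x)=\int_{\mathbb{R}^3}e^{i[x\cdot\xi+t\lambda_\sigma(|\xi|)]}\varphi_{[k-2,k+2]}(\xi)\widehat{f_{k,j}}(\xi)\,d\xi,
\end{equation*}
decompose $f_{k,j}=g+h$ as above, and estimate each contribution separately. The bound $\|e^{it\Lambda_\sigma}f_{k,j}\|_{L^\infty}\le \|\widehat{f_{k,j}}\|_{L^1}$ is trivially available for small $|t|$, and accounts for the factor $(2^{\alpha k}+2^{10k})^{-1}\cdot 2^{(3/2-\beta)\widetilde{k}}$ (using Cauchy--Schwarz with the volume of the frequency support $\sim 2^{3\widetilde{k}}$). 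For large $|t|$, apply Lemma~\ref{tech5} in the angular directions and van der Corput / stationary phase in the radial direction, with the critical information supplied by Lemma~\ref{tech99}. For $\sigma\in\{e,b\}$ the Hessian of $\lambda_\sigma$ is non-degenerate with $\lambda_\sigma''(r)\approx(1+r^2)^{-3/2}$, which yields the standard Klein--Gordon decay rate $|t|^{-3/2}$; the $2^{-j}$ rescaling inside $(1+2^{-j}|t|2^k)^{-3/2+10\beta}$ (resp.\ $(1+2^{-j}|t|)^{-3/2+10\beta}$) reflects the gain of $2^{10\beta j}$ on each integration by parts coming from the spatial localization. For $\sigma=i$, the three regimes come from Lemma~\ref{tech99}: $|k-k_\ast|\le 3$ gives $\lambda_i''(r_\ast)=0$ with $\lambda_i'''(r_\ast)\ne 0$, producing degenerate $|t|^{-5/4}$ decay via a cubic phase analysis (hence \eqref{mk15.66}); $k\ge k_\ast+3$ with $\lambda_i''(r)\approx r^{-3}$ recovers $|t|^{-3/2}$ decay (\eqref{mk15.67}); and $k\le k_\ast-3$ with $\lambda_i''(r)\approx r$ gives decay $(|t|\cdot 2^k)^{-3/2}$ after a van der Corput/rescaling, leading to the effective time $|t|2^{2k/3}$ in \eqref{mk15.65} (the $2/3$ exponent coming from balancing the radial and angular scales). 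The $B^2$ component $h$ is estimated using its $L^1$ concentration bound \eqref{sec5.815} on balls of radius $R\in[2^{-j},2^k]$; the exponent $\gamma=3/2-4\beta$ is chosen exactly so that $h$'s $L^\infty$ contribution matches $g$'s after the slower $|t|^{-5/4}$ decay is suffered near $r_\ast$. This is precisely the role of the "weak" atom design discussed in the introduction.

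Stage (c) is a straightforward summation: \eqref{ok9} combines \eqref{mk15.5} with the $j+\widetilde{k}\ge 0$ constraint to sum geometric series in $j$, and \eqref{ok10} is the $\min$ of \eqref{mk15.6}--\eqref{mk15.67} summed in $j$, noting that $(1+2^{-j}|t|\cdot\alpha_k)^{-3/2+10\beta}\cdot 2^{-(1+\beta)j}$ is summable in $j$ with total of order $(1+|t|)^{-1-\beta}$ after using $10\beta<\beta$ to ensure the $j$-sum controls itself (and discarding the $2^{2k/3}$ and $2^k$ dilation factors at the cost of a constant, since we only demand the weaker uniform-in-$k$ bound in \eqref{ok10}).

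The main obstacle will be the ion dispersive estimate \eqref{mk15.66} near the critical sphere $r=r_\ast$, where $\lambda_i''$ vanishes and the decay degrades to $|t|^{-5/4}$: this is where the bespoke $B^2_{k,j}$ norm is essential, because the $L^1$-on-small-balls control encoded by the exponent $\gamma$ is what prevents the loss $|t|^{5/4-3/2}=|t|^{-1/4}$ from destroying time integrability after summing in $j$. Once the van der Corput analysis at $r_\ast$ is carried out with care—separating the contribution tangential to the critical sphere (handled by the weight $2^{\gamma j}\sup_{R\le 2^k}R^{-2}\|\widehat{h}\|_{L^1(B(\xi_0,R))}$) from the transverse contribution (handled by the $L^2$ term)—the remaining cases reduce to now-standard Klein--Gordon and wave-like dispersive estimates.
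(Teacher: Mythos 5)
Your overall strategy mirrors the paper's: decompose $f_{k,j}$ into a $B^1_{k,j}$-piece and a $B^2_{k,j}$-piece, derive kernel bounds for $e^{it\Lambda_\sigma}P_{[k-2,k+2]}$ via the 1D radial representation plus stationary phase/van der Corput (with the three $\lambda_i$ regimes and the non-degenerate $\lambda_e$, $\lambda_b$ Hessian), and then treat the critical case $k\approx k_\ast$ by exploiting the $L^1$-on-small-balls control encoded by the $\gamma$-weight in $B^2$. The summation arguments in (ii) are also as in the paper. So the architecture is right, and your identification of the $k\approx k_\ast$ regime as the crux and of the $B^2$ norm as the mechanism that compensates the $|t|^{-5/4}$ degeneracy is on target.

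However, there is a concrete gap in your argument for \eqref{mk15.55} when $|\rho|\geq 1$. You bound $D^\rho_\xi\widehat{f_{k,j}}$ via $\|x^\rho f_{k,j}\|_{L^1}\lesssim 2^{|\rho|j}\cdot 2^{3j/2}\|f_{k,j}\|_{L^2}$ and then invoke \eqref{mk15.5}. Running the numbers: this yields $\lesssim 2^{|\rho|j}(2^{\alpha k}+2^{10k})^{-1}\,2^{2\beta\widetilde{k}}2^{(1/2+\beta)j}$, whereas the target is $\lesssim 2^{|\rho|j}(2^{\alpha k}+2^{10k})^{-1}2^{-(1/2-\beta)\widetilde{k}}$. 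The ratio is $2^{(1/2+\beta)(j+\widetilde{k})}$, and since $j+\widetilde{k}\geq 0$ for $(k,j)\in\mathcal{J}$ (the constraint you appeal to), this factor is $\geq 1$ and unbounded; the constraint works \emph{against} you here, not for you. The correct route (and the one the paper uses) is to avoid passing through $L^2$: write $D^\rho_\xi\widehat{g_{\mu,j}}(\xi)=c\int\widehat{g_{\mu,j}}(\eta)\,\mathcal{F}(x^\rho\widetilde{\varphi}^{(k)}_{[j-2,j+2]})(\xi-\eta)\,d\eta$, use $\|\mathcal{F}(x^\rho\widetilde{\varphi}^{(k)}_{[j-2,j+2]})\|_{L^1}\lesssim 2^{|\rho|j}$, and pair it with the $L^\infty$ Fourier bound $\|\widehat{g_{\mu,j}}\|_{L^\infty}\lesssim 2^{-(1/2-\beta)\widetilde{k}}$ coming directly from \eqref{sec5.815}, not the $L^2$ bound. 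Since \eqref{mk15.55} feeds into both the derivative bounds \eqref{ccc44} used in the $k\approx k_\ast$ case and the integration-by-parts estimates \eqref{nh9} elsewhere, this is not a cosmetic slip. A secondary inaccuracy: the kernel decay for $\sigma=i$, $k\leq k_\ast-3$ is $\|K^i_{k,t}\|_{L^\infty}\lesssim 2^{k/2}|t|^{-3/2}$, not $(|t|2^k)^{-3/2}=2^{-3k/2}|t|^{-3/2}$ as you state; with your version the exponent $2k/3$ in \eqref{mk15.65} does not come out, so the intermediate claim contradicts the conclusion you state.
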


\begin{proof}[Proof of Lemma \ref{tech1.5}] We start by decomposing, as in \eqref{sec5.8}--\eqref{sec5.815},
\begin{equation}\label{compat70}
\widetilde{\varphi}^{(k)}_{j}\cdot P_kf=(2^{\alpha k}+2^{10k})^{-1}(g_{1,j}+g_{2,j}),\qquad g_{1,j}=g_{1,j}\cdot \widetilde{\varphi}^{(k)}_{[j-2,j+2]},\qquad g_{2,j}=g_{2,j}\cdot \widetilde{\varphi}^{(k)}_{[j-2,j+2]},
\end{equation}
such that
\begin{equation}\label{compat70.1}
2^{(1+\be)j}\|g_{1,j}\|_{L^2}+2^{(1/2-\beta)\widetilde{k}}\|\widehat{g_{1,j}}\|_{L^\infty}\lesssim 1,
\end{equation}
and
\begin{equation}\label{compat70.2}
2^{(1-\be)j}\|g_{2,j}\|_{L^2}+\|\widehat{g_{2,j}}\|_{L^\infty}+2^{\gamma j}\sup_{R\in[2^{-j},2^k],\,\xi_0\in\mathbb{R}^3}R^{-2}\|\widehat{g_{2,j}}\|_{L^1(B(\xi_0,R))}\lesssim 2^{-10|k|}.
\end{equation}
The bound \eqref{mk15.5} follows easily.

To prove \eqref{mk15.55} we use the formulas in \eqref{compat70} to write, for $\mu=1,2$,
\begin{equation*}
\widehat{g_{\mu,j}}(\xi)=c\int_{\mathbb{R}^3}\widehat{g_{\mu,j}}(\eta)\mathcal{F}(\widetilde{\varphi}^{(k)}_{[j-2,j+2]})(\xi-\eta)\,d\eta.
\end{equation*}
Therefore
\begin{equation}\label{ccc99}
D^\rho_\xi\widehat{g_{\mu,j}}(\xi)=c\int_{\mathbb{R}^3}\widehat{g_{\mu,j}}(\eta)\mathcal{F}(x^\rho\cdot\widetilde{\varphi}^{(k)}_{[j-2,j+2]})(\xi-\eta)\,d\eta.
\end{equation}
The desired bounds \eqref{mk15.55} follow using the bounds $\|\widehat{g_{\mu,j}}\|_{L^\infty}\lesssim2^{-(1/2-\beta)\widetilde{k}}$, see \eqref{compat70.1}-\eqref{compat70.2}.

We consider now the $L^\infty$ bounds \eqref{mk15.6}-\eqref{mk15.67}. Using \eqref{compat70}-\eqref{compat70.2}, we have
\begin{equation*}
\|\widehat{f_{k,j}}\|_{L^1(B(\xi_0,R))}\lesssim (2^{\alpha k}+2^{10k})^{-1}\min(R^32^{-(1/2-\beta)\widetilde{k}},R^{3/2}2^{-(1+\beta)j}),
\end{equation*}
for any $\xi_0\in\mathbb{R}^3$ and $R\leq 2^{k}$. Therefore, for any $k\in\mathbb{Z}$ and $\sigma\in\{i,e,b\}$,
\begin{equation}\label{ccc2}
\big\|e^{it\Lambda_\sigma}f_{k,j}\big\|_{L^\infty}\lesssim (2^{\alpha k}+2^{10k})^{-1}\cdot 2^{3k/2}2^{-(1+\beta)j}.
\end{equation}

{\bf{Step 1.}} We consider first the simplest case
\begin{equation}\label{ccc3}
\sigma\in\{e,b\},\qquad k\leq 0,\qquad |t|\geq 2^{j-k+D},
\end{equation}
and prove that, for any $x\in\mathbb{R}^3$,
\begin{equation}\label{ccc4}
\Big|\int_{\mathbb{R}^3}e^{ix\cdot\xi}e^{it\Lambda_\sigma(\xi)}\widehat{f_{k,j}}(\xi)\,d\xi\Big|\lesssim 2^{(3/2-\alpha) k}2^{-(1+\beta)j}\rho_1^{3/2-10\beta},\qquad \rho_1:=2^{j-k}|t|^{-1}.
\end{equation}
The bound \eqref{mk15.6} would clearly follow from \eqref{ccc2} and \eqref{ccc4}. 
Using the decomposition \eqref{compat70}, it suffices to prove that, for $\mu\in\{1,2\}$,
\begin{equation}\label{ccc5}
\begin{split}
&\big\|g_{\mu,j}\ast K^\sigma_{k,t}\big\|_{L^\infty}\lesssim 2^{3k/2}2^{-(1+\beta)j}\rho_1^{3/2-10\beta},\\
&K^\sigma_{k,t}(x):=\int_{\mathbb{R}^3}e^{ix\cdot\xi}e^{it\Lambda_\sigma(\xi)}\varphi_{[k-2,k+2]}(\xi)\,d\xi.
\end{split}
\end{equation}

Recall that the kernel of the operator on $\mathbb{R}^3$ defined by the radial multiplier $\xi\to p(|\xi|)$ is
\begin{equation}\label{ccc6}
K(x)=c\int_0^\infty p(s)s^2\frac{e^{is|x|}-e^{-is|x|}}{s|x|}\,ds.
\end{equation}
We show that
\begin{equation}\label{ccc7}
\|K^\sigma_{k,t}\|_{L^\infty}\lesssim |t|^{-3/2}.
\end{equation}
In view of \eqref{ccc6} it suffices to prove that
\begin{equation}\label{mk20}
\Big|\int_0^\infty s^2\varphi^1_{[k-2,k+2]}(s)e^{it\lambda_\sigma(s)}\frac{e^{isr}-e^{-isr}}{sr}\,ds\Big|\lesssim |t|^{-3/2},
\end{equation}
for any $r\in(0,\infty)$. Recall the assumption \eqref{ccc3}, it particular $|t|\geq 2^{D-2k}$. Since $\lambda'_\sigma(s)\approx \min(s,1)$ (see \eqref{mk3.1}), the bound \eqref{mk20} follows by integration by parts unless $r\approx |t|2^k$. On the other hand, if $r\approx |t|2^k$ then the bound \eqref{mk20} follows by stationary phase, using $\lambda''_\sigma(s)\approx (1+s^2)^{-3/2}$, see \eqref{mk3.1}.

In view of \eqref{ccc7} and the assumptions \eqref{compat70}--\eqref{compat70.2}, it follows that
\begin{equation*}
\big\|g_{1,j}\ast K^\sigma_{k,t}\big\|_{L^\infty}\lesssim \|g_{1,j}\|_{L^1}\|K^\sigma_{k,t}\|_{L^\infty}\lesssim 2^{3j/2}2^{-(1+\beta)j}|t|^{-3/2}\lesssim 2^{3 k/2}2^{-(1+\beta)j}\rho_1^{3/2},
\end{equation*}
and
\begin{equation*}
\big\|g_{2,j}\ast K^\sigma_{k,t}\big\|_{L^\infty}\lesssim \|g_{2,j}\|_{L^1}\|K^\sigma_{k,t}\|_{L^\infty}\lesssim 2^{3j/2}2^{-(1-\beta)j}2^{2\beta k}|t|^{-3/2}\lesssim 2^{3 k/2}2^{-(1+\beta)j}\rho_1^{3/2}2^{2\beta(j+k)}.
\end{equation*}
The bounds \eqref{ccc5} follow if $\mu=1$ or if $\mu=2$ and $2^{j+k}\leq\rho_1^{-5}$. On the other hand, if $\mu=2$ and $2^{j+k}\geq\rho_1^{-5}$ then, using the $L^1$ bounds on $\widehat{g_{2,j}}$ in \eqref{compat70.2},
\begin{equation*}
\big\|g_{2,j}\ast K^\sigma_{k,t}\big\|_{L^\infty}\lesssim \big\|\widehat{g_{2,j}}\big\|_{L^1}\lesssim 2^{3 k/2}2^{-(1+\beta)j}2^{-(\gamma-\beta-1)(j+k)}\lesssim 2^{3 k/2}2^{-(1+\beta)j}\rho_1^{5(\gamma-\beta-1)},
\end{equation*}
which suffices to prove \eqref{ccc5} in this case as well.

{\bf{Step 2.}} We consider now the case
\begin{equation}\label{ccc20}
\sigma\in\{e,b\},\qquad k\geq 0,\qquad |t|\geq 2^{j+k+D},
\end{equation}
and prove that, for any $x\in\mathbb{R}^3$,
\begin{equation}\label{ccc21}
\Big|\int_{\mathbb{R}^3}e^{ix\cdot\xi}e^{it\Lambda_\sigma(\xi)}\widehat{f_{k,j}}(\xi)\,d\xi\Big|\lesssim 2^{-6 k}2^{-(1+\beta)j}\rho_2^{3/2-10\beta},\qquad \rho_2:=2^{j}|t|^{-1}.
\end{equation}
The bound \eqref{mk15.61} would clearly follow from \eqref{ccc2} and \eqref{ccc21}. 
Using the decomposition \eqref{compat70}, it suffices to prove that, for $\mu\in\{1,2\}$,
\begin{equation}\label{ccc24}
\big\|g_{\mu,j}\ast K^\sigma_{k,t}\big\|_{L^\infty}\lesssim 2^{4k}2^{-(1+\beta)j}\rho_2^{3/2-10\beta},
\end{equation}
where $K^\sigma_{k,t}$ is defined as in \eqref{ccc5}. 

As before, we show that
\begin{equation}\label{ccc25}
\|K^\sigma_{k,t}\|_{L^\infty}\lesssim |t|^{-3/2}2^{3k}.
\end{equation}
In view of \eqref{ccc6} it suffices to prove that
\begin{equation}\label{ccc26}
\Big|\int_0^\infty s^2\varphi^1_{[k-2,k+2]}(s)e^{it\lambda_\sigma(s)}\frac{e^{isr}-e^{-isr}}{sr}\,ds\Big|\lesssim |t|^{-3/2}2^{3k},
\end{equation}
for any $r\in(0,\infty)$. Recall the assumption \eqref{ccc20}, in particular $|t|\geq 2^{D+k}$. Since $\lambda'_\sigma(s)\approx \min(s,1)$ (see \eqref{mk3.1}), the bound \eqref{ccc26} follows by integration by parts unless $r\approx |t|$. On the other hand, if $r\approx |t|$ then the bound \eqref{ccc26} follows by stationary phase, using $\lambda^{\prime\prime}_\sigma(s)\approx (1+s^2)^{-3/2}$, see \eqref{mk3.1}.

In view of \eqref{ccc25} and the assumptions \eqref{compat70}--\eqref{compat70.2}, it follows that
\begin{equation*}
\big\|g_{1,j}\ast K^\sigma_{k,t}\big\|_{L^\infty}\lesssim \|g_{1,j}\|_{L^1}\|K^\sigma_{k,t}\|_{L^\infty}\lesssim 2^{3j/2}2^{-(1+\beta)j}2^{3k}|t|^{-3/2}\lesssim 2^{3 k}2^{-(1+\beta)j}\rho_2^{3/2},
\end{equation*}
and
\begin{equation*}
\big\|g_{2,j}\ast K^\sigma_{k,t}\big\|_{L^\infty}\lesssim \|g_{2,j}\|_{L^1}\|K^\sigma_{k,t}\|_{L^\infty}\lesssim 2^{3j/2}2^{-(1-\beta)j}2^{3k}|t|^{-3/2}\lesssim 2^{3 k}2^{-(1+\beta)j}\rho_2^{3/2}2^{2\beta j}.
\end{equation*}
The bounds \eqref{ccc24} follow if $\mu=1$ or if $\mu=2$ and $2^{j}\leq\rho_2^{-5}$. On the other hand, if $\mu=2$ and $2^{j}\geq\rho_2^{-5}$ then, using the $L^1$ bounds on $\widehat{g_{2,j}}$ in \eqref{compat70.2},
\begin{equation*}
\big\|g_{2,j}\ast K^\sigma_{k,t}\big\|_{L^\infty}\lesssim \big\|\widehat{g_{2,j}}\big\|_{L^1}\lesssim 2^{-(1+\beta)j}2^{-(\gamma-\beta-1)j}\lesssim 2^{-(1+\beta)j}\rho_2^{5(\gamma-\beta-1)},
\end{equation*}
which suffices to prove \eqref{ccc24} in this case as well.

An identical argument shows that
\begin{equation}\label{ccc28}
\text{ if }\quad k\geq k_\ast+3,\qquad |t|\geq 2^{j+k+D},
\end{equation}
then, for any $x\in\mathbb{R}^3$,
\begin{equation}\label{ccc29}
\Big|\int_{\mathbb{R}^3}e^{ix\cdot\xi}e^{it\Lambda_i(\xi)}\widehat{f_{k,j}}(\xi)\,d\xi\Big|\lesssim 2^{-6 k}2^{-(1+\beta)j}(2^j/|t|)^{3/2-10\beta}.
\end{equation}
The bound \eqref{mk15.67} clearly follows from \eqref{ccc2} and \eqref{ccc29}.

{\bf{Step 3.}} We consider now the case
\begin{equation}\label{ccc30}
k\leq k_\ast-3,\qquad |t|\geq 2^{j-2k/3+D},
\end{equation}
and prove that, for any $x\in\mathbb{R}^3$,
\begin{equation}\label{ccc31}
\Big|\int_{\mathbb{R}^3}e^{ix\cdot\xi}e^{it\Lambda_i(\xi)}\widehat{f_{k,j}}(\xi)\,d\xi\Big|\lesssim 2^{(3/2-\alpha) k}2^{-(1+\beta)j}\rho_3^{3/2-10\beta},\qquad \rho_3:=2^{j-2k/3}|t|^{-1}.
\end{equation}
The bound \eqref{mk15.65} would clearly follow from \eqref{ccc2} and \eqref{ccc31}. 
Using the decomposition \eqref{compat70}, it suffices to prove that, for $\mu\in\{1,2\}$,
\begin{equation}\label{ccc32}
\begin{split}
&\big\|g_{\mu,j}\ast K^i_{k,t}\big\|_{L^\infty}\lesssim 2^{3k/2}2^{-(1+\beta)j}\rho_3^{3/2-10\beta},\\
&K^i_{k,t}(x):=\int_{\mathbb{R}^3}e^{ix\cdot\xi}e^{it\Lambda_i(\xi)}\varphi_{[k-2,k+2]}(\xi)\,d\xi.
\end{split}
\end{equation}

As before, we show that
\begin{equation}\label{ccc33}
\|K^i_{k,t}\|_{L^\infty}\lesssim 2^{k/2}|t|^{-3/2}.
\end{equation}
In view of \eqref{ccc6} it suffices to prove that
\begin{equation}\label{ccc34}
\Big|\int_0^\infty s^2\varphi^1_{[k-2,k+2]}(s)e^{it\lambda_i(s)}\frac{e^{isr}-e^{-isr}}{sr}\,ds\Big|\lesssim 2^{k/2}|t|^{-3/2},
\end{equation}
for any $r\in(0,\infty)$. Recall the assumption \eqref{ccc3}, it particular $|t|\geq 2^{D-5k/3}$. Since $\lambda'_i(s)\approx 1$ (see \eqref{mk3.1}), the bound \eqref{ccc34} follows by integration by parts unless $r\approx |t|$. On the other hand, if $r\approx |t|$ then the bound \eqref{ccc34} follows by stationary phase, using $\lambda''_\sigma(s)\approx s$, see \eqref{mk3.2}.

As before, we can now prove \eqref{ccc32}. Using \eqref{ccc7} and \eqref{compat70}--\eqref{compat70.2}, it follows that
\begin{equation*}
\big\|g_{1,j}\ast K^i_{k,t}\big\|_{L^\infty}\lesssim \|g_{1,j}\|_{L^1}\|K^i_{k,t}\|_{L^\infty}\lesssim 2^{3j/2}2^{-(1+\beta)j}2^{k/2}|t|^{-3/2}\lesssim 2^{3 k/2}2^{-(1+\beta)j}\rho_3^{3/2},
\end{equation*}
and
\begin{equation*}
\big\|g_{2,j}\ast K^i_{k,t}\big\|_{L^\infty}\lesssim \|g_{2,j}\|_{L^1}\|K^i_{k,t}\|_{L^\infty}\lesssim 2^{3j/2}2^{-(1-\beta)j}2^{2\beta k}2^{k/2}|t|^{-3/2}\lesssim 2^{3 k/2}2^{-(1+\beta)j}\rho_3^{3/2}2^{2\beta(j+k)}.
\end{equation*}
The bounds \eqref{ccc32} follow if $\mu=1$ or if $\mu=2$ and $2^{j+k}\leq\rho_3^{-5}$. On the other hand, if $\mu=2$ and $2^{j+k}\geq\rho_3^{-5}$ then, using the $L^1$ bounds on $\widehat{g_{2,j}}$ in \eqref{compat70.2},
\begin{equation*}
\big\|g_{2,j}\ast K^i_{k,t}\big\|_{L^\infty}\lesssim \big\|\widehat{g_{2,j}}\big\|_{L^1}\lesssim 2^{3 k/2}2^{-(1+\beta)j}2^{-(\gamma-\beta-1)(j+k)}\lesssim 2^{3 k/2}2^{-(1+\beta)j}\rho_3^{5(\gamma-\beta-1)},
\end{equation*}
which suffices to prove \eqref{ccc32} in this case as well.

{\bf{Step 4.}} Finally, we consider the case
\begin{equation}\label{ccc40}
k\in[k_\ast-3,k_\ast+3],\qquad |t|\geq 2^{j+4D},
\end{equation}
and prove that, for any $x\in\mathbb{R}^3$,
\begin{equation}\label{ccc41}
\Big|\int_{\mathbb{R}^3}e^{ix\cdot\xi}e^{it\Lambda_i(\xi)}\widehat{f_{k,j}}(\xi)\,d\xi\Big|\lesssim 2^{-(1+\beta)j}\rho_2^{5/4-10\beta},\qquad \rho_2=2^{j}|t|^{-1}.
\end{equation}
The bound \eqref{mk15.66} would clearly follow from \eqref{ccc2} and \eqref{ccc41}. 

Using the decomposition \eqref{compat70}--\eqref{compat70.2}, it suffices to prove that, for $\mu\in\{1,2\}$ and $x\in\mathbb{R}^3$,
\begin{equation}\label{ccc42}
\Big|\int_{\mathbb{R}^3}e^{ix\cdot\xi}e^{it\Lambda_i(\xi)}\varphi_{[k-2,k+2]}(\xi)\big[1-\varphi[(|\xi|-r_\ast)/\rho_2^{1/2}]\big]\widehat{g_{\mu,j}}(\xi)\,d\xi\Big|\lesssim 2^{-(1+\beta)j}\rho_2^{5/4-10\beta},
\end{equation}
and
\begin{equation}\label{ccc43}
\Big|\int_{\mathbb{R}^3}e^{ix\cdot\xi}e^{it\Lambda_i(\xi)}\varphi[(|\xi|-r_\ast)/\rho_2^{1/2}]\widehat{g_{\mu,j}}(\xi)\,d\xi\Big|\lesssim 2^{-(1+\beta)j}\rho_2^{5/4-10\beta}.
\end{equation}

Letting
\begin{equation*}
K_{k,t;\delta}(x):=\int_{\mathbb{R}^3}e^{ix\cdot\xi}e^{it\Lambda_i(\xi)}\varphi_{[k-2,k+2]}(\xi)\big[1-\varphi[(|\xi|-r_\ast)/\delta]\big]\,d\xi
\end{equation*}
and arguing as in the proof of \eqref{ccc33}, it is easy to see that
\begin{equation}\label{ccc43.5}
\|K_{k,t;\delta}\|_{L^\infty}\lesssim |t|^{-3/2}\delta^{-1/2}
\end{equation}
provided that $\delta\in[|t|^{-1/2},2^{-D}]$. As before, this suffices to prove the bounds \eqref{ccc42}.

To prove \eqref{ccc43}, we may assume, without loss of generality, that $x/t=(-z_1,0,0)$ for some $z_1\in[0,\infty)$. The formula \eqref{ccc99}, together with the bounds in \eqref{compat70.1} and \eqref{compat70.2} show that
\begin{equation}\label{ccc44}
\begin{split}
&2^{(1+\beta)j}\|D^\rho_\xi\widehat{g_{1,j}}(\xi)\|_{L^2}+\|D^\rho_\xi\widehat{g_{1,j}}(\xi)\|_{L^\infty}\lesssim 2^{|\rho|j},\\
&2^{(1-\beta)j}\|D^\rho_\xi\widehat{g_{2,j}}(\xi)\|_{L^2}+\|D^\rho_\xi\widehat{g_{2,j}}(\xi)\|_{L^\infty}+2^{\gamma j}\sup_{R\in[2^{-j},1],\,\xi_0\in\mathbb{R}^3}R^{-2}\|D^\rho_\xi\widehat{g_{2,j}}(\xi)\|_{L^1(B(\xi_0,R))}\lesssim 2^{|\rho|j}.
\end{split}
\end{equation}
With $\xi=(\xi_1,\xi_2,\xi_3)=(\xi_1,\xi')$ and $l\in\mathbb{Z}\cap(-\infty,D/10]$ we define
\begin{equation*}
I^\mu_{\leq l}:=\int_{\mathbb{R}^3}\varphi(|\xi'|/2^l)e^{it(\Lambda_i(\xi)-z_1\xi_1)}\varphi[(|\xi|-r_\ast)/\rho_2^{1/2}]\widehat{g_{\mu,j}}(\xi)\,d\xi,\qquad I^\mu_l:=I^\mu_{\leq l}-I^\mu_{\leq l-1}.
\end{equation*}
We fix $l_0\in\mathbb{Z}$ such that
\begin{equation*}
2^{l_0}\leq \rho_2+|t|^{-1/2}\leq 2^{l_0+1}.
\end{equation*}
We use \eqref{ccc44} with $|\rho|=0$ to estimate, if $2^j\geq |t|^{1/2}$,
\begin{equation*}
\begin{split}
&|I^1_{\leq l_0}|\lesssim 2^{l_0}\rho_2^{1/4}\|\widehat{g_{1,j}}\|_{L^2}\lesssim 2^{-(1+\beta)j}2^{l_0}\rho_2^{1/4},\\
&|I^2_{\leq l_0}|\lesssim \frac{\rho_2^{1/2}}{2^{l_0}}\cdot 2^{2l_0}2^{-\gamma_j}\lesssim 2^{-(1+\beta)j}2^{l_0}\rho_2^{1/2}.
\end{split}
\end{equation*}
On the other hand, if $2^j\leq |t|^{1/2}$ then
\begin{equation*}
|I^1_{\leq l_0}|+|I^2_{\leq l_0}|\lesssim 2^{2l_0}\rho_2^{1/2}(\|\widehat{g_{1,j}}\|_{L^\infty}+\|\widehat{g_{2,j}}\|_{L^\infty})\lesssim 2^{2l_0}\rho_2^{1/2}.
\end{equation*}
Therefore, in both cases,
\begin{equation}\label{ccc45}
|I^1_{\leq l_0}|+|I^2_{\leq l_0}|\lesssim 2^{-(1+\beta)j}\rho_2^{5/4}.
\end{equation}

To estimate $|I_l^\mu|$ for $l\geq l_0+1$ we integrate by parts in $\xi'$, using Lemma \ref{tech5} with $K\approx |t|2^l$ and $\eps^{-1}\approx 2^j+2^{-l}+2^l\rho_2^{-1/2}$. Arguing as before, we estimate, if $2^j=\max(2^j,2^{-l},2^l\rho_2^{-1/2})$,
\begin{equation*}
\begin{split}
&|I^1_l|\lesssim 2^{2j}/(|t|^22^{2l})\cdot 2^{l}\rho_2^{1/4}2^{-(1+\beta)j}\lesssim \rho_2^{5/4}2^{-(1+\beta)j}\cdot \rho_22^{-l},\\
&|I^2_l|\lesssim 2^{2j}/(|t|^22^{2l})\cdot 2^l(2^l+\rho_2^{1/2})2^{-\gamma j}\lesssim 2^{-\gamma j}\rho_2^2+\rho_2^{3/2}2^{-(1+\beta)j}\cdot \rho_22^{-l}.
\end{split}
\end{equation*}
On the other hand, if $2^{-l}=\max(2^j,2^{-l},2^l\rho_2^{-1/2})$ then
\begin{equation*}
|I^1_l|+|I^2_l|\lesssim 2^{-2l}/(|t|^22^{2l})\cdot 2^{2l}\rho_2^{1/2}\lesssim 2^{-2l}|t|^{-2}\rho_2^{1/2}.
\end{equation*}
Finally, if $2^l\rho_2^{-1/2}=\max(2^j,2^{-l},2^l\rho_2^{-1/2})$ then
\begin{equation*}
|I^1_l|+|I^2_l|\lesssim (2^{2l}\rho_2^{-1})/(|t|^22^{2l})\cdot 2^{2l}\rho_2^{1/2}\lesssim 2^{2l}|t|^{-2}\rho_2^{-1/2}.
\end{equation*}
Therefore
\begin{equation}\label{ccc46}
\sum_{l\geq l_0+1}|I^1_{l}|+|I^2_{l}|\lesssim 2^{-(1+\beta)j}\rho_2^{5/4}.
\end{equation}
The desired bound \eqref{ccc43} follows from \eqref{ccc45} and \eqref{ccc46}.
\end{proof}

Our last lemma in this section is a bilinear estimate. Recall the operators $Q_s^{\sigma;\mu,\nu}$ defined in \eqref{norm4.0},
\begin{equation*}
\mathcal{F}[Q_s^{\sigma;\mu,\nu}(f,g)](\xi)=\int_{\mathbb{R}^3} e^{is[\Lambda_\sigma(\xi)-\widetilde{\Lambda}_{\mu}(\xi-\eta)-\widetilde{\Lambda}_{\nu}(\eta)]}m_{\sigma;\mu,\nu}(\xi,\eta)
\widehat{f}(\xi-\eta)\widehat{g}(\eta)\,d\eta.
\end{equation*}

\begin{lemma}\label{ders}
Assume $s\in\mathbb{R}$, $\sigma\in\{i,e,b\}$, $\mu,\nu\in\mathcal{I}_0$, and 
\begin{equation}\label{ders2}
\|f\|_{Z\cap H^{N_0}}\leq 1,\qquad \|g\|_{Z\cap H^{N_0}}\leq 1.
\end{equation}
Then, for any $k'\in\mathbb{Z}$,
\begin{equation}\label{ok50}
\|P_{k'}Q_s^{\sigma;\mu,\nu}(f,g)\|_{L^2}\lesssim 2^{k'_\sigma}\min[(1+s)^{-1-\beta},2^{3k'/2}]\cdot\min [1,2^{-(N_0-5)k'}].
\end{equation}
Moreover
\begin{equation}\label{derv1}
\text{ if }\quad 2^{k'}\in[2^{-D},2^D]\text{ and }\sigma\in\{e,b\}\quad\text{ or }\quad 2^{k'}\in(0,2^D]\text{ and }\sigma=i,
\end{equation}
then
\begin{equation}\label{derv2}
\|\mathcal{F}P_{k'}Q_s^{\sigma;\mu,\nu}(f,g)\|_{L^\infty}\lesssim (1+s)^{-1+\beta/10}2^{-k'}.
\end{equation}
\end{lemma}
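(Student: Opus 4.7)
The proof rests on the structural decomposition of the multipliers from Lemma \ref{tech1.3} together with the dispersive estimates of Lemma \ref{tech1.5} and the bilinear tool of Lemma \ref{tech2}. By Lemma \ref{tech1.3}, we may reduce to multipliers of the form $A_\sigma(\xi)\,m'(\xi,\eta)$, with $A_\sigma(\xi)=(1+|\xi|^2)^{1/2}$ for $\sigma\in\{e,b\}$ and $A_\sigma(\xi)=|\xi|$ for $\sigma=i$, and $m'\in\mathcal{M}$ a product of Calder\'on-Zygmund symbols $q_0(\xi)q_1(\xi-\eta)q_2(\eta)$. In particular $|\varphi_{k'}(\xi)A_\sigma(\xi)|\lesssim 2^{k'_\sigma}(1+2^{k'})$ on the support of $\varphi_{k'}$, and using Lemma \ref{CZop} the $q_j$ may be absorbed into CZ operators $T_j$ applied to $F_\mu:=e^{-is\widetilde\Lambda_\mu}f$, $G_\nu:=e^{-is\widetilde\Lambda_\nu}g$, yielding
\begin{equation*}
Q_s^{\sigma;\mu,\nu}(f,g)=e^{-is\Lambda_\sigma}\mathcal{F}^{-1}\Big\{A_\sigma(\xi)q_0(\xi)\,\mathcal{F}[(T_1F_\mu)(T_2G_\nu)]\Big\}.
\end{equation*}

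For the $L^2$ bound \eqref{ok50}, I would apply Lemma \ref{tech2} to obtain
$\|P_{k'}Q_s^{\sigma;\mu,\nu}(f,g)\|_{L^2}\lesssim 2^{k'_\sigma}(1+2^{k'})\min(\|F_\mu\|_{L^\infty}\|G_\nu\|_{L^2},\|F_\mu\|_{L^2}\|G_\nu\|_{L^\infty})$.
The summed dispersive bound \eqref{ok10} gives $\|F_\mu\|_{L^\infty}\lesssim(1+s)^{-1-\beta}$ under $\|f\|_Z\le 1$, while $\|G_\nu\|_{L^2}\lesssim\|g\|_{H^{N_0}}\lesssim 1$, producing the $(1+s)^{-1-\beta}$ factor. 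The alternate $2^{3k'/2}$ bound follows from Bernstein $\|P_{k'}F\|_{L^2}\lesssim 2^{3k'/2}\|\mathcal{F}P_{k'}F\|_{L^\infty}$ and the trivial Fourier Cauchy-Schwarz $|\mathcal{F}P_{k'}Q_s(f,g)(\xi)|\lesssim 2^{k'_\sigma}\|\widehat f\|_{L^2}\|\widehat g\|_{L^2}$. The high-frequency factor $\min(1,2^{-(N_0-5)k'})$ is extracted from the observation that if $2^{k'}\gg 1$ then $\max(|\xi-\eta|,|\eta|)\gtrsim 2^{k'}$, so the Sobolev inequality $\|P_{\gtrsim 2^{k'}}(\cdot)\|_{L^2}\lesssim 2^{-N_0k'}\|\cdot\|_{H^{N_0}}$ applies to one of the inputs.

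The $L^\infty$ bound \eqref{derv2} is more delicate. In the range \eqref{derv1}, $2^{k'}$ is bounded, so $A_\sigma(\xi)\lesssim 1$ and the factor $2^{-k'}$ is absorbed into implicit constants; the core task is to extract the decay $(1+s)^{-1+\beta/10}$. I would decompose $f=\sum f^\mu_{k_1,j_1}$, $g=\sum g^\nu_{k_2,j_2}$ into $Z$-atoms (atoms with $\max(k_1,k_2)$ large are controlled by the Sobolev tail as above), set $\kappa:=(1+s)^{-1/2+\beta/20}$, and split the $\eta$-integration into non-resonant and resonant parts $\{|\Xi^{\mu,\nu}(\xi,\eta)|>\kappa\}$ and $\{|\Xi^{\mu,\nu}(\xi,\eta)|\le\kappa\}$. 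On the non-resonant region, integrate by parts in $\eta$ using Lemma \ref{tech5}, which is efficient provided the derivative losses $2^{|\rho|\max(j_1,j_2)}$ coming from \eqref{nh9} are offset by the gain $(s\kappa)^{-|\rho|}$, i.e., provided $\max(j_1,j_2)\le(\tfrac12+\tfrac\beta{30})\log_2(1+s)$. On the resonant region, the critical set $\{\nabla\widetilde\Lambda_\mu(\xi-\eta)=-\nabla\widetilde\Lambda_\nu(\eta)\}$ for fixed bounded $\xi$ consists of finitely many points (collinearity of $\xi-\eta$ and $\eta$ reduces to a scalar equation in $|\eta|$ controlled by Lemma \ref{tech99}), so its $\kappa$-neighborhood has volume $\lesssim\kappa^3$, yielding a contribution $\lesssim\kappa^3\cdot\|\widehat f\|_{L^\infty}\|\widehat g\|_{L^\infty}\lesssim(1+s)^{-3/2+3\beta/20}$. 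Pairs of atoms with $\max(j_1,j_2)$ exceeding the IBP threshold are handled by Cauchy-Schwarz and the $L^2$ estimate $\|\widehat{f^\mu_{k_1,j_1}}\|_{L^2}\|\widehat{g^\nu_{k_2,j_2}}\|_{L^2}\lesssim 2^{-(1-\beta)(j_1+j_2)}$ from \eqref{nh9.1}, whose geometric sum also respects $(1+s)^{-1+\beta/10}$.

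The main obstacle is balancing the sums over $(k_1,j_1,k_2,j_2)$ so that the polynomial losses $2^{|\rho|\max(j_1,j_2)}$ from the integration by parts, combined with the $\kappa^3$ volume estimate and the bounded number of atoms near the marginal scale $\max(j_1,j_2)\sim\tfrac12\log_2 s$, consolidate into the claimed uniform decay; the slack $\beta/10$ in the exponent is precisely what absorbs these logarithmic summation losses. A secondary point is verifying that for $\sigma=i$ the resonant analysis remains clean on $(0,2^D]$: one must avoid accumulating at the degenerate frequency $|\eta|=r_\ast$ where $\lambda''_i$ vanishes, but the exclusion $2^{k'}>0$ in \eqref{derv1} and the quantitative bounds in Lemma \ref{tech99} ensure this does not occur in the regime considered.
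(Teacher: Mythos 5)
Your treatment of the $L^2$ bound \eqref{ok50} is essentially the paper's: decompose the multiplier via Lemma \ref{tech1.3}, absorb the Calder\'on--Zygmund pieces into CZ operators acting on $e^{-is\widetilde\Lambda_\mu}f$ and $e^{-is\widetilde\Lambda_\nu}g$, apply Lemma \ref{tech2} with one input in $L^\infty$ (giving the $(1+s)^{-1-\beta}$ decay from \eqref{ok10}), use Bernstein and Cauchy--Schwarz for the $2^{3k'/2}$ alternative, and use Sobolev tails for the $2^{-(N_0-5)k'}$ factor. This part is correct.

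The gap is in the $L^\infty$ bound \eqref{derv2}. You claim the resonant region $\{|\Xi^{\mu,\nu}(\xi,\eta)|\le\kappa\}$, for fixed $\xi$, is a $\kappa$-neighborhood of finitely many points and therefore has volume $\lesssim\kappa^3$. This is false in general. The zero set of $\Xi^{\mu,\nu}(\xi,\cdot)$ is indeed finite, but the sublevel set is not a union of small balls around those zeros, because $D_\eta\Xi^{\mu,\nu}(\xi,\eta)=-\iota_\mu D^2\Lambda_{\sigma_\mu}(\eta-\xi)-\iota_\nu D^2\Lambda_{\sigma_\nu}(\eta)$ is degenerate in the radial direction: the radial eigenvalue involves $\lambda''_{\sigma_\mu}$ and $\lambda''_{\sigma_\nu}$, and $\lambda''_i$ vanishes at $r_\ast$ (and more generally the radial derivative can be small or cancel when $\iota_\mu\ne\iota_\nu$ and $|\xi|$ is small). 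What \emph{is} true, and what the paper actually proves in \eqref{nxc11}, is that if $\xi=se$ and $\eta=re+\eta'$ with $\eta'\perp e$, the resonant set constrains only the \emph{transverse} component, $|\eta'|\lesssim 2^{\max(k_1,0)}2^{-k'}2^{k_2}\delta$, with no constraint on $r$ beyond the frequency cutoff $|r|\lesssim 2^{k_2}$. The set is a tube, with volume $\lesssim (\text{thickness})^2\cdot 2^{k_2}$, not a ball. Plugging in $\delta\approx s^{-1/2}$ you get volume $\sim s^{-1}$, not $s^{-3/2}$, which is exactly why the final rate in \eqref{derv2} is $(1+s)^{-1+\beta/10}$ and not the stronger $(1+s)^{-3/2+\cdots}$ your estimate would yield. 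The additional step you are missing is that once the resonant set is a tube, the pure $L^\infty\times L^\infty$ estimate is not sharp enough in the large-$j$ regime; the paper switches to an $L^1$-on-the-tube estimate for the atom (restricted Cauchy--Schwarz using $\|\widehat{h^2_{k_2,j_2}}\|_{L^2}\lesssim 2^{-(1-\beta)j_2}$), precisely to trade the $\eta$-localization of the atom against the length of the tube.

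Your final remark that the restriction $2^{k'}\in(0,2^D]$ for $\sigma=i$ prevents accumulation at $|\eta|=r_\ast$ is also not correct: the constraint on $k'$ concerns the \emph{output} frequency $|\xi|$, not the input frequency $|\eta|$; the radial degeneracy at $|\eta|\sim r_\ast$ or $|\eta-\xi|\sim r_\ast$ can (and does) occur, and it is absorbed by the tube estimate rather than excluded.
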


\begin{proof}[Proof of Lemma \ref{ders}] Clearly, the left-hand side of \eqref{ok50} is dominated by
\begin{equation}\label{ok51}
\begin{split}
C\sum_{k_1,k_2\in\mathbb{Z}}\Big\|\varphi_{k'}(\xi)\int_{\mathbb{R}^3}e^{-is[\widetilde{\Lambda}_{\mu}(\xi-\eta)+\widetilde{\Lambda}_{\nu}(\eta)]}m_{\sigma;\mu,\nu}(\xi,\eta)\widehat{P_{k_1}f}(\xi-\eta,s)\widehat{P_{k_2}g}(\eta,s)\,d\eta\Big\|_{L^2_\xi}.
\end{split}
\end{equation}
Using \eqref{ok9}--\eqref{ok10} and the assumption \eqref{ders2},
\begin{equation}\label{ok52}
\begin{split}
&\|P_{k''}f\|_{L^2}+\|P_{k''}g\|_{L^2}\lesssim \min(2^{(1+\beta-\alpha)k''},2^{-N_0k''}),\\
&\|e^{-is\widetilde{\Lambda}_\mu}P_{k''}f\|_{L^\infty}+\|e^{-is\widetilde{\Lambda}_\nu}P_{k''}g\|_{L^\infty}\lesssim\min(2^{(1/2-\beta-\alpha)k''},2^{-6k''}) (1+s)^{-1-\beta},
\end{split}
\end{equation}
for any $k''\in\mathbb{Z}$. Using \eqref{ok52} and the description of the symbols $m_{\sigma;\mu,\nu}$ in Lemma \ref{tech1.3}, the expression in \eqref{ok51} is dominated by
\begin{equation*}
\begin{split}
C2^{k'_\sigma}\sum_{k_1,k_2\in\mathbb{Z},\,k_1\leq k_2,\,k'\leq k_2+4} \min(2^{(1+\beta-\alpha)k_2},2^{-(N_0-2)k_2})\cdot \min(2^{(1/2-\beta-\alpha)k_1},2^{-6k_1})(1+s)^{-1-\beta}\\
\lesssim 2^{k'_\sigma}(1+s)^{-1-\beta}\min(1,2^{-(N_0-5)k'}).
\end{split}
\end{equation*}
Moreover, if $k'\leq 0$, we use again \eqref{ok52} and Lemma \ref{tech1.3} to estimate the expression in \eqref{ok51} by
\begin{equation*}
\begin{split}
&C\sum_{k_1,k_2\in\mathbb{Z}}2^{3k'/2}\Big\|\int_{\mathbb{R}^3}e^{-is[\widetilde{\Lambda}_{\mu}(\xi-\eta)+\widetilde{\Lambda}_{\nu}(\eta)]}m_{\sigma;\mu,\nu}(\xi,\eta)\widehat{P_{k_1}f}(\xi-\eta,s)\widehat{P_{k_2}g}(\eta,s)\,d\eta\Big\|_{L^\infty_\xi}\\
&\lesssim 2^{3k'/2}2^{k'_\sigma}\sum_{k_1,k_2\in\mathbb{Z}}\min(2^{(1+\beta-\alpha)k_1},2^{-(N_0-2)k_1})\cdot \min(2^{(1+\beta-\alpha)k_2},2^{-(N_0-2)k_2})\\
&\lesssim 2^{3k'/2}2^{k'_\sigma}.
\end{split}
\end{equation*}
The desired bound \eqref{ok50} follows.

To prove \eqref{derv2} we use first Lemma \ref{tech1.3} and Lemma \ref{CZop} and decompose the functions $f,g$ in suitable atoms. It suffices to prove that if 
\begin{equation}\label{nxc2}
\|h_1\|_{Z\cap H^{N_0}}+\|h_2\|_{Z\cap H^{N_0}}\leq 1,
\end{equation}
and we decompose
\begin{equation*}
h_i=\sum_{(k_i,j_i)\in\mathcal{J}}h^i_{k_i,j_i},\qquad h^i_{k_i,j_i}:=P_{[k_i-2,k_i+2]}(\phii^{(k_i)}_{j_i}\cdot P_{k_i}h_i),\qquad i=1,2,
\end{equation*}
then
\begin{equation}\label{nxc3}
\sum_{(k_1,j_1),(k_2,j_2)\in\mathcal{J}}2^{k'}\Big|\varphi_{k'}(\xi)\int_{\mathbb{R}^3}e^{is[\Lambda_\sigma(\xi)-\widetilde{\Lambda}_{\mu}(\xi-\eta)-\widetilde{\Lambda}_{\nu}(\eta)]}\widehat{h^1_{k_1,j_1}}(\xi-\eta)\widehat{h^2_{k_2,j_2}}(\eta)\,d\eta\Big|\lesssim (1+s)^{-1+\beta/10}2^{-k'},
\end{equation}
for any $\xi\in\mathbb{R}^3$, $\mu,\nu\in\mathcal{I}_0$, $s\in\mathbb{R}$, and $k',\sigma$ as in \eqref{derv1}.

We use first only the $L^2$ bounds
\begin{equation}\label{nxc3.5}
\|h^1_{k_1,j_1}\|_{L^2}\lesssim \min(2^{-N_0 k_1},2^{(2\beta-\alpha)\widetilde{k_1}}2^{-(1-\beta)j_1}), \quad \|h^2_{k_2,j_2}\|_{L^2}\lesssim \min(2^{-N_0 k_2},2^{(2\beta-\alpha)\widetilde{k_2}}2^{-(1-\beta)j_2}),
\end{equation}
see \eqref{nxc2} and \eqref{mk15.5}. The full bound \eqref{nxc3} follows easily if $s^{1-\beta/10}\leq 2^{D^2}2^{-2k'}$. Assuming $s^{1-\beta/10}\geq 2^{D^2}2^{-2k'}$ we estimate easily
\begin{equation*}
\sum_{((k_1,j_1),(k_2,j_2))\in J_1}2^{k'}\Big|\varphi_{k'}(\xi)\int_{\mathbb{R}^3}e^{is[\Lambda_\sigma(\xi)-\widetilde{\Lambda}_{\mu}(\xi-\eta)-\widetilde{\Lambda}_{\nu}(\eta)]}\widehat{h^1_{k_1,j_1}}(\xi-\eta)\widehat{h^2_{k_2,j_2}}(\eta)\,d\eta\Big|\lesssim s^{-1}2^{-k'},
\end{equation*}
where
\begin{equation*}
J_1:=\{((k_1,j_1),(k_2,j_2))\in\mathcal{J}\times\mathcal{J}: 2^{\max(k_1,k_2)}\geq s^{2/N_0}\text{ or }2^{\max(j_1,j_2)}\geq s^{1+4\beta}2^{k'}\}.
\end{equation*}

Let
\begin{equation*}
J_2:=\{((k_1,j_1),(k_2,j_2))\in\mathcal{J}\times\mathcal{J}:\,2^{\max(k_1,k_2)}\leq s^{2/N_0}\text{ and }2^{\max(j_1,j_2)}\leq s^{1+4\beta}2^{k'}\},
\end{equation*}
and notice that $J_2$ has at most $C(\ln s)^4$ elements. Therefore, for \eqref{nxc3} it suffices to prove that
\begin{equation}\label{nxc4}
\Big|\varphi_{k'}(\xi)\int_{\mathbb{R}^3}e^{-is[\widetilde{\Lambda}_{\mu}(\xi-\eta)+\widetilde{\Lambda}_{\nu}(\eta)]}\widehat{h^1_{k_1,j_1}}(\xi-\eta)\widehat{h^2_{k_2,j_2}}(\eta)\,d\eta\Big|\lesssim 2^{-2k'}s^{-1+\beta/11},
\end{equation}
provided that $\xi\in\mathbb{R}^3$, $\mu,\nu\in\mathcal{I}_0$, $k'\in\mathbb{Z}\cap (-\infty,D]$, $s^{1-\beta/10}\geq 2^{D^2}2^{-2k'}$, and $((k_1,j_1),(k_2,j_2))\in J_2$.

Assume first that
\begin{equation}\label{nxc5}
2^{\max(j_1,j_2)}\geq 2^{-D^2}s^{1-\beta/11}2^{k'}.
\end{equation}
Without loss of generality, in proving \eqref{nxc4} we may assume that $j_1\leq j_2$. Then, using \eqref{sec5.8}, \eqref{sec5.82} and the assumption \eqref{nxc2}, we have
\begin{equation}\label{nxc5.5}
\|\widehat{h^2_{k_2,j_2}}\|_{L^1}\lesssim 2^{-(1+\beta)j_2}2^{3k_2/2}(2^{\alpha k_2}+2^{10k_2})^{-1}.
\end{equation}
Using \eqref{mk15.55}, $\|\widehat{h^1_{k_1,j_1}}\|_{L^\infty}\lesssim 2^{-\widetilde{k_1}/2}$. Using also \eqref{nxc3.5} we estimate the left-hand side of \eqref{nxc4} by
\begin{equation*}
C\min(\|\widehat{h^1_{k_1,j_1}}\|_{L^\infty}\|\widehat{h^2_{k_2,j_2}}\|_{L^1},\|\widehat{h^1_{k_1,j_1}}\|_{L^2}\|\widehat{h^2_{k_2,j_2}}\|_{L^2})\lesssim \min(2^{-\widetilde{k_1}/2}2^{-(1+\beta)j_2},2^{\widetilde{k_1}(1+\beta-\alpha)}2^{-(1-\beta)j_2})\lesssim 2^{-j_2}.
\end{equation*}
The desired bound \eqref{nxc4} follows if we assume \eqref{nxc5}.

Assume now that
\begin{equation}\label{nxc5.6}
2^{2\min(k_1,k_2)}\leq 2^{D^2}2^{-2k'}s^{-1+\beta/11}.
\end{equation}
Without loss of generality, in proving \eqref{nxc4} we may assume that $k_2\leq k_1$. Then, using \eqref{nxc5.5} we estimate the left-hand side of \eqref{nxc4} by
\begin{equation*}
C\|\widehat{h^1_{k_1,j_1}}\|_{L^\infty}\|\widehat{h^2_{k_2,j_2}}\|_{L^1}\lesssim 2^{-k_1/2}2^{5k_2/2}\lesssim 2^{2k_2},
\end{equation*}
as desired

Finally it remains to prove \eqref{nxc4} assuming that
\begin{equation}\label{nxc6}
2^{\max(j_1,j_2)}\leq 2^{-D^2}s^{1-\beta/11}2^{k'}\qquad\text{and}\qquad 2^{2\min(k_1,k_2)}\geq 2^{D^2}2^{-2k'}s^{-1+\beta/11}.
\end{equation}
In this case we would like to integrate by parts in $\eta$ to estimate the integral in \eqref{nxc4}.
Using the bounds \eqref{ln1} and \eqref{mk15.55},
\begin{equation}\label{nxc7}
\Big|\int_{\mathbb{R}^3}[1-\varphi_{\leq 0}(\delta^{-1}\Xi^{\mu,\nu}(\xi,\eta))]e^{-is[\widetilde{\Lambda}_{\mu}(\xi-\eta)+\widetilde{\Lambda}_{\nu}(\eta)]}\widehat{h^1_{k_1,j_1}}(\xi-\eta)\widehat{h^2_{k_2,j_2}}(\eta)\,d\eta\Big|\lesssim s^{-2}
\end{equation}
as long as
\begin{equation}\label{nxc7.5}
\delta\in(0,1],\quad s\delta\geq s^{\beta^2}2^{\max(j_1,j_2)},\quad s\delta\geq s^{\beta^2}\delta^{-1}2^{-\min(k_1,k_2,0)}.
\end{equation}
Therefore, letting
\begin{equation}\label{nxc9}
D(\xi,\delta):=\{\eta\in\mathbb{R}^3:\,|\eta|\in[2^{k_2-4},2^{k_2+4}],\,|\xi-\eta|\in [2^{k_1-4},2^{k_1+4}],\,|\Xi^{\mu,\nu}(\xi,\eta)|\leq 2\delta\},
\end{equation}
for \eqref{nxc4} it remains to prove that, for some $\delta$ satisfying \eqref{nxc7.5},
\begin{equation}\label{nxc10}
\Big|\varphi_{k'}(\xi)\int_{\mathbb{R}^3}\mathbf{1}_{D(\xi,\delta)}(\eta)\big|\widehat{h^1_{k_1,j_1}}(\xi-\eta)\big|\,\big|\widehat{h^2_{k_2,j_2}}(\eta)\big|\,d\eta\Big|\lesssim 2^{-2k'}s^{-1+\beta/11},
\end{equation}
provided that $\xi\in\mathbb{R}^3$, $\mu,\nu\in\mathcal{I}_0$, $k'\in\mathbb{Z}\cap (-\infty,D]$, and $((k_1,j_1),(k_2,j_2))\in J_2$ satisfies \eqref{nxc6}. Without loss of generality, we may assume that $k_2\leq k_1$.

We examine now the sets $D(\xi,\delta)$ defined in \eqref{nxc9}. Assume that $\mu=(\sigma_1\iota_1)$, $\nu=(\sigma_2\iota_2)$, $\sigma_1,\sigma_2\in\{i,e,b\}$, $\iota_1,\iota_2\in\{+,-\}$. Notice that
\begin{equation*}
\Xi^{\mu,\nu}(\xi,\eta)=-\iota_1\frac{\lambda'_{\sigma_1}(|\eta-\xi|)}{|\eta-\xi|}(\eta-\xi)-\iota_2\frac{\lambda'_{\sigma_2}(|\eta|)}{|\eta|}\eta=A(\eta-\xi)+B\eta,
\end{equation*}
where
\begin{equation*}
A:=-\iota_1\frac{\lambda'_{\sigma_1}(|\eta-\xi|)}{|\eta-\xi|},\qquad B:=-\iota_2\frac{\lambda'_{\sigma_2}(|\eta|)}{|\eta|}.
\end{equation*}
In view of Lemma \ref{tech99} (i), we have $\min(|A|,|B|)\gtrsim_{C_b,\varepsilon}2^{-\max(k_1,0)}$. Letting $\xi=se$, $e\in\mathbb{S}^2$, $s\in[2^{k'-2},2^{k'+2}]$, and $\eta=re+\eta'$, $r\in\mathbb{R}$, $\eta'\cdot e=0$, and assuming $\eta\in D(\xi,\delta)$, it follows that
\begin{equation*}
|(A+B)\eta'|\leq 2\delta,\qquad |(A+B)r-As|\leq 2\delta.
\end{equation*}
We let $\delta:=\max(s^{\beta^2-1}2^{\max(j_1,j_2)},s^{(\beta^2-1)/2}2^{-\min(k_2,0)/2})$, such that \eqref{nxc7.5} is satisfied. In view of \eqref{nxc6}, it follows that $|(A+B)r|\gtrsim_{C_b,\varepsilon}2^{-\max(k_1,0)}2^{k'}$, therefore $|A+B|\gtrsim_{C_b,\varepsilon}2^{-\max(k_1,0)}2^{k'}2^{-k_2}$. This shows that $|\eta'|\lesssim 2^{\max(k_1,0)}2^{-k'}2^{k_2}\delta$. In other words, we proved that if $\xi=se$, $e\in\mathbb{S}^2$, $s\in[2^{k'-2},2^{k'+2}]$, then
\begin{equation}\label{nxc11}
D(\xi,\delta)\subseteq\{\eta=re+\eta'\in\mathbb{R}^3:|r|+|\eta'|^2\leq 2^{k_2+4},\,\eta'\cdot e=0,\,|\eta'|\lesssim 2^{\max(k_1,0)}2^{-k'}2^{k_2}\delta\}.
\end{equation}

Using \eqref{nxc11} and the $L^\infty$ bounds $\|\widehat{h^1_{k_1,j_1}}\|_{L^\infty}\lesssim 2^{-k_1/2}$, $\|\widehat{h^2_{k_2,j_2}}\|_{L^\infty}\lesssim 2^{-k_2/2}$, we can bound the left-hand side of \eqref{nxc10} by
\begin{equation*}
C2^{-k_1/2}2^{-k_2/2}\cdot (2^{\max(k_1,0)}2^{-k'}2^{k_2}\delta)^22^{k_2}\lesssim 2^{2\min(k_2,0)}\delta^22^{-2k'}s^{8/N_0}.
\end{equation*}
This suffices to prove \eqref{nxc10} if $2^{\max(j_1,j_2)}2^{\min(k_2,0)}\leq s^{1/2}$. On the other hand, if $j_1\leq j_2$ and $2^{j_2}2^{\min(k_2,0)}\geq s^{1/2}$ then we estimate the left-hand side of \eqref{nxc10} by
\begin{equation*}
C2^{-k_1/2}\|\mathbf{1}_{D(\xi,\delta)}(\eta)\cdot \widehat{h^2_{k_2,j_2}}(\eta)\|_{L^1_\eta}\lesssim 2^{-k_1/2}\cdot 2^{k_2/2}2^{\max(k_1,0)}2^{-k'}2^{k_2}\delta\cdot 2^{-j_2}\lesssim 2^{-k'}s^{-1+\beta/11},
\end{equation*}
which also suffices to prove \eqref{nxc10}. Finally, if $j_1\geq j_2$ and $2^{j_1}2^{\min(k_2,0)}\geq s^{1/2}$ then we estimate the left-hand side of \eqref{nxc10} by
\begin{equation*}
C2^{-k_2/2}\|\mathbf{1}_{D(\xi,\delta)}(\eta)\cdot \widehat{h^1_{k_1,j_1}}(\xi-\eta)\|_{L^1_\eta}\lesssim 2^{-k_2/2}\cdot 2^{k_2/2}2^{\max(k_1,0)}2^{-k'}2^{k_2}\delta\cdot 2^{-j_1}\lesssim 2^{-k'}s^{-1+\beta/11},
\end{equation*}
which also suffices to prove \eqref{nxc10}. This completes the proof of the lemma.
\end{proof}

\section{Classification of resonances}\label{resonantsets}

We define the order $i<e<b$. Recall that we introduced a large number $D\gg(\varepsilon^{-1}+C_b)^{10}$, depending only on $\varepsilon$ and $C_b$. For $\sigma\in\{i,e,b\}$ and $\mu,\nu\in\mathcal{I}_0$, see definition \eqref{Iset},
\begin{equation}\label{jb1}
\mu=(\sigma_1\iota_1),\qquad \nu=(\sigma_2\iota_2),\qquad \sigma_1,\sigma_2\in\{i,e,b\},\qquad \iota_1,\iota_2\in\{+,-\},
\end{equation}
recall the definitions of the smooth functions $\Lambda_\sigma:\mathbb{R}^3\to(0,\infty)$, $\Phi^{\sigma;\mu,\nu}:\mathbb{R}^3\times\mathbb{R}^3\to\mathbb{R}$ and $\Xi^{\mu,\nu}:\mathbb{R}^3\times\mathbb{R}^3\to\mathbb{R}^3$,
\begin{equation}\label{jb2}
\begin{split}
&\Phi^{\sigma;\mu,\nu}(\xi,\eta)=\Lambda_\sigma(\xi)-\iota_1\Lambda_{\sigma_1}(\xi-\eta)-\iota_2\Lambda_{\sigma_2}(\eta),\\
&\Xi^{\mu,\nu}(\xi,\eta)=(\nabla_\eta\Phi^{\sigma;\mu,\nu})(\xi,\eta)=-\iota_1\nabla\Lambda_{\sigma_1}(\eta-\xi)-\iota_2\nabla\Lambda_{\sigma_2}(\eta).
\end{split}
\end{equation}
In this subsection we prove several lemmas describing the structure of almost resonant sets, which are the sets where both $|\Phi^{\sigma;\mu,\nu}(\xi,\eta)|$ and $|\Xi^{\mu,\nu}(\xi,\eta)|$ are small. Recall the sets
\begin{equation}\label{jb3}
\begin{split}
\mathcal{L}^{\sigma;\mu,\nu}_{k,k_1,k_2;\delta_1,\delta_2}=\{(\xi,\eta)\in\mathbb{R}^3\times\mathbb{R}^3:&\,|\xi|\in[2^{k-4},2^{k+4}],\,|\xi-\eta|\in[2^{k_1-4},2^{k_1+4}],\,|\eta|\in[2^{k_2-4},2^{k_2+4}],\\
&\,|\Xi^{\mu,\nu}(\xi,\eta)|\leq\delta_1,\,|\Phi^{\sigma;\mu,\nu}(\xi,\eta)|\leq\delta_2\}.
\end{split}
\end{equation}
defined for $\sigma\in\{i,e,b\}$, $\mu,\nu\in\mathcal{I}_0$, $k,k_1,k_2\in\mathbb{Z}$, $\delta_1,\delta_2\in(0,\infty)$. We define also
\begin{equation*}
\mathcal{L}_{k,k_1,k_2}:=\{(\xi,\eta)\in\mathbb{R}^3\times\mathbb{R}^3:\,|\xi|\in[2^{k-4},2^{k+4}],\,|\xi-\eta|\in[2^{k_1-4},2^{k_1+4}],\,|\eta|\in[2^{k_2-4},2^{k_2+4}]\}.
\end{equation*}

Given a phase $\Phi^{\sigma;\mu,\nu}$ and a set of phases $\mathcal{T}$, we denote $\Phi^{\sigma;\mu,\nu}\in\in \mathcal{T}$ if either $\Phi^{\sigma;\mu,\nu}\in \mathcal{T}$ or $\Phi^{\sigma;\nu,\mu}\in \mathcal{T}$, and $\Phi^{\sigma\mu,\nu}\notin\notin \mathcal{T}$ if neither possibility holds.

We show first that some phases do not contribute in the analysis of resonant interactions. We define the $39$ strongly elliptic phases,
\begin{equation*}
\begin{split}
\mathcal{T}_{Sell}:=\{&\Phi^{i;i+,e+}, \Phi^{i,i+,e-}, \Phi^{i;i+,b+}, \Phi^{i;i+,b-}, \Phi^{i,i-,e-}, \Phi^{i;i-,b-}, \Phi^{i,e+,e+}, \Phi^{i;e+,b+}, \Phi^{i,e-,e-}, \Phi^{i;e-,b-},\\
&\Phi^{i;b+,b+}, \Phi^{i;b-,b-}, \Phi^{e;i+,i-}, \Phi^{e;i+,e-}, \Phi^{e;i+,b-}, \Phi^{e;i-,i-}, \Phi^{e;i-,e-}, \Phi^{e;i-,b-}, \Phi^{e;e+,e+}, \Phi^{e;e+,e-},\\
& \Phi^{e;e+,b+}, \Phi^{e;e+,b-}, \Phi^{e;e-,e-}, \Phi^{e;e-,b-}, \Phi^{e;b+,b+}, \Phi^{e;b-,b-}, \Phi^{b;i+,i-}, \Phi^{b;i+,e-}, \Phi^{b;i+,b-}, \Phi^{b;i-,i-},\\
 &\Phi^{b;i-,e-}, \Phi^{b;i-,b-}, \Phi^{b;e+,e-}, \Phi^{b;e+,b-}, \Phi^{b;e-,e-}, \Phi^{b;e-,b-}, \Phi^{b;b+,b+}, \Phi^{b;b+,b-},\Phi^{b;b-,b-}\}.
\end{split}
\end{equation*}
We also define $4$ additional nonresonant phases
\begin{equation*}
\mathcal{T}_{NR}=\{\Phi^{e;i+,i+},\Phi^{e;b+,b-},\Phi^{b;i+,i+},\Phi^{b;i-,e+}\}.
\end{equation*}

\begin{lemma}\label{StronglyEll}
Assume that $\Phi^{\sigma;\mu,\nu}\in\in\mathcal{T}_{Sell}\cup\mathcal{T}_{NR}$. If
\begin{equation*}
\delta_1\le 2^{-D}2^{-4\max(k_1,k_2,0)},\,\,\delta_2\leq 2^{-D}2^{-\max(k_1,k_2,0)}\quad\hbox{then}\quad
\mathcal{L}^{\sigma;\mu,\nu}_{k,k_1,k_2;\delta_1,\delta_2}=\emptyset.
\end{equation*}
\end{lemma}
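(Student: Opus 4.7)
I will prove the uniform lower bound
\begin{equation*}
|\Phi^{\sigma;\mu,\nu}(\xi,\eta)|\ge c(1+2^{\max(k,k_1,k_2)})\qquad\text{on }\mathcal{L}_{k,k_1,k_2}
\end{equation*}
whenever $\Phi^{\sigma;\mu,\nu}\in\in\mathcal{T}_{Sell}$, with $c=c(C_b,T,\varepsilon)>0$, and the same bound on the subset $\{|\Xi^{\mu,\nu}|\le\delta_1\}$ when $\Phi^{\sigma;\mu,\nu}\in\in\mathcal{T}_{NR}$. Since the triangle inequality yields $\max(k,k_1,k_2)\ge\max(k_1,k_2)-O(1)$, such a bound comfortably dominates $\delta_2=2^{-D}2^{-\max(k_1,k_2,0)}$ as soon as $D=D(C_b,T,\varepsilon)$ is taken large. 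The standing inputs, coming from Lemma~\ref{tech99} and the normalizations \eqref{condTeps}, are: (E1) the mass gap $\Lambda_e(\zeta),\Lambda_b(\zeta)\ge\varepsilon^{-1/2}\sqrt{1+T|\zeta|^2}\ge\varepsilon^{-1/2}$; (E2) the slow acoustic growth $|\Lambda_i(a)-\Lambda_i(b)|\le\sqrt{(1+T)/(1+\varepsilon)}\,|a-b|$ together with $\Lambda_i(a)\ge|a|$; and (E3) the gap of asymptotic speeds $\sqrt{C_b/\varepsilon}\ge\sqrt{6}\sqrt{T/\varepsilon}\gg\sqrt{(1+T)(1+\varepsilon)}$ guaranteed by $\varepsilon\le 10^{-3}$, $T\ge 1$, $C_b\ge 6T$.

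\textbf{Step 1 (strongly elliptic phases).} I would dispatch the $39$ phases in $\mathcal{T}_{Sell}$ by matching each to one of three templates. Template (a): if all three summands share the same sign (as in $\Phi^{i;i-,e-}=\Lambda_i(\xi)+\Lambda_i(\xi-\eta)+\Lambda_e(\eta)$ or $\Phi^{b;b+,b+}$) the triangle bound trivially yields $|\Phi|\gtrsim\varepsilon^{-1/2}+2^{\max(k,k_1,k_2)}$ via (E1)--(E2). Template (b): an isolated $\Lambda_{e/b}$-term balanced against two $\Lambda_i$-terms of the same sign (as in $\Phi^{i;i+,e+}$, $\Phi^{e;i+,i-}$, $\Phi^{b;i+,e-}$, etc.): rewrite the two acoustic terms as a single difference $\Lambda_i(\zeta_1)-\Lambda_i(\zeta_2)$ with $|\zeta_1-\zeta_2|\le 2^{\max(k,k_1,k_2)+O(1)}$, control it by $\sqrt{(1+T)/(1+\varepsilon)}|\zeta_1-\zeta_2|$ via (E2), and conclude using (E1) and (E3) that the isolated $\Lambda_{e/b}$-term dominates. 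Template (c): two $\Lambda_{e/b}$-terms against a single $\Lambda_i$-output (as in $\Phi^{i;e+,e+}$, $\Phi^{i;e+,b+}$, $\Phi^{i;b+,b+}$): use $\Lambda_{e/b}(\zeta_1)+\Lambda_{e/b}(\zeta_2)\ge\varepsilon^{-1/2}\sqrt{T}\,(|\zeta_1|+|\zeta_2|)\ge\varepsilon^{-1/2}\sqrt{T}|\xi|$ together with (E2) for $\Lambda_i(\xi)$, and invoke (E3). Every phase in $\mathcal{T}_{Sell}$ fits one of these templates, and the verification is purely mechanical once the three templates are stated.

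\textbf{Step 2 (nonresonant phases).} For $\mathcal{T}_{NR}=\{\Phi^{e;i+,i+},\Phi^{e;b+,b-},\Phi^{b;i+,i+},\Phi^{b;i-,e+}\}$, the Step~1 argument fails along a lower-dimensional locus, which I eliminate by the $\Xi$-constraint. For $\Phi^{e;i+,i+}$ and $\Phi^{b;i+,i+}$: the phase is bounded below by a Step~1-type argument unless $\Lambda_i(\xi-\eta)+\Lambda_i(\eta)\simeq\Lambda_{e/b}(\xi)$, which by (E1) and $\Lambda_i(\zeta)\ge|\zeta|$ forces $|\xi-\eta|+|\eta|\ge C\sqrt{T/\varepsilon}\,|\xi|\gg|\xi|$; equivalently, $(\xi-\eta)/|\xi-\eta|$ and $-\eta/|\eta|$ are almost equal, so the explicit formula $\Xi^{i+,i+}=-\lambda_i'(|\xi-\eta|)(\eta-\xi)/|\eta-\xi|-\lambda_i'(|\eta|)\eta/|\eta|$ together with $\lambda_i'(r)\ge\lambda_i'(r_\ast)>0$ from Lemma~\ref{tech99}(i) gives $|\Xi^{i+,i+}|\ge 2\lambda_i'(r_\ast)-o(1)\gg\delta_1$. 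For $\Phi^{e;b+,b-}=\Lambda_e(\xi)-\Lambda_b(\xi-\eta)+\Lambda_b(\eta)$, the strict convexity $\lambda_b''>0$ makes $\nabla\Lambda_b$ injective, so $|\Xi^{b+,b-}|=|\nabla\Lambda_b(\eta-\xi)-\nabla\Lambda_b(\eta)|\le\delta_1$ forces $|\xi|\ll 1$, after which $|\Phi|\ge\Lambda_e(\xi)-\|\nabla\Lambda_b\|_{L^\infty(|\zeta|\lesssim 1)}|\xi|\ge\varepsilon^{-1/2}/2$. The remaining phase $\Phi^{b;i-,e+}=\Lambda_b(\xi)+\Lambda_i(\xi-\eta)-\Lambda_e(\eta)$ requires the quantitative gap $\sqrt{C_b/\varepsilon}-\sqrt{T/\varepsilon}\ge(\sqrt{6}-1)\sqrt{T/\varepsilon}$ from (E3): on $\{|\Xi^{i-,e+}|\le\delta_1\}$ the relation $\nabla\Lambda_i(\eta-\xi)=\nabla\Lambda_e(\eta)+O(\delta_1)$ pins the directions of $\eta-\xi$ and $\eta$ and constrains $|\eta|$ to a compact range, and then direct comparison of $\Lambda_b(\xi)$ against the difference $\Lambda_e(\eta)-\Lambda_i(\xi-\eta)$, using (E1)--(E3), produces a strict positive lower bound.

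\textbf{Main obstacle.} The genuinely delicate case is $\Phi^{b;i-,e+}$: it vanishes at the origin $(\xi,\eta)=(0,0)$, so no single mass-gap or triangle argument suffices, and one must jointly exploit the speed separation $C_b\ge 6T$ and the $\Xi$-localization in a coordinated way. The remaining strongly elliptic and nonresonant phases reduce, after the appropriate grouping, to essentially routine comparisons between $\sqrt{C_b/\varepsilon}$, $\sqrt{T/\varepsilon}$ and $\sqrt{(1+T)(1+\varepsilon)}$.
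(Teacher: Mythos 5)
Your Step 2 strategy for the four phases in $\mathcal{T}_{NR}$ is broadly in line with the paper's proof, and you correctly flag $\Phi^{b;i-,e+}$ as the delicate case. However, Step~1 contains a genuine gap: the uniform lower bound you announce in the Plan,
\begin{equation*}
|\Phi^{\sigma;\mu,\nu}(\xi,\eta)|\ge c\bigl(1+2^{\max(k,k_1,k_2)}\bigr)\qquad\text{on }\mathcal{L}_{k,k_1,k_2},
\end{equation*}
is \emph{false} for a sizeable subfamily of $\mathcal{T}_{Sell}$, and your three templates simply do not apply to those phases. Concretely, any phase of the form $\Phi^{\sigma;\sigma_1+,\sigma_2\pm}$ with $\sigma,\sigma_1,\sigma_2\in\{e,b\}$ (for instance $\Phi^{e;e+,e+}$, $\Phi^{b;b+,b+}$, $\Phi^{e;e+,b-}$, $\Phi^{b;e+,b-}$, $\Phi^{b;b+,b-}$, \dots) degenerates as the frequencies get large and nearly collinear: for $\Phi^{e;e+,e+}$, taking $\xi-\eta\parallel\eta$ with $|\xi-\eta|\approx|\eta|\approx 2^K$ gives $|\Phi^{e;e+,e+}|=\Lambda_e(r_1)+\Lambda_e(r_2)-\Lambda_e(r_1+r_2)=O(2^{-K})\to 0$. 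No constant lower bound, let alone one growing like $2^{\max k}$, can hold. The sharp lower bound is only $\gtrsim_{\varepsilon,C_b}2^{-\max(k_1,k_2,0)}$, which is what is needed against $\delta_2\le 2^{-D}2^{-\max(k_1,k_2,0)}$, but establishing it requires the subadditivity--convexity inequality
\begin{equation*}
\lambda_\sigma(r_1)+\lambda_\sigma(r_2)-\lambda_\sigma(r_1+r_2)\gtrsim_{\varepsilon,C_b}(1+\min(r_1,r_2))^{-1}\qquad(\sigma\in\{e,b\}),
\end{equation*}
together with the gaps $\lambda_b-\lambda_e\gtrsim r$ and $\lambda_e-\lambda_i\gtrsim 1+r$. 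These ingredients, which the paper isolates as its inequalities \eqref{nbc2}--\eqref{nbc3}, never enter your argument. A symptom of the problem is that you list $\Phi^{b;b+,b+}$ under Template~(a) as if its three summands shared a sign: $\Phi^{b;b+,b+}=\Lambda_b(\xi)-\Lambda_b(\xi-\eta)-\Lambda_b(\eta)$ manifestly does not, and this is precisely a phase for which Template~(a)'s conclusion fails.

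Two smaller points in Step~2. For $\Phi^{e;i+,i+}$, $\Phi^{b;i+,i+}$ the inequality you invoke should read $\Lambda_i(\zeta)\le C|\zeta|$ (an upper bound on the acoustic sum, giving $|\xi-\eta|+|\eta|\gg|\xi|$), not $\Lambda_i(\zeta)\ge|\zeta|$; with that fixed the deduction is fine, and in fact it is cleaner to argue as the paper does --- smallness of $\Xi^{i+,i+}$ forces $(\xi-\eta)\cdot\eta\ge 0$, hence $|\xi|\ge\max(|\xi-\eta|,|\eta|)$, hence $\Phi\ge\lambda_e(|\xi|)-2\lambda_i(|\xi|)\gtrsim 1$ by the mass gap. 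For $\Phi^{e;b+,b-}$, the assertion ``injectivity of $\nabla\Lambda_b$ forces $|\xi|\ll 1$'' is not quite enough by itself: $\nabla\Lambda_b$ is a contraction near infinity (its Hessian decays like $(1+r^2)^{-3/2}$), so small $|\Xi^{b+,b-}|$ does not on its own control $|\xi|$; you need the frequency-dependent smallness $\delta_1\le 2^{-D}2^{-4\max(k_1,k_2,0)}$ to compensate the loss of ellipticity at high frequency, exactly as the paper's choice of $\delta_1$ is designed to do.
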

\begin{proof}[Proof of Lemma \ref{StronglyEll}] We claim that if $\Phi^{\sigma;\mu,\nu}\in\in\mathcal{T}_{Sell}$, then we have
\begin{equation}\label{nbc1}
\vert\Phi^{\sigma;\mu,\nu}(\xi,\eta)\vert\gtrsim_{\varepsilon,C_b} 2^{-\max(k_1,k_2,0)}.
\end{equation}
This would clearly suffice to prove the claim for the strongly elliptic phases.

If $(\iota_1,\iota_2)=(-,-)$, the proof of \eqref{nbc1} is a direct consequence of the fact that $\lambda_b\ge\lambda_e\ge 1$. To deal with the remaining $22$ phases in $\mathcal{T}_{Sell}$ we observe that, as a consequence of Lemma \ref{tech99}, we have, for any $r\in[0,\infty)$,
\begin{equation}\label{nbc2}
r\leq\lambda_i(r)\leq\lambda_e(r)\leq\lambda_b(r),\qquad \lambda_b(r)-\lambda_e(r)\gtrsim_{\varepsilon,C_b}r,\qquad \lambda_e(r)-\lambda_i(r)\gtrsim_{\varepsilon,C_b}1+r.
\end{equation}
In addition, for any $r_1,r_2\in[0,\infty)$,
\begin{equation}\label{nbc3}
\begin{split}
&\lambda_i(r_1)+\lambda_i(r_2)-\lambda_i(r_1+r_2)\geq 0,\\
&\lambda_e(r_1)+\lambda_e(r_2)-\lambda_e(r_1+r_2)\gtrsim_{\varepsilon,C_b}(1+\min(r_1,r_2))^{-1},\\
&\lambda_b(r_1)+\lambda_b(r_2)-\lambda_b(r_1+r_2)\gtrsim_{\varepsilon,C_b}(1+\min(r_1,r_2))^{-1}.
\end{split}
\end{equation}
Indeed, the first bound in \eqref{nbc3} follows from the formula $\lambda_i(r)=rq_i(r)$ in Lemma \ref{tech99}, and the fact that $q_i$ is decreasing. For the second bound in \eqref{nbc3} we use the fact that the function $r\to \lambda_e(r)-h_\varepsilon(r)$ is nonnegative and decreasing on $[0,\infty)$ (see \eqref {alo1}), therefore
\begin{equation*}
\lambda_e(r_1)+\lambda_e(r_2)-\lambda_e(r_1+r_2)\geq h_\varepsilon(r_1)+h_\varepsilon(r_2)-h_\varepsilon(r_1+r_2)\gtrsim_{\varepsilon,C_b}(1+\min(r_1,r_2))^{-1}.
\end{equation*}
The third bound follows directly from the definition.

Using \eqref{nbc2}, \eqref{nbc3}, and the monotonicity of the functions $\lambda_i,\lambda_e,\lambda_b$ on $[0,\infty)$, we can now prove lower bounds for the absolute values of the 22 phases in $\mathcal{T}_{Sell}$, which correspond to $(\iota_1,\iota_2)=(+,+)$,
\begin{equation*}
\begin{split}
-\Phi^{i;i+,e+}(\xi,\eta)&=[-\Lambda_i(\xi)+\Lambda_i(\xi-\eta)+\Lambda_i(\eta)]+[\Lambda_e(\eta)-\Lambda_i(\eta)]\gtrsim_{\varepsilon,C_b} 1,\\
-\Phi^{i;i+,b+}(\xi,\eta)&=[-\Lambda_i(\xi)+\Lambda_i(\xi-\eta)+\Lambda_i(\eta)]+[\Lambda_b(\eta)-\Lambda_i(\eta)]\gtrsim_{\varepsilon,C_b} 1,\\
-\Phi^{i;e+,e+}(\xi,\eta)&=[-\Lambda_i(\xi)+\Lambda_i(\xi-\eta)+\Lambda_i(\eta)]+[\Lambda_e(\xi-\eta)-\Lambda_i(\xi-\eta)]+[\Lambda_e(\eta)-\Lambda_i(\eta)]\gtrsim_{\varepsilon,C_b}1,\\
-\Phi^{i;e+,b+}(\xi,\eta)&=[-\Lambda_i(\xi)+\Lambda_i(\xi-\eta)+\Lambda_i(\eta)]+[\Lambda_e(\xi-\eta)-\Lambda_i(\xi-\eta)]+[\Lambda_b(\eta)-\Lambda_i(\eta)]\gtrsim_{\varepsilon,C_b}1,\\
-\Phi^{i;b+,b+}(\xi,\eta)&=[-\Lambda_i(\xi)+\Lambda_i(\xi-\eta)+\Lambda_i(\eta)]+[\Lambda_b(\xi-\eta)-\Lambda_i(\xi-\eta)]+[\Lambda_b(\eta)-\Lambda_i(\eta)]\gtrsim_{\varepsilon,C_b}1,\\
-\Phi^{e;e+,e+}(\xi,\eta)&=[-\Lambda_e(\xi)+\Lambda_e(\xi-\eta)+\Lambda_e(\eta)]\gtrsim_{\varepsilon,C_b}2^{-\max(k_1,k_2,0)},\\
-\Phi^{e;e+,b+}(\xi,\eta)&=[-\Lambda_e(\xi)+\Lambda_e(\xi-\eta)+\Lambda_e(\eta)]+[\Lambda_b(\eta)-\Lambda_e(\eta)]\gtrsim_{\varepsilon,C_b}2^{-\max(k_1,k_2,0)},\\
-\Phi^{e;b+,b+}(\xi,\eta)&=[-\Lambda_b(\xi)+\Lambda_b(\xi-\eta)+\Lambda_b(\eta)]+[\Lambda_b(\xi)-\Lambda_e(\xi)]\gtrsim_{\varepsilon,C_b}2^{-\max(k_1,k_2,0)},\\
-\Phi^{b;b+,b+}(\xi,\eta)&=[-\Lambda_b(\xi)+\Lambda_b(\xi-\eta)+\Lambda_b(\eta)]\gtrsim_{\varepsilon,C_b}2^{-\max(k_1,k_2,0)},
\end{split}
\end{equation*}
or $(\iota_1,\iota_2)=(+,-)$,
\begin{equation*}
\begin{split}
\Phi^{i;i+,e-}(\xi,\eta)&=[\Lambda_i(\xi)-\Lambda_i(\xi-\eta)+\Lambda_i(\eta)]+[\Lambda_e(\eta)-\Lambda_i(\eta)]\gtrsim_{\varepsilon,C_b} 1,\\
\Phi^{i;i+,b-}(\xi,\eta)&=[\Lambda_i(\xi)-\Lambda_i(\xi-\eta)+\Lambda_i(\eta)]+[\Lambda_b(\eta)-\Lambda_i(\eta)]\gtrsim_{\varepsilon,C_b} 1,\\
\Phi^{e;i+,i-}(\xi,\eta)&=[\Lambda_i(\xi)-\Lambda_i(\xi-\eta)+\Lambda_i(\eta)]+[\Lambda_e(\xi)-\Lambda_i(\xi)]\gtrsim_{\varepsilon,C_b} 1,\\
\Phi^{e;i+,e-}(\xi,\eta)&=[\Lambda_i(\xi)-\Lambda_i(\xi-\eta)+\Lambda_i(\eta)]+[\Lambda_e(\xi)-\Lambda_i(\xi)]+[\Lambda_e(\eta)-\Lambda_i(\eta)]\gtrsim_{\varepsilon,C_b} 1,\\
\Phi^{e;i+,b-}(\xi,\eta)&=[\Lambda_i(\xi)-\Lambda_i(\xi-\eta)+\Lambda_i(\eta)]+[\Lambda_e(\xi)-\Lambda_i(\xi)]+[\Lambda_b(\eta)-\Lambda_i(\eta)]\gtrsim_{\varepsilon,C_b} 1,\\
\Phi^{e;e+,e-}(\xi,\eta)&=[\Lambda_e(\xi)-\Lambda_e(\xi-\eta)+\Lambda_e(\eta)]\gtrsim_{\varepsilon,C_b}2^{-\max(k_1,k_2,0)},\\
\Phi^{e;e+,b-}(\xi,\eta)&=[\Lambda_e(\xi)-\Lambda_e(\xi-\eta)+\Lambda_e(\eta)]+[\Lambda_b(\eta)-\Lambda_e(\eta)]\gtrsim_{\varepsilon,C_b}2^{-\max(k_1,k_2,0)},\\
\Phi^{b;i+,i-}(\xi,\eta)&=[\Lambda_i(\xi)-\Lambda_i(\xi-\eta)+\Lambda_i(\eta)]+[\Lambda_b(\xi)-\Lambda_i(\xi)]\gtrsim_{\varepsilon,C_b} 1,\\
\Phi^{b;i+,e-}(\xi,\eta)&=[\Lambda_i(\xi)-\Lambda_i(\xi-\eta)+\Lambda_i(\eta)]+[\Lambda_b(\xi)-\Lambda_i(\xi)]+[\Lambda_e(\eta)-\Lambda_i(\eta)]\gtrsim_{\varepsilon,C_b} 1,\\
\Phi^{b;i+,b-}(\xi,\eta)&=[\Lambda_i(\xi)-\Lambda_i(\xi-\eta)+\Lambda_i(\eta)]+[\Lambda_b(\xi)-\Lambda_i(\xi)]+[\Lambda_b(\eta)-\Lambda_i(\eta)]\gtrsim_{\varepsilon,C_b} 1,\\
\Phi^{b;e+,e-}(\xi,\eta)&=[\Lambda_e(\xi)-\Lambda_e(\xi-\eta)+\Lambda_e(\eta)]+[\Lambda_b(\xi)-\Lambda_e(\xi)]\gtrsim_{\varepsilon,C_b}2^{-\max(k_1,k_2,0)},\\
\Phi^{b;e+,b-}(\xi,\eta)&=[\Lambda_b(\xi)-\Lambda_b(\xi-\eta)+\Lambda_b(\eta)]+[\Lambda_b(\xi-\eta)-\Lambda_e(\xi-\eta)]\gtrsim_{\varepsilon,C_b}2^{-\max(k_1,k_2,0)},\\
\Phi^{b;b+,b-}(\xi,\eta)&=[\Lambda_b(\xi)-\Lambda_b(\xi-\eta)+\Lambda_b(\eta)]\gtrsim_{\varepsilon,C_b}2^{-\max(k_1,k_2,0)}.
\end{split}
\end{equation*}
The desired lower bound \eqref{nbc1} follows for all phases $\Phi^{\sigma;\mu,\nu}\in\mathcal{T}_{Sell}$.

We now consider the phases $\Phi^{\sigma;\mu,\nu}\in\in\mathcal{T}_{NR}$. Assume first $\sigma_1=\sigma_2=i$. Then, since $\lambda_i^\prime\gtrsim_{C_b,\varepsilon} 1$, we see that smallness of $\vert\Xi^{\mu,\nu}(\xi,\eta)\vert$ implies that $(\xi-\eta)\cdot\eta\geq 0$. But then $\vert\xi\vert\ge\max(\vert\xi-\eta\vert,\vert\eta\vert)$ and, using \eqref{SimpleBdLie},
\begin{equation*}
\min[\Phi^{e;i+,i+}(\xi,\eta),\Phi^{b;i+,i+}(\xi,\eta)]\ge\lambda_e(\vert\xi\vert)-2\lambda_i(\vert\xi\vert)\ge h_\varepsilon(\vert\xi\vert)-2\sqrt{(T+1)(\varepsilon+1)}|\xi|\gtrsim_{\varepsilon,C_b} 1,
\end{equation*}
and the desired conclusion $\mathcal{L}^{\sigma;i+,i+}_{k,k_1,k_2;\delta_1,\delta_2}=\emptyset$, $\sigma\in\{e,b\}$, follows.

Assume now that $\Phi^{\sigma;\mu,\nu}\in\in\{\Phi^{e;b+,b-}\}$. If $\max(k_1,k_2)\le -D/10$, then, using \eqref{mk3.1}, $\Phi^{e;b+,b-}(\xi,\eta)\ge 1$. On the other hand, if $\max(k_1,k_2)\ge -D/10$, we see from smallness of $\vert\Xi^{b+,b-}(\xi,\eta)\vert$ that $\vert \xi\vert\le 2^{-D}$. But in this case,
\begin{equation*}
\vert\lambda_b(\vert\xi-\eta\vert)-\lambda_b(\vert\eta\vert)\vert\lesssim_{\varepsilon,C_b} \vert\xi\vert\leq\lambda_e(0)/2,
\end{equation*}
hence $\Phi^{e;b+,b-}(\xi,\eta)\geq 1$. 

Finally, assume that $\Phi^{\sigma;\mu,\nu}\in\in\{\Phi^{b;i-,e+}\}$. By symmetry we may assume that
\begin{equation*}
\Phi^{\sigma;\mu,\nu}(\xi,\eta)=\Phi^{b;i-,e+}(\xi,\eta)=\lambda_b(|\xi|)+\lambda_i(|\xi-\eta|)-\lambda_e(|\eta|).
\end{equation*}
The condition $|\Xi^{\mu,\nu}(\xi,\eta)|\leq 2^{-D}$ and Lemma \ref{tech99} (i) show that $2^{k_2}\gtrsim_{C_b,\varepsilon}1$. The condition $|\Phi^{\sigma;\mu,\nu}(\xi,\eta)|\leq 2^{-D}$ then shows that $|\eta|\geq |\xi|$. Since
\begin{equation*}
\lambda'_i(r)\leq \frac{\sqrt{1+T}}{\sqrt{1+\varepsilon}}\qquad\text{ and }\qquad\lambda'_e(r)\geq \frac{(1-\sqrt\varepsilon)Tr}{\sqrt{\varepsilon}\sqrt{1+Tr^2}},
\end{equation*}
for any $r\in[0,\infty)$, see Lemma \ref{tech99} (ii) and (iii), the restriction $|\Xi^{\mu,\nu}(\xi,\eta)|\leq 2^{-D}$ shows that $|\eta|\leq\sqrt{\varepsilon/T}$. Therefore $|\xi|\leq \sqrt{\varepsilon/T}$ and $|\xi-\eta|\leq 2\sqrt{\varepsilon/T}$. Since $r_\ast\geq T^{-1/2}$, see Lemma \ref{tech99} (i), it follows that $|\xi-\eta|\leq r_\ast/2$. Therefore $\lambda'_i$ is decreasing on the interval $[0,|\xi-\eta|]$ and we estimate, recalling that $2^{-D}\geq \big|\Xi^{\mu,\nu}(\xi,\eta)\big|\geq\big|\lambda'_i(|\xi-\eta|)-\lambda'_e(|\eta|)\big|$,
\begin{equation*}
\begin{split}
\Phi^{\sigma;\mu,\nu}(\xi,\eta)&=\int_0^{|\xi-\eta|}\lambda'_i(s)\,ds+\int_0^{|\xi|}\lambda'_b(s)\,ds-\int_0^{|\eta|}\lambda'_e(s)\,ds\\
&\geq C_{C_b,\varepsilon}^{-1}+|\xi-\eta|\lambda'_i(|\xi-\eta|)-(|\eta|-|\xi|)\lambda'_e(|\eta|)\\
&\gtrsim_{C_b,\varepsilon}1.
\end{split}
\end{equation*}
This provides the contradiction.
\end{proof} 

We consider now the remaining 20 phases, and define three sets of phases
\begin{equation}\label{ResPhases}
\begin{split}
\mathcal{T}_{A}:=&\{\Phi^{i;i-,e+}, \Phi^{i;i-,b+}, \Phi^{i;e+,b-}, \Phi^{i;e-,b+}, \Phi^{e; i+,e+}, \Phi^{e;i+,b+}, \Phi^{e; i-,b+}, \Phi^{e;e-,b+},\\
&\,\,\Phi^{b;i+,e+}, \Phi^{b;i+,b+}, \Phi^{b;e+,e+}, \Phi^{b;e+,b+},\Phi^{b;e-,b+}\},\\
\mathcal{T}_{B}:=&\{\Phi^{e;i+,e+},\Phi^{e;i-,e+},\Phi^{b;i+,b+},\Phi^{b;i-,b+}\},\\
\mathcal{T}_{C}:=&\{\Phi^{i;i+,i+},\Phi^{i;i+,i-},\Phi^{i;i-,i-},\Phi^{i;e+,e-},\Phi^{i;e+,b-},\Phi^{i;e-,b+},\Phi^{i;b+,b-}\}.
\end{split}
\end{equation}
Notice that some phases, such as $\Phi^{e;i+,e+}$, belong to more than one set. The set $\mathcal{T}_A$ corresponds to phases having nondegenerate stationary points on spheres, while the sets $\mathcal{T}_B,\mathcal{T}_C$ consist of phases with degenerate behavior around $0$ (in $\eta$, $\xi-\eta$, or $\xi$). More precisely:

\begin{proposition}\label{PropABC}
Assume $k,k_1,k_2\in\mathbb{Z}$ and that there is a point $(\xi,\eta)\in\mathcal{L}_{k,k_1,k_2}$ satisfying 
\begin{equation*}
|\Xi^{\mu,\nu}(\xi,\eta)|\leq \delta_1=2^{-10D}2^{-4\max(0,k_1,k_2)},\quad\vert\Phi^{\sigma;\mu,\nu}(\xi,\eta)\vert\le \delta_2=2^{-10D}2^{-\max(0,k_1,k_2)}.
\end{equation*}
Then one of the three following possibilities holds:

{\bf Case A}: $-D/2\le k,k_1,k_2\le D/2$ and $\Phi^{\sigma;\mu,\nu}\in\in\mathcal{T}_A$.

{\bf Case B}: $\min(k_1,k_2)\le -D/3$, $k\ge -D/4$ and $\Phi^{\sigma;\mu,\nu}\in\in\mathcal{T}_B$.

{\bf Case C}: $k\le -D/4$ and $\Phi^{\sigma;\mu,\nu}\in\in\mathcal{T}_C$.
\end{proposition}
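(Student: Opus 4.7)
The plan is to proceed by a case analysis organized around Lemma \ref{StronglyEll}, which has already been established, together with Lemma \ref{tech99} (monotonicity/convexity of $\lambda_i,\lambda_e,\lambda_b$). The first step is the reduction: the hypotheses $\delta_1\leq 2^{-10D}2^{-4\max(0,k_1,k_2)}$ and $\delta_2\leq 2^{-10D}2^{-\max(0,k_1,k_2)}$ are much stronger than the thresholds required in Lemma \ref{StronglyEll}, so $\mathcal{L}^{\sigma;\mu,\nu}_{k,k_1,k_2;\delta_1,\delta_2}=\emptyset$ whenever $\Phi^{\sigma;\mu,\nu}\in\in\mathcal{T}_{Sell}\cup\mathcal{T}_{NR}$. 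A simple counting ($\sigma\in\{i,e,b\}$ times $\binom{6}{2}+6=21$ unordered pairs from $\mathcal{I}_0$ gives $63$ phases, and $39+4+20=63$) shows that the complement of $\mathcal{T}_{Sell}\cup\mathcal{T}_{NR}$ is exactly $\mathcal{T}_A\cup\mathcal{T}_B\cup\mathcal{T}_C$. Hence we may assume $\Phi^{\sigma;\mu,\nu}\in\in\mathcal{T}_A\cup\mathcal{T}_B\cup\mathcal{T}_C$, and it remains to check that in each such case the frequencies $k,k_1,k_2$ satisfy at least one of (A), (B), (C).

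For the geometric step I would exploit the representation
\[
\Xi^{\mu,\nu}(\xi,\eta)=-\iota_1\lambda'_{\sigma_1}(|\xi-\eta|)\,\frac{\eta-\xi}{|\xi-\eta|}-\iota_2\lambda'_{\sigma_2}(|\eta|)\,\frac{\eta}{|\eta|},
\]
so that $|\Xi^{\mu,\nu}|\lesssim\delta_1$ forces $(\xi-\eta)/|\xi-\eta|$ and $\eta/|\eta|$ to be nearly parallel or antiparallel, with $\lambda'_{\sigma_1}(|\xi-\eta|)\approx\lambda'_{\sigma_2}(|\eta|)$. Since Lemma \ref{tech99} gives $\lambda'_i(r)\approx_{C_b,\varepsilon}1$ for every $r$, while $\lambda'_e,\lambda'_b$ increase strictly from $0$ to finite asymptotes, this matching together with the smallness of $\Phi^{\sigma;\mu,\nu}$ will pin down the allowed scales. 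Concretely, for $\Phi\in\mathcal{T}_A$ with $\sigma_1,\sigma_2\in\{e,b\}$, the matching of derivatives and the Pythagorean-type identity from $\Phi=0$ force $k_1,k_2\in[-D/2,D/2]$, and then $\Phi\approx 0$ gives $k\in[-D/2,D/2]$; for $\Phi\in\mathcal{T}_A$ with exactly one of $\sigma_1,\sigma_2$ equal to $i$, Lemma \ref{desc2} shows that $|\eta|$ must sit near $R_{\sigma_2}\approx 1$ and $|\xi-\eta|$ near $r^{\mu,\nu}(|\xi|)\approx 1$. For $\Phi\in\mathcal{T}_C$ (all with $\sigma=i$), the wave-like behaviour of $\lambda_i$ near $\infty$ and the presence of one or two "massive" $e$ or $b$ terms on the right-hand side of $\Phi=0$ force near-cancellation on the right, hence $\lambda_i(|\xi|)$ small, i.e., $k\leq -D/4$.

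The subtlest cases are the four phases $\Phi^{e;i+,e+},\Phi^{e;i-,e+},\Phi^{b;i+,b+},\Phi^{b;i-,b+}$, which lie in $\mathcal{T}_A\cap\mathcal{T}_B$. For these I would argue as follows: if $\min(k_1,k_2)\geq -D/3$, the matching argument of the previous paragraph forces $k,k_1,k_2\in[-D/2,D/2]$ (Case A). Otherwise, only $k_1\leq -D/3$ is compatible with $|\Xi|\leq\delta_1$ (since $\lambda'_{\sigma_2}(|\eta|)$ for $\sigma_2\in\{e,b\}$ forces $|\eta|$ to be of order one via matching with $\lambda'_i\approx 1$); then Lemma \ref{desc2} yields $||\xi|-R_{\sigma_2}|\lesssim 2^{k_1}+\delta_1$ and $||\eta|-R_{\sigma_2}|\lesssim 2^{k_1}+\delta_1$, giving $k,k_2\in[-D/100,D/100]$ and in particular $k\geq -D/4$ (Case B).

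The main obstacle is simply the combinatorial size of the case analysis: twenty phases, each requiring a separate verification, though all with the same analytical input (monotonicity/convexity from Lemma \ref{tech99}, the identities of Section \ref{Algebraic}, and for the degenerate situations the more refined Lemmas \ref{LemP} and \ref{desc2}). No new analytical tool is needed beyond what is already proved in the appendix; the content of the proposition is the bookkeeping statement that these three cases are the \emph{only} ones compatible with simultaneous smallness of $\Phi^{\sigma;\mu,\nu}$ and $\Xi^{\mu,\nu}$ on a frequency-localized region.
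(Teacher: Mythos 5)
Your overall strategy mirrors the paper's: first invoke Lemma~\ref{StronglyEll} to eliminate the $43$ phases in $\mathcal{T}_{Sell}\cup\mathcal{T}_{NR}$, verify by counting that what remains is exactly $\mathcal{T}_A\cup\mathcal{T}_B\cup\mathcal{T}_C$, and then argue phase by phase with the monotonicity/convexity facts from Lemma~\ref{tech99} that the frequency triple must fall into one of the three admissible regimes. This reduction and the case structure are correct, and the counting $39+4+20=63$ (with $20=|\mathcal{T}_A\cup\mathcal{T}_B\cup\mathcal{T}_C|$ after accounting for the overlaps) checks out.

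There is, however, a genuine gap in your treatment of the $\mathcal{T}_B$ phases. You state that all four of $\Phi^{e;i+,e+},\Phi^{e;i-,e+},\Phi^{b;i+,b+},\Phi^{b;i-,b+}$ lie in $\mathcal{T}_A\cap\mathcal{T}_B$; this is not so. Comparing with the lists in \eqref{ResPhases}, $\Phi^{e;i+,e+}$ and $\Phi^{b;i+,b+}$ are in both sets, but $\Phi^{e;i-,e+}$ and $\Phi^{b;i-,b+}$ lie only in $\mathcal{T}_B$. Consequently your proposed dichotomy — ``if $\min(k_1,k_2)\geq -D/3$ the matching forces $k,k_1,k_2\in[-D/2,D/2]$ (Case A)'' — actually produces a false conclusion for the latter two phases: Case A requires $\Phi^{\sigma;\mu,\nu}\in\in\mathcal{T}_A$, which fails here. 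The correct statement for $\Phi^{e;i-,e+},\Phi^{b;i-,b+}$ with $\min(k_1,k_2)\geq -D/3$ is that the almost resonant set is \emph{empty}, and this is not a bookkeeping observation: it requires a quantitative argument comparing $\int_0^{|\xi-\eta|}\lambda_i'$ with $\int_{|\xi|}^{|\eta|}\lambda_\sigma'$ (using the facts that $\lambda_i'$ is decreasing on $[0,r_\ast]$, $\lambda_\sigma'(0)=0$, and $\lambda_\sigma'$ is increasing), exactly as in the proof that $\Phi^{b;i-,e+}\in\mathcal{T}_{NR}$. Without this step your plan would certify a case that does not hold.

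A secondary issue is your reliance on Lemma~\ref{desc2} for ``$\mathcal{T}_A$ phases with exactly one of $\sigma_1,\sigma_2$ equal to $i$.'' Lemma~\ref{desc2} is formulated only for $(\mu,\nu)\in\{(i+,e+),(i-,e+),(i+,b+),(i-,b+)\}$ with the constraints $k_1\leq -D/3$, $k\geq -D/4$, $|k-k_2|\leq 10$; it says nothing about, e.g., $\Phi^{i;i-,e+}$ or $\Phi^{e;i+,b+}$, which are in $\mathcal{T}_A$ but need a separate argument (for these the constraint $-D/2\leq k,k_1,k_2\leq D/2$ is forced directly from the monotonicity of $\lambda_i',\lambda_e',\lambda_b'$ and the inequalities $\lambda_b-\lambda_e\gtrsim r$, $\lambda_e-\lambda_i\gtrsim 1+r$). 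As written, your plan assigns the wrong lemma to these phases.

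The rest of the sketch (in particular the heuristics for $\mathcal{T}_C$ and for the $\sigma_1,\sigma_2\in\{e,b\}$ phases in $\mathcal{T}_A$) is consistent with what the full case analysis requires, but it remains at the level of a plan: none of the roughly seven sub-cases are actually carried out, and the arithmetic showing why $|\Phi|$ or $|\Xi|$ is bounded below in the excluded regimes is the actual content of the proposition.
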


\begin{proof}[Proof of Proposition \ref{PropABC}] We divide the proof in several steps:

{\bf{Step 1.}} Assume that
\begin{equation}\label{yut1}
\Phi^{\sigma;\mu,\nu}\in\in\{\Phi^{i;i+,i+},\Phi^{i;i+,i-},\Phi^{i;i-,i-}\}.
\end{equation}
These phases are only in the set $\mathcal{T}_C$, and we have to prove that if $\mathcal{L}^{\sigma;\mu,\nu}_{k,k_1,k_2;\delta_1,\delta_2}\neq\emptyset$ then
\begin{equation}\label{yut2}
k\leq -D/2.
\end{equation}

Assume for contradiction that $k\geq -D/2$. Using Lemma \ref{tech99} (iii) it is easy to see that if $\Phi^{\sigma;\mu,\nu}\in\in\{\Phi^{i;i-,i-}\}$ then $|\Phi^{\sigma;\mu,\nu}(\xi,\eta)|\gtrsim_{\varepsilon,C_b}\max(|\xi|,|\eta|,|\xi-\eta|)\gtrsim_{\varepsilon,C_b}2^{-D/2}$, which is not possible. On the other hand, using Lemma \ref{tech99} (iii), for any $r,s\in[0,\infty)$
\begin{equation*}
\lambda_i(r)+\lambda_i(s)-\lambda_i(r+s)=r(q_i(r)-q_i(r+s))+s(q_i(s)-q_i(r+s))\gtrsim_{\varepsilon,C_b}\frac{\min(r,s)(r+s)^2}{(1+(r+s)^2)(1+\min(r,s)^2)}.
\end{equation*}
Therefore if $\Phi^{\sigma;\mu,\nu}\in\in\{\Phi^{i;i+,i+},\Phi^{i;i+,i-}\}$, $k\geq -D/2$, and $\mathcal{L}^{\sigma;\mu,\nu}_{k,k_1,k_2;\delta_1,\delta_2}\neq\emptyset$, then $\min(k_1,k_2)\leq -5D$. On the other hand, if $\min(k_1,k_2)\leq -5D$ and $\max(k_1,k_2)\geq -D/2-10$, then we use the bound
\begin{equation*}
\lambda'_i(r)-\lambda'_i(s)\geq q_i(0)-C_{C_b,\varepsilon}r-q_i(s)\geq 2^{-2D},
\end{equation*}
whenever $0\leq r\leq 2^{-4D}$ and $s\geq 2^{-D}$, which is a consequence of Lemma \ref{tech99} (iii). Therefore, in this case $|\Xi^{\mu,\nu}(\xi,\eta)|\geq 2^{-2D}$ for all $(\xi,\eta)\in\mathcal{L}_{k,k_1,k_2}$, which provides the contradiction.

\medskip

{\bf{Step 2.}} Assume that
\begin{equation}\label{yut3}
\Phi^{\sigma;\mu,\nu}\in\in\{\Phi^{i;e+,e-},\Phi^{i;b+,b-}\}.
\end{equation}
These phases are only in the set $\mathcal{T}_C$, and it suffices to prove that if $\mathcal{L}^{\sigma;\mu,\nu}_{k,k_1,k_2;\delta_1,\delta_2}\neq\emptyset$ then
\begin{equation}\label{yut4}
k\leq -D/2.
\end{equation}

Assume for contradiction that $k\geq -D/2$. Since $\lambda'_\sigma(0)=0$, $\sigma\in\{e,b\}$, the restriction $|\Xi^{\mu,\nu}(\xi,\eta)|\leq \delta_1$ shows that $\min(k_1,k_2)\geq -D$. On the other hand, if $\min(k,k_1,k_2)\geq -D$ then, for any $(\xi,\eta)\in\mathcal{L}_{k,k_1,k_2}$ we have
\begin{equation*}
|\Xi^{\mu,\nu}(\xi,\eta)|\gtrsim_{C_b,\varepsilon}\big|\lambda'_\sigma(|\eta-\xi|)-\lambda'_\sigma(|\eta|)\big|+\max[\lambda'_\sigma(|\eta-\xi|),\lambda'_\sigma(|\eta|)]\Big|\frac{\eta-\xi}{|\eta-\xi|}-\frac{\eta}{|\eta|}\Big|\geq 2^D\delta_1,
\end{equation*}
which provides a contradiction.

\medskip

{\bf{Step 3.}} Assume that
\begin{equation}\label{yut6}
\Phi^{\sigma;\mu,\nu}\in\in\{\Phi^{i;e+,b-},\Phi^{i;e-,b+}\}.
\end{equation}
These phases are in the sets $\mathcal{T}_C$ and $\mathcal{T}_A$, and we have to prove that if $\mathcal{L}^{\sigma;\mu,\nu}_{k,k_1,k_2;\delta_1,\delta_2}\neq\emptyset$ then
\begin{equation*}
\text{ either }\qquad k\leq -D/4\qquad\text{ or }\qquad -D/2\leq k,k_1,k_2\leq D/2.
\end{equation*}
This is equivalent to proving that
\begin{equation}\label{yut7}
\text{ if }\quad k\geq -D/4\quad\text{ and }\quad\mathcal{L}^{\sigma;\mu,\nu}_{k,k_1,k_2;\delta_1,\delta_2}\neq\emptyset\quad\text{ then }\quad -D/2\leq k,k_1,k_2\leq D/2.
\end{equation}

Since
\begin{equation}\label{yut9}
\lim_{r\to\infty}\lambda'_i(r)=1,\qquad\lim_{r\to\infty}\lambda'_e(r)=\sqrt{T/\varepsilon},\qquad\lim_{r\to\infty}\lambda'_b(r)=\sqrt{C_b/\varepsilon},
\end{equation}
it is easy to see that the condition $\mathcal{L}^{\sigma;\mu,\nu}_{k,k_1,k_2;\delta_1,\delta_2}\neq\emptyset$ implies that $\max(k_1,k_2,k_3)\leq D/4$. Since $k\geq -D/4$ it follows that $\max(k_1,k_2)\geq -D/4-10$. Recall that $\lambda'_\sigma(0)=0$ and $\lambda''_\sigma(r)\approx_{C_b,\varepsilon}(1+r^2)^{-3/2}$, $\sigma\in\{e,b\}$ (see Lemma \ref{tech99} (i)). Since $|\Xi^{\mu,\nu}(\xi,\eta)|\leq \delta_1$ for some $(\xi,\eta)\in\mathcal{L}_{k,k_1,k_2}$ it follows that $\min(k_1,k_2)\geq -D/2$ as desired.

\medskip

{\bf{Step 4.}} Assume that
\begin{equation}\label{yuk10}
\Phi^{\sigma;\mu,\nu}\in\in\{\Phi^{e;i-,e+},\Phi^{b;i-,b+}\}.
\end{equation}
These phases are only in the set $\mathcal{T}_B$, and we have to prove that if $\mathcal{L}^{\sigma;\mu,\nu}_{k,k_1,k_2;\delta_1,\delta_2}\neq\emptyset$ then
\begin{equation}\label{yuk11}
k\geq -D/4\,\,\text{ and }\,\,\min(k_1,k_2)\leq-D/3.
\end{equation}

It is easy to see that $\max(k_1,k_2,k)\geq -D/10$; otherwise $|\Xi^{\mu,\nu}(\xi,\eta)|\geq 2^{-2D}$ for all $(\xi,\eta)\in\mathcal{L}_{k,k_1,k_2}$, in view of the fact that $\lambda'_e(0)=\lambda'_b(0)=0$ and $\lambda'_i(0)\approx_{C_b,\varepsilon}1$. Therefore it remains to prove that if $\mathcal{L}^{\sigma;\mu,\nu}_{k,k_1,k_2;\delta_1,\delta_2}\neq\emptyset$ then
\begin{equation}\label{yuk12}
\min(k_1,k_2)\leq-D/3.
\end{equation}

Assume, for contradiction, that \eqref{yuk12} fails. We may assume, without loss of generality, that
\begin{equation*}
\Phi^{\sigma;\mu,\nu}(\xi,\eta)=\Phi^{\sigma;i-,\sigma+}(\xi,\eta)=\lambda_\sigma(|\xi|)+\lambda_i(|\xi-\eta|)-\lambda_\sigma(|\eta|),
\end{equation*}
for $\sigma\in\{e,b\}$. We argue as in the proof of Lemma \ref{StronglyEll}, $\Phi^{\sigma;\mu,\nu}=\Phi^{b;i-,e+}\in\mathcal{T}_{NR}$. The conditions $|\Phi^{\sigma;\mu,\nu}(\xi,\eta)|\leq 2^{-10D}$ and $k_1\geq -D/3$ show that $|\eta|\geq |\xi|$. Since
\begin{equation*}
\lambda'_i(r)\leq \frac{\sqrt{1+T}}{\sqrt{1+\varepsilon}}\qquad\text{ and }\qquad\lambda'_b(r)\geq\lambda'_e(r)\geq \frac{(1-\sqrt\varepsilon)Tr}{\sqrt{\varepsilon}\sqrt{1+Tr^2}},
\end{equation*}
for any $r\in[0,\infty)$, see Lemma \ref{tech99} (ii) and (iii), the restriction $|\Xi^{\mu,\nu}(\xi,\eta)|\leq 2^{-D}$ shows that $|\eta|\leq\sqrt{\varepsilon/T}$. Therefore $|\xi|\leq \sqrt{\varepsilon/T}$ and $|\xi-\eta|\leq 2\sqrt{\varepsilon/T}$. Since $r_\ast\geq T^{-1/2}$, see Lemma \ref{tech99} (i), it follows that $|\xi-\eta|\leq r_\ast/2$. Therefore $\lambda'_i$ is decreasing on the interval $[0,|\xi-\eta|]$ and we estimate, recalling that $2^{-10D}\geq \big|\Xi^{\mu,\nu}(\xi,\eta)\big|\geq\big|\lambda'_i(|\xi-\eta|)-\lambda'_\sigma(|\eta|)\big|$ and $|\xi-\eta|\geq 2^{-D/2}$, 
\begin{equation*}
\Phi^{\sigma;\mu,\nu}(\xi,\eta)=\int_0^{|\xi-\eta|}\lambda'_i(s)\,ds-\int_{|\xi|}^{|\eta|}\lambda'_\sigma(s)\,ds\geq 2^{-2D}+|\xi-\eta|\lambda'_i(|\xi-\eta|)-(|\eta|-|\xi|)\lambda'_\sigma(|\eta|)\geq 2^{-4D}.
\end{equation*}
This provides the contradiction.

\medskip 

{\bf{Step 5.}} Assume that
\begin{equation}\label{yut10}
\Phi^{\sigma;\mu,\nu}\in\in\{\Phi^{e;i+,e+},\Phi^{b;i+,b+}\}.
\end{equation}
These phases are in the sets $\mathcal{T}_B$ and $\mathcal{T}_A$, and we have to prove that if $\mathcal{L}^{\sigma;\mu,\nu}_{k,k_1,k_2;\delta_1,\delta_2}\neq\emptyset$ then
\begin{equation}\label{yut11}
\text{ either }\qquad k\geq -D/4,\,\min(k_1,k_2)\leq-D/3\qquad\text{ or }\qquad -D/2\leq k,k_1,k_2\leq D/2.
\end{equation}

It is easy to see that $\max(k_1,k_2,k)\geq -D/10$; otherwise $|\Xi^{\mu,\nu}(\xi,\eta)|\geq 2^{-2D}$ for all $(\xi,\eta)\in\mathcal{L}_{k,k_1,k_2}$, in view of the fact that $\lambda'_e(0)=\lambda'_b(0)=0$ and $\lambda'_i(0)\approx_{C_b,\varepsilon}1$. Therefore, for \eqref{yut11} it suffices to prove that
\begin{equation}\label{yut12}
\text{ if }\qquad\min(k_1,k_2)\geq-D/3\qquad\text{ then }\qquad -D/2\leq k,k_1,k_2\leq D/2.
\end{equation}

In view of \eqref{yut9}, it is clear that $\min(k_1,k_2)\leq D/10$; otherwise $|\Xi^{\mu,\nu}(\xi,\eta)|\geq 2^{-2D}$ for all $(\xi,\eta)\in\mathcal{L}_{k,k_1,k_2}$. On the other hand, if $k\geq D/4$ then $\max(k_1,k_2)\geq D/4-10$, and one can use \eqref{yut9} again to see easily that this in contradiction with the assumption $\mathcal{L}^{\sigma;\mu,\nu}_{k,k_1,k_2;\delta_1,\delta_2}\neq\emptyset$. Therefore $\max(k,k_1,k_2)\leq D/2$, as desired.

Finally, for \eqref{yut12} it remains to prove that $k\geq -D/2$. Assuming, for contradiction, that $k\leq -D/2$ and recalling that $\max(k_1,k_2,k)\geq -D/10$, it follows that $\max(k_1,k_2)\geq -D/10$, $|k_1-k_2|\leq 10$. Therefore $|\Phi^{\sigma;\mu,\nu}(\xi,\eta)|\gtrsim_{C_b,\varepsilon}2^{-2D}$, which provides a contradiction.

\medskip 

{\bf{Step 6.}} Assume that
\begin{equation}\label{yut20}
\Phi^{\sigma;\mu,\nu}\in\in\{\Phi^{e;e-,b+},\Phi^{b;e+,e+},\Phi^{b;e+,b+},\Phi^{b;e-,b+}\}.
\end{equation}
These phases are only in the set $\mathcal{T}_A$, and we have to prove that if $\mathcal{L}^{\sigma;\mu,\nu}_{k,k_1,k_2;\delta_1,\delta_2}\neq\emptyset$ then
\begin{equation}\label{yut21}
-D/2\leq k,k_1,k_2
\end{equation}
and
\begin{equation}\label{yut22}
k,k_1,k_2\leq D/2.
\end{equation}

We prove first \eqref{yut21}. We notice that $\max(k_1,k_2)\geq -D/10$; otherwise $|\Phi^{\sigma;\mu,\nu}(\xi,\eta)|\gtrsim_{C_b,\varepsilon}1$ for any $(\xi,\eta)\in\mathcal{L}_{k,k_1,k_2}$, since $\lambda_e(0)=\lambda_b(0)=\sqrt{1+\varepsilon^{-1}}$. This implies that $\min(k_1,k_2)\geq -D/4$; otherwise $|\Xi^{\mu,\nu}(\xi,\eta)|\gtrsim_{C_b,\varepsilon}2^{-D}$ for any $(\xi,\eta)\in\mathcal{L}_{k,k_1,k_2}$, since $\lambda'_e(r)\approx_{C_b,\varepsilon}\min(r,1)$ and $\lambda'_b(r)\approx_{C_b,\varepsilon}\min(r,1)$. 

To complete the proof of \eqref{yut21}, assume, for contradiction, that $k\leq -D/2$, therefore $\max(k_1,k_2)\geq -D/10$, $|k_1-k_2|\leq 10$. If $\Phi^{\sigma;\mu,\nu}\in\in\{\Phi^{b;e+,e+},\Phi^{b;e+,b+}\}$ then $|\Phi^{\sigma;\mu,\nu}(\xi,\eta)|\gtrsim_{C_b,\varepsilon}1$ for any $(\xi,\eta)\in\mathcal{L}_{k,k_1,k_2}$, in contradiction with the assumption $\mathcal{L}^{\sigma;\mu,\nu}_{k,k_1,k_2;\delta_1,\delta_2}\neq\emptyset$. On the other hand, if $\Phi^{\sigma;\mu,\nu}\in\in\{\Phi^{e;e-,b+},\Phi^{b;e-,b+}\}$ then $|\Xi^{\mu,\nu}(\xi,\eta)|\gtrsim_{C_b,\varepsilon}2^{-2D}$ for any $(\xi,\eta)\in\mathcal{L}_{k,k_1,k_2}$, which is again in contradiction with the assumption $\mathcal{L}^{\sigma;\mu,\nu}_{k,k_1,k_2;\delta_1,\delta_2}\neq\emptyset$. This last bound is a consequence of the estimate
\begin{equation}\label{yut23}
\lambda'_b(r)-\lambda'_e(r)\gtrsim_{C_b,\varepsilon}\min(1,r),\qquad\text{ for any }r\geq 0,
\end{equation}
which follows from Lemma \ref{tech99} (ii). This completes the proof of \eqref{yut21}.

We prove now \eqref{yut22}. We notice first that $\min(k,k_1,k_2)\leq D/10$; otherwise either $|\Xi^{\mu,\nu}(\xi,\eta)|\gtrsim_{C_b,\varepsilon}1$ or $|\Phi^{\sigma;\mu,\nu}(\xi,\eta)|\gtrsim_{C_b,\varepsilon}1$ for any $(\xi,\eta)\in\mathcal{L}_{k,k_1,k_2}$, using \eqref{yut9}. Assuming, for contradiction, that \eqref{yut22} fails, we need to consider two cases:
\begin{equation}\label{yut25}
\begin{split}
\text{ either }\qquad &k\leq D/10,\quad\max(k_1,k_2)\geq D/2,\quad |k_1-k_2|\leq 10,\\
\text{ or }\qquad &\min(k_1,k_2)\leq D/10,\quad\max(k,k_1,k_2)\geq D/2,\quad |k-\max(k_1,k_2)|\leq 10.
\end{split}
\end{equation}
In the first case, we use \eqref{yut9} to see that $|\Xi^{\mu,\nu}(\xi,\eta)|\gtrsim_{C_b,\varepsilon}1$ for any $(\xi,\eta)\in\mathcal{L}_{k,k_1,k_2}$ if $\Phi^{\sigma;\mu,\nu}\in\in\{\Phi^{e;e-,b+},\Phi^{b;e+,b+},\Phi^{b;e-,b+}\}$. We also notice that $|\Xi^{\mu,\nu}(\xi,\eta)|\gtrsim_{C_b,\varepsilon}1$ for any $(\xi,\eta)\in\mathcal{L}_{k,k_1,k_2}$ if $\Phi^{\sigma;\mu,\nu}\in\in\{\Phi^{b;e+,e+}\}$, which completes the contradiction in this case.

Assume now that the inequalities in the second line of \eqref{yut25} hold. By symmetry we may assume that $k_1=\min(k_1,k_2)$. In view of \eqref{yut9} it is clear that $|\Xi^{\mu,\nu}(\xi,\eta)|\gtrsim_{C_b,\varepsilon}1$ if $(\xi,\eta)\in\mathcal{L}_{k,k_1,k_2}$ and $\Phi^{\sigma;\mu,\nu}\in\{\Phi^{e;e-,b+},\Phi^{b;e+,b+},\Phi^{b;e-,b+}\}$. Also, using \eqref{nbc2} it is clear that $|\Phi^{\sigma;\mu,\nu}(\xi,\eta)|\gtrsim_{C_b,\varepsilon}1$ if $(\xi,\eta)\in\mathcal{L}_{k,k_1,k_2}$ and $\Phi^{\sigma;\mu,\nu}\in\{\Phi^{e;,b+,e-},\Phi^{b;e+,e+},\Phi^{b;b+,e+},\Phi^{b;b+,e-}\}$. This completes the contradiction in this case as well, and the desired bound \eqref{yut22} follows.

\medskip

{\bf{Step 7.}} Assume that
\begin{equation}\label{yut30}
\Phi^{\sigma;\mu,\nu}\in\in\{\Phi^{i;i-,e+}, \Phi^{i;i-,b+}, \Phi^{e;i+,b+}, \Phi^{e; i-,b+}, \Phi^{b;i+,e+}\}.
\end{equation}
These phases are only in the set $\mathcal{T}_A$, and we have to prove that if $\mathcal{L}^{\sigma;\mu,\nu}_{k,k_1,k_2;\delta_1,\delta_2}\neq\emptyset$ then
\begin{equation}\label{yut31}
-D/2\leq k,k_1,k_2
\end{equation}
and
\begin{equation}\label{yut32}
k,k_1,k_2\leq D/2.
\end{equation}

To prove \eqref{yut32}, assume, for contradiction, that $\max(k_1,k_2)\geq D/4$. By symmetry, we may assume also $k_1\leq k_2$. Using \eqref{yut9} it is easy to see that $|\Xi^{\mu,\nu}(\xi,\eta)|\gtrsim_{C_b,\varepsilon}1$ if $(\xi,\eta)\in\mathcal{L}_{k,k_1,k_2}$ and $\Phi^{\sigma;\mu,\nu}\in\{\Phi^{i;i-,e+}, \Phi^{i;i-,b+}, \Phi^{e;i+,b+}, \Phi^{e; i-,b+}, \Phi^{b;i+,e+}\}$. On the other hand, if 
\begin{equation*}
\Phi^{\sigma;\mu,\nu}\in\{\Phi^{i;,e+,i-}, \Phi^{i;b+,i-}, \Phi^{e;b+,i+}, \Phi^{e;b+,i-}, \Phi^{b;e+,i+}\}
\end{equation*}
then, using again \eqref{yut9} and the smallness of $|\Xi^{\mu,\nu}(\xi,\eta)|$, we necessarily have $k_1\leq D/10$, $|k-k_2|\leq 10$. In this case, however, $|\Phi^{\sigma;\mu,\nu}(\xi,\eta)|\gtrsim_{C_b,\varepsilon}1$, as a consequence of \eqref{yut9}. This completes the proof of the contradiction.

We prove now \eqref{yut31}. We notice that $\max(k_1,k_2)\geq -D/10$; otherwise $|\Xi^{\mu,\nu}(\xi,\eta)|\gtrsim_{C_b,\varepsilon}1$ for any $(\xi,\eta)\in\mathcal{L}_{k,k_1,k_2}$, since $\lambda'_e(0)=\lambda'_b(0)=0,\,\lambda'_i(0)\approx_{C_b,\varepsilon}1$. Assume, for contradiction, that \eqref{yut31} fails. We may assume by symmetry that $k_1\leq k_2$ and need to consider two cases:
\begin{equation}\label{yut35}
\begin{split}
\text{ either }\qquad &k_1\leq -D/2,\quad k_2\geq -D/10,\quad |k-k_2|\leq 10,\\
\text{ or }\qquad &k\leq -D/2,\quad k_2\geq -D/10,\quad |k_1-k_2|\leq 10.
\end{split}
\end{equation}

Assume first that the inequalities in the first line of \eqref{yut35} hold. Since $\lambda'_i(r)\approx_{C_b,\varepsilon}1$ and $\lambda'_b(0)=\lambda'_e(0)=0$, it is easy to see that $|\Xi^{\mu,\nu}(\xi,\eta)|\gtrsim_{C_b,\varepsilon}2^{-2D}$ if $(\xi,\eta)\in\mathcal{L}_{k,k_1,k_2}$ and 
\begin{equation*}
\Phi^{\sigma;\mu,\nu}\in\{\Phi^{i;e+,i-}, \Phi^{i;b+,i-}, \Phi^{e;b+,i+}, \Phi^{e; b+,i-}, \Phi^{b;e+,i+},\}.
\end{equation*}
On the other hand, using \eqref{nbc2}, $|\Phi^{\sigma;\mu,\nu}(\xi,\eta)|\gtrsim_{C_b,\varepsilon}2^{-2D}$ if $(\xi,\eta)\in\mathcal{L}_{k,k_1,k_2}$ and 
\begin{equation*}
\Phi^{\sigma;\mu,\nu}\in\{\Phi^{i;i-,e+}, \Phi^{i;i-,b+}, \Phi^{e;i+,b+}, \Phi^{e; i-,b+}, \Phi^{b;i+,e+},\}.
\end{equation*}
The desired contradiction follows in this case.

Finally, assume that the inequalities in the second line of \eqref{yut35} hold. Using \eqref{nbc2} it is easy to see that $|\Phi^{\sigma;\mu,\nu}(\xi,\eta)|\gtrsim_{C_b,\varepsilon}2^{-2D}$ if $(\xi,\eta)\in\mathcal{L}_{k,k_1,k_2}$ and $\Phi^{\sigma;\mu,\nu}\in\in\{\Phi^{i;i-,e+}, \Phi^{i;i-,b+}, \Phi^{e;i+,b+}, \Phi^{b;i+,e+}\}$. On the other hand, if $\Phi^{\sigma;\mu,\nu}\in\in\{\Phi^{e; i-,b+}\}$, then the contradiction follows by the same argument as in Step 4. This completes the proof of the proposition.
\end{proof}

\section{The multipliers $m_{e;\mu,\nu}$, $m_{i;\mu,\nu}$, and $m_{b;\mu,\nu}$} \label{multiex}

For the sake of completeness, in this section we compute explicitly the multipliers $m_{e;\mu,\nu}$, $m_{i;\mu,\nu}$, and $m_{b,\alpha;\mu,\nu}$. The precise  formulas are not used at any other place in the paper; the information we use about these multipliers is summarized in Lemma \ref{tech1.3}.

Let
\begin{equation}\label{pla21}
\begin{split}
&U_{e+}:=U_e,\quad U_{e-}:=\overline{U_e},\quad U_{i+}:=U_i,\quad U_{i-}:=\overline{U_i},\quad [U_{b+}]_\alpha=[U_{b}]_\alpha,\quad [U_{b-}]_\alpha=\overline{[U_b]_\alpha}\\
&\widetilde{U_\mu}:=(1+R^2)^{-1/2}U_{\mu},\quad \mu\in\{e+,e-,i+,i-\}.
\end{split}
\end{equation}
It follows from \eqref{pla20} that
\begin{equation*}
\begin{split}
\varepsilon nR_\al h=&-i\frac{|\nabla|}{\Lambda_e}\widetilde{U_{e+}}\cdot R_\al\widetilde{U_{e+}}+i\Big(\frac{|\nabla|}{\Lambda_e}\widetilde{U_{e+}}\cdot R_\al\widetilde{U_{e-}}-R_\al\widetilde{U_{e+}}\cdot \frac{|\nabla|}{\Lambda_e}\widetilde{U_{e-}}\Big)+i\frac{|\nabla|}{\Lambda_e}\widetilde{U_{e-}}\cdot R_\al\widetilde{U_{e-}}\\
&-i\frac{|\nabla|R}{\Lambda_i}\widetilde{U_{i+}}\cdot R_\al R\widetilde{U_{i+}}+i\Big(\frac{|\nabla|R}{\Lambda_i}\widetilde{U_{i+}}\cdot R_\al R\widetilde{U_{i-}}-R_\al R\widetilde{U_{i+}}\cdot \frac{|\nabla|R}{\Lambda_i}\widetilde{U_{i-}}\Big)+i\frac{|\nabla|R}{\Lambda_i}\widetilde{U_{i-}}\cdot R_\al R\widetilde{U_{i-}}\\
&+i\Big(\frac{|\nabla|}{\Lambda_e}\widetilde{U_{e+}}\cdot R_\al R\widetilde{U_{i+}}+R_\al\widetilde{U_{e+}}\cdot\frac{|\nabla|R}{\Lambda_i}\widetilde{U_{i+}}\Big)+i\Big(-\frac{|\nabla|}{\Lambda_e}\widetilde{U_{e+}}\cdot R_\al R\widetilde{U_{i-}}+R_\al\widetilde{U_{e+}}\cdot\frac{|\nabla|R}{\Lambda_i}\widetilde{U_{i-}}\Big)\\
&+i\Big(\frac{|\nabla|}{\Lambda_e}\widetilde{U_{e-}}\cdot R_\al R\widetilde{U_{i+}}-R_\al\widetilde{U_{e-}}\cdot\frac{|\nabla|R}{\Lambda_i}\widetilde{U_{i+}}\Big)-i\Big(\frac{|\nabla|}{\Lambda_e}\widetilde{U_{e-}}\cdot R_\al R\widetilde{U_{i-}}+R_\al\widetilde{U_{e-}}\cdot\frac{|\nabla|R}{\Lambda_i}\widetilde{U_{i-}}\Big),
\end{split}
\end{equation*}
and
\begin{equation*}
\begin{split}
\varepsilon R_\al hR_\al h=&-R_\al \widetilde{U_{e+}}\cdot R_\al \widetilde{U_{e+}}+2R_\al \widetilde{U_{e+}}\cdot R_\al \widetilde{U_{e-}}-R_\al \widetilde{U_{e-}}\cdot R_\al \widetilde{U_{e-}}\\
&-R_\al R\widetilde{U_{i+}}\cdot R_\al R\widetilde{U_{i+}}+2R_\al R\widetilde{U_{i+}}\cdot R_\al R\widetilde{U_{i-}}-R_\al R\widetilde{U_{i-}}\cdot R_\al R\widetilde{U_{i-}}\\
&+2R_\al \widetilde{U_{e+}}\cdot R_\al R\widetilde{U_{i+}}-2R_\al \widetilde{U_{e+}}\cdot R_\al R\widetilde{U_{i-}}-2R_\al \widetilde{U_{e-}}\cdot R_\al R\widetilde{U_{i+}}+2R_\al \widetilde{U_{e-}}\cdot R_\al R\widetilde{U_{i-}}.
\end{split}
\end{equation*}
Moreover
\begin{equation*}
\begin{split}
\rho R_\al g=&-i\frac{|\nabla|R}{\Lambda_e}\widetilde{U_{e+}}\cdot R_\al R\widetilde{U_{e+}}+i\Big(\frac{|\nabla|R}{\Lambda_e}\widetilde{U_{e+}}\cdot R_\al R\widetilde{U_{e-}}-R_\al R\widetilde{U_{e+}}\cdot \frac{|\nabla|R}{\Lambda_e}\widetilde{U_{e-}}\Big)+i\frac{|\nabla|R}{\Lambda_e}\widetilde{U_{e-}}\cdot R_\al R\widetilde{U_{e-}}\\
&-i\frac{|\nabla|}{\Lambda_i}\widetilde{U_{i+}}\cdot R_\al \widetilde{U_{i+}}+i\Big(\frac{|\nabla|}{\Lambda_i}\widetilde{U_{i+}}\cdot R_\al \widetilde{U_{i-}}-R_\al \widetilde{U_{i+}}\cdot \frac{|\nabla|}{\Lambda_i}\widetilde{U_{i-}}\Big)+i\frac{|\nabla|}{\Lambda_i}\widetilde{U_{i-}}\cdot R_\al \widetilde{U_{i-}}\\
&-i\Big(\frac{|\nabla|R}{\Lambda_e}\widetilde{U_{e+}}\cdot R_\al \widetilde{U_{i+}}+R_\al R\widetilde{U_{e+}}\cdot\frac{|\nabla|}{\Lambda_i}\widetilde{U_{i+}}\Big)+i\Big(\frac{|\nabla|R}{\Lambda_e}\widetilde{U_{e+}}\cdot R_\al \widetilde{U_{i-}}-R_\al R\widetilde{U_{e+}}\cdot\frac{|\nabla|}{\Lambda_i}\widetilde{U_{i-}}\Big)\\
&+i\Big(-\frac{|\nabla|R}{\Lambda_e}\widetilde{U_{e-}}\cdot R_\al \widetilde{U_{i+}}+R_\al R\widetilde{U_{e-}}\cdot\frac{|\nabla|}{\Lambda_i}\widetilde{U_{i+}}\Big)+i\Big(\frac{|\nabla|R}{\Lambda_e}\widetilde{U_{e-}}\cdot R_\al \widetilde{U_{i-}}+R_\al R\widetilde{U_{e-}}\cdot\frac{|\nabla|}{\Lambda_i}\widetilde{U_{i-}}\Big),
\end{split}
\end{equation*}
and
\begin{equation*}
\begin{split}
R_\al gR_\al g=&-R_\al R\widetilde{U_{e+}}\cdot R_\al R\widetilde{U_{e+}}+2R_\al R\widetilde{U_{e+}}\cdot R_\al R\widetilde{U_{e-}}-R_\al R\widetilde{U_{e-}}\cdot R_\al R\widetilde{U_{e-}}\\
&-R_\al \widetilde{U_{i+}}\cdot R_\al \widetilde{U_{i+}}+2R_\al \widetilde{U_{i+}}\cdot R_\al \widetilde{U_{i-}}-R_\al \widetilde{U_{i-}}\cdot R_\al \widetilde{U_{i-}}\\
&-2R_\al R\widetilde{U_{e+}}\cdot R_\al \widetilde{U_{i+}}+2R_\al R\widetilde{U_{e+}}\cdot R_\al \widetilde{U_{i-}}+2R_\al R\widetilde{U_{e-}}\cdot R_\al \widetilde{U_{i+}}-2R_\al R\widetilde{U_{e-}}\cdot R_\al \widetilde{U_{i-}}.
\end{split}
\end{equation*}

We also have the terms involving the magnetic field:
\begin{equation*}
\begin{split}
nA_\alpha&=-\varepsilon^{-1/2}\vert\nabla\vert\Lambda_e^{-1}\widetilde{U_{e+}}\cdot\Lambda_b^{-1}\left[U_{b+}\right]_\alpha-\varepsilon^{-1/2}\vert\nabla\vert\Lambda_e^{-1}\widetilde{U_{e+}}\cdot\Lambda_b^{-1}\left[U_{b-}\right]_\alpha\\
&-\varepsilon^{-1/2}\vert\nabla\vert\Lambda_e^{-1}\widetilde{U_{e-}}\cdot\Lambda_b^{-1}\left[U_{b+}\right]_\alpha-\varepsilon^{-1/2}\vert\nabla\vert\Lambda_e^{-1}\widetilde{U_{e-}}\cdot\Lambda_b^{-1}\left[U_{b-}\right]_\alpha\\
&+\varepsilon^{-1/2}\vert\nabla\vert\Lambda_i^{-1}R\widetilde{U_{i+}}\cdot\Lambda_b^{-1}\left[U_{b+}\right]_\alpha+\varepsilon^{-1/2}\vert\nabla\vert\Lambda_i^{-1}R\widetilde{U_{i+}}\cdot\Lambda_b^{-1}\left[U_{b-}\right]_\alpha\\
&+\varepsilon^{-1/2}\vert\nabla\vert\Lambda_i^{-1}R\widetilde{U_{i-}}\cdot\Lambda_b^{-1}\left[U_{b+}\right]_\alpha+\varepsilon^{-1/2}\vert\nabla\vert\Lambda_i^{-1}R\widetilde{U_{i-}}\cdot\Lambda_b^{-1}\left[U_{b-}\right]_\alpha,
\end{split}
\end{equation*}
and
\begin{equation*}
\begin{split}
\rho A_\alpha&=\vert\nabla\vert\Lambda_e^{-1}R\widetilde{U_{e+}}\cdot\Lambda_b^{-1}\left[U_{b+}\right]_\alpha+\vert\nabla\vert\Lambda_e^{-1}R\widetilde{U_{e+}}\cdot\Lambda_b^{-1}\left[U_{b-}\right]_\alpha\\
&+\vert\nabla\vert\Lambda_e^{-1}R\widetilde{U_{e-}}\cdot\Lambda_b^{-1}\left[U_{b+}\right]_\alpha+\vert\nabla\vert\Lambda_e^{-1}R\widetilde{U_{e-}}\cdot\Lambda_b^{-1}\left[U_{b-}\right]_\alpha\\
&+\vert\nabla\vert\Lambda_i^{-1}\widetilde{U_{i+}}\cdot\Lambda_b^{-1}\left[U_{b+}\right]_\alpha+\vert\nabla\vert\Lambda_i^{-1}\widetilde{U_{i+}}\cdot\Lambda_b^{-1}\left[U_{b-}\right]_\alpha\\
&+\vert\nabla\vert\Lambda_i^{-1}\widetilde{U_{i-}}\cdot\Lambda_b^{-1}\left[U_{b+}\right]_\alpha+\vert\nabla\vert\Lambda_i^{-1}\widetilde{U_{i-}}\cdot\Lambda_b^{-1}\left[U_{b-}\right]_\alpha,
\end{split}
\end{equation*}
and
\begin{equation*}
\begin{split}
(R_\alpha h)\cdot A_\alpha=&i\varepsilon^{-1/2}R_\alpha\widetilde{U_{e+}}\cdot\Lambda_b^{-1}\left[U_{b+}\right]_\alpha+i\varepsilon^{-1/2}R_\alpha\widetilde{U_{e+}}\cdot\Lambda_b^{-1}\left[U_{b-}\right]_\alpha\\
&-i\varepsilon^{-1/2}R_\alpha\widetilde{U_{e-}}\cdot\Lambda_b^{-1}\left[U_{b+}\right]_\alpha-i\varepsilon^{-1/2}R_\alpha\widetilde{U_{e-}}\cdot\Lambda_b^{-1}\left[U_{b-}\right]_\alpha\\
&-i\varepsilon^{-1/2}R R_\alpha\widetilde{U_{i+}}\cdot\Lambda_b^{-1}\left[U_{b+}\right]_\alpha-i\varepsilon^{-1/2}R R_\alpha\widetilde{U_{i+}}\cdot\Lambda_b^{-1}\left[U_{b-}\right]_\alpha\\
&+i\varepsilon^{-1/2}R R_\alpha\widetilde{U_{i-}}\cdot\Lambda_b^{-1}\left[U_{b+}\right]_\alpha+i\varepsilon^{-1/2}R R_\alpha\widetilde{U_{i-}}\cdot\Lambda_b^{-1}\left[U_{b-}\right]_\alpha,\\
\end{split}
\end{equation*}
while
\begin{equation*}
\begin{split}
(R_\alpha g)\cdot A_\alpha=&-iRR_\alpha\widetilde{U_{e+}}\cdot\Lambda_b^{-1}\left[U_{b+}\right]_\alpha-iRR_\alpha\widetilde{U_{e+}}\cdot\Lambda_b^{-1}\left[U_{b-}\right]_\alpha\\
&+iR R_\alpha\widetilde{U_{e-}}\cdot\Lambda_b^{-1}\left[U_{b+}\right]_\alpha+iRR_\alpha\widetilde{U_{e-}}\cdot\Lambda_b^{-1}\left[U_{b-}\right]_\alpha\\
&-iR_\alpha\widetilde{U_{i+}}\cdot\Lambda_b^{-1}\left[U_{b+}\right]_\alpha-iR_\alpha\widetilde{U_{i+}}\cdot\Lambda_b^{-1}\left[U_{b-}\right]_\alpha\\
&+iR_\alpha\widetilde{U_{i-}}\cdot\Lambda_b^{-1}\left[U_{b+}\right]_\alpha+iR_\alpha\widetilde{U_{i-}}\cdot\Lambda_b^{-1}\left[U_{b-}\right]_\alpha.
\end{split}
\end{equation*}
Finally,
\begin{equation*}
\begin{split}
A_\alpha\cdot A_\alpha&=\Lambda_b^{-1}\left[U_{b+}\right]_\alpha\cdot\Lambda_b^{-1}\left[U_{b+}\right]_\alpha+2\Lambda_b^{-1}\left[U_{b+}\right]_\alpha\cdot\Lambda_b^{-1}\left[U_{b-}\right]_\alpha+\Lambda_b^{-1}\left[U_{b-}\right]_\alpha\cdot\Lambda_b^{-1}\left[U_{b-}\right]_\alpha.
\end{split}
\end{equation*}

Let
\begin{equation}\label{pla22}
\begin{split}
\mathcal{I}:=&\big\{(e+,e+),(e+,e-),(e-,e-),(i+,i+),(i+,i-),(i-,i-),(b+\alpha,b+\beta),(b+\alpha,b-\beta),(b-\alpha,b-\beta)\\
 &(e+,i+),(e+,i-),(e-,i+),(e-,i-),(e+,b+\alpha),(e+,b-\alpha),(e-,b+\alpha),(e-,b-\alpha),\\
 &(i+,b+\alpha),(i+,b-\alpha),(i-,b+\alpha),(i-,b-\alpha),\,1\le\alpha,\beta\le 3\big\}.
\end{split}
\end{equation}
The nonlinearities $\mathcal{N}_e,\mathcal{N}_i,\mathcal{N}_b$ can be written in the form
\begin{equation}\label{pla23}
\begin{split}
&\mathcal{F}(\mathcal{N}_e)(\xi,t)=c\sum_{(\mu,\nu)\in\mathcal{I}}\int_{\mathbb{R}^3}m_{e;\mu,\nu}(\xi,\eta)\widehat{U_\mu}(\xi-\eta,t)\widehat{U_\nu}(\eta,t)\,d\eta,\\
&\mathcal{F}(\mathcal{N}_i)(\xi,t)=c\sum_{(\mu,\nu)\in\mathcal{I}}\int_{\mathbb{R}^3}m_{i;\mu,\nu}(\xi,\eta)\widehat{U_\mu}(\xi-\eta,t)\widehat{U_\nu}(\eta,t)\,d\eta\\
&\mathcal{F}(\mathcal{N}_{b,\alpha})(\xi,t)=c\sum_{(\mu,\nu)\in\mathcal{I}}\int_{\mathbb{R}^3}m_{b,\alpha;\mu,\nu}(\xi,\eta)\widehat{U_\mu}(\xi-\eta,t)\widehat{U_\nu}(\eta,t)\,d\eta.
\end{split}
\end{equation}
Letting
\begin{equation}\label{mult0}
\begin{split}
&T_{e;e,e}(\xi,\eta):=\frac{i[\varepsilon^{-1/2}-R(\xi)R(\xi-\eta)R(\eta)]}{\sqrt{(1+R(\xi)^2)(1+R(\xi-\eta)^2)(1+R(\eta)^2)}},\\
&T_{e;i,i}(\xi,\eta):=\frac{i[\varepsilon^{-1/2}R(\xi-\eta)R(\eta)-R(\xi)]}{\sqrt{(1+R(\xi)^2)(1+R(\xi-\eta)^2)(1+R(\eta)^2)}},\\
&T_{e;e,i}(\xi,\eta):=\frac{i[\varepsilon^{-1/2}R(\eta)+R(\xi)R(\xi-\eta)]}{\sqrt{(1+R(\xi)^2)(1+R(\xi-\eta)^2)(1+R(\eta)^2)}},\\
&T_{e;e,b}(\xi,\eta):=\frac{i[-\varepsilon^{-1}+R(\xi)R(\xi-\eta)]}{\sqrt{(1+R(\xi)^2)(1+R(\xi-\eta)^2)}\Lambda_b(\eta)},\\
&T_{e;i,b}(\xi,\eta):=\frac{i[\varepsilon^{-1}R(\xi-\eta)+R(\xi)]}{\sqrt{(1+R(\xi)^2)(1+R(\xi-\eta)^2)}\Lambda_b(\eta)},\\
\end{split}
\end{equation}
the multipliers $m_{e;\mu,\nu}$ are given by
\begin{equation}\label{mult1}
\begin{split}
&m_{e;e+,e+}(\xi,\eta)=T_{e;e,e}(\xi,\eta)\Big[+\frac{\Lambda_e(\xi)|\xi-\eta|(\xi\cdot\eta)}{2|\xi||\eta|\Lambda_e(\xi-\eta)}+\frac{|\xi|((\xi-\eta)\cdot\eta)}{4|\xi-\eta||\eta|}\Big],\\
&m_{e;e+,e-}(\xi,\eta)=T_{e;e,e}(\xi,\eta)\Big[-\frac{\Lambda_e(\xi)|\xi-\eta|(\xi\cdot\eta)}{2|\xi||\eta|\Lambda_e(\xi-\eta)}+\frac{\Lambda_e(\xi)|\eta|(\xi\cdot (\xi-\eta))}{2|\xi||\xi-\eta|\Lambda_e(\eta)}-\frac{|\xi|((\xi-\eta)\cdot\eta)}{2|\xi-\eta||\eta|}\Big],\\
&m_{e;e-,e-}(\xi,\eta)=T_{e;e,e}(\xi,\eta)\Big[-\frac{\Lambda_e(\xi)|\xi-\eta|(\xi\cdot\eta)}{2|\xi||\eta|\Lambda_e(\xi-\eta)}+\frac{|\xi|((\xi-\eta)\cdot\eta)}{4|\xi-\eta||\eta|}\Big],\\
&m_{e;i+,i+}(\xi,\eta)=T_{e;i,i}(\xi,\eta)\Big[+\frac{\Lambda_e(\xi)|\xi-\eta|(\xi\cdot\eta)}{2|\xi||\eta|\Lambda_i(\xi-\eta)}+\frac{|\xi|((\xi-\eta)\cdot\eta)}{4|\xi-\eta||\eta|}\Big],\\
&m_{e;i+,i-}(\xi,\eta)=T_{e;i,i}(\xi,\eta)\Big[-\frac{\Lambda_e(\xi)|\xi-\eta|(\xi\cdot\eta)}{2|\xi||\eta|\Lambda_i(\xi-\eta)}+\frac{\Lambda_e(\xi)|\eta|(\xi\cdot(\xi-\eta))}{2|\xi||\xi-\eta|\Lambda_i(\eta)}-\frac{|\xi|((\xi-\eta)\cdot\eta)}{2|\xi-\eta||\eta|}\Big],\\
&m_{e;i-,i-}(\xi,\eta)=T_{e;i,i}(\xi,\eta)\Big[-\frac{\Lambda_e(\xi)|\xi-\eta|(\xi\cdot\eta)}{2|\xi||\eta|\Lambda_i(\xi-\eta)}+\frac{|\xi|((\xi-\eta)\cdot\eta)}{4|\xi-\eta||\eta|}\Big],\\
&m_{e;e+,i+}(\xi,\eta)=T_{e;e,i}(\xi,\eta)\Big[-\frac{\Lambda_e(\xi)|\xi-\eta|(\xi\cdot\eta)}{2|\xi||\eta|\Lambda_e(\xi-\eta)}-\frac{\Lambda_e(\xi)|\eta|(\xi\cdot(\xi-\eta))}{2|\xi||\xi-\eta|\Lambda_i(\eta)}-\frac{|\xi|((\xi-\eta)\cdot\eta)}{2|\xi-\eta||\eta|}\Big],\\
&m_{e;e+,i-}(\xi,\eta)=T_{e;e,i}(\xi,\eta)\Big[+\frac{\Lambda_e(\xi)|\xi-\eta|(\xi\cdot\eta)}{2|\xi||\eta|\Lambda_e(\xi-\eta)}-\frac{\Lambda_e(\xi)|\eta|(\xi\cdot(\xi-\eta))}{2|\xi||\xi-\eta|\Lambda_i(\eta)}+\frac{|\xi|((\xi-\eta)\cdot\eta)}{2|\xi-\eta||\eta|}\Big],\\
&m_{e;e-,i+}(\xi,\eta)=T_{e;e,i}(\xi,\eta)\Big[-\frac{\Lambda_e(\xi)|\xi-\eta|(\xi\cdot\eta)}{2|\xi||\eta|\Lambda_e(\xi-\eta)}+\frac{\Lambda_e(\xi)|\eta|(\xi\cdot(\xi-\eta))}{2|\xi||\xi-\eta|\Lambda_i(\eta)}+\frac{|\xi|((\xi-\eta)\cdot\eta)}{2|\xi-\eta||\eta|}\Big],\\
&m_{e;e-,i-}(\xi,\eta)=T_{e;e,i}(\xi,\eta)\Big[+\frac{\Lambda_e(\xi)|\xi-\eta|(\xi\cdot\eta)}{2|\xi||\eta|\Lambda_e(\xi-\eta)}+\frac{\Lambda_e(\xi)|\eta|(\xi\cdot(\xi-\eta))}{2|\xi||\xi-\eta|\Lambda_i(\eta)}-\frac{|\xi|((\xi-\eta)\cdot\eta)}{2|\xi-\eta||\eta|}\Big].\\
\end{split}
\end{equation}
We also have the magnetic terms
\begin{equation}\label{mult1.1}
\begin{split}
&m_{e;e+,b\pm,\alpha}(\xi,\eta)=T_{e;e,b}(\xi,\eta)\Big[\frac{1}{2}\frac{\Lambda_e(\xi)\vert\xi-\eta\vert}{\Lambda_e(\xi-\eta)}\frac{\xi_\alpha}{\vert\xi\vert}+\frac{1}{2}\vert\xi\vert\frac{(\xi-\eta)_\alpha}{\vert\xi-\eta\vert}\Big]\\
&m_{e;e-,b\pm,\alpha}(\xi,\eta)=T_{e;e,b}(\xi,\eta)\Big[\frac{1}{2}\frac{\Lambda_e(\xi)\vert\xi-\eta\vert}{\Lambda_e(\xi-\eta)}\frac{\xi_\alpha}{\vert\xi\vert}-\frac{1}{2}\vert\xi\vert\frac{(\xi-\eta)_\alpha}{\vert\xi-\eta\vert}\Big]\\
&m_{e;i+,b\pm,\alpha}(\xi,\eta)=T_{e;i,b}(\xi,\eta)\Big[\frac{1}{2}\frac{\Lambda_e(\xi)\vert\xi-\eta\vert}{\Lambda_i(\xi-\eta)}\frac{\xi_\alpha}{\vert\xi\vert}+\frac{1}{2}\vert\xi\vert\frac{(\xi-\eta)_\alpha}{\vert\xi-\eta\vert}\Big]\\
&m_{e;i-,b\pm,\alpha}(\xi,\eta)=T_{e;i,b}(\xi,\eta)\Big[\frac{1}{2}\frac{\Lambda_e(\xi)\vert\xi-\eta\vert}{\Lambda_i(\xi-\eta)}\frac{\xi_\alpha}{\vert\xi\vert}-\frac{1}{2}\vert\xi\vert\frac{(\xi-\eta)_\alpha}{\vert\xi-\eta\vert}\Big]\\
&m_{e;b+,\alpha,b+\beta}(\xi,\eta)=m_{e;b-,\alpha,b-,\beta}(\xi,\eta)=\frac{1}{2}m_{e;b+,\alpha,b-,\beta}(\xi,\eta)=\frac{i[\varepsilon^{-3/2}-R(\xi)]}{\sqrt{1+R(\xi)^2}}\frac{\vert\xi\vert}{4\Lambda_b(\xi-\eta)\Lambda_b(\eta)}\delta_{\alpha\beta}.
\end{split}
\end{equation}
Similarly, letting
\begin{equation}\label{mult1.5}
\begin{split}
&T_{i;e,e}(\xi,\eta):=\frac{i[\varepsilon^{-1/2}R(\xi)+R(\xi-\eta)R(\eta)]}{\sqrt{(1+R(\xi)^2)(1+R(\eta)^2)(1+R(\xi-\eta)^2)}},\\
&T_{i;i,i}(\xi,\eta):=\frac{i[\varepsilon^{-1/2}R(\xi)R(\xi-\eta)R(\eta)+1]}{\sqrt{(1+R(\xi)^2)(1+R(\eta)^2)(1+R(\xi-\eta)^2)}},\\
&T_{i;e,i}(\xi,\eta):=\frac{i[\varepsilon^{-1/2}R(\xi)R(\eta)-R(\xi-\eta)]}{\sqrt{(1+R(\xi)^2)(1+R(\eta)^2)(1+R(\xi-\eta)^2)}},\\
&T_{i;e,b}(\xi,\eta):=\frac{i[\varepsilon^{-1}R(\xi)+R(\xi-\eta)]}{\sqrt{(1+R(\xi)^2)(1+R(\xi-\eta)^2}\Lambda_b(\eta)},\\
&T_{i;i,b}(\xi,\eta):=\frac{-i[\varepsilon^{-1}R(\xi)R(\xi-\eta)-1]}{\sqrt{(1+R(\xi)^2)(1+R(\xi-\eta)^2)}\Lambda_b(\eta)},\\
&T_{i;b,b}(\xi,\eta):=\frac{-i[\varepsilon^{-3/2}R(\xi)+1]}{\sqrt{1+R(\xi)^2}\Lambda_b(\xi-\eta)\Lambda_b(\eta)},
\end{split}
\end{equation}
the multipliers $m_{i;\mu,\nu}$ are given by
\begin{equation}\label{mult2}
\begin{split}
&m_{i;e+,e+}(\xi,\eta)=T_{i;e,e}(\xi,\eta)\Big[-\frac{\Lambda_i(\xi)|\xi-\eta|(\xi\cdot\eta)}{2|\xi||\eta|\Lambda_e(\xi-\eta)}-\frac{|\xi|((\xi-\eta)\cdot\eta)}{4|\xi-\eta||\eta|}\Big],\\
&m_{i;e+,e-}(\xi,\eta)=T_{i;e,e}(\xi,\eta)\Big[+\frac{\Lambda_i(\xi)|\xi-\eta|(\xi\cdot\eta)}{2|\xi||\eta|\Lambda_e(\xi-\eta)}-\frac{\Lambda_i(\xi)|\eta|(\xi\cdot (\xi-\eta))}{2|\xi||\xi-\eta|\Lambda_e(\eta)}+\frac{|\xi|((\xi-\eta)\cdot\eta)}{2|\xi-\eta||\eta|}\Big],\\
&m_{i;e-,e-}(\xi,\eta)=T_{i;e,e}(\xi,\eta)\Big[+\frac{\Lambda_i(\xi)|\xi-\eta|(\xi\cdot\eta)}{2|\xi||\eta|\Lambda_e(\xi-\eta)}-\frac{|\xi|((\xi-\eta)\cdot\eta)}{4|\xi-\eta||\eta|}\Big],\\
&m_{i;i+,i+}(\xi,\eta)=T_{i;i,i}(\xi,\eta)\Big[-\frac{\Lambda_i(\xi)|\xi-\eta|(\xi\cdot\eta)}{2|\xi||\eta|\Lambda_i(\xi-\eta)}-\frac{|\xi|((\xi-\eta)\cdot\eta)}{4|\xi-\eta||\eta|}\Big],\\
&m_{i;i+,i-}(\xi,\eta)=T_{i;i,i}(\xi,\eta)\Big[+\frac{\Lambda_i(\xi)|\xi-\eta|(\xi\cdot\eta)}{2|\xi||\eta|\Lambda_i(\xi-\eta)}-\frac{\Lambda_i(\xi)|\eta|(\xi\cdot (\xi-\eta))}{2|\xi||\xi-\eta|\Lambda_i(\eta)}+\frac{|\xi|((\xi-\eta)\cdot\eta)}{2|\xi-\eta||\eta|}\Big],\\
&m_{i;i-,i-}(\xi,\eta)=T_{i;i,i}(\xi,\eta)\Big[+\frac{\Lambda_i(\xi)|\xi-\eta|(\xi\cdot\eta)}{2|\xi||\eta|\Lambda_i(\xi-\eta)}-\frac{|\xi|((\xi-\eta)\cdot\eta)}{4|\xi-\eta||\eta|}\Big],\\
&m_{i;e+,i+}(\xi,\eta)=T_{i;e,i}(\xi,\eta)\Big[+\frac{\Lambda_i(\xi)|\xi-\eta|(\xi\cdot\eta)}{2|\xi||\eta|\Lambda_e(\xi-\eta)}+\frac{\Lambda_i(\xi)|\eta|(\xi\cdot (\xi-\eta))}{2|\xi||\xi-\eta|\Lambda_i(\eta)}+\frac{|\xi|((\xi-\eta)\cdot\eta)}{2|\xi-\eta||\eta|}\Big],\\
&m_{i;e+,i-}(\xi,\eta)=T_{i;e,i}(\xi,\eta)\Big[-\frac{\Lambda_i(\xi)|\xi-\eta|(\xi\cdot\eta)}{2|\xi||\eta|\Lambda_e(\xi-\eta)}+\frac{\Lambda_i(\xi)|\eta|(\xi\cdot (\xi-\eta))}{2|\xi||\xi-\eta|\Lambda_i(\eta)}-\frac{|\xi|((\xi-\eta)\cdot\eta)}{2|\xi-\eta||\eta|}\Big],\\
&m_{i;e-,i+}(\xi,\eta)=T_{i;e,i}(\xi,\eta)\Big[+\frac{\Lambda_i(\xi)|\xi-\eta|(\xi\cdot\eta)}{2|\xi||\eta|\Lambda_e(\xi-\eta)}-\frac{\Lambda_i(\xi)|\eta|(\xi\cdot (\xi-\eta))}{2|\xi||\xi-\eta|\Lambda_i(\eta)}-\frac{|\xi|((\xi-\eta)\cdot\eta)}{2|\xi-\eta||\eta|}\Big],\\
&m_{i;e-,i-}(\xi,\eta)=T_{i;e,i}(\xi,\eta)\Big[-\frac{\Lambda_i(\xi)|\xi-\eta|(\xi\cdot\eta)}{2|\xi||\eta|\Lambda_e(\xi-\eta)}-\frac{\Lambda_i(\xi)|\eta|(\xi\cdot (\xi-\eta))}{2|\xi||\xi-\eta|\Lambda_i(\eta)}+\frac{|\xi|((\xi-\eta)\cdot\eta)}{2|\xi-\eta||\eta|}\Big].
\end{split}
\end{equation}
Again, the magnetic terms give
\begin{equation}\label{mult2.5}
\begin{split}
&m_{i;e+,b\pm,\alpha}(\xi,\eta)=T_{i;e,b}(\xi,\eta)\Big[\frac{1}{2}\frac{\Lambda_i(\xi)\vert\xi-\eta\vert}{\Lambda_e(\xi-\eta)}\frac{\xi_\alpha}{\vert\xi\vert}+\frac{1}{2}\vert\xi\vert\frac{(\xi-\eta)_\alpha}{\vert\xi-\eta\vert}\Big],\\
&m_{i;e-,b\pm,\alpha}(\xi,\eta)=T_{i;e,b}(\xi,\eta)\Big[\frac{1}{2}\frac{\Lambda_i(\xi)\vert\xi-\eta\vert}{\Lambda_e(\xi-\eta)}\frac{\xi_\alpha}{\vert\xi\vert}-\frac{1}{2}\vert\xi\vert\frac{(\xi-\eta)_\alpha}{\vert\xi-\eta\vert}\Big],\\
&m_{i;i+,b\pm,\alpha}(\xi,\eta)=T_{i;i,b}(\xi,\eta)\Big[\frac{1}{2}\frac{\Lambda_i(\xi)\vert\xi-\eta\vert}{\Lambda_i(\xi-\eta)}\frac{\xi_\alpha}{\vert\xi\vert}+\frac{1}{2}\vert\xi\vert\frac{(\xi-\eta)_\alpha}{\vert\xi-\eta\vert}\Big],\\
&m_{i;i-,b\pm,\alpha}(\xi,\eta)=T_{i;i,b}(\xi,\eta)\Big[\frac{1}{2}\frac{\Lambda_i(\xi)\vert\xi-\eta\vert}{\Lambda_i(\xi-\eta)}\frac{\xi_\alpha}{\vert\xi\vert}-\frac{1}{2}\vert\xi\vert\frac{(\xi-\eta)_\alpha}{\vert\xi-\eta\vert}\Big],\\
&m_{i;b+,\alpha,b+,\beta}(\xi,\eta)=m_{i;b-,\alpha,b-,\beta}(\xi,\eta)=\frac{1}{2}m_{i;b+,\alpha,b-,\beta}(\xi,\eta)=T_{i;b,b}(\xi,\eta)\frac{\vert\xi\vert}{4}\delta_{\alpha\beta}.
\end{split}
\end{equation}

Finally, we can decompose $\mathcal{N}_b$. Letting
\begin{equation}\label{mult3}
\begin{split}
&T_{b;e,e}^{\alpha,\beta}(\xi,\eta):=\frac{iQ^2_{\alpha,\beta}(\xi)\left[-\varepsilon^{-1}+R(\xi-\eta)R(\eta)\right]}{\sqrt{(1+R(\xi-\eta)^2)(1+R(\eta)^2)}}\\
&T_{b;i,i}^{\alpha,\beta}(\xi,\eta):=\frac{iQ^2_{\alpha,\beta}(\xi)\left[-\varepsilon^{-1}R(\xi-\eta)R(\eta)+1\right]}{\sqrt{(1+R(\xi-\eta)^2)(1+R(\eta)^2)}}\\
&T_{b;e,i}^{\alpha,\beta}(\xi,\eta):=\frac{iQ^2_{\alpha,\beta}(\xi)\left[\varepsilon^{-1}R(\eta)+R(\xi-\eta)\right]}{\sqrt{(1+R(\xi-\eta)^2)(1+R(\eta)^2)}}\\
\end{split}
\end{equation}
we get that
\begin{equation}\label{mult3.1}
\begin{split}
&m_{b,\alpha;e+,e+}(\xi,\eta)=T^{\alpha,\beta}_{b;e,e}(\xi,\eta)\frac{1}{2}\frac{\vert\xi-\eta\vert}{\Lambda_e(\xi-\eta)}\frac{\eta_\beta}{\vert\eta\vert}\\
&m_{b,\alpha;e+,e-}(\xi,\eta)=T^{\alpha,\beta}_{b;e,e}(\xi,\eta)\frac{1}{2}\Big[-\frac{\vert\xi-\eta\vert}{\Lambda_e(\xi-\eta)}\frac{\eta_\beta}{\vert\eta\vert}+\frac{\vert\eta\vert}{\Lambda_e(\eta)}\frac{(\xi-\eta)_\beta}{\vert\xi-\eta\vert}\Big]\\
&m_{b,\alpha;e-,e-}(\xi,\eta)=-T^{\alpha,\beta}_{b;e,e}(\xi,\eta)\frac{1}{2}\frac{\vert\xi-\eta\vert}{\Lambda_i(\xi-\eta)}\frac{\eta_\beta}{\vert\eta\vert}\\
&m_{b,\alpha;i+,i+}(\xi,\eta)=T^{\alpha,\beta}_{b;i,i}(\xi,\eta)\frac{1}{2}\frac{\vert\xi-\eta\vert}{\Lambda_i(\xi-\eta)}\frac{\eta_\beta}{\vert\eta\vert}\\
&m_{b,\alpha;i+,i-}(\xi,\eta)=-T^{\alpha,\beta}_{b;i,i}(\xi,\eta)\frac{1}{2}\Big[\frac{\vert\xi-\eta\vert}{\Lambda_i(\xi\eta)}\frac{\eta_\beta}{\vert\eta\vert}-\frac{\vert\eta\vert}{\Lambda_i(\eta)}\frac{(\xi-\eta)_\beta}{\vert\xi-\eta\vert}\Big]\\
&m_{b,\alpha;i-,i-}(\xi,\eta)=-T^{\alpha,\beta}_{b;i,i}(\xi,\eta)\frac{1}{2}\frac{\vert\xi-\eta\vert}{\Lambda_i(\xi-\eta)}\frac{\eta_\beta}{\vert\eta\vert}\\
&m_{b,\alpha;e+,i+}(\xi,\eta)=T^{\alpha,\beta}_{b;i,e}(\xi,\eta)\frac{1}{2}\Big[\frac{\vert\xi-\eta\vert}{\Lambda_e(\xi-\eta)}\frac{\eta_\beta}{\vert\eta\vert}+\frac{\vert\eta\vert}{\Lambda_i(\eta)}\frac{(\xi-\eta)_\beta}{\vert\xi-\eta\vert}\Big]\\
&m_{b,\alpha;e+,i-}(\xi,\eta)=T^{\alpha,\beta}_{b;i,e}(\xi,\eta)\frac{1}{2}\Big[-\frac{\vert\xi-\eta\vert}{\Lambda_e(\xi-\eta)}\frac{\eta_\beta}{\vert\eta\vert}+\frac{\vert\eta\vert}{\Lambda_i(\eta)}\frac{(\xi-\eta)_\beta}{\vert\xi-\eta\vert}\Big]\\
&m_{b,\alpha;e-,i+}(\xi,\eta)=T^{\alpha,\beta}_{b;i,e}(\xi,\eta)\frac{1}{2}\Big[\frac{\vert\xi-\eta\vert}{\Lambda_e(\xi-\eta)}\frac{\eta_\beta}{\vert\eta\vert}-\frac{\vert\eta\vert}{\Lambda_i(\eta)}\frac{(\xi-\eta)_\beta}{\vert\xi-\eta\vert}\Big]\\
&m_{b,\alpha;e-,i-}(\xi,\eta)=-T^{\alpha,\beta}_{b;i,e}(\xi,\eta)\frac{1}{2}\Big[\frac{\vert\xi-\eta\vert}{\Lambda_e(\xi-\eta)}\frac{\eta_\beta}{\vert\eta\vert}+\frac{\vert\eta\vert}{\Lambda_i(\eta)}\frac{(\xi-\eta)_\beta}{\vert\xi-\eta\vert}\Big].
\end{split}
\end{equation}
Finally, we add the magnetic terms
\begin{equation}\label{mult3.5}
\begin{split}
&m_{b,\alpha;e\pm,b\pm,\beta}(\xi,\eta)=\frac{iQ^2_{\alpha,\beta}(\xi)\left[\varepsilon^{-3/2}-R(\xi-\eta)\right]}{2\sqrt{1+R(\xi-\eta)^2}}\frac{\vert\xi-\eta\vert}{\Lambda_e(\xi-\eta)\Lambda_b(\eta)}\\
&m_{b,\alpha;i\pm,b\pm,\beta}(\xi,\eta)=-\frac{iQ^2_{\alpha,\beta}(\xi)\left[\varepsilon^{-3/2}R(\xi-\eta)+1\right]}{2\sqrt{1+R(\xi-\eta)^2}}\frac{\vert\xi-\eta\vert}{\Lambda_i(\xi-\eta)\Lambda_b(\eta)}.
\end{split}
\end{equation}

\section{Other models}\label{AEP}

The purpose of this section is to show how the results in the main body of this work extend to several other models. In particular we focus on two variants of \eqref{EuMa1} which are invariant under appropriate change of frame: the {\it Euler-Poisson} is invariant under Galilean transformation, while the {\it relativistic model} is invariant under Lorentz transformations.

\subsection{The Euler-Poisson model}

Formally taking $c\rightarrow \infty$ and $B\equiv 0$ in \eqref{EuMa1}, we derive the Euler-Poisson system in the two-fluid theory, which is relevant when the magnetic forces can be neglected:
\begin{equation}  \label{EP}
\begin{split}
\partial _{t}n_{e}+\hbox{div}(n_{e}v_{e}) &=0, \\
n_{e}m_{e}\left[ \partial _{t}v_{e}+v_{e}\cdot \nabla v_{e}\right] +\nabla
p_{e} &=n_{e}e\nabla \phi , \\
\partial _{t}n_{i}+\hbox{div}(n_{i}v_{i}) &=0, \\
n_{i}M_{i}\left[ \partial _{t}v_{i}+v_{i}\cdot \nabla v_{i}\right] +\nabla
p_{i} &=-Zn_{i}e\nabla \phi , \\
-\Delta \phi &=4\pi e(Zn_{i}-n_{e}).
\end{split}
\end{equation}
Here the electromagnetic fields are given by
\begin{equation*}
E=-\nabla\phi,\quad B=0.
\end{equation*}

{\bf Galilean invariance.} This system is left invariant by Galilean change of unknowns. More precisely, let $V\in\mathbb{R}^3$ be a fixed vector and let
\begin{equation*}
\begin{split}
&x\to x^\prime:= x+Vt,\quad t\to t^\prime:=t\\
&n_e\to n_e^\prime:=n_e,\quad v_e\to v_e^\prime:=v_e-V,\quad n_i\to n_i^\prime:=n_i,\quad v_i\to v_i^\prime:=v_i-V,\quad \phi\to \phi^\prime:=\phi,
\end{split}
\end{equation*}
then, we observe that $(n_e,v_e,n_i,v_i,\phi)(x,t)$ solves \eqref{EP} if and only if $(n_e^\prime,v_e^\prime,n_i^\prime,v_i^\prime,\phi^\prime)(x^\prime,t^\prime)$ does.

\medskip

{\bf Arbitrary pressures.}\footnote{The analysis of the arbitrary pressure here carries over directly to the Euler-Maxwell case in \eqref{EuMa1}.} We consider arbitrary smooth pressure laws of the form
\begin{equation}\label{ArbitraryPressure}
p_i=p_i(n_i),\quad p_e=p_e(n_e),\quad\text{ with}\quad p_e^\prime(n_0)>0\hbox{ and }p_i^\prime(n_0/Z)>0.
\end{equation}
Defining
\begin{equation*}
P_i=Zp_i^\prime(n_0/Z)/n_0,\quad P_e=p_e^\prime(n_0)/n_0,
\end{equation*}
and using the rescaling \eqref{AddedResc}, we have the Taylor expansions\footnote{The {\it enthalpies} given by $h_e^\prime(x)=p_e^\prime(x)/x$ and $h_i^\prime(x)=p_i^\prime(x)/x$ are also important for the relativistic model later on. The functions $q_i$ and $q_e$ are essentially the {\it internal specific energies}, $\epsilon_i=n_ih_i-p_i$ and $\epsilon_e=n_eh_e-p_e$.}
\begin{equation}\label{EnthalpyEP}
\frac{p_i^\prime(n_i)}{n_i}=P_i\left[1+P_i^1\rho+\rho^2\cdot c_i(\rho)\right]:=P_iq_i^\prime(\rho)\nabla\rho,\quad \frac{p_e^\prime(n_e)}{n_e}=P_i\left[T+P_e^1n+n^2\cdot c_e(n)\right]:=P_iq_e^\prime(n)\nabla n,
\end{equation}
for some constants $P_i^1, P_e^1$ and some smooth functions $c_i$ and $c_e$. After rescaling as in \eqref{AddedResc}, this gives the new system
\begin{align}
&\partial_tn+\hbox{div}((n+1)v)&&&=&0,\label{NewSysEP1}\\
&\varepsilon\left(\partial_tv+v\cdot\nabla v\right)+T\nabla n-\nabla\phi & +&P_e^1n\nabla n+n^2c_e(n)\nabla n&=&0,\\
&\partial_t\rho+\hbox{div}((\rho+1) u)&&&=&0,\\
&\left(\partial_tu+u\cdot\nabla u\right)+\nabla \rho+\nabla\phi&+&P_i^1\rho\nabla\rho+\rho^2c_i(\rho)\nabla \rho&=&0,\\
&-\Delta\phi-\rho+n&&&=&0\label{NewSysEP2},
\end{align}

The analysis developed in the main body of this work applies directly to the Euler-Poisson system and gives:
\begin{theorem}
\label{ThmEP}
Assume $\varepsilon\le 10^{-3}$ and $T\in[1/10,100]$.  Let $N_{0}=10^{4}$
and assume that 
\begin{equation}
\begin{split}
& \Vert (n^{0},v^{0},\rho ^{0},u^{0})\Vert
_{H^{N_{0}}}+\Vert (n^{0}, v^{0},\rho^{0}, u^{0})\Vert _{Z}=\delta _{0}\leq \overline{\delta }
, \\
&\nabla \times v^{0}=\nabla \times u^{0}=0,
\end{split}
\label{maincondEP}
\end{equation}
where $\overline{\delta }=\overline{\delta }(T,\varepsilon )>0$ is sufficiently small, and the $Z$ norm is defined in Definition \ref{MainDef}. Then there exists a unique global solution $(n,v,\rho ,u)\in C([0,\infty ):{H^{N_{0}}})$ of the system \eqref{NewSysEP1}--\eqref{NewSysEP2} with initial data $(n(0),v(0),\rho (0),u(0))=(n^{0},v^{0},\rho ^{0},u^{0})$. Moreover, for any $t\in \lbrack 0,\infty )$, 
\begin{equation}
\nabla \times v(t)=\nabla \times u(t)\equiv 0,\text{ (irrotationality) }  \label{mainconcl2EP}
\end{equation}
and, with $\beta :=1/100$, 
\begin{equation}
\begin{split}
&\Vert (n(t),v(t),\rho(t),u(t))\Vert_{H^{N_{0}}}+\sup_{|\alpha |\leq 4}(1+t)^{1+\beta /2}\Vert D_{x}^{\alpha }(n(t),v(t),\rho (t),u(t))\Vert _{L^{\infty }}\lesssim \delta_{0}.
\end{split}
\label{mainconcl2.1EP}
\end{equation}
\end{theorem}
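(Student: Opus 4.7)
The plan is to mimic the proof of Theorem \ref{Main1} step by step, exploiting the fact that the Euler-Poisson system \eqref{NewSysEP1}--\eqref{NewSysEP2} is structurally a subsystem of Euler-Maxwell in which the magnetic branch $\Lambda_b$ does not appear. First I would establish the analogue of Proposition \ref{Localexistence}, symmetrizing the system using the natural physical energy
\begin{equation*}
\mathfrak{e}_{EP}=\varepsilon(n+1)\tfrac{|v|^2}{2}+(\rho+1)\tfrac{|u|^2}{2}+Tq_e(n)+q_i(\rho)+\tfrac{|\nabla\phi|^2}{2},
\end{equation*}
where $q_i,q_e$ are the primitives determined by \eqref{EnthalpyEP}; under smallness of $(n,\rho)$ this controls the full $H^{N_0}$-norm of the state modulo the Poisson relation. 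Kato's theory then delivers local existence, and the irrotationality of $v$ and $u$ is propagated by commuting $\nabla\times$ through the momentum equations (noting that $-\nabla\phi$ is a pure gradient).

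Next I would set $h:=-|\nabla|^{-1}\mathrm{div}(v)$, $g:=-|\nabla|^{-1}\mathrm{div}(u)$, and define the complex variables $U_e,U_i$ by the same formulas as in \eqref{pla50} (with no $U_b$, and with $\phi$ determined algebraically from $-\Delta\phi=\rho-n$ replacing the electric unknown). A direct computation following Proposition \ref{algebra} yields the diagonal dispersive system
\begin{equation*}
(\partial_t+i\Lambda_\sigma)U_\sigma=\mathcal{N}_\sigma^{quad}+\mathcal{N}_\sigma^{high},\qquad\sigma\in\{e,i\},
\end{equation*}
where $\mathcal{N}_\sigma^{quad}$ is a bilinear expression in $U_{e\pm},U_{i\pm}$ whose multipliers satisfy the structural bounds \eqref{mk4}--\eqref{mk5} of Lemma \ref{tech1.3}, and $\mathcal{N}_\sigma^{high}$ collects the cubic and higher-order contributions generated by the Taylor remainders in \eqref{EnthalpyEP}.

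The heart of the argument is then the analogue of Proposition \ref{Norm} for this reduced system. Since the phases in play are now $\Phi^{\sigma;\mu,\nu}$ with $\sigma,\sigma_1,\sigma_2\in\{e,i\}$, the classification of Proposition \ref{PropABC} restricts to a strict subset of the resonant cases already treated: the surviving Case~A phases are essentially $\{\Phi^{i;e+,i-},\Phi^{e;i+,e+}\}$, the surviving Case~B phases are $\{\Phi^{e;i+,e+},\Phi^{e;i-,e+}\}$, and the surviving Case~C phases are $\{\Phi^{i;i+,i+},\Phi^{i;i+,i-},\Phi^{i;i-,i-},\Phi^{i;e+,e-}\}$. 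Each of these appears verbatim in the analysis of Sections \ref{normproof2}--\ref{normproof4}, and the proofs of Lemmas \ref{lemmaB1}--\ref{CaseCLem} transfer without modification. The hard part, exactly as in the Euler-Maxwell case, will be the strongly degenerate ion-ion resonance at the origin in Case~C: its treatment requires the null structure noted in Remark \ref{tech1.33} (the factor $|\xi|$ in front of the ion multipliers), the angular decomposition \eqref{AngLocD1}--\eqref{AngLocD2}, and the orthogonality-plus-integration-by-parts machinery of Lemma \ref{CaseCLem}.

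Finally, the cubic and higher-order terms $\mathcal{N}_\sigma^{high}$ coming from the general pressure law are comparatively benign: since the linear profiles decay like $t^{-3/2+O(\beta)}$ in $L^\infty$ by Lemma \ref{tech1.5}, any two such factors give a time-integrand of size $(1+t)^{-3+O(\beta)}$ that is uniformly integrable on $[0,\infty)$, so these terms contribute only perturbatively to the $Z$-norm bootstrap and to the energy identity. Combining the bootstrap bound with the local theory, the dispersive estimate \eqref{ok10}, and the energy estimate, the continuation-in-time argument of \S\ref{MainSec5} closes as written and yields \eqref{mainconcl2EP}--\eqref{mainconcl2.1EP}.
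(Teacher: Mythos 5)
Your proposal tracks the paper's proof closely and takes the same route. The skeleton — symmetrized energy for local well-posedness, propagation of irrotationality, reduction to the diagonalized dispersive system in $U_e,U_i$ with the same $\Lambda_e,\Lambda_i$ (no magnetic branch), reuse of the multiplier structure of Lemma \ref{tech1.3}, and restriction of the resonant classification to the $\{e,i\}$ phases — is exactly what the paper does, and your list of surviving resonant phases in Cases A, B, C is correct.

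Where your sketch is genuinely incomplete is the dismissal of the cubic and higher-order nonlinearity produced by the general pressure law as ``comparatively benign.'' Two specific points are missing. First, the integrability argument you give — two dispersive factors decaying at $t^{-3/2+O(\beta)}$ in $L^\infty$ make $\int_0^\infty\|\mathcal{N}^{high}(s)\|_{L^2}\,ds$ converge — only controls the $L^2$ norm of the increment; it does \emph{not} control the $Z$-norm, since $\|\cdot\|_{B_{k,j}}$ contains the weight $2^{(1+\beta)j}$, which for $j\gg m$ is exponentially large compared to the time of evolution. To kill these contributions one needs a non-stationary phase / finite-speed-of-propagation argument (applying Lemma \ref{tech5} in the $\xi_1$-gradient to show the output lives in $|x|\lesssim t$). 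In the paper this is a dedicated trilinear estimate, Proposition \ref{PropCub}, which is stated and proved from scratch; it is the genuine new analytic content of the Euler--Poisson extension. Second, the Taylor remainders $C_i(\rho),C_e(n)$ appearing inside $\mathcal{N}^{high}$ are compositions of smooth functions with the dispersive unknowns, not dispersive unknowns themselves; before Proposition \ref{PropCub} can be applied one must prove that such compositions still satisfy the dyadic $L^2$ and $L^\infty$-decay bounds of \eqref{AssumpHCubic}. That is the content of Lemma \ref{LemCubComp}, and it is not automatic — it uses the specific structure of the $Z$-norm. Your proof sketch should spell out, or at least explicitly invoke, both of these ingredients rather than asserting integrability suffices.
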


\subsubsection{Derivation and main steps of the proof.}

The proof of this theorem is very similar to the proof of Theorem \ref{MainThm}. Proposition \ref{Localexistence} remains essentially unchanged using the energy
\begin{equation*}
\mathcal{E}_N:=\sum_{|\gamma|\leq N}\int_{\mathbb{R}^3}
\big[q_e^\prime(n)|D_x^\gamma n|^2+\varepsilon(1+n)|D_x^\gamma v|^2
+q_i^\prime(\rho)\vert D_x^\gamma\rho\vert^2+(\rho+1)\vert D_x^\gamma u\vert^2+|D_x^\gamma \nabla\phi|^2\big]\,dx.
\end{equation*}
Note that $q_e^\prime>0$ and $q_i^\prime>0$ for small perturbations, as follows from \eqref{ArbitraryPressure} and \eqref{EnthalpyEP} and that $q_e^\prime(0)=T$, $q_i^\prime(0)=1$.
This gives local existence of smooth solutions.
Using that
\begin{equation*}
\begin{split}
\partial_t\left[\nabla\times v\right]&=\nabla\times\left[v\times\left[\nabla\times v\right]\right]\\
\partial_t\left[\nabla\times u\right]&=\nabla\times\left[u\times\left[\nabla\times u\right]\right]
\end{split}
\end{equation*}
we see that smooth solutions with irrotational initial data remain irrotational for all time. We can therefore write
\begin{equation*}
v_\alpha=R_\alpha h,\,\, u_\alpha=R_\alpha g,\quad\hbox{or}\quad h:=-\vert\nabla\vert^{-1}\hbox{div}(v),\,\, g:=-\vert\nabla\vert^{-1}\hbox{div}(u)
\end{equation*}
and we then see that \eqref{NewSysEP1}-\eqref{NewSysEP2} becomes
\begin{equation}\label{pla0AP}
\begin{split}
\partial_t n-\vert\nabla\vert h&=-\partial_\alpha\left[n R_\alpha h\right],\\
\partial_t\rho-\vert\nabla\vert g&=-\partial_\alpha\left[\rho R_\alpha g\right],\\
\partial_th+\vert\nabla\vert^{-1}H_\varepsilon^2n-\varepsilon^{-1}\vert\nabla\vert^{-1}\rho&=-(1/2)\vert\nabla\vert\left[R_\alpha h R_\alpha h\right]-\vert\nabla\vert\left[(P_e^1/2) n^2+n^2C_e(n)\right],\\
\partial_t g-\vert\nabla\vert^{-1}n+\vert\nabla\vert^{-1}H_1^2\rho&=-(1/2)\vert\nabla\vert\left[R_\alpha g R_\alpha g\right]-\vert\nabla\vert\left[(P_i^1/2)\rho^2+\rho^2C_i(\rho)\right],\\
\end{split}
\end{equation}
where $H_1$ and $H_\varepsilon$ are given in \eqref{pla1} and $C_i$ and $C_e$ are smooth functions related to $c_i$ and $c_e$ by
\begin{equation*}
C_i(t)=\frac{1}{t^2}\int_0^ts^2c_i(s)ds,\quad C_e(t)=\frac{1}{t^2}\int_0^ts^2c_e(s)ds.
\end{equation*}
In particular, they satisfy that $C_i(0)=C_e(0)=0$.

\medskip

Defining $U_e$ and $U_i$ as in \eqref{pla7}, we can recast this as
\begin{equation*}
\begin{cases}
\left(\partial_t+i\Lambda_e\right)U_e&=\mathcal{N}_e+\mathcal{N}_e^\prime\\
\left(\partial_t+i\Lambda_i\right)U_i&=\mathcal{N}_i+\mathcal{N}_i^\prime
\end{cases}
\end{equation*}
where (recall that $R$ is defined in \eqref{pla1})
\begin{equation}\label{EPpla9}
\begin{split}
&\mathcal{N}_e=\frac{\Lambda_eR_\al}{2\sqrt{1+R^2}}\varepsilon^{1/2}(nR_\al h)+i\left\{\frac{|\nabla|}{4\sqrt{1+R^2}}\varepsilon^{1/2}R_\al h\cdot R_\al h-R[R_\al g\cdot R_\al g]\right\},\\
&\mathcal{N}_i=\frac{-\Lambda_iR_\al}{2\sqrt{1+R^2}}\big[\varepsilon^{1/2}R(nR_\al h)+(\rho R_\al g)+i\left\{\frac{-|\nabla|}{4\sqrt{1+R^2}}\big[\varepsilon^{1/2}R[ R_\al h\cdot  R_\al h]+R_\al g\cdot R_\al g\right\}.\\
\end{split}
\end{equation}
contains nonlinear terms already present in \eqref{EuMa1} and 
\begin{equation}\label{EPpla10}
\begin{split}
\mathcal{N}_i^\prime&=-i\frac{\vert\nabla\vert}{4\sqrt{1+R^2}}\left[\varepsilon^\frac{1}{2}P_e^1R(n\cdot n)+P_i^1\rho\cdot\rho\right]-i\frac{\vert\nabla\vert}{2\sqrt{1+R^2}}\left[\varepsilon^\frac{1}{2}R[n\cdot n\cdot C_e(n)]+\rho\cdot \rho\cdot C_i(\rho)\right] ,\\
\mathcal{N}_e^\prime&=i\frac{\vert\nabla\vert}{4\sqrt{1+R^2}}\left[\varepsilon^\frac{1}{2}P_e^1n\cdot n-P_i^1R(\rho\cdot\rho)\right]+i\frac{\vert\nabla\vert}{2\sqrt{1+R^2}}\left[\varepsilon^\frac{1}{2}n\cdot n\cdot C_e(n)-R[\rho\cdot \rho\cdot C_i(\rho)]\right],\\
\end{split}
\end{equation}
is a new nonlinearities coming from the new pressure.

Since the formulas in \eqref{EPpla10} involve Fourier multipliers with $\mathcal{S}^{100}$ Fourier symbols, we see that all the additional quadratic symbols are of the form given in \eqref{mk5}. In addition, we have that the cubic terms are of the form
\begin{equation*}
\mathcal{F}\mathcal{N}_\sigma^3(\xi,t)=\sum_{\mu,\nu\in\mathcal{I}_0,j\in\{1,2\}}\int_{\mathbb{R}^3}m^3_{\sigma;\mu,\nu,j}(\xi-\eta,\eta,\chi)\widehat{U}_\mu(\xi-\eta,t)\widehat{U}_\nu(\eta-\chi,t)\widehat{C_j}(\chi,t)d\eta d\chi
\end{equation*}
where
\begin{equation*}
m^3_{\sigma;\mu,\nu,j}(\xi,\eta,\chi)=\vert\xi\vert\cdot m(\xi-\eta,\eta,\chi)
\end{equation*}
for some $m(\xi_1,\xi_2,\xi_3)$ a linear combination of functions of the form \eqref{StructureCubMult} and
\begin{equation*}
C_i=C_i(\rho),\quad C_e=C_e(\rho)
\end{equation*}
satisfy \eqref{AssumpHCubic} in view of Lemma \ref{LemCubComp} below. Thus, the new terms in the nonlinearity fall under the scope of Proposition \ref{reduced1} or under the scope of Proposition \ref{PropCub} and can be handled.

\subsection{Relativistic model}

We refer to \cite{Ch2,Gou,GuoTah} for other references about relativistic fluids.

\subsubsection{Relativistic Euler-Maxwell model for the electron}

As a starter, we introduce a simpler {\it one-fluid} relativistic Euler-Maxwell model, namely the model for the electrons. This is the relativistic counterpart of the (classical) Euler-Maxwell model for electrons already discussed in \cite{GeMa,IoPa2}.

We consider Minkowski space $(\mathbb{R}^{1+3},g_{\alpha\beta})$ with $g_{00}=-c^2$, $g_{ij}=\delta_{ij}$ and $g_{0j}=g_{j0}=0$. Its inverse is denoted $g^{\mu\nu}$ where $g^{00}=-c^{-2}$, $g^{ij}=\delta_{ij}$ and $g^{0j}=0=g^{j0}$. We use the Einstein convention that repeated up-down indices be summed and we raise and lower indices using the metric. Latin indices $i,j\dots$ vary from $1$ to $3$, while greek indices $\mu,\nu\dots$ vary from $0$ to $3$.

We denote $\mathcal{T}^d(\mathbb{R}^{1+3})$ the set of contravariant $d$-tensors on the Minkowski space.
We model the electron fluid by a scalar function $n_e\in\mathcal{T}^0(\mathbb{R}^{1+3})$ and a velocity function $u=(u^\alpha)_{0\le\alpha\le 3}\in\mathcal{T}^1(\mathbb{R}^{1+3})$ normalized by\footnote{Some authors prefer to normalize the velocity of a particle of {\it rest-mass} $m$ by asking that $u^\alpha u_\alpha=-mc^2$. Here, we prefer to incorporate the mass separately.}
\begin{equation}\label{NormREMFluids}
u^\alpha u_\alpha=-c^2.
\end{equation}
Below, we will always note $\gamma_e=u^0$ so that $u^\nu=(\gamma_e,v^1,v^2,v^3)$ with $\gamma_e=\sqrt{1+c^{-2}\vert v\vert^2}$. We will repeatedly switch between $u$ and $v$ as one uniquely defines the other thanks to \eqref{NormREMFluids}.

In addition, we also consider an electromagnetic field $F=\{F^{\mu\nu}\}_{0\le\mu,\nu\le 3}\in \mathcal{T}^2(\mathbb{R}^{1+3})$. We assume that this field is skew-symmetric: $F^{\mu\nu}=-F^{\nu\mu}$. Finally, we also assume the presence of a uniform flat positively charged background of density of charge $n_0e$ and {\it four-velocity} $\partial_t=(1,0,0,0)\in\mathcal{T}^1(\mathbb{R}^{1+3})$.

We can introduce the energy-momentum tensor associated to the fluid under consideration:
\begin{equation*}
T^{\mu\nu}=nh\frac{u^\mu u^\nu}{c^2}+pg^{\mu\nu}\in\mathcal{T}^2(\mathbb{R}^{1+3}),
\end{equation*}
where $p=p(n)$ is a function satisfying assumptions as in \eqref{ArbitraryPressure} and $h=h(n)$, the {\it specific enthalpy} is a function satisfying
\begin{equation}\label{Enth}
h^\prime(x)=\frac{p^\prime(x)}{x},\quad h(0)=m_ec^2,
\end{equation}
where $m_e$ is the rest-mass of an electron. We can also consider the energy-momentum tensor of the electromagnetic field:
\begin{equation*}
\mathcal{E}^{\mu\nu}=-(4\pi)^{-1}\left[F^{\mu\alpha}F^{\beta\nu}g_{\alpha\beta}+\frac{1}{4}F^{\alpha\beta}F_{\alpha\beta}g^{\mu\nu} \right]\in\mathcal{T}^2(\mathbb{R}^{1+3}).
\end{equation*}

The dynamics are then given by three equations: the Maxwell equations,  the continuity of matter and the balance of energy-momentum. The Maxwell equations give:
\begin{equation}\label{Max}
\begin{split}
\partial_\mu F^{\mu\nu}=\frac{4\pi}{c}J^\nu\qquad\partial_\alpha F_{\beta\gamma}+\partial_\beta F_{\gamma\alpha}+\partial_\gamma F_{\alpha\beta}=0.
\end{split}
\end{equation}
where the total relativistic current is defined by
\begin{equation}\label{RelCur}
J^\nu=en_0\partial_t^\nu-en u^\nu.
\end{equation}
The continuity of matter gives
\begin{equation}\label{ContMatt}
\partial_\nu(n u^\nu)=0.
\end{equation}
The balance of Energy-momentum is then
\begin{equation*}\label{Bianchi}
\begin{split}
\partial_\nu T^{\mu\nu}=\frac{en}{c}u_\alpha F^{\mu\alpha}.
\end{split}
\end{equation*}
After using \eqref{ContMatt}, this reduces to
\begin{equation}\label{Bianchi2}
\frac{nu^\nu}{c^2}\partial_\nu\left[hu^\mu\right]+g^{\mu\nu}\partial_\nu p=\frac{en}{c}u_\alpha F^{\mu\alpha}.
\end{equation}
Projecting onto the direction of $u$ gives
\begin{equation*}\label{DirectionOfU}
-nu^\nu\partial_\nu h+u^\nu\partial_\nu p=0
\end{equation*}
which is always satisfied. Therefore, \eqref{Bianchi2} only contains three nontrivial equations which can be obtained by projecting onto $\mathbb{R}^3$, the orthogonal of $\partial_t$.

\medskip

{\bf Lorentz Covariance.} Consider a Lorentz-transformation $L$, i.e. a (fixed) $2$-tensor $L$ satisfying $L_{\alpha\beta}L^{\alpha\gamma}=\delta_\beta^\gamma$ and define
\begin{equation*}
\begin{split}
&(X^\prime)^\alpha=L^{\alpha\beta}X_\beta,\quad n^\prime(X^\prime)=n(X),\quad (u^\prime)^\alpha(X^\prime)=L^{\alpha\beta}u_\beta(X),\quad (F^\prime)^{\alpha\beta}(X^\prime)=L^{\alpha\gamma}L^{\beta\delta}F_{\gamma\delta}(X),\\
 &(J^\prime)^\alpha(X^\prime)=L^{\alpha\beta}J_\beta(X)
\end{split}
\end{equation*}
Then, we see that $(n,u,J,F)$ satisfy \eqref{Max}-\eqref{ContMatt}-\eqref{Bianchi} if and only if $(n^\prime,u^\prime,J^\prime,F^\prime)$ does.

\medskip

{\bf Irrotational flows.} We introduce the (generalized) vorticity defined by
\begin{equation*}
\omega_{\alpha\beta}=\partial_\alpha(hu_\beta)-\partial_\beta(hu_\alpha)+ecF_{\alpha\beta}.
\end{equation*}
This is transported by the flow in the following sense:
\begin{equation}\label{GV1FC2}
u^\nu\partial_\nu\omega_{\alpha\beta}=(\partial_\alpha u^\nu)\omega_{\beta\nu}-(\partial_\beta u^\nu)\omega_{\alpha\nu}.
\end{equation}
Indeed, we may simply compute
\begin{equation*}
\begin{split}
u^\nu\partial_\nu\omega_{\alpha\beta}&=\partial_\alpha(u^\nu\partial_\nu(hu_\beta))-\partial_\nu(hu_\beta)\partial_\alpha u^\nu-\partial_\beta(u^\nu\partial_\nu(hu_\alpha))+\partial_\nu(hu_\alpha)\partial_\beta u^\nu+ec u^\nu\partial_\nu F_{\alpha\beta}\\
&=-\partial_\alpha(\frac{c^2}{n}\partial_\beta p-ceu_\theta F_{\beta\gamma}g^{\gamma\theta})+\partial_\beta(\frac{c^2}{n}\partial_\alpha p-ceu_\theta F_{\alpha\gamma}g^{\gamma\theta})-ecu^\nu(\partial_\alpha F_{\beta\nu}+\partial_\beta F_{\nu\alpha})\\
&\quad-(\partial_\alpha u^\nu)\omega_{\nu\beta}-(\partial_\alpha u^\nu)\partial_\beta(hu_\nu)+ec(\partial_\alpha u^\nu)F_{\nu\beta}+(\partial_\beta u^\nu)\omega_{\nu\alpha}+(\partial_\beta u^\nu)\partial_\alpha(hu_\nu)-ec(\partial_\beta u^\nu)F_{\nu\alpha}\\
&=ec\{\partial_\alpha(u^\theta F_{\beta\theta})-\partial_\beta(u^\theta F_{\alpha\theta})-u^\nu\partial_\alpha F_{\beta\nu}-u^\nu\partial_\beta F_{\nu\alpha}+(\partial_\alpha u^\nu)F_{\nu\beta}-(\partial_\beta u^\nu)F_{\nu\alpha}\}\\
&\quad-(\partial_\alpha u^\nu)\omega_{\nu\beta}+(\partial_\beta u^\nu)\omega_{\nu\alpha}-\{(\partial_\alpha u^\nu)\partial_\beta(hu_\nu)-(\partial_\beta u^\nu)\partial_\alpha(hu_\nu)\}\\
&=-(\partial_\alpha u^\nu)\omega_{\nu\beta}+(\partial_\beta u^\nu)\omega_{\nu\alpha},\\
\end{split}
\end{equation*}
and hence as long as the solution is smooth irrotational initial data lead to solutions which remain irrotational.

\medskip

{\bf New Unknowns.} We assume that the data are irrotational and we consider the unknowns\footnote{This choice of unknown is motivated from the choice of unknowns in the non relativistic case \cite{IoPa2}.}
\begin{equation}\label{NewU1}
\begin{split}
\mu_e^j=c^{-2}hu^j=c^{-2}hv^j,\quad E^j=ecF^{j0}.
\end{split}
\end{equation}
All the other unknowns can be recovered from the formula
\begin{equation*}
ec^{-1}F^{jk}=\partial_k\mu_e^j-\partial_j\mu_e^k,
\end{equation*}
the first equation in \eqref{Max}
\begin{equation*}
n\gamma_e=n_0-\frac{1}{4\pi e^2}\partial_jE^j,
\end{equation*}
and from the fact that the mapping
\begin{equation*}\label{RCoV}
D:(n,v)\mapsto (n\gamma_e,\mu_e)=(n\sqrt{1+c^{-2}\vert v\vert^2},c^{-2}h(n)v)
\end{equation*}
is invertible in an $L^\infty$-neighborhood of $v\equiv 0$ and $n\equiv n_0$, where $h\simeq h_0:=h(n_0)$. This latter point is easily seen from the Jacobian matrix
\begin{equation*}
\nabla D=\begin{pmatrix}\gamma_e&\gamma_e^{-1}c^{-2}n v^T\\ c^{-2}h^\prime(n)v&c^{-2}h(n)I_3\end{pmatrix},
\end{equation*}
which also in particular implies that
\begin{equation}\label{useful1}
\begin{split}
\partial_jh=-\frac{h^\prime(n_0)}{4\pi e^2}\partial_j\partial_kE^k+h.o.t.
\end{split}
\end{equation}
The dynamical equations then reduce to the following (from \eqref{Bianchi2} and the first equation of \eqref{Max})
\begin{equation}\label{NewSysREM1F}
\begin{split}
\partial_t\mu_e^j+E^{j}+\frac{1}{\gamma}\partial_jh+\frac{c^2}{h\gamma_e}\partial_j\frac{\vert \mu_e\vert^2}{2}&=0\\
\partial_tE^j-c^2(\hbox{curl}\,\hbox{curl}(\mu_e))^j-4\pi e^2c^2\frac{n}{ h}\mu_e^j&=0,
\end{split}
\end{equation}
where
\begin{equation*}
(\hbox{curl}(v))^i:=\in^{ijk}\partial_jv_k.
\end{equation*}

We can now choose scales appropriately so as to minimize the number of parameters in the linear system. Define\footnote{Here $\omega_e$ denotes the (nonrelativist) electron plasma frequency and $c_s$ denotes the (non relativist) sound velocity.}
\begin{equation*}
\begin{split}
&\lambda=\sqrt{\frac{4\pi e^2n_0}{h_0}}\approx c\omega_e,\qquad T=\frac{p_0^\prime}{h_0}\approx\left(\frac{c_s}{c}\right)^2,\quad p_0^\prime=p^\prime(n_0),\quad \beta=\lambda c\\
&\mu_e(x,t)=\widetilde{\mu}(\lambda x,\beta t),\quad E(x,t)=\beta \widetilde{E}(\lambda x,\beta t),\\
&n=n_0(1+\widetilde{n}(\lambda x,\beta t)),\quad h_0\widetilde{h}=h(\widetilde{n}),\quad\widetilde{\gamma}=\sqrt{1+(c/h_0)^{2}\widetilde{h}^{-2}\vert\widetilde{\mu}\vert^2}\\
\end{split}
\end{equation*}
and introduce
\begin{equation*}
Q=\vert\nabla\vert^{-1}\hbox{curl},\quad P=-\nabla(-\Delta)^{-1}\hbox{div},\quad P^2+Q^2=Id,\quad PQ=0,\quad P^2=P,\quad Q^3=Q.
\end{equation*}
We can recast \eqref{NewSysREM1F} as
\begin{equation*}
\begin{split}
\partial_t\widetilde{\mu}+\widetilde{E}-T\Delta P\widetilde{E}&=\mathcal{N}_1\\
\partial_t\widetilde{E}+\Delta Q^2\widetilde{\mu}-\widetilde{\mu}&=\mathcal{N}_2,
\end{split}
\end{equation*}
where
\begin{equation*}
\begin{split}
-\mathcal{N}_1&=\frac{c}{h_0}\nabla\frac{\vert\widetilde{\mu}\vert^2}{2}+\left\{T\Delta P\widetilde{E}+\frac{h_0}{\widetilde{\gamma}}\nabla \widetilde{h}\right\}+\frac{c}{h_0}\left\{\frac{1}{\widetilde{h}\widetilde{\gamma}}-1\right\}\nabla\frac{\vert \widetilde{\mu}\vert^2}{2}\\
-\mathcal{N}_2&=\left\{1-\frac{1+\widetilde{n}}{\widetilde{ h}}\right\}\widetilde{\mu}^j.
\end{split}
\end{equation*}

We define
\begin{equation*}
\begin{split}
\Lambda_e^2:=1-T\Delta,\qquad\Lambda_b^2&:=1-\Delta
\end{split}
\end{equation*}
and we introduce the dispersive unknowns
\begin{equation}\label{NewUREM1F}
\begin{split}
U_e&:=P\widetilde{\mu}-i\Lambda_eP\widetilde{E}\\
U_b&:=Q\widetilde{\mu}-i\Lambda_b^{-1} Q\widetilde{E}
\end{split}
\end{equation}
which satisfy the system
\begin{equation}\label{NewSysREM1F2}
\begin{split}
\left(\partial_t+i\Lambda_e\right)U_e&=P\mathcal{N}_1-i\Lambda_eP\mathcal{N}_2\\
\left(\partial_t+i\Lambda_b\right)U_b&=Q\mathcal{N}_1-i\Lambda_b^{-1}Q\mathcal{N}_2.
\end{split}
\end{equation}
Now, it suffices to remark that $(\widetilde{E},\widetilde{h},\widetilde{n},\widetilde{\mu},\widetilde{\gamma})$ are smooth invertible functions of $U_e$ and $U_b$ to see that the nonlinearities in \eqref{NewSysREM1F2} correspond to a quadratic form that can be treated using the same techniques as in \cite{IoPa2} plus a cubic term that can be treated using techniques similar to those of Subsection \ref{Cubic}. This yields the following theorem:

\begin{theorem}\label{MainThmREM1F}
Fix $h$, $c$, $T$. Let $N_0=10^4$ and consider $\nu_0=n_0(1+\rho_0)$ with $(\rho_0,v_0,F_0)\in H^{N_0+1}$ satisfy
\begin{equation*}
\partial_j F^{j0}_0=-4\pi c^{-1} \nu_0\sqrt{1+c^{-2}\vert v_0\vert^2}-n_0,\qquad ec^{-1}F^{jk}_0=\partial_k(h(\nu_0)v_0^j)-\partial_j(h(\nu_0)v_0^k),
\end{equation*}
and
\begin{equation*}\label{maincond2REM}
\begin{split}
&\|(v_0,F_0)\|_{H^{N_0+2}}+\sum_{j=1}^3\|((1-\Delta)v_0^j,(1-\Delta)F_0^{j0})\|_Z=\varepsilon_0\leq\overline{\varepsilon},\\
&
\end{split}
\end{equation*}
where $\overline{\varepsilon}>0$ is sufficiently small, and the $Z$ norm is defined in Definition \ref{MainDef}. Then there exists a unique global solution $(n,u,F)$ with $(n-n_0,v,F)\in C([0,\infty):H^{N_0+1})$ of the system \eqref{Max}-\eqref{RelCur}-\eqref{ContMatt}-\eqref{Bianchi2} with initial data $(n(0),v(0),F(0))=(\nu_0,v_0,F_0)$. Moreover,
\begin{equation*}\label{mainconcl2REM}
\partial_j F^{j0}(t)=-4\pi c^{-1} [n(t)\sqrt{1+c^{-2}\vert v(t)\vert^2}-n_0],\quad ec^{-1}F_{jk}=\partial_k(hv_j)-\partial_j(hv_k),\quad\text{ for any }t\in[0,\infty),
\end{equation*}
and, with $\beta=1/100$,
\begin{equation*}\label{mainconcl2.1REM}
\sup_{t\in[0,\infty)}\|(v(t),F(t))\|_{H^{N_0+1}}+\sup_{t\in[0,\infty)}\sup_{|\rho|\leq 4}(1+t)^{1+\beta}\|(D^\rho_x v(t),D^\rho_x F(t))\|_{L^\infty}\lesssim \varepsilon_0.
\end{equation*}
\end{theorem}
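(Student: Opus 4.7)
The plan is to follow the same bootstrap scheme that proved Theorem \ref{MainThm}: combine a high-order energy estimate (to absorb the derivative loss in the quasilinear nonlinearity) with a dispersive $Z$-norm control at low regularity that supplies the $(1+t)^{-1-\beta}$ pointwise decay needed to close the energy inequality. First I would establish the analogue of Proposition \ref{Localexistence}: the system \eqref{NewSysREM1F} is symmetric hyperbolic in the variables $(\widetilde{n},\widetilde{\mu},\widetilde{E})$ (the multiplication by $\widetilde{h}/(1+\widetilde{n})$ in the momentum equation produces the needed symmetrization), so Kato's theorem gives local existence together with the standard differential energy inequality. The constraints $\partial_j F^{j0}+4\pi c^{-1}(n\gamma-n_0)=0$ and $ec^{-1}F^{jk}=\partial_k(hv^j)-\partial_j(hv^k)$ are propagated, as in the proof of Proposition \ref{Localexistence}(iii), using the Maxwell system and the vorticity identity \eqref{GV1FC2} respectively.

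Next I would verify that the diagonalization \eqref{NewUREM1F}–\eqref{NewSysREM1F2} puts the problem into the same framework as Proposition \ref{algebra}. The physical variables $(\widetilde{n},\widetilde{\mu},\widetilde{E})$ are smooth functions of $(U_e,U_b)$ near the origin, via \eqref{useful1} and the invertibility of $D$, so the right-hand sides $P\mathcal{N}_1-i\Lambda_e P\mathcal{N}_2$ and $Q\mathcal{N}_1-i\Lambda_b^{-1}Q\mathcal{N}_2$ split into quadratic and cubic-plus-higher pieces with multipliers of the type \eqref{mk4}–\eqref{mk5} from Lemma \ref{tech1.3}. Crucially, only two dispersion relations appear,
\begin{equation*}
\Lambda_e(\xi)=\sqrt{1+T|\xi|^2},\qquad \Lambda_b(\xi)=\sqrt{1+|\xi|^2},
\end{equation*}
both of Klein–Gordon type with mass $1$ and finite speeds $\sqrt{T},\,1$ (distinct since $T\in(0,1)$ in the relevant regime). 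There is no ion-like dispersion $\Lambda_i$, so the roots of $\Lambda_\sigma'$ do not accumulate at $\xi=0$ and no degenerate zero-frequency resonance of the kind treated in Section \ref{normproof4} arises; in particular the phase set $\mathcal T_C$ is empty for this system.

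The heart of the argument is then the analogue of Proposition \ref{Norm}. I would reuse the $Z$-norm of Definition \ref{MainDef} (the dispersive estimates of Lemma \ref{tech1.5} apply verbatim to $\Lambda_e$ and $\Lambda_b$ since they satisfy the same symbol estimates as in Lemma \ref{tech99}, giving $L^\infty$ decay of the free evolution at rate $t^{-3/2}$ up to the weighted loss). The nonresonant reductions of Section \ref{normproof} (Lemmas \ref{BigBound1}--\ref{NewBigBound1}) depend only on symbol estimates and the $Z$-norm machinery, so they transfer directly. After these reductions the only remaining bilinear interactions are the Case A spherical resonances on $\{(\xi,\eta):\Xi^{\mu,\nu}(\xi,\eta)=0,\ \Phi^{\sigma;\mu,\nu}(\xi,\eta)=0\}$, which for phases built from $\Lambda_e,\Lambda_b$ with two distinct speeds are exactly the nondegenerate $2$-dimensional spheres studied in \cite{IoPa2} and in Section \ref{normproof2} here; the interaction functions $p^{\sigma;\mu,\nu}$ and $\Psi^{\sigma;\mu,\nu}$ of Lemma \ref{LemP} are constructed in the same way (all the relevant $t^{\sigma_1\sigma_2}$ are well-defined since $\lambda_e',\lambda_b'$ are increasing and bounded away from their limits at $0$), and Lemmas \ref{BigBound7A}–\ref{BigBound8A} apply without change.

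The genuinely new point is the cubic term in $\mathcal N_1$ coming from the relativistic correction $(1/(\widetilde h\widetilde\gamma)-1)\nabla|\widetilde\mu|^2/2$ and from the expansion of $\widetilde h/(1+\widetilde n)$ in $\mathcal N_2$. I expect this to be the only real obstacle, but a manageable one: since each factor is itself a linear combination of $U_e,U_b$ with Calderon--Zygmund symbols, these cubic terms have the structure $|\xi|\cdot m(\xi-\eta,\eta-\chi,\chi)$ with $m$ of the product form required by Proposition \ref{PropCub}, and the outer derivative $\nabla$ matches the null-form gain. At cubic order the linear $t^{-3/2}$ decay of Klein--Gordon beats $t^{-1}$ with plenty of room, so one can close the $Z$-norm bound for these terms by a crude stationary-phase argument (integration by parts in time once, using two dispersive factors and one $L^2$ factor). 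Combining this cubic bound with the quadratic analysis above yields the analogue of \eqref{sec3}; the bootstrap of Section \ref{MainSec5} then gives \eqref{mainconcl2REM} and the decay \eqref{mainconcl2.1REM}, completing the proof.
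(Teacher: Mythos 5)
Your proposal is correct and follows essentially the same strategy as the paper: the paper's own (brief) proof likewise reduces the relativistic one-fluid model to the framework of \cite{IoPa2} (two Klein--Gordon dispersion relations $\Lambda_e,\Lambda_b$ with distinct speeds, so only the nondegenerate Case A spherical resonances remain), defines the high-order energies from the conserved stress-energy $T^{00}+\mathcal E^{00}$, and treats the extra relativistic cubic corrections via the cubic estimate of Proposition \ref{PropCub}. Your observation that the degenerate Case B and Case C resonances (which involve $\Lambda_i$) do not occur here, and that the only new difficulty is the cubic term, matches the structure the authors rely on; the one phrasing to tighten is the justification that the $t^{\sigma_1\sigma_2}$ parametrizations are well-defined, which comes from the strict monotonicity and surjectivity of $\lambda_e',\lambda_b'$ onto their respective ranges, not from being bounded away from their endpoint values.
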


The proof follows the same strategy as in \cite{IoPa2}. To find the high-order energies, we start from the conservation of energy:
\begin{equation*}
\partial_\nu\left[T^{0\nu}+\mathcal{E}^{0\nu}\right]=0
\end{equation*}
which implies that
\begin{equation*}
\mathcal{E}_0=\int_{\mathbb{R}^3}\left[T^{00}+\mathcal{E}^{00}\right]dx
\end{equation*}
is conserved. We then define the high order energies by
\begin{equation*}
\mathcal{E}_N=\int_{\mathbb{R}^3}\left[h^\prime(n)\vert D^N n\vert^2+\frac{nh}{c^2}\sum_{j=1}^3\vert D^Nv^j\vert^2+\frac{1}{4\pi e^2}\sum_{j=1}^3\vert D^NE^{j}\vert^2+\frac{c^2}{8\pi}\sum_{j,k=1}^3\vert D^NF^{jk}\vert^2\right]dx,
\end{equation*}
from the Hessian of $\mathcal{E}_{0}$.

The semilinear analysis is a direct adaptation of \cite[Section 3 and 4]{IoPa2}. Once we remark that $U_e(0)$ and $U_b(0)$ defined in terms of $(v_0,F_0)$ and $h$ as above satisfy
\begin{equation*}
\|(U_e(0),U_b(0))\|_{H^{N_0+1}}+\|(U_e(0),U_b(0))\|_Z\le \varepsilon_0^\frac{3}{4},
\end{equation*}
and \eqref{NewSysREM1F2}.

\subsubsection{Relativistic Euler-Maxwell model for two fluids} Now we consider the relativistic analogue of \eqref{EuMa1}. We consider again the standard Minkowski space $(\mathbb{R}^{1+3},g)$ defined as in the previous subsection.

The main unknowns are two densities $n_i$ and $n_e$, two velocity fields $v_i$ and $v_e$ (both of which satisfy \eqref{NormREMFluids}) and an electromagnetic field $F$. We are also given pressure laws $p_i$ and $p_e$ and enthalpies $h_i$ and $h_e$ satisfying \eqref{Enth}, with $M_i$, the rest-mass of an ion instead of $m_e$ for $p_i$, $h_i$.

\medskip

The Maxwell equations \eqref{Max} remain the same, with the relativistic current now defined as
\begin{equation}\label{RelCur2}
J^\nu=Ze n_iu_i^\nu-en_eu_e^\nu
\end{equation}
instead of \eqref{RelCur}. Both species are independently conserved so that
\begin{equation}\label{ContMatt2}
\partial_\nu(n_iu_i^\nu)=0=\partial_\nu(n_eu_e^\nu)
\end{equation}
and we have two forms of balance of momentum:
\begin{equation}\label{Bianchi2F}
\begin{split}
\frac{n_iu_i^\nu}{c^2}\partial_\nu\left[h_iu_i^\mu\right]+g^{\mu\nu}\partial_\nu p_i=-Z\frac{en_i}{c}(u_i)_\alpha F^{\mu\alpha}\\
\frac{n_eu_e^\nu}{c^2}\partial_\nu\left[h_eu_e^\mu\right]+g^{\mu\nu}\partial_\nu p_e=\frac{en_e}{c}(u_e)_\alpha F^{\mu\alpha}.
\end{split}
\end{equation}
In particular, we recover the fact that the stress-energy tensor is divergence free\footnote{In general relativity, the stress energy tensor should be equated to a multiple of the Einstein tensor of space time. The fact that the stress energy tensor is divergence free is then a consequence of the Bianchi identity.}
\begin{equation*}
\begin{split}
&\partial_\nu\left[T_i^{\mu\nu}+T_e^{\mu\nu}+\mathcal{E}^{\mu\nu}\right]=0,\\
&T_i^{\mu\nu}=n_ih_i\frac{u^\mu_i u^\nu_i}{c^2}+p_ig^{\mu\nu},\qquad T_e^{\mu\nu}=n_eh_e\frac{u^\mu_e u^\nu_e}{c^2}+p_eg^{\mu\nu}.
\end{split}
\end{equation*}

Again, we have two naturally transported (generalized) vorticities:
\begin{equation*}
\begin{split}
\omega^i_{\alpha\beta}&=\partial_\alpha\left[h_i(u_i)_\beta\right]-\partial_\beta\left[h_i(u_i)_\alpha\right]-ZecF_{\alpha\beta},\\
\omega^e_{\alpha\beta}&=\partial_\alpha\left[h_e(u_e)_\beta\right]-\partial_\beta\left[h_e(u_e)_\alpha\right]+ecF_{\alpha\beta},\\
\end{split}
\end{equation*}
which satisfy that
\begin{equation}\label{VortTrans2F}
\begin{split}
u_i^\nu\partial_\nu\omega^i_{\alpha\beta}&=-(\partial_\alpha u_i^\nu)\omega^i_{\nu\beta}+(\partial_\beta u_i^\nu)\omega^i_{\nu\alpha},\\
u_e^\nu\partial_\nu\omega^e_{\alpha\beta}&=-(\partial_\alpha u_e^\nu)\omega^e_{\nu\beta}+(\partial_\beta u_e^\nu)\omega^e_{\nu\alpha}.
\end{split}
\end{equation}
We thus see that irrotational flows are well-defined and remain irrotational along the flow.

\medskip

We can easily see from \eqref{Enth} that the component of \eqref{Bianchi2F} parallel to the fluids under consideration are automatically satisfied. Thus to verify \eqref{Bianchi2F}, it suffices to verify it when $\mu=j$ varies between $1$ and $3$.

\medskip

We now define the unknowns
\begin{equation*}
\begin{split}
&\mu_i^j=c^{-2}h_iu_i^j,\quad \mu_e^j=c^{-2}h_eu_e^j,\,\, 1\le j\le 3,\\
&E^j=ecF^{j0},\quad 2B^j=-ec^{-1}\in^{jkl}F_{kl},\quad F^{jk}=-ce^{-1}\in^{jkl}B^l.
\end{split}
\end{equation*}
Since we only consider irrotational flows,
\begin{equation}\label{IrrB}
B=\nabla\times\mu_e=-Z^{-1}\nabla\times\mu_i.
\end{equation}
at all times and we see that $B$ is uniquely determined by $\mu_i$ or $\mu_e$.

Now, we can rewrite our evolution system as
\begin{equation*}
\begin{split}
\partial_t(n_i\gamma_i)+c^2\hbox{div}(\frac{n_i}{h_i}\mu_i)&=0,\\
\partial_t\mu_i+\frac{1}{\gamma_i}\partial_jh_i-ZE^j+\frac{c^2}{\gamma_i h_i}\partial_j\frac{\vert \mu_i\vert^2}{2}&=0,\\
\partial_t(n_e\gamma_e)+c^2\hbox{div}(\frac{n_e}{h_e}\mu_e)&=0,\\
\partial_t\mu_e^j+\frac{1}{\gamma_e}\partial_jh_e+E^j+\frac{c^2}{h_e\gamma_e}\partial_j\frac{\vert\mu_e\vert^2}{2}&=0,\\
\partial_tE-c^2\hbox{curl}(B)+4\pi e^2c^2[Z\frac{n_i}{h_i}\mu_i-\frac{n_e}{h_e}\mu_e]&=0,\\
\partial_tB+\hbox{curl}(E)&=0.\\
\end{split}
\end{equation*}
We now set
\begin{equation*}
\begin{split}
H_i=h_i(n_0/Z),\quad n_0P_iZ=p_i^\prime(n_0/Z),\quad H_e=h_e(n_0),\quad n_0P_e= p_e^\prime(n_0)\\
\beta:=\sqrt{\frac{4\pi n_0Ze^2c^2}{H_i}},\qquad\lambda:=\sqrt{\frac{4\pi e^2}{P_i}},\qquad\mu:=\frac{\sqrt{n_0ZP_iH_i}}{c}
\end{split}
\end{equation*}
and
\begin{equation}\label{NewEpsTC}
\varepsilon=\frac{ZH_e}{H_i},\qquad T=\frac{P_e}{P_i},\qquad C_b=\frac{H_e}{n_0P_i}
\end{equation}
and use the rescaling
\begin{equation*}
\begin{split}
\gamma_i(x,t)=\widetilde{\gamma}_i(\lambda x,\beta t),\qquad&\gamma_e(x,t)=\widetilde{\gamma}_e(\lambda x,\beta t),\\
n_i(x,t)\gamma_i(x,t)=(n_0/Z)[\rho(\lambda x,\beta t)+1],\qquad&n_e(x,t)\gamma_e(x,t)=n_0[n(\lambda x,\beta t)+1]\\
\mu_i(x,t)= \mu u(\lambda x,\beta t),\qquad&\mu_e(x,t)=(\varepsilon\mu/Z)v(\lambda x,\beta t)\\
E(x,t)=n_0\lambda P_i\widetilde{E}(\lambda x,\beta t),\qquad&B(x,t)=(\lambda\mu/Z)\widetilde{B}(\lambda x,\beta t)\\
h_i(n_i(x,t))=H_i\widetilde{h}_i(\widetilde{\rho}(\lambda x,\beta t)),\qquad &h_i^\prime(n_i)=Z^2P_iq_i^\prime(\widetilde{\rho}),\qquad \widetilde{h}_i(0)=1=q_i^\prime(0)\\
h_e(n_e(x,t))=H_e\widetilde{h}_e(\widetilde{n}(\lambda x,\beta t)),\qquad&h_e^\prime(n_e)=P_eq_e^\prime(\widetilde{n}),\qquad\widetilde{h}_e(0)=1=q_e^\prime(0)\\
\end{split}
\end{equation*}
to obtain the system
\begin{equation}\label{NewSysREM2F}
\begin{split}
\partial_t\rho+\hbox{div}[\frac{1+\rho}{\widetilde{\gamma}_i\widetilde{h}_i}u]&=0,\\
\partial_tu^j-\widetilde{E}^j+\frac{1}{\widetilde{\gamma}_i}\partial_jq_i+\frac{1}{\widetilde{\gamma}_i\widetilde{h}_i}\partial_j\frac{\vert u\vert^2}{2}&=0,\\
\partial_tn+\hbox{div}[\frac{1+n}{\widetilde{\gamma}_e\widetilde{h}_e}v]&=0,\\
\varepsilon\{\partial_tv+\frac{1}{\widetilde{h}_e\widetilde{\gamma}_e}\partial_j\frac{\vert v\vert^2}{2}\}+\widetilde{E}^j+\frac{T}{\widetilde{\gamma}_e}\partial_jq_e&=0,\\
\partial_t\widetilde{E}^j-\frac{C_b}{\varepsilon}\hbox{curl}(\widetilde{B})+[\frac{1+\rho}{\widetilde{\gamma}_i\widetilde{h}_i}u-\frac{1+n}{\widetilde{\gamma}_e\widetilde{h}_e}v]&=0\\
\partial_t\widetilde{B}+\hbox{curl}(\widetilde{E})&=0\\
\end{split}
\end{equation}
which has a similar structure to \eqref{EuMa1}. Indeed, we may Taylor expand to get
\begin{equation*}
\begin{split}
\frac{1+\rho}{\widetilde{\gamma}_i\widetilde{h}_i}=1+r_1\rho+g_1,\quad& q_i=\rho+r_2\rho^2+r_2^\prime\vert u\vert^2+h_2,\\
\frac{1+n}{\widetilde{\gamma}_e\widetilde{h}_e}=1+r_3n+g_3,\quad& q_e=n+r_4n^2+r_4^\prime\vert v\vert^2+h_4,\\
\end{split}
\end{equation*}
where $r_1$, $r_2$, $r_2^\prime$, $r_3$, $r_4$ and $r_4^\prime$ are constants and $g_1,g_3$ are smooth functions of $(\rho,u,n,v)$ which vanish at the origin $(0,0,0,0)$ together with their gradient, and $h_2,h_4$ are smooth functions of $(\rho,u,n,v)$ which vanish at the origin $(0,0,0,0)$ together with their first and second derivatives.

We may thus rewrite \eqref{NewSysREM2F} as
\begin{equation}\label{NewSysREM2F2}
\begin{split}
\partial_t\rho+\hbox{div}[u]+r_1\hbox{div}[\rho u]&=-\hbox{div}[g_1],\\
\partial_tu^j-\widetilde{E}^j+\partial_j\rho+r_2\partial_j(\rho^2)+(r_2^\prime+\frac{1}{2})\partial_j\vert u\vert^2&=T_2,\\
\partial_tn+\hbox{div}[v]+r_3\hbox{div}[nv]&=-\hbox{div}[g_3],\\
\partial_tv+\varepsilon^{-1}\widetilde{E}^j+\varepsilon^{-1}T\partial_jn+\varepsilon^{-1}Tr_4\partial_j(n^2)+(\varepsilon^{-1}Tr_r^\prime+\frac{1}{2})\partial_j\vert v\vert^2&=T_4,\\
\partial_t\widetilde{B}+\hbox{curl}(\widetilde{E})&=0,\\
\partial_t\widetilde{E}-\frac{C_b}{\varepsilon}\hbox{curl}(\widetilde{B})+u-v+[r_1\rho u-r_3nv]&=-g_1u+g_3v,
\end{split}
\end{equation}
where
\begin{equation*}
\begin{split}
T_2&=-\{\widetilde{\gamma}_i^{-1}-1\}\partial_jq_i-\widetilde{\gamma}_i^{-1}\partial_jh_2-((\widetilde{\gamma}_i\widetilde{h}_i)^{-1}-1)/2\cdot\partial_j\vert u\vert^2\\
T_4&=-\{\varepsilon^{-1}T\widetilde{\gamma}_e^{-1}-1\}\partial_jq_e-\varepsilon^{-1}\widetilde{\gamma}_e^{-1}\partial_jh_4-((\widetilde{h}_e\widetilde{\gamma}_e)^{-1}-1)/2\cdot\partial_j\vert v\vert^2
\end{split}
\end{equation*}
are simply smooth cubic (or higher order) terms in $(\rho,n,u,v)$ with no particle structure. We can directly observe that the linearization of \eqref{NewSysREM2F2} coincides with the linearization of \eqref{EuMa1}. Therefore we consider the same dispersion relations \eqref{operators1} and we define the dispersive unknowns as in \eqref{pla7}:
\begin{equation}\label{REM2FDUnknowns}
\begin{split}
&U_i:=\frac{1}{2\sqrt{1+R^2}}\big[\varepsilon^{1/2}R|\nabla|^{-1}\Lambda_i n+|\nabla|^{-1}\Lambda_i\rho-i\varepsilon^{1/2}RR_jv^j-iR_ju^j\big],\\
&U_e:=\frac{1}{2\sqrt{1+R^2}}\big[-\varepsilon^{1/2}|\nabla|^{-1}\Lambda_e n+R|\nabla|^{-1}\Lambda_e\rho+i\varepsilon^{1/2}R_jv^j-iRR_ju^j\big],\\
&2U_b:=\Lambda_b\vert\nabla\vert^{-1}Q\tilde{B}-iQ^2\tilde{E}
 \end{split}
 \end{equation}
with inverse transformation given by
\begin{equation}\label{pla20REM2F}
\begin{split}
&n=\frac{-|\nabla|\varepsilon^{-1/2}}{\sqrt{1+R^2}\Lambda_e}(U_e+\overline{U_e})+\frac{|\nabla|\varepsilon^{-1/2}R}{\sqrt{1+R^2}\Lambda_i}(U_i+\overline{U_i}),\\
&\rho=\frac{|\nabla|R}{\sqrt{1+R^2}\Lambda_e}(U_e+\overline{U_e})+\frac{|\nabla|}{\sqrt{1+R^2}\Lambda_i}(U_i+\overline{U_i}),\\
&v^j=R_j\left\{\frac{i\varepsilon^{-1/2}}{\sqrt{1+R^2}}(U_e-\overline{U_e})+\frac{-i\varepsilon^{-1/2}R}{\sqrt{1+R^2}}(U_i-\overline{U_i})\right\}+\frac{2}{\varepsilon}\Lambda_b^{-1}\hbox{Re}(U_b^j),\\
&u^j=R_j\left\{\frac{-iR}{\sqrt{1+R^2}}(U_e-\overline{U_e})+\frac{-i}{\sqrt{1+R^2}}(U_i-\overline{U_i})\right\}-2\Lambda_b^{-1}\hbox{Re}(U_b^j).\\
\end{split}
\end{equation}

Using these formulas (and in particular the fact that $\partial_tn$ and $\partial_t\rho$ are exact spatial derivatives so as to counteract the singular relation at $0$ frequency in the definition of $U_e$ and to keep the derivative structure in the quadratic part of the nonlinearity $\mathcal{N}_i$), one quickly sees that $(U_i,U_e,U_b)$ satisfy \eqref{pla51} with quadratic nonlinear terms of the form given in \eqref{pla52} and Lemma \ref{tech1.3} and cubic nonlinear terms of the form handled by Proposition \ref{PropCub}.

Following the analysis in Section \ref{normproof}-\ref{normproof4}, we may obtain
\begin{theorem}
\label{REM2FThm}
Fix $h_i$, $h_e$, $c$, define $\varepsilon$, $T$, $C_b$ by \eqref{NewEpsTC} and assume \eqref{condTeps}. Let $N_{0}=10^{4}$ and assume that 
\begin{equation*}
\begin{split}
& \Vert (n^{0}-n_0,v^0,\rho^0-Z^{-1}n_0,u^0,F^{0})\Vert
_{H^{N_{0+2}}}+\Vert (1-\Delta)(n^{0},v^{0},\rho
^{0},u^{0},F)\Vert _{Z}=\delta _{0}\leq \overline{\delta }
, \\
& c\partial_j(F^0)^{j0}+4\pi e[n^{0}\sqrt{1+c^{-2}\vert v^0\vert^2}-Z\rho ^{0}\sqrt{1+c^{-2}\vert u^0\vert^2}]=0,\\
&ecF^0_{jk}=Z^{-1}\{\partial_j[h_i(\rho^0)u^0_k]-\partial_k[h_i(\rho^0)u^0_j]\}=\partial_k[h_e(n^0)v^0_j]-\partial_j[h_e(n^0)v^0_k],
\end{split}
\end{equation*}
where $\overline{\delta }>0$ is sufficiently small, and the $Z$ norm is defined in Definition \ref{MainDef}. Then there exists a unique global solution $(n_e-n_0,u_e,n_i-Z^{-1}n_0 ,u_i,F)\in C([0,\infty ):{H^{N_{0}}})$ of the system \eqref{Max}-\eqref{RelCur2}-\eqref{ContMatt2}-\eqref{Bianchi2F} with initial data $(n_e(0),v_e(0),n_i(0),v_i(0),F(0))=(n^{0},v^{0},\rho ^{0},u^{0},F^0)$. Moreover, for any $t\in \lbrack 0,\infty )$, 
\begin{equation*}
\begin{split}
&ecF^0_{jk}=Z^{-1}\{\partial_j[h_i(v_i)_k]-\partial_k[h_i(v_i)_j]\}=\partial_k[h_e(v_e)_j]-\partial_j[h_e(v_e)_k],
\end{split}
\end{equation*}
and, with $\beta :=1/100$, 
\begin{equation*}
\begin{split}
&\Vert (n_e(t)-n_0,v_e(t),n_i(t)-Z^{-1}n_0,v_i(t),F(t))\Vert
_{H^{N_{0}}}\\
&+\sup_{|\alpha |\leq 4}(1+t)^{1+\beta /2}\Vert
D_{x}^{\alpha }(n_e(t)-n_0,v_e(t),n_i(t)-Z^{-1}n_0,v_i(t),F(t))\Vert _{L^{\infty
}}\lesssim \delta
_{0}.
\end{split}
\label{mainconcl2.1REM2F}
\end{equation*}
\end{theorem}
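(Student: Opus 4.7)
My plan is to follow the same architecture as the proof of Theorem \ref{MainThm}, exploiting the fact that the authors have already done the algebraic work showing that \eqref{NewSysREM2F2} has the same linearization as \eqref{NewSys} and differs from it only through (i) modifications of the quadratic coefficients $r_1,r_2,r_2',r_3,r_4,r_4'$ and (ii) the cubic and higher-order remainders $g_1,g_3,h_2,h_4,T_2,T_4$. The structural invariance of the dispersion relations means the dispersive system \eqref{pla51} for the complex unknowns \eqref{REM2FDUnknowns} has identical left-hand side and the same quadratic normal-form structure from Proposition \ref{algebra} and Lemma \ref{tech1.3}.

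First I would establish the local existence and energy identity analogous to Proposition \ref{Localexistence}. The starting point is the conserved total energy $\int[T_i^{00}+T_e^{00}+\mathcal{E}^{00}]\,dx$ coming from $\partial_\nu[T_i^{\mu\nu}+T_e^{\mu\nu}+\mathcal{E}^{\mu\nu}]=0$, from which Hessian considerations at the equilibrium $(n_0/Z,0,n_0,0,0)$ produce a positive-definite quadratic form in $(\rho,n,u,v,\widetilde{E},\widetilde{B})$. Commuting $D_x^\gamma$ through the symmetrized version of \eqref{NewSysREM2F2} and using the Sobolev bound \eqref{sobo} yields an inequality of the form \eqref{Addpla2}. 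The propagation of the constraints is handled exactly as in part (iii) of Proposition \ref{Localexistence}: Maxwell's divergence identity and the transport structure \eqref{VortTrans2F} (which I would re-express in Eulerian form for the new variables $\mu_i$, $\mu_e$) ensure that the initial constraints $\partial_j F^{j0}+4\pi e[n_e\gamma_e-Zn_i\gamma_i]=0$ and the irrotationalities $ecF_{jk}=\partial_k(h_ev_j)-\partial_j(h_ev_k)=Z^{-1}[\partial_j(h_i(v_i)_k)-\partial_k(h_i(v_i)_j)]$ are preserved.

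Next I would set up the dispersive system for $U_e,U_i,U_b$ defined by \eqref{REM2FDUnknowns}, with inversion formulas \eqref{pla20REM2F}. Writing $\mathcal{N}_\sigma=\mathcal{N}_\sigma^{(2)}+\mathcal{N}_\sigma^{(3)}$ where $\mathcal{N}_\sigma^{(2)}$ collects the quadratic contributions, the multipliers of $\mathcal{N}_\sigma^{(2)}$ satisfy exactly the form \eqref{mk4}--\eqref{mk5} of Lemma \ref{tech1.3} (crucially, the factor $|\xi|$ is preserved in $m_{i;\mu,\nu}$ because $\partial_t\rho$ and $\partial_tn$ in \eqref{NewSysREM2F2} remain exact spatial divergences). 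Hence the quadratic analysis of Sections \ref{normproof}--\ref{normproof4}, summarized in Propositions \ref{reduced1}, \ref{reduced2}, \ref{reduced3}, \ref{reduced4}, \ref{reduced5}, \ref{CaseCProp}, applies verbatim to $\mathcal{N}_\sigma^{(2)}$. For the cubic remainder $\mathcal{N}_\sigma^{(3)}$ I would invoke the analogue of the referenced Proposition \ref{PropCub} (alluded to in the Euler--Poisson discussion via the assumption \eqref{AssumpHCubic}): since each cubic term has the structure $|\xi|\cdot m(\xi-\eta,\eta,\chi)\cdot [\text{composite of }(\rho,n,u,v)]$ with a H\"ormander--Mikhlin-type trilinear symbol and a smooth nonlinear composition $C_j$ satisfying the required hypotheses, the cubic contribution to the $Z$ norm increment is controlled by $\delta_0^3$ and is therefore strictly subcritical with respect to the $\delta_0^2$ bound in \eqref{sec3}.

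Once the cubic-augmented version of Proposition \ref{Norm} is in place, I would close the bootstrap exactly as in Section \ref{MainSec5}: start from Proposition \ref{Localexistence}, pass to the complex variables using the analogue of Proposition \ref{algebra}, invoke continuity in $Z$ (Proposition \ref{Norm0}, whose proof depends only on the structure of the quadratic and cubic nonlinearities and the definition of $Z$), apply the Proposition \ref{Norm} analogue on $[0,T_2(1-1/n)]$, and use Lemma \ref{tech1.5} to convert the $H^{N_0}\cap Z$ control of $V_\sigma$ into the $(1+t)^{-1-\beta/2}$ $L^\infty$ decay of $D_x^\alpha(n_e-n_0,v_e,n_i-Z^{-1}n_0,v_i,F)$ for $|\alpha|\le 4$, thereby closing the energy estimate \eqref{Addpla2} and extending the solution globally. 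The main obstacle I anticipate is the cubic analysis: although the cubic terms are formally smaller, the fact that $C_e(n),C_i(\rho),\widetilde{h}_i^{-1},\widetilde{\gamma}_i^{-1}$ etc.\ are nonlinear compositions means one must verify a Moser-type estimate in the $Z$ norm (compatibility of $Z$ with the algebra of smooth functions of small arguments), which requires adapting Lemma \ref{CZop} to nonlinear substitutions and showing that the two $L^\infty$-decaying factors in the trilinear integral compensate for the derivative loss in the $|\xi|$ prefactor. Everything else is a direct transcription of the machinery already developed in Sections \ref{local}--\ref{normproof4}.
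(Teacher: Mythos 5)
Your plan matches the paper's proof sketch essentially verbatim: reduction to \eqref{NewSysREM2F2}, the same dispersive unknowns \eqref{REM2FDUnknowns}--\eqref{pla20REM2F}, quadratic multipliers satisfying Lemma \ref{tech1.3} (with the $|\xi|$-prefactor preserved because $\partial_t\rho,\partial_t n$ remain exact divergences), cubic remainders via Proposition \ref{PropCub}, and the bootstrap of Section \ref{MainSec5}. One place where you slightly overanticipate the difficulty: the paper does not prove a Moser-type estimate in the $Z$ norm for the compositions $\widetilde{h}_i^{-1},\widetilde{\gamma}_i^{-1},\ldots$; instead Proposition \ref{PropCub} is formulated so that only one of the three factors (the composed one, $h$) needs the weaker $L^2$ and decaying-$L^\infty$ bounds \eqref{AssumpHCubic}, which Lemma \ref{LemCubComp} verifies for smooth $f(n)$ — the other two inputs carry the full $H^{N_0}\cap Z$ control, so no nonlinear $Z$-algebra is required.
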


\subsection{Cubic nonlinearities}\label{Cubic}

We consider an operator of the type
\begin{equation}\label{CubicOp}
\begin{split}
&\tilde{T}^{\sigma;\mu,\nu}_m[f,g;h](x)=\int_{\mathbb{R}}\int_{\mathbb{R}^9}q_m(s)m(\xi_1,\xi_2,\xi_3)e^{i\Phi(s,\xi_1,\xi_2,\xi_3)}\widehat{f}(\xi_1,s)\widehat{g}(\xi_2,s)\widehat{h}(\xi_3,s)dsd\xi_1 d\xi_2 d\xi_3,\\
&\Phi(s,\xi_1,\xi_2,\xi_3)=x\cdot(\xi_1+\xi_2+\xi_3)+s\left[\Lambda_\sigma(\xi_1+\xi_2+\xi_3)-\widetilde{\Lambda}_{\mu}(\xi_1)-\widetilde{\Lambda}_\nu(\xi_2)\right]
\end{split}
\end{equation}
where
\begin{equation}\label{StructureCubMult}
m(\xi_1,\xi_2,\xi_3)=q_0(\xi_1+\xi_2+\xi_3)q_1(\xi_1)q_2(\xi_2)q_3(\xi_3),\quad\sup_{0\le n\le 3}\Vert q_n\Vert_{\mathcal{S}^{100}}\le 1.
\end{equation}
In particular,
\begin{equation*}
\begin{split}
&\mathcal{F}\tilde{T}^{\sigma;\mu,\nu}_m[f,g;h](\xi)\\
&=\int_{\mathbb{R}}\int_{\mathbb{R}^6}q_m(s)m(\xi-\eta,\eta-\theta,\theta)e^{is\left[\Lambda_\sigma(\xi)-\widetilde{\Lambda}_{\mu}(\xi-\eta)-\widetilde{\Lambda}_\nu(\eta-\theta)\right]}\widehat{f}(\xi-\eta,s)\widehat{g}(\eta-\theta,s)\widehat{h}(\theta,s)dsd\eta d\theta.\\
\end{split}
\end{equation*}

We will prove the following proposition
\begin{proposition}\label{PropCub}
Assume that
\begin{equation*}
\Vert f^\mu\Vert_{H^{N_0}\cap Z}+\Vert f^\nu\Vert_{H^{N_0}\cap Z}\le 2
\end{equation*}
and that $h$ satisfies for any $s\in [0,T_0]$, and any $k\in\mathbb{Z}$
\begin{equation}\label{AssumpHCubic}
\begin{split}
\Vert P_kh(s)\Vert_{L^2}&\lesssim \min(2^{-N_0k},2^{(1+\beta-\alpha)k}),\\
\Vert P_kh(s)\Vert_{L^\infty}&\lesssim \min(2^{-6k},2^{(1/2-\beta-\alpha)k})(1+s)^{-1-\beta}
\end{split}
\end{equation}
then  there holds that
\begin{equation}\label{CubicBound}
\sum_{(k_1,j_1),(k_2,j_2)\in\mathcal{J},k_3}(1+2^{k_1}+2^{k_2}+2^{k_3})\Vert \tilde{\varphi}^{(k)}_j\cdot P_{k_0}\tilde{T}^{\sigma;\mu,\nu}_m[f^{\mu}_{k_1,j_1},f^\nu_{k_2,j_2};P_{k_3}h]\Vert_{B^1_{k,j}}\lesssim 2^{-\beta^4m}
\end{equation}
for any choice of
\begin{equation*}
\sigma\in\{i,e,b\},\quad\mu,\nu\in\mathcal{I}_0,\quad(k,j)\in\mathcal{J},\quad m\in\{0,\dots,L+1\}
\end{equation*}

\end{proposition}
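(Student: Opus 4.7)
The strategy is to exploit two structural features of the cubic operator in \eqref{CubicOp}. First, by \eqref{StructureCubMult} the symbol factorizes as $m(\xi_1,\xi_2,\xi_3) = q_0(\xi_1+\xi_2+\xi_3)q_1(\xi_1)q_2(\xi_2)q_3(\xi_3)$. Second, and crucially, the phase $\Phi$ does not involve $\xi_3$ apart from through the output frequency $\xi=\xi_1+\xi_2+\xi_3$, so $h$ enters without its own oscillatory factor and is controlled purely through the pointwise decay in \eqref{AssumpHCubic}. Using the factorization together with the substitution $\zeta=\xi-\theta=\xi_1+\xi_2$, I would rewrite
\begin{equation*}
\tilde{T}^{\sigma;\mu,\nu}_m[f,g;h] \;=\; Q_0\int_{\mathbb{R}} q_m(s)\, e^{is\Lambda_\sigma(-i\nabla)}\Big[\,B^{\mu,\nu}_s\big(Q_1 f(s),\, Q_2 g(s)\big)\cdot Q_3 h(s)\,\Big]\, ds,
\end{equation*}
where $B^{\mu,\nu}_s$ is the bilinear operator with Fourier symbol $e^{-is[\widetilde{\Lambda}_\mu(\xi_1)+\widetilde{\Lambda}_\nu(\zeta-\xi_1)]}$---the analogue of $\widetilde{T}^{\sigma;\mu,\nu}_s$ in \eqref{nh9.2} with the output dispersion stripped off---and each $Q_\ell$ denotes the Fourier multiplier associated to $q_\ell$. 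This puts the problem in the form of a bilinear expression in $f,g$ times a physical-space multiplier $h$.

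To bound the $B^1_{k,j}$ norm of the output I would apply H\"older in physical space: for the $L^2$ component use $\|B^{\mu,\nu}_s(\cdot,\cdot)\cdot Q_3 P_{k_3}h\|_{L^2}\leq \|B^{\mu,\nu}_s\|_{L^2}\|Q_3 P_{k_3}h\|_{L^\infty}$, and for the $\|\mathcal{F}(\cdot)\|_{L^\infty}$ component use Hausdorff--Young via $\|\cdots\|_{L^1}\leq \|B^{\mu,\nu}_s\|_{L^2}\|Q_3 P_{k_3}h\|_{L^2}$. The bilinear factors are controlled exactly as in \eqref{nh9.5}--\eqref{nh9.1} using Lemma \ref{tech2}, while the time integration absorbs the remaining factor through
\begin{equation*}
\int q_m(s)\,\|Q_3 P_{k_3}h(s)\|_{L^\infty}\,ds \;\lesssim\; 2^{-\beta m}\min\big(2^{-6k_3},\,2^{(1/2-\beta-\alpha)k_3}\big).
\end{equation*}
The minimum is summable in $k_3$ (and absorbs the $(1+2^{k_3})$ factor in \eqref{CubicBound}), and the $2^{-\beta m}$ decay is the crucial gain that serves as the ``budget'' to close \eqref{CubicBound}. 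Under this reduction, the coarse case analyses of Lemmas \ref{BigBound1}--\ref{BigBound2} and Lemma \ref{BigBound5} adapt directly to yield summable bounds in $(k_1,j_1),(k_2,j_2)$.

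The main obstacle, as always, is the regime where $j$ is much larger than $m$, where the naive H\"older argument is too weak and one must integrate by parts. The key observation is that $\nabla_\eta\Phi$ in \eqref{CubicOp} depends only on $\xi_1,\xi_2$, i.e.\ after the change of variables $\zeta=\xi-\theta$ the resulting bilinear phase is identical to the quadratic phase $\Phi^{\sigma;\mu,\nu}(\zeta,\xi_1)$ of Section \ref{normproof}, shifted by the ``passive'' variable $\theta$. Thus the non-stationary phase analyses of Lemma \ref{BigBound3}, Lemma \ref{BigBound4} and Lemma \ref{NewBigBound1} apply with essentially no change, provided we first decompose $h$ into atoms in the spirit of Definition \ref{MainDef} so as to track the physical-space localization $\tilde{\varphi}^{(k)}_j$ through the product with $h$---this decomposition is made possible by the hypothesis \eqref{AssumpHCubic}, which encodes exactly the ingredients needed to produce such an atomic decomposition via Lemma \ref{tech1.5}. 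Since the extra $2^{-\beta m}$ gain from $h$ is always strong enough to absorb the losses of the non-resonant bilinear estimate, the more delicate space-time resonant analyses of Sections \ref{normproof2}--\ref{normproof4} are not needed here; the entire proof reduces to the comparatively soft estimates of Section \ref{normproof}.
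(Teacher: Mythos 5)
Your high-level strategy is correct and matches the paper's: reduce by the factorization \eqref{StructureCubMult} to $m\equiv 1$, exploit the fact that $h$ enters without its own oscillatory factor, and use the extra $(1+s)^{-1-\beta}$ decay from $\|P_{k_3}h(s)\|_{L^\infty}$ as a ``budget'' that renders the space-time resonance analysis of Sections \ref{normproof2}--\ref{normproof4} unnecessary. But your handling of the large-$j$ regime has a genuine error. You claim to decompose $h$ into atoms ``in the spirit of Definition \ref{MainDef}'' and assert that the hypothesis \eqref{AssumpHCubic} makes this possible ``via Lemma \ref{tech1.5}.'' This is backwards: Lemma \ref{tech1.5} \emph{derives} the $L^2$ and $L^\infty$ bounds of the form \eqref{AssumpHCubic} as a \emph{consequence} of $\|f\|_Z\leq 1$, not the other way around. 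The hypothesis on $h$ is strictly weaker than membership in $Z$, and an atomic decomposition of $h$ is not available. In particular, you have no control on the $\xi_3$-derivatives of $\widehat{P_{k_3}h}$, so the bilinear non-stationary-phase machinery (which tracks $2^{j'}$ growth of derivatives of atoms) cannot be applied to the $h$-factor as you propose.

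Fortunately this gap is also unnecessary, and the paper's argument is simpler than what you outline. The integration by parts that kills the regime $j\geq m$ is performed in $\xi_1$ alone: the gradient $\nabla_{\xi_1}\Phi = x + s\nabla\Lambda_\sigma(\xi_1+\xi_2+\xi_3) - s\nabla\widetilde{\Lambda}_\mu(\xi_1)$ has size $\gtrsim 2^j$ when $|x|\approx 2^j\geq 2^{m+D}$, and since $\widehat{h}(\xi_3)$ does not depend on $\xi_1$, the $h$-factor is never differentiated; Lemma \ref{tech5} needs only the derivative bounds on $\widehat{f^\mu_{k_1,j_1}}$ and on the phase, which are available. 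For the remaining regime $m/2+N'_0k_{++}+D^2\leq j\leq m+D$, no normal form or time integration by parts (Lemma \ref{NewBigBound1}) and no space-nonresonant decomposition are needed at all: the trivial Plancherel/H\"older estimate \eqref{BasicPlaCub}, with two $L^\infty$ factors each contributing $(1+s)^{-1-\beta}$ and one $L^2$ factor, already produces $\|P_k\tilde{T}_m[\cdots]\|_{L^2}\lesssim 2^{-(1+2\beta)m}2^{-(N_0-1)k_{++}}$, which after multiplication by $2^{(1+\beta)j}\lesssim 2^{(1+\beta)(m+D)}$ leaves the $2^{-\beta m}$ margin. So the resolution is both weaker (no atomic decomposition of $h$) and simpler (no normal form analysis) than what you sketch.
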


\begin{proof}[Proof of Proposition \ref{CubicOp}]

Proceeding as for Proposition \ref{reduced1}, from \eqref{StructureCubMult}, we may assume that $m(\xi_1,\xi_2,\xi_3)=1$. Then, Plancherel theorem gives that
\begin{equation}\label{BasicPlaCub}
\begin{split}
&\Vert \tilde{T}^{\sigma;\mu,\nu}_{m}[P_{k_1}f,P_{k_2}g;P_{k_3}h]\Vert_{L^2}\\
&\lesssim 2^m\sup_{s\in[2^{m-4},2^{m+4}]}\min\{\Vert P_{k_1}f(s)\Vert_{L^2}\Vert e^{is\Lambda_\nu}P_{k_2}g(s)\Vert_{L^\infty}\Vert P_{k_3}h(s)\Vert_{L^\infty},\\
&\Vert e^{is\Lambda_\mu}P_{k_1}f(s)\Vert_{L^\infty}\Vert P_{k_2}g(s)\Vert_{L^2}\Vert P_{k_3}h(s)\Vert_{L^\infty}, \Vert e^{is\Lambda_\mu}P_{k_1}f(s)\Vert_{L^\infty}\Vert e^{is\Lambda_\nu} P_{k_2}g(s)\Vert_{L^\infty}\Vert P_{k_3}h(s)\Vert_{L^2}\},
\end{split}
\end{equation}
which will be used repeatedly.

\medskip

Let $k_{++}=\max(0,k_1,k_2,k_3)$. We first claim that if
\begin{equation*}
j\le m/2+N_0^\prime k_{++}+D^2
\end{equation*}
where $N_0^\prime$ is as in \eqref{nh7}, then
\eqref{CubicBound} holds.

We proceed as for Lemma \ref{BigBound1}. It suffices to show that
\begin{equation*}
\sum_{(k_1,j_1),(k_2,j_2)\in\mathcal{J},k_3}2^{k_{++}}(2^{\alpha k}+2^{10k})2^{3j/2}2^{(1/2-\beta)\tilde{k}}\Vert \tilde{\varphi}^{(k)}_j\cdot P_{k}\tilde{T}^{\sigma;\mu,\nu}_{m}[f^{\mu}_{k_1,j_1},f^\nu_{k_2,j_2};P_{k_3}h]\Vert_{L^2}\lesssim 2^{-\beta^4m}.
\end{equation*}
Using \eqref{nh9}, we first observe that
\begin{equation*}
\begin{split}
\Vert \tilde{T}^{\sigma;\mu,\nu}_{m}[f^{\mu}_{k_1,j_1},f^\nu_{k_2,j_2};P_{k_3}h]\Vert_{L^2}
&\lesssim \int_{\mathbb{R}}q_m(s)\Vert Ef^{\mu}_{k_1,j_1}(s)\Vert_{L^\infty}\Vert Ef^\nu_{k_2,j_2}(s)\Vert_{L^\infty}\Vert P_{k_3}h(s)\Vert_{L^2}ds\\
&\lesssim 2^{-\beta (\vert k_1\vert+\vert k_2\vert)}2^{-m}2^{-N_0k_3}
\end{split}
\end{equation*}
and therefore
\begin{equation*}
\sum_{(k_1,j_1),(k_2,j_2)\in\mathcal{J},k_3\ge k_{++}-10}2^{k_{++}}(2^{\alpha k}+2^{10k})2^{3j/2}2^{(1/2-\beta)\tilde{k}}\Vert \tilde{\varphi}^{(k)}_j\cdot P_{k}\tilde{T}^{\sigma;\mu,\nu}_{m}[f^{\mu}_{k_1,j_1},f^\nu_{k_2,j_2};P_{k_3}h]\Vert_{L^2}\lesssim 2^{-\beta^4m}.
\end{equation*}
On the other hand, if $k_3\le k_{++}-10$, then $\max(0,k_1,k_2)\ge k_{++}$ and we may proceed as in the proof of Lemma \ref{BigBound1} using \eqref{BasicPlaCub}.

\medskip

We now define three sets
\begin{equation*}
\begin{split}
&\mathcal{S}_1=\{(k_1,j_1),(k_2,j_2)\in\mathcal{J}:\,\max(k_1,k_2,k_3)\ge j/N_0^\prime\},\\
&\mathcal{S}_2=\{(k_1,j_1),(k_2,j_2)\in\mathcal{J}:\,\min(k_1,k_2,k_3)\le -10j\},\quad\mathcal{S}_3=\{(k_1,j_1),(k_2,j_2)\in\mathcal{J}:\,\max(j_1,j_2)\ge 10j\}
\end{split}
\end{equation*}
and claim that if
\begin{equation*}
j\ge m/2+N_0^\prime k_++D^2
\end{equation*}
then, for $p=1,2,3$, we have that
\begin{equation}\label{CubicBound1.1}
\sum_{(k_1,j_1),(k_2,j_2)\in\mathcal{S}_p,k_3}2^{k_{++}}\Vert \tilde{\varphi}^{(k)}_j\cdot P_{k}\tilde{T}^{\sigma;\mu,\nu}_{m}[f^{\mu}_{k_1,j_1},f^\nu_{k_2,j_2};P_{k_3}h]\Vert_{B^1_{k,j}}\lesssim 2^{-\beta^2m}.
\end{equation}

This is done similarly to the proof of Lemma \ref{BigBound2}.

\medskip

We can now consider each term in the remaining sum separately. More precisely, it suffices to prove that
\begin{equation}\label{CubicBound2}
2^{k_{++}}\Vert \tilde{\varphi}^{(k)}_j\cdot P_{k}\tilde{T}^{\sigma;\mu,\nu}_m[f^{\mu}_{k_1,j_1},f^\nu_{k_2,j_2};P_{k_3}h]\Vert_{B^1_{k,j}}\lesssim 2^{-\beta^2m}
\end{equation}
provided that
\begin{equation}\label{CondCubicBound2}
j\ge m/2+N_0^\prime k_{++}+D^2,\quad -10j\le k_1,k_2,k_3\le j/N_0^\prime,\quad\max(j_1,j_2)\le 10j.
\end{equation}

This is similar to the proofs of Lemma \ref{BigBound3} and \ref{BigBound4} but in an easier situation.
We first assume that
\begin{equation}\label{AssCCB2-1}
j+4k/3\le 0.
\end{equation}
In this case, as in Lemma \ref{BigBound4}, it suffices to prove that
\begin{equation*}
2^{k_{++}}2^{\alpha k}2^{(1+\beta)j}2^{3k/2}\Vert \mathcal{F}P_{k}\tilde{T}^{\sigma;\mu,\nu}_m[f^{\mu}_{k_1,j_1},f^\nu_{k_2,j_2};P_{k_3}h]\Vert_{L^\infty}\lesssim 2^{-\beta^4(m+j)}.
\end{equation*}
On the other hand, Cauchy-Schwartz inequality gives that, for any $\xi$,
\begin{equation*}
\begin{split}
\left\vert\mathcal{F}P_{k}\tilde{T}^{\sigma;\mu,\nu}_m[f^{\mu}_{k_1,j_1},f^\nu_{k_2,j_2};P_{k_3}h](\xi)\right\vert
&\lesssim \int_{\mathbb{R}}q_m(s)\Vert f^{\mu}_{k_1,j_1}(s)\Vert_{L^2}\Vert (P_{k_3}h(s))\cdot f^\nu_{k_2,j_2}(s)\Vert_{L^2}ds\\
&\lesssim 2^{-\max(0,k_3)}2^{-\beta m}\sup_{s\in [2^{m-2},2^{m+2}]}\Vert f^{\mu}_{k_1,j_1}(s)\Vert_{L^2}\Vert f^\nu_{k_2,j_2}(s)\Vert_{L^2}\\
&\lesssim 2^{-k_{++}}2^{-\beta m}.
\end{split}
\end{equation*}
which suffices given \eqref{AssCCB2-1}.

\medskip

Recall Definition \ref{MainDef}. We now show that, whenever
\begin{equation*}
j\ge m/2+N_0^\prime k_{++}+D^2,\quad -10j\le k_1,k_2,k_3\le j/N_0^\prime,\quad\max(j_1,j_2)\le 10j,
\end{equation*}
there holds that
\begin{equation}\label{LinftyCunic}
2^{k_{++}}(2^{\alpha k}+2^{10k})\Vert\mathcal{F}\left\{ \tilde{\varphi}^{(k)}_j\cdot P_{k}\tilde{T}^{\sigma;\mu,\nu}_m[f^{\mu}_{k_1,j_1},f^\nu_{k_2,j_2};P_{k_3}h]\right\}\Vert_{L^\infty}\lesssim 2^{-\beta^2m}.
\end{equation}
Indeed, for any $\xi$,
\begin{equation*}
\begin{split}
\left\vert\tilde{T}^{\sigma;\mu,\nu}_m[f^{\mu}_{k_1,j_1},f^\nu_{k_2,j_2};P_{k_3}h](\xi)\right\vert
&\lesssim \int_{\mathbb{R}}q_m(s)\Vert \mathcal{F}\{Ef^{\mu}_{k_1,j_1}(s)\cdot f^\nu_{k_2,j_2}(s)\}\Vert_{L^2}\Vert P_{k_3}h(s)\Vert_{L^2}ds\\
&\lesssim 2^{-N_0k_{++}}\int_{\mathbb{R}}q_m(s)\Vert Ef^{\mu}_{k_1,j_1}(s)\Vert_{L^\infty}\Vert f^\nu_{k_2,j_2}(s)\Vert_{L^2}ds\\
&\lesssim 2^{-N_0k_{++}}2^{-\beta m},
\end{split}
\end{equation*}
if $\max(0,k_3)\ge k_{++}$, and
\begin{equation*}
\begin{split}
\left\vert\tilde{T}^{\sigma;\mu,\nu}_m[f^{\mu}_{k_1,j_1},f^\nu_{k_2,j_2};P_{k_3}h](\xi)\right\vert
&\lesssim \int_{\mathbb{R}}q_m(s)\Vert \mathcal{F}\{Ef^{\mu}_{k_1,j_1}(s)\cdot P_{k_3}h(s)\}\Vert_{L^2}\Vert f^\nu_{k_2,j_2}\Vert_{L^2}ds\\
&\lesssim 2^{-N_0k_{++}}2^{-\beta m},
\end{split}
\end{equation*}
if $k_2\ge k_{++}$, and similarly if $k_1\ge k_{++}$. In every case, we easily deduce \eqref{LinftyCunic}.

\medskip

We now consider the $L^2$-bound in Defintion \ref{MainDef}. We first assume that
\begin{equation}\label{AssCCB2-2}
k\ge-3/4j,\quad j\ge m, \quad\min(j_1,j_2)\le (1-\beta^2)j.
\end{equation}
We may also assume that $j_1\le j_2$ and $j_1\ge-k_1\ge-j/N_0^\prime$. In this case, using that
\begin{equation*}
\left\vert\nabla_{\xi_1}\left[x\cdot(\xi_1+\xi_2+\xi_3)+s\Lambda_{\sigma}(\xi_1+\xi_2+\xi_3)-s\widetilde{\Lambda}_\mu(\xi_1)-s\widetilde{\Lambda}(\xi_2)\right]\right\vert\ge\vert x\vert-C2^{m}\ge 2^{j-10}
\end{equation*}
we may use Lemma \ref{tech5} in \eqref{CubicOp} with $K=2^{j}$ and $\epsilon=\min(2^k,2^{-j_1})$ to see that
\begin{equation*}
\vert \tilde{\varphi}^{(k)}_j(x)\cdot P_{k}\tilde{T}^{\sigma;\mu,\nu}_m[f^{\mu}_{k_1,j_1},f^\nu_{k_2,j_2};h](x)\vert\lesssim 2^{-100j}
\end{equation*}
from which \eqref{CubicBound1.1} follows easily.

\medskip

Assume now that
\begin{equation}\label{AssCCB2-3}
j/N_0^\prime\ge k\ge-3j/4,\quad j\ge m, \quad\min(j_1,j_2)\ge (1-\beta^2)j. 
\end{equation}
In this situation, using \eqref{nh9.1}, we compute that, when $k_1\le k_2$,
\begin{equation*}
\begin{split}
\Vert P_{k}\tilde{T}^{\sigma;\mu,\nu}_m[f^{\mu}_{k_1,j_1},f^\nu_{k_2,j_2};P_{k_3}h]\Vert_{L^2}
&\lesssim \int_{\mathbb{R}}q_m(s)\Vert \widehat{f^{\mu}_{k_1,j_1}}(s)\Vert_{L^1}\Vert \mathcal{F}\left\{f^{\nu}_{k_2,j_2}(s)\cdot (P_{k_3}h(s))\right\}\Vert_{L^2}ds\\
&\lesssim 2^{3k_1/2}\int_{\mathbb{R}}q_m(s)\Vert f^{\mu}_{k_1,j_1}(s)\Vert_{L^2}\Vert f^{\nu}_{k_2,j_2}(s)\Vert_{L^2}\Vert P_{k_3}h(s)\Vert_{L^\infty}ds\\
&\lesssim 2^{-\beta m}\cdot 2^{-k_{++}}\cdot2^{3k_1/2}\min(1,2^{-10k_1})2^{-(1-\beta)(j_1+j_2)},
\end{split}
\end{equation*}
from which we deduce that
\begin{equation*}
2^{k_{++}}(2^{\alpha k}+2^{10k})2^{(1+\beta)j}\Vert P_{k}\tilde{T}^{\sigma;\mu,\nu}_m[f^{\mu}_{k_1,j_1},f^\nu_{k_2,j_2};P_{k_3}h]\Vert_{L^2}\lesssim 2^{-\beta j}.
\end{equation*}
Together with \eqref{LinftyCunic}, this finishes the proof in this case.

\medskip

Finally, we may assume that
\begin{equation*}
m/2+N_0^\prime k_{++}+D^2\le j\le m+D,\quad -10j\le k_1,k_2\le j/N_0^\prime,\quad\max(j_1,j_2)\le 10j.
\end{equation*}

In this case, we may simply use \eqref{BasicPlaCub} to conclude that
\begin{equation*}
\begin{split}
\Vert P_{k}\tilde{T}^{\sigma;\mu,\nu}_m[f^{\mu}_{k_1,j_1},f^\nu_{k_2,j_2};P_{k_3}h]\Vert_{L^2}
&\lesssim 2^{-(1+2\beta)m}2^{-(N_0-1)k_{++}}
\end{split}
\end{equation*}
and therefore
\begin{equation*}
2^{k_{++}}(2^{\alpha k}+2^{10k})2^{(1+\beta)j}\Vert \widetilde{\varphi}^{(k)}_j\cdot P_{k}\tilde{T}^{\sigma;\mu,\nu}_m[f^{\mu}_{k_1,j_1},f^\nu_{k_2,j_2};P_{k_3}h]\Vert_{L^2}\lesssim 2^{-\beta m}
\end{equation*}
which suffices when combined with \eqref{LinftyCunic}.
\end{proof}

\begin{lemma}\label{LemCubComp}

Assume that $t\in\mathbb{R}$ and that $f\in C^\infty$ satisfies $f(0)=0$ and
\begin{equation}\label{FLip}
\sup_{\vert x\vert\le 1,\,\,j\le N_0+3}\vert f^{(j)}(x)\vert\le A.
\end{equation}
Let $n\in H^{N_0}$ be such that
\begin{equation}\label{AssNormD3}
\Vert n\Vert_{H^{N_0}}+\Vert e^{-it\Lambda_\sigma}n\Vert_{Z}\le 1
\end{equation}
for some $\sigma\in\{i,e,b\}$. Then
\begin{equation*}
\begin{split}
\sup_{k\in\mathbb{Z}}(2^{-(1+\alpha-\beta)k}+2^{N_0k})\Vert P_k(f(n))\Vert_{L^2}&\lesssim_{A,N_0}\Vert n\Vert_{H^{N_0}}+\Vert e^{-it\Lambda_\sigma}n\Vert_{Z},\\
\sup_{k\in\mathbb{Z}}(2^{-(1/2-\beta-\alpha)k}+ 2^{6k})(1+t)^{1+\beta}\Vert P_k(f(n))\Vert_{L^\infty}&\lesssim_{A,N_0}\Vert n\Vert_{H^{N_0}}+\Vert e^{-it\Lambda_\sigma}n\Vert_{Z}.
\end{split}
\end{equation*}
\end{lemma}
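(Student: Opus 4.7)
The plan is to split $f(n)$ into its linear contribution plus a quadratically vanishing remainder and to bound each piece using the hypotheses together with the linear decay estimate of Lemma \ref{tech1.5}. Writing $f(x) = f'(0)x + g(x)$, the function $g$ satisfies $g(0)=g'(0)=0$ and, in view of \eqref{FLip} together with the Sobolev control $\|n\|_{L^\infty}\lesssim \|n\|_{H^{N_0}}\leq 1$, obeys the pointwise bound $|g(n)|\lesssim_A n^2$ while all derivatives $g^{(j)}(n)$, $j\leq N_0+3$, remain uniformly bounded. The linear piece $f'(0)n$ inherits both conclusions directly from \eqref{AssNormD3} and Lemma \ref{tech1.5}, so only the contribution of $g(n)$ will require genuine analysis.

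For the weighted $L^2$ bound on $P_k(g(n))$ two regimes arise. When $k\geq 0$, the Moser composition inequality yields $\|g(n)\|_{H^{N_0}}\lesssim_{A,N_0}\|n\|_{H^{N_0}}\lesssim 1$, hence $\|P_k g(n)\|_{L^2}\lesssim 2^{-N_0 k}$, which absorbs the weight $2^{N_0 k}$. When $k\leq 0$, Bernstein's inequality combined with $|g(n)|\lesssim n^2$ gives $\|P_k g(n)\|_{L^2}\lesssim 2^{3k/2}\|g(n)\|_{L^1}\lesssim 2^{3k/2}\|n\|_{L^2}^2\lesssim 2^{3k/2}$; since $3/2>1+\alpha-\beta$ (recalling $\alpha=\beta/2$), this dominates the required $2^{(1+\alpha-\beta)k}$.

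For the weighted $L^\infty$ bound on $P_k(g(n))$, the case $k\leq 0$ follows from Bernstein together with the estimate $\|g(n)\|_{L^2}\lesssim \|n\|_{L^\infty}\|n\|_{L^2}$, which yields $\|P_k g(n)\|_{L^\infty}\lesssim 2^{3k/2}\|n\|_{L^\infty}\|n\|_{L^2}\lesssim 2^{3k/2}(1+t)^{-1-\beta}$, where again the exponent $3/2$ easily beats $1/2-\beta-\alpha$. The case $k\geq 0$ is resolved by interpolation between the trivial bound $\|g(n)\|_{L^\infty}\lesssim \|n\|_{L^\infty}^2\lesssim (1+t)^{-2-2\beta}$ and the high-frequency bound $\|P_k g(n)\|_{L^\infty}\lesssim 2^{(3/2-N_0)k}$ (from Bernstein and the Moser control of $\|g(n)\|_{H^{N_0}}$); taking the geometric mean at exponent $1/2$ produces $\|P_k g(n)\|_{L^\infty}\lesssim (1+t)^{-1-\beta}2^{(3/4-N_0/2)k}$, which, since $N_0=10^4$, overwhelms the weight $2^{6k}(1+t)^{1+\beta}$ by a wide margin.

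The only genuine technical input is the Moser composition estimate $\|g(n)\|_{H^{N_0}}\lesssim_{A,N_0}\|n\|_{H^{N_0}}$ valid under the smallness hypothesis $\|n\|_{L^\infty}\lesssim 1$; this is classical and follows by inductively differentiating $g\circ n$ via the Faà di Bruno formula, distributing derivatives through the Leibniz rule, and closing with Sobolev embeddings, and no new ideas are required. Once this lemma is in hand, the rest is a bookkeeping exercise across the two frequency regimes as described above.
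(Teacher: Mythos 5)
Your proof is correct, and it takes a genuinely somewhat different route from the paper for the high-frequency $L^\infty$ estimate. You apply the linear/quadratic split $f(x)=f'(0)x+g(x)$ \emph{uniformly} across all frequencies and control $P_k g(n)$ for $k\geq 0$ by interpolating between the raw $L^\infty$ bound $\|g(n)\|_{L^\infty}\lesssim\|n\|_{L^\infty}^2\lesssim(1+t)^{-2-2\beta}$ and the Bernstein/Moser bound $\|P_kg(n)\|_{L^\infty}\lesssim 2^{(3/2-N_0)k}$. The paper, by contrast, reserves the linear/quadratic split for $k\leq 0$ and handles $k\geq 0$ by the paraproduct-type decomposition $f(n)=f(P_{\leq k}n)+\big[f(n)-f(P_{\leq k}n)\big]$: the first term is controlled by trading seven derivatives against the frequency localization, and the second by the mean-value theorem combined with $\|P_{\geq k+1}n\|_{L^\infty}\lesssim 2^{-6k}(1+t)^{-1-\beta}$. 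Your route avoids chain-rule bookkeeping but relies crucially on $N_0$ being enormous ($3/4-N_0/2<-6$); the paper's decomposition would survive even with modest regularity since it only spends a fixed number of derivatives. Two small remarks: first, the low-frequency arguments ($L^1$-Fourier bound for $L^2$, Bernstein for $L^\infty$) are essentially identical to the paper's; second, the paper explicitly normalizes $\|n\|_{H^{N_0}}+\|e^{-it\Lambda_\sigma}n\|_Z=1$ and then rescales $f(x)\to\delta^{-1}f(\delta x)$ to recover the homogeneous $\lesssim\delta$ conclusion, whereas you leave this implicit — though tracing your estimates shows the linear piece is $O(\delta)$ and every bound on $g(n)$ is $O(\delta^2)$ or $O(\delta)$, so the right dependence does emerge. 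It would be cleaner to say so.
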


\begin{proof}

We first assume that
\begin{equation}\label{AssNormD32}
\Vert n\Vert_{H^{N_0}}+\Vert e^{-it\Lambda_\sigma}n\Vert_{Z} = 1.
\end{equation}
When $k\ge 0$, the lemma essentially follows from the following easy inequalities,
\begin{equation*}
\begin{split}
\Vert  f(u)\Vert_{H^{N_0}}&\lesssim_{N_0} \Vert u\Vert_{H^{N_0}},\quad\hbox{ and }\quad\Vert \partial^\alpha_x f(u)\Vert_{L^\infty}\lesssim \sum_{\beta\le\alpha}\Vert \partial^\beta u\Vert_{L^\infty}.
\end{split}
\end{equation*}
This is sufficient for the first inequality; for the second, we first observe that
\begin{equation*}
\begin{split}
2^{6k}\Vert P_k(f(P_{\le k}n))\Vert_{L^\infty}&\lesssim 2^{-k}\sum_{\vert\alpha\vert\le 7}\Vert \partial^\alpha P_k(f(P_{\le k}n))\Vert_{L^\infty}\lesssim 2^{-k}\sum_{\vert\alpha\vert\le 7}\Vert \partial^\alpha f(P_{\le k}n)\Vert_{L^\infty}\lesssim (1+s)^{-1-\beta},
\end{split}
\end{equation*}
while using that
\begin{equation*}
f(n)-f(P_{\le k}n)=\int_0^1 f^\prime(P_{\le k}n+sP_{\ge k+1}n)ds\cdot P_{\ge k+1}n
\end{equation*}
we see that
\begin{equation*}
2^{6k}\Vert P_k[f(n)-f(P_{\le k}n)]\Vert_{L^\infty}\lesssim 2^{6k}\Vert P_{\ge k+1}n\Vert_{L^\infty}\lesssim (1+s)^{-1-\beta}
\end{equation*}
which is what we wanted.
If $k\le 0$, since $f$ is smooth, we can write
\begin{equation*}
f(n)=C_1n+g(n),\quad C_1\in\mathbb{R},\,\,\lim_{n\to 0}\frac{g(n)}{n^2}\in\mathbb{R}.
\end{equation*}
The linear part clearly verifies the appropriate bounds by \eqref{nh9}. For the nonlinear part, we use that
\begin{equation*}
\Vert P_kg(n)\Vert_{L^2}\lesssim 2^{3k/2}\Vert\widehat{g(n)}\Vert_{L^\infty}\lesssim 2^{3k/2}\Vert g(n)\Vert_{L^1}\lesssim 2^{3k/2}
\end{equation*}
and
\begin{equation*}
\begin{split}
\Vert P_kg(n)\Vert_{L^\infty}&\lesssim \Vert n\Vert_{L^\infty}^2\lesssim (1+s)^{-2(1+\beta)}\\
\Vert P_kg(n)\Vert_{L^\infty}&\lesssim 2^{3k}\Vert\widehat{g(n)}\Vert_{L^\infty}\lesssim 2^{3k}
\end{split}
\end{equation*}
and therefore, we can easily get the bounds when $k\le 0$.

In the general case of \eqref{AssNormD3}, we may simply write $n=\delta\tilde{n}$ where $\tilde{n}$ satisfy \eqref{AssNormD32}. Then the Lemma follow by changing
$f(x)\to \delta^{-1} f(\delta x)$ which still satisfies \eqref{FLip}.
\end{proof}


\begin{thebibliography}{99}

\bibitem{Al} S. Alinhac, Temps de vie des solutions r\'eguli\`eres des \'equations d'Euler compressibles axisym\'etriques en dimension deux. (French) [Life spans of the classical solutions of two-dimensional axisymmetric compressible Euler equations] Invent. Math. 111 (1993), no. 3, 627-- 670.

\bibitem{Bit} J. A. Bittencourt Fundamentals of plasma physics, 3rd edition, 2004, Springer ISBN-13: 978-1441919304.

\bibitem{CorGre} S. Cordier and E. Grenier. Quasineutral limit of an Euler-Poisson system arising from plasma physics. Comm. Partial Differential Equations, 25 (5-6):1099--1113, 2000.

\bibitem{Che} F. Chen, introduction to plasma physics, 1995, Springer, ISBN-13: 978-0306307553

\bibitem{Ch} D. Christodoulou, Global solutions of nonlinear hyperbolic equations for small initial data, Comm. Pure Appl. Math. \textbf{39} (1986), 267--282.

\bibitem{Ch2} D. Christodoulou, The Formation of Shocks in 3-Dimensional Fluids, EMS Monographs in Mathematics, EMS Publishing House, 2007.

\bibitem{ChMi} D. Christodoulou and S. Miao, Compressible Flow and Euler's Equations, arXiv:1212.2867.

\bibitem{DelBer} J.-L. Delcroix and A. Bers, Physique des plasmas, InterEditions/ CNRS Editions, Paris, 1994.

\bibitem{DeFa}  J.-M. Delort and D. Fang, Almost global existence for solutions of semilinear Klein-Gordon equations with small weakly decaying Cauchy data. Comm. Partial Differential Equations 25 (2000), no. 11-12, 2119--2169.

\bibitem{DeFaXu} J.-M. Delort, D. Fang, and R. Xue, Global existence of small solutions for quadratic quasilinear Klein-Gordon systems in two space dimensions. J. Funct. Anal. {\bf{211}} (2004), 288--323.

\bibitem{DeDeSa} P. Degond, F. Deluzet and D. Savelief, Numerical approximation of the Euler-Maxwell model in the quasineutral limit, J. Comput. Phys. 231 (2012), no. 4, 1917--1946.

\bibitem{GeHKRo} D. G\'erard-Varet, D. Han-Kwan and F. Rousset, Quasineutral limit of the Euler-Poisson system for ions in a domain with boundaries, Indiana Univ. Math. J., to appear.

\bibitem{Ge} P. Germain,Global existence for coupled Klein-Gordon equations with different speeds. Ann. Inst. Fourier (Grenoble) 61 (2011), no. 6, 2463Ð2506 (2012).

\bibitem{GeMa} P. Germain and N. Masmoudi, Global existence for the Euler-Maxwell system. arXiv:1107.1595

\bibitem{GeMaPa} P. Germain, N. Masmoudi and B. Pausader, Non-neutral global solutions for the electron Euler-Poisson system in 3D. Siam. J. Math. Anal., to appear.

\bibitem{GeMaSh} P. Germain, N. Masmoudi, and J. Shatah, Global solutions for 3D quadratic Schr\"{o}dinger equations, Int. Math. Res. Not. \textbf{2009}, 414--432.

\bibitem{GeMaSh2} P. Germain, N. Masmoudi, and J. Shatah, Global solutions for the gravity water waves equation in dimension 3, Ann. of Math., to appear.

\bibitem{GeMaSh3} P. Germain, N. Masmoudi and J. Shatah, Global solutions for 2D quadratic Schr\"odinger equations, J. Math. Pures Appl. (9) 97 (2012), no. 5, 505--543.

\bibitem{GeMaSh4} P. Germain, N. Masmoudi and J. Shatah, Global existence for capillary water-waves, arXiv:1210.1601.

\bibitem{Gou} E. Gourgoulhon, An introduction to relativistic hydrodynamics. EAS Publications Series, 21, pp 43-79. (2006). 

\bibitem{SGuo} S. Guo, On the 1D cubic NLS in an almost critical space, master thesis, U. Bonn.

\bibitem{Guo} Y. Guo, Smooth irrotational flows in the large to the Euler-Poisson system in $\mathbb{R}^{3+1}$. Comm. Math. Phys. 195 (1998), no. 2, 249--265.

\bibitem{GuPa} Y. Guo and B. Pausader, Global smooth ion dynamics in the Euler-Poisson system. Comm. Math. Phys. 303 (2011), no. 1, 89--125.

\bibitem{GuPu} Y. Guo and X. Pu, KdV limit of the Euler-Poisson system. arXiv:1202.1830.

\bibitem{GuoTah} Y. Guo and S. Tahvildar-Zadeh, Formation of Singularities in Relativistic Fluid Dynamics and in Spherically Symmetric Plasma Dynamics, Contemporary Mathematics, Vol. 238, 151--161, (1999).

\bibitem{GuNaTs} S. Gustafson, Stephen, K. Nakanishi, and T.-P. Tsai, Scattering theory for the Gross-Pitaevskii equation in three dimensions, Commun. Contemp. Math. \textbf{11} (2009), 657--707.

\bibitem{HaPuSh} Z. Hani, F. Pusateri, and J. Shatah, Scattering for the Zakharov system in 3 dimensions, Comm. Pure. Appl. Math., arXiv: 1206.3473.

\bibitem{IoPa1} A. D. Ionescu and B. Pausader, The Euler-Poisson system in 2D: global stability of the constant equilibrium solution, Int. Math. Res. Not., to appear.

\bibitem{IoPa2} A. D. Ionescu and B. Pausader, Global solutions of quasilinear systems of Klein--Gordon equations in 3D, J. Eur. Math. Soc., to appear.

\bibitem{Ja} J. Jang, The two-dimensional Euler-Poisson system with spherical symmetry. J. Math. Phys. 53 (2012), no. 2, 023701, 4 pp.

\bibitem{JaLiZh} J. Jang, D. Li and X. Zhang, Smooth global solutions for the two dimensional Euler Poisson system, Forum Mathematicum, to appear.

\bibitem{Jo} F. John, Blow-up of solutions of nonlinear wave equations in three space dimensions, Manuscripta Math. {\bf{28}} (1979), 235--268.

\bibitem{Ka} T. Kato, The Cauchy problem for quasi-linear symmetric hyperbolic systems, Arch. Rational Mech. Anal. \textbf{58} (1975), 181--205.

\bibitem{Kl} S. Klainerman, Global existence of small amplitude solutions to nonlinear Klein-Gordon equations in four space-time dimensions, \textit{Comm. Pure Appl. Math.} 38, 631--641 (1985).

\bibitem{Kl2} S. Klainerman, The null condition and global existence to nonlinear wave equations. Nonlinear systems of partial differential equations in applied mathematics, Part 1 (Santa Fe, N.M., 1984), 293---326, Lectures in Appl. Math. \textbf{23}, Amer. Math. Soc., Providence, RI, 1986.

\bibitem{LaLiSa} D. Lannes, F. Linares, J.-C. Saut, The Cauchy problem for the Euler-Poisson system and derivation of the Zakharov-Kuznetsov equation, ``Perspectives in Phase Space Analysis of PDE's'' to appear in Birkh\"auser series ``Progress in Nonlinear Differential Equations and Their Applications''.

\bibitem{LiWu} D. Li and Y. Wu, The Cauchy problem for the two dimensional Euler-Poisson system. arXiv:1109.5980

\bibitem{Peng} Y. J. Peng, Global existence and long-time behavior of smooth solutions of two-fluid Euler-Maxwell equations. Ann. Inst. H. Poincar\'e Anal. Non Lin\'eaire 29 (2012), no. 5, 737--759.

\bibitem{Pu} X. Pu, Dispersive Limit of the Euler-Poisson System in Higher Dimensions. arXiv:1204.5435.

\bibitem{Sch} T. Schottdorf, Global existence without decay for quadratic Klein-Gordon equations, preprint, arXiv:1209.1518.

\bibitem{Sh} J. Shatah, Normal forms and quadratic nonlinear Klein-Gordon equations, Comm. Pure Appl. Math. \textbf{38} (1985), 685--696.

\bibitem{Sid} T. Sideris, Formation of singularities in three-dimensional compressible fluids. Comm. Math. Phys. 101, (1985), 475-- 485.

\bibitem{Stu} Sturrock, P.A., Plasma Physics, Cambridge University Press, 1994.

\bibitem{Sw} D. G., Swanson, Plasma waves, Academic Press, Boston, 1989.

\bibitem{Te} B. Texier, Derivation of the Zakharov equations. Arch. Ration. Mech. Anal. 184 (2007), no. 1, 121--183.

\end{thebibliography}
\end{document}